\documentclass[a4paper, 11pt]{article}
\usepackage[utf8]{inputenc}
\usepackage{amsmath}
\usepackage{mathtools}
\usepackage{amssymb}
\usepackage[breakable]{tcolorbox}
\usepackage{amsthm}
\usepackage{graphicx}
\usepackage{hyperref}
\usepackage{tikz}
\usepackage{pgfplots}
\usepackage{fancyhdr}
\usepackage{lastpage}
\usepackage[normalem]{ulem}
\usepackage[mathscr]{euscript}
\usepackage{geometry}
\usepackage{enumerate}
\usepackage{bm}
\usepackage{bbm}

\newtcolorbox{ntcolorbox}[1][]{%
    breakable
}

\allowdisplaybreaks

\newcommand\restr{\upharpoonright}

\newcommand{\blackdiamond}{%
  \mathord{%
    \sbox0{$\diamond$}%
    \resizebox{!}{1.125\ht0}{%
      \raisebox{\depth}{\rotatebox[origin=c]{45}{$\blacksquare$}}%
    }%
  }%
}
\makeatletter
\providecommand*{\shuffle}{%
  \mathbin{\mathpalette\shuffle@{}}%
}
\newcommand*{\shuffle@}[2]{%
  \sbox0{$#1\vcenter{}$}%
  \kern .15\ht0 
  \rlap{\vrule height .25\ht0 depth 0pt width 2.5\ht0}%
  \raise.1\ht0\hbox to 2.5\ht0{%
    \vrule height 1.75\ht0 depth -.1\ht0 width .17\ht0 %
    \hfill
    \vrule height 1.75\ht0 depth -.1\ht0 width .17\ht0 %
    \hfill
    \vrule height 1.75\ht0 depth -.1\ht0 width .17\ht0 %
  }%
  \kern .15\ht0 
}
\makeatother
\newcommand{\RN}[1]{%
  \textup{\uppercase\expandafter{\romannumeral#1}}%
}
\begin{document}
\title{\vspace{-1cm}\Large \textbf{Mass erasure on infinitely measured $\mathbb{R}$-trees $\RN{1}$:\\ construction and limit theorems}}
\author{Mie Glückstad\thanks{Mathematical Institute \& Department of Statistics, University of Oxford, \emph{mie.gluckstad@exeter.ox.ac.uk}}}
\date{}
\maketitle
\newtheorem{thm}{Theorem}[section]
\newtheorem{lemma}[thm]{Lemma}
\newtheorem{cor}[thm]{Corollary}
\newtheorem{prop}[thm]{Proposition}
\newtheorem{defin}[thm]{Definition}
\newtheorem{example}[thm]{Example}
\newtheorem{calc}[thm]{Calculation}
\newtheorem{choice}[thm]{Choice}
\newtheorem{rmk}[thm]{Remark}
\newtheorem{conj}[thm]{Conjecture}
\newtheorem{ques}[thm]{Question}
\newtheorem{algo}[thm]{Algorithm}
\let\oldcalc\calc
\renewcommand{\calc}{\oldcalc\normalfont}
\let\oldexample\example
\renewcommand{\example}{\oldexample\normalfont}
\let\oldrmk\rmk
\renewcommand{\rmk}{\oldrmk\normalfont}
\numberwithin{equation}{section}

\newcommand\xqed[1]{%
  \leavevmode\unskip\penalty9999 \hbox{}\nobreak\hfill
  \quad\hbox{#1}}
\newcommand\demo{\xqed{$\circ$}}
\setcounter{page}{1}
\pagenumbering{arabic}
\fancyfoot[R]{Page \thepage{} of \pageref{LastPage}}
\vspace{-.5cm}
\begin{abstract}
This paper is the first in a two-part series, seeking to i) extend the theory of mass erasure developed by Duquesne and Winkel \cite{DW26} to a setting with infinite measures, and ii) apply this theory to obtain invariance principles for Galton--Watson and Lévy forests in the most general setting where the trees may be supercritical and not satisfy Grey's condition.

We consider $\mathbb{R}$-trees equipped with a suitable subclass of boundedly finite measures, which we call discretely infinite measures. Let $(T, d, \rho)$ be a complete and separable (rooted) $\mathbb{R}$-tree, let $\mu$ be a discretely infinite measure on $T$, and let $h > 0$. As in \cite{DW26} the $h$-mass-erased subtree is obtained by removing all fringe subtrees with $\mu$-mass strictly less than $h$, and we prove that all $h$-mass-erased subtrees have a discrete branching structure. The mass-erased subtrees may be equipped with associated measures, which gives rise to a family of operators $(\mathscr{E}_h)_{h \geq 0}$ which forms a semigroup on the space of discretely infinite measures. The operators may be lifted to an appropriate space of Gromov-vague isometry classes.

Given a sequence $\boldsymbol{\mu}_n = [T_n, d_n, \rho_n, \mu_n]$, $n \in \mathbb{N}$, of suitable isometry classes, we say that they converge vaguely in the sense of mass erasure if their mass erasures $(\mathscr{E}_h \boldsymbol{\mu}_n)_{n \in \mathbb{N}}$ converge in the Gromov-vague topology for all $h > 0$. We prove that by imposing suitable tightness conditions on $(\boldsymbol{\mu}_n)_{n \in \mathbb{N}}$, Gromov-vague convergence implies vague convergence in the sense of mass erasure. Under appropriate tightness, we identify conditions which relate the two notions of convergence, and prove that either mode of convergence implies local Gromov--Hausdorff convergence of the mass-erased subtrees. The applications to Galton--Watson and Lévy forests are considered in part two.

\noindent \textbf{Keywords:} Real tree; Continuum random tree; Mass erasure; Gromov-vague topology; Lévy tree; Limit theorems; Boundedly finite measure; Discretely infinite measure; Tightness

\noindent \textbf{Mathematics Subject Classification:} 60B05; 28A33; 53C23; 60J80; 60G51; 60F17

\noindent \textbf{Comment:} No AI, LLM, or other generative technology was used in this work. The author does \textbf{not} consent to any parts of this work being used for training, testing, or feeding any AI, LLM or similar technology, without the author's prior express written consent.
\end{abstract}
\section{Introduction}
\label{sec:Intro}
In this paper, we extend the theory of \emph{mass erasure of $\mathbb{R}$-trees} developed by Duquesne and Winkel \cite{DW26} to a setting with suitable classes of infinite measures. One of the main goals of extending the theory is to obtain a general invariance principle for sequences of Galton--Watson $\mathbb{R}$-forests which are \emph{supercritical}, but which don't satisfy the so-called \emph{Grey condition}, meaning that their limiting continuous-state branching processes have probability zero of going extinct in finite time. This paper is part one in this overarching project, and will purely deal with developing the formal theory of mass erasure with infinite measures, focusing in particular on its construction and limit theorems. In part two \cite{Draft2}, the theory will first be extended to a setting which allows the study of appropriate \emph{random} infinitely measured $\mathbb{R}$-trees, and then be applied to obtain an invariance principle for Galton--Watson $\mathbb{R}$-forests. In some sense, one may broadly characterize this first part as developing the underlying \emph{deterministic} theory, and part two as developing the subsequent \emph{probabilistic theory} of mass erasure with infinite measures.

In the early 1990s Aldous introduced the \emph{Brownian continuum random tree} in three seminal papers \cite{A91a, A91b, A93}, where he constructed it i) as the scaling limit of discrete random trees, ii) via an encoding of the Brownian excursion, and iii) via a line-breaking construction. In later work by Duquesne, Le Gall and Le Jan \cite{DL02, LL98} a more general type of continuum random tree, the \emph{Lévy tree}, was introduced using encodings through height processes. Since then, the analytical foundations to study continuum random trees have been developed using tools from metric geometry (in particular the Gromov--Hausdorff topology and its variants, see \cite{P89}), pioneered by Evans, Pitman, Greven, Pfaffelhuber, and Winter in the early 2000s \cite{EPW06, EW06, GPW09}, with applications to the more general study of random metric (measure) spaces, often arising as scaling limits in the context of random graphs \cite{AG24} or random planar maps \cite{L07, MM07, MM06, M09}. In this context we view continuum random trees as \emph{$\mathbb{R}$-trees} -- formally, these are connected 0-hyperbolic metric spaces, but one may just think of them as being tree-like metric spaces.

The theory of finite mass erasure, introduced in \cite{DW26}, develops a notion of convergence weaker than Gromov--Prokhorov convergence, which allows for the study of continuum random trees arising as limits in contexts that fall outside the scope of what may be captured in classical topologies. This framework is useful for studying (sub-)critical Galton--Watson forests and their scaling limits, as subtrees obtained under finite mass erasure are \emph{bounded}. With infinite mass erasure we seek to generalize the theory in \cite{DW26}, to study $\mathbb{R}$-trees equipped with infinite measures. This is more suitable for handling supercritical Galton--Watson forests, as infinite mass may be allocated such that the \emph{unbounded} backbone/skeleton of the tree survives the mass erasure operation. Infinite mass erasure is developed by extending the ideas from \cite{DW26} in a framework building on the Gromov-vague topology of Athreya, Löhr and Winter \cite{ALW16}. 

Throughout this work we will be considering $\mathbb{R}$-trees which are complete and separable, and call such $\mathbb{R}$-trees \emph{Polish}.\footnote{Note here the slight abuse of terminology. We will call $(\mathscr{X}, d)$ a \emph{Polish} space, if it is complete and separable with respect to the metric $d$. This differs from the more standard notion of a Polish space, as being a separable topological space which is completely metrizable.} The $\mathbb{R}$-trees arising in our setting may not be (locally) compact, making their geometry complicated to study. To address this difficulty, mass erasure provides a technique for erasing parts of a measured $\mathbb{R}$-tree such that the mass-erased subtree obtained is (locally) compact with a discrete branching structure, and may be easily dealt with using the classical Gromov-type topologies. This is done by erasing all fringe subtrees of mass strictly less than $h > 0$ from the tree, with $h$ being an erasure parameter. The mass-erased subtree is reminiscent to the \emph{$h$-leaf-length erased subtree} studied by Evans, Pitman and Winter \cite{EPW06} (and applied in the context of Lévy forests by Duquesne and Winkel \cite{DW19}), or similar trimming techniques formalized in earlier work by Neveu and Pitman \cite{N86, NP89a, NP89b} in different tree formalisms. The main difference here being whether one erases fringe subtrees based on their mass or their height/leaf-length.

In \cite{DW26} mass erasure was developed for Polish $\mathbb{R}$-trees equipped with a finite Borel measure, whereas mass erasure in this paper will be developed for a suitable class of infinite Borel measures. In either setting, the mass-erased subtree may be equipped with a measure which relocates the mass of the fringe subtrees in a suitable way such that the resulting $h$-mass erasure operators $(\mathscr{E}_h)_{h \geq 0}$ form a semigroup. In \cite{DW26} the mass erasure operator was considered on the Gromov--Prokhorov space of (measure-preserving isometry classes) of \emph{minimal} Polish finitely measured $\mathbb{R}$-trees, with minimality ensuring that the span of the support of the measure is the entire $\mathbb{R}$-tree. In that setting, the semigroup $(\mathscr{E}_h)_{h \geq 0}$ was shown to be Gromov--Prokhorov-continuous. See also \cite{J20, L13, LVW15} for useful resources on the Gromov--Prokhorov topology. For mass erasure with infinite measures, we consider a similar space of (measure-preserving isometry classes) of minimal Polish infinitely measured $\mathbb{R}$-trees, with respect to the Gromov-vague topology. In this framework, the semigroup $(\mathscr{E}_h)_{h \geq 0}$ is \emph{not} immediately Gromov-vaguely continuous, but we may still establish sequential continuity under additional tightness conditions imposed on the sequence of isometry classes considered. A significant part of this paper will deal with establishing this result, using \emph{cut-off methods} which allow us to utilize the results from \cite{DW26}.

Mass erasure gives rise to a natural notion of convergence, namely \emph{convergence in the sense of mass erasure}. In the finitely measured setting of \cite{DW26}, we say that a sequence of isometry classes converges in the sense of mass erasure, if the mass erasures converge in the Gromov--Prokhorov sense for all $h > 0$. It is shown in \cite{DW26} that convergence in the sense of mass erasure is strictly weaker than Gromov--Prokhorov convergence, and it may indeed be metrized in order to obtain a Polish topology on an appropriate space of isometry classes. In the setting with infinite measures, we similarly introduce the notion of \emph{(Gromov-)vague convergence in the sense of mass erasure}, meaning that the mass erasures converge for all $h > 0$ in the Gromov-vague sense. One may metrize this notion of convergence on an appropriate space of isometry classes. This space is seen to be separable, but \emph{not} complete, and it seems difficult to provide a completion -- even proving that it is a Lusin space is an open question that remains to be addressed. Fortunately, for our probabilistic applications in \cite{Draft2}, the metrization is not strictly needed, as one may just directly study the convergence in distribution of the mass erasures on an appropriate Lusin space of isometry classes equipped with the Gromov-vague topology.

As noted in \cite{DW26} related notions and ideas to that of mass erasure have appeared in existing literature, for instance in \cite{B02, B06, BM05, FKM10, G26, HKK10, HM04, R23, TXCP23}. In recent work by Dubach, Thévenin and Wagner \cite{DTW26+} a similar problem to that of studying scaling limits which cannot be captured in the classical Gromov--Prokhorov setting, is addressed using the theory of \emph{dendrons} \cite{ET22, S21, TW23}. This is done in the context of scaling limits for descent-biased trees, and the approach builds on the sampling-based characterization of Gromov--Prokhorov convergence developed in \cite{A93, GPW09}. An open line of inquiry would be to examine the relationship between convergence of dendrons and convergence in the sense of mass erasure, and in particular to study how the limits obtained under these two notions of convergence may relate.

This paper is organized as follows: In Section \ref{sec:Main} we summarize our main definitions and results, in Section \ref{sec:3} we cover some preliminary material on infinite measures, projections, and topologies, in Section \ref{sec:4} we develop the theory of mass erasure with infinite measures on fixed $\mathbb{R}$-trees, and in Section \ref{sec:5} we extend the theory to the Gromov-vague setting.
\section{Main definitions and results}
\label{sec:Main}
\subsection{Vague convergence and discretely infinite measures on $\mathbb{R}$-trees}
\label{subsec:MainVague}
\emph{Boundedly finite measures and vague convergence:} Let $(\mathscr{X}, d, \rho)$ be a pointed metric space. We denote by $\mathscr{B}(\mathscr{X})$ the Borel $\sigma$-algebra induced by the metric $d$. For each $r > 0$, we denote by $B_{\mathscr{X}}(\rho, r)$ the open ball of radius $r$ around $\rho$ in $\mathscr{X}$. We let $\overline{B}_{\mathscr{X}}(\rho, r)$ be its closure and $\partial B_{\mathscr{X}}(\rho, r)$ its boundary. If it is obvious from the context which space we are working on, we will sometimes just write $B(\rho, r)$, $\overline{B}(\rho, r)$ and $\partial B(\rho, r)$. A Borel measure $\mu$ is said to be \emph{boundedly finite} if it is finite on all bounded sets, and we denote by
$$ \mathscr{M}_{\rm{prob}}(\mathscr{X}) \subseteq \mathscr{M}_{\rm{fin}}(\mathscr{X}) \subseteq \mathscr{M}_{\rm{bf}}(\mathscr{X}) \subseteq \mathscr{M}(\mathscr{X}), $$
the collections of probability measures, finite measures, boundedly finite measures and (non-negative) measures, respectively, on the measurable space $(\mathscr{X}, \mathscr{B}(\mathscr{X}))$. The support of a measure $\mu \in \mathscr{M}(\mathscr{X})$ is denoted by ${\rm{supp}}(\mu) := \{ x \in \mathscr{X} ~|~ \forall A \subseteq \mathscr{X} \text{ open with } x \in A: ~ \mu(A) > 0\}$.

We equip $\mathscr{M}_{\rm{fin}}(\mathscr{X})$ with the topology of weak convergence. For $\mu \in \mathscr{M}_{\rm{bf}}(\mathscr{X})$, we write $\mu^{\restr r} := \mu|_{\overline{B}(\rho, r)} \in \mathscr{M}_{\rm{fin}}(\mathscr{X})$ for the restriction of $\mu$ to the ball of radius $r$ around $\rho$ in $\mathscr{X}$. We say that a sequence of boundedly finite measures $(\mu_n)_{n \in \mathbb{N}}$ converges vaguely to some $\mu \in \mathscr{M}_{\rm{bf}}(T)$, and write $\mu_n \stackrel{\rm{vg}}{\rightarrow} \mu$, if $\mu_n^{\restr r_k} \stackrel{\rm{wk}}{\rightarrow} \mu^{\restr r_k}$ as $n \rightarrow \infty$ for all $k$ in some increasing sequence of radii $r_k \rightarrow \infty$. One may metrize this notion of convergence as in Definition \ref{def:BFMeas}, which gives rise to a Polish space $(\mathscr{M}_{\rm{bf}}(\mathscr{X}), d_{\rm{V}})$.

It is easy to show that for finite measures, weak convergence implies vague convergence. For the opposite implication, it is well-known (see \cite{HLS22} Section 2), that tightness has to be imposed. Indeed, under vague convergence, mass can \emph{escape to infinity} due to the lack of tightness. As this observation will be essential in our further work, let us illustrate it with a standard example.
\begin{example}
\label{ex:BFMeas6}
Let $\mathscr{X} = [0, \infty)$ be equipped with the standard Euclidean distance, and let $\rho =0$. Let for each $n \in \mathbb{N}$, $\delta_n$ be the Dirac point-measure in $n$. Then $\delta_n \stackrel{\rm{vg}}{\rightarrow} \mathbf{0}$ (with $\mathbf{0}$ denoting the null measure), but $(\delta_n)_{n \in \mathbb{N}}$ is clearly not tight, and thus not weakly convergent in $\mathscr{M}_{\rm{fin}}([0,\infty))$.
\demo
\end{example}
An easy application of Prokhorov's theorem yields that a sequence of (finite) measures $(\mu_n)_{n \in \mathbb{N}}$ defined on a pointed Polish space is weakly convergent if and only if it is vaguely convergent and tight. As vague convergence is defined only in terms of closed balls, and these may not be compact on a general Polish space, we can even loosen our assumption of tightness to a condition that we will refer to as \emph{b-tightness} (understood as a form of \emph{tightness on \underline{b}alls}).
\begin{defin}[b-tightness]
\label{def:btight}
Let $(\mathscr{X}, d, \rho)$ be a pointed Polish space, and let $\mu_n \in \mathscr{M}_{\rm{bf}}(\mathscr{X})$, $n \in \mathbb{N}$. We say that $(\mu_n)_{n \in \mathbb{N}}$ is b-tight on $\mathscr{X}$ if $ \lim_{R \rightarrow \infty} \limsup_{n \rightarrow \infty} \mu_n(\mathscr{X} \setminus \overline{B}_{\mathscr{X}}(\rho, R)) = 0$.
\end{defin}
\begin{lemma}
\label{lemma:BFMeas7}
Let $(\mathscr{X}, d, \rho)$ be a pointed Polish space, and let $\mu_n \in \mathscr{M}_{\rm{bf}}(\mathscr{X})$, $n \in \mathbb{N}$. Then $(\mu_n)_{n \in \mathbb{N}}$ is vaguely convergent and b-tight if and only if there exists some $N \in \mathbb{N}$ such that $(\mu_n)_{n \geq N}$ is a sequence of weakly convergent finite measures.
\end{lemma}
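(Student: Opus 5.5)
We prove the two implications separately, working with the definition of vague convergence through restrictions $\mu_n^{\restr r_k}$ to closed balls along some sequence $r_k \to \infty$. To identify limits we use that a finite measure on a Polish space is determined by its restrictions to the balls $\overline{B}(\rho,r_k)$: these increase to $\mathscr{X}$, and a Borel measure is determined by its values on Borel subsets of each ball.

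For ($\Leftarrow$), suppose $\mu_n \stackrel{\rm{wk}}{\rightarrow} \nu$ for $n \ge N$, with $\nu$ and all $\mu_n$ finite.

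\emph{b-tightness.} By Prokhorov's theorem (converse direction, valid on Polish spaces), $(\mu_n)_{n\ge N}$ is tight. Given $\varepsilon>0$, pick a compact $K$ with $\sup_{n \ge N}\mu_n(\mathscr{X}\setminus K)<\varepsilon$. Since $K$ is bounded, $K\subseteq \overline{B}(\rho,R)$ for all large $R$. Hence $\limsup_n \mu_n(\mathscr{X}\setminus\overline{B}(\rho,R))\le\varepsilon$ for such $R$.

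\emph{Vague convergence.} Because $\nu$ is finite, $\nu(\partial B(\rho,r))>0$ can hold only for countably many $r$. Choose $r_k\to\infty$ outside this countable set. For each such $k$, the restriction map $\mu\mapsto\mu|_{\overline{B}(\rho,r_k)}$ is weakly continuous at $\nu$. This is the standard continuity-set argument: for bounded continuous $f$, the function $f\mathbbm{1}_{\overline{B}(\rho,r_k)}$ is bounded and discontinuous only on $\partial B(\rho,r_k)$, a $\nu$-null set, so the portmanteau / continuous mapping theorem applies. Thus $\mu_n^{\restr r_k}\stackrel{\rm{wk}}{\rightarrow}\nu^{\restr r_k}$ for all $k$.

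The finitely many indices $n<N$ are irrelevant to both properties.

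For ($\Rightarrow$), suppose $\mu_n\stackrel{\rm{vg}}{\rightarrow}\mu$ along radii $r_k$, and that the sequence is b-tight.

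\emph{Finiteness.} Choose $R_0$ with $\limsup_n \mu_n(\mathscr{X}\setminus\overline{B}(\rho,R_0))<1$. Then there is $N$ such that $\mu_n(\mathscr{X}\setminus\overline{B}(\rho,R_0))<1$ for all $n\ge N$. Since $\mu_n$ is boundedly finite, this gives $\mu_n(\mathscr{X})<\infty$ for $n\ge N$.

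\emph{Finiteness of the limit.} Pick $k$ with $r_k\ge R_0$. Then $\mu(\overline{B}(\rho,r_k))=\lim_n\mu_n(\overline{B}(\rho,r_k))$, and this is bounded uniformly in $k$ by
\[
\limsup_n \mu_n(\mathscr{X}) \le \limsup_n \mu_n(\overline{B}(\rho,r_{k_0})) + 1 < \infty
\]
for a fixed $k_0$ with $r_{k_0}\ge R_0$. Letting $k\to\infty$ shows $\mu$ is finite.

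\emph{Weak convergence.} Fix a bounded continuous $f$ with $\|f\|_\infty\le 1$ and $\varepsilon>0$. Choose $k$ with
\[
\limsup_n\mu_n(\mathscr{X}\setminus\overline{B}(\rho,r_k))<\varepsilon \quad\text{and}\quad \mu(\mathscr{X}\setminus\overline{B}(\rho,r_k))<\varepsilon .
\]
This is possible because b-tightness is monotone in $R$ and $r_k\to\infty$. Now split
\[
\Big|\int f\,d\mu_n-\int f\,d\mu\Big|\le\Big|\int f\,d\mu_n^{\restr r_k}-\int f\,d\mu^{\restr r_k}\Big|+\mu_n(\mathscr{X}\setminus\overline{B}(\rho,r_k))+\mu(\mathscr{X}\setminus\overline{B}(\rho,r_k)).
\]
The first term tends to $0$ by vague convergence. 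Taking $\limsup_n$ therefore gives a bound of $2\varepsilon$, and since $\varepsilon$ was arbitrary, $\int f\,d\mu_n\to\int f\,d\mu$.

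The main obstacle is the bookkeeping of radii in ($\Leftarrow$). The definition requires some sequence $r_k$, and weak convergence of the restrictions fails at radii where $\nu$ charges the sphere $\partial B(\rho,r)$. This is resolved by the countable-atoms argument above: the function $r\mapsto\nu(\overline{B}(\rho,r))$ is monotone, so it has at most countably many discontinuities, and we choose the $r_k$ to avoid them.
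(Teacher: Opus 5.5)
Your proof is correct and follows essentially the same route as the paper's. For ($\Rightarrow$) you use eventual finiteness of $\mu_n$ from b-tightness, then finiteness of the limit via masses of balls, then a ball/complement splitting. The only differences are that you test against bounded continuous functions along the given radii $r_k$ rather than bounding the Prokhorov distance at $\mu$-continuity radii, and that you write out the ($\Leftarrow$) direction, which the paper dismisses as obvious.
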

The proof of Lemma \ref{lemma:BFMeas7} and other results on vague convergence may be found in Section \ref{subsec:BFmeas}.\\

\noindent \emph{$\mathbb{R}$-trees:} Let $(\mathscr{X}, d)$ be a metric space. For any compact real interval $[a,b] \subseteq \mathbb{R}$ and any isometry $\phi\colon [a,b] \rightarrow \mathscr{X}$, we refer to the image $\phi([a,b]) \subseteq \mathscr{X}$ as a \emph{segment} in $\mathscr{X}$. We call $\phi(a)$ and $\phi(b)$ the \emph{end points} of the segment. If for some $x,y \in \mathscr{X}$, there exists a \emph{unique} segment in $\mathscr{X}$ with end points $x$ and $y$, we denote this segment by $[x,y]$.
\begin{defin}[$\mathbb{R}$-tree]
A pointed metric space $(T, d, \rho)$ is called an $\mathbb{R}$-tree with root $\rho$ if
\begin{enumerate}[$(i)$]
\item for any $\sigma_1, \sigma_2 \in T$ there exists a (unique\footnote{This actually follows by property $(ii)$, see \cite{E08}.}) segment $[\sigma_1, \sigma_2]$ in $T$ with end points $\sigma_1, \sigma_2$;
\item whenever $[\sigma_1, \sigma_0]$, $[\sigma_0, \sigma_2]$ are two segments in $T$ with $[\sigma_1, \sigma_0] \cap [\sigma_0, \sigma_2] = \{\sigma_0\}$, then $[\sigma_1, \sigma_0] \cup [\sigma_0, \sigma_2]$ is a segment with end points $\sigma_1, \sigma_2$.
\end{enumerate}
\end{defin}
There are a multitude of equivalent definitions of $\mathbb{R}$-trees. A pointed metric space is an $\mathbb{R}$-tree if and only if it is 0-hyperbolic and connected (see \cite{B74}). Alternatively, $T$ is an $\mathbb{R}$-tree if there is a unique path between any two points $\sigma_1, \sigma_2 \in T$, and this path is isometric to the compact interval $[0, d(\sigma_1, \sigma_2)]$. $\mathbb{R}$-trees satisfy the properties one would expect them to satisfy; for instance, if $\sigma_0, \sigma_1, \sigma_2 \in T$, then $[\sigma_0, \sigma_1] \cap [\sigma_0, \sigma_2] = [\sigma_0, \sigma]$ for some unique point $\sigma \in T$. When $\sigma_0 = \rho$, we denote the unique point $\sigma$ by $\sigma_1 \wedge \sigma_2$, and refer to this point as the most recent common ancestor of $\sigma_1$ and $\sigma_2$. For a detailed discussion of $\mathbb{R}$-trees, see \cite{B99, E08}.

Let $(T, d, \rho)$ be an $\mathbb{R}$-tree. We denote the \emph{degree} of each point in $T$ by
\begin{align*}
n(\sigma, T) := \#\{\text{connected components of } T \setminus \{\sigma\}\} \in \mathbb{N}_0 \cup \{+\infty\} \quad \text{for every } \sigma \in T,
\end{align*}
We moreover define the collections of leaves and branch points in $T$ by
\begin{align*}
{\rm{Lf}}(T) &:= \{\sigma \in T \setminus \{\rho\} ~|~ n(\sigma,T) = 1 \}, \quad {\rm{Bp}}(T) := \{\sigma \in T \setminus \{\rho\} ~|~ n(\sigma,T) \geq 3 \}.
\end{align*}
We denote the \emph{height} of an $\mathbb{R}$-tree by $ {\rm{Ht}}(T) := \sup_{\sigma \in T} d(\rho, \sigma)$.

Suppose that $T' \subseteq T$ is a subset of the $\mathbb{R}$-tree $(T, d, \rho)$. We say that $T'$ is a \emph{subtree} of $T$ if it is connected. If in addition, $\rho \in T'$ we say that the subtree is \emph{rooted}. An easy way of generating a \emph{rooted} subtree is by letting it be spanned by a non-empty subset of $T$.
\begin{defin}[Subtree spanned by a set]
\label{def:Span}
Let $(T, d, \rho)$ be an $\mathbb{R}$-tree, and let $A \subseteq T$ be a non-empty subset. The \emph{(rooted) subtree spanned by $A$} is the smallest connected subset of $T$ containing $A$ and $\rho$, i.e. obtained by setting $ {\rm{Span}}(A) := \bigcup_{\sigma \in A} [\rho, \sigma]$.
\end{defin} 
Given some fixed point $\sigma \in T$, we denote by $\theta_{\sigma}T$ the subtree above the point $\sigma$ by setting
$$ \theta_{\sigma}T := \{ \sigma' \in T ~|~ \sigma \in [\rho, \sigma'] \}. $$
It is then easily checked that $\theta_{\sigma}T$ is a closed subtree of $T$. It is also easily seen that if $\sigma \in [\rho, \sigma']$, then $\theta_{\sigma'}T \subseteq \theta_{\sigma}T$, and if $\sigma \wedge \sigma' \notin \{\sigma, \sigma'\}$, then $\theta_{\sigma}T \cap \theta_{\sigma'}T = \emptyset$. Whenever $\sigma \neq \rho$, we write $\theta_{\sigma}T^{\circ} = \theta_{\sigma}T \setminus \{\sigma\}$ for the interior of the subtree above $\sigma$.\\

\noindent \emph{Discretely infinite measures on $\mathbb{R}$-trees:} Let $(T, d, \rho)$ be a Polish $\mathbb{R}$-tree, and let $\mu \in \mathscr{M}_{\rm{bf}}(T)$. In our later discussions of projections (see Section \ref{subsec:Proj}) and mass erasure (see Section \ref{subsec:MESubtrees}), we will quickly discover that some restrictions will need to be imposed on the measure $\mu$. We define
$$ {\rm{Skel}}(T, \mu) := \{\rho\} \cup \left\{ \sigma \in T ~|~ \mu(\theta_{\sigma}T) = +\infty \right\},$$
to be the set of points in $T$ with infinite mass above them. We will call this set the \emph{infinite-mass skeleton} or just \emph{skeleton} of $T$ with respect to $\mu$, and indeed ${\rm{Skel}}(T, \mu)$ is a closed rooted subtree of $T$. We denote the set of points in $T$ with finite mass above them by
$$ F_{\mu}(T) := T \setminus {\rm{Skel}}(T, \mu) = \left\{ \sigma \in T \setminus \{\rho\} ~|~ \mu(\theta_{\sigma}T) < \infty \right\}. $$
Recall from \cite{DW19} Definition 2.1 that $(T, d, \rho)$ is called \emph{discrete} if it is complete, and $n(\rho, T) + \sum_{\sigma \in {\rm{Bp}}(T) \cap \overline{B}_T(\rho, r)} n(\sigma, T) < \infty$ for all $r > 0$.
\begin{defin}[Discretely infinite measures]
\label{def:DIMeas}
Let $(T, d, \rho)$ be a Polish $\mathbb{R}$-tree, and let $\mu \in \mathscr{M}_{\rm{bf}}(T)$. We say that $\mu$ is \emph{discretely infinite}, and write $\mu \in \mathscr{M}_{\rm{di}}(T)$, if $\mu(T) = +\infty$, and
\begin{enumerate}[$(i)$]
\item ${\rm{Skel}}(T, \mu)$ is a discrete $\mathbb{R}$-tree, and
\item $\mu\left( \bigcup_{\sigma \in \partial B_T(\rho, r) \cap F_{\mu}(T)} \theta_{\sigma}T \right) < \infty$ for all $r > 0$.
\end{enumerate}
\end{defin}
The second condition ensures that when we consider all of the subtrees above level $r$, infinite mass will only be found on the subtrees belonging to points on the skeleton, and \emph{cannot} be diffused across infinitely many subtrees belonging to points in $F_{\mu}(T)$. We denote by $\mathscr{M}_{\rm{di}}(T)$ the space of discretely infinite \emph{or finite} measures on $T$, such that
$$ \mathscr{M}_{\rm{prob}}(T) \subseteq \mathscr{M}_{\rm{fin}}(T) \subseteq \mathscr{M}_{\rm{di}}(T) \subseteq \mathscr{M}_{\rm{bf}}(T) \subseteq \mathscr{M}(T). $$
Observe also as a consequence of condition $(ii)$, that for a discretely infinite measure $\mu$, the skeleton ${\rm{Skel}}(T, \mu)$ will have no leaves. Indeed, ${\rm{Skel}}(T, \mu)$ is the union of a collection of \emph{rays}\footnote{A subset of $T$ is called a \emph{ray} if it contains the root and is isometric to $[0, \infty)$. Visually one can think of a ray as an infinitely long branch in $T$, starting at the root.} in $T$. We see in Proposition \ref{prop:DIMeas1} that the space $(\mathscr{M}_{\rm{di}}(T), d_{\rm{V}})$ is \emph{Lusin}, but \emph{not} complete in general.
\subsection{Mass erasure on infinitely measured $\mathbb{R}$-trees}
\label{subsec:MainME}
Let $(T, d, \rho)$ be a Polish $\mathbb{R}$-tree. Given some $\mu \in \mathscr{M}(T)$, we define for every $h \geq 0$, the \emph{$h$-mass-erased subtree of $T$} by
\begin{equation} 
\label{eq:MEFix}
R_{\mu,h}(T) = \{\rho\} \cup \{ \sigma \in T ~|~ \mu(\theta_{\sigma}T) \geq h \}.
\end{equation}
In \cite{DW26} it was shown that if $\mu \in \mathscr{M}_{\rm{fin}}(T)$, then $R_{\mu,h}(T)$ is a compact discrete $\mathbb{R}$-tree.\footnote{In \cite{DW26} this was done by showing that $R_{\mu,h}(T)$ is of \emph{finite type}, but by Lemma \ref{lemma:Rtree1} this is equivalent to being compact and discrete.}
\begin{lemma}
\label{lemma:MEFix2}
Let $(T, d, \rho)$ be a Polish $\mathbb{R}$-tree, and let $\mu \in \mathscr{M}_{\rm{di}}(T)$. Then $R_{\mu,h}(T)$ is a discrete $\mathbb{R}$-tree for all $h > 0$.
\end{lemma}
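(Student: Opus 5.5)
The idea is to split $R_{\mu,h}(T)$ into the skeleton and finitely many finite-mass pieces above each level, and to use the finite-measure result of \cite{DW26} on the pieces. If $\mu$ is finite, that result gives the claim directly, so assume $\mu(T)=+\infty$.

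\textbf{Step 1: structure.} Since $\mu(\theta_\sigma T)=\infty\geq h$ on the skeleton, we have ${\rm{Skel}}(T,\mu)\subseteq R_{\mu,h}(T)$. The map $\sigma\mapsto\mu(\theta_\sigma T)$ is non-increasing along $[\rho,\sigma']$, so $R_{\mu,h}(T)$ is a rooted subtree. It is closed: if $\sigma_n\to\sigma$ with $\mu(\theta_{\sigma_n}T)\geq h$, then for $\sigma\neq\rho$ the points $\sigma_n\wedge\sigma$ converge to $\sigma$ along $[\rho,\sigma]$. One then uses the right-continuity $\mu(\theta_\sigma T)=\lim\mu(\theta_{\sigma'}T)$ as $\sigma'\uparrow\sigma$ along $[\rho,\sigma]$, which holds because $\theta_\sigma T=\bigcap\theta_{\sigma'}T$ and the masses are finite or the limit is infinite. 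Since $\sigma_n\in\theta_{\sigma_n\wedge\sigma}T$, we get $\mu(\theta_{\sigma_n\wedge\sigma}T)\geq h$. This gives closedness and hence completeness.

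\textbf{Step 2: the skeleton part.} Fix $r>0$. By condition $(i)$, ${\rm{Skel}}(T,\mu)\cap\overline B(\rho,r)$ contains only finitely many branch points, counted with degree. Any branch point of $R_{\mu,h}(T)$ in $\overline B(\rho,r)$ either lies on the skeleton or lies in $F_\mu(T)$. In the first case its degree in $R_{\mu,h}(T)$ could exceed its skeleton degree because of finite-mass fringe subtrees of mass $\geq h$ attached to it. These must be controlled, and Step 3 handles them together with the second case.

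\textbf{Step 3: the finite parts.} Let $S=\partial B(\rho,r)\cap F_\mu(T)$ and $A=\bigcup_{\sigma\in S}\theta_\sigma T$. By condition $(ii)$, $\mu(A)<\infty$. The sets $\theta_\sigma T$ for distinct $\sigma\in S$ are disjoint, so at most $\mu(A)/h$ of the points $\sigma\in S$ satisfy $\mu(\theta_\sigma T)\geq h$. Hence $R_{\mu,h}(T)\cap\partial B(\rho,r)$ is finite. It consists of finitely many skeleton points, which are finite in number by discreteness, and finitely many points of $S$.

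Now consider $R_{\mu,h}(T)\cap\overline B(\rho,r)$. It is contained in the union of the spans of these finitely many level-$r$ points together with $R_{\mu,h}(T)\cap\overline B(\rho,r)$ restricted to subtrees that die before level $r$. To handle the latter, apply the same counting at each smaller level. Alternatively, and preferably, work with the finite measure $\nu=\mu|_{T\setminus\bigcup_{\sigma\in{\rm{Skel}}\cap\partial B(\rho,r)}\theta_\sigma T}$. This measure is finite by condition $(ii)$ together with bounded finiteness of $\overline B(\rho,r)$. The key check is that $R_{\mu,h}(T)\cap\overline B(\rho,r)$ is contained in $R_{\nu,h}(T)$ together with the finitely many skeleton segments $[\rho,\sigma]$, $\sigma\in{\rm{Skel}}\cap\partial B(\rho,r)$. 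To see this, take a point of $R_{\mu,h}$ in the ball off these segments. Its fringe subtree is disjoint from the removed $\theta_\sigma T$, so its $\nu$-mass equals its $\mu$-mass, which is $\geq h$.

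By \cite{DW26}, $R_{\nu,h}(T)$ is compact and discrete. A finite union of a discrete compact tree with finitely many segments has finitely many branch points, each of finite degree, in $\overline B(\rho,r)$. The degrees in $R_{\mu,h}(T)$ are bounded by these, up to finitely many extra directions through the chosen segments. Together with the finiteness of $n(\rho,R_{\mu,h}(T))$, which follows from the same argument, this gives discreteness.

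\textbf{Main obstacle.} The hardest step is Step 3's containment claim. One has to verify carefully that restricting $\mu$ to $\nu$ does not lower the fringe masses of points near the skeleton segments, and that degrees at skeleton branch points remain finite. For the degree at a skeleton point, I would count the finite-mass directions: each carries a fringe subtree of mass $\geq h$ contained in $A$ for slightly larger $r$, so there are at most $\mu(A)/h$ of them.
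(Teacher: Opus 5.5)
Your preferred route in Step 3 is essentially the paper's argument. The paper restricts $\mu$ to ${\rm{Cut}}(T,\bullet^r)$ and adds point masses $h_0\geq h$ at $\bullet^r$, so that $R_{\mu,h}(T)\cap{\rm{Cut}}(T,\bullet^r)=R_{{\rm{cut}}^{h_0}_{\bullet^r}\mu,h}(T)$ holds exactly (Lemma~\ref{lemma:MEFix1}); you drop the point masses and adjoin the finitely many skeleton segments $[\rho,\sigma]$, $\sigma\in\bullet^r$, by hand, and the resulting containment in a finite-type tree works just as well once you invoke \cite{DW26} Lemma 3.12.

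One small repair is needed in Step 1: if $\mu(\theta_{\sigma'}T)=\infty$ for all $\sigma'\in[\rho,\sigma)$ near $\sigma$, continuity from above does not give $\mu(\theta_\sigma T)=\infty$, so instead use that ${\rm{Skel}}(T,\mu)$ is closed, which follows from the completeness built into condition $(i)$.
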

$R_{\mu,h}(T)$ is clearly a closed rooted subtree of $T$. We also observe that ${\rm{Skel}}(T, \mu) \subseteq R_{\mu,h}(T)$ for all $h > 0$, so if $\mu \in \mathscr{M}_{\rm{di}}(T) \setminus \mathscr{M}_{\rm{fin}}(T)$, then $R_{\mu,h}(T)$ is unbounded. The subtrees $\left( R_{\mu, h}(T) \right)_{h \geq 0}$ are seen to be decreasing as $h \rightarrow \infty$, and it will be useful to consider their right limits 
\begin{equation}
\label{eq:RightLim}
R_{\mu,h+}(T) := \overline{\bigcup_{h' > h} R_{\mu,h'}(T)}.
\end{equation}

One may ask why the assumption that $\mu \in \mathscr{M}_{\rm{di}}(T)$ is necessary in order to ensure that the $h$-mass-erased subtree is discrete, and whether it would not suffice to consider $\mu \in \mathscr{M}_{\rm{bf}}(T)$. We illustrate why this condition is necessary in the following example.
\begin{example}
\label{ex:DI}
Let $(T, d, \rho)$ be the (infinitely large) infinite star, consisting of countably many rays which only intersect at the root. Order the rays by $\mathbf{r}_1, \mathbf{r}_2, \dots$. 

Let $\mu_1$ be the boundedly finite measure on $T$ defined by acting on every ray $\mathbf{r}_i$ as the Lebesgue measure $\mathbf{m}_{[i, \infty)}$ started at level $i$. Then $\mu_1 \notin \mathscr{M}_{\rm{di}}(T)$ as its skeleton is not discrete. Moreover, $R_{\mu_1,h}(T) = T$, which is clearly not in any way a nicer subtree than what we started with.

Let $\mu_2$ be the boundedly finite measure on $T$ defined by associating to each ray $\mathbf{r}_i$ a Dirac point mass $\delta_i$ at level $i$. Then $\mu_2 \notin \mathscr{M}_{\rm{di}}(T)$, as above any level $r > 0$, the total mass of its finite-mass subtrees is infinite. Moreover, erasing with any parameter $h < 1$ we see that $R_{\mu_2,h}(T)$ is the infinite star with branches of integer lengths, which is clearly not a discrete $\mathbb{R}$-tree.
\demo
\end{example}
In order to define a notion of \emph{convergence in the sense of mass erasure} for Polish measured $\mathbb{R}$-trees, we may define mass erasure as an operator which acts on measures. Let $(T, d, \rho)$ be a fixed Polish $\mathbb{R}$-tree. In \cite{DW26} mass erasure is defined as an operator $\mathscr{E}_h \colon \mathscr{M}_{\rm{fin}}(T) \rightarrow \mathscr{M}_{\rm{fin}}(T)$ \emph{uniquely} defined by satisfying the mass erasure property
\begin{equation}
\label{eq:MEProperty}
\mathscr{E}_h\mu(\theta_{\sigma}T) = \left( \mu(\theta_{\sigma}T) - h \right)_+ \quad \text{for all } \sigma \in T.
\end{equation}
This operator will, in a heuristic sense, project any mass outside of the mass-erased subtree onto its nearest point in $R_{\mu,h}(T)$ in order to evaluate $\mu(\theta_{\sigma}T)$ correctly. To subtract $h$ mass appropriately from subtrees, mass $h$ is removed from each leaf in $R_{\mu,h}(T)$ and at the same time point masses of suitable multiples of $h$ are added to each branch point of $R_{\mu,h}(T)$. See Figure \ref{fig:MEOperator}. The theory of projections is discussed in Section \ref{subsec:Proj}.

\begin{figure}[t]
\center
\includegraphics[width=.85\textwidth]{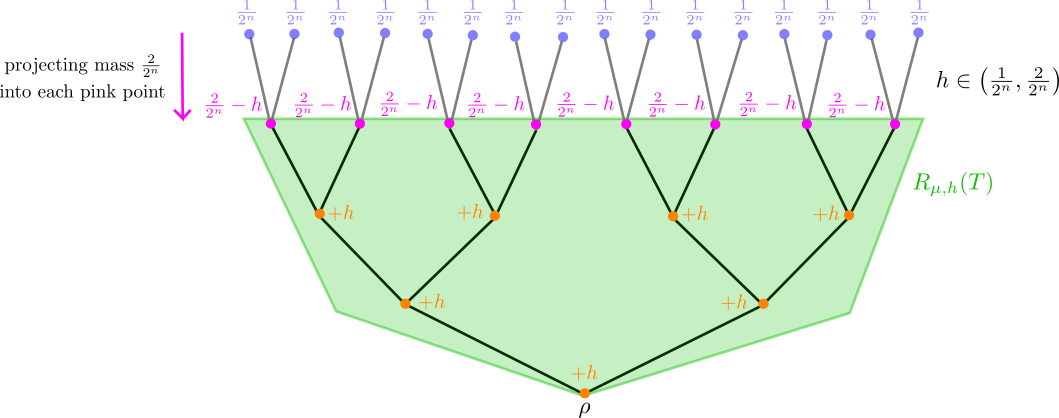}
\caption{Example of how the mass erasure operator acts when $(T, d, \rho)$ is the level-$n$ binary tree, and $\mu \in \mathscr{M}_{\rm{fin}}(T)$ allocates point masses of $\frac{1}{2^n}$ in each of the $2^n$ leaves of $T$.}
\label{fig:MEOperator}
\end{figure}

In the setting of this paper, we may similarly define mass erasure as an operator $\mathscr{E}_h \colon \mathscr{M}_{\rm{di}}(T) \rightarrow \mathscr{M}_{\rm{di}}(T)$ satisfying \eqref{eq:MEProperty}. It immediately follows that ${\rm{Skel}}(T, \mathscr{E}_h\mu) = {\rm{Skel}}(T, \mu)$. When considering discretely infinite measures, the property \eqref{eq:MEProperty} is however not enough to uniquely determine $\mathscr{E}_h$ as mass may be moved around on the skeleton without affecting subtree masses. We will thus define $\mathscr{E}_h$ explicitly to behave like in the finite-mass setting, ensuring the consistency of the operators $(\mathscr{E}_h)_{h \geq 0}$ on $\mathscr{M}_{\rm{fin}}(T)$ and $\mathscr{M}_{\rm{di}}(T)$. See Section \ref{subsec:MEOp} for more details. The mass erasures $(\mathscr{E}_h)_{h \geq 0}$ satisfy similar properties on $\mathscr{M}_{\rm{di}}(T)$ as in the setting of finite mass erasure from \cite{DW26}.
\begin{prop}[Semigroup property]
\label{prop:MEOp3}
Let $(T, d, \rho)$ be a Polish $\mathbb{R}$-tree. Then the family of operators $(\mathscr{E}_h)_{h \geq 0}$ forms a semigroup on $\mathscr{M}_{\rm{di}}(T)$, i.e. $ \mathscr{E}_h \circ \mathscr{E}_{h'} = \mathscr{E}_{h+h'}$ for all $h,h' \geq 0$.
\end{prop}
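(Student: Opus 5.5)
The plan is to reduce to the finite-mass semigroup property of \cite{DW26} by a cut-off argument, using the explicit definition of $\mathscr{E}_h$ on $\mathscr{M}_{\rm{di}}(T)$ from Section \ref{subsec:MEOp}.

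The first step is to fix the structure of that definition. Since $\mathscr{E}_h$ is built to agree with the finite-mass operator, I expect it to decompose as follows. The mass of $\mu$ on ${\rm{Skel}}(T,\mu)$ is left untouched. Each finite-mass fringe component hanging off the skeleton is a subtree $\theta_\sigma T$ with $\sigma \in F_\mu(T)$ and $\sigma$ minimal in that set, so $\sigma$ is a child direction at a point $\tau$ of the skeleton. On each such component, $\mathscr{E}_h$ acts as the finite operator of \cite{DW26}, applied to the finite measure $\mu|_{\theta_\sigma T}$ on the subtree rooted at $\tau$. Any mass that this finite erasure would push onto the root $\tau$ is then relocated to $\tau$. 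By condition $(ii)$ of Definition \ref{def:DIMeas}, only finitely much mass lies in such components above each level, so the resulting measure is boundedly finite. Because ${\rm{Skel}}(T,\mathscr{E}_h\mu)={\rm{Skel}}(T,\mu)$, it is again discretely infinite. The finite case $\mu\in\mathscr{M}_{\rm{fin}}(T)$ is exactly \cite{DW26}, so I assume $\mu(T)=\infty$.

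Next I compare $\mathscr{E}_h\circ\mathscr{E}_{h'}\mu$ with $\mathscr{E}_{h+h'}\mu$. Write $\nu=\mathscr{E}_{h'}\mu$.
\begin{enumerate}[$(a)$]
\item \emph{The skeleton is unchanged.} By \eqref{eq:MEProperty}, a subtree has infinite $\nu$-mass if and only if it has infinite $\mu$-mass. Hence ${\rm{Skel}}(T,\nu)={\rm{Skel}}(T,\mu)$, and the fringe components of $\nu$ are the same subtrees $\theta_\sigma T$ as for $\mu$, possibly with smaller mass.
\item \emph{Agreement on the fringe components.} On each component, $\nu|_{\theta_\sigma T}$ together with the mass deposited at $\tau$ is precisely the finite erasure $\mathscr{E}^{\rm fin}_{h'}$ of the finite measure $\mu|_{\theta_\sigma T}$, viewed on the tree $\{\tau\}\cup\theta_\sigma T$. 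Applying $\mathscr{E}_h$ then acts componentwise by the finite operator. The finite semigroup property of \cite{DW26} gives $\mathscr{E}^{\rm fin}_h\mathscr{E}^{\rm fin}_{h'}=\mathscr{E}^{\rm fin}_{h+h'}$ on each component, including the atom at $\tau$.
\item \emph{Agreement on the skeleton.} The skeleton mass of $\mu$ is preserved by each operator. The atoms added at skeleton points are sums, over the components attached there, of the root atoms of the finite erasures. These add consistently by the componentwise identity in $(b)$, so summing over components gives equality on the skeleton.
\end{enumerate}

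The main obstacle is the bookkeeping at skeleton points $\tau$ where components attach. The skeleton atom created by $\mathscr{E}_{h'}$ at $\tau$ must not be treated by $\mathscr{E}_h$ as fringe mass; it stays on the skeleton and is not re-erased. I must therefore make sure the explicit definition assigns it to the skeleton, and that summing over possibly countably many components attached at a single $\tau$ converges. Both points should follow from condition $(ii)$ of Definition \ref{def:DIMeas} and dominated convergence.

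If the definition in Section \ref{subsec:MEOp} is instead given via cut-offs $\mu^{\restr r}$, I would apply the same reasoning to truncated finite measures. The key fact is that for $\sigma$ below level $r$ and off the skeleton, the value $\mathscr{E}_h\mu(\theta_\sigma T)$ is determined by $\mu$ on the subtree $\theta_\sigma T$. I would then pass to the limit $r\to\infty$ setwise on bounded Borel sets.

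Finally, the boundary cases $h=0$ or $h'=0$ are trivial, since $\mathscr{E}_0$ is the identity.
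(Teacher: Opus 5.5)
Your first step guesses the structure of $\mathscr{E}_h$, and the guess is not the operator defined in \eqref{eq:MEOp}. Steps $(b)$ and $(c)$ therefore verify the semigroup law for a different operator.

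In \eqref{eq:MEOp}, a point $\tau\in{\rm{Skel}}(T,\mu)$ is never a leaf of $R_{\mu,h}(T)$. Its atom is ${\rm{Pr}}_{R_{\mu,h}(T)}\mu(\{\tau\})+h\,(n(\tau,R_{\mu,h}(T))-2)$, with $-1$ in place of $-2$ when $\tau=\rho$. Let $k_\tau$ be the number of components of $\theta_\tau T\setminus\{\tau\}$ meeting the skeleton, and call the others fringe components $C$ at $\tau$. A fringe component meeting $R_{\mu,h}(T)$ has $\mu(C)\geq h$ and raises $n(\tau,R_{\mu,h}(T))$ by one. One missing it has $\mu(C)\leq h$ and is projected entirely onto $\tau$. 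Hence the atom is
\[
\mu(\{\tau\})+\sum_{C}\min(\mu(C),h)+h\,(k_\tau-1).
\]
So the skeleton is not left untouched: for $T=\mathbb{R}$ rooted at $0$ with Lebesgue measure, $\mathscr{E}_h\mu=\mu+h\delta_\rho$. Nor is the fringe contribution a root atom of a finite erasure. The erasure of $\mu|_C$ on $\{\tau\}\cup C$ rooted at $\tau$ never charges $\tau$: the root has out-degree one in the erased tree, and the erasure vanishes if $\mu(C)\leq h$. By contrast, $\mathscr{E}_h$ deposits $\min(\mu(C),h)$ at $\tau$, so $\{\tau\}\cup C$ keeps total mass $\mu(C)$.

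The operator you describe still satisfies \eqref{eq:MEProperty}, since all subtrees rooted on the skeleton have infinite mass. This is exactly the non-uniqueness the paper flags before defining $\mathscr{E}_h$ explicitly, so a semigroup law for your operator says nothing about $\mathscr{E}_h$. In particular the identity claimed in $(b)$ is false. The bookkeeping at skeleton points that you call the main obstacle is where the content lies, and it is not carried out.

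With the corrected atom formula, your componentwise strategy does go through, and it would be a genuinely different route.
\begin{itemize}
\item On each fringe component, $(\mathscr{E}_h\mu)|_C$ is the finite erasure of $\mu|_C$ (compare the computation in the proof of Proposition \ref{prop:MEOp1}), so \cite{DW26} handles $C$.
\item The skeleton, the components and $k_\tau$ are unchanged by $\mathscr{E}_{h'}$, and $\mathscr{E}_{h'}\mu(C)=(\mu(C)-h')_+$ by Proposition \ref{prop:MEOp2}$(ii)$.
\item The skeleton atoms compose correctly because $h(k_\tau-1)$ is additive in $h$ and $\min(x,h')+\min((x-h')_+,h)=\min(x,h+h')$ for $x\geq 0$. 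Summability follows from Definition \ref{def:DIMeas}$(ii)$.
\end{itemize}

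The paper never reduces to the finite case here. It notes $R_{\mathscr{E}_{h'}\mu,h}(T)=R_{\mu,h+h'}(T)$ and computes ${\rm{Pr}}_{R_{\mu,h+h'}(T)}[\mathscr{E}_{h'}\mu]$ on $R_{\mu,h+h'}(T)\setminus{\rm{Lf}}(R_{\mu,h+h'}(T))$ directly. At closure points $\sigma^*$ of components of $T\setminus R_{\mu,h+h'}(T)$ it uses $n(\sigma^*,R_{\mu,h'}(T))=n(\sigma^*,R_{\mu,h+h'}(T))+|I^*_{h'}|$, and it treats Borel sets avoiding these points separately. Skeleton and fringe points are handled uniformly.

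Finally, your fallback via $\mu^{\restr r}$ uses the wrong truncation, because erasure does not commute with restriction to balls. If $\mu(\theta_\sigma T)=h$ and $\mu|_{\theta_\sigma T}$ has unbounded support, then $\sigma\in R_{\mu,h}(T)$ but $\sigma\notin R_{\mu^{\restr r},h}(T)$ for every $r$. The truncation compatible with erasure is ${\rm{cut}}^{h_0}_{\bullet^r}\mu$ with $h_0\geq h+h'$. For $\mu$-nice $r$ it satisfies $\mathscr{E}_{h'}{\rm{cut}}^{h_0}_{\bullet^r}\mu=(\mathscr{E}_{h'}\mu)|_{{\rm{Cut}}(T,\bullet^r)}+(h_0-h')\sum_{\boldsymbol{\sigma}\in\bullet^r}\delta_{\boldsymbol{\sigma}}$.
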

\begin{lemma}[Approximation via mass erasures]
\label{lemma:MEOp5b}
Let $(T, d, \rho)$ be a Polish $\mathbb{R}$-tree, and let $\mu \in \mathscr{M}_{\rm{di}}(T)$. If $h_p \searrow 0$ is a decreasing sequence of numbers in $(0, \infty)$, then $\mathscr{E}_{h_p}\mu \stackrel{\rm{vg}}{\rightarrow} \mu$ as $p \rightarrow \infty$.
\end{lemma}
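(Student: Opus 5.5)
We prove that $\mathscr{E}_{h_p}\mu^{\restr r}$, restricted to balls, converges weakly for a sequence of radii $r_k \to \infty$. The approach reduces to the finite setting of \cite{DW26}, where $\mathscr{E}_h\nu \stackrel{\rm{wk}}{\rightarrow} \nu$ as $h\searrow 0$ for $\nu \in \mathscr{M}_{\rm{fin}}(T)$. We cut off at a level using condition $(ii)$ of Definition \ref{def:DIMeas} and the discreteness of ${\rm{Skel}}(T,\mu)$.

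\textbf{Choosing radii.} Fix $r > 0$. Choose radii $r_k \to \infty$ such that $\mu(\partial B_T(\rho, r_k)) = 0$, and also such that no branch point of ${\rm{Skel}}(T,\mu)$ lies at height exactly $r_k$. This is possible because there are only countably many atoms of $r \mapsto \mu(\overline{B}(\rho,r))$, and the skeleton is discrete. Since the skeleton has no leaves and is discrete, $\partial B_T(\rho,r_k) \cap {\rm{Skel}}(T,\mu)$ is a finite set $\{s_1,\dots,s_m\}$.

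\textbf{Decomposing the tree.} Write $T$ as the union of three pieces:
\begin{itemize}
\item the part $\overline{B}(\rho,r_k)$;
\item the finite-mass fringe $\bigcup_{\sigma \in \partial B(\rho,r_k)\cap F_\mu(T)}\theta_\sigma T$, which has finite mass by $(ii)$;
\item the finitely many subtrees $\theta_{s_j}T$.
\end{itemize}
Let $\nu_k := \mu|_{T \setminus \bigcup_j \theta_{s_j}T^{\circ}}$. This is a finite measure, by boundedness of the ball together with $(ii)$. Add to it atoms $\kappa_j\delta_{s_j}$ of a large finite weight $\kappa_j$. Call the resulting measure $\tilde\nu_k$.

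\textbf{Key consistency claim.} For all $h$ smaller than $\min_j \kappa_j$, we claim
$$(\mathscr{E}_h\mu)|_{\overline{B}(\rho,r_k)} = (\mathscr{E}_h\tilde\nu_k)|_{\overline{B}(\rho,r_k)}.$$
The justification uses the mass erasure property \eqref{eq:MEProperty}. For $\sigma$ in the ball not on $[\rho,s_j]$, subtree masses agree. On the skeleton, both measures give the same local structure: the explicit definition of $\mathscr{E}_h$ in Section \ref{subsec:MEOp} coincides with the finite-mass one away from the skeleton, and on the skeleton it does not move mass.

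This is the step needing care. One must check from the explicit construction of $\mathscr{E}_h$ on $\mathscr{M}_{\rm{di}}(T)$ that the following hold:
\begin{itemize}
\item the measure inside the closed ball depends only on the finite-mass fringe data inside the ball, together with the fact that the $s_j$ carry mass at least $h$;
\item branch-point atoms created at skeleton branch points inside the ball match those produced by the large atoms $\kappa_j\delta_{s_j}$;
\item no leaf of $R_{\tilde\nu_k,h}(T)$ occurs at $s_j$ when $h<\kappa_j$.
\end{itemize}
I expect this locality/consistency verification to be the main obstacle. I would first try to phrase the definition in Section \ref{subsec:MEOp} as a sum over the finitely many skeleton branches plus finite fringe pieces and compare term by term.

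\textbf{Conclusion.} Given the claim, the finite result of \cite{DW26} gives $\mathscr{E}_{h_p}\tilde\nu_k \stackrel{\rm{wk}}{\rightarrow} \tilde\nu_k$. Restricting to $\overline{B}(\rho,r_k)$ preserves weak convergence, because the limit $\tilde\nu_k$ charges neither $\partial B(\rho,r_k)$ except at the points $s_j$, nor the atoms. The atoms need a small additional argument: one either shifts $r_k$ slightly so that the restriction excludes the $s_j$, or verifies directly that the mass near $s_j$ converges.

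Since $\tilde\nu_k|_{\overline{B}(\rho,r_k)}$ agrees with $\mu^{\restr r_k}$ up to the atoms on the boundary, the cleanest fix is to place the $s_j$ at level $r_k+1$ and restrict to $r_k$. Then $\mathscr{E}_{h_p}\mu^{\restr r_k}$ converges weakly to $\mu^{\restr r_k}$ for all $k$, which is vague convergence by definition.
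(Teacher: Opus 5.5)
Your final argument is the paper's proof. Your $\tilde\nu_k$ is the cut-off measure ${\rm{cut}}_{\bullet^{r}}\mu$ with atom weights $\kappa_j$ in place of $h_0$. You invoke the finite approximation result of \cite{DW26} (Proposition 3.20) and compare $\mathscr{E}_{h_p}\mu$ with $\mathscr{E}_{h_p}\tilde\nu_k$ on a ball strictly inside the cut level. The locality claim you defer is exactly Lemma \ref{lemma:MEOp5}, which the paper derives from Lemmas \ref{lemma:MEFix1}, \ref{lemma:MEFix8} and \ref{lemma:MEOp4}.

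However, that claim is false in the form you first state it, on $\overline{B}_T(\rho,r_k)$ with the $s_j$ lying on $\partial B_T(\rho,r_k)$. Your third bullet is also wrong, not merely unverified. Since $\tilde\nu_k(\theta_{s_j}T\setminus\{s_j\})=0$, every $s_j$ is a leaf of $R_{\tilde\nu_k,h}(T)$. Hence $\mathscr{E}_h\tilde\nu_k$ carries an atom $(\kappa_j-h)\delta_{s_j}$ that $\mathscr{E}_h\mu$ lacks. Already for $T=[0,\infty)$, $\mu$ Lebesgue and $r_k=1$, one has $\mathscr{E}_h\mu=\mu$ but $\mathscr{E}_h\tilde\nu_k=\mu|_{[0,1]}+(\kappa_1-h)\delta_1$.

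The identity holds only on $\overline{B}_T(\rho,\tilde r)$ with $\tilde r$ strictly below the cut level $r$, and for $h\le\min_j\kappa_j$. There the trees $R_{\mu,h}(T)$ and $R_{\mu,h}(T)\cap{\rm{Cut}}(T,\bullet^r)=R_{\tilde\nu_k,h}(T)$ have the same points, degrees and leaves. The complementary components attached to points below level $r$ are the same for both trees. They lie in ${\rm{Cut}}(T,\bullet^r)\setminus\bullet^r$, where $\tilde\nu_k$ and $\mu$ agree. So \eqref{eq:MEOp} and \eqref{eq:MEOpFin} match term by term on that ball.

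Your closing fix (cut at level $r_k+1$, restrict to $\overline{B}_T(\rho,r_k)$ with $\mu(\partial B_T(\rho,r_k))=0$) is therefore essential, not cosmetic. It is precisely the paper's choice of $\tilde r<r$, and you should state the consistency claim in that form from the outset.
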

Let $(T, d, \rho)$ be a fixed Polish $\mathbb{R}$-tree. Recall from Duquesne and Winkel \cite{DW26} Definition 3.22, that \emph{(weak) convergence in the sense of mass erasure} for finite measures $(\mu_n)_{n \in \mathbb{N}}$ and $\mu$ on $T$ is defined to mean that $\mathscr{E}_h\mu_n \stackrel{\rm{wk}}{\rightarrow} \mathscr{E}_h\mu$ as $n \rightarrow \infty$ for all $h > 0$, and we will denote this convergence by $\mu_n \stackrel{\rm{me}}{\rightarrow} \mu$. This notion of convergence may be metrized and yields a suitable topology on $\mathscr{M}_{\rm{fin}}(T)$ which is strictly weaker than the topology of weak convergence. The space $\mathscr{M}_{\rm{fin}}(T)$ equipped with the topology of (weak) convergence in the sense of mass erasure is \emph{not} complete, and so a completion space is defined in \cite{DW26} using so-called \emph{bordifications}. We refer to \cite{DW26} Sections 2.3 and 3.4 for a more in-depth discussion of this. In the setting of mass erasure with discretely infinite measures, we may now define the notion of \emph{vague convergence in the sense of mass erasure}.
\begin{defin}[Vague convergence in the sense of mass erasure]
\label{def:MEConv}
Let $(T, d, \rho)$ be a Polish $\mathbb{R}$-tree, and let $\mu, \mu_n \in \mathscr{M}_{\rm{di}}(T)$, $n \in \mathbb{N}$. We say that \emph{$(\mu_n)_{n \in \mathbb{N}}$ converges to $\mu$ vaguely in the sense of mass erasure}, and write $\mu_n \stackrel{\rm{vme}}{\rightarrow} \mu$, if $\mathscr{E}_h\mu_n \stackrel{\rm{vg}}{\rightarrow} \mathscr{E}_h\mu$ as $n \rightarrow \infty$ for all $h > 0$.
\end{defin}
If $\mu$ is a finite measure, $\mu_n \stackrel{\rm{vme}}{\rightarrow} \mu$, and $(\mathscr{E}_h\mu_n)_{n \in \mathbb{N}}$ is b-tight on $T$ for all $h > 0$, then it clearly follows that $\mu_n \stackrel{\rm{me}}{\rightarrow} \mu$ as $n \rightarrow \infty$ by Lemma \ref{lemma:BFMeas7}.

When considering applications in probability theory, we will usually be interested in studying \emph{random} Polish measured $\mathbb{R}$-trees, meaning that not only the measures but also the underlying metric space structures are chosen under some form of randomness. Studying the convergence of sequences of such random measured metric spaces requires us to allow for comparison of measures which are defined on \emph{different} $\mathbb{R}$-trees. The natural setting for considering such convergence is to look at appropriate spaces of \emph{isometry classes} of Polish measured $\mathbb{R}$-trees, equipped with appropriate Gromov-like topologies (as introduced and discussed in Section \ref{subsec:Gromov}). We may then define a notion of \emph{(Gromov--Prokhorov/Gromov-vague) convergence in the sense of mass erasure} in the extended setting of isometry classes, similarly to what we did here in the setting of a fixed $\mathbb{R}$-tree. For the remainder of this section, all of our remaining results will be stated in the more general `Gromov'-setting. Note, however, that corresponding statements in the setting of fixed $\mathbb{R}$-trees may be found in Section \ref{sec:4}. 
\subsection{Gromov-vague convergence in the sense of mass erasure}
\label{subsec:MainConvME}
Two Polish measured $\mathbb{R}$-trees $(T, d, \rho, \mu)$ and $(T', d', \rho', \mu')$ are said to be isometric if there exists a bijective isometry $\varphi \colon T \rightarrow T'$ with $\varphi(\rho) = \rho'$ and $\mu \circ \varphi^{-1} = \mu'$, where $\mu \circ \varphi^{-1}$ denotes the pushforward of $\mu$ under $\varphi$. We will be considering the isometry classes of \emph{minimal} Polish measured $\mathbb{R}$-trees.
\begin{defin}[Minimal $\mathbb{R}$-trees]
\label{def:MinRTree}
Let $(T, d, \rho, \mu)$ be a Polish $\mathbb{R}$-tree equipped with a Borel measure. We say that it is \emph{minimal} if $ T = \overline{\text{\emph{Span}}(\text{\emph{supp}}(\mu))}$.
\end{defin}
In other words, minimality ensures that the tree does not contain fringe subtrees with mass zero, which are indistinguishable under the Gromov--Prokhorov distance. We will denote by $\mathbb{T}_{\rm{min}}^{\rm{fin}}$, $\mathbb{T}_{\rm{min}}^{\rm{di}}$ and $\mathbb{T}_{\rm{min}}^{\rm{bf}}$ the spaces of isometry classes of minimal Polish $\mathbb{R}$-trees $(T, d, \rho)$ equipped with measures from $\mathscr{M}_{\rm{fin}}(T)$, $\mathscr{M}_{\rm{di}}(T)$, and $\mathscr{M}_{\rm{bf}}(T)$, respectively. We denote the isometry classes in these spaces by $\boldsymbol{\mu} = [T, d, \rho, \mu]$. Letting $d_{\rm{GP}}$ denote the Gromov--Prokhorov distance and $d_{\rm{GV}}$ the Gromov-vague distance as recalled and discussed in Section \ref{subsec:Gromov}, the spaces $(\mathbb{T}_{\rm{min}}^{\rm{fin}}, d_{\rm{GP}})$ and $(\mathbb{T}_{\rm{min}}^{\rm{bf}}, d_{\rm{GV}})$ are Polish, whereas $(\mathbb{T}_{\rm{min}}^{\rm{di}}, d_{\rm{GV}})$ is Lusin.

Let $\boldsymbol{\mu} = [T, d, \rho, \mu]$ be an isometry class in $\mathbb{T}_{\rm{min}}^{\rm{di}}$. We may want to characterize the \emph{minimal} Polish $\mathbb{R}$-tree generated by the mass erasure $\mathscr{E}_h\mu$. Recall the definition of the right limit of a mass-erased subtree in \eqref{eq:RightLim}. The following result follows from \cite{DW26} Proposition 3.19$(iv)$.
\begin{lemma}
\label{lemma:MEOp2b}
Let $(T, d, \rho)$ be a Polish $\mathbb{R}$-tree, $\mu \in \mathscr{M}_{\rm{di}}(T)$, and $h > 0$. Then $\text{\emph{Span}}(\text{\emph{supp}}(\mathscr{E}_h\mu)) = R_{\mu,h+}(T)$.
\end{lemma}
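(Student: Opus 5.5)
The plan is to reduce to the finite-measure case, where \cite{DW26} Proposition 3.19$(iv)$ applies, by a cut-off at large radii. We will also use the locality of $\mathscr{E}_h$ on the fringe structure. For $r>0$, consider the finite measure
$$\mu_r := \mu\big|_{T_r}, \qquad T_r := \overline{B}_T(\rho,r)\cup \bigcup_{\sigma\in\partial B_T(\rho,r)\cap F_\mu(T)}\theta_\sigma T.$$
By Definition \ref{def:DIMeas}$(ii)$ and bounded finiteness, $\mu_r$ is finite. Alternatively, one could truncate only the skeleton rays beyond level $r$.

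The key observation is that for $\sigma\in T$ with $d(\rho,\sigma)<r$ off the skeleton, $\mu(\theta_\sigma T)=\mu_r(\theta_\sigma T)$. This holds because $\theta_\sigma T\subseteq T_r$ when $\sigma\in F_\mu(T)$. Indeed, every point of $\theta_\sigma T$ beyond level $r$ lies above some point of $\partial B(\rho,r)$, which lies in $\theta_\sigma T$ and hence also has finite mass above it. Consequently, inside the open ball $B(\rho,r)$ we have $R_{\mu,h}(T)\cap B(\rho,r) = \big(R_{\mu_r,h}(T)\cup {\rm Skel}(T,\mu)\big)\cap B(\rho,r)$. Here the skeleton part lies in both sides, provided $r$ is large enough relative to the ball considered and the skeleton branches inside have infinite $\mu$-mass. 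The same identification holds for the right limits $R_{\mu,h+}$.

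Next I would use the explicit construction of $\mathscr{E}_h$ on $\mathscr{M}_{\rm di}(T)$ from Section \ref{subsec:MEOp}. That construction is designed to agree with the finite construction of \cite{DW26}: projecting fringe mass onto $R_{\mu,h}(T)$, removing $h$ at leaves and adding multiples of $h$ at branch points. I would therefore check that $\mathscr{E}_h\mu$ and $\mathscr{E}_h\mu_{r'}$ coincide on $B(\rho,r)$ away from the skeleton, for $r'\gg r$. On the skeleton, $\mathscr{E}_h\mu$ keeps the projected mass, possibly with branch-point atoms. Applying \cite{DW26} Proposition 3.19$(iv)$ to $\mu_{r'}$ gives ${\rm Span}({\rm supp}(\mathscr{E}_h\mu_{r'}))=R_{\mu_{r'},h+}(T)$. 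Intersecting with $B(\rho,r)$ and letting $r\to\infty$ yields the identity away from the skeleton: both sides are rooted subtrees determined by their traces on balls, and Span commutes with this exhaustion.

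It remains to show ${\rm Skel}(T,\mu)\subseteq{\rm Span}({\rm supp}(\mathscr{E}_h\mu))$. The inclusion ${\rm Skel}(T,\mu)\subseteq R_{\mu,h+}(T)$ is clear. Since ${\rm Skel}(T,\mathscr{E}_h\mu)={\rm Skel}(T,\mu)$, every skeleton point $\sigma$ has $\mathscr{E}_h\mu(\theta_\sigma T)=\infty$, so ${\rm supp}(\mathscr{E}_h\mu)\cap\theta_\sigma T\neq\emptyset$. Hence $\sigma$ lies on a segment $[\rho,x]$ with $x$ in the support. Conversely, the support is contained in $R_{\mu,h+}(T)$ by the finite case plus the skeleton. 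I expect the main obstacle to be the bookkeeping near the cut-off: the truncation $\mu_{r'}$ creates artificial leaves on the skeleton at level $r'$, which carry the $-h$ correction in the finite construction. One must verify that these effects stay outside $B(\rho,r)$ and do not alter branch-point atoms inside. This needs $r'$ chosen beyond the last branch point of the discrete skeleton in $\overline B(\rho,r+1)$, which is possible by Definition \ref{def:DIMeas}$(i)$.
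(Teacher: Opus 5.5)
\noindent The paper gives no argument beyond the pointer to \cite{DW26} Proposition~3.19$(iv)$, so your plan is reasonable: reduce to that result by a cut-off and handle the skeleton by the positive-mass argument. The step that fails as justified is ``intersecting with $B(\rho,r)$ and letting $r\to\infty$ yields the identity away from the skeleton''. Agreement of $\mathscr{E}_h\mu$ and $\mathscr{E}_h\mu_{r'}$ on $B(\rho,r)$ determines ${\rm supp}(\mathscr{E}_h\mu)\cap B(\rho,r)$, but not ${\rm Span}({\rm supp}(\mathscr{E}_h\mu))\cap B(\rho,r)$, because the span is not local. A point $x\in B(\rho,r)\cap F_\mu(T)$ may lie in ${\rm Span}({\rm supp}(\mathscr{E}_h\mu_{r'}))=R_{\mu_{r'},h+}(T)$ only through a support point $z\in\theta_xT$ outside $B(\rho,r)$; think of $\mu|_{\theta_xT}$ being a single atom of mass $>h$ above level $r$. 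There you have not compared the two measures. Enlarging the ball does not close the loop, since $r'$ changes with the ball and \cite{DW26} only gives some witness for the new $\mu_{r'}$, whose location is again uncontrolled. The exhaustion ${\rm Span}(A)=\bigcup_r{\rm Span}(A\cap B(\rho,r))$ yields only ${\rm Span}({\rm supp}(\mathscr{E}_h\mu))\subseteq R_{\mu,h+}(T)$. The inclusion $R_{\mu,h+}(T)\setminus{\rm Skel}(T,\mu)\subseteq{\rm Span}({\rm supp}(\mathscr{E}_h\mu))$ is left unproved.

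Your requirement on $r'$ in the last sentence is also not the right one. You need $\mu(\theta_{\boldsymbol{\sigma}}T\cap T_{r'})>h$ for the finitely many $\boldsymbol{\sigma}\in\bullet^{r}(\mu)$, not that $r'$ exceeds the last skeleton branch point. For Lebesgue measure on $T=[0,\infty)$ there are no branch points, yet $R_{\mu_{r'},h}(T)=[0,r'-h]$.

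There are two ways to repair the gap. The first is a locality statement beyond balls. For $x\in R_{\mu,h}(T)\cap F_\mu(T)$ with $d(\rho,x)<r'$ you have $\theta_xT\subseteq T_{r'}$ and $\mu_{r'}|_{\theta_xT}=\mu|_{\theta_xT}$. Exactly as in the proof of Proposition~\ref{prop:MEOp1}, this gives $(\mathscr{E}_h\mu)|_{\theta_xT}=\mathscr{E}_h(\mu|_{\theta_xT})=(\mathscr{E}_h\mu_{r'})|_{\theta_xT}$. Since $\theta_xT\setminus\{x\}$ is open, any witness $z\neq x$ for $\mathscr{E}_h\mu_{r'}$ is then a witness for $\mathscr{E}_h\mu$, and $z=x$ is covered by your ball agreement.

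The second is to drop the cut-off altogether. Your skeleton argument works verbatim for every $x$ with $\mu(\theta_xT)>h$: by Proposition~\ref{prop:MEOp1}, $\mathscr{E}_h\mu(\theta_xT)=(\mu(\theta_xT)-h)_+>0$, which forces a support point in $\theta_xT$. This gives $\bigcup_{h'>h}R_{\mu,h'}(T)\subseteq{\rm Span}({\rm supp}(\mathscr{E}_h\mu))$. Passing to the closure is then enough, because ${\rm supp}(\mathscr{E}_h\mu)$ is a closed subset of the boundedly-finite-type tree $R_{\mu,h}(T)$ (Lemma~\ref{lemma:MEFix2}). The span of such a set is closed, since each ball contains only finitely many branch points. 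The reverse inclusion is equally direct: every component $C$ of $T\setminus R_{\mu,h+}(T)$ is a countable union of sets $\theta_{\sigma_m}T$ with $\mu(\theta_{\sigma_m}T)\le h$. Each such set is $\mathscr{E}_h\mu$-null by the mass erasure property, so $\mathscr{E}_h\mu(C)=0$.
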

As $R_{\mu,h+}(T)$ is closed by construction, we must have that $[R_{\mu,h+}(T), d, \rho, \mathscr{E}_h\mu] \in \mathbb{T}_{\rm{min}}^{\rm{di}}$. We note moreover, that if $(T, d, \rho, \mu)$ and $(T', d', \rho', \mu')$ are representatives of the same isometry class in $\mathbb{T}_{\rm{min}}^{\rm{di}}$, then the representatives $(R_{\mu,h+}(T), d, \rho, \mathscr{E}_h\mu)$ and $(R_{\mu',h+}(T), d', \rho', \mathscr{E}_h\mu')$ also belong to the same isometry class in $\mathbb{T}_{\rm{min}}^{\rm{di}}$. This allows us to make sense of the following definition.
\begin{defin}[Mass erasure on $\mathbb{T}_{\rm{min}}^{\rm{di}}$]
\label{def:MEIsom}
For every $h > 0$, we define the mass erasure operator $\mathscr{E}_h \colon \mathbb{T}_{\rm{min}}^{\rm{di}} \rightarrow \mathbb{T}_{\rm{min}}^{\rm{di}}$ by setting $\mathscr{E}_h\boldsymbol{\mu} := [R_{\mu,h+}(T), d, \rho, \mathscr{E}_h\mu]$ for every $\boldsymbol{\mu} = [T, d, \rho, \mu] \in \mathbb{T}_{\rm{min}}^{\rm{di}}$.
\end{defin}
The semigroup property in $\mathbb{T}_{\rm{min}}^{\rm{di}}$, $\mathscr{E}_h \circ \mathscr{E}_{h'} = \mathscr{E}_{h + h'}$ for all $h,h' \geq 0$, now follows immediately from Proposition \ref{prop:MEOp3}.

In \cite{DW26} \emph{(Gromov--Prokhorov) convergence in the sense of mass erasure} is defined on $\mathbb{T}_{\rm{min}}^{\rm{fin}}$ to mean that $\mathscr{E}_h \boldsymbol{\mu}_n \stackrel{\rm{GP}}{\rightarrow} \mathscr{E}_h \boldsymbol{\mu}$ for all $h > 0$, and we write $\boldsymbol{\mu}_n \stackrel{\rm{GPme}}{\rightarrow} \boldsymbol{\mu}$. It is shown in \cite{DW26} that the mass erasure operator $\mathscr{E}_h \colon \mathbb{T}_{\rm{min}}^{\rm{fin}} \rightarrow \mathbb{T}_{\rm{min}}^{\rm{fin}}$ is Gromov--Prokhorov continuous, and moreover (Gromov--Prokhorov) convergence in the sense of mass erasure is shown to be strictly weaker than Gromov--Prokhorov convergence on $\mathbb{T}_{\rm{min}}^{\rm{fin}}$. Note that this also holds under variations in the erasure parameter $h$ -- indeed, the map $(h, \boldsymbol{\mu}) \mapsto \mathscr{E}_h\boldsymbol{\mu}$ is sequentially continuous in the Gromov--Prokhorov sense. We can similarly define a notion of \emph{(Gromov-vague) convergence in the sense of mass erasure} on $\mathbb{T}_{\rm{min}}^{\rm{di}}$. 
\begin{defin}[Gromov-vague convergence in the sense of mass erasure]
Let $\boldsymbol{\mu} = [T, d, \rho, \mu]$ and $\boldsymbol{\mu}_n = [T_n, d_n, \rho_n, \mu_n]$, $n \in \mathbb{N}$, be isometry classes in $\mathbb{T}_{\rm{min}}^{\rm{di}}$. We say that $(\boldsymbol{\mu}_n)_{n \in \mathbb{N}}$ converges to $\boldsymbol{\mu}$ \emph{Gromov-vaguely in the sense of mass erasure}, and write $\boldsymbol{\mu}_n \stackrel{\rm{GVme}}{\rightarrow} \boldsymbol{\mu}$, if $\mathscr{E}_h\boldsymbol{\mu}_n \stackrel{\rm{GV}}{\rightarrow} \mathscr{E}_h\boldsymbol{\mu}$ as $n \rightarrow \infty$ for all $h > 0$.
\end{defin}
Contrary to the Gromov--Prokhorov setting, the mass erasure operator $\mathscr{E}_h \colon \mathbb{T}_{\rm{min}}^{\rm{di}} \rightarrow \mathbb{T}_{\rm{min}}^{\rm{di}}$ is \emph{not} Gromov-vaguely continuous. Gromov-vague convergence is also not enough to ensure the Gromov--Hausdorff convergence of mass-erased subtrees. This is illustrated in the example below, which is for simplicity stated in the setting of a fixed $\mathbb{R}$-tree.
\begin{example}
\label{ex:MEConv}
Let $T = [0, \infty)$ be equipped with the standard Euclidean norm, and let $\rho = 0$. Set $\mu_n = \delta_1 + \delta_n$. Then clearly $\mu_n \stackrel{\rm{vg}}{\rightarrow} \mu$ where $\mu = \delta_1$. Let $h \in (0,1)$. Then $\mathscr{E}_h\mu_n = \delta_1 + (1-h)\delta_n \stackrel{\rm{vg}}{\rightarrow} \delta_1$, yet $\mathscr{E}_h\mu = (1-h)\delta_1$. So clearly $\mathscr{E}_h\mu_n \stackrel{\rm{vg}}{\nrightarrow} \mathscr{E}_h\mu$.

We moreover have for all $h \in (0,1)$ that $R_{\mu_n, h}(T) = [0,n]$ but clearly $R_{\mu,h}(T) = [0,1]$, and so $d_H(R_{\mu_n, h}(T), R_{\mu,h}(T)) = n-1$ for all $n \in \mathbb{N}$.
\demo
\end{example}
To overcome this lack of sequential continuity for the mass erasure operator $\mathscr{E}_h$, we will need to impose an appropriate b-tightness condition on the sequence considered, to rule out examples such as the above. For a sequence of isometry classes $\boldsymbol{\mu}_n = [T_n, d_n, \rho_n, \mu_n] \in \mathbb{T}_{\rm{min}}^{\rm{di}}$, $n \in \mathbb{N}$, we will say that $(\mu_n)_{n \in \mathbb{N}}$ is b-tight if $\lim_{R \rightarrow \infty} \limsup_{n \rightarrow \infty} \mu_n\left( T_n \setminus \overline{B}_{T_n}(\rho_n, R) \right) = 0$. This notion extends to transformations and restrictions of the representative measures $(\mu_n)_{n \in \mathbb{N}}$. I.e. we are looking for a statement of the form \emph{``if $\boldsymbol{\mu}_n \stackrel{\rm{GV}}{\rightarrow} \boldsymbol{\mu}$ and appropriate b-tightness conditions are imposed, then $\boldsymbol{\mu}_n \stackrel{\rm{GVme}}{\rightarrow} \boldsymbol{\mu}$"}. Our main method of addressing this will be using the \emph{cut-off method}, as discussed in Section \ref{subsec:MainTechnique}. For some isometry class $\boldsymbol{\mu} = [T, d, \rho, \mu] \in \mathbb{T}_{\rm{min}}^{\rm{di}}$ and some $r > 0$, the `$\mu$-intrinsic' \emph{cut-off points}, $\bullet^r = \bullet^r(\mu)$, at level $r$, will be the points in $\partial B_T(\rho, r)$ which have infinite $\mu$-mass above them. We then obtain a \emph{cut-off tree}, ${\rm{Cut}}(T, \bullet^r)$, by erasing all subtrees above points in $\bullet^r$ from $T$. The associated \emph{cut-off measure}, ${\rm{cut}}_{\bullet^r}\mu = {\rm{cut}}_{\bullet^r}^{h_0} \mu$ is obtained by restricting $\mu$ to ${\rm{Cut}}(T, \bullet^r)$, and assigning point masses of size $h_0$ to each point in $\bullet^r$. One may replace the `$\mu$-intrinsic' cut-off points $\bullet^r = \bullet^r(\mu)$ by more \emph{general} cut-off points, i.e. finite collections of points in $\partial B_T(\rho, r)$, typically denoted by $\blackdiamond^r$. This terminology may of course similarly be applied to the isometry classes $\boldsymbol{\mu}_n = [T_n, d_n, \rho_n, \mu_n] \in \mathbb{T}_{\rm{min}}^{\rm{di}}$.

For some sequence $\boldsymbol{\mu}_n = [T_n, d_n, \rho_n, \mu_n] \in \mathbb{T}_{\rm{min}}^{\rm{di}}$, $n \in \mathbb{N}$, we say for each $n$ that a collection $\blackdiamond_n^r$ of (general) cut-off points in $\partial B_{T_n}(\rho_n, r)$ is \emph{$\bullet^r$-compatible} if $|{\blackdiamond_n^r}| = |{\bullet^r}| \wedge |{\partial B_{T_n}(\rho_n, r)}|$. The use of compatible cut-off points is useful in cases where finite mass in $\boldsymbol{\mu}_n$ grows to infinity.
\begin{lemma}[Approximation via mass erasures]
\label{lemma:MEGV18}
Let $\boldsymbol{\mu} = [T, d, \rho, \mu] \in \mathbb{T}_{\rm{min}}^{\rm{di}}$. Then for a decreasing sequence of numbers $h_p \searrow 0$,
\begin{enumerate}[$(i)$]
\item $\mathscr{E}_{h_p}\boldsymbol{\mu} \stackrel{\rm{GV}}{\rightarrow} \boldsymbol{\mu}$ as $p \rightarrow \infty$, and $\left( \left(\mathscr{E}_{h_p}\mu\right)|_{{\rm{Cut}}(T, \bullet^r(\mu))} \right)_{p \in \mathbb{N}}$ is b-tight.
\item whenever $\boldsymbol{\nu} \in \mathbb{T}_{\rm{min}}^{\rm{di}}$ with $\mathscr{E}_{h_p}\boldsymbol{\mu} = \mathscr{E}_{h_p}\boldsymbol{\nu}$ for all $p$, it follows that $\boldsymbol{\mu} = \boldsymbol{\nu}$.
\end{enumerate}
\end{lemma}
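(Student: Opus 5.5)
For (i), I will work with the fixed representative $(T,d,\rho,\mu)$ and show that the Gromov-vague distance between $\mathscr{E}_{h_p}\boldsymbol{\mu}$ and $\boldsymbol{\mu}$ vanishes. Since $R_{\mu,h_p+}(T)\subseteq T$ and $\mathscr{E}_{h_p}\mu$ lives on this subtree, I can embed both isometry classes isometrically into the common space $T$ via the identity, keeping the roots identified. The Gromov-vague distance is then bounded by the vague distance $d_{\rm V}(\mathscr{E}_{h_p}\mu,\mu)$ on $T$. By Lemma \ref{lemma:MEOp5b}, $\mathscr{E}_{h_p}\mu\stackrel{\rm vg}{\rightarrow}\mu$, so the Gromov-vague convergence follows.

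For the b-tightness claim, let $\bullet^r=\bullet^r(\mu)$. It is a finite set, since ${\rm Skel}(T,\mu)$ is discrete and $\partial B_T(\rho,r)\cap{\rm Skel}(T,\mu)$ is finite. The part of ${\rm Cut}(T,\bullet^r)$ outside $\overline{B}(\rho,R)$ for $R>r$ is contained in $\bigcup_{\sigma\in\partial B(\rho,r)\cap F_\mu(T)}\theta_\sigma T$. I will use that mass erasure does not increase the mass of subtrees above points of $F_\mu(T)$. By \eqref{eq:MEProperty} one has $\mathscr{E}_h\mu(\theta_\sigma T)\le\mu(\theta_\sigma T)$, and more precisely, restricted to the union $U_r$ of such finite-mass subtrees, the explicit construction of $\mathscr{E}_h$ only relocates mass downwards inside each $\theta_\sigma T$ and removes some. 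Hence $\mathscr{E}_{h_p}\mu(U_r\setminus\overline{B}(\rho,R))\le\mu(U_r\setminus\overline{B}(\rho,R))$. The right-hand side is independent of $p$ and tends to $0$ as $R\to\infty$ by condition $(ii)$ of Definition \ref{def:DIMeas} and dominated convergence. The main obstacle here is justifying this monotonicity for sets of the form $U_r\setminus \overline B(\rho,R)$ rather than only for whole subtrees $\theta_\sigma T$. I would handle it via the projection description: the set $U_r\setminus\overline B(\rho,R)$ is a union of subtrees $\theta_{\sigma'}T$ with $\sigma'\in\partial B(\rho,R)\cap U_r$, since the mass at points of height exactly $R$ is excluded by the strict inequality. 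Countable additivity over these disjoint fringe subtrees, together with \eqref{eq:MEProperty}, then gives the bound directly.

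For (ii), suppose $\mathscr{E}_{h_p}\boldsymbol{\mu}=\mathscr{E}_{h_p}\boldsymbol{\nu}$ for all $p$. By (i) applied to both $\boldsymbol{\mu}$ and $\boldsymbol{\nu}$, the same sequence converges Gromov-vaguely to $\boldsymbol{\mu}$ and to $\boldsymbol{\nu}$. Since $d_{\rm GV}$ is a metric on $\mathbb{T}^{\rm bf}_{\rm min}\supseteq\mathbb{T}^{\rm di}_{\rm min}$, limits are unique, and so $\boldsymbol{\mu}=\boldsymbol{\nu}$. The only point to check is that minimality is respected: $\mathscr{E}_{h_p}\boldsymbol{\mu}$ is a minimal class by Lemma \ref{lemma:MEOp2b}, and the limit $\boldsymbol{\mu}$ is minimal by assumption, so identification in the space of minimal isometry classes is legitimate.
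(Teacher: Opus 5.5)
Your proposal is correct and follows the paper's proof. Part (i) uses $T$ itself as the common embedding space together with Lemma~\ref{lemma:MEOp5b}. The b-tightness comes from the monotonicity $\mathscr{E}_{h_p}\mu(C)=(\mu(C)-h_p)_+\le\mu(C)$ on the pieces of ${\rm{Cut}}(T,\bullet^r)$ beyond radius $R$; the paper gets this from Proposition~\ref{prop:MEOp2}$(ii)$--$(iii)$ applied to the closed rooted subtree $\overline{B}_T(\rho,R)\cup\bigcup_{\boldsymbol{\sigma}\in\bullet^r}\theta_{\boldsymbol{\sigma}}T$. Part (ii) is uniqueness of Gromov-vague limits.

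One small correction. $U_r\setminus\overline{B}_T(\rho,R)$ is the union of the open sets $\theta_{\sigma'}T\setminus\{\sigma'\}$, not of the closed $\theta_{\sigma'}T$, so the decomposition as you stated it is not literally true. You can fix this in either of two ways:
\begin{enumerate}[$(a)$]
\item Apply \eqref{eq:MEProperty} to each component of $\theta_{\sigma'}T\setminus\{\sigma'\}$ using continuity from below (Proposition~\ref{prop:Proj1}$(ii)$); this is exactly Proposition~\ref{prop:MEOp2}$(ii)$.
\item Cover by the countably many disjoint closed $\theta_{\sigma'}T$ with $\theta_{\sigma'}T\setminus\{\sigma'\}\neq\emptyset$, and bound by $\mu(U_r\setminus B_T(\rho,R))$, which still tends to $0$.
\end{enumerate}
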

Note that the b-tightness condition in Lemma \ref{lemma:MEGV18}$(i)$ may be checked for \emph{any} representative of the isometry class. We will often choose the radius $r > 0$ such that $\mu(\partial B_T(\rho, r)) = 0$ and $\partial B_T(\rho, r) \cap {\rm{Bp}}(T) = \emptyset$, i.e. such that nothing significant happens on the boundary $\partial B_T(\rho, r)$. If $r$ satisfies these two properties, we say that it is $\boldsymbol{\mu}$-nice. Indeed, Lebesgue-almost all choices of radii are $\boldsymbol{\mu}$-nice due to separability. For a fixed Polish $\mathbb{R}$-tree we similarly say for some $\mu \in \mathscr{M}_{\rm{di}}(T)$ that $r$ is $\mu$-nice if the above conditions are satisfied.
\begin{thm}[Convergence of mass erasures]
\label{thm:MEGV17}
Let $\boldsymbol{\mu} = [T, d, \rho, \mu]$ and $\boldsymbol{\mu}_n = [T_n, d_n, \rho_n, \mu_n]$, $n \in \mathbb{N}$, be isometry classes in $\mathbb{T}_{\rm{min}}^{\rm{di}}$. Let $r_k \rightarrow \infty$ be an increasing sequence of $\boldsymbol{\mu}$-nice radii. Let for each $n$ and $k$, $\blackdiamond_n^{r_k}$ be $\bullet^{r_k}(\mu)$-compatible cut-off points in $\partial B_{T_n}(\rho_n, r_k)$. Let $h, h_n \in (0, \infty)$, $n \in \mathbb{N}$, and suppose that
\begin{enumerate}[$(i)$]
\item $\boldsymbol{\mu}_n \stackrel{\rm{GV}}{\rightarrow} \boldsymbol{\mu}$, and $\left( \mu_n|_{{\rm{Cut}}(T_n, \blackdiamond_n^{r_k})} \right)_{n \in \mathbb{N}}$ is b-tight for all $k$,
\item $h_n \rightarrow h$ as $n \rightarrow \infty$.
\end{enumerate}
Then  $\mathscr{E}_{h_n}\boldsymbol{\mu}_n \stackrel{\rm{GV}}{\rightarrow} \mathscr{E}_h\boldsymbol{\mu}$, and $\left( \left(\mathscr{E}_h \mu_n \right)|_{{\rm{Cut}}(T_n, \blackdiamond_n^{r_k})} \right)_{n \in \mathbb{N}}$ is b-tight for all $k$.
\end{thm}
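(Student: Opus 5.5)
The plan is a cut-off argument that reduces the statement to the finite-mass continuity result of \cite{DW26}. There the map $(h,\boldsymbol{\mu}) \mapsto \mathscr{E}_h\boldsymbol{\mu}$ is sequentially Gromov--Prokhorov continuous on $\mathbb{T}_{\rm{min}}^{\rm{fin}}$.

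Fix $k$ and write $r = r_k$. First, use the Gromov-vague convergence $\boldsymbol{\mu}_n \stackrel{\rm{GV}}{\rightarrow} \boldsymbol{\mu}$ to embed all trees isometrically into a common Polish $\mathbb{R}$-tree, or at least into a common metric space, so that the restrictions to balls converge weakly. This holds along the $\boldsymbol{\mu}$-nice radii, since $\mu(\partial B_T(\rho,r))=0$.

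Second, compare the cut-off trees. Because $r$ avoids branch points and carries no mass, and because $\mu$ is discretely infinite, the set $\bullet^r(\mu)$ is finite and the finite-mass fringe above level $r$ has finite mass by condition $(ii)$ of Definition \ref{def:DIMeas}. Using the $\bullet^r$-compatible points $\blackdiamond_n^r$, I will show that
$$ {\rm{cut}}_{\blackdiamond_n^r}^{h_0}\mu_n \longrightarrow {\rm{cut}}_{\bullet^r}^{h_0}\mu $$
in the Gromov--Prokhorov sense for a fixed large $h_0$, say $h_0 > h + \sup_n h_n$. The vague convergence gives weak convergence inside $\overline{B}(\rho,r)$. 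The b-tightness of $\mu_n|_{{\rm{Cut}}(T_n,\blackdiamond_n^r)}$ together with Lemma \ref{lemma:BFMeas7} gives weak convergence of the full cut-off restriction. The atoms of size $h_0$ at the cut-off points should converge because the number of points matches. The delicate part is matching $\blackdiamond_n^r$ with $\bullet^r$ in the embedding. I would pick $\blackdiamond_n^r$ near the points of $\bullet^r$ by approximating the boundary level-$r$ subtrees carrying most of the mass, using niceness of $r$.

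Third, apply the \cite{DW26} continuity with $h_n \to h$ to the finite cut-off measures. This gives
$$ \mathscr{E}_{h_n}{\rm{cut}}_{\blackdiamond_n^r}\mu_n \stackrel{\rm{GP}}{\longrightarrow} \mathscr{E}_h{\rm{cut}}_{\bullet^r}\mu. $$
Next, I need a commutation identity: on $\overline{B}(\rho,r)$, mass erasure of the cut-off measure agrees with the cut-off of the mass erasure, up to the atoms at the cut-off points, provided $h_0 > h$. This follows from the mass erasure property \eqref{eq:MEProperty}. For $\sigma$ below level $r$, the subtree masses $\mu(\theta_\sigma T)$ are either infinite, whenever $\sigma$ lies on a path to a point of $\bullet^r$, where they are replaced by at least $h_0 > h$ and so are not erased, or they are unchanged. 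I would check this with the explicit definition of $\mathscr{E}_h$ from Section \ref{subsec:MEOp} rather than only through \eqref{eq:MEProperty}, because on the skeleton that property does not determine $\mathscr{E}_h$. I also expect to need Lemma \ref{lemma:MEOp2b} so that the minimal representatives $R_{\mu,h+}(T)$ match.

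Restricting to balls of nice radii below $r$ then yields weak convergence of $(\mathscr{E}_{h_n}\mu_n)^{\restr r'}$ for nice radii $r' < r_k$. Letting $k \to \infty$ gives $\mathscr{E}_{h_n}\boldsymbol{\mu}_n \stackrel{\rm{GV}}{\rightarrow} \mathscr{E}_h\boldsymbol{\mu}$.

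Finally, b-tightness of $(\mathscr{E}_h\mu_n)|_{{\rm{Cut}}(T_n,\blackdiamond_n^{r_k})}$ follows from the bound
$$ (\mathscr{E}_h\mu_n)\bigl({\rm{Cut}} \setminus \overline{B}(\rho_n,R)\bigr) \le \mu_n\bigl({\rm{Cut}} \setminus \overline{B}(\rho_n,R)\bigr) + (\text{atoms at branch points beyond } R). $$
The atoms at branch points are controlled because each fringe subtree of mass at least $h$ beyond $R$ contributes at most its own mass. So the total is at most a constant times the tail mass, which is then handled by assumption $(i)$.

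I expect the main obstacle to be step two: identifying the compatible cut-off points $\blackdiamond_n^r$ with $\bullet^r(\mu)$ in the common embedding. The statement allows arbitrary compatible choices, so the argument must ensure that the tightness hypothesis forces the infinite-mass directions of $\mu_n$ to sit at the right places. If that fails, I would first try to reduce to well-chosen $\blackdiamond_n^r$ via the tightness assumption.
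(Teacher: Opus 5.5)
Your architecture matches the paper's: Gromov--Prokhorov convergence of the cut-off measures (Theorem \ref{thm:MEGV12}), then \cite{DW26} Theorem 4.21$(i)$, then the local identity $(\mathscr{E}_{h_n}\mu_n)^{\restr\tilde r}=(\mathscr{E}_{h_n}{\rm{cut}}_{\blackdiamond_n^r}^{h_0}\mu_n)^{\restr\tilde r}$ for nice $\tilde r<r$, then restriction to balls. Your b-tightness step is also fine; the paper gets it with constant $1$ from Proposition \ref{prop:MEOp2}$(iii)$ applied to the closed rooted subtree $\overline{B}_{T_n}(\rho_n,R)\cup\bigcup_{\boldsymbol{\sigma}_n\in\blackdiamond_n^r}\theta_{\boldsymbol{\sigma}_n}T_n$, $R>r$. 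However, the step you defer as the main obstacle is where essentially all of the proof lives, and as written it does not go through. First, you cannot choose $\blackdiamond_n^r$. Second, equal cardinality does not make the $h_0$-atoms converge; the matched points must also be close in the embedding. Third, Lemma \ref{lemma:BFMeas7} does not apply, because $\phi_n({\rm{Cut}}(T_n,\blackdiamond_n^r))$ varies with $n$ and is not a fixed continuity set. So vague convergence of $\mu_n\circ\phi_n^{-1}$ says nothing directly about the restricted measures. Contrast Theorem \ref{thm:MEFix7}, where ${\rm{Cut}}(T,\bullet^r)$ is fixed and Lemma \ref{lemma:BFMeas2} applies. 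The same gap affects your commutation step. Your justification uses that subtree masses above $\bullet^r$ are infinite, which holds for $\mu$ but not for $\mu_n$, since these may be finite measures. For $\mu_n$ the identity needs $\mu_n(\theta_{\boldsymbol{\sigma}_n}T_n)\geq h_n$ for every $\boldsymbol{\sigma}_n\in\blackdiamond_n^r$; otherwise $\boldsymbol{\sigma}_n$ survives the erasure of the cut-off measure but not that of $\mu_n$.

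The missing idea is the identification of $\blackdiamond_n^r$ with the `heavy' level-$r$ points of $T_n$ (Sections \ref{subsec:CutOffEmbed}--\ref{subsec:GeneralCutOff}). Take an embedding where the restrictions to $\overline{B}_{\mathscr{Z}}(\varrho,R)$ are $\eta$-close in Prokhorov distance. Then each $\boldsymbol{\sigma}\in\bullet^r$ forces $\mu_n$-mass at least $K-\eta$ into the contour area $\mathbf{J}_{\boldsymbol{\sigma}}^{\kappa,\eta}$ (Lemma \ref{lemma:MEGV4}). A tree-geometry argument using $\eta<\eta_0$ (separation of $\bullet^r$, and no branch points of ${\rm{Skel}}(T,\mu)$ just above level $r$) shows this mass sits above a single level-$r$ point of $T_n$, distinct for distinct $\boldsymbol{\sigma}$ (Proposition \ref{prop:MEGV1}, Lemma \ref{lemma:MEGV3}, Corollary \ref{cor:MEGV5}). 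Only then can your pigeonhole intuition be run. If $\blackdiamond_n^r$ missed one of these $|\bullet^r|$ heavy points, its whole subtree would lie in ${\rm{Cut}}(T_n,\blackdiamond_n^r)$, so tightness would cap its mass beyond $R_0$. For $K$ large, $\overline{B}_{T_n}(\rho_n,R_0)$ would then carry more $\mu_n$-mass than Prokhorov closeness permits (Lemma \ref{lemma:MEGV7}). The same identification yields $\mu_n(\theta_{\boldsymbol{\sigma}_n}T_n)>h_0$ on $\blackdiamond_n^r$, which is what makes the local identity hold for $\mu_n$ (Lemmas \ref{lemma:MEGV13}, \ref{lemma:MEGV15}). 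You would still need that the matched points are within $6\eta$ of each other in a well-chosen embedding (Lemma \ref{lemma:MEGV6}). You would also need the Prokhorov comparison of $\mu_n|_{{\rm{Cut}}(T_n,\blackdiamond_n^r)}$ with $\mu|_{{\rm{Cut}}(T,\bullet^r)}$, via $\kappa$-shifted neighbourhood inclusions and small mass in thin shells (Lemmas \ref{lemma:MEGV9}--\ref{lemma:MEGV11}). Without these, your proposal establishes the theorem only modulo Theorem \ref{thm:MEGV12} and Lemma \ref{lemma:MEGV13}, which are its real content.
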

As an immediate consequence of Theorem \ref{thm:MEGV17} we see that under assumption $(i)$ it follows that $\boldsymbol{\mu}_n \stackrel{\rm{GVme}}{\rightarrow} \boldsymbol{\mu}$. In particular, we may think of Gromov-vague convergence in the sense of mass erasure as being a weaker mode of convergence than Gromov-vague convergence, provided that we impose b-tightness on appropriate choices of cut-off trees.
\begin{thm}[Convergence of mass-erased subtrees]
\label{thm:MEGV23b}
Let for all $n$, $\boldsymbol{\mu} = [T, d, \rho, \mu]$ and $\boldsymbol{\mu}_n = [T_n, d_n, \rho_n, \mu_n]$ be isometry classes in $\mathbb{T}_{\rm{min}}^{\rm{di}}$. Fix some $\boldsymbol{\mu}$-nice $r > 0$. Let for each $n$, $\blackdiamond_n^r$ be $\bullet^r(\mu)$-compatible cut-off points in $\partial B_{T_n}(\rho_n, r)$. Let $h, h_n \in (0,\infty)$, $n \in \mathbb{N}$, and let $h_0 \geq \sup_{n \in \mathbb{N}} h_n$. Suppose that
\begin{enumerate}[$(i)$]
\item $\boldsymbol{\mu}_n \stackrel{\rm{GVme}}{\rightarrow} \boldsymbol{\mu}$, and $\left( \left(\mathscr{E}_h\mu_n\right)|_{{\rm{Cut}}(T_n, \blackdiamond_n^r)} \right)$ is b-tight for all $h > 0$,
\item $h_n \rightarrow h$, and the map $h' \mapsto R_{{\rm{cut}}_{\bullet^r(\mu)}^{h_0}\mu, h'}(T)$ is $d_H$-continuous at $h$.\footnote{This condition may be removed if $h_n = h$ eventually.}
\end{enumerate}
Then $\lim_{n \rightarrow \infty} d_{\rm{GH}}\left( R_{\mu_n,h_n}(T_n) \cap {\rm{Cut}}(T_n, \blackdiamond_n^r), R_{\mu,h}(T) \cap {\rm{Cut}}(T, \bullet^r(\mu)) \right) = 0$. If $(i)$--$(ii)$ are true for some increasing sequence of radii $r_k \rightarrow \infty$, then $\lim_{n \rightarrow \infty} d_{\rm{GH}}^{\rm{loc}}(R_{\mu_n, h_n}(T_n), R_{\mu,h}(T)) = 0$.
\end{thm}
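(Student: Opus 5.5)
The approach is the cut-off method: reduce to the finite-measure Gromov--Hausdorff statement of \cite{DW26} for mass-erased subtrees of finitely measured trees, applied to cut-off measures. The first step is an identity on the fixed limit tree. Write $\bullet^r = \bullet^r(\mu)$ and $\nu := {\rm{cut}}_{\bullet^r}^{h_0}\mu \in \mathscr{M}_{\rm{fin}}({\rm{Cut}}(T,\bullet^r))$. For $h' \le h_0$ we want
\[
R_{\nu,h'}({\rm{Cut}}(T,\bullet^r)) = R_{\mu,h'}(T)\cap {\rm{Cut}}(T,\bullet^r).
\]
This holds for the following reason. For $\sigma$ below a point of $\bullet^r$, both $\mu(\theta_\sigma T)=\infty$ and $\nu(\theta_\sigma)\ge h_0 \ge h'$. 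For any other $\sigma$ in the cut-off tree, the subtree $\theta_\sigma T$ lies entirely inside ${\rm{Cut}}(T,\bullet^r)$, so $\nu$ and $\mu$ agree on it. The same argument gives the analogous identity on $T_n$ with $\blackdiamond_n^r$ and $\nu_n := {\rm{cut}}^{h_0}_{\blackdiamond_n^r}\mu_n$, for $h_n\le h_0$, provided the points of $\blackdiamond_n^r$ carry infinite $\mu_n$-mass above them. Compatibility, together with GV-convergence and the assumed b-tightness, should force this for large $n$: since $|\blackdiamond_n^r| = |\bullet^r|$ eventually, the chosen points must be the heavy ones. Where they are not, I would instead work with the truncation and absorb the discrepancy into the b-tightness estimate.

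The second step is to show $[{\rm{Cut}}(T_n,\blackdiamond_n^r),\nu_n]\to[{\rm{Cut}}(T,\bullet^r),\nu]$ in the Gromov--Prokhorov sense in the sense of mass erasure. By the semigroup structure and the mass erasure property \eqref{eq:MEProperty}, erasing a cut-off measure coincides with cutting off the erased measure: $\mathscr{E}_{h'}\nu_n = {\rm{cut}}^{h_0-h'}(\mathscr{E}_{h'}\mu_n)$ on the cut-off tree. By $(i)$, $\mathscr{E}_{h'}\boldsymbol{\mu}_n\stackrel{\rm{GV}}{\to}\mathscr{E}_{h'}\boldsymbol{\mu}$. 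The radius $r$ is $\boldsymbol{\mu}$-nice, so there is no mass on $\partial B(\rho,r)$ and no branching there. Hence the restrictions to closed balls converge, and the cut-off points converge to the cut-off points as atoms of mass $h_0-h'$. Combined with b-tightness of $(\mathscr{E}_{h'}\mu_n)|_{{\rm{Cut}}(T_n,\blackdiamond_n^r)}$, Lemma \ref{lemma:BFMeas7} (in its Gromov form) upgrades vague convergence of these finite measures to weak, that is GP, convergence of $\mathscr{E}_{h'}[\,{\rm{Cut}}_n,\nu_n]$ for every $h'>0$.

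The third step applies the finite-measure result of \cite{DW26}: GPme convergence together with $h_n\to h$ and $d_H$-continuity of $h'\mapsto R_{\nu,h'}$ at $h$ gives $d_{\rm{GH}}$-convergence of $R_{\nu_n,h_n}$ to $R_{\nu,h}$. Where \cite{DW26} only provides this after an erasure, I would write $R_{\nu_n,h_n}=R_{\mathscr{E}_{\varepsilon}\nu_n,h_n-\varepsilon}$ for small $\varepsilon$, which the semigroup permits. If $h_n=h$ eventually, continuity is not needed, matching the footnote. By the first step this is exactly the first claim.

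The local statement then follows because, for each $r_k$, the sets $R_{\mu_n,h_n}(T_n)\cap\overline B(\rho_n,r_k)$ equal the cut-off pieces truncated at radius $r_k$. The cut-off trees contain the full balls except the parts above the cut-off points, and those parts lie outside the ball apart from the points themselves. So $d_{\rm{GH}}$-convergence of the pieces for every $k$ gives $d^{\rm{loc}}_{\rm{GH}}$-convergence.

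I expect the main obstacle to be the second step. It requires checking that the cut-off operation commutes with erasure and is continuous under Gromov-vague convergence. This must be done in a common embedding, where one shows that the compatible points $\blackdiamond_n^r$ converge to $\bullet^r$ and that no finite mass near level $r$ splits incorrectly; the niceness of $r$ and the b-tightness hypothesis are exactly what control this.
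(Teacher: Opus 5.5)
Your outline follows the paper's route: Step 1 is Lemma~\ref{lemma:MEGV20}; Step 2 is Lemma~\ref{lemma:MEGV21} and Theorem~\ref{thm:MEGV22}, namely the commutation $\mathscr{E}_{h'}{\rm{cut}}^{h_0}_{\blackdiamond_n^r}\mu_n={\rm{cut}}^{h_0-h'}_{\blackdiamond_n^r}\mathscr{E}_{h'}\mu_n$ followed by the embedding and contour-area estimate of Lemma~\ref{lemma:MEGV11}, which is exactly the obstacle you flag; and Step 3 is \cite{DW26} Lemma 4.31$(ii)$. One correction: the points of $\blackdiamond_n^r$ need not carry infinite $\mu_n$-mass (the $\mu_n$ may be finite measures), and a light cut-off point could not be absorbed into the b-tightness estimate; what the identity actually needs, and what the pigeonhole argument of Lemma~\ref{lemma:MEGV7} together with the contour-mass bound of Lemma~\ref{lemma:MEGV4} applied to $\mathscr{E}_h\boldsymbol{\mu}_n$ provides, is only that $\mu_n(\theta_{\boldsymbol{\sigma}_n}T_n)\geq\mathscr{E}_h\mu_n(\theta_{\boldsymbol{\sigma}_n}T_n)>h_0$ eventually for every $\boldsymbol{\sigma}_n\in\blackdiamond_n^r$.
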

One may ask in which cases Gromov-vague convergence in the sense of mass erasure can be strengthened to Gromov-vague convergence (under sufficient b-tightness). As in \cite{DW26}, this boils down to a question of convergence of \emph{height measures}. Let $(T, d, \rho)$ be a Polish $\mathbb{R}$-tree, and let $\mu \in \mathscr{M}_{\rm{di}}(T)$. Recall from \cite{DW26} Definition 3.26$(b)$ the \emph{height measure} $\lambda_{\mu}$, which is the pushforward of $\mu$ under the map $T \ni \sigma \mapsto d(\rho, \sigma) \in [0, \infty)$. It is easily checked that $\lambda_{\mu} \in \mathscr{M}_{\rm{bf}}([0,\infty))$. If $\boldsymbol{\mu} = [T, d, \rho, \mu] \in \mathbb{T}_{\rm{min}}^{\rm{di}}$, then all representatives of $\boldsymbol{\mu}$ will have the same pushforward measure under the map $\sigma \mapsto d(\rho, \sigma)$. So there is a unique measure $\lambda_{\boldsymbol{\mu}} \in \mathscr{M}_{\rm{bf}}([0, \infty))$ which is the height measure for all representatives of $\boldsymbol{\mu}$. Note that if $\boldsymbol{\mu} \in \mathbb{T}_{\rm{min}}^{\rm{fin}}$, then also $\lambda_{\boldsymbol{\mu}} \in \mathscr{M}_{\rm{fin}}([0,\infty))$. 
\begin{thm}
\label{thm:MEGV24}
Let for all $n$, $\boldsymbol{\mu} = [T, d, \rho, \mu]$ and $\boldsymbol{\mu}_n = [T_n, d_n, \rho_n, \mu_n]$ be isometry classes in $\mathbb{T}_{\rm{min}}^{\rm{di}}$. Let $r_k \rightarrow \infty$ be an increasing sequence of $\boldsymbol{\mu}$-nice radii. Let for each $n$ and $k$, $\blackdiamond_n^{r_k}$ be $\bullet^{r_k}(\mu)$-compatible cut-off points in $\partial B_{T_n}(\rho_n, r_k)$. Let $h_0 > 0$. Then the following are equivalent:
\begin{itemize}
\item[$(a)$] $\boldsymbol{\mu}_n \stackrel{\rm{GV}}{\rightarrow} \boldsymbol{\mu}$ and $\left( \mu_n|_{{\rm{Cut}}(T_n, \blackdiamond_n^{r_k})} \right)_{n \in \mathbb{N}}$ is b-tight for all $k$.
\item[$(b)$] $\boldsymbol{\mu}_n \stackrel{\rm{GVme}}{\rightarrow} \boldsymbol{\mu}$, $\left( \left(\mathscr{E}_h\mu_n \right)|_{{\rm{Cut}}(T_n, \blackdiamond_n^{r_k})} \right)_{n \in \mathbb{N}}$ is b-tight for all $h > 0$ and $k$, and $\lambda_{{\rm{cut}}_{\blackdiamond_n^{r_k}}^{h_0}\boldsymbol{\mu}_n} \stackrel{\rm{wk}}{\rightarrow} \lambda_{{\rm{cut}}_{\bullet^{r_k}(\mu)}^{h_0}\boldsymbol{\mu}}$ as $n \rightarrow \infty$ for all $k$.
\end{itemize}
\end{thm}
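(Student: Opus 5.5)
The plan is to reduce both implications to the finite-measure theory of \cite{DW26} on cut-off trees, using the cut-off method. There, on $\mathbb{T}_{\rm{min}}^{\rm{fin}}$, Gromov--Prokhorov convergence is equivalent to Gromov--Prokhorov convergence in the sense of mass erasure together with weak convergence of height measures. I would fix $k$ and work with the finite isometry classes ${\rm{cut}}_{\blackdiamond_n^{r_k}}^{h_0}\boldsymbol{\mu}_n$ and ${\rm{cut}}_{\bullet^{r_k}(\mu)}^{h_0}\boldsymbol{\mu}$. The first step is a transfer lemma. Since $r_k$ is $\boldsymbol{\mu}$-nice and the $\blackdiamond_n^{r_k}$ are $\bullet^{r_k}(\mu)$-compatible, I would show that for the $n$-sequence the following are equivalent: (1) Gromov-vague convergence $\boldsymbol{\mu}_n \to \boldsymbol{\mu}$ (restricted to balls of radius slightly above $r_k$) together with b-tightness of $\mu_n|_{{\rm{Cut}}(T_n,\blackdiamond_n^{r_k})}$; and (2) Gromov--Prokhorov convergence of the cut-off classes. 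The tool is Lemma \ref{lemma:BFMeas7}: b-tightness plus vague convergence is exactly weak convergence of the finite cut-off measures. The cut-off atoms of mass $h_0$ at the $|\bullet^{r_k}|$ boundary points match because niceness guarantees that no branch points or mass sit on $\partial B_T(\rho,r_k)$. Ranging over all $k$ then recovers Gromov-vague convergence, since $r_k \to \infty$.

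For $(a)\Rightarrow(b)$, Theorem \ref{thm:MEGV17} directly gives $\boldsymbol{\mu}_n \stackrel{\rm{GVme}}{\rightarrow}\boldsymbol{\mu}$ and b-tightness of $(\mathscr{E}_h\mu_n)|_{{\rm{Cut}}(T_n,\blackdiamond_n^{r_k})}$. For the height measures, the transfer lemma gives GP-convergence of the cut-off classes. The map $\boldsymbol{\nu}\mapsto\lambda_{\boldsymbol{\nu}}$ is continuous from $(\mathbb{T}^{\rm{fin}}_{\rm{min}},d_{\rm{GP}})$ to weak convergence, because $\sigma\mapsto d(\rho,\sigma)$ is $1$-Lipschitz and preserved under root-preserving isometric embeddings. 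This yields $\lambda_{{\rm{cut}}\boldsymbol{\mu}_n}\stackrel{\rm{wk}}{\rightarrow}\lambda_{{\rm{cut}}\boldsymbol{\mu}}$.

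For $(b)\Rightarrow(a)$, I would first show that mass erasure commutes with cutting off. For $h<h_0$, the measure $\mathscr{E}_h{\rm{cut}}^{h_0}_{\bullet}\mu$ agrees with ${\rm{cut}}^{h_0-h}_{\bullet}\mathscr{E}_h\mu$ on ${\rm{Cut}}(T,\bullet)$. This follows from the explicit construction in Section \ref{subsec:MEOp}, since subtree masses above points of ${\rm{Cut}}$ not on the skeleton are unchanged, and skeleton points carry infinite mass, respectively at least $h_0$ mass, in both pictures. Combining this with the hypotheses, GVme-convergence, b-tightness of the cut erasures and Lemma \ref{lemma:BFMeas7} give GP-convergence $\mathscr{E}_h{\rm{cut}}\boldsymbol{\mu}_n\to\mathscr{E}_h{\rm{cut}}\boldsymbol{\mu}$ for all small $h$, hence for all $h$ by the semigroup property. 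This is GPme-convergence of the cut-off classes. Together with the assumed weak convergence of the height measures, the finite-measure equivalence theorem of \cite{DW26} gives GP-convergence of the cut-off classes. The transfer lemma then returns Gromov-vague convergence and b-tightness of $\mu_n|_{{\rm{Cut}}(T_n,\blackdiamond_n^{r_k})}$.

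The main obstacle is the transfer lemma in the direction from Gromov-vague data on $T_n$ to cut-off trees. One must show that the general points $\blackdiamond_n^{r_k}$ can be matched, inside a common embedding space, with $\bullet^{r_k}(\mu)$. The difficulty is that finite mass in $\mu_n$ above $\blackdiamond_n^{r_k}$ may blow up while mass elsewhere on $\partial B_{T_n}(\rho_n,r_k)$ stays bounded. I would first handle this by using niceness of $r_k$: the sphere $\partial B_T(\rho,r_k)$ is finite in the relevant part of the tree, so approximating points converge. Compatibility then identifies which of these points are the cut-off points, with b-tightness excluding escaping mass in the remaining subtrees.
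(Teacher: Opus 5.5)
Your two-direction architecture matches the paper's proof. For $(a)\Rightarrow(b)$ you use Theorem~\ref{thm:MEGV17}, Gromov--Prokhorov convergence of the cut-off classes (the paper's Theorem~\ref{thm:MEGV12}) and continuous mapping of $\sigma\mapsto d(\rho,\sigma)$. For $(b)\Rightarrow(a)$ you use GPme convergence of the cut-off classes (Theorem~\ref{thm:MEGV22}, built on your identity $\mathscr{E}_h\mathrm{cut}^{h_0}=\mathrm{cut}^{h_0-h}\mathscr{E}_h$, cf.\ Lemma~\ref{lemma:MEGV21}), then \cite{DW26} Theorem~4.35 and restriction to nice radii $\tilde r_k<r_k$. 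The easy half of your transfer lemma is fine: GP convergence of cut-off classes does give Gromov-vague convergence and b-tightness.

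The gap is the hard half of the transfer lemma and its mass-erased analogue. This is where all the Gromov-setting work lies, and the tools you name do not deliver it. Lemma~\ref{lemma:BFMeas7} (with Lemma~\ref{lemma:BFMeas2}) concerns restriction to a \emph{fixed} continuity set in a \emph{fixed} space. In an embedding system the sets $\phi_n(\mathrm{Cut}(T_n,\blackdiamond_n^{r_k}))$ move with $n$. So the lemma has nothing to act on until you know these cut-off trees match $\phi(\mathrm{Cut}(T,\bullet^{r_k}(\mu)))$ up to $\eta$-neighbourhoods and a thin height band. This half also needs GP closeness at radii $R\to\infty$, not only slightly above $r_k$ as you wrote. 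For example, moving a finite atom on a non-skeleton branch from height $r_k+5$ to $r_k+6$ leaves balls of radius $r_k+1$ unchanged, but destroys GP convergence of the cut-off classes.

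Your sketch of the matching step does not work as written.
\begin{itemize}
\item Niceness does not make $\partial B_T(\rho,r_k)$ finite. Only $\bullet^{r_k}(\mu)$ is finite, by discreteness of the skeleton, and uniqueness of a match also needs no skeleton branch points in a band just above $r_k$.
\item More seriously, Prokhorov closeness in an arbitrary embedding does not put $\phi_n(\boldsymbol\sigma_n)$ near $\phi(\boldsymbol\sigma)$. The level-$r_k$ points carry no mass and $\mathscr{Z}$ is not a tree, so closeness of mass high up in the subtrees does not propagate down the segments. Yet this is exactly what matching the $h_0$-atoms requires.
\end{itemize}
The paper supplies four ingredients you are missing.
\begin{itemize}
\item Canonical points $\bullet_n^{r,\kappa,\eta}$ are defined via contour areas $\phi(\theta_\sigma T)^{\eta}\setminus B_{\mathscr{Z}}(\varrho,r+2\kappa)$ and put in bijection with $\bullet^r$ through a mass lower bound (Proposition~\ref{prop:MEGV1}, Lemmas~\ref{lemma:MEGV3} and~\ref{lemma:MEGV4}, Corollary~\ref{cor:MEGV5}).
\item The pigeonhole argument of Lemma~\ref{lemma:MEGV7} shows compatible b-tight points eventually coincide with these. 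Your ``compatibility plus b-tightness'' remark is this idea, but it presupposes the previous step.
\item The inclusions of Lemma~\ref{lemma:MEGV9}, together with Lemma~\ref{lemma:MEGV10}, absorb the level shift in the Prokhorov estimate.
\item An embedding with Hausdorff control of mass-carrying compact subtrees (\cite{DW26} Proposition~4.15) gives $\delta(\phi(\boldsymbol\sigma),\phi_n(u_n(\boldsymbol\sigma)))\le 6\eta$ (Lemma~\ref{lemma:MEGV6}).
\end{itemize}

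The same gap affects $(b)\Rightarrow(a)$. The identity $\mathscr{E}_h\mathrm{cut}^{h_0}_{\blackdiamond_n^{r_k}}\mu_n=\mathrm{cut}^{h_0-h}_{\blackdiamond_n^{r_k}}\mathscr{E}_h\mu_n$ fails whenever some $\boldsymbol\sigma_n\in\blackdiamond_n^{r_k}$ has $\mu_n(\theta_{\boldsymbol\sigma_n}T_n)<h$: the $h_0$-atom keeps alive a branch that $\mathscr{E}_h\mu_n$ erases. Your appeal to infinite skeleton mass only covers the limit measure. You must run the matching for $\mathscr{E}_h\boldsymbol\mu_n\to\mathscr{E}_h\boldsymbol\mu$ under b-tightness of the erasures, and check that $\bullet_n^{r,\kappa,\eta}(\mathscr{E}_h\mu)=\bullet_n^{r,\kappa,\eta}(\mu)$. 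That is what guarantees the chosen cut-off points eventually carry $\mu_n$-mass exceeding $h_0$ (Lemmas~\ref{lemma:MEGV19} and~\ref{lemma:MEGV20}).
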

In applications, when proving Gromov-vague convergence in the sense of mass erasure for some specific sequence of isometry classes $\boldsymbol{\mu}_n = [T_n, d_n,\rho_n, \mu_n]$, $n \in \mathbb{N}$, in $\mathbb{T}_{\rm{min}}^{\rm{di}}$, we may want to check for each $h > 0$ that the sequence $(\mathscr{E}_h \boldsymbol{\mu}_n)_{n \in \mathbb{N}}$ converges Gromov-vaguely to some limit, $\boldsymbol{\nu}_h$, which will naturally depend on the erasure parameter $h$. The question now, is whether we may construct a `global' limit $\boldsymbol{\mu} \in \mathbb{T}_{\rm{min}}^{\rm{di}}$ such that $\mathscr{E}_h\boldsymbol{\mu} = \boldsymbol{\nu}_h$ for all $h > 0$. To address this question, we start by noting that if $\boldsymbol{\mu}_n \in \mathbb{T}_{\rm{min}}^{\rm{di}}$, $n \in \mathbb{N}$, then \emph{any} possible Gromov-vague limit of $\left( \mathscr{E}_h \boldsymbol{\mu}_n \right)_{n \in \mathbb{N}}$ must necessarily be contained in $\mathbb{T}_{\rm{min}}^{\rm{di}}$. This observation is non-trivial, as we recall that the space $(\mathbb{T}_{\rm{min}}^{\rm{di}}, d_{\rm{GV}})$ is Lusin, but \emph{not} complete.

\begin{lemma}
\label{lemma:MEGV24b}
Fix some $h > 0$. Then $ \overline{\mathscr{E}_h(\mathbb{T}_{\rm{min}}^{\rm{di}})}^{d_{\rm{GV}}} \subseteq \mathbb{T}_{\rm{min}}^{\rm{di}}$.
\end{lemma}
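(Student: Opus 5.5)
We take a Gromov-vague limit $\boldsymbol{\nu} = [S, d_S, \rho_S, \nu]$ of a sequence $\mathscr{E}_h\boldsymbol{\mu}_n = [R_n, d_n, \rho_n, \nu_n]$ with $R_n = R_{\mu_n,h+}(T_n)$ and $\nu_n = \mathscr{E}_h\mu_n$. Since $(\mathbb{T}_{\rm{min}}^{\rm{bf}}, d_{\rm{GV}})$ is Polish, we know $\boldsymbol{\nu} \in \mathbb{T}_{\rm{min}}^{\rm{bf}}$. It remains to show that $\nu$ is finite or discretely infinite. The plan is to extract, from the mass erasure property \eqref{eq:MEProperty}, a uniform structural constraint satisfied by every $\nu_n$ that survives Gromov-vague limits. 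That constraint is the following: every point $\sigma$ of $R_n\setminus\{\rho_n\}$ with $\nu_n(\theta_\sigma R_n)<\infty$ carries above it a subtree whose mass is quantised in terms of $h$. More precisely, the branch points of $R_n$ outside the skeleton carry point masses that are multiples of $h$, and each finite fringe subtree of $R_n$ contains at least one leaf of $R_{\mu_n,h}(T_n)$. As a consequence we get the counting bound
\[
\#\{\text{leaves of } R_n \text{ in } F_{\nu_n}\} \cdot c_h \le \nu_n\Big(\textstyle\bigcup_{\sigma\in \partial B(\rho_n,r)\cap F_{\nu_n}}\theta_\sigma R_n\Big) + (\text{skeleton contributions}).
\]

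The key steps, in order, are as follows. First, fix a radius $r$ that is $\boldsymbol{\nu}$-nice. Using the Gromov-vague convergence, embed everything isometrically into a common Polish space $(E,d_E)$ in which $\nu_n^{\restr r}\to\nu^{\restr r}$ weakly, in the spirit of Section \ref{subsec:Gromov}. Second, bound the number of points of $\partial B_{R_n}(\rho_n,r)$ uniformly in $n$. Each such point $\sigma$ lies below some point at distance $r+\varepsilon$, or below a leaf of $R_n$ lying in $\overline B(\rho_n, r+\varepsilon)$. In either case $\nu_n(\theta_\sigma R_n) > 0$. For the leaf case we use Lemma \ref{lemma:MEOp2b}, which shows that any leaf of $R_{\mu_n,h+}$ carries $\mathscr{E}_h\mu_n$-mass in every neighbourhood. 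Together with the uniform bound $\sup_n \nu_n(\overline B(\rho_n,r+1))<\infty$ coming from vague convergence, this controls the number of finite-mass branches crossing level $r$ whose mass lies within the ball. Third, pass to the limit. We show that the number of subtrees above $\partial B_S(\rho_S,r)$ is at most $\liminf_n |\partial B_{R_n}(\rho_n,r')|$ for slightly larger $r'$. This yields the finiteness of the branching structure of $\overline B_S(\rho_S,r)$, and hence that ${\rm{Skel}}(S,\nu)$ is discrete, which is condition $(i)$ of Definition \ref{def:DIMeas}. Fourth, verify condition $(ii)$. The finite-mass subtrees of $S$ above level $r$ are finitely many by step three, and each has finite $\nu$-mass by the definition of $F_\nu(S)$, so their union has finite mass.

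The main obstacle is step two and the limiting step three. A point of $\partial B_{R_n}(\rho_n,r)$ may sit on a skeleton ray, where the mass lies far away and is invisible to vague convergence. It may also sit on a finite branch whose mass escapes to infinity, as in Example \ref{ex:MEConv}. The first thing I would try is to exploit the fact that, in $\mathscr{E}_h\mu_n$, every finite fringe subtree $\theta_\sigma R_n$ with $\sigma\notin{\rm{Skel}}$ has mass $(\mu_n(\theta_\sigma T_n)-h)_+$. Moreover, by the branch-point masses, each leaf of $R_{\mu_n,h}$ brings mass at least comparable to $h$ near the branching point below it. This should show that branching within $\overline B(\rho_n,r)$ forces $\nu_n$-mass of order $h$ within $\overline B(\rho_n,r)$ for each extra branch. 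That in turn bounds the number of branch points in the ball by $\nu_n(\overline B(\rho_n,r))/h$, uniformly in $n$. Branches of $S$ that are created in the limit without branch points in the approximants cannot occur, because Gromov-vague limits of minimal trees whose branch count in a ball is uniformly bounded are again of that form. I would verify this last point with a weak-convergence argument on the finitely many "branch-point masses" together with the height measures.
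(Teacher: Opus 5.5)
Your proposal follows essentially the paper's argument. The decisive ingredient is the one in your final paragraph: by \eqref{eq:MEOp}, every extra branch of $R_{\mu_n,h}(T_n)$ inside $\overline{B}_{T_n}(\rho_n,r)$ costs a point mass $h$ at a branch point, so $\mathscr{E}_h\mu_n(\overline{B}_{T_n}(\rho_n,r))\geq h(M-1)$ when there are $M$ branches at level $r$. Combined with the uniform ball-mass bound from Gromov-vague convergence and an embedding that separates positive-mass branches of the limit, this is exactly the paper's proof, which argues by contradiction and counts at level $r-\varepsilon$ rather than at a slightly larger $r'$.

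Two points need tightening. First, step 2 as literally written (each branch has positive mass and the ball mass is bounded) does not bound the count without this $h$-quantisation. Second, a bound on the number of level-$r$ points gives discreteness of ${\rm{Skel}}(S,\nu)$ and condition $(ii)$ of Definition \ref{def:DIMeas} directly, but it does not give finite type of $\overline{B}_S(\rho_S,r)$ itself.
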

We may finally state the following existence result, which allows us to construct global limits in $\mathbb{T}_{\rm{min}}^{\rm{di}}$, just by considering limits of the $h_p$-mass erasures for some decreasing sequence of erasure parameters $h_p \searrow 0$, under appropriate b-tightness conditions on both the mass erasures $(\mathscr{E}_{h_p}\boldsymbol{\mu}_n)_{n \in \mathbb{N}}$ with respect to their limits for each $p$, and across the limits considered as a sequence indexed by $p \in \mathbb{N}$.

\begin{thm}[Existence of a limit]
\label{thm:MEGV27}
Let $\boldsymbol{\mu}_n = [T_n, d_n, \rho_n, \mu_n]$, $n \in \mathbb{N}$, be a sequence of isometry classes in $\mathbb{T}_{\rm{min}}^{\rm{di}}$, and let $h_p \searrow 0$ be a strictly decreasing sequence of numbers. Suppose for every $p \in \mathbb{N}$ that there exists some $\boldsymbol{\nu}_p = [T_p', d_p', \rho_p', \nu_p] \in \mathbb{T}_{\rm{min}}^{\rm{di}}$ such that $\mathscr{E}_{h_p}\boldsymbol{\mu}_n \stackrel{\rm{GV}}{\rightarrow} \boldsymbol{\nu}_p$ as $n \rightarrow \infty$. 

For any given $r > 0$ and $p$, set as usual $\bullet^r(\nu_p) = \partial B_{T_p'}(\rho_p', r) \cap {\rm{Skel}}(T_p', \nu_p)$, and let for each $n$, $\blackdiamond_n^r(\nu_p)$ be $\bullet^r(\nu_p)$-compatible cut-off points in $\partial B_{T_n}(\rho_n, r)$. Suppose for some increasing sequence of radii $r_k \rightarrow \infty$, that
\begin{enumerate}[$(i)$]
\item $\left( \left( \mathscr{E}_{h_p}\mu_n \right)|_{{\rm{Cut}}(T_n, \blackdiamond_n^{r_k}(\nu_p))} \right)_{n \in \mathbb{N}}$ is b-tight for all $p$, and
\item $\left( \nu_p|_{{\rm{Cut}}(T_p', \bullet^{r_k}(\nu_p))} \right)_{p \in \mathbb{N}}$ is b-tight.
\end{enumerate}
Then there exists a unique isometry class $\boldsymbol{\mu} = [T, d, \rho, \mu] \in \mathbb{T}_{\rm{min}}^{\rm{di}}$ such that $\boldsymbol{\mu}_n \stackrel{\rm{GVme}}{\rightarrow} \boldsymbol{\mu}$ as $n \rightarrow \infty$ and $\mathscr{E}_{h_p}\boldsymbol{\mu} = \boldsymbol{\nu}_p$ for all $p \in \mathbb{N}$.
\end{thm}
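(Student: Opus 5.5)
The plan is to build $\boldsymbol{\mu}$ as the Gromov-vague limit of $(\boldsymbol{\nu}_p)_{p \in \mathbb{N}}$. The first step is to check that the limits $\boldsymbol{\nu}_p$ are consistent. Fix $p < q$, so that $h_p = h_q + (h_p - h_q)$. By the semigroup property (Proposition \ref{prop:MEOp3}, lifted to $\mathbb{T}_{\rm{min}}^{\rm{di}}$),
\[
\mathscr{E}_{h_p}\boldsymbol{\mu}_n = \mathscr{E}_{h_p - h_q}\mathscr{E}_{h_q}\boldsymbol{\mu}_n .
\]
Now apply Theorem \ref{thm:MEGV17} to the sequence $\mathscr{E}_{h_q}\boldsymbol{\mu}_n \stackrel{\rm{GV}}{\rightarrow} \boldsymbol{\nu}_q$, with constant erasure parameter $h_p - h_q$. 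Assumption $(i)$ supplies exactly the b-tightness on $\bullet^{r_k}(\nu_q)$-compatible cut-offs that it requires. One technicality: the $r_k$ should be $\boldsymbol{\nu}_q$-nice. Since nice radii are Lebesgue-a.e. and we have only countably many $\boldsymbol{\nu}_q$, I would perturb $r_k$ slightly, using monotonicity of b-tightness of restrictions to cut-off trees in $r$. The theorem gives $\mathscr{E}_{h_p}\boldsymbol{\mu}_n \stackrel{\rm{GV}}{\rightarrow} \mathscr{E}_{h_p-h_q}\boldsymbol{\nu}_q$, and uniqueness of Gromov-vague limits then yields $\boldsymbol{\nu}_p = \mathscr{E}_{h_p - h_q}\boldsymbol{\nu}_q$ for all $p<q$. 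By Lemma \ref{lemma:MEOp2b}, this realises $T_p'$ as a closed subtree $R_{\nu_q,(h_p-h_q)+}(T_q')$ of $T_q'$, and these embeddings are compatible.

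Next I would construct the candidate limit directly. Take the completion $T$ of the increasing union $\bigcup_p T_p'$ along these isometric embeddings; it is a Polish $\mathbb{R}$-tree. The measures need more care. For a fixed Borel set $A$ inside some $T_p'$ bounded away from the skeleton, one hopes $\nu_q(A)$ is monotone in $q$, but mass erasure is not monotone pointwise, since it adds atoms at branch points. Instead I would define $\mu$ through subtree masses:
\[
\mu(\theta_\sigma T) := \lim_{q} \bigl(\nu_q(\theta_\sigma T_q') + h_q\bigr) ,
\]
which is consistent because $\nu_p(\theta_\sigma)=(\nu_q(\theta_\sigma)-(h_p-h_q))_+$. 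Equivalently, I would show that $(\boldsymbol{\nu}_p)_p$ is Gromov-vaguely Cauchy. For this, on each cut-off tree ${\rm{Cut}}(T_p',\bullet^{r_k}(\nu_p))$, the difference $\nu_q-\nu_p$ restricted to the ball is controlled by $h_q$ times the number of leaves/branch points in the ball, and assumption $(ii)$ controls the mass outside large balls. The limit then lies in $\mathbb{T}_{\rm{min}}^{\rm{di}}$ by Lemma \ref{lemma:MEGV24b}, because $\boldsymbol{\nu}_p = \mathscr{E}_{h_p}(\cdot)$ of elements of $\mathbb{T}_{\rm{min}}^{\rm{di}}$.

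The identification step is to show $\mathscr{E}_{h_p}\boldsymbol{\mu} = \boldsymbol{\nu}_p$. We have $\mathscr{E}_{h_p-h_q}\boldsymbol{\nu}_q = \boldsymbol{\nu}_p$, and $\boldsymbol{\nu}_q \stackrel{\rm{GV}}{\rightarrow} \boldsymbol{\mu}$ with b-tightness given by $(ii)$. Applying Theorem \ref{thm:MEGV17} with $h_n := h_p - h_q \to h_p$ gives $\boldsymbol{\nu}_p = \mathscr{E}_{h_p}\boldsymbol{\mu}$. From this, $\boldsymbol{\mu}_n \stackrel{\rm{GVme}}{\rightarrow}\boldsymbol{\mu}$ holds for $h = h_p$. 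For a general $h>0$, choose $h_p<h$ and apply Theorem \ref{thm:MEGV17} once more to $\mathscr{E}_{h_p}\boldsymbol{\mu}_n \to \boldsymbol{\nu}_p$ with parameter $h-h_p$, using assumption $(i)$ and the semigroup property. Uniqueness follows from Lemma \ref{lemma:MEGV18}$(ii)$.

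The main obstacle is the Cauchy/compactness step: producing the limit of $(\boldsymbol{\nu}_p)$ in the incomplete space $\mathbb{T}_{\rm{min}}^{\rm{di}}$. This requires controlling the count of branch points and leaves of $R_{\nu_q,\cdot}$ in balls uniformly in $q$. I would first try to get this from the discreteness of $T_{p_0}'$ for a fixed $p_0$ together with the embedding structure, since all $T_q'$ agree with $T$ up to fringe pieces of mass less than $h_{p_0}$.
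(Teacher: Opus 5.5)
Your consistency step ($\boldsymbol{\nu}_p=\mathscr{E}_{h_p-h_q}\boldsymbol{\nu}_q$ via the semigroup property, Theorem \ref{thm:MEGV17} and uniqueness of limits), the common tree built from the nested embeddings, the upgrade to arbitrary $h>0$, and uniqueness via Lemma \ref{lemma:MEGV18}$(ii)$ are all fine and match the paper. The gap is the core of the theorem: you never actually produce $\boldsymbol{\mu}$, and the route you sketch for it does not work.

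\emph{The subtree-mass formula is not yet a measure.} The function $\sigma\mapsto\lim_q(\nu_q(\theta_\sigma T)+h_q)$ is the right one. But a set function on the family $\{\theta_\sigma T\}$ still needs countable additivity. Moreover, the limit may charge points of the completion $T\setminus\bigcup_q T_q'$; for a CRT, the mass sits on leaves, which lie outside $\bigcup_p R_{\mu,h_p}$.

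\emph{The Cauchy estimate fails.} You propose to bound the discrepancy between $\nu_p$ and $\nu_q$ on a ball by the erasure parameter times the number of leaves and branch points. This fails for two reasons.
First, $\nu_p=\mathscr{E}_{h_p-h_q}\nu_q$ also contains the projection part. That part transports the mass of every fringe subtree of mass $<h_p-h_q$, and no count controls it.
Second, the counts are not uniformly bounded. Leaves of $R_{\nu,h}$ carry disjoint subtrees of mass at least $h$, so there are only $O(1/h)$ of them, and $h\,|{\rm{Lf}}|$ is genuinely of order one. For example, take a binary tree with edge lengths $2^{-k}$ at depth $k$ and uniform mass on its boundary; then $h=2^{-k}$ gives $2^k$ leaves. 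The signed part $h\varpi$ becomes small only through cancellation between the $-h$ atoms at leaves and the $+h$ atoms at nearby branch points, which counting cannot see. Discreteness of a fixed $T_{p_0}'$ says nothing about the new leaves that $T_q'$, $q>p_0$, grows inside fringe pieces of mass $<h_{p_0}$.

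\emph{Lemma \ref{lemma:MEGV24b} does not apply.} It concerns a fixed $h$, while your $h_p\to0$. By Lemma \ref{lemma:MEGV18}$(i)$, the closure of $\bigcup_{h>0}\mathscr{E}_h(\mathbb{T}_{\rm{min}}^{\rm{di}})$ contains all of $\mathbb{T}_{\rm{min}}^{\rm{di}}$. It therefore also contains non-discretely-infinite limits such as the infinite star in Proposition \ref{prop:DIMeas1}, so that lemma cannot place the limit in $\mathbb{T}_{\rm{min}}^{\rm{di}}$.

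\emph{The missing idea is to un-erase.} Embed everything in one tree (Proposition \ref{prop:MEGV26}, following \cite{DW26} Lemma 4.29). On each cut-off tree set $\lambda_q^r:=\bigl(\nu_q-h_q\varpi_{T_q'\cap{\rm{Cut}}(T,\bullet^{r+\varepsilon})}\bigr)|_{{\rm{Cut}}(T,\bullet^r)}$. These measures have the following properties.
\begin{enumerate}[$(a)$]
\item They are nonnegative: write $\nu_q=\mathscr{E}_{h_q-h_{q+m}}\nu_{q+m}$ via Lemma \ref{lemma:MEOp4} and let $m\to\infty$.
\item They are consistent in $r$, so they glue to measures $\lambda_q$ whose subtree masses are exactly your $\nu_q(\theta_\sigma T)+h_q$.
\item They form a projective system ${\rm{Pr}}_{T_p'}\lambda_q=\lambda_p$.
\end{enumerate}
Assumption $(ii)$ makes the $\lambda_q$ b-tight on the cut-off trees. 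The monotone convergence of projections (Proposition \ref{prop:MEOp12}, resting on \cite{DW26} Proposition 3.9) then yields a measure $\mu$ with ${\rm{Pr}}_{T_p'}\mu=\lambda_p$. This $\mu$ keeps the common skeleton and is finite on cut-off trees, hence lies in $\mathscr{M}_{\rm{di}}(T)$. The identity $\mathscr{E}_{h_p}\mu=\nu_p$ then follows from the subtree masses (Proposition \ref{prop:MEOp13}). With $\boldsymbol{\mu}$ in hand, the rest of your argument goes through.
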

\begin{rmk}
i) By Lemma \ref{lemma:MEGV24b} it is sufficient to check that $(\mathscr{E}_{h_p}\boldsymbol{\mu}_n)_{n \in \mathbb{N}}$ is Cauchy in the Gromov-vague topology to get the existence of a limit $\boldsymbol{\nu}_p \in \mathbb{T}_{\rm{min}}^{\rm{di}}$. ii) If $\mathscr{E}_{h_p}\boldsymbol{\mu} = \boldsymbol{\nu}_p$, then $|{\bullet^r(\nu_p)}| = |{\bullet^r(\mathscr{E}_{h_p}\mu)}| = |{\bullet^r(\mu)}|$, so the cut-off points $\blackdiamond_n^r(\nu_p) = \blackdiamond_n^r$ from Theorem \ref{thm:MEGV27} may be considered as $\bullet^r(\mu)$-compatible cut-off points in $\partial B_{T_n}(\rho_n, r)$.
\demo
\end{rmk}
In \cite{DW26}, Duquesne and Winkel metrized (Gromov--Prokhorov) convergence in the sense of mass erasure, and identified the completion of $\mathbb{T}_{\rm{min}}^{\rm{fin}}$ under this metric as a \emph{bordification} space. We may similarly metrize the notion of Gromov-vague convergence in the sense of mass erasure.

\begin{prop}
\label{prop:MEGV25}
Let $h_p \searrow 0$ be a decreasing sequence of numbers in $(0, \infty)$, and set
$$ \delta_{\rm{GV}}^{\rm{era}}(\boldsymbol{\mu}, \boldsymbol{\nu}) := \sum_{p=1}^{\infty} 2^{-p}\left(1 \wedge d_{\rm{GV}}(\mathscr{E}_{h_p}\boldsymbol{\mu}, \mathscr{E}_{h_p}\boldsymbol{\nu}) \right) \quad \text{for all } \boldsymbol{\mu}, \boldsymbol{\nu} \in \mathbb{T}_{\rm{min}}^{\rm{di}}. $$
Then $(\mathbb{T}_{\rm{min}}^{\rm{di}}, \delta_{\rm{GV}}^{\rm{era}})$ is a metric space, and $\delta_{\rm{GV}}^{\rm{era}}$ metrizes Gromov-vague convergence in the sense of mass erasure on $\mathbb{T}_{\rm{min}}^{\rm{di}}$. The space $(\mathbb{T}_{\rm{min}}^{\rm{di}}, \delta_{\rm{GV}}^{\rm{era}})$ is separable but \emph{not} complete.
\end{prop}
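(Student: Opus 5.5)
The plan has three parts: verify the metric axioms, show the metric induces exactly Gromov-vague convergence in the sense of mass erasure, and then establish separability and non-completeness separately.

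\emph{Metric axioms.} Symmetry and the triangle inequality follow termwise from those of $d_{\rm{GV}}$, since $t \mapsto 1 \wedge t$ is subadditive and nondecreasing. The series converges and is bounded by $1$. For definiteness, suppose $\delta_{\rm{GV}}^{\rm{era}}(\boldsymbol{\mu}, \boldsymbol{\nu}) = 0$. Then $\mathscr{E}_{h_p}\boldsymbol{\mu} = \mathscr{E}_{h_p}\boldsymbol{\nu}$ for every $p$, and Lemma \ref{lemma:MEGV18}$(ii)$ gives $\boldsymbol{\mu} = \boldsymbol{\nu}$.

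\emph{Metrization.} If $\boldsymbol{\mu}_n \stackrel{\rm{GVme}}{\rightarrow} \boldsymbol{\mu}$, each summand tends to zero, and dominated convergence for the series gives $\delta_{\rm{GV}}^{\rm{era}}(\boldsymbol{\mu}_n,\boldsymbol{\mu}) \to 0$. Conversely, if $\delta_{\rm{GV}}^{\rm{era}}(\boldsymbol{\mu}_n,\boldsymbol{\mu}) \to 0$, we obtain $\mathscr{E}_{h_p}\boldsymbol{\mu}_n \stackrel{\rm{GV}}{\rightarrow} \mathscr{E}_{h_p}\boldsymbol{\mu}$ for every $p$, and we must pass to an arbitrary $h>0$. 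Choose $p$ with $h_p < h$. The semigroup property gives $\mathscr{E}_h = \mathscr{E}_{h-h_p}\circ\mathscr{E}_{h_p}$, so I will apply Theorem \ref{thm:MEGV17} to the sequence $\mathscr{E}_{h_p}\boldsymbol{\mu}_n \to \mathscr{E}_{h_p}\boldsymbol{\mu}$ with parameter $h-h_p$.

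That theorem requires b-tightness of the cut-off restrictions of $\mathscr{E}_{h_p}\mu_n$, and this is the main obstacle. Example \ref{ex:MEConv} shows that convergence for each fixed $h_p$ need not propagate by itself. I would try to derive b-tightness from the fact that convergence holds simultaneously for all $p$. Mass of $\mathscr{E}_{h_p}\mu_n$ escaping to infinity off the skeleton is carried by subtrees of mass at least $h_p$. These subtrees survive in $R_{\mu_n,h_{p'}}$ for every $p' \geq p$. They would then contribute a Gromov-vague discrepancy of size at least $h_p-h_{p'}$ at a fixed radius: the mass on the segment leading out to them, or on the branch points created by the erasure, which cannot be matched by the limit. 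If this argument fails in general, I would fall back on showing that $\delta_{\rm{GV}}^{\rm{era}}$ metrizes convergence along the sequence $(h_p)$. That mode is equivalent to the stated one once the tightness is in hand, and a careful check is needed here.

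\emph{Separability and non-completeness.} For separability, I will use that $(\mathbb{T}_{\rm{min}}^{\rm{di}}, d_{\rm{GV}})$ is Lusin, hence separable. By Theorem \ref{thm:MEGV17}, Gromov-vague convergence together with the cut-off b-tightness implies $\delta_{\rm{GV}}^{\rm{era}}$-convergence. I will therefore take a countable dense family consisting of finite discrete trees with finitely many rays, whose measures are finite combinations of atoms plus Lebesgue measure on the rays with rational parameters. For any $\boldsymbol{\mu}$, I approximate each cut-off tree by such trees while preserving tightness, which yields density in $\delta_{\rm{GV}}^{\rm{era}}$.

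For non-completeness, I will exhibit a Cauchy sequence with no limit. Take a fixed discrete tree, say a single ray $[0,\infty)$ with an extra finite branch of length one attached at the root. Put on it $\mathbb{m}$-type infinite mass along the ray, and on the branch the measures $\mu_n$ given by $n$ atoms of mass $1/n$ placed at distinct heights. This imitates the finite-mass counterexample of \cite{DW26}, where the mass erasures converge for every $h$ but the family of limits is not of the form $\mathscr{E}_h$ of any measured tree; the bordification phenomenon appears there. Each $\mathscr{E}_{h_p}\boldsymbol{\mu}_n$ then converges, so the sequence is Cauchy, but the limits fail the consistency needed to come from a single class, for instance because the condition of Theorem \ref{thm:MEGV27}$(ii)$ fails. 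A limit in $\mathbb{T}_{\rm{min}}^{\rm{di}}$ would, by Lemma \ref{lemma:MEGV18}$(ii)$, have to equal this inconsistent family, which is a contradiction.
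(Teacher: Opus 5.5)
Your metric axioms follow the paper's argument: Lemma \ref{lemma:MEGV18}$(ii)$ is exactly the uniqueness-of-limits step used there. The non-completeness example, however, fails. On a ray with a branch of length one at the root, the branch parts of $\boldsymbol{\mu}_n$ are probability measures on a segment of length at most one. So $\{\boldsymbol{\mu}_n\}$ is relatively compact in $d_{\rm{GV}}$, and every subsequential limit lies in $\mathbb{T}_{\rm{min}}^{\rm{di}}$. For $r>1$ the compatible cut-off tree is just $\overline{B}_{T_n}(\rho_n,r)$, so the b-tightness in Theorem \ref{thm:MEGV17} is automatic. Hence some subsequence converges in $\delta_{\rm{GV}}^{\rm{era}}$, and a Cauchy sequence with a convergent subsequence converges. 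With atoms at heights $k/n$ the limit is simply the ray plus Lebesgue measure on the branch.

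Nothing bordification-like can happen on a segment; you need mass spread over infinitely many branches that do not shrink. For instance, graft at the root an infinite binary tree with unit edges and put mass $2^{-n}$ on each of its $2^n$ generation-$n$ vertices. Then $\mathscr{E}_{h_p}\boldsymbol{\mu}_n$ is eventually constant in $n$, equal to some $\boldsymbol{\nu}_p$, so the sequence is Cauchy. Any limit $\boldsymbol{\mu}$ would satisfy $\mathscr{E}_{h_p}\boldsymbol{\mu}=\boldsymbol{\nu}_p$, and hence, by Lemma \ref{lemma:MEGV18}$(i)$, would be the Gromov-vague limit of $\boldsymbol{\nu}_p$. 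Inside any fixed ball the binary part of $\boldsymbol{\nu}_p$ has mass $O(h_p)$, so this limit is the ray with Lebesgue measure. That class is fixed by every $\mathscr{E}_h$ and is not $\boldsymbol{\nu}_p$, a contradiction. (The paper instead transfers the finite-measure counterexample of \cite{DW26}, using that Gromov--Prokhorov convergence implies Gromov-vague convergence.) Note that in such examples the family $(\boldsymbol{\nu}_p)$ is consistent under the semigroup; the obstruction is loss of mass as $p\to\infty$. So calling the family inconsistent, using Lemma \ref{lemma:MEGV18}$(ii)$ instead of $(i)$, and citing failure of the merely sufficient condition Theorem \ref{thm:MEGV27}$(ii)$ do not produce a contradiction.

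In the metrization step you are right that passing from the $h_p$ to an arbitrary $h$ needs the cut-off b-tightness of Theorem \ref{thm:MEGV17} (cf.\ Example \ref{ex:MEConv}). The paper's proof, which only cites the semigroup property, does not spell this out either. But your proposal does not supply it: the heuristic is not an estimate, and the fallback presupposes the tightness it is meant to produce.

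Your heuristic can be made quantitative. Let $\lambda\in\mathscr{M}_{\rm{di}}(T_n)$, $\sigma\in\overline{B}_{T_n}(\rho_n,R)$ and $\Delta\le\lambda(\theta_\sigma T_n)<\infty$. The mass erasure property together with Proposition \ref{prop:MEOp2}$(ii)$ for $T'=\overline{B}_{T_n}(\rho_n,R)$ gives $\mathscr{E}_\Delta\lambda(\theta_\sigma T_n\cap\overline{B}_{T_n}(\rho_n,R))-\lambda(\theta_\sigma T_n\cap\overline{B}_{T_n}(\rho_n,R))=-\Delta+\sum_C\min\{\lambda(C),\Delta\}$. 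Here $C$ ranges over the components of $\theta_\sigma T_n\setminus\overline{B}_{T_n}(\rho_n,R)$. Take $\lambda=\mathscr{E}_{h_{p'}}\mu_n$ and $\Delta=h_p-h_{p'}$, so the left side compares $\mathscr{E}_{h_p}\mu_n$ with $\mathscr{E}_{h_{p'}}\mu_n$. Convergence at both levels then makes $\sum_C\min\{\lambda(C),\Delta\}$ eventually small for large $R$. This forces every escaping component to have mass below $\Delta$, so their total mass is small. The argument still has to be organised over the whole cut-off tree and matched through an embedding system.

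Finally, your separability route (an explicit countable family plus Theorem \ref{thm:MEGV17}) differs from the paper's. The paper replaces $\boldsymbol{\mu}$ by ${\rm{cut}}^{h_0}_{\bullet^R(\mu)}\boldsymbol{\mu}$ with $h_0\ge h_1$, so that every $\mathscr{E}_{h_p}$ is unchanged on balls of radius below $R$. It then uses separability of the mass-erasure metric on $\mathbb{T}_{\rm{min}}^{\rm{fin}}$ from \cite{DW26}. Your route can work, but only once the approximants are actually built with uniformly b-tight compatible cut-off restrictions, which you merely assert.
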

\begin{rmk}
\label{rmk:OpenQuestion}
It remains an open question whether the space $(\mathbb{T}_{\rm{min}}^{\rm{di}}, \delta_{\rm{GV}}^{\rm{era}})$ is Lusin. A natural idea to consider in this context is whether a bordification as in \cite{DW26} may be utilized to characterize a completion of the space $(\mathbb{T}_{\rm{min}}^{\rm{di}}, \delta_{\rm{GV}}^{\rm{era}})$. This is however not straight-forward, even in the simpler setting where one considers a fixed $\mathbb{R}$-tree $(T, d, \rho)$. Recall from \cite{DW26} Theorem 2.13 that the bordification $T^*$ is equipped with a metric $d^*$ which distorts distances in a way such that all points in $(T^*, d^*)$ are at most distance $2$ away from each other. If one considers a discretely infinite measure $\mu$ on $T$ the distortion of the metric under the bordification will mean that infinite mass is squeezed into a closed ball of radius $2$ when considering the measure $\mu$ on $(T^*, d^*)$. As such $\mu \notin \mathscr{M}_{\rm{bf}}(T^*)$, and so the setting collapses. A similar issue naturally persists when one considers the more general setting of isometry classes.
\demo
\end{rmk}
Although it remains an open question whether the space $(\mathbb{T}_{\rm{min}}^{\rm{di}}, \delta_{\rm{GV}}^{\rm{era}})$ is Lusin, we may still make sense of Gromov-vague convergence in the sense of mass erasure for \emph{random} isometry classes. Indeed, the mass erasures may be considered as well-defined random variables on the Lusin space $(\mathbb{T}_{\rm{min}}^{\rm{di}}, d_{\rm{GV}})$. A sequence of random isometry classes in $\mathbb{T}_{\rm{min}}^{\rm{di}}$ will then be said to converge \emph{in distribution} in the Gromov-vague sense of mass erasure, if for all $h > 0$ the associated mass erasures convergence in distribution in the Lusin space $(\mathbb{T}_{\rm{min}}^{\rm{di}}, d_{\rm{GV}})$. In our companion paper \cite{Draft2}, we extend the results outlined here to a probabilistic setting, in order to prove an invariance principle for supercritical Galton--Watson $\mathbb{R}$-forests which do not satisfy Grey's condition, analogously to what was done by Duquesne and Winkel in \cite{DW26}.
\subsection{Techniques and cut-off methods}
\label{subsec:MainTechnique}
\emph{Local compactness and leaf-length-erasure:} As the notion of (local) compactness plays a central role in the difficulties we are trying to address with mass erasure, let us heuristically describe how this is expressed in $\mathbb{R}$-trees. Let $(T, d, \rho)$ be a Polish $\mathbb{R}$-tree. If it is (locally) compact, it may still be somewhat ill-behaved in the sense that we may observe i) infinite degree branch points, i.e. points $\sigma \in T$ with $n(\sigma, T) = +\infty$, and ii) accumulations of branch points, i.e. convergent sequences $(\sigma_n)_{n \in \mathbb{N}}$ of distinct points in ${\rm{Bp}}(T)$. In either of the cases i) and ii), (local) compactness will ensure that the heights of the countably many subtrees associated to either i) or ii) will converge to zero. In a general Polish $\mathbb{R}$-tree, these subtrees may not have heights tending to zero -- indeed, one may encounter the phenomena i) and ii) in a setting where all of the countably many subtrees are e.g. of fixed or infinite height. See Figure \ref{fig:LC}.

\begin{figure}[b]
\center
\includegraphics[width=\textwidth]{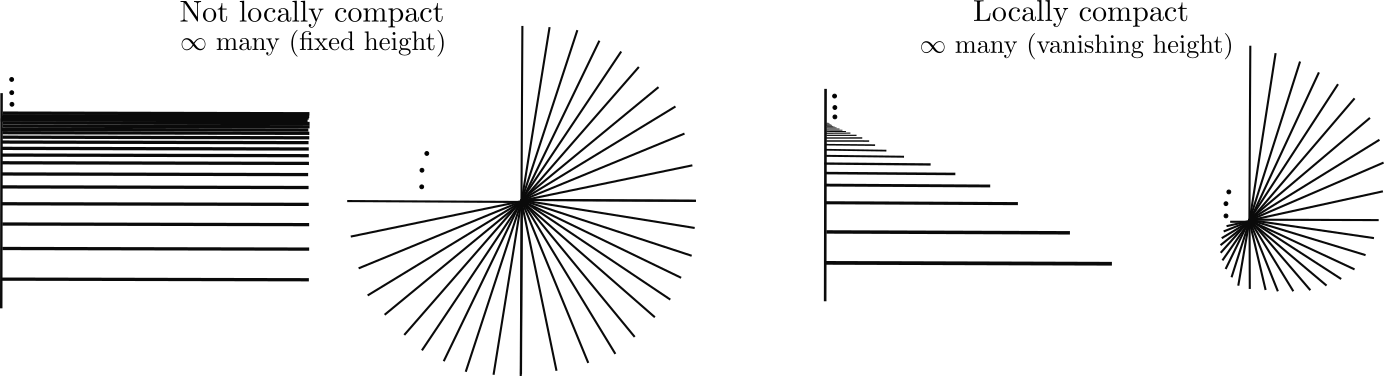}
\caption{Illustration of how i) accumulations of branch points, and ii) infinite degree branch points may appear in, respectively, Polish $\mathbb{R}$-trees and locally compact $\mathbb{R}$-trees.}
\label{fig:LC}
\end{figure}

Let $(T, d, \rho)$ be a complete locally compact $\mathbb{R}$-tree. A useful technique for examining the convergence of a sequence of such $\mathbb{R}$-trees is \emph{leaf-length erasure}, also known as \emph{trimming} \cite{DW19, EPW06, N86, NP89a, NP89b}. Given some erasure parameter $h > 0$, one constructs the $h$-leaf-length erased subtree by setting $ R_h(T) := \{\rho\} \cup \{\sigma \in T ~|~ {\rm{Ht}}(\theta_{\sigma}T) \geq h \}$, i.e. by trimming away all points which have above them a subtree of height strictly less than $h$. As seen in the illustration of local compactness in Figure \ref{fig:LC}, this will get rid of countably many of the small fringe subtrees that may appear in infinite degree branch points or accumulations of branch points, and one may indeed prove that the $h$-leaf-length erased subtree is a discrete $\mathbb{R}$-tree \cite{DW19}. In \cite{DW19} Duquesne and Winkel moreover show that a sequence of complete locally compact $\mathbb{R}$-trees will converge in the local Gromov--Hausdorff topology (see Section \ref{subsec:Gromov} for an overview of the various Gromov-type topologies) if and only if the $h$-leaf-length erased subtrees converge in the local Gromov--Hausdorff topology across all $h > 0$. In other words, local Gromov--Hausdorff convergence is induced by convergence of the much simpler $h$-leaf-length erased subtrees. This has been used to prove invariance principles for Galton--Watson forests, \emph{assuming Grey's condition} (which ensures local compactness of the limiting $\mathbb{R}$-tree), as seen in \cite{DW19}.

Suppose now instead that $(T, d, \rho)$ is a Polish $\mathbb{R}$-tree, which may not be locally compact. By Figure \ref{fig:LC} erasing fringe subtrees according to their height is not a technique particularly well-suited to dealing with general Polish $\mathbb{R}$-trees. Mass erasure thus provides a suitable alternative to this technique by instead removing fringe subtrees according to their \emph{mass}, allocated by some measure $\mu \in \mathscr{M}_{\rm{di}}(T)$. As noted in Lemma \ref{lemma:MEFix2} the mass-erased subtrees obtained are discrete $\mathbb{R}$-trees, and may be studied in the local Gromov-Hausdorff topology, as in Theorem \ref{thm:MEGV23b}.\\

\noindent \emph{Cut-off methods:} As a general theme throughout this paper, we will be considering the \emph{cut-off points}, \emph{cut-off trees} and \emph{cut-off measures} of $(T, d, \rho, \mu)$.

Let $(T, d, \rho)$ be a Polish $\mathbb{R}$-tree, and let $\mu \in \mathscr{M}_{\rm{di}}(T)$. Let $\blackdiamond^r$ be a finite collection of points on the level-$r$ boundary $\partial B_T(\rho, r)$. We may construct a \emph{cut-off tree} with respect to the \emph{cut-off points} $\blackdiamond^r$, by cutting off any subtrees above the points $\blackdiamond^r$ in $T$. We define the cut-off tree with respect to $\blackdiamond^r$ by
$$ {\rm{Cut}}(T, \blackdiamond^r) := \overline{B}_T(\rho, r) \cup \bigcup_{\boldsymbol{\sigma} \in \partial B_T(\rho, r) \setminus \blackdiamond^r} \theta_{\boldsymbol{\sigma}}T, $$
and note that ${\rm{Cut}}(T, \blackdiamond^r)$ is a closed rooted subtree of $T$ with boundary $\partial {\rm{Cut}}(T, \blackdiamond^r) = \blackdiamond^r$.
We may for any $h_0 > 0$ define an associated \emph{cut-off measure}, which removes any mass outside of ${\rm{Cut}}(T, \blackdiamond^r)$, and then associates a point mass of $h_0$ to every cut-off point $\blackdiamond^r$, i.e. by setting
\begin{equation} 
\label{eq:CutMeas}
{\rm{cut}}_{\blackdiamond^r} \mu = {\rm{cut}}_{\blackdiamond^r}^{h_0} \mu := \mu|_{{\rm{Cut}}(T, \blackdiamond^r)} + h_0\sum_{\boldsymbol{\sigma} \in \blackdiamond^r} \delta_{\boldsymbol{\sigma}}.
\end{equation}
It is easily checked that ${\rm{cut}}_{\blackdiamond^r} \mu \in \mathscr{M}_{\rm{di}}(T)$. The most natural cut-off points to consider will be the points on $\partial B_T(\rho, r)$ which have infinite $\mu$-mass above them. We denote these `$\mu$-intrinsic' cut-off points by $\bullet^r = \bullet^r(\mu) := \partial B_T(\rho, r) \cap {\rm{Skel}}(T, \mu)$, and note that $|{\bullet^r(\mu)}| < \infty$ for all $r > 0$ due to the assumption that $\mu \in \mathscr{M}_{\rm{di}}(T)$. Using this notation, we moreover note that condition $(ii)$ in Definition \ref{def:DIMeas} may be replaced by the condition $\mu\left( {\rm{Cut}}(T, \bullet^r(\mu)) \right) < \infty$ for all $r > 0$. See Figure \ref{fig:MEFix} for an illustration of the cut-off points $\bullet^r(\mu)$ and their associated cut-off tree.

\begin{figure}[t]
\center
\includegraphics[width=0.6\textwidth]{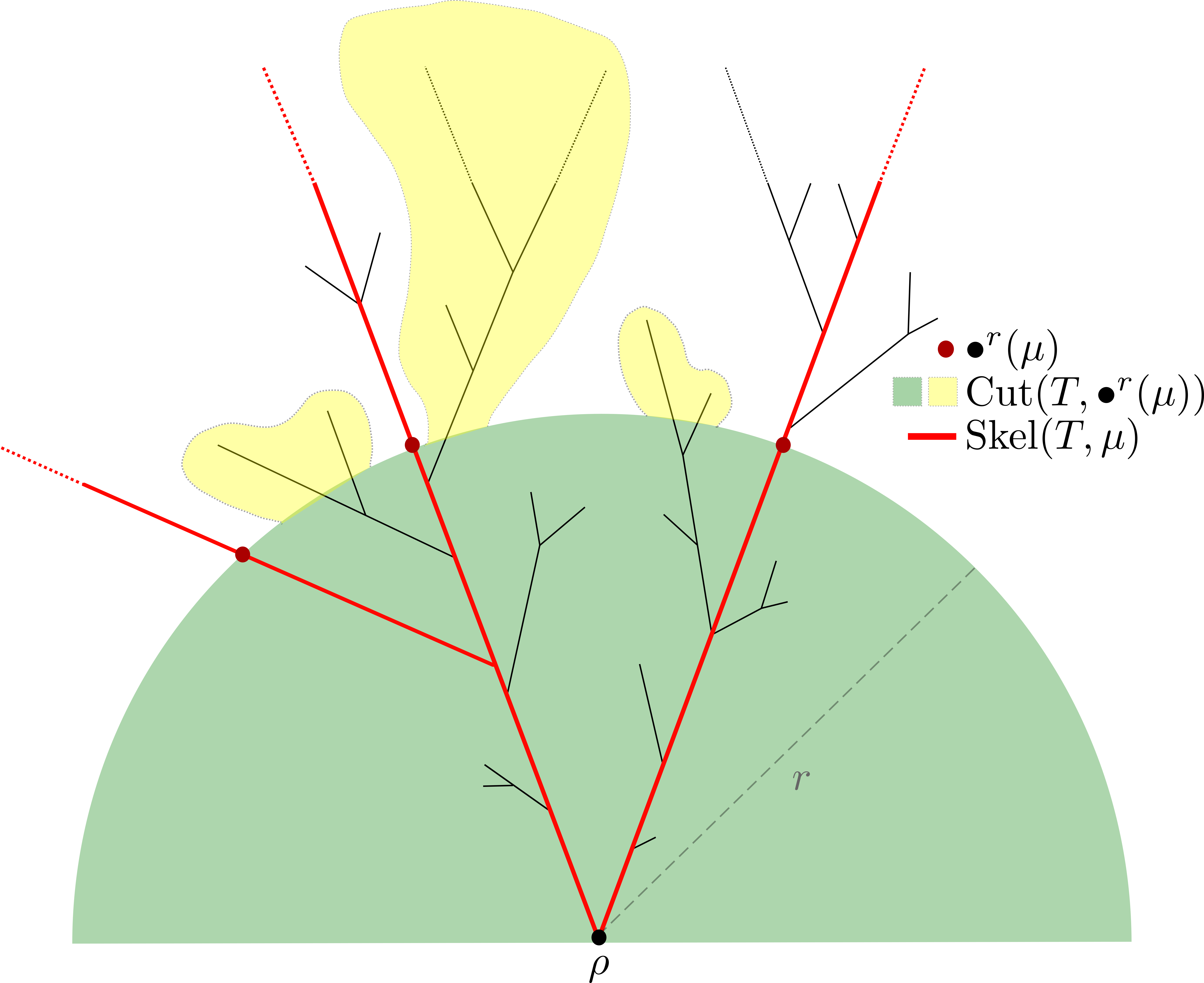}
\caption{The skeleton, intrinsic cut-off points and associated cut-off tree of a discretely infinitely measured Polish $\mathbb{R}$-tree $(T, d, \rho, \mu)$.}
\label{fig:MEFix}
\end{figure}

For the intrinsic cut-off points, clearly ${\rm{cut}}_{\bullet^r(\mu)}^{h_0} \mu$ is in $\mathscr{M}_{\rm{fin}}(T)$ with support on ${\rm{Cut}}(T, \bullet^r(\mu))$. When considering $h$-mass-erased subtrees, we will typically choose $h_0 \geq h$. As the specific choice of $h_0$ rarely matters beyond this, we will often suppress the parameter in our notation. 

Let us illustrate how the cut-off method is generally applied by outlining the proof-strategy for Lemma \ref{lemma:MEFix2}. Write $\bullet^r = \bullet^r(\mu)$, and let $h_0 \geq h$. To prove Lemma \ref{lemma:MEFix2}, one may make the crucial observation that $R_{\mu, h}(T) \cap {\rm{Cut}}(T, \bullet^r) = R_{{\rm{cut}}_{\bullet^r}^{h_0}\mu, h}(T)$ (see Lemma \ref{lemma:MEFix1}), in other words, when restricting to the cut-off tree at level $r$, the mass-erased subtree under the discretely infinite measure $\mu$ is precisely the mass-erased subtree under the \emph{finite} cut-off measure ${\rm{cut}}_{\bullet^r}^{h_0} \mu$. As the theory of mass erasure with finite measures has been developed in \cite{DW26} by Duquesne and Winkel, we may apply their results to the $h$-mass-erased subtree $R_{{\rm{cut}}_{\bullet^r}^{h_0}\mu, h}(T)$. This allows us to conclude that $R_{\mu, h}(T) \cap {\rm{Cut}}(T, \bullet^r(\mu))$ is a compact discrete $\mathbb{R}$-tree, and as this is the case for every $r > 0$ (in particular as $r \rightarrow \infty$), we may arrive at the conclusion from Lemma \ref{lemma:MEFix2}. 

Throughout this work, we will frequently make use of this general method, where we restrict our object of interest to some cut-off tree/measure at level $r$ and then translate it into something that may be expressed in terms of finite mass erasure. This allows us to apply relevant results from \cite{DW26} to obtain our desired properties \emph{locally} at level $r$. Doing this across different radii $r > 0$ as $r \rightarrow \infty$, may allow us to make global statements based on these local properties. Indeed, this local approach goes nicely together with the idea of vague convergence (see Section \ref{subsec:MainVague}).

Let us finally note that cut-off measures may also be appropriately considered in the setting of isometry classes. Let $\boldsymbol{\mu} = [T, d, \rho, \mu] \in \mathbb{T}_{\rm{min}}^{\rm{di}}$, and let $\blackdiamond^r$ be a collection of cut-off points in $\partial B_T(\rho, r)$. If $\left( \widetilde{T}, \widetilde{d}, \widetilde{\rho}, \widetilde{\mu} \right)$ is another representative of the isometry class $\boldsymbol{\mu}$, then there must exist a collection of points $\tilde{\blackdiamond}^r$ in $\partial B_{\widetilde{T}}(\widetilde{\rho}, r)$ with $|{\tilde{\blackdiamond}^r}| = |{\blackdiamond^r}|$ such that the minimal spaces generated by ${\rm{cut}}_{\tilde{\blackdiamond}^r}\widetilde{\mu}$ and ${\rm{cut}}_{\blackdiamond^r}\mu$ are isometric. Indeed, any measured space which is isometric to the minimal space generated by ${\rm{cut}}_{\blackdiamond^r}\mu$, will be possible to construct from a representative of $\boldsymbol{\mu}$ by picking $|{\blackdiamond^r}|$ appropriate points on the boundary at level $r$. With a slight abuse of notation, noting of course that $\blackdiamond^r$ depends on the specific representative of the isometry class $\boldsymbol{\mu}$, we set
\begin{align*}
{\rm{cut}}_{\blackdiamond^r}\boldsymbol{\mu} &:= \left[\overline{{\rm{Span}}({\rm{supp}}({\rm{cut}}_{\blackdiamond^r}\mu))}, d, \rho, {\rm{cut}}_{\blackdiamond^r}\mu\right].
\end{align*}
\section{Measures and topologies}
\label{sec:3}
For any subset $A \subseteq \mathscr{X}$ and any $\varepsilon > 0$, we denote the $\varepsilon$-neighbourhood of $A$ by $A^{\varepsilon} := \{ x \in \mathscr{X} ~|~ d(x,A) \leq \varepsilon\}$ where $d(x,A) := \inf\{ d(x,y) ~|~ y \in A\}$. Let $\mathfrak{K}(\mathscr{X})$ denote the collection of non-empty compact subsets of $\mathscr{X}$. We define the Hausdorff distance $d_{\rm{H}}$ on $\mathfrak{K}(\mathscr{X})$ by
$$ d_{\rm{H}}(A, B) := \inf\{ \varepsilon > 0 ~|~ A \subseteq B^{\varepsilon}, ~ B \subseteq A^{\varepsilon} \} \quad \text{for all } A,B \in \mathfrak{K}(\mathscr{X}).$$
It is well-known that $(\mathfrak{K}(\mathscr{X}), d_{\rm{H}})$ is a metric space which is complete whenever $(\mathscr{X}, d)$ is complete, and compact whenever $(\mathscr{X}, d)$ is compact (see e.g., \cite{BBI01} Section or \cite{E08} Section 4.1). Moreover, it is known from \cite{BBI01} Section 7.3.1 that if $A_n \supseteq A_{n+1}$ is a non-increasing sequence of sets in $\mathfrak{K}(\mathscr{X})$, then setting $A = \bigcap_{n \in \mathbb{N}} A_n$ it holds that $\lim_{n \rightarrow \infty}d_H(A_n, A) = 0$. Similarly, if $A_n \subseteq A_{n+1}$ is a non-decreasing sequence of sets in $\mathfrak{K}(\mathscr{X})$, then setting $A = \overline{\bigcup_{n \in \mathbb{N}} A_n}$ it holds that $\lim_{n \rightarrow \infty} d_H(A_n, A) = 0$. The Hausdorff distance can be localized to the set $\mathfrak{M}(\mathscr{X}) := \left\{ A \subseteq \mathscr{X} ~|~ A \neq \emptyset, ~ A \text{ is closed and locally compact} \right\}$, by setting
$$ d_{\rm{H}}^{\rm{loc}}(A, B) := \sum_{k=1}^{\infty} \frac{1}{2^k} d_{\rm{H}}(A \cap \overline{B}_{\mathscr{X}}(\rho, k), B \cap \overline{B}_{\mathscr{X}}(\rho, k)) \quad \text{for all } A, B \in \mathfrak{M}(\mathscr{X}).$$
Then $(\mathfrak{M}(\mathscr{X}), d_{\rm{H}}^{\rm{loc}})$ is a metric space, and the topology generated by $d_{\rm{H}}^{\rm{loc}}$ is independent of the choice of point $\rho$. It can be checked that $(\mathfrak{M}(\mathscr{X}), d_{\rm{H}}^{\rm{loc}})$ is Polish whenever $(\mathscr{X}, d)$ is Polish. This is done in an analogous manner to \cite{DW07} Proposition 3.3. It is moreover easily checked that if $A_n \supseteq A_{n+1}$ is a non-increasing sequence of sets in $\mathfrak{M}(\mathscr{X})$, then for $A = \bigcap_{n \in \mathbb{N}} A_n$ it holds that $\lim_{n \rightarrow \infty} d_{\rm{H}}^{\rm{loc}}(A_n, A) =0$, using the fact that the property holds in $(\mathfrak{K}(\mathscr{X}), d_{\rm{H}})$, and applying the dominated convergence theorem. Similarly, if $A_n \subseteq A_{n+1}$ is a non-decreasing sequence of sets in $\mathscr{M}(\mathscr{X})$, then for $A = \overline{\bigcup_{n \in \mathbb{N}} A_n}$ it holds that $\lim_{n \rightarrow \infty} d_{\rm{H}}^{\rm{loc}}(A_n, A) =0$.

We define the Prokhorov distance $d_{\rm{P}}$ on $\mathscr{M}_{\rm{fin}}(\mathscr{X})$ by
$$ d_{\rm{P}}(\mu, \nu) := \inf\left\{ \varepsilon > 0 ~\Big{|}~ \forall A \subseteq \mathscr{X} \text{ closed}: ~ \begin{array}{l}
\mu(A) \leq \nu(A^{\varepsilon}) + \varepsilon\\ 
\nu(A) \leq \mu(A^{\varepsilon}) + \varepsilon
\end{array} \right\} \quad \text{for all } \mu, \nu \in \mathscr{M}_{\rm{fin}}(\mathscr{X}),$$
such that $(\mathscr{M}_{\rm{fin}}(\mathscr{X}), d_{\rm{P}})$ and $(\mathscr{M}_{\rm{prob}}(\mathscr{X}), d_{\rm{P}})$ are metric spaces, which are known to be Polish whenever $(\mathscr{X}, d)$ is Polish, (see e.g., \cite{DVJ03} Appendix A2.5). We also recall from \cite{DVJ03} Appendix A2.5 that the Prokhorov distance metrizes weak convergence of finite measures, i.e. $d_{\rm{P}}(\mu_n, \mu) \rightarrow 0$ if and only if $\mu_n \stackrel{\rm{wk}}{\rightarrow} \mu$ for $(\mu_n)_{n \in \mathbb{N}}$ and $\mu$ measures in $\mathscr{M}_{\rm{fin}}(\mathscr{X})$.
\subsection{The vague topology}
\label{subsec:BFmeas}
In this section we discuss the notion of \emph{vague convergence} for \emph{boundedly finite measures}, as recalled in Section \ref{subsec:MainVague}. Vague convergence is often understood to be defined as a functional form of convergence for Radon measures on Heine--Borel spaces. Instead of taking the classical functional approach to vague convergence, we will be following the approach of Athreya, Löhr and Winter \cite{ALW16} in which they introduce a notion of \emph{Gromov-vague} and \emph{Gromov--Hausdorff-vague} convergence, not just in a Heine--Borel setting, but for isometry classes of general Polish spaces equipped with boundedly finite measures. Before discussing the `Gromov-type' convergences of \cite{ALW16} in Section \ref{subsec:Gromov}, we will follow their approach to get a similar and suitable metrization of vague convergence on the space of boundedly finite measures in a fixed pointed Polish space $(\mathscr{X}, d, \rho)$.

We note here for context that one might instead be inclined to suggest the \emph{weak$^{\#}$} topology (metrized through the \emph{weak$^{\#}$ distance}, $d^{\#}$) as known from e.g., Daley and Vere-Jones \cite{DVJ03} Appendix A2.6, when considering a suitable topology for the space $\mathscr{M}_{\rm{bf}}(\mathscr{X})$. However, as discussed by Morariu-Patrichi \cite{MP18}, there are some issues with the proof in \cite{DVJ03} that $(\mathscr{M}_{\rm{bf}}(\mathscr{X}), d^{\#})$ forms a Polish space, which to our knowledge have only been resolved in specific settings, such as the one in \cite{MP18} where only \emph{integer-valued} boundedly finite measures are considered. To avoid these issues, and provide more consistency with the metrizations provided in the Gromov-setting, we will base our results solely on the approach of \cite{ALW16}. To remain faithful to their naming of the Gromov-type topologies, we will refer to our topology on $\mathscr{M}_{\rm{bf}}(\mathscr{X})$ as the \emph{vague} topology. We note indeed that convergence in our vague topology will be equivalent to weak$^{\#}$ convergence, and so (as discussed in \cite{DVJ03, LR16}) be consistent with the traditional notion of vague convergence, when considering the special case where $(\mathscr{X}, d)$ is a Heine--Borel space (in which case any boundedly finite measure on $\mathscr{X}$ automatically becomes a Radon measure).
\begin{defin}[Vague topology]
\label{def:BFMeas}
We define a metric on $\mathscr{M}_{\rm{bf}}(\mathscr{X})$ by setting
$$ d_V(\mu, \nu) := \int_0^{\infty} e^{-r} (1 \wedge d_P(\mu^{\restr r}, \nu^{\restr r})) dr \quad \text{for all } \mu, \nu \in \mathscr{M}_{\rm{bf}}(\mathscr{X}), $$
and call the topology induced by this metric, the \emph{vague topology}.
\end{defin}
Convergence in the vague topology is denoted by $\mu_n \stackrel{\rm{vg}}{\rightarrow} \mu$, and it is immediately clear that $d_{\rm{V}} \leq d_{\rm{P}}$. The following proposition is analogous to \cite{ALW16} Lemma 2.6, and ensures that convergence under the vague topology coincides with our notion of vague convergence from Section \ref{subsec:MainVague}.
\begin{prop}
\label{prop:BFMeas1}
Let $(\mathscr{X}, d, \rho)$ be a pointed Polish space, and let $\mu, \mu_n \in \mathscr{M}_{\rm{bf}}(\mathscr{X})$, $n \in \mathbb{N}$. Then the following statements are equivalent:
\begin{itemize}
\item[$(a)$] $\mu_n^{\restr r} \stackrel{\rm{wk}}{\rightarrow} \mu^{\restr r}$ as $n \rightarrow \infty$ for all $r > 0$ with $\mu(\partial B_{\mathscr{X}}(\rho, r)) = 0$.
\item[$(b)$] $\mu_n^{\restr r} \stackrel{\rm{wk}}{\rightarrow} \mu^{\restr r}$ as $n \rightarrow \infty$ for Lebesgue-almost all $r > 0$.
\item[$(c)$] $\mu_n^{\restr r_k} \stackrel{\rm{wk}}{\rightarrow} \mu^{\restr r_k}$ as $n \rightarrow \infty$ for all $k \in \mathbb{N}$, and some increasing sequence of radii $r_k \rightarrow \infty$.
\item[$(d)$] $\mu_n \stackrel{\rm{vg}}{\rightarrow} \mu$ as $n \rightarrow \infty$.
\end{itemize}
\end{prop}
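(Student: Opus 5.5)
The plan is to prove the cycle $(a)\Rightarrow(b)\Rightarrow(c)\Rightarrow(d)\Rightarrow(a)$, following the structure of \cite{ALW16} Lemma 2.6.

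Start with $(a)\Rightarrow(b)$. The function $r \mapsto \mu(\overline{B}(\rho,r))$ is non-decreasing and finite because $\mu$ is boundedly finite. Hence it has at most countably many jumps. Since $\partial B(\rho,r) \subseteq \{x : d(\rho,x)=r\}$ and these level sets are disjoint, $\mu(\partial B(\rho,r))>0$ for at most countably many $r$. Such a set is Lebesgue-null, so $(b)$ follows.

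For $(b)\Rightarrow(c)$, the set of good radii is of full measure, so we can pick an increasing sequence $r_k\to\infty$ inside it. For $(c)\Rightarrow(d)$, the first step is to upgrade $(c)$ to weak convergence of $\mu_n^{\restr r}$ for every $r$ with $\mu(\partial B(\rho,r))=0$. Fix such an $r$ and choose $r_k > r$. Write $\mu_n^{\restr r} = (\mu_n^{\restr r_k})|_{\overline{B}(\rho,r)}$. Weak convergence is preserved under restriction to a closed set whose boundary (relative to the ambient space) is null for the limit. The relevant boundary is contained in $\partial B(\rho,r)$, whose $\mu^{\restr r_k}$-mass is zero. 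This step uses the portmanteau theorem, and it is where the nice-radius assumption is really needed. The space $\mathscr{X}$ is Polish, so the Prokhorov metric metrizes weak convergence, and therefore $d_P(\mu_n^{\restr r},\mu^{\restr r})\to 0$ for Lebesgue-a.e.\ $r$. Dominated convergence with dominating function $e^{-r}$ then gives $d_V(\mu_n,\mu)\to0$.

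The main step is $(d)\Rightarrow(a)$. From $d_V(\mu_n,\mu)\to 0$ we get that the integrands $1\wedge d_P(\mu_n^{\restr r},\mu^{\restr r})$ converge to $0$ in $L^1(e^{-r}dr)$. This gives convergence for a.e.\ $r$ only along a subsequence, and the subsequence may depend on where we start. To handle this, use the subsequence principle: take any subsequence and extract a further subsequence along which $\mu_n^{\restr r}\to\mu^{\restr r}$ weakly for a.e.\ $r$. Apply the restriction/portmanteau argument from the previous step along that further subsequence. This yields convergence at every $r$ with $\mu(\partial B(\rho,r))=0$, by choosing a good $r'>r$ and restricting. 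The limit $\mu^{\restr r}$ does not depend on the subsequence, so the full sequence $(\mu_n^{\restr r})$ converges weakly, which is $(a)$.

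The step I expect to need the most care is this restriction argument, so I will state it as a standalone fact. If $\nu_n\stackrel{\rm{wk}}{\rightarrow}\nu$ are finite measures and $F$ is closed with $\nu(\partial F)=0$, then $\nu_n|_F\stackrel{\rm{wk}}{\rightarrow}\nu|_F$. The proof uses portmanteau bounds on $\nu_n(G\cap F)$ for closed $G$, comparing $G\cap F$ with $G\cap F^{\circ}$. Here the open ball is contained in the interior of the closed ball, so the boundary of $\overline{B}(\rho,r)$ is indeed contained in $\partial B(\rho,r)$ as defined. Total masses converge because $\nu(\partial F)=0$.
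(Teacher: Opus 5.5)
Your proof is correct and rests on the same key step as the paper's: weak convergence on a larger ball $\overline{B}(\rho,r')$ passes, via the portmanteau theorem, to a smaller ball $\overline{B}(\rho,r)$ with $\mu(\partial B(\rho,r))=0$. Your version is more complete than the paper's sketch, which calls $(c)\Rightarrow(d)$ trivial and in $(d)\Rightarrow(a)$ simply takes weak convergence at some $r_k\geq r$ for granted; your countability of bad radii plus dominated convergence, and your sub-subsequence extraction from $d_V(\mu_n,\mu)\to 0$, are exactly what justify those two steps.
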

\begin{proof}
The implications $(a) \Rightarrow (b) \Rightarrow (c) \Rightarrow (d)$ are trivial. For the implication $(d) \Rightarrow (a)$, fix some $r > 0$ with $\mu(\partial B(\rho, r)) = 0$, and choose some $r_k \geq r$. Then $\mu_n^{\restr r_k} \stackrel{\rm{wk}}{\rightarrow} \mu^{\restr r_k}$, and so the Portmanteau theorem implies that $\mu_n^{\restr r} \stackrel{\rm{wk}}{\rightarrow} \mu^{\restr r}$ as $n \rightarrow \infty$, as wanted.
\end{proof}
As a consequence of the Portmanteau theorem for finite measures, we see that vague convergence is stable under restriction to \emph{$\mu$-continuity sets}, i.e. Borel sets $\Theta \in \mathscr{B}(\mathscr{X})$ with $\mu(\partial \Theta) = 0$. 
\begin{lemma}
\label{lemma:BFMeas2}
Let $(\mathscr{X}, d, \rho)$ be a pointed Polish space, and let $\mu, \mu_n \in \mathscr{M}_{\rm{bf}}(\mathscr{X})$, $n \in \mathbb{N}$. Suppose that $\mu_n \stackrel{\rm{vg}}{\rightarrow} \mu$ as $n \rightarrow \infty$, and let $\Theta$ be a $\mu$-continuity set in $\mathscr{X}$. Then
$$ \mu_n|_{\Theta} \stackrel{\rm{vg}}{\rightarrow} \mu|_{\Theta} \quad \text{as } n \rightarrow \infty \quad \text{in } \mathscr{M}_{\rm{bf}}(\mathscr{X}). $$
\end{lemma}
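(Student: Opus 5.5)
To prove Lemma \ref{lemma:BFMeas2}, I would reduce to the finite-measure Portmanteau theorem on balls, using the characterization of vague convergence in Proposition \ref{prop:BFMeas1}. By the implication $(d) \Rightarrow (b)$ there, it is enough to show that $(\mu_n|_{\Theta})^{\restr r} \stackrel{\rm{wk}}{\rightarrow} (\mu|_{\Theta})^{\restr r}$ for Lebesgue-almost all $r>0$. Once that holds, the implication $(b) \Rightarrow (d)$ gives $\mu_n|_\Theta \stackrel{\rm{vg}}{\rightarrow} \mu|_\Theta$.

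Fix $r > 0$ with $\mu(\partial B(\rho, r)) = 0$. This excludes at most countably many $r$: the spheres $\partial B(\rho,r)$ are contained in the disjoint sets $\{d(\rho,\cdot)=r\}$, and $\mu$ is boundedly finite. By Proposition \ref{prop:BFMeas1}$(a)$ we have $\mu_n^{\restr r} \stackrel{\rm{wk}}{\rightarrow} \mu^{\restr r}$ as finite measures. Observe that $(\mu_n|_\Theta)^{\restr r} = \mu_n^{\restr r}|_{\Theta}$, and likewise for $\mu$. So we need to pass from weak convergence of the finite measures $\nu_n := \mu_n^{\restr r}$ to $\nu := \mu^{\restr r}$ to weak convergence of their restrictions to $\Theta$.

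For this I use the Portmanteau theorem. Take a bounded continuous $f$, or equivalently a closed set $F$. I check the limsup and liminf conditions for $\nu_n(\cdot \cap \Theta)$. For a closed $F$, the set $A = F \cap \overline{\Theta}$ is closed, so $\limsup_n \nu_n(F\cap\Theta) \le \limsup_n \nu_n(A) \le \nu(A) = \nu(F \cap \Theta)$. The last equality uses $\nu(\partial\Theta) \le \mu(\partial \Theta)=0$. Similarly, for an open $G$ we get $\liminf_n \nu_n(G \cap \Theta) \ge \liminf_n \nu_n(G\cap \Theta^\circ) \ge \nu(G\cap\Theta^\circ) = \nu(G\cap \Theta)$. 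Total masses converge because $\Theta$ is itself a $\nu$-continuity set, so $\nu_n(\Theta)\to\nu(\Theta)$. Together these give weak convergence of the finite measures $\nu_n|_\Theta \to \nu|_\Theta$.

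The only mildly delicate point is that the Portmanteau theorem is usually stated for probability measures. For finite measures I would either normalize, handling the degenerate case $\nu(\Theta)=0$ directly, or invoke the finite-measure version, which needs the convergence of total mass checked above. Beyond that the argument is routine.
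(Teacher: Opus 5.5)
Your proof is correct and follows essentially the same route as the paper: restrict to balls $\overline{B}(\rho,r)$ with $\mu(\partial B(\rho,r))=0$, note that $(\mu_n|_\Theta)^{\restr r}=\mu_n^{\restr r}|_\Theta$, and apply the finite-measure Portmanteau theorem using $\mu(\partial\Theta)=0$. The only difference is cosmetic. The paper checks $\mu_n^{\restr r}(\Theta\cap A)\to\mu^{\restr r}(\Theta\cap A)$ for $\mu$-continuity sets $A$, whereas you verify the closed/open-set inequalities together with convergence of total mass; this has the small advantage of not needing the fact that $\mu$-continuity sets, rather than $\mu|_{\Theta}^{\restr r}$-continuity sets, already determine weak convergence.
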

\begin{proof}
Take some $\mu$-continuity set $A$, and some $r > 0$ with $\mu(\partial B(\rho, r)) =0$. Then by vague convergence, and the Portmanteau theorem for finite measures we have that $ \mu_n|_{\Theta}^{\restr r}(A) = \mu_n^{\restr r}(\Theta \cap A) \rightarrow \mu^{\restr r}(\Theta \cap A ) = \mu|_{\Theta}^{\restr r}(A)$ as $n \rightarrow \infty$. As this holds for all $\mu$-continuity sets $A$, we conclude again by the Portmanteau theorem for finite measures that $\mu_n|_{\Theta}^{\restr r} \stackrel{\rm{wk}}{\rightarrow} \mu|_{\Theta}^{\restr r}$ as $n \rightarrow \infty$, as wanted.
\end{proof}
The following estimates allow us to compare the vague distance with the Prokhorov distance on fixed balls.
\begin{lemma}
\label{lemma:BFMeas3}
Let $(\mathscr{X}, d, \rho)$ be a pointed Polish space, and let $\mu, \nu \in \mathscr{M}_{\rm{bf}}(\mathscr{X})$. Let $R > 0$ and $\varepsilon \in (0, e^{-R})$. Then
\begin{enumerate}[$(i)$]
\item if $d_{\rm{P}}(\mu^{\restr R}, \nu^{\restr R}) \leq \varepsilon$, then $d_V(\mu, \nu) \leq \varepsilon\left( 1 + \mu(\overline{B}_T(\rho, R)) + \nu(\overline{B}_T(\rho, R)) \right) + e^{-R} $.
\item if $d_{\rm{V}}(\mu, \nu) \leq \varepsilon$, then there exists some $R' \geq R$ such that $d_{\rm{P}}\left(\mu^{\restr R'}, \nu^{\restr R'} \right) \leq \varepsilon (1 + e^R)$.
\end{enumerate}
\end{lemma}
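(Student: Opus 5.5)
For part $(i)$ I will split the integral defining $d_{\rm V}$ at $r=R$. The tail contributes at most $\int_R^\infty e^{-r}\,dr = e^{-R}$, since the integrand is bounded by $1$. For $r \le R$ the task is to bound $d_{\rm P}(\mu^{\restr r},\nu^{\restr r})$ using the hypothesis $d_{\rm P}(\mu^{\restr R},\nu^{\restr R})\le\varepsilon$. The difficulty is that restricting to a smaller ball is not continuous in Prokhorov distance: mass near the boundary sphere can leave or enter the ball. I will therefore use the crude comparison $d_{\rm P}(\alpha,\beta) \le \|\alpha-\beta\|_{\rm TV}$ together with a smeared-boundary estimate.

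Fix a closed $A$ and set $A_r := A\cap \overline B(\rho,r)$. Then
$$\mu^{\restr r}(A) = \mu^{\restr R}(A_r) \le \nu^{\restr R}(A_r^{\varepsilon}) + \varepsilon \le \nu^{\restr r}(A^{\varepsilon}) + \nu\big(\overline B(\rho,r+\varepsilon)\setminus \overline B(\rho,r)\big) + \varepsilon,$$
where the middle term appears only when $r+\varepsilon\le R$. The symmetric inequality holds with $\mu$ and $\nu$ exchanged. Writing $g_\nu(r) := \nu(\overline B(\rho,r+\varepsilon)\setminus\overline B(\rho,r))$, this gives
$$d_{\rm P}(\mu^{\restr r},\nu^{\restr r}) \le \varepsilon + g_\mu(r) + g_\nu(r).$$

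Integrating over $r\in[0,R]$ and applying Fubini,
$$\int_0^R g_\nu(r)\,dr = \int \mathrm{Leb}\{r\le R:\ r< d(\rho,x)\le r+\varepsilon\}\,\nu(dx) \le \varepsilon\,\nu(\overline B(\rho,R)).$$
This holds provided I restrict to those $r$ with $r+\varepsilon \le R$. On the strip $r\in(R-\varepsilon,R]$ I will bound the integrand by $1$ instead; its integral is at most $\varepsilon e^{-(R-\varepsilon)}$, which is absorbed harmlessly. This is the one technicality, and combining the two pieces gives a bound of the form $\varepsilon(1+\mu(\overline B(\rho,R))+\nu(\overline B(\rho,R)))+e^{-R}$.

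For part $(ii)$ I will argue by Markov-type averaging. Suppose $d_{\rm P}(\mu^{\restr r},\nu^{\restr r}) > \varepsilon(1+e^{R})$ for every $r\ge R$. Since $\varepsilon(1+e^R)<1+e^{-R}\cdot e^R\cdot\ldots$ needs care (the minimum with $1$ may intervene), I treat two cases. If $\varepsilon(1+e^R)\ge 1$, the Prokhorov bound is already at least what is required in the degenerate sense. Otherwise the integrand exceeds $\varepsilon(1+e^R)$ on all of $[R,R+1]$, so
$$d_{\rm V}(\mu,\nu) \ge \varepsilon(1+e^R)\int_R^{R+1} e^{-r}\,dr = \varepsilon(1+e^R)e^{-R}(1-e^{-1}).$$
I then need this to exceed $\varepsilon$, i.e. $(1+e^R)e^{-R}(1-e^{-1})>1$. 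Integrating over the whole half-line $[R,\infty)$ instead gives $\varepsilon(1+e^R)e^{-R}=\varepsilon(e^{-R}+1)>\varepsilon$, which contradicts $d_{\rm V}\le\varepsilon$.

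In the case $\varepsilon(1+e^R)\ge1$, I will use $\varepsilon<e^{-R}$, which gives $\varepsilon(1+e^R)<1+e^{-R}$. Here the contradiction must come from noting that $1\wedge d_{\rm P}$ would equal $1$ throughout $[R,\infty)$. But the integrand then satisfies $\ge \min(1,\varepsilon(1+e^R))$, and $e^{-R}\ge$ this minimum times $e^{-R}$ is compared against $\varepsilon<e^{-R}$, which again yields $d_{\rm V}>\varepsilon$. So in both cases some $R'\ge R$ satisfies the required Prokhorov bound.
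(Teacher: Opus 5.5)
Your proof is correct and follows essentially the paper's route. In $(i)$ you split the integral at $R$, bound $d_{\rm P}(\mu^{\restr r},\nu^{\restr r})$ by $\varepsilon$ plus annulus masses and integrate via Fubini; in $(ii)$ you average $1\wedge d_{\rm P}$ over the tail $[R,\infty)$. The differences are only cosmetic:

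\begin{enumerate}
\item[$(i)$] The paper avoids your boundary strip $(R-\varepsilon,R]$ by measuring the annulus with $\mu^{\restr R},\nu^{\restr R}$ instead of $\mu,\nu$. Your strip is still absorbed exactly, provided you keep the weight $e^{-r}$ on the $\varepsilon$-term, since then $\int_0^{R-\varepsilon}\varepsilon e^{-r}\,dr+\int_{R-\varepsilon}^R e^{-r}\,dr\le\varepsilon$.
\item[$(ii)$] Your case split on whether $\varepsilon(1+e^R)\ge 1$ makes explicit how $\varepsilon<e^{-R}$ controls the truncation $1\wedge d_{\rm P}$. The paper's one-line lower bound passes over this point.
\end{enumerate}
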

\begin{proof} 
If $d_{\rm{P}}(\mu^{\restr R}, \nu^{\restr R}) \leq \varepsilon$, then $d_{\rm{V}}(\mu, \nu) \leq \int_0^R e^{-r}d_P(\mu^{\restr r}, \nu^{\restr r}) dr + \int_R^{\infty} e^{-r} dr$. The second term is bounded by $e^{-R}$. For the first term, set $A_{r, \varepsilon} := \overline{B}_T(\rho, r+\varepsilon) \setminus \overline{B}_T(\rho, r)$, and observe that then $d_P(\mu^{\restr r}, \nu^{\restr r}) \leq \varepsilon + \max\left\{ \mu^{\restr R}(A_{r, \varepsilon}), \nu^{\restr R}(A_{r, \varepsilon}) \right\}$. Now, Fubini's theorem yields
\begin{align*}
\int_0^R \mu^{\restr R}(A_{r, \varepsilon}) dr &= \int_0^R \int_{\overline{B}_T(\rho, R)} \mathbbm{1}_{( r < d(\rho, x) \leq r+\varepsilon)} d\mu(x) dr\\ 
&= \int_{\overline{B}_T(\rho, R)} \int_0^R \mathbbm{1}_{( r < d(\rho, x) \leq r+\varepsilon)} dr d\mu(x)\\ 
&\leq \varepsilon \mu(\overline{B}_T(\rho, R)),
\end{align*}
and so we may conclude that $d_V(\mu, \nu) \leq \varepsilon\left( 1 + \mu(\overline{B}_T(\rho, R)) + \nu(\overline{B}_T(\rho, R)) \right) + e^{-R} $.

If instead $d_{\rm{V}}(\mu, \nu) \leq \varepsilon$, then $\varepsilon \geq \int_R^{\infty} e^{-r}(1 \wedge d_P(\mu^{\restr r}, \nu^{\restr r})) dr \geq e^{-R} \inf_{R' \geq R} d_{\rm{P}}(\mu^{\restr R'}, \nu^{\restr R'})$.
We may find some $R' \geq R$ such that $d_{\rm{P}}(\mu^{\restr R'}, \nu^{\restr R'}) \leq \inf_{R' \geq R} d_{\rm{P}}(\mu^{\restr R'}, \nu^{\restr R'}) + \varepsilon$, and so we may conclude that $d_{\rm{P}}(\mu^{\restr R'}, \nu^{\restr R'}) \leq \varepsilon (1 + e^R)$.
\end{proof}
Completely analogously to \cite{ALW16} Corollary 4.3, we can characterize the relatively compact sets in $(\mathscr{M}_{\rm{bf}}(\mathscr{X}), d_{\rm{V}})$ in terms of their \emph{restrictions} being relatively compact in $(\mathscr{M}_{\rm{fin}}(\mathscr{X}), d_{\rm{P}})$.
\begin{lemma}
\label{lemma:BFMeas4}
Let $(\mathscr{X}, d, \rho)$ be a pointed Polish space, and let $\Pi \subseteq \mathscr{M}_{\rm{bf}}(\mathscr{X})$. Then the following statements are equivalent:
\begin{itemize}
\item[$(a)$] $\Pi$ is relatively compact in $(\mathscr{M}_{\rm{bf}}(\mathscr{X}), d_{\rm{V}})$.
\item[$(b)$] For all $r > 0$, $\Pi^{\restr r} := \{\mu^{\restr r} ~|~ \mu \in \Pi \}$ is relatively compact in $(\mathscr{M}_{\rm{fin}}(\mathscr{X}), d_{\rm{P}})$.
\item[$(c)$] $\Pi^{\restr r_k}$ is relatively compact in $(\mathscr{M}_{\rm{fin}}(\mathscr{X}), d_{\rm{P}})$ for an increasing sequence of radii $r_k \rightarrow \infty$.
\end{itemize}
\end{lemma}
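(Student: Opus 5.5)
The plan is to prove the cycle $(a) \Rightarrow (b) \Rightarrow (c) \Rightarrow (a)$. The implications $(a) \Rightarrow (b)$ and $(b) \Rightarrow (c)$ reduce to continuity of the restriction map and are essentially immediate. The converse $(c) \Rightarrow (a)$ is a diagonal extraction combined with Lemma \ref{lemma:BFMeas3} and Proposition \ref{prop:BFMeas1}.

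For $(a) \Rightarrow (b)$, fix $r > 0$ and a sequence $(\mu_n)$ in $\Pi$. By relative compactness in the Polish space $(\mathscr{M}_{\rm{bf}}(\mathscr{X}), d_{\rm{V}})$ we may pass to a subsequence with $\mu_n \stackrel{\rm{vg}}{\rightarrow} \mu$. Relative compactness of $\Pi^{\restr r}$ in $(\mathscr{M}_{\rm{fin}}(\mathscr{X}), d_{\rm{P}})$ is, by Prokhorov's theorem, equivalent to uniform boundedness of total masses together with tightness, and I would verify these two properties directly.

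\begin{itemize}
\item \emph{Bounded masses.} Choose $r' > r$ with $\mu(\partial B(\rho, r')) = 0$. Then $\mu_n^{\restr r'} \stackrel{\rm{wk}}{\rightarrow} \mu^{\restr r'}$ by Proposition \ref{prop:BFMeas1}$(a)$, so $\sup_n \mu_n(\overline{B}(\rho,r)) \le \sup_n \mu_n(\overline{B}(\rho,r')) < \infty$.
\item \emph{Tightness.} The weakly convergent sequence $(\mu_n^{\restr r'})$ is tight, again by Prokhorov. Since $\mu_n^{\restr r} \le \mu_n^{\restr r'}$, the restricted sequence inherits tightness.
\end{itemize}

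This argument runs along subsequences of an arbitrary sequence in $\Pi$. To conclude uniformly over $\Pi$, I would argue by contradiction. If $\sup_{\mu \in \Pi} \mu(\overline{B}(\rho,r)) = \infty$, pick $\mu_n \in \Pi$ with masses on $\overline{B}(\rho,r)$ tending to infinity, extract a convergent subsequence, and contradict the mass bound. If tightness fails for some $\varepsilon$, pick a sequence violating it on larger and larger compacts and contradict the tightness just established. Hence $\Pi^{\restr r}$ is relatively compact.

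The implication $(b) \Rightarrow (c)$ is trivial.

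For $(c) \Rightarrow (a)$, take a sequence $(\mu_n)$ in $\Pi$ and proceed as follows.

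\begin{enumerate}
\item By $(c)$ and a diagonal argument, extract a subsequence such that $\mu_n^{\restr r_k} \stackrel{\rm{wk}}{\rightarrow} \nu_k$ for every $k$, with $\nu_k \in \mathscr{M}_{\rm{fin}}(\mathscr{X})$.
\item Check consistency of the limits: for $j < k$, show $\nu_k|_{\overline{B}(\rho,r_j)} = \nu_j$.
\item Glue the $\nu_k$ into a single $\mu \in \mathscr{M}_{\rm{bf}}(\mathscr{X})$ with $\mu^{\restr r_k} = \nu_k$.
\item Conclude $\mu_n \stackrel{\rm{vg}}{\rightarrow} \mu$.
\end{enumerate}

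The main obstacle is step 2. Weak limits do not commute with restriction to a closed ball that carries boundary mass. I would therefore not use the given radii directly. Instead, I would pick radii $s$ with $\nu_k(\partial B(\rho,s)) = 0$ for all $k$; all but countably many $s$ qualify. By the Portmanteau theorem, for such $s \le r_j < r_k$,
$$\mu_n^{\restr s} = (\mu_n^{\restr r_k})|_{\overline{B}(\rho,s)} \stackrel{\rm{wk}}{\rightarrow} \nu_k|_{\overline{B}(\rho,s)},$$
and likewise $\mu_n^{\restr s} \stackrel{\rm{wk}}{\rightarrow} \nu_j|_{\overline{B}(\rho,s)}$. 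Uniqueness of weak limits then gives $\nu_k|_{\overline{B}(\rho,s)} = \nu_j|_{\overline{B}(\rho,s)}$ for a dense set of $s$.

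This yields a consistent family of finite measures on a dense, increasing family of balls. Step 3 then defines $\mu$ as the increasing limit of these restrictions, which is boundedly finite by construction. For step 4, we have $\mu_n^{\restr s} \stackrel{\rm{wk}}{\rightarrow} \mu^{\restr s}$ along a sequence $s \to \infty$ of the good radii, so Proposition \ref{prop:BFMeas1}$(c) \Rightarrow (d)$ gives $\mu_n \stackrel{\rm{vg}}{\rightarrow} \mu$. Thus every sequence in $\Pi$ has a vaguely convergent subsequence, and $\Pi$ is relatively compact.
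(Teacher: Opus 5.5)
Your proof is correct and is a self-contained fixed-space version of the restriction-plus-diagonal-extraction argument that the paper only cites (\cite{ALW16} Corollary~4.3). For $(a)\Rightarrow(b)$ you dominate $\mu_n^{\restr r}\le\mu_n^{\restr r'}$ by a nice radius $r'>r$, and for $(c)\Rightarrow(a)$ you glue the limits only along radii carrying no boundary mass.

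Two clean-ups are needed. First, the headline targets of your steps 2--3, $\nu_k|_{\overline{B}(\rho,r_j)}=\nu_j$ and $\mu^{\restr r_k}=\nu_k$, are false in general: on $[0,\infty)$ with $\mu_n=\delta_{1+1/n}$ and $r_1=1$ one gets $\nu_1=\mathbf{0}$, while the vague limit is $\delta_1$. Keep only the good-radii version you actually carry out. Second, drop the uniformity-by-contradiction paragraph in $(a)\Rightarrow(b)$. It is unnecessary, since relative compactness in the metric space $(\mathscr{M}_{\rm{fin}}(\mathscr{X}),d_{\rm{P}})$ is already sequential. Its tightness half also fails as sketched in a non-locally-compact space, because a compact set witnessing tightness of the extracted subsequence need not lie in any of your chosen compacts.
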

\begin{proof}
The proof is completely analogous to that of \cite{ALW16} Corollary 4.3, and even simpler as our measures are already defined on the same pointed Polish space $(\mathscr{X}, d, \rho)$.
\end{proof}
As in \cite{ALW16} Proposition 4.8, we may now conclude that $(\mathscr{M}_{\rm{bf}}(\mathscr{X}), d_{\rm{V}})$ forms a Polish space.
\begin{prop}
\label{prop:BFMeas5}
Let $(\mathscr{X}, d, \rho)$ be a pointed Polish space. Then $(\mathscr{M}_{\rm{bf}}(\mathscr{X}), d_{\rm{V}})$ is a Polish space.
\end{prop}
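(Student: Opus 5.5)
We need to show that $d_{\rm{V}}$ is a metric, that $(\mathscr{M}_{\rm{bf}}(\mathscr{X}), d_{\rm{V}})$ is separable, and that it is complete. We follow \cite{ALW16} Proposition 4.8, using the results already established in this section.

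\emph{Metric.} Symmetry and the triangle inequality are inherited pointwise from $d_{\rm{P}}$ in the integrand, because $1 \wedge (\cdot)$ preserves the triangle inequality. For definiteness, suppose $d_{\rm{V}}(\mu,\nu)=0$. Then $d_{\rm{P}}(\mu^{\restr r},\nu^{\restr r})=0$ for Lebesgue-almost every $r$, so $\mu^{\restr r}=\nu^{\restr r}$ along some sequence $r_k \to \infty$. Letting $k\to\infty$ gives $\mu=\nu$, since both measures agree on every bounded Borel set.

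\emph{Separability.} Fix a countable dense set $D \subseteq \mathscr{X}$. Let $\mathcal{D}$ be the countable family of finite measures $\sum_i q_i \delta_{x_i}$ with $q_i \in \mathbb{Q}_{\geq 0}$ and $x_i \in D$; it is dense in $(\mathscr{M}_{\rm{fin}}(\mathscr{X}), d_{\rm{P}})$ by the standard argument. Given $\mu \in \mathscr{M}_{\rm{bf}}(\mathscr{X})$ and $\varepsilon>0$, choose $R$ with $e^{-R}<\varepsilon$. Pick $\nu \in \mathcal{D}$ with $d_{\rm{P}}(\mu^{\restr R},\nu)$ small, and restrict $\nu$ to atoms in $\overline{B}(\rho,R)$; this is harmless after replacing $D$ by the countable set $D\cap$ balls of rational radius, or after slightly enlarging $R$. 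Lemma \ref{lemma:BFMeas3}$(i)$ then gives $d_{\rm{V}}(\mu,\nu) \lesssim \varepsilon(1+2\mu(\overline{B}(\rho,R)))+e^{-R}$, which can be made small. The minor nuisance is controlling $\nu^{\restr R}$ versus $\nu$. It is handled by choosing the atoms of $\nu$ to lie inside $B(\rho,R)$ and $R$ to be $\mu$-nice, so that $\mu^{\restr R}$ is approximated by measures supported in the open ball.

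\emph{Completeness.} This is the main step. Let $(\mu_n)$ be $d_{\rm{V}}$-Cauchy. The plan is to show that $\{\mu_n\}$ is relatively compact via Lemma \ref{lemma:BFMeas4}$(c)$; a Cauchy sequence with a convergent subsequence then converges.
\begin{enumerate}[$(1)$]
\item Using Lemma \ref{lemma:BFMeas3}$(ii)$, for each $R$ and $\varepsilon$ there exist $N$ and radii $R'_{n,m} \geq R$ with $d_{\rm{P}}(\mu_n^{\restr R'},\mu_m^{\restr R'}) \leq \varepsilon(1+e^R)$ for $n,m\geq N$.
\item The difficulty is that $R'$ depends on the pair $(n,m)$. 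We therefore work directly with the integrand instead. Since $\int e^{-r}(1\wedge d_{\rm{P}}(\mu_n^{\restr r},\mu_m^{\restr r}))dr \to 0$, a diagonal subsequence argument with Fubini and Borel–Cantelli yields a subsequence $(\mu_{n_j})$ and a Lebesgue-full set of radii $r$ along which $(\mu_{n_j}^{\restr r})_j$ is $d_{\rm{P}}$-Cauchy.
\item By completeness of $(\mathscr{M}_{\rm{fin}}(\mathscr{X}), d_{\rm{P}})$, pick an increasing sequence $r_k \to \infty$ in this full set, so that $\mu_{n_j}^{\restr r_k} \to \lambda_k$ weakly. In particular $\{\mu_{n_j}^{\restr r_k}\}_j$ is relatively compact for every $k$. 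Lemma \ref{lemma:BFMeas4} then gives a further subsequence converging vaguely to some $\mu \in \mathscr{M}_{\rm{bf}}(\mathscr{X})$.
\item The Cauchy property and the triangle inequality for $d_{\rm{V}}$ then give $\mu_n \stackrel{\rm{vg}}{\rightarrow} \mu$ for the whole sequence.
\end{enumerate}

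The expected obstacle is step $(2)$: passing from Cauchy-ness of the integral to Cauchy-ness of the restrictions on a common set of radii. The first thing to try is summability along a fast subsequence, $d_{\rm{V}}(\mu_{n_j},\mu_{n_{j+1}})<4^{-j}$. By Markov's inequality, the set of $r\le K$ where $1\wedge d_{\rm{P}}(\mu_{n_j}^{\restr r},\mu_{n_{j+1}}^{\restr r})>2^{-j}$ has Lebesgue measure at most $e^K2^{-j}$. Borel–Cantelli then gives that, for almost every $r$, the consecutive distances are eventually below $2^{-j}$, so the sequence of restrictions is Cauchy.
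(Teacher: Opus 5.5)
Your proposal is correct and follows the same route as the paper. Separability comes from Lemma~\ref{lemma:BFMeas3}$(i)$ applied to a countable $d_{\rm{P}}$-dense family of finite measures carried by a ball. The paper takes these families directly inside $\mathscr{M}_{\rm{fin}}(\overline{B}_{\mathscr{X}}(\rho, r_k))$, which removes the $\nu$ versus $\nu^{\restr r}$ nuisance that you handle via $\mu$-nice radii. Completeness comes, as in \cite{ALW16} Proposition 4.8, from the relative-compactness criterion of Lemma~\ref{lemma:BFMeas4}. Your fast-subsequence Markov/Borel--Cantelli argument, which gives $d_{\rm{P}}$-Cauchy restrictions for almost every $r$, is a valid way of supplying the details the paper delegates to \cite{ALW16}.
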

\begin{proof}
Completeness is shown in the same way as in \cite{ALW16} Proposition 4.8, using the characterization of relatively compact sets from Lemma \ref{lemma:BFMeas4}. For separability, let $r_k \rightarrow \infty$ be some increasing sequence of radii. Then as $(\mathscr{M}_{\rm{fin}}(\mathscr{X}), d_{\rm{P}})$ is separable, we can for each $k \in \mathbb{N}$ find a $d_{\rm{P}}$-dense countable set $D_k \subseteq \mathscr{M}_{\rm{fin}}(\overline{B}_{\mathscr{X}}(\rho, r_k))$, and let $D = \bigcup_{k \in \mathbb{N}} D_k$. Then $D$ is clearly countable, and what remains is to check that it is dense in $(\mathscr{M}_{\rm{bf}}(\mathscr{X}), d_{\rm{V}})$. Take some $\mu \in \mathscr{M}_{\rm{bf}}(\mathscr{X})$ and $\varepsilon > 0$. We want to find some $\nu \in D$ such that $d_{\rm{V}}(\mu, \nu) < \varepsilon$. To do this, choose $k$ large enough that $e^{-r_k} < \frac{\varepsilon}{2}$ and let $\delta \in (0, e^{-r_k})$ be small enough that $\delta \left( 1 + \mu\left( \overline{B}_T(\rho, r_k) \right) + \nu\left( \overline{B}_T(\rho, r_k) \right) \right) < \frac{\varepsilon}{2}$. Then as $D_k$ is dense in $ \mathscr{M}_{\rm{fin}}(\overline{B}_{\mathscr{X}}(\rho, r_k))$ we can find some $\nu \in D_k \subseteq D$ (with $\nu = \nu^{\restr r_k}$) such that $d_{\rm{P}}(\mu^{\restr r_k}, \nu) \leq \delta$. By Lemma \ref{lemma:BFMeas3} we then conclude that $d_{\rm{V}}(\mu, \nu) < \varepsilon$, as wanted.
\end{proof}
As noted in Section \ref{subsec:MainVague}, it is immediately clear from Proposition \ref{prop:BFMeas1} that weak convergence implies vague convergence. However, to strengthen vague convergence to weak convergence, one needs an additional assumption of b-tightness (recall Definition \ref{def:btight}). This was the statement of Lemma \ref{lemma:BFMeas7}, which we prove below.
\begin{proof}[Proof of Lemma \ref{lemma:BFMeas7}]
The direction $(\Leftarrow)$ is obvious. For $(\Rightarrow)$, suppose that $\mu_n \stackrel{\rm{vg}}{\rightarrow} \mu$ for some $\mu \in \mathscr{M}_{\rm{bf}}(\mathscr{X})$ and that $(\mu_n)_{n \in \mathbb{N}}$ is b-tight. Suppose for contradiction that for all $N \in \mathbb{N}$ there exists $n \geq N$ such that $\mu_n(\mathscr{X}) = +\infty$. Then as each $\mu_n$ is boundedly finite, we must have for all $R > 0$ that $\mu_n(\mathscr{X} \setminus \overline{B}(\rho, R)) = +\infty$, and so $\limsup_{n \rightarrow \infty} \mu_n(\mathscr{X} \setminus \overline{B}(\rho, R)) = +\infty$, contradicting b-tightness. So there must exist some $N \in \mathbb{N}$ such that $\mu_n \in \mathscr{M}_{\rm{fin}}(\mathscr{X})$ for all $n \geq N$.

To see that also $\mu \in \mathscr{M}_{\rm{fin}}(\mathscr{X})$, take some $R > 0$ with $\mu(\partial B(\rho, R)) =0$. By vague convergence we have for all $R' \geq R$ with $\mu(\partial B(\rho, R')) =0$, that $\mu_n^{\restr R'} \stackrel{\rm{wk}}{\rightarrow} \mu^{\restr R'}$, and so in particular Prokhorov's theorem yields that $\mu_n(\overline{B}(\rho, R')) \rightarrow \mu(\overline{B}(\rho, R'))$ as $n \rightarrow \infty$. Now, observe that $ \mu_n(\overline{B}(\rho, R')) \leq \mu_n(\overline{B}(\rho, R)) + \mu_n(\mathscr{X} \setminus \overline{B}(\rho, R))$. Taking the $\limsup$ on both sides as $n \rightarrow \infty$ gives the inequality $ \mu(\overline{B}(\rho, R')) \leq \mu(\overline{B}(\rho, R)) + \limsup_{n \rightarrow \infty} \mu_n(\mathscr{X} \setminus \overline{B}(\rho, R))$, and so letting $R' \rightarrow \infty$ (noting that only the left-hand side depends on $R'$) yields
\begin{equation}
\label{eq:BFMeas7}
\mu(\mathscr{X}) \leq \mu(\overline{B}(\rho, R)) + \limsup_{n \rightarrow \infty} \mu_n(\mathscr{X} \setminus \overline{B}(\rho, R)) < \infty,
\end{equation}
using again b-tightness. Hence, $\mu \in \mathscr{M}_{\rm{fin}}(\mathscr{X})$, as wanted.

To get weak convergence, take some $\varepsilon > 0$ and choose $R > 0$ such that $\limsup_{n \rightarrow \infty} \mu_n(\mathscr{X} \setminus \overline{B}_{\mathscr{X}}(\rho, R)) < \varepsilon$. Then \eqref{eq:BFMeas7} gives that also $\mu(\mathscr{X} \setminus \overline{B}(\rho, R)) < \varepsilon$. As $\mu_n^{\restr R} \stackrel{\rm{wk}}{\rightarrow} \mu^{\restr R}$ there exists some $N \in \mathbb{N}$ such that for all $n \geq N$: $d_{\rm{P}}(\mu_n^{\restr R}, \mu^{\restr R}) < \varepsilon$, so for any $A \subseteq \mathscr{X}$ closed, and $n \geq N$ we have
\begin{align*}
\mu_n(A) &\leq \mu_n^{\restr R}(A) + \mu_n(\mathscr{X} \setminus \overline{B}(\rho, R)) \leq \mu^{\restr R}(A^{\varepsilon}) + \varepsilon + \mu_n(\mathscr{X} \setminus \overline{B}(\rho, R))  \qquad \text{and} \\
\mu(A) &\leq \mu^{\restr R}(A) + \varepsilon \leq \mu_n^{(R)}(A^{\varepsilon}) + 2\varepsilon \leq \mu_n(A^{2\varepsilon}) + 2\varepsilon.
\end{align*}
Hence, $d_P(\mu_n, \mu) \leq 2 \varepsilon + \mu_n(\mathscr{X} \setminus \overline{B}(\rho, R))$. Taking the $\limsup$ as $n \rightarrow \infty$ on both sides, thus yields $ \limsup_{n \rightarrow \infty} d_P(\mu_n, \mu) \leq 2 \varepsilon + \limsup_{n \rightarrow \infty} \mu_n(\mathscr{X} \setminus \overline{B}(\rho, R)) < 3\varepsilon$. As $\varepsilon > 0$ was chosen arbitrarily small, this allows us to conclude that $\mu_n \stackrel{\rm{wk}}{\rightarrow} \mu$, as desired.
\end{proof}
Let $(T, d, \rho)$ be a Polish $\mathbb{R}$-tree, and recall from Section \ref{subsec:MainVague} the space of \emph{discretely infinite} (or finite) measures $\mathscr{M}_{\rm{di}}(T) \subseteq \mathscr{M}_{\rm{bf}}(T)$. We shall see in the following proposition that $\mathscr{M}_{\rm{di}}(T)$ is a measurable subset of $\mathscr{M}_{\rm{bf}}(T)$ under the vague topology, but that it is not in general closed. This implies in particular that $(\mathscr{M}_{\rm{di}}(T), d_{\rm{V}})$ is a Lusin space, but will in general \emph{not} be complete.
\begin{prop}
\label{prop:DIMeas1}
Let $(T, d, \rho)$ be a Polish $\mathbb{R}$-tree. Then $(\mathscr{M}_{\rm{di}}(T), d_{\rm{V}})$ is a Lusin space, but \emph{not} necessarily complete.
\end{prop}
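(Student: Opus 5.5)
The plan is to show that $\mathscr{M}_{\rm{di}}(T)$ is a Borel subset of the Polish space $(\mathscr{M}_{\rm{bf}}(T), d_{\rm{V}})$ from Proposition \ref{prop:BFMeas5}. A Borel subset of a Polish space, equipped with the subspace topology, is Lusin, so this suffices for the first claim. Non-completeness will then follow from a single example of a $d_{\rm{V}}$-Cauchy (indeed vaguely convergent) sequence in $\mathscr{M}_{\rm{di}}(T)$ whose limit lies outside $\mathscr{M}_{\rm{di}}(T)$.

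For measurability, I would first fix a countable dense set $\{\sigma_j\}$ in $T$. The main tool is the evaluation map $\mu \mapsto \mu(A)$. For closed $A$ this map is upper semicontinuous on each restriction $\mu^{\restr r}$, so it is Borel on $\mathscr{M}_{\rm{bf}}(T)$; for general Borel sets one extends by a monotone class argument. I would then write the defining conditions of $\mathscr{M}_{\rm{di}}(T)$ through countably many such evaluations.

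\begin{enumerate}[$(1)$]
\item Infinite total mass: $\{\mu(T)=+\infty\}$ is Borel, since $\mu(T)=\lim_k \mu(\overline{B}(\rho,k))$.
\item Condition $(ii)$: use the equivalent form $\mu({\rm{Cut}}(T,\bullet^r(\mu)))<\infty$, for $r$ ranging over rationals, which suffices by monotonicity in $r$. This equals $\mu(T)$ minus the mass above the skeleton points at level $r$. For each $r$ I would express it as a supremum over finite subsets $S$ of the countable family $\{\theta_{\sigma}T : \sigma \in \partial B(\rho,r)\}$ indexed by projections of the $\sigma_j$, taking those with finite mass, of $\mu\bigl(\overline{B}(\rho,r)\cup\bigcup_{S}\theta_\sigma T\bigr)$. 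This requires checking that the level-$r$ subtrees carrying mass are indexed by countably many points, which holds by separability.
\item Condition $(i)$: the skeleton is discrete. Since it is a union of rays, it suffices to require that $|\bullet^r(\mu)|<\infty$ for all rational $r$, together with the analogous bound on the degree at $\rho$. The event $\{|\bullet^r(\mu)|\ge m\}$ says that there exist $m$ points among the countable index set above level $r$ with pairwise disjoint subtrees, each of infinite mass. This is a countable union of intersections of Borel events of the form $\{\mu(\theta_\sigma T)=+\infty\}$.
\end{enumerate}

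The main obstacle is this reduction to countably many sets. Points of $\partial B(\rho,r)$ are not countable in general, so I would use the subtrees $\theta_{\sigma}T$ with $\sigma$ taken at rational heights on the segments $[\rho,\sigma_j]$. Any $\sigma'$ with $\mu(\theta_{\sigma'}T)>0$ lies on such a segment, because ${\rm{supp}}(\mu)$ is separable. Slightly lowering the level $r$ handles points not exactly of this form.

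For non-completeness, I would take $T=[0,\infty)$ and $\mu_n=\sum_{i\le n}\delta_i+\mathbf{m}_{[n,\infty)}$, where $\mathbf{m}$ denotes Lebesgue measure. Each $\mu_n$ is in $\mathscr{M}_{\rm{di}}$, but the vague limit $\sum_i\delta_i$ is also in $\mathscr{M}_{\rm{di}}$, so this example does not work. Instead I would use the infinite star from Example \ref{ex:DI}. Let $\nu_n$ be $\mu_2$ restricted to the rays $\mathbf{r}_1,\dots,\mathbf{r}_n$, plus Lebesgue measure on $\mathbf{r}_1$ to make the total mass infinite. Each $\nu_n$ has skeleton $\mathbf{r}_1$ and finite cut-off mass, so $\nu_n\in\mathscr{M}_{\rm{di}}(T)$. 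Moreover $\nu_n^{\restr r}$ is eventually constant for every $r$, so $(\nu_n)$ is $d_{\rm{V}}$-Cauchy. Its limit $\mu_2+\mathbf{m}|_{\mathbf{r}_1}$ violates condition $(ii)$, because above level $r$ infinitely many unit point masses sit on finite-mass subtrees. Hence $\mathscr{M}_{\rm{di}}(T)$ is not closed, and therefore not complete.
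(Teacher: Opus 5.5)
Your proposal is correct, but both halves take a different route from the paper.

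\textbf{Measurability.} The paper works with sets $A^{(k)}_{m,K}$: exactly $m$ skeleton points on $\partial B_T(\rho,r_k)$, with the finite-mass subtrees there of total mass at most $K$. It writes $\mathscr{M}_{\rm di}(T)=\bigcap_k\bigcup_m\bigcup_K\overline{A^{(k)}_{m,K}}$. Borel-ness therefore rests on a limit analysis of these closures: infinite mass may escape in the vague limit and reappear as finite mass, lowering $m$ and raising $K$. The paper only sketches this step. Moreover, since $A^{(k)}_{m,K}$ constrains only level $r_k$, the inclusion the argument actually needs is $\overline{A^{(k)}_{m,K}}\subseteq A^{(k)}$, not $\subseteq\mathscr{M}_{\rm di}(T)$. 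Your approach needs no closure analysis. Once $\mu\mapsto\mu(A)$ is Borel and all relevant level-$r$ points come from a fixed countable set, each defining condition is a countable combination of Borel functions. This is longer to set up but every step is checkable.

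\textbf{Non-completeness.} The paper's limit (Lebesgue measure on $\mathbf{r}_i$ from level $i$, for all $i$) violates condition $(i)$, since its skeleton is the whole star. Your limit $\mu_2+\mathbf{m}|_{\mathbf{r}_1}$ violates condition $(ii)$ instead. Both work.

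A few points to tidy up:
\begin{enumerate}[$(a)$]
\item Drop your item $(1)$. The space $\mathscr{M}_{\rm di}(T)$ also contains the finite measures, which satisfy $(i)$ and $(ii)$ trivially.
\item Your index set works for a different reason than the one you give. Separability of ${\rm supp}(\mu)$ would only produce a $\mu$-dependent set, which is useless for measurability. The correct reason is that for a non-leaf $\sigma\neq\rho$, the set $\theta_\sigma T\setminus\{\sigma\}$ is open and nonempty, so it contains some $\sigma_j$. Hence $D_r:=\partial B_T(\rho,r)\setminus{\rm Lf}(T)$ is countable and independent of $\mu$. It contains $\bullet^r(\mu)$, because $\mu$ is boundedly finite and so skeleton points are never leaves. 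Leaves at level $r$ lie in $\overline{B}_T(\rho,r)$ anyway, so no lowering of $r$ is needed.
\item Reducing $(i)$ to $|\bullet^r(\mu)|<\infty$ for rational $r$ uses $(ii)$. Without skeleton leaves, $|\bullet^r(\mu)|$ is nondecreasing in $r$ and ${\rm Skel}(T,\mu)\cap\overline{B}_T(\rho,r)={\rm Span}(\{\rho\}\cup\bullet^r(\mu))$; the root-degree bound is then automatic. This is harmless because you intersect the two conditions anyway.
\end{enumerate}
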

\begin{proof}
As $(\mathscr{M}_{\rm{bf}}(T), d_{\rm{V}})$ is Polish, it is enough to argue (by \cite{C13} Theorem 8.2.10) that $\mathscr{M}_{\rm{di}}(T) \subseteq \mathscr{M}_{\rm{bf}}(T)$ is a Borel-measurable subset. Note, that for some increasing sequence of (strictly positive) radii $r_k \rightarrow \infty$ with $\partial B_T(\rho, r_k) \cap {\rm{Bp}}(T) = \emptyset$, we can write $\mathscr{M}_{\rm{di}}(T)$ as 
\begin{align*}
\bigcap_{k \in \mathbb{N}}\underbrace{\left\{ \mu \in \mathscr{M}_{\rm{bf}}(T) ~\Bigg{|}~ |{\rm{Skel}}(T, \mu) \cap \partial B_T(\rho, r_k)| < \infty, ~ \mu\left( \bigcup_{\sigma \in \partial B_T(\rho, r_k) \cap F_{\mu}(T)} \theta_{\sigma}T^{\circ}\right) < \infty \right\}}_{=:A^{(k)}},
\end{align*}
and we can further decompose each $A^{(k)}$ into
$$ \bigcup_{m \in \mathbb{N}_0} \bigcup_{K \in \mathbb{N}} \underbrace{\left\{ \mu \in \mathscr{M}_{\rm{bf}}(T) ~\Bigg{|}~ |{\rm{Skel}}(T, \mu) \cap \partial B_T(\rho, r_k)| = m, ~ \mu\left( \bigcup_{\sigma \in \partial B_T(\rho, r_k) \cap F_{\mu}(T)} \theta_{\sigma}T^{\circ}\right) \leq K \right\}}_{=:A_{m,K}^{(k)}}. $$
Observe for each $(k, m, K)$ that $A_{m,K}^{(k)}$ may not be closed. This may for instance be seen when $T = [0, \infty)$ is equipped with the Euclidean distance and has its root at $0$, $m = 1$, and $K$ is arbitrary. In this case, letting $\mu_n$ be the Lebesgue measure on $[n, \infty)$, we have that $\mu_n \stackrel{\rm{vg}}{\rightarrow} \mathbf{0}$, but clearly $\mathbf{0} \notin A_{1,K}^{(k)}$ for any $k$ as ${\rm{Skel}}(T, \mathbf{0}) = \{\rho\}$. We may however easily check for the closures that $\overline{A_{m,K}^{(k)}} \subseteq \mathscr{M}_{\rm{di}}(T)$ for any $(k, m, K)$ (the main idea here being that infinite-mass subtrees may have the infinite-mass pushed so far out in the limit that only finite mass survives, corresponding to $m$ becoming smaller in the limit at the cost of a larger $K$). As we also trivially have that $A_{m,K}^{(k)} \subseteq \overline{A_{m,K}^{(k)}}$ we see from our decomposition of $\mathscr{M}_{\rm{di}}(T)$ that we may indeed write $ \mathscr{M}_{\rm{di}}(T) = \bigcap_{k \in \mathbb{N}} \bigcup_{m \in \mathbb{N}_0} \bigcup_{K \in \mathbb{N}} \overline{A_{m,K}^{(k)}}$. As $\mathscr{M}_{\rm{di}}(T)$ is thus decomposed into countable intersections and unions of closed sets, it must be a Borel-measurable subset of $(\mathscr{M}_{\rm{bf}}(T), d_{\rm{V}})$, as desired.

To see that $\mathscr{M}_{\rm{di}}(T)$ is \emph{not} closed in $(\mathscr{M}_{\rm{bf}}(T), d_{\rm{V}})$ (and thus, \emph{not} a complete space), consider the following example. Suppose that $T$ is the \emph{infinite star}, consisting of countably many rays which only intersect at the root. Order the rays in $T$ by $\mathbf{r}_1, \mathbf{r}_2, \dots$. Let for each $n \in \mathbb{N}$, $\mu_n$ be the measure on $T$ constructed such that
\begin{itemize}
\item for $1 \leq i \leq n$, $\mu_n$ acts as the Lebesgue measure $\mathbf{m}_{[i,\infty)}$ started at level $i$ on the ray $\mathbf{r}_i$;
\item for $i \geq n+1$, $\mu_n$ acts as the null measure on $\mathbf{r}_i$. 
\end{itemize}
Then $\mu_n \in \mathscr{M}_{\rm{di}}(T)$ for every $n \in \mathbb{N}$. Let on the other hand $\mu$ be the boundedly finite measure on $T$ defined such that for every $i \in \mathbb{N}$, $\mu$ acts as the Lebesgue measure $\mathbf{m}_{[i,\infty)}$ started at level $i$ on the ray $\mathbf{r}_i$. Then ${\rm{Skel}}(T, \mu) = T$ is clearly not a discrete $\mathbb{R}$-tree, so $\mu \notin \mathscr{M}_{\rm{di}}(T)$. On the other hand it is clear that $\mu_n^{\restr r} = \mu^{\restr r}$ whenever $n \geq \lfloor r \rfloor$, so in particular $\mu_n \stackrel{\rm{vg}}{\rightarrow} \mu$ as $n \rightarrow \infty$.
\end{proof}
\subsection{Discrete $\mathbb{R}$-trees and spanning sets}
\label{subsec:Rtree}
Let $(T, d, \rho)$ be an $\mathbb{R}$-tree, and recall the \emph{span} from Definition \ref{def:Span}. Given a rooted subtree $T' \subseteq T$, it is natural to ask what the smallest possible spanning set of the tree is. In particular, we may ask whether it is possible to find a \emph{finite} spanning set of the tree.
\begin{defin}[Subtrees of finite type]
\label{def:FinType}
A rooted subtree $T' \subseteq T$ is said to be of \emph{finite type} if there exists a \emph{finite} subset $A \subseteq T$, such that $T' = {\rm{Span}}(A)$.
\end{defin}
It is easily seen by Definition \ref{def:Span} that ${\rm{Span}}(A)$ is compact whenever $A$ is finite, so every subtree of finite type is indeed a compact subtree. Moreover, subtrees of finite type will have at most finitely many leaves, at most finitely many branch points and be of bounded height.

The notion of being a subtree of finite type, can easily be converted into a local property for unbounded subtrees.
\begin{defin}[Subtrees of boundedly finite type]
\label{def:BFType}
A rooted subtree $T' \subseteq T$ is said to be of \emph{boundedly finite type} if $T' \cap \overline{B}_T(\rho, r)$ is a rooted subtree of finite type for every $r > 0$.
\end{defin}
The notions in Definition \ref{def:FinType} and \ref{def:BFType} are closely linked to \emph{discrete $\mathbb{R}$-trees} (see Section \ref{subsec:MainVague}).
\begin{lemma}
\label{lemma:Rtree1}
Let $(T, d, \rho)$ be an $\mathbb{R}$-tree. Then
\begin{enumerate}[$(i)$]
\item $T$ is a discrete $\mathbb{R}$-tree if and only if it is of boundedly finite type.
\item $T$ is a compact discrete $\mathbb{R}$-tree if and only if it is of finite type.
\end{enumerate}
\end{lemma}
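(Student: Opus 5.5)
The plan is to prove $(i)$ directly in both directions by comparing the branch structure of $T$ with that of the truncations $T_r := T \cap \overline{B}_T(\rho, r)$, and then to deduce $(ii)$ from $(i)$. Throughout I use two facts. First, $T_r$ is a rooted subtree, since $[\rho,\sigma]\subseteq \overline{B}_T(\rho,r)$ whenever $d(\rho,\sigma)\le r$. Second, ${\rm Span}(A)$ is compact for finite $A$.

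\emph{Direction ``boundedly finite type $\Rightarrow$ discrete''.} I would do this direction first.

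Completeness: a Cauchy sequence in $T$ is bounded, so it lies in some $T_r = {\rm Span}(A_r)$ with $A_r$ finite. This set is compact, so the sequence converges in $T$.

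Finiteness of degrees: fix $r>0$ and write $T_{r+1} = {\rm Span}(A)$ with $A$ finite. The branch points of ${\rm Span}(A)$ are among the finitely many meets $a\wedge a'$ with $a,a'\in A$. Each point of ${\rm Span}(A)$ has degree at most $|A|+1$ there.

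It remains to compare degrees in $T$ and in $T_{r+1}$ at a point $\sigma$ with $d(\rho,\sigma)\le r$ (including $\sigma=\rho$). Every component of $T\setminus\{\sigma\}$ contains points of $T$ arbitrarily close to $\sigma$, hence points of $T_{r+1}$. Distinct components of $T\setminus\{\sigma\}$ stay distinct after intersecting with the connected set $T_{r+1}$. Therefore $n(\sigma,T)=n(\sigma,T_{r+1})$. It follows that ${\rm Bp}(T)\cap\overline{B}_T(\rho,r)$ is finite with finite degrees, and that $n(\rho,T)<\infty$, which is the definition of discreteness.

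\emph{Direction ``discrete $\Rightarrow$ boundedly finite type''.} This is the step I expect to be the main obstacle, because a finite spanning set has to be built from the degree condition alone.

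Fix $r$ and let $V$ be the finite set consisting of $\rho$ and the branch points of $T$ in $\overline{B}_T(\rho,r)$. For each $v\in V$ and each of its finitely many directions pointing away from $\rho$, consider the connected component $C$ of $T_r\setminus V$ that starts in that direction. The claim is that $C$ contains no branch point of $T$. Hence $C\cup\{v\}$ is a connected subset without branch points along which $d(\rho,\cdot)$ is strictly increasing, so it is isometric to an interval $[0,\ell)$ or $[0,\ell]$ with $\ell\le r$.

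By completeness the endpoint $x_C$ at distance $\ell$ exists in $T$. This endpoint is either a point of $V$, a leaf of $T$, or a point at level $r$. The collection of all such endpoints $x_C$ is finite, because it is bounded by $\sum_{v\in V} n(v,T)$. I would then check that every $\sigma\in T_r$ lies on some $[\rho,x_C]$: follow $[\rho,\sigma]$ through the finitely many points of $V$ it meets, and after the last one $\sigma$ lies in a single such $C$. Hence $T_r={\rm Span}(\{x_C\})$ is of finite type.

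The delicate point is to exclude accumulation of branch points at level exactly $r$ and non-closed pieces. This is handled by the finiteness of ${\rm Bp}(T)\cap\overline{B}_T(\rho,r)$ together with completeness, which supplies the endpoints of the segments.

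\emph{Part $(ii)$.} If $T$ is compact and discrete, then it is bounded, so $T=T_r$ for any $r>{\rm Ht}(T)$. By $(i)$, $T_r$ is of finite type, hence so is $T$. Conversely, if $T$ is of finite type, then it is compact, as already remarked. Moreover $T_r={\rm Span}(A)\cap\overline{B}_T(\rho,r)={\rm Span}(\{\text{points of }[\rho,a]\text{ at height }\min(r,d(\rho,a)) : a\in A\})$, which is of finite type. So $T$ is of boundedly finite type, and by $(i)$ it is discrete.
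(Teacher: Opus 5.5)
Your proof is correct and follows the same route as the paper. The completeness argument (a Cauchy sequence is bounded, hence lies in a compact finite-type ball) is exactly the paper's, and you prove in detail the equivalence between finite type of the balls and finiteness of $n(\rho,T)+\sum_{\sigma\in{\rm Bp}(T)\cap\overline{B}_T(\rho,r)}n(\sigma,T)$, which the paper only asserts in a one-line remark. In doing so you handle two points that remark glosses over: completeness is needed to produce the endpoints $x_C$ (consider $[0,1)$), and one must pass to $T\cap\overline{B}_T(\rho,r+1)$, since a branch point at level exactly $r$ can have large degree in $T$ while being a leaf of the truncated ball.
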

\begin{proof}
Note for every $r > 0$, that $\overline{B}_T(\rho, r)$ is of finite type (i.e. is spanned by at most finitely many leaves) if and only if the total degree of the root and branch points in $\overline{B}_T(\rho, r)$ is finite. So the only non-trivial argument we need to make is that $T$ is complete if it is of boundedly finite type.
Suppose that $T$ is of boundedly finite type, and let $(\sigma_n)_{n \in \mathbb{N}}$ be a Cauchy sequence in $T$. Take some $\varepsilon > 0$. Then there exists some $N \in \mathbb{N}$ such that $d(\sigma_n, \sigma_m) < \varepsilon$ for all $n,m \geq N$. Let $r > 0$ be chosen sufficiently large such that $\sigma_1, \dots, \sigma_N \in \overline{B}_T(\rho, r)$. Then we must have that $\{\sigma_n ~|~ n \in \mathbb{N}\} \subseteq \overline{B}_T(\rho, r+\varepsilon)$. So as $\overline{B}_T(\rho, r+\varepsilon)$ is of finite type, we can conclude that $(\sigma_n)_{n \in \mathbb{N}}$ must be convergent in $\overline{B}_T(\rho, r+\varepsilon)$ (and thus especially in $T$), as wanted.
\end{proof}
\subsection{Projections on closed subtrees}
\label{subsec:Proj}
\begin{defin}[Projections of points on closed subtrees]
Let $(T, d, \rho)$ be an $\mathbb{R}$-tree, and let $T' \subseteq T$ be a \emph{closed} rooted subtree of $T$. Recall that for every $\sigma \in T$, there exists a unique point $\sigma' \in T'$ such that $[\rho, \sigma] \cap T' = [\rho, \sigma']$. We define the \emph{projection of $\sigma$ on $T'$} by $f_{T'}(\sigma) := \sigma'$. This induces a map $f_{T'}\colon T \rightarrow T'$, which we  call the \emph{point-projection map on $T'$}.
\end{defin}
Given a \emph{Polish} $\mathbb{R}$-tree $(T, d, \rho)$ and a closed rooted subtree $T' \subseteq T$, we denote by $(C_i)_{i \in I}$ the countable\footnote{This is a direct consequence of separability.} family of connected components of the open set $T \setminus T'$. We recall the following properties from Duquesne and Winkel \cite{DW26} Proposition 3.5$(i)$--$(iv)$ and Proposition 2.4$(iv)$.
\begin{prop}[Properties of point-projections]
\label{prop:Proj1}
Let $(T, d, \rho)$ be a Polish $\mathbb{R}$-tree, let $T' \subseteq T$ be a closed rooted subtree, and let $(C_i)_{i \in I}$ be given as above. Then
\begin{enumerate}[$(i)$]
\item For every $i \in I$ there exists a point $\sigma_i \in T'$, such that $C_i$ is a connected component of $T \setminus \{\sigma_i\}$ with closure $\overline{C_i} = C_i \cup \{\sigma_i\}$.

\item For every $i \in I$, and every sequence of points $(\sigma_{n,i})_{n \in \mathbb{N}}$ in $C_i$ which converges to the closure point $\sigma_i$, it holds that $C_i = \bigcup_{n \in \mathbb{N}} \theta_{\sigma_{n,i}}T$.

\item $f_{T'}(\sigma) = \sigma$ if $\sigma \in T'$, and $f_{T'}(\sigma) = \sigma_i$ if $\sigma \in C_i$ for some $i \in I$.
\item The point-projection map $f_{T'}\colon T \rightarrow T'$ is 1-Lipschitz.\footnote{Meaning that $ d(f_{T'}(\sigma), f_{T'}(\sigma')) \leq d(\sigma, \sigma')$ for all $\sigma, \sigma' \in T$.}
\item Let $T_1, T_2 \subseteq T$ be two closed rooted subtrees. Then $T_1 \cap T_2$ is a closed rooted subtree of $T$, and $f_{T_1} \circ f_{T_2} = f_{T_1 \cap T_2}$.
\end{enumerate}
\end{prop}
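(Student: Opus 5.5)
The statement collects facts from \cite{DW26}, but I would reprove them directly from the geodesic structure of $T$. The organising observation is this: for a closed rooted subtree $T'$ and any $\sigma \in T$, the set $[\rho,\sigma]\cap T'$ is a closed connected subset of the segment $[\rho,\sigma]$ containing $\rho$. Hence it equals $[\rho,\sigma']$ for a unique $\sigma'$, and $\sigma' = f_{T'}(\sigma)$ is the first point of $T'$ met when travelling from $\sigma$ towards $\rho$. In particular $(\sigma', \sigma] \subseteq T\setminus T'$.

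For $(i)$ and $(iii)$, I will first show that $f_{T'}$ is constant on each component $C_i$. Open subsets of an $\mathbb{R}$-tree are locally path connected, so $C_i$ is path connected. For $x,y\in C_i$ the segment $[x,y]$ is then the unique arc between them and lies in $C_i\subseteq T\setminus T'$. Writing $[x,y] = [x, x\wedge_\rho y]\cup [x\wedge_\rho y, y]$, this forces the first hitting points of $T'$ along $[x,\rho]$ and $[y,\rho]$ to coincide. Call the common value $\sigma_i$.

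Next I will show that $C_i$ equals the component $D$ of $T\setminus\{\sigma_i\}$ containing a fixed $x\in C_i$. For $C_i\subseteq D$: every $y\in C_i$ is joined to $x$ by $[x,y]\subseteq C_i$, which avoids $\sigma_i \in T'$. For $D\subseteq C_i$: if $z\in D$, then $[x,z]$ avoids $\sigma_i$. If $[x,z]$ met $T'$, say first at $w$, then going from $x$ to $w$ and from $w$ back along the tree to $\rho$ would have to pass through $\sigma_i$, because $[\rho,x]\cap T'=[\rho,\sigma_i]$ and $T'$ is connected. That gives a contradiction. The closure statement $\overline{C_i}=C_i\cup\{\sigma_i\}$ follows because $(\sigma_i,x]\subseteq C_i$ accumulates at $\sigma_i$, and any other limit point would lie in $C_i$ or in another component. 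Property $(iii)$ is then immediate from the definitions.

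For $(ii)$, note that every $y\in C_i$ satisfies $\sigma_i\in[\rho,y]$, so $\sigma_{n,i}\in C_i$ gives $\theta_{\sigma_{n,i}}T\subseteq C_i$. Conversely, fix $y\in C_i$. Since $y$ and $\sigma_{n,i}$ lie in the same component of $T\setminus\{\sigma_i\}$ above $\sigma_i$, their branch point $y\wedge\sigma_{n,i}$ differs from $\sigma_i$. The step I expect to need the most care is making this uniform in $n$. I will set $\delta := d(\sigma_i, y\wedge \sigma_{1,i}) > 0$. Comparing branch points then shows that as soon as $d(\sigma_i,\sigma_{n,i})<\delta$, the point $\sigma_{n,i}$ lies on $[\sigma_i,y]$, and hence $y\in\theta_{\sigma_{n,i}}T$.

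For $(iv)$, take $\sigma,\tau \in T$ and split into cases according to the geodesic $[\sigma,\tau]$. If $\sigma$ and $\tau$ lie in the same $C_i$, or both in $T'$, the bound is trivial or an equality. Otherwise, $[\sigma,\tau]$ must pass through $f_{T'}(\sigma)$ and $f_{T'}(\tau)$, by $(i)$, so $d(f_{T'}(\sigma),f_{T'}(\tau))\le d(\sigma,\tau)$.

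For $(v)$, $T_1\cap T_2$ is closed and contains $\rho$. It is connected because for any $\sigma$ in it, $[\rho,\sigma]$ lies in both $T_1$ and $T_2$. For the composition, $[\rho,\sigma]\cap T_2=[\rho,f_{T_2}(\sigma)]$, so
\[
[\rho,\sigma]\cap T_1\cap T_2=[\rho,f_{T_2}(\sigma)]\cap T_1=[\rho,f_{T_1}(f_{T_2}(\sigma))],
\]
which gives $f_{T_1}\circ f_{T_2}=f_{T_1\cap T_2}$.
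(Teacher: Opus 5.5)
Your arguments for $(i)$, $(iii)$, $(iv)$ and $(v)$ are sound. The paper gives no proof of its own here and imports the statement from \cite{DW26}, so a direct proof is a legitimate route. The problem is the uniformity step in $(ii)$.

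\textbf{The gap in $(ii)$.} It is not true that $d(\sigma_i,\sigma_{n,i})<\delta:=d(\sigma_i,y\wedge\sigma_{1,i})$ forces $\sigma_{n,i}\in[\sigma_i,y]$. Branch points of $T$ may accumulate at $\sigma_i$, so $y\wedge\sigma_{n,i}$ can itself tend to $\sigma_i$. The quantity $\delta$, computed from $\sigma_{1,i}$, says nothing about later terms. In fact $(ii)$ as literally stated, for \emph{every} sequence in $C_i$ converging to $\sigma_i$, is false, so no argument can close this step.

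Here is a counterexample. Take a segment $[\rho,\tau]$ of length $2$. For each $k\geq 1$, glue a segment of length $1$ (a tooth) at the point $p_k\in[\rho,\tau]$ with $d(\rho,p_k)=1+\frac{1}{k+1}$. This is a Polish $\mathbb{R}$-tree.
\begin{itemize}
\item With $T'=\overline{B}_T(\rho,1)$, the set $T\setminus T'$ is a single component $C$. Its closure point $\sigma_*$ is the point of $[\rho,\tau]$ at height $1$.
\item Let $\sigma_n$ be the point of the $n$-th tooth at distance $\frac{1}{n}$ from $p_n$. Then $d(\sigma_*,\sigma_n)=\frac{1}{n+1}+\frac{1}{n}\to 0$.
\item Each $\theta_{\sigma_n}T$ lies inside the $n$-th tooth, so $\tau\in C\setminus\bigcup_n\theta_{\sigma_n}T$.
\end{itemize}
Your specific claim fails with $y=\tau$: no $\sigma_n$ ever lies on $[\sigma_*,\tau]$.

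\textbf{What is true.} The paper actually uses the version in which the sequence approaches $\sigma_i$ along a fixed segment. This is the form in Lemma \ref{lemma:Proj3}, and it is implicitly needed for the continuity-from-below argument in Proposition \ref{prop:MEFix3}$(ii)$. Concretely, assume $\sigma_{n+1,i}\in[\rho,\sigma_{n,i}]$ for all $n$. Then all $\sigma_{n,i}$ lie in $(\sigma_i,\sigma_{1,i}]$ and the sets $\theta_{\sigma_{n,i}}T$ increase.

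Under this extra hypothesis your choice of $\delta$ works verbatim. Put $a:=y\wedge\sigma_{1,i}\in(\sigma_i,\sigma_{1,i}]$. Any $\sigma_{n,i}\in(\sigma_i,\sigma_{1,i}]$ with $d(\sigma_i,\sigma_{n,i})<d(\sigma_i,a)$ lies in $(\sigma_i,a)\subseteq[\rho,y]$. You should state $(ii)$ in this corrected form.

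\textbf{Two minor tightenings.}
\begin{itemize}
\item In $(ii)$, the inclusion $\theta_{\sigma_{n,i}}T\subseteq C_i$ does not come from $\sigma_i\in[\rho,y]$; it comes from $\sigma_{n,i}\notin T'$. If some $w\in[\sigma_{n,i},z]\cap T'$ with $z\in\theta_{\sigma_{n,i}}T$, then $\sigma_{n,i}\in[\rho,w]\subseteq T'$, a contradiction.
\item In $(i)$, your step $D\subseteq C_i$ as written does not yet give a contradiction, since $\sigma_i$ could lie on the $w\to\rho$ leg. It is cleanest as follows. For any $w\in T'$, set $v:=x\wedge w$. Then $v\in[\rho,x]\cap[\rho,w]\subseteq[\rho,\sigma_i]$, hence $\sigma_i\in[v,x]\subseteq[x,w]$. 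So every segment from $x$ into $T'$ passes through $\sigma_i$.
\end{itemize}
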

Let $(T, d, \rho)$ be a Polish $\mathbb{R}$-tree, and let $T' \subseteq T$ be a closed rooted subtree. Then we may similarly consider projections of \emph{measures} onto the subtree $T'$.
\begin{defin}[Projections of measures on closed subtrees]
Let $(T, d, \rho)$ be an $\mathbb{R}$-tree, and let $T' \subseteq T$ be a closed rooted subtree of $T$. Let $\mu \in \mathscr{M}(T)$ be a fixed Borel measure. Then we define the \emph{projection of $\mu$ on $T'$} as the pushforward under $f_{T'}$, i.e. ${\rm{Pr}}_{T'}\mu := \mu \circ f_{T'}^{-1}$. This yields a map ${\rm{Pr}}_{T'}\colon \mathscr{M}(T) \rightarrow \mathscr{M}(T')$, which we call the \emph{measure-projection map on $T'$}.
\end{defin}
It is easily seen that on the space of finite Borel measures $\mathscr{M}_{\rm{fin}}(T)$, the projection onto a closed rooted subtree $T' \subseteq T$ yields a map ${\rm{Pr}}_{T'}\colon \mathscr{M}_{\rm{fin}}(T) \rightarrow \mathscr{M}_{\rm{fin}}(T')$.
\begin{prop}[Properties of measure-projections]
\label{prop:Proj2}
Let $(T, d, \rho)$ be a Polish $\mathbb{R}$-tree, and let $T' \subseteq T$ be a closed rooted subtree. Let $(C_i)_{i \in I}$ be given as before, and consider some fixed positive measure $\mu \in \mathscr{M}(T)$. Then the following statements are true:
\begin{enumerate}[$(i)$]
\item The projection of $\mu$ on $T'$ is explicitly given by ${\rm{Pr}}_{T'} \mu = \mu(\cdot \cap T') + \sum_{i \in I} \mu(C_i) \delta_{\sigma_i}$.

\item ${\rm{Pr}}_{T'}\mu(\theta_{\sigma}T) = \mu(\theta_{\sigma}T)$ if $\sigma \in T'$, and ${\rm{Pr}}_{T'}\mu(\theta_{\sigma}T) = 0$ otherwise.

\item Let $T'' \subseteq T' \subseteq T$ be closed rooted subtrees. Then ${\rm{Pr}}_{T''}\mu = {\rm{Pr}}_{T''}{\rm{Pr}}_{T'}\mu$.
\end{enumerate}
\end{prop}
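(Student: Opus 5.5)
The proof will be direct, using the description of the connected components $(C_i)_{i\in I}$ of $T\setminus T'$ from Proposition \ref{prop:Proj1}. The key observation is that $f_{T'}^{-1}(B)$ for a Borel set $B\subseteq T'$ splits as $B$ itself together with all components $C_i$ whose closure point satisfies $\sigma_i \in B$.

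\textbf{Step 1: part $(i)$.} By Proposition \ref{prop:Proj1}$(iii)$, $f_{T'}$ is the identity on $T'$ and constant equal to $\sigma_i$ on $C_i$. Since $T = T' \sqcup \bigsqcup_{i\in I} C_i$, we get for any Borel $B\subseteq T$ the disjoint decomposition
\[
f_{T'}^{-1}(B) = (B\cap T') \sqcup \bigsqcup_{i:\,\sigma_i\in B} C_i .
\]
The index set $I$ is countable and each $C_i$ is open, hence Borel. Countable additivity then gives ${\rm Pr}_{T'}\mu(B) = \mu(B\cap T') + \sum_i \mu(C_i)\delta_{\sigma_i}(B)$.

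\textbf{Step 2: part $(ii)$.} Take $\sigma\in T'$ and apply $(i)$ with $B=\theta_\sigma T$. We then need
\[
f_{T'}^{-1}(\theta_\sigma T)=\theta_\sigma T .
\]
For "$\supseteq$": if $\sigma'\in\theta_\sigma T$, then $\sigma\in[\rho,\sigma']\cap T' = [\rho,f_{T'}(\sigma')]$, because $T'$ is a rooted subtree containing $\rho$ and $\sigma$. Hence $f_{T'}(\sigma')\in\theta_\sigma T$. For "$\subseteq$": if $\sigma\in[\rho,f_{T'}(\sigma')]$, then $\sigma \in [\rho, \sigma']$, since $[\rho,f_{T'}(\sigma')]\subseteq[\rho,\sigma']$.

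For $\sigma\notin T'$, we need $f_{T'}^{-1}(\theta_\sigma T)=\emptyset$. Suppose $f_{T'}(\sigma')\in\theta_\sigma T$. Then $\sigma \in [\rho, f_{T'}(\sigma')]\subseteq T'$, using that $T'$ is connected and rooted. This contradicts $\sigma\notin T'$.

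\textbf{Step 3: part $(iii)$.} By functoriality of pushforwards, ${\rm Pr}_{T''}{\rm Pr}_{T'}\mu = \mu\circ (f_{T''}\circ f_{T'})^{-1}$, where $f_{T''}$ is read as the point-projection on $T''$ viewed inside $T$ (restricted to $T'$). Proposition \ref{prop:Proj1}$(v)$ gives $f_{T''}\circ f_{T'} = f_{T''\cap T'} = f_{T''}$, since $T''\subseteq T'$.

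One point needs care: the projection on $T''$ computed inside the tree $T'$ must agree with the restriction to $T'$ of the projection computed in $T$. This holds because the definition $[\rho,\sigma]\cap T''=[\rho,f(\sigma)]$ is intrinsic and does not depend on the ambient tree.

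\textbf{Expected difficulty.} The only delicate points are measurability and countability. Both are supplied by separability (countability of $I$) and by the openness of the $C_i$. So I expect no serious obstacle; the most care is needed in Step 2, where the connectedness of $T'$ is used.
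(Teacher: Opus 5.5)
Your proof is correct and is essentially the direct verification the paper relies on when it defers to \cite{DW26} (Proposition 3.5$(v)$--$(vi)$ and Lemma 3.7). That argument uses the structure of $f_{T'}$ from Proposition \ref{prop:Proj1}$(iii)$ for $(i)$ and $(ii)$, and $f_{T''}\circ f_{T'}=f_{T''\cap T'}=f_{T''}$ from Proposition \ref{prop:Proj1}$(v)$ for $(iii)$. You also make explicit the paper's remark that finiteness of $\mu$ is never needed, since only countable additivity in $[0,\infty]$ is used.
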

The proof of Proposition \ref{prop:Proj2} is the same as in \cite{DW26} Proposition 3.5$(v)$--$(vi)$ and Lemma 3.7, but replacing any instance of $\mathscr{M}_{\rm{fin}}(T)$ with $\mathscr{M}(T)$. A direct application of Proposition \ref{prop:Proj2}$(ii)$ allows us to easily calculate the projected measure on a connected component outside some arbitrary closed rooted subtree.
\begin{lemma}
\label{lemma:Proj3}
Let $(T, d, \rho)$ be a Polish $\mathbb{R}$-tree, and let $T' \subseteq T$ and $T'' \subseteq T$ be two closed rooted subtrees. Let $C$ be an open connected component of $T \setminus T''$, and consider some fixed positive measure $\mu \in \mathscr{M}(T)$. Then ${\rm{Pr}}_{T'}\mu(C) = \mathbbm{1}_{(C \cap T' \neq \emptyset)} \mu(C)$.
\end{lemma}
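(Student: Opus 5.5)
The plan is to read the statement straight off the explicit formula in Proposition \ref{prop:Proj2}$(i)$, ${\rm{Pr}}_{T'}\mu = \mu(\cdot \cap T') + \sum_{i \in I} \mu(C_i)\delta_{\sigma_i}$, where $(C_i)_{i \in I}$ are the components of $T \setminus T'$ and $\sigma_i \in T'$ their closure points from Proposition \ref{prop:Proj1}$(i)$. The proof splits into the cases $C \cap T' = \emptyset$ and $C \cap T' \neq \emptyset$.

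Let $\sigma_C \in T''$ be the closure point of $C$ given by Proposition \ref{prop:Proj1}$(i)$ applied to $T''$. We fix a sequence $\sigma_{n} \in C$ with $\sigma_n \to \sigma_C$. By Proposition \ref{prop:Proj1}$(ii)$, $C = \bigcup_n \theta_{\sigma_n}T$, and this union is increasing, since the $\sigma_n$ may be taken along the segment from the ends of $C$ towards $\sigma_C$. Monotone convergence then reduces the claim to computing ${\rm{Pr}}_{T'}\mu(\theta_{\sigma_n}T)$. By Proposition \ref{prop:Proj2}$(ii)$ this equals $\mu(\theta_{\sigma_n}T)$ if $\sigma_n \in T'$ and $0$ otherwise.

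\textbf{Case $C \cap T' \neq \emptyset$.} We take some $\tau \in C \cap T'$. Since $T'$ is a rooted subtree containing $\tau$, it contains $[\rho,\tau]$. Every $\sigma_n$ sufficiently close to $\sigma_C$ lies on $[\sigma_C,\tau] \subseteq [\rho,\tau]$, since all of $C$ hangs above $\sigma_C$ through a single direction. We therefore choose the sequence $\sigma_n$ on $[\sigma_C,\tau]$, which keeps it increasing towards $\sigma_C$ and inside $T'$. Proposition \ref{prop:Proj2}$(ii)$ then gives ${\rm{Pr}}_{T'}\mu(\theta_{\sigma_n}T) = \mu(\theta_{\sigma_n}T)$, and letting $n \to \infty$ yields ${\rm{Pr}}_{T'}\mu(C) = \mu(C)$.

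\textbf{Case $C \cap T' = \emptyset$.} Every $\sigma_n \in C$ lies outside $T'$, so each term is $0$ by Proposition \ref{prop:Proj2}$(ii)$, and hence ${\rm{Pr}}_{T'}\mu(C) = 0$. The same conclusion follows directly from the explicit formula: $\mu(C \cap T') = 0$, and no $\sigma_i$ lies in $C$, because $\sigma_i \in T'$.

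The step needing the most care is the geometric claim in the nonempty case, namely that the approximating points $\sigma_n$ can be taken inside $T'$ along a monotone sequence. This uses that $C$ is a component of $T \setminus \{\sigma_C\}$, so $[\sigma_C,\tau] \setminus \{\sigma_C\} \subseteq C$, together with the tree structure. The monotonicity $\theta_{\sigma_{n+1}}T \supseteq \theta_{\sigma_n}T$ is what legitimises the monotone limit even when $\mu(C) = \infty$.
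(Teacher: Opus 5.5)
Your proof is correct and follows the paper's argument. The empty case goes via the support of ${\rm{Pr}}_{T'}\mu$ (equivalently, the explicit formula), and the nonempty case via a monotone sequence on $(\sigma_C,\tau] \subseteq C \cap T'$ converging to $\sigma_C$, combined with Proposition~\ref{prop:Proj1}$(ii)$, Proposition~\ref{prop:Proj2}$(ii)$ and continuity from below. Your aside that every $\sigma_n$ sufficiently close to $\sigma_C$ lies on $[\sigma_C,\tau]$ is false for an arbitrary sequence in $C$, since branch points of $C$ may accumulate at $\sigma_C$; it is harmless, though, because Proposition~\ref{prop:Proj1}$(ii)$ holds for every such sequence, so you may simply choose yours on that segment, as you in fact do.
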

\begin{proof}
As ${\rm{Pr}}_{T'}\mu$ has support on (a subset of) $T'$, we have ${\rm{Pr}}_{T'}\mu(C) = 0$ whenever $C \cap T' = \emptyset$. Suppose that there exists some $\sigma \in C \cap T'$. Then $[\rho, \sigma] \subseteq T'$. By Proposition \ref{prop:Proj1}$(i)$ we can find some point $\tilde{\sigma} \in T''$ such that $\overline{C} = C \cup \{\tilde{\sigma}\}$. In particular, we must have that $\tilde{\sigma} \in [\rho, \sigma)$. We can now choose some sequence of points $(\sigma_n)_{n \in \mathbb{N}}$ in $(\tilde{\sigma}, \sigma] \subseteq C$ with $\sigma_{n+1} \in [\rho, \sigma_n]$ and $d(\sigma_n, \tilde{\sigma}) \rightarrow 0$ as $n \rightarrow \infty$. Then $\theta_{\sigma_1}T \subseteq \theta_{\sigma_2}T \subseteq \dots$ with $\bigcup_{n \in \mathbb{N}} \theta_{\sigma_n}T = C$, and we conclude by Proposition \ref{prop:Proj2}$(ii)$, that $ {\rm{Pr}}_{T'}\mu(C) = \lim_{n \rightarrow \infty} {\rm{Pr}}_{T'}\mu(\theta_{\sigma_n}T) = \lim_{n \rightarrow \infty} \mu(\theta_{\sigma_n}T) = \mu(C)$.
\end{proof}
We now examine under which conditions the projection of a discretely infinite measure onto a closed rooted subtree is discretely infinite.
\begin{prop}
\label{prop:Proj4}
Let $(T, d, \rho)$ be a Polish $\mathbb{R}$-tree, let $\mu \in \mathscr{M}_{\rm{di}}(T)$, and let $T' \subseteq T$ be a closed rooted subtree. Then the following are equivalent:
\begin{itemize}
\item[$(a)$] ${\rm{Pr}}_{T'}\mu \in \mathscr{M}_{\rm{di}}(T)$.
\item[$(b)$] ${\rm{Pr}}_{T'}\mu \in \mathscr{M}_{\rm{bf}}(T)$.
\item[$(c)$] $\mu(\theta_{\sigma}T) < \infty$ for all $\sigma \in T \setminus T'$.
\end{itemize}
In the affirmative case, we in particular have that ${\rm{Skel}}(T, \mu) = {\rm{Skel}}(T, {\rm{Pr}}_{T'}\mu)$.
\end{prop}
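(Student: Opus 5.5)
The plan is to prove the cycle $(a) \Rightarrow (b) \Rightarrow (c) \Rightarrow (a)$ and then read off the skeleton identity from the computation used in $(c) \Rightarrow (a)$. Throughout, write $\nu := {\rm{Pr}}_{T'}\mu$ and use the explicit formula of Proposition \ref{prop:Proj2}$(i)$, namely $\nu = \mu(\cdot \cap T') + \sum_{i \in I} \mu(C_i)\delta_{\sigma_i}$, where the $C_i$ are the components of $T \setminus T'$. The implication $(a) \Rightarrow (b)$ is immediate, since $\mathscr{M}_{\rm{di}}(T) \subseteq \mathscr{M}_{\rm{bf}}(T)$.

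For $(b) \Rightarrow (c)$ I would argue by contraposition. Suppose $\sigma \in T \setminus T'$ has $\mu(\theta_\sigma T) = \infty$, and let $C_i$ be the component containing $\sigma$. Since $\theta_\sigma T \subseteq C_i$ (the segment $[\sigma_i,\sigma]$ meets $T'$ only at $\sigma_i$), we get $\mu(C_i) = \infty$. Then $\nu(\{\sigma_i\}) = \infty$ by Lemma \ref{lemma:Proj3} or by the explicit formula, and the bounded set $\{\sigma_i\}$ has infinite $\nu$-mass. So $\nu \notin \mathscr{M}_{\rm{bf}}(T)$.

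For $(c) \Rightarrow (a)$ I would proceed in four steps.
\begin{enumerate}[(1)]
\item \emph{Each $C_i$ has finite mass.} Writing $C_i = \bigcup_n \theta_{\sigma_{n,i}}T$ as an increasing union (Proposition \ref{prop:Proj1}$(ii)$), each piece is finite by $(c)$. This alone does not give finiteness of $\mu(C_i)$, so I use the discreteness of $\mu$. Since $\mu \in \mathscr{M}_{\rm{di}}(T)$, ${\rm{Skel}}(T,\mu)$ is a discrete tree without leaves, and by $(c)$ it lies entirely in $T'$. If $\mu(C_i) = \infty$, then $\sigma_i \in {\rm{Skel}}(T,\mu)$. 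Pick a level $r > d(\rho,\sigma_i)$; the set $C_i \cap \partial B(\rho,r)$ consists of points of $F_\mu(T)$, so by Definition \ref{def:DIMeas}$(ii)$ the mass of $C_i$ above level $r$ is finite. Below level $r$, $C_i \cap \overline{B}(\rho,r)$ is bounded, so its mass is finite by bounded finiteness. Hence $\mu(C_i) < \infty$.
\item \emph{$\nu$ is boundedly finite.} For a bounded set $B \subseteq \overline{B}(\rho,r)$, $\nu(B) \le \mu(\overline{B}(\rho,r)) + \sum_{i:\,\sigma_i \in \overline{B}(\rho,r)} \mu(C_i)$. The sum is the mass of those components $C_i$ with $\sigma_i \in \overline{B}(\rho,r)$. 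Their parts inside $\overline{B}(\rho,r)$ are controlled by $\mu(\overline{B}(\rho,r))$. Their parts beyond level $r$ lie in $\bigcup_{\sigma \in \partial B(\rho,r) \cap F_\mu(T)} \theta_\sigma T$, because these components avoid the skeleton, and this union is finite by condition $(ii)$.
\item \emph{Skeleton identity.} By Proposition \ref{prop:Proj2}$(ii)$, $\nu(\theta_\sigma T) = \mu(\theta_\sigma T)$ for $\sigma \in T'$, and $\nu(\theta_\sigma T) = 0$ otherwise. Since ${\rm{Skel}}(T,\mu) \subseteq T'$ by $(c)$, this gives ${\rm{Skel}}(T,\nu) = {\rm{Skel}}(T,\mu)$. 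In particular the skeleton of $\nu$ is discrete, and $\nu(T) = \mu(T) = \infty$ (or both are finite, which is also permitted).
\item \emph{Condition $(ii)$ for $\nu$.} For $\sigma \in \partial B(\rho,r) \cap F_\nu(T)$, either $\sigma \notin T'$, in which case it contributes $0$, or $\sigma \in T' \cap F_\mu(T)$ and $\nu(\theta_\sigma T) = \mu(\theta_\sigma T)$. Since the subtrees $\theta_\sigma T$ over distinct points of the same level are disjoint, countable additivity bounds the total by $\mu\big(\bigcup_{\sigma \in \partial B(\rho,r) \cap F_\mu(T)} \theta_\sigma T\big) < \infty$.
\end{enumerate}

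The main obstacle is the bookkeeping in steps (1) and (2): showing that the components $C_i$ hanging off $T'$ inside a bounded region carry only finitely much mass in total. The key point is that these components avoid the skeleton, so beyond level $r$ they sit inside the finite-mass fringe of Definition \ref{def:DIMeas}$(ii)$.
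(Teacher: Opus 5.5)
Your route is essentially the paper's: a contrapositive for $(b)\Rightarrow(c)$, the explicit projection formula, and the skeleton identity via Proposition~\ref{prop:Proj2}$(ii)$. However, step (2) has a boundary error. You assert that for every component $C_i$ with $\sigma_i\in\overline{B}(\rho,r)$, the part beyond level $r$ lies in $\bigcup_{\sigma\in\partial B(\rho,r)\cap F_\mu(T)}\theta_\sigma T$. This fails when $\sigma_i\in\partial B(\rho,r)\cap{\rm{Skel}}(T,\mu)$: the level-$r$ ancestor of every point of $C_i$ is then $\sigma_i$ itself, which is not in $F_\mu(T)$.

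Concretely, let $T$ be a ray carrying Lebesgue measure, and let $T'$ be the ray. Attach countably many unit segments at the ray point $\sigma$ at level $r$, the $j$-th carrying a point mass $m_j$ at its tip. Then $\partial B(\rho,r)\cap F_\mu(T)=\emptyset$, so your union is empty, yet the components $C_j$ lie entirely beyond level $r$ and $\nu(\{\sigma\})=\sum_j m_j$. Finiteness of this sum does not follow from finiteness of each $\mu(C_j)$ (your step (1)). It needs Definition~\ref{def:DIMeas}$(ii)$ at a level strictly above $r$.

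The fix is the paper's. For $\varepsilon>0$, any point of such a $C_i$ above level $r+\varepsilon$ has its level-$(r+\varepsilon)$ ancestor inside $C_i\subseteq T\setminus T'$, hence in $F_\mu(T)$ by $(c)$. Therefore
\[ \bigcup_{i:\,\sigma_i\in\overline B(\rho,r)}C_i\subseteq\overline B(\rho,r+\varepsilon)\cup\bigcup_{\sigma\in\partial B(\rho,r+\varepsilon)\cap F_\mu(T)}\theta_\sigma T, \]
and the right-hand side has finite $\mu$-mass. You already used the strict inequality $r>d(\rho,\sigma_i)$ correctly in step (1); with this fix, step (1) becomes superfluous.

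Step (4) has a smaller imprecision. $\partial B(\rho,r)$ can be uncountable, so countable additivity over the disjoint family $\{\theta_\sigma T\}$ is not available. The pointwise comparisons $\nu(\theta_\sigma T)\le\mu(\theta_\sigma T)$ do not by themselves control $\nu$ of the union; think of a diffuse measure on uncountably many leaves at level $r$. The paper splits off the leaves, whose total contribution is at most $\nu(\partial B(\rho,r))<\infty$, and sums over the countably many non-leaf boundary points.

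A one-line alternative: set $U=\bigcup_{\sigma\in\partial B(\rho,r)\cap F_\mu(T)}\theta_\sigma T$. Since $f_{T'}(x)\in[\rho,x]$, you get $f_{T'}^{-1}(U)\subseteq U$, hence $\nu(U)\le\mu(U)<\infty$. By your step (3), $F_\nu(T)=F_\mu(T)$, so this is exactly condition $(ii)$ for $\nu$.
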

\begin{proof}
Start by observing that
\begin{equation}
\label{eq:Proj4}
{\rm{Pr}}_{T'}\mu \in \mathscr{M}_{\rm{bf}}(T) \quad \text{if and only if} \quad \mu\left( f_{T'}^{-1}(\overline{B}(\rho, r)) \right) < \infty \text{ for all } r \geq 0.
\end{equation}
The implication $(a) \Rightarrow (b)$ is trivial. To see that $(b) \Rightarrow (c)$, suppose for contradiction that there exists some $\sigma \in T \setminus T'$ with $\mu(\theta_{\sigma}T) = +\infty$, and set $\tilde{\sigma} := f_{T'}(\sigma)$. As $T'$ is a closed rooted subtree of $T$, we have that $\theta_{\sigma}T \subseteq T \setminus T'$, and as all elements of $\theta_{\sigma}T$ belong to the same open connected component of $T \setminus T'$, we must have that $f_{T'}(\sigma') = \tilde{\sigma}$ for all $\sigma' \in \theta_{\sigma}T$. Hence, $\theta_{\sigma}T \subseteq f_{T'}^{-1}(\{\tilde{\sigma}\})$. Now, choose $r > 0$ sufficiently large such that $\tilde{\sigma} \in \overline{B}_T(\rho, r)$. Then $\mu(f_{T'}^{-1}(\overline{B}_T(\rho, r))) \geq \mu(f_{T'}^{-1}(\{\tilde{\sigma}\})) \geq \mu(\theta_{\sigma}T) = +\infty$. By \eqref{eq:Proj4} we conclude that ${\rm{Pr}}_{T'}\mu \notin \mathscr{M}_{\rm{bf}}(T)$, giving the contrapositive of $(b) \Rightarrow (c)$

Next, suppose that $(c)$ holds and fix some $r \geq 0$. Denote by $(C_i)_{i \in I}$ the open connected components of $T \setminus T'$, and let $(\sigma_i)_{i \in I}$ denote their closure points in $T'$ from Proposition \ref{prop:Proj1}$(i)$. Let $I_r \subseteq I$ denote the subset of indices for which $\sigma_i \in \overline{B}_T(\rho, r)$. Then by Proposition \ref{prop:Proj1}$(iii)$ we see that
$$ \mu(f_{T'}^{-1}(\overline{B}_T(\rho, r))) = \mu\left( T' \cap \overline{B}_T(\rho, r) \right) + \mu\left( \bigcup_{i \in I_r} C_i \right).$$
As $\mu \in \mathscr{M}_{\rm{di}}(T)$ (and so in particular $\mu \in \mathscr{M}_{\rm{bf}}(T)$), we immediately see that the first term is finite. For the second term, take some $\varepsilon > 0$, and note by $(c)$ that $C_i \cap \partial B_T(\rho, r+\varepsilon) \subseteq F_{\mu}(T)$ for every $i \in I_r$. This implies in particular that $\bigcup_{i \in I_r} C_i \subseteq \overline{B}_T(\rho, r+\varepsilon) \cup \bigcup_{\sigma \in \partial B_T(\rho, r+\varepsilon) \cap F_{\mu}(T)} \theta_{\sigma}T$, and so we get that $\mu\left( \bigcup_{i \in I_r} C_i \right) \leq \mu(\overline{B}_T(\rho, r+\varepsilon)) + \mu\left( \bigcup_{\sigma \in \partial B_T(\rho, r+\varepsilon) \cap F_{\mu}(T)} \theta_{\sigma}T \right) < \infty$ since $\mu \in \mathscr{M}_{\rm{di}}(T)$ by assumption. By \eqref{eq:Proj4} we conclude that ${\rm{Pr}}_{T'}\mu \in \mathscr{M}_{\rm{bf}}(T)$, taking care of the implication $(c) \Rightarrow (b)$

Finally let us prove that $(b)/(c) \Rightarrow (a)$. We immediately see that $(c)$ ensures that ${\rm{Skel}}(T, \mu) \subseteq T'$, and so we conclude by Proposition \ref{prop:Proj2}$(ii)$ that ${\rm{Skel}}(T, {\rm{Pr}}_{T'}\mu) = T' \cap {\rm{Skel}}(T, \mu) = {\rm{Skel}}(T, \mu)$. As $\mu \in \mathscr{M}_{\rm{di}}(T)$, we thus have that ${\rm{Skel}}(T, {\rm{Pr}}_{T'}\mu)$ is a discrete $\mathbb{R}$-tree. Moreover, we have by Proposition \ref{prop:Proj2}$(ii)$ that ${\rm{Pr}}_{T'}\mu(\theta_{\sigma}T) \leq \mu(\theta_{\sigma}T)$ for all $\sigma \in T$. Fix some $r > 0$ and let $D_r = \partial B_T(\rho, r) \setminus {\rm{Lf}}(T)$ be the countable collection of boundary points with $\theta_{\sigma}T \setminus \{\sigma\} \neq \emptyset$. Then we conclude that
\begin{align*}
{\rm{Pr}}_{T'}\mu \left( \bigcup_{\sigma \in \partial B_T(\rho, r) \cap F_{\mu}(T)} \theta_{\sigma} T \right) &\leq {\rm{Pr}}_{T'}\mu\left( \partial B_T(\rho, r) \right) + \sum_{\sigma \in \partial B_T(\rho, r) \cap F_{\mu}(T) \cap D_r} \mu(\theta_{\sigma}T)
\end{align*}
which is finite as ${\rm{Pr}}_{T'}\mu \in \mathscr{M}_{\rm{bf}}(T)$ and $\mu \in \mathscr{M}_{\rm{di}}(T)$.
\end{proof}
\begin{example}
\label{ex:Proj5}
In Proposition \ref{prop:Proj4}, the assumption that $\mu \in \mathscr{M}_{\rm{di}}(T)$ is absolutely essential for the argument to work. To illustrate why it will not suffice to assume that $\mu \in \mathscr{M}_{\rm{bf}}(T)$, let $(T, d, \rho)$ be the infinite star from Example \ref{ex:BFMeas6} with rays ordered by $\mathbf{r}_1, \mathbf{r}_2, \dots$, and let $\mu$ correspond to the Dirac point-measure $\delta_i$ on the ray $\mathbf{r}_i$ for every $i \in \mathbb{N}$. Then we clearly have that $\mu \in \mathscr{M}_{\rm{bf}}(T)$, but $\mu \notin \mathscr{M}_{\rm{di}}(T)$ as it violates the second condition of Definition \ref{def:DIMeas}. If we let $T' = \overline{B}_T(\rho, 1)$, we have that $\mu(\theta_{\sigma}T) < \infty$ for all $\sigma \in T \setminus T'$, but we immediately see that the countably many Dirac point-masses will all be projected onto the boundary $\partial B_T(\rho, 1)$. So in particular, ${\rm{Pr}}_{T'}\mu(\overline{B}_T(\rho, 1)) = +\infty$, implying that ${\rm{Pr}}_{T'}\mu \notin \mathscr{M}_{\rm{bf}}(T)$.
\demo
\end{example}
\subsection{Gromov-type topologies on isometry classes of $\mathbb{R}$-trees}
\label{subsec:Gromov}
Recall that two pointed metric spaces $(\mathscr{X}, d_{\mathscr{X}}, \rho_{\mathscr{X}})$ and $(\mathscr{Y}, d_{\mathscr{Y}}, \rho_{\mathscr{Y}})$ are said to be \emph{isometric} if there exists a bijective isometry $\varphi\colon \mathscr{X} \rightarrow \mathscr{Y}$ with $\varphi(\rho_{\mathscr{X}}) = \rho_{\mathscr{Y}}$. Two measured pointed metric spaces $(\mathscr{X}, d_{\mathscr{X}}, \rho_{\mathscr{X}}, \mu)$ and $(\mathscr{Y}, d_{\mathscr{Y}}, \rho_{\mathscr{Y}}, \nu)$ are said to be isometric if there exists a pointed bijective isometry which moreover satisfies that $\mu \circ \varphi^{-1} = \nu$. We will denote the isometry class of a pointed (measured) metric space by square brackets, i.e. $[\mathscr{X}, d, \rho]$ or $[\mathscr{X}, d, \rho, \mu]$. Returning to the setting of $\mathbb{R}$-trees, we denote by $\mathbb{T}_{\rm{c}}$ the space of isometry classes of compact $\mathbb{R}$-trees, and by $\mathbb{T}_{\rm{lc}}$ the space of isometry classes of complete locally compact $\mathbb{R}$-trees.

Given two compact $\mathbb{R}$-trees, $(T_1, d_1, \rho_1)$ and $(T_2, d_2, \rho_2)$, we define their (pointed) Gromov--Hausdorff distance by
\begin{align*}
d_{\rm{GH}}(T_1, T_2) := \inf\left\{ \delta_{\rm{H}}(\phi_1(T_1), \phi_2(T_2)) ~\Bigg{|} ~ \begin{array}{r}
(\mathscr{Z}, \delta, \varrho) \text{ is a pointed metric space}, \\
\phi_1 \colon T_1 \rightarrow \mathscr{Z} \text{ is a pointed isometry}, \\
\phi_2 \colon T_2 \rightarrow \mathscr{Z} \text{ is a pointed isometry},
\end{array} \right\}
\end{align*}
where we will usually take $\delta_H$ to denote the Hausdorff distance on an embedding space $(\mathscr{Z}, \delta)$. It is easily seen that $d_{\rm{GH}}(T_1, T_2) = 0$ if and only if $(T_1, d_1, \rho_1)$ and $(T_2, d_2, \rho_2)$ are isometric, and in particular it can be checked that $d_{\rm{GH}}$ is a well-defined metric on the space $\mathbb{T}_{\rm{c}}$. It is well-known from e.g., \cite{EPW06} Theorem 2 that $(\mathbb{T}_{\rm{c}}, d_{\rm{GH}})$ forms a Polish space.
\begin{rmk}
\label{rmk:Gromov}
Note that we have used a slightly different definition of the pointed Gromov--Hausdorff distance here, compared to that of \cite{EPW06} and most other literature. This difference lies in the fact that we require the isometries $\phi_1$ and $\phi_2$ to be \emph{pointed} in the sense that they must identify the roots, $\phi_1(\rho_1) = \varrho = \phi_2(\rho_2)$. It is an easy exercise to show that these two different definitions of the pointed Gromov--Hausdorff distance generate the same topology -- indeed, if we denote the distance from \cite{EPW06} Section 2.3 by
\begin{align*}
\widetilde{d}_{\rm{GH}}(T_1, T_2) := \inf\left\{ \widetilde{\delta}_{\rm{H}}(\widetilde{\phi}_1(T_1), \widetilde{\phi}_2(T_2)) \vee \widetilde{\delta}(\widetilde{\phi}_1(\rho_1), \widetilde{\phi}_2(\rho_2)) ~\Bigg{|} ~ \begin{array}{r}
(\widetilde{\mathscr{Z}}, \widetilde{\delta}) \text{ is a metric space}, \\
\widetilde{\phi}_1 \colon T_1 \rightarrow \mathscr{Z} \text{ is an isometry}, \\
\widetilde{\phi}_2 \colon T_2 \rightarrow \mathscr{Z} \text{ is an isometry}.
\end{array} \right\},
\end{align*}
then $\widetilde{d}_{\rm{GH}} \leq d_{\rm{GH}} \leq 2 \widetilde{d}_{\rm{GH}}$. The lower bound holds automatically as $\widetilde{d}_{\rm{GH}}$ allows for the possibility that $\widetilde{\phi}_1$ and $\widetilde{\phi}_2$ are pointed. The upper bound is shown by noting that the embedding space $\widetilde{\mathscr{Z}}$ in $\widetilde{d}_{\rm{GH}}$ can always be chosen as the disjoint union $T_1 \sqcup T_2$, and that $\widetilde{\delta}$ is in this case chosen as a metric which coincides with $d_1$ on $T_1$ and $d_2$ on $T_2$. For $d_{\rm{GH}}$, one then considers the quotient space $\mathscr{Z} = T_1 \sqcup T_2 / \{\rho_1, \rho_2\}$ which identifies the roots, and sets $\varrho := [\rho_1]$. The metric $\delta$ can then be defined by letting it coincide with $d_1$ and $d_2$ on $(T_1\setminus \{\rho_1\}) \cup \{\rho\}$ and $(T_2 \setminus \{\rho_2\}) \cup \{\rho\}$, respectively, and setting for $\sigma_1 \in T_1$ and $\sigma_2 \in T_2$: $ \delta(\sigma_1, \sigma_2) = \min\left\{ d_1(\sigma_1, \rho_1) + d_2(\rho_2, \sigma_2), \widetilde{\delta}(\sigma_1, \sigma_2) + \widetilde{\delta}(\rho_1, \rho_2) \right\}$ (noting by convention that we suppress the isometries in our notation, when choosing the embedding space to be the disjoint union).
\demo
\end{rmk}

The (pointed) Gromov--Hausdorff distance can easily be localized to obtain a way of measuring distances between complete locally compact $\mathbb{R}$-trees. Recall from the Hopf--Rinow theorem that any closed and bounded subset of a locally compact space is compact, so in particular $\overline{B}_T(\rho, r)$ is compact for every $r \geq 0$ if $(T, d, \rho)$ is locally compact. We can thus define the \emph{local} (pointed) Gromov--Hausdorff distance of two complete locally compact $\mathbb{R}$-trees $(T_1, d_1, \rho_1)$ and $(T_2, d_2, \rho_2)$ by
$$ d_{\rm{GH}}^{\rm{loc}}(T_1, T_2) := \sum_{k=1}^{\infty} \frac{1}{2^k} d_{\rm{GH}}(\overline{B}_{T_1}(\rho_1, k), \overline{B}_{T_2}(\rho_2, k)).$$
It is then easily checked that $d_{\rm{GH}}^{\rm{loc}}$ is a well-defined metric on $\mathbb{T}_{\rm{lc}}$, and it is well-known from e.g., \cite{DW07} Proposition 3.4, that $(\mathbb{T}_{\rm{lc}}, d_{\rm{GH}}^{\rm{loc}})$ forms a Polish space.

\bigskip
\noindent \emph{Minimal $\mathbb{R}$-trees and isometry classes:} Our next objective will be to introduce the Gromov--Prokhorov and Gromov-vague distances on suitable spaces of isometry classes of Polish measured $\mathbb{R}$-trees. Consider two Polish finitely measured $\mathbb{R}$-trees $(T_1, d_1, \rho_1, \mu_1)$ and $(T_2, d_2, \rho_2, \mu_2)$. We define their \emph{Gromov--Prokhorov distance} by
\begin{align*}
d_{\rm{GP}}(\mu_1, \mu_2) &:= \inf\left\{ \delta_{\rm{P}}( \mu_1 \circ \phi_1^{-1}, \mu_2 \circ \phi_2^{-1}) ~\Bigg{|}~ \begin{array}{r}
(\mathscr{Z}, d, \rho) \text{ is a pointed metric space}, \\
\phi_1 \colon T_1 \rightarrow \mathscr{Z} \text{ is a pointed isometry}, \\
\phi_2 \colon T_2 \rightarrow \mathscr{Z} \text{ is a pointed isometry}.
\end{array} \right\}.
\end{align*}
Recall the notion of a minimal $\mathbb{R}$-tree from Definition \ref{def:MinRTree}. 
As discussed by Duquesne and Winkel \cite{DW26}, if $d_{\rm{GP}}(\mu_1, \mu_2) = 0$, then $\left( \overline{\text{Span}(\text{supp}(\mu_1))}, d_1, \rho_1, \mu_1 \right)$ and $\left( \overline{\text{Span}(\text{supp}(\mu_2))}, d_2, \rho_2, \mu_2 \right)$ are isometric. If we denote by $\mathbb{T}_{\rm{min}}^{\rm{fin}}$ the space of isometry classes of \emph{minimal} Polish finitely measured  $\mathbb{R}$-trees, it can easily be checked that $d_{\rm{GP}}$ is a well-defined metric on $\mathbb{T}_{\rm{min}}^{\rm{fin}}$ and as seen in \cite{LVW15} Proposition 2.6 the space $(\mathbb{T}_{\rm{min}}^{\rm{fin}}, d_{\rm{GP}})$ is indeed Polish. Similarly as in Remark \ref{rmk:Gromov} we note the slight change from \cite{DW26} that we identify the roots of the trees in our embedding space (as usual, this does not impact the topology generated, and just means that we might have to multiply some estimates by two). We denote isometry classes in $\mathbb{T}_{\rm{min}}^{\rm{fin}}$ by $\boldsymbol{\mu} = [T, d, \rho, \mu]$. We say that a sequence $(\boldsymbol{\mu}_n)_{n \in \mathbb{N}}$ converges in the \emph{Gromov--Prokhorov sense} to $\boldsymbol{\mu}$, and write $\boldsymbol{\mu}_n \stackrel{\rm{GP}}{\rightarrow} \boldsymbol{\mu}$, if $d_{\rm{GP}}(\boldsymbol{\mu}_n, \boldsymbol{\mu}) \rightarrow 0$ as $n \rightarrow \infty$.

Denote now by $\mathbb{T}_{\rm{min}}^{\rm{bf}}$ the space of isometry classes of minimal Polish boundedly finitely measured $\mathbb{R}$-trees. We will equip this space with the Gromov-vague topology, as constructed by Athreya, Löhr and Winter \cite{ALW16}.\footnote{Note as in \cite{ALW16}, that these results also hold in a non-minimal setting where one considers more general metric spaces than just $\mathbb{R}$-trees. For our purposes we will only state results on the space $\mathbb{T}_{\rm{min}}^{\rm{bf}}$.} Consider an isometry class $\boldsymbol{\mu} = [T, d, \rho, \mu] \in \mathbb{T}_{\rm{min}}^{\rm{bf}}$, and fix some radius $r > 0$. We can define by $\boldsymbol{\mu}^{\restr r}$ the isometry class of the restriction to the closed ball of radius $r$ around the root, by setting
$$ \boldsymbol{\mu}^{\restr r} := \left[ \overline{{\rm{Span}}({\rm{supp}}(\mu^{\restr r}))}, d, \rho, \mu^{\restr r} \right] \in \mathbb{T}_{\rm{min}}^{\rm{fin}}. $$
Given two isometry classes $\boldsymbol{\mu}_1, \boldsymbol{\mu}_2 \in \mathbb{T}_{\rm{min}}^{\rm{bf}}$ we define their Gromov-vague distance by
$$ d_{\rm{GV}}(\boldsymbol{\mu}_1, \boldsymbol{\mu}_2) := \int_0^{\infty} e^{-r} \left( 1 \wedge d_{\rm{GP}}(\boldsymbol{\mu}_1^{\restr r}, \boldsymbol{\mu}_2^{\restr r} \right) dr.$$
If $d_{\rm{GV}}(\boldsymbol{\mu}_1, \boldsymbol{\mu}_2) = 0$, we must have for Lebesgue-almost all $r > 0$ that $d_{\rm{GP}}(\boldsymbol{\mu}_1^{\restr r}, \boldsymbol{\mu}_2^{\restr r}) = 0$, and so we know that $\boldsymbol{\mu}_1^{\restr r} = \boldsymbol{\mu}_2^{\restr r}$. Given representatives $(T_1, d_1, \rho_1, \mu_1)$ and $(T_2, d_2, \rho_2, \mu_2)$ for the minimal isometry classes, we thus have for almost all $r > 0$ that there must exist some bijective isometry $\phi^{(r)}\colon \overline{{\rm{Span}}({\rm{supp}}(\mu_1^{\restr r}))} \rightarrow \overline{{\rm{Span}}({\rm{supp}}(\mu_2^{\restr r}))}$ with $\phi^{(r)}(\rho_1) = \rho_2$ and $\mu_1^{\restr r} \circ (\phi^{(r)})^{-1} = \mu_2^{\restr r}$. If $r' \leq r$ we see that the restriction of $\phi^{(r)}$ to $\overline{{\rm{Span}}({\rm{supp}}(\mu_1^{\restr r'}))}$ will precisely be a bijective isometry onto $\overline{{\rm{Span}}({\rm{supp}}(\mu_2^{\restr r'}))}$ with $\mu_1^{\restr r'} \circ \left(\phi^{(r)}|_{\overline{{\rm{Span}}({\rm{supp}}(\mu_1^{\restr r'}))}}\right)^{-1} = \mu_2^{\restr r'}$, so we can without loss of generality assume that $\phi^{(r')} = \phi^{(r)}|_{\overline{{\rm{Span}}({\rm{supp}}(\mu_1^{\restr r'}))}}$. Noting that $T_1$ and $T_2$ are minimal, we can let $r \rightarrow \infty$ in order to uniquely extend to a bijective isometry $\phi\colon T_1 \rightarrow T_2$ with $\phi(\rho_1) = \rho_2$ and $\mu_1 \circ \phi^{-1} = \mu_2$, and so we conclude that $\boldsymbol{\mu}_1 = \boldsymbol{\mu}_2$, as wanted. It is clear that $d_{\rm{GV}}$ is symmetric, and it can also easily be checked that it satisfies the triangle inequality, hence $(\mathbb{T}_{\rm{min}}^{\rm{bf}}, d_{\rm{GV}})$ forms a well-defined metric space, and we say that a sequence $(\boldsymbol{\mu}_n)_{n \in \mathbb{N}}$ converges \emph{Gromov-vaguely} to $\boldsymbol{\mu}$, and write $\boldsymbol{\mu}_n \stackrel{\rm{GV}}{\rightarrow} \boldsymbol{\mu}$, if $d_{\rm{GV}}(\boldsymbol{\mu}_n, \boldsymbol{\mu}) \rightarrow 0$ as $n \rightarrow \infty$. Proposition \ref{prop:GV1} is analogous to \cite{ALW16} Lemma 2.6.
\begin{prop}
\label{prop:GV1}
Let $\boldsymbol{\mu}, \boldsymbol{\mu}_n \in \mathbb{T}_{\rm{min}}^{\rm{bf}}$, $n \in \mathbb{N}$. Then the following statements are equivalent:
\begin{itemize}
\item[$(a)$] $\boldsymbol{\mu}_n^{\restr r} \stackrel{\rm{GP}}{\rightarrow} \boldsymbol{\mu}^{\restr r}$ as $n \rightarrow \infty$ for all $r > 0$ with $\mu(\partial B_T(\rho, r)) = 0$.
\item[$(b)$] $\boldsymbol{\mu}_n^{\restr r} \stackrel{\rm{GP}}{\rightarrow} \boldsymbol{\mu}^{\restr r}$ as $n \rightarrow \infty$ for Lebesgue-almost all $r > 0$.
\item[$(c)$] $\boldsymbol{\mu}_n^{\restr r_k} \stackrel{\rm{GP}}{\rightarrow} \boldsymbol{\mu}^{\restr r_k}$ as $n \rightarrow \infty$ for all $k \in \mathbb{N}$, and some increasing sequence of radii $r_k \rightarrow \infty$.
\item[$(d)$] $\boldsymbol{\mu}_n \stackrel{\rm{GV}}{\rightarrow} \boldsymbol{\mu}$ as $n \rightarrow \infty$.
\end{itemize}
\end{prop}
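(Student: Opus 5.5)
The plan is to mirror the proof of Proposition \ref{prop:BFMeas1}, replacing the Prokhorov distance on a fixed space by the Gromov--Prokhorov distance and the Portmanteau theorem by a Gromov-type analogue. The implications $(a) \Rightarrow (b) \Rightarrow (c)$ are immediate. The set $\{r > 0 ~|~ \mu(\partial B_T(\rho, r)) > 0\}$ is at most countable, since the spheres $\partial B_T(\rho, r)$ are disjoint and $\mu$ is boundedly finite, so it is Lebesgue-null. Moreover, a Lebesgue-full set of radii contains an increasing sequence $r_k \rightarrow \infty$.

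For $(c) \Rightarrow (d)$ I would first show that $(c)$ implies GP-convergence of $\boldsymbol{\mu}_n^{\restr r}$ for every $\mu$-nice $r$, using the step $(d) \Rightarrow (a)$ below, which only uses convergence at some radius above $r$. The integrand $1 \wedge d_{\rm{GP}}(\boldsymbol{\mu}_n^{\restr r}, \boldsymbol{\mu}^{\restr r})$ then tends to zero for almost every $r$. Since it is bounded by $1$, dominated convergence against $e^{-r}dr$ gives $d_{\rm{GV}}(\boldsymbol{\mu}_n, \boldsymbol{\mu}) \rightarrow 0$.

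For $(d) \Rightarrow (a)$ the plan has two steps.

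\emph{Step 1.} If $d_{\rm{GV}}(\boldsymbol{\mu}_n, \boldsymbol{\mu}) \rightarrow 0$, then for each $R$ the argument of Lemma \ref{lemma:BFMeas3}$(ii)$, applied to the integrand of $d_{\rm{GV}}$, yields radii $R_n' \geq R$ with $d_{\rm{GP}}(\boldsymbol{\mu}_n^{\restr R_n'}, \boldsymbol{\mu}^{\restr R_n'}) \rightarrow 0$. Passing to a subsequence, we may take $R_n' \in [R, R+1]$ by restricting the integral to $[R, R+1]$. The radii $R_n'$ may depend on $n$, which is the main obstacle.

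\emph{Step 2.} Fix $r$ with $\mu(\partial B_T(\rho, r)) = 0$ and $R > r$. For each $n$, embed both trees into a common pointed space $(\mathscr{Z}_n, \delta_n, \varrho_n)$, identifying the roots, so that $\delta_{\rm{P}}(\mu_n^{\restr R_n'} \circ \phi_{n}^{-1}, \mu^{\restr R_n'} \circ \phi^{-1}) \leq \varepsilon_n \rightarrow 0$. Because the isometries are pointed, distances to the root are preserved, so $\delta_n(\varrho_n, \cdot)$ pulls back to $d_n(\rho_n, \cdot)$ and to $d(\rho, \cdot)$. Restricting both measures to $\overline{B}_{\mathscr{Z}_n}(\varrho_n, r)$ then changes the Prokhorov distance by at most $\varepsilon_n + \mu(\overline{B}(\rho, r+\varepsilon_n) \setminus B(\rho, r-\varepsilon_n))$. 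This is the usual Portmanteau estimate for thin annuli, since any coupling moves a point across the sphere only if it lies in such an annulus. The annulus term tends to $0$ because $\mu(\partial B_T(\rho, r)) = 0$ and $\mu^{\restr R}$ is finite. Consequently $d_{\rm{GP}}(\boldsymbol{\mu}_n^{\restr r}, \boldsymbol{\mu}^{\restr r}) \rightarrow 0$ along the subsequence.

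Finally, applying the same argument to an arbitrary subsequence and using the subsequence principle upgrades this to convergence of the full sequence, which gives $(a)$.

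The delicate point is the annulus estimate in Step 2 inside a varying embedding space. I would handle it by writing the Prokhorov bound explicitly: for closed $A$,
$\mu_n^{\restr r}(A) \leq \mu^{\restr R_n'}(A^{\varepsilon_n} \cap \overline{B}(\varrho_n, r+\varepsilon_n)) + \varepsilon_n$,
and then splitting off the annulus mass. The reverse inequality is handled symmetrically.
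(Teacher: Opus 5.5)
Your proposal is correct and follows essentially the paper's route. The paper proves this as the Gromov analogue of Proposition~\ref{prop:BFMeas1}, following \cite{ALW16} Lemma 2.6: the implications are immediate except for a Portmanteau-type restriction of Gromov--Prokhorov convergence from a larger radius to a radius $r$ with $\mu(\partial B_T(\rho,r))=0$, carried out in a common pointed embedding. Your annulus estimate is uniform in $n$-dependent radii $R_n'$ and embedding spaces, so it fills in a step that the paper's proof of Proposition~\ref{prop:BFMeas1} glosses over: that proof calls $(c)\Rightarrow(d)$ trivial and uses a fixed convergent radius in $(d)\Rightarrow(a)$. One detail: for the reverse Prokhorov inequality, shrink the source set to $\overline{B}_{\mathscr{Z}_n}(\varrho_n, r-\varepsilon_n)$ rather than copying the first direction, so that only the $\mu$-mass of the annulus appears.
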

Next, observe that Lemma \ref{lemma:BFMeas3} generalizes immediately to the Gromov setting.
\begin{lemma}
\label{lemma:GV2}
Let $\boldsymbol{\mu}$ and $\boldsymbol{\nu}$ be isometry classes in $\mathbb{T}_{\rm{min}}^{\rm{bf}}$, and let $R > 0$ and $\varepsilon \in (0, e^{-R})$. Then
\begin{enumerate}[$(i)$]
\item if $d_{\rm{GP}}(\boldsymbol{\mu}^{\restr R}, \boldsymbol{\nu}^{\restr R}) \leq \varepsilon$, then $d_{\rm{GV}}(\boldsymbol{\mu}, \boldsymbol{\nu}) \leq \varepsilon \left( 1 + \mu\left( \overline{B}_T(\rho, R) \right) + \nu\left( \overline{B}_T(\rho, R) \right) \right) + e^{-R}$.
\item if $d_{\rm{GV}}(\boldsymbol{\mu}, \boldsymbol{\nu}) \leq \varepsilon$, then there exists some $R' \geq R$ such that $d_{\rm{GP}}(\boldsymbol{\mu}^{\restr R'}, \boldsymbol{\nu}^{\restr R'}) \leq \varepsilon (1 + e^R)$.
\end{enumerate}
\end{lemma}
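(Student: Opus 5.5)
The plan is to copy the proof of Lemma \ref{lemma:BFMeas3}. There, everything was computed for measures on a common space. Here we will fix one common embedding in which the estimate can be carried out, and then transfer the resulting bounds to the Gromov--Prokhorov distances of the restricted isometry classes. Throughout, write $(T,d,\rho,\mu)$ and $(T',d',\rho',\nu)$ for representatives of $\boldsymbol{\mu}$ and $\boldsymbol{\nu}$.

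For $(i)$, split $d_{\rm{GV}}(\boldsymbol{\mu},\boldsymbol{\nu})$ into the integral over $[0,R]$ and the integral over $(R,\infty)$. The tail is bounded by $\int_R^\infty e^{-r}\,dr=e^{-R}$.

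For the integral over $[0,R]$, note first that $d_{\rm{GP}}(\boldsymbol{\mu}^{\restr R},\boldsymbol{\nu}^{\restr R})\le\varepsilon$. Since $d_{\rm{GP}}$ is an infimum, for any $\varepsilon'>\varepsilon$ there is a pointed metric space $(\mathscr{Z},\delta,\varrho)$ with pointed isometries $\phi_1,\phi_2$ of the spans of the supports into it, such that the Prokhorov distance of the pushforwards is at most $\varepsilon'$. At the end we let $\varepsilon'\downarrow\varepsilon$.

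The embeddings are pointed, so $\delta(\varrho,\phi_1(x))=d(\rho,x)$, and likewise for $\phi_2$. Hence for every $r\le R$ the restriction $\boldsymbol{\mu}^{\restr r}$ embeds in $\mathscr{Z}$ as $(\mu^{\restr R}\circ\phi_1^{-1})|_{\overline{B}_{\mathscr{Z}}(\varrho,r)}$, and the same holds for $\boldsymbol{\nu}^{\restr r}$. One point needs checking: the span of the support of $\mu^{\restr r}$ lies inside the span of the support of $\mu^{\restr R}$, so the isometry restricts correctly. This is immediate.

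We therefore get
$$d_{\rm{GP}}(\boldsymbol{\mu}^{\restr r},\boldsymbol{\nu}^{\restr r})\le\delta_{\rm{P}}\big((\phi_1\mu^{\restr R})^{\restr r},(\phi_2\nu^{\restr R})^{\restr r}\big).$$
Now we repeat the argument of Lemma \ref{lemma:BFMeas3} inside $\mathscr{Z}$. The right-hand side is at most $\varepsilon'+\max\{\mu^{\restr R}(A_{r,\varepsilon'}),\nu^{\restr R}(A_{r,\varepsilon'})\}$, where $A_{r,\varepsilon'}$ is the annulus of heights in $(r,r+\varepsilon']$. By Fubini, $\int_0^R\mu^{\restr R}(A_{r,\varepsilon'})\,dr\le\varepsilon'\mu(\overline{B}_T(\rho,R))$, and the same bound holds for $\nu$. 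Adding up the three contributions and letting $\varepsilon'\downarrow\varepsilon$ gives $(i)$.

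The Prokhorov bound for the restricted measures is the step that needs some care. For a closed set $A$ we have $\mu^{\restr r}(A)\le\nu^{\restr R}(A^{\varepsilon'})+\varepsilon'$. The mass of $\nu^{\restr R}$ on $A^{\varepsilon'}$ that lies outside $\overline{B}(\varrho,r)$ sits at heights in $(r,r+\varepsilon']$, because the embedding is pointed and $A$ lies within height $r$. This is exactly the annulus term, and it holds verbatim as in the fixed-space case.

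For $(ii)$, the argument is purely about the integral and does not use the underlying spaces. We have
$$\varepsilon\ge\int_R^\infty e^{-r}\big(1\wedge d_{\rm{GP}}(\boldsymbol{\mu}^{\restr r},\boldsymbol{\nu}^{\restr r})\big)\,dr\ge e^{-R}\inf_{R'\ge R}\big(1\wedge d_{\rm{GP}}(\boldsymbol{\mu}^{\restr R'},\boldsymbol{\nu}^{\restr R'})\big).$$
Since $\varepsilon<e^{-R}$, the infimum is strictly less than $1$, so the truncation $1\wedge\cdot$ can be dropped near the infimum. Choose $R'\ge R$ with $d_{\rm{GP}}(\boldsymbol{\mu}^{\restr R'},\boldsymbol{\nu}^{\restr R'})$ within $\varepsilon$ of the infimum. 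Then $d_{\rm{GP}}(\boldsymbol{\mu}^{\restr R'},\boldsymbol{\nu}^{\restr R'})\le\varepsilon e^R+\varepsilon=\varepsilon(1+e^R)$, which is $(ii)$.

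The main obstacle is the reduction in $(i)$: we need a single embedding, chosen at level $R$, to control $d_{\rm{GP}}$ at every level $r\le R$ at once. This is exactly where pointedness of the embeddings is essential, because it makes heights in $\mathscr{Z}$ agree with heights in the trees.
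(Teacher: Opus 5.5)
Your proof is correct and follows the paper's intended argument: the paper only says that Lemma~\ref{lemma:BFMeas3} generalizes immediately, and you run that proof inside a single pointed embedding chosen at level $R$. You correctly note that pointedness makes the restrictions to $\overline{B}_{\mathscr{Z}}(\varrho,r)$, $r\le R$, represent $\boldsymbol{\mu}^{\restr r}$ and $\boldsymbol{\nu}^{\restr r}$, so the annulus and Fubini estimates carry over verbatim, and your handling of the truncation $1\wedge\cdot$ in $(ii)$ is slightly more careful than the paper's fixed-space version.
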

In the same way as the Gromov--Prokhorov distance was defined in terms of minimizing the (rooted) Prokhorov-distance in a common embedding space, Gromov-vague convergence can similarly be characterized in terms of isometric embeddings, as in \cite{ALW16} Proposition 4.1.
\begin{prop}[Characterization via isometric embeddings]
\label{prop:GV3}
Let $\boldsymbol{\mu}_n = [T_n, d_n, \rho_n, \mu_n]$, $n \in \mathbb{N}$, and $\boldsymbol{\mu} = [T, d, \rho, \mu]$ be isometry classes in $\mathbb{T}_{\rm{min}}^{\rm{bf}}$. Then $\boldsymbol{\mu}_n \stackrel{\rm{GV}}{\rightarrow} \boldsymbol{\mu}$ as $n \rightarrow \infty$ if and only if there exists a Polish pointed metric space $(\mathscr{Z}, \delta, \varrho)$ and \emph{pointed} isometries $\phi_n\colon T_n \rightarrow \mathscr{Z}$ and $\phi\colon T \rightarrow \mathscr{Z}$, such that $\left( \mu_n \circ \phi_n^{-1} \right)^{\restr r} \stackrel{\rm{wk}}{\rightarrow} \left( \mu \circ \phi^{-1} \right)^{\restr r}$ as $n \rightarrow \infty$ for all $r > 0$ with $\mu(\partial B_T(\rho, r)) = 0$.
\end{prop}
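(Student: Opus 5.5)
The plan is to transfer the argument of \cite{ALW16} Proposition 4.1 to the rooted setting, using Proposition \ref{prop:GV1} and Lemma \ref{lemma:GV2} to pass between Gromov-vague convergence and Gromov--Prokhorov convergence of restrictions.

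\emph{The direction $(\Leftarrow)$.} Suppose such an embedding space exists and fix $r>0$ with $\mu(\partial B_T(\rho,r))=0$. Since the $\phi_n$ and $\phi$ are pointed isometries, $\phi_n(\overline{B}_{T_n}(\rho_n,r)) = \phi_n(T_n)\cap\overline{B}_{\mathscr{Z}}(\varrho,r)$. Hence $(\mu_n\circ\phi_n^{-1})^{\restr r} = \mu_n^{\restr r}\circ\phi_n^{-1}$, and similarly for $\mu$. Plugging this common pointed embedding into the definition of $d_{\rm{GP}}$ gives
$$d_{\rm{GP}}(\boldsymbol{\mu}_n^{\restr r},\boldsymbol{\mu}^{\restr r})\le \delta_{\rm{P}}\bigl((\mu_n\circ\phi_n^{-1})^{\restr r},(\mu\circ\phi^{-1})^{\restr r}\bigr)\to 0.$$
One point needs care: $\boldsymbol{\mu}^{\restr r}$ lives on the closed span of the support rather than on the whole ball. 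The Gromov--Prokhorov distance only depends on the measure, so this causes no problem. Proposition \ref{prop:GV1}$(a)\Rightarrow(d)$ then gives $\boldsymbol{\mu}_n\stackrel{\rm{GV}}{\to}\boldsymbol{\mu}$.

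\emph{The direction $(\Rightarrow)$.} This is the substantive part.
\begin{enumerate}[$(1)$]
\item Pick $\mu$-continuity radii $r_k\uparrow\infty$. By Proposition \ref{prop:GV1}, for each $k$ we have $d_{\rm{GP}}(\boldsymbol{\mu}_n^{\restr r_k},\boldsymbol{\mu}^{\restr r_k})\to0$.
\item Choose $\varepsilon_n\downarrow 0$ and indices $k(n)\to\infty$ slowly enough that $d_{\rm{GP}}(\boldsymbol{\mu}_n^{\restr r_{k(n)}},\boldsymbol{\mu}^{\restr r_{k(n)}})<\varepsilon_n$.
\item For each $n$, take a pointed embedding realizing this bound up to $\varepsilon_n$. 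Following the standard gluing used for Gromov--Prokhorov (e.g.\ \cite{GPW09}), this can be taken to be a metric $\delta_n$ on $T_n\sqcup T$ with $\rho_n$ and $\rho$ identified.
\item Glue all the spaces $T_n$ and $T$ along the common copy of $T$. Set $\delta(x_n,x_m) := \inf_{y\in T}\bigl(\delta_n(x_n,y)+\delta_m(y,x_m)\bigr)$. This is a pseudometric; quotient out zero distances and complete. The resulting pointed space $\mathscr{Z}$ is Polish, being a countable union of separable pieces, and the roots are all identified.
\end{enumerate}

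Step (3) is the main obstacle. Gromov--Prokhorov closeness only controls the measures on the span of $\mu^{\restr r}$. The metric $\delta_n$ is therefore built only on $\overline{\mathrm{Span}}(\mathrm{supp}(\mu_n^{\restr r_{k(n)}}))$, and must be extended to all of $T_n$. The extension should be made along the tree structure, by taking $\delta_n(x,y)$ to be the infimum over projections onto that span plus tree distances. One must then check that this extension is still a metric compatible with $d_n$ and $d$ and keeps the roots identified.

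With $\mathscr{Z}$ built, fix a $\mu$-continuity radius $r$. For $n$ large we have $r\le r_{k(n)}$, so the Prokhorov distance between $\mu_n^{\restr r_{k(n)}}\circ\phi_n^{-1}$ and $\mu^{\restr r_{k(n)}}\circ\phi^{-1}$ is at most $\varepsilon_n$. Restriction to the ball of radius $r$ is continuous at limits with $\mu(\partial B(\varrho,r))=0$, by the Portmanteau theorem as in Proposition \ref{prop:BFMeas1}. This yields $(\mu_n\circ\phi_n^{-1})^{\restr r}\stackrel{\rm{wk}}{\to}(\mu\circ\phi^{-1})^{\restr r}$.
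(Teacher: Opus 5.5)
Your proposal is correct and follows the same route as the paper, which gives no separate proof but defers to \cite{ALW16} Proposition 4.1 transferred to the rooted setting. That is exactly what you do: the easy direction via Proposition \ref{prop:GV1}, and for the hard direction a diagonal choice of radii $r_{k(n)}$ together with gluing the pointed Gromov--Prokhorov embeddings along the common copy of $T$.

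Two small remarks. First, extending $\delta_n$ from the closed spans to all of $T_n\sqcup T$ is routine metric amalgamation along a closed subspace, not a real obstacle. Second, in the last step the comparison measures $\mu^{\restr r_{k(n)}}\circ\phi^{-1}$ vary with $n$. You should therefore run the Portmanteau argument on sets inside $\overline{B}_{\mathscr{Z}}(\varrho, r+1)$, where these measures agree with $\mu\circ\phi^{-1}$ once $r_{k(n)}\geq r+1$.
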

Proposition \ref{prop:GV3} especially ensures that for two isometry classes $\boldsymbol{\mu}_1 = [T_1, d_1, \rho_1, \mu_1]$ and $\boldsymbol{\mu}_2=[T_2, d_2, \rho_2, \mu_2]$, we can set
\begin{align*}
d_{\rm{GV}}^{\rm{emb}}(\boldsymbol{\mu}_1, \boldsymbol{\mu}_1) = \inf\left\{ \delta_{\rm{V}}( \mu_1 \circ \phi_1^{-1}, \mu_2 \circ \phi_2^{-1}) ~ \Bigg{|} ~ \begin{array}{l}
(\mathscr{Z}, \delta, \varrho) \text{ is a pointed metric space} \\
\phi_1\colon T_1 \rightarrow \mathscr{Z} \text{ is a pointed isometry} \\
\phi_2\colon T_2 \rightarrow \mathscr{Z} \text{ is a pointed isometry}
\end{array}  \right\},
\end{align*}
such that $d_{\rm{GV}}^{\rm{emb}}$ is an alternative metrization generating the Gromov-vague topology on $\mathbb{T}_{\rm{min}}^{\rm{bf}}$.

The characterization of relatively compact sets in $(\mathbb{T}_{\rm{min}}^{\rm{bf}}, d_{\rm{GV}})$ and the fact that this forms a Polish space is proved in the exact same way as in \cite{ALW16} Corollary 4.3 and Proposition 4.8.
\begin{prop}
\label{prop:GV4}
Let $\boldsymbol{\Pi} \subseteq \mathbb{T}_{\rm{min}}^{\rm{bf}}$. Then the following statements are equivalent:
\begin{itemize}
\item[$(a)$] $\boldsymbol{\Pi}$ is relatively compact in $(\mathbb{T}_{\rm{min}}^{\rm{bf}}, d_{\rm{GV}})$.
\item[$(b)$] For all $r > 0$, $\boldsymbol{\Pi}^{\restr r} := \left\{ \boldsymbol{\mu}^{\restr r} ~|~ \boldsymbol{\mu} \in \boldsymbol{\Pi} \right\}$ is relatively compact in $(\mathbb{T}_{\rm{min}}^{\rm{fin}}, d_{\rm{GP}})$. 
\item[$(c)$] $\boldsymbol{\Pi}^{\restr r_k}$ is relatively compact in $(\mathbb{T}_{\rm{min}}^{\rm{fin}}, d_{\rm{GP}})$ for an increasing sequence of radii $r_k \rightarrow \infty$.
\end{itemize}
\end{prop}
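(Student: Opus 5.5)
The plan is to follow \cite{ALW16} Corollary 4.3 and reduce everything to the restricted classes $\boldsymbol{\mu}^{\restr r}$, using Lemma \ref{lemma:GV2} to pass between $d_{\rm{GV}}$ and $d_{\rm{GP}}$ on balls, together with the fact that relative compactness in a metric space is the same as sequential relative compactness. The implication $(b) \Rightarrow (c)$ is trivial.

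For $(a) \Rightarrow (b)$, fix $r > 0$ and a sequence $(\boldsymbol{\mu}_n)$ in $\boldsymbol{\Pi}$. By $(a)$ we pass to a subsequence with $\boldsymbol{\mu}_n \stackrel{\rm{GV}}{\rightarrow} \boldsymbol{\mu}$. We then choose $r' \geq r$ with $\mu(\partial B_T(\rho, r')) = 0$, which is possible for all but countably many $r'$. Proposition \ref{prop:GV1} gives $\boldsymbol{\mu}_n^{\restr r'} \stackrel{\rm{GP}}{\rightarrow} \boldsymbol{\mu}^{\restr r'}$. By Proposition \ref{prop:GV3} we may embed everything in a common Polish $(\mathscr{Z}, \delta, \varrho)$ so that the restrictions converge weakly. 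Hence $\{\mu_n^{\restr r'}\}$ is tight and has bounded total mass, so by Prokhorov's theorem it is relatively compact in $\mathscr{M}_{\rm{fin}}(\mathscr{Z})$. A further restriction to the closed ball of radius $r$ preserves bounded mass and tightness, since restriction to a closed subset preserves tightness. The restricted measures therefore have a weakly convergent subsequence in $\mathscr{Z}$. Taking minimal spans, this gives a $d_{\rm{GP}}$-convergent subsequence of $\boldsymbol{\mu}_n^{\restr r}$, because $d_{\rm{GP}}$ is bounded by the Prokhorov distance in a common embedding. The limit is in $\mathbb{T}_{\rm{min}}^{\rm{fin}}$ because that space is Polish.

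For $(c) \Rightarrow (a)$, take a sequence $(\boldsymbol{\mu}_n)$ in $\boldsymbol{\Pi}$. By a diagonal argument over $k$ we extract a subsequence along which $\boldsymbol{\mu}_n^{\restr r_k} \stackrel{\rm{GP}}{\rightarrow} \boldsymbol{\nu}_k$ for every $k$. The main obstacle is to show that the limits $\boldsymbol{\nu}_k$ are consistent, i.e. that they arise as restrictions of a single $\boldsymbol{\mu} \in \mathbb{T}_{\rm{min}}^{\rm{bf}}$. The boundary spheres may carry mass in the limit, so we cannot naively claim $\boldsymbol{\nu}_j = \boldsymbol{\nu}_k^{\restr r_j}$.

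To handle this, I would first use the GP-compactness to embed (Gromov--Prokhorov characterization via embeddings, as in \cite{GPW09}) the trees $T_n$ and a representative of $\boldsymbol{\nu}_k$ for all $k$ simultaneously into one Polish space $\mathscr{Z}$. This can be done by gluing the countably many embeddings along the roots, as in the proof of \cite{ALW16} Proposition 4.8. In $\mathscr{Z}$ we have $\mu_n^{\restr r_k} \stackrel{\rm{wk}}{\rightarrow} \nu_k$. For $j < k$ and all but countably many $s \in (r_{j-1}, r_j)$, the Portmanteau theorem applied to the ball of radius $s$ gives $\nu_k^{\restr s} = \lim_n \mu_n^{\restr s} = \nu_j^{\restr s}$. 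The measures $\nu_k$ are therefore consistent on balls of radii avoiding a countable set. Setting $\mu := \lim_k \nu_k^{\restr s_k}$ along such radii $s_k \uparrow \infty$ defines an increasing limit, which is a boundedly finite measure on $\mathscr{Z}$. Its minimal span yields $\boldsymbol{\mu} \in \mathbb{T}_{\rm{min}}^{\rm{bf}}$, and the span is an $\mathbb{R}$-tree since it is a closed increasing union of subtrees of the glued tree-like embedding, taking $\mathscr{Z}$ itself to be an $\mathbb{R}$-tree by gluing along roots. Then $\boldsymbol{\mu}_n^{\restr s_k} \stackrel{\rm{GP}}{\rightarrow} \boldsymbol{\mu}^{\restr s_k}$ for all $k$, and Proposition \ref{prop:GV1}$(c)$ gives $\boldsymbol{\mu}_n \stackrel{\rm{GV}}{\rightarrow} \boldsymbol{\mu}$, which proves $(a)$.

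Finally, for $(b) \Leftrightarrow (c)$ in the direction $(c) \Rightarrow (b)$, note that any $r$ lies below some $r_k$. The argument of $(a) \Rightarrow (b)$, restricting from $r_k$ to $r$ in a common embedding, then applies verbatim.
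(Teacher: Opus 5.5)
Your directions $(a)\Rightarrow(b)$, $(b)\Rightarrow(c)$ and $(c)\Rightarrow(b)$ are fine. Weakly convergent restrictions in a common Polish embedding are tight, and tightness survives restriction to a smaller closed ball. Prokhorov's theorem then gives a Prokhorov-Cauchy subsequence, $d_{\rm{GP}}$ is dominated by $\delta_{\rm{P}}$ in that embedding, and completeness of $(\mathbb{T}_{\rm{min}}^{\rm{fin}}, d_{\rm{GP}})$ supplies the limit.

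The gap is in $(c)\Rightarrow(a)$, exactly at the step you call the main obstacle. For each $k$, the convergence $\boldsymbol{\mu}_n^{\restr r_k} \stackrel{\rm{GP}}{\rightarrow} \boldsymbol{\nu}_k$ gives its own space $\mathscr{Z}_k$ with its own embeddings $\phi_n^{(k)}$ of $T_n$. Gluing the $\mathscr{Z}_k$ along their roots produces a wedge in which each $T_n$ occurs once per summand, as unrelated copies.

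\begin{itemize}
\item There is no single pointed isometry $\phi_n\colon T_n \rightarrow \mathscr{Z}$ along which $\mu_n^{\restr r_k}\circ\phi_n^{-1}$ converges to $\nu_k$ for \emph{all} $k$. Any fixed choice $\phi_n^{(k)}$ only controls radii up to $r_k$.
\item The representatives of $\nu_j$ and $\nu_k$ sit in different summands. So the identity $\nu_k^{\restr s} = \nu_j^{\restr s}$ as measures on $\mathscr{Z}$, and the definition $\mu := \lim_k \nu_k^{\restr s_k}$, have no meaning.
\item Identifying the copies of $T_n$ across summands does not help either. Chains through copies of another $T_m$, placed differently in $\mathscr{Z}_k$ and $\mathscr{Z}_{k+1}$, can shortcut distances inside $T_n$, so the $\phi_n$ stop being isometries.
\item A simultaneous embedding at all radii, as in Proposition \ref{prop:GV3}, is built by gluing around an already known limit. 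It therefore cannot be used to construct that limit.
\item Gromov--Prokhorov embedding spaces are not $\mathbb{R}$-trees in general, so ``taking $\mathscr{Z}$ to be an $\mathbb{R}$-tree'' is also unjustified.
\end{itemize}

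The repair is to prove consistency at the level of isometry classes and then build the limit tree intrinsically.

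\begin{enumerate}
\item For each $k$ separately, take an embedding realizing $\boldsymbol{\mu}_n^{\restr r_k} \stackrel{\rm{GP}}{\rightarrow} \boldsymbol{\nu}_k$. The Portmanteau theorem gives $\boldsymbol{\mu}_n^{\restr s} \stackrel{\rm{GP}}{\rightarrow} \boldsymbol{\nu}_k^{\restr s}$ for all $s < r_k$ outside a countable set.
\item Uniqueness of $d_{\rm{GP}}$-limits then gives $\boldsymbol{\nu}_k^{\restr s} = \boldsymbol{\nu}_{k+1}^{\restr s}$ for such $s$.
\item Choose such $s_k \in (r_{k-1}, r_k)$ and set $\boldsymbol{\lambda}_k := \boldsymbol{\nu}_k^{\restr s_k}$. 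Then $\boldsymbol{\lambda}_{k+1}^{\restr s_k} = \boldsymbol{\lambda}_k$. This yields root- and measure-preserving isometries of the minimal tree of $\boldsymbol{\lambda}_k$ onto a closed subtree of the minimal tree of $\boldsymbol{\lambda}_{k+1}$.
\item The completion of the resulting increasing union is a Polish $\mathbb{R}$-tree. It carries a boundedly finite measure $\mu$ whose restriction to $\overline{B}(\rho, s_k)$ is the image of $\lambda_k$, and it is minimal.
\item Finally $\boldsymbol{\mu}^{\restr s_k} = \boldsymbol{\lambda}_k = \lim_n \boldsymbol{\mu}_n^{\restr s_k}$, so Proposition \ref{prop:GV1}$(c)$ gives $\boldsymbol{\mu}_n \stackrel{\rm{GV}}{\rightarrow} \boldsymbol{\mu}$.
\end{enumerate}

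The paper itself only refers to \cite{ALW16} Corollary 4.3, so this inductive-limit construction is the part you actually have to supply.
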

\begin{prop}
\label{prop:GV5}
$(\mathbb{T}_{\rm{min}}^{\rm{bf}}, d_{\rm{GV}})$ is a Polish space.
\end{prop}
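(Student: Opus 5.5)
The plan follows \cite{ALW16} Proposition 4.8: prove separability and completeness of $(\mathbb{T}_{\rm{min}}^{\rm{bf}}, d_{\rm{GV}})$ separately, using Proposition \ref{prop:GV4} to reduce compactness questions to the Polish space $(\mathbb{T}_{\rm{min}}^{\rm{fin}}, d_{\rm{GP}})$ (\cite{LVW15} Proposition 2.6).

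\emph{Separability.} Let $D \subseteq \mathbb{T}_{\rm{min}}^{\rm{fin}}$ be a countable $d_{\rm{GP}}$-dense set; since $\mathbb{T}_{\rm{min}}^{\rm{fin}} \subseteq \mathbb{T}_{\rm{min}}^{\rm{bf}}$, it suffices to show $D$ is $d_{\rm{GV}}$-dense. Given $\boldsymbol{\mu}$ and $\varepsilon>0$, choose $R$ with $e^{-R}<\varepsilon/2$, and approximate $\boldsymbol{\mu}^{\restr R}$ by some $\boldsymbol{\nu} \in D$ in $d_{\rm{GP}}$. Comparing $\boldsymbol{\nu}$ with the finite measure $\boldsymbol{\mu}^{\restr R}$ via Lemma \ref{lemma:GV2}$(i)$ (with total masses bounded by $\mu(\overline{B}(\rho,R))+1$) gives $d_{\rm{GV}}(\boldsymbol{\nu}, \boldsymbol{\mu}^{\restr R})$ small. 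Moreover $d_{\rm{GV}}(\boldsymbol{\mu}^{\restr R}, \boldsymbol{\mu}) \le e^{-R}$, because the restrictions of the two classes to radius $r \le R$ coincide. The triangle inequality finishes this part.

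\emph{Completeness.} Let $(\boldsymbol{\mu}_n)$ be $d_{\rm{GV}}$-Cauchy. I will show that $\{\boldsymbol{\mu}_n\}$ is relatively compact by verifying Proposition \ref{prop:GV4}$(c)$; a Cauchy sequence with a convergent subsequence converges.

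For fixed $r$, Lemma \ref{lemma:GV2}$(ii)$ shows that for $m,n$ large there is $R'\ge r$ with $d_{\rm{GP}}(\boldsymbol{\mu}_n^{\restr R'},\boldsymbol{\mu}_m^{\restr R'})$ small. Restricting further to radius $r$ is not $d_{\rm{GP}}$-continuous, so instead I will use the tightness characterization of relative compactness in $\mathbb{T}_{\rm{min}}^{\rm{fin}}$ (\cite{GPW09} Theorem 2): bounded total mass plus a uniform distance-distribution/modulus condition. Both conditions pass to restrictions to smaller balls up to an additive error. As in \cite{ALW16}, I will therefore argue by a diagonal $\varepsilon/2^j$ approximation: for each $j$, all but finitely many $\boldsymbol{\mu}_n^{\restr r}$ lie within $\varepsilon_j$ (in the sense relevant to the tightness criterion) of a finite family. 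The finitely many remaining terms are trivially compact, which gives total boundedness of $\{\boldsymbol{\mu}_n^{\restr r}\}$.

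\emph{Main obstacle.} The main difficulty is exactly this step: Gromov--Prokhorov closeness at radius $R'$ does not control the restrictions at radius $r$ when mass sits near $\partial B(\rho,r)$. I would handle it as ALW do. By Fubini, only a small Lebesgue measure of radii carry large boundary mass, and Proposition \ref{prop:GV4}$(c)$ needs only some increasing sequence $r_k\to\infty$. So I will pick radii $r_k$ at which the mass in thin shells around $\partial B(\rho,r_k)$ is uniformly small along the sequence; this selection is possible by the integral estimate in the proof of Lemma \ref{lemma:BFMeas3}. At these radii, restriction is continuous up to a small error, and the argument goes through.
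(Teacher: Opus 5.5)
Your overall architecture matches the paper's, which defers to \cite{ALW16}: separability via finite classes and Lemma~\ref{lemma:GV2}$(i)$, and completeness by showing a Cauchy sequence is relatively compact through Proposition~\ref{prop:GV4}.

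There is one slip in the separability part. Lemma~\ref{lemma:GV2}$(i)$ at radius $R$ needs $d_{\rm{GP}}(\boldsymbol{\nu}^{\restr R},\boldsymbol{\mu}^{\restr R})$ small. An element $\boldsymbol{\nu}\in D$ that is only $d_{\rm{GP}}$-close to $\boldsymbol{\mu}^{\restr R}$ may carry the mass that $\boldsymbol{\mu}^{\restr R}$ has on $\partial B_T(\rho,R)$ just outside radius $R$. This is easy to repair. If $d_{\rm{GP}}(\boldsymbol{\nu},\boldsymbol{\mu}^{\restr R})\le\delta$ with $\delta$ small, then $\boldsymbol{\nu}$ has at most $\delta$ mass outside the ball of radius $R+1$. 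Hence $d_{\rm{GP}}(\boldsymbol{\nu}^{\restr (R+1)},\boldsymbol{\mu}^{\restr R})\le 2\delta$, and you can apply the lemma at radius $R+1$. Alternatively, take $D$ among classes supported in balls, as in Proposition~\ref{prop:BFMeas5}.

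The completeness part has a genuine gap. Everything rests on choosing radii $r_k$ at which the mass in thin shells around $\partial B(\rho,r_k)$ is small uniformly along the sequence. This does not follow from the Fubini estimate in the proof of Lemma~\ref{lemma:BFMeas3}.
\begin{itemize}
\item That estimate bounds, for one measure at a time, the Lebesgue measure of radii with heavy shells. You have infinitely many $\mu_n$ on different trees, their exceptional sets need not overlap, and the estimate says nothing about their union.
\item It also needs $\sup_n\mu_n(\overline{B}_{T_n}(\rho_n,R))<\infty$, which you have not derived.
\item The radius $R'$ from Lemma~\ref{lemma:GV2}$(ii)$ depends on $(m,n)$ and is not bounded above. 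So comparing $\boldsymbol{\mu}_n^{\restr R'}$ with a fixed reference term $\boldsymbol{\mu}_N^{\restr R'}$ gives no uniform control, since the mass and complexity of $\boldsymbol{\mu}_N^{\restr R'}$ grow with $R'$.
\item The ``$\varepsilon_j$-closeness in the sense relevant to the tightness criterion'' is never specified.
\end{itemize}
As written, relative compactness of $\{\boldsymbol{\mu}_n^{\restr r}\}$ is not established.

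The missing idea is to work directly with the integral defining $d_{\rm{GV}}$. This never changes the radius, so the boundary issue disappears. Pass to a subsequence with $d_{\rm{GV}}(\boldsymbol{\mu}_{n_k},\boldsymbol{\mu}_{n_{k+1}})\le 2^{-k}$. By Tonelli,
\[
\int_0^\infty e^{-r}\sum_{k}\Bigl(1\wedge d_{\rm{GP}}\bigl(\boldsymbol{\mu}_{n_k}^{\restr r},\boldsymbol{\mu}_{n_{k+1}}^{\restr r}\bigr)\Bigr)\,dr<\infty .
\]
So for Lebesgue-almost every $r$ the series converges, and $(\boldsymbol{\mu}_{n_k}^{\restr r})_k$ is $d_{\rm{GP}}$-Cauchy. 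It is therefore convergent in the Polish space $(\mathbb{T}_{\rm{min}}^{\rm{fin}},d_{\rm{GP}})$, and in particular relatively compact. Choose $r_j\to\infty$ among these radii. Proposition~\ref{prop:GV4}$(c)$ then gives relative compactness of $\{\boldsymbol{\mu}_{n_k}\}$. A further subsequence converges, and hence so does the original Cauchy sequence. This is the input needed to run the argument of \cite{ALW16} Proposition 4.8 via Proposition~\ref{prop:GV4}, as the paper invokes. No shell estimates, uniform mass bounds, or the tightness criterion of \cite{GPW09} are required.
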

Finally, let $\mathbb{T}_{\rm{min}}^{\rm{di}}$ denote the space of isometry classes of minimal Polish discretely infinitely (or finitely) measured $\mathbb{R}$-trees. Then naturally, $\mathbb{T}_{\rm{min}}^{\rm{di}} \subseteq \mathbb{T}_{\rm{min}}^{\rm{bf}}$. The proof of Proposition \ref{prop:DIMeas1} is easily generalized to the Gromov-setting, and so we have that $(\mathbb{T}_{\rm{min}}^{\rm{di}}, d_{\rm{GV}})$ is a Lusin space, which is \emph{not} complete.
\section{Mass erasure in fixed $\mathbb{R}$-trees}
\label{sec:4}
\subsection{Mass-erased subtrees}
\label{subsec:MESubtrees}
Recall the definition and discussion of mass-erased subtrees from Section \ref{subsec:MainME}. Our first goal will be to formally prove Lemma \ref{lemma:MEFix2}. Recall that $\bullet^r = \bullet^r(\mu)$.
\begin{lemma}
\label{lemma:MEFix1}
Let $(T, d, \rho)$ be a Polish $\mathbb{R}$-tree, let $\mu \in \mathscr{M}_{\rm{di}}(T)$, and let $h_0 \geq h > 0$. Then for all $r > 0$, $ R_{\mu,h}(T) \cap {\rm{Cut}}(T, \bullet^r) = R_{{\rm{cut}}_{\bullet^r}^{h_0}\mu, h}(T)$, and it is a subtree of finite type.
\end{lemma}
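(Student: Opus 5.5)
The plan is to prove the set identity by comparing subtree masses pointwise, and then obtain finite type from the finite-mass result of \cite{DW26}. Write $\nu := {\rm{cut}}_{\bullet^r}^{h_0}\mu$. This is a finite measure: $\mu({\rm{Cut}}(T,\bullet^r))<\infty$ by condition $(ii)$ of Definition \ref{def:DIMeas} (in its reformulation via the cut-off tree), and $|\bullet^r|<\infty$ since ${\rm{Skel}}(T,\mu)$ is discrete.

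The key computation is $\nu(\theta_\sigma T)$ for each $\sigma\in T$, which we split into three cases.

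\emph{Case 1: $\sigma\notin{\rm{Cut}}(T,\bullet^r)$.} Then $\sigma\in\theta_{\boldsymbol\sigma}T^\circ$ for some $\boldsymbol\sigma\in\bullet^r$. Hence $\theta_\sigma T$ misses ${\rm{Cut}}(T,\bullet^r)$ and contains no point of $\bullet^r$, so $\nu(\theta_\sigma T)=0<h$. Therefore $\sigma\notin R_{\nu,h}(T)$, and $\sigma\neq\rho$.

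\emph{Case 2: $\sigma\in{\rm{Cut}}(T,\bullet^r)$ and $\theta_\sigma T$ contains some $\boldsymbol\sigma\in\bullet^r$.} Then $\nu(\theta_\sigma T)\ge h_0\ge h$. On the other side, $\mu(\theta_\sigma T)\ge\mu(\theta_{\boldsymbol\sigma}T)=\infty\ge h$. So $\sigma$ lies in both $R_{\mu,h}(T)$ and $R_{\nu,h}(T)$.

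\emph{Case 3: $\sigma\in{\rm{Cut}}(T,\bullet^r)$ and $\theta_\sigma T\cap\bullet^r=\emptyset$.} We claim $\theta_\sigma T\subseteq{\rm{Cut}}(T,\bullet^r)$, which gives $\nu(\theta_\sigma T)=\mu(\theta_\sigma T)$. If $d(\rho,\sigma)\ge r$, then $\sigma$ lies above a point of $\partial B(\rho,r)\setminus\bullet^r$, and the claim follows. If $d(\rho,\sigma)<r$, then any point of $\theta_\sigma T$ outside the closed ball passes through a level-$r$ point above $\sigma$, and that point is not in $\bullet^r$; the claim follows again.

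These three cases give $R_{\mu,h}(T)\cap{\rm{Cut}}(T,\bullet^r)=R_{\nu,h}(T)$. Case 1 is needed for the inclusion $\supseteq$, since it shows $R_{\nu,h}(T)\subseteq{\rm{Cut}}(T,\bullet^r)$. The root is handled trivially because it lies in both sets by definition.

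Finite type then follows directly from the result of \cite{DW26} recalled after \eqref{eq:MEFix}: for the finite measure $\nu$, $R_{\nu,h}(T)$ is compact and discrete, hence of finite type by Lemma \ref{lemma:Rtree1}$(ii)$. The only place requiring care is the topological bookkeeping in Case 3, namely that every branch leaving $\overline{B}(\rho,r)$ does so through a unique level-$r$ point. This uses uniqueness of segments, and I expect it to be routine.
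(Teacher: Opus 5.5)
Your proof is correct and follows essentially the same route as the paper. Both arguments compare ${\rm cut}_{\bullet^r}^{h_0}\mu(\theta_\sigma T)$ with $\mu(\theta_\sigma T)$ pointwise to get the set identity, then invoke the finite-measure result of \cite{DW26} (Lemma 3.12 there) for finite type. The only cosmetic difference is the case split: the paper separates points of the cut-off tree by whether they lie in ${\rm Skel}(T,\mu)$ or $F_\mu(T)$, while you separate them by whether $\theta_\sigma T$ meets $\bullet^r$, which is the same dichotomy on ${\rm Cut}(T,\bullet^r)$.
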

\begin{proof}
By Duquesne and Winkel \cite{DW26} Lemma 3.12, $R_{{\rm{cut}}_{\bullet^r}\mu, h}(T)$ is of finite type since ${\rm{cut}}_{\bullet^r}\mu \in \mathscr{M}_{\rm{fin}}(T)$. As ${\rm{cut}}_{\bullet^r}\mu(\theta_{\sigma}T) \leq \mu(\theta_{\sigma}T)$ for all $\sigma \in T$, and ${\rm{cut}}_{\bullet^r}\mu$ has support on ${\rm{Cut}}(T, \bullet^r)$, we immediately have that $R_{{\rm{cut}}_{\bullet^r}\mu, h}(T) \subseteq R_{\mu,h}(T) \cap {\rm{Cut}}(T, \bullet^r)$. Suppose that $\sigma \in R_{\mu,h}(T) \cap {\rm{Cut}}(T, \bullet^r)$. Then ${\rm{cut}}_{\bullet^r}\mu(\theta_{\sigma}T) \geq h_0 \geq h$ if $\sigma \in {\rm{Skel}}(T, \mu)$, and ${\rm{cut}}_{\bullet^r}\mu(\theta_{\sigma}T) = \mu(\theta_{\sigma}T) \geq h$ if $\sigma \in F_{\mu}(T)$, so in either case, $\sigma \in R_{{\rm{cut}}_{\bullet^r}\mu, h}(T)$.
\end{proof}
We may now prove Lemma \ref{lemma:MEFix2}, stating that $R_{\mu,h}(T)$ is a discrete $\mathbb{R}$-tree (i.e. boundedly finite) whenever $\mu \in \mathscr{M}_{\rm{di}}(T)$ and $h > 0$.
\begin{proof}[Proof of Lemma \ref{lemma:MEFix2}]
Fix some $r > 0$, and let $h_0 \geq h$. By Lemma \ref{lemma:MEFix1}, $R_{\mu,h}(T) \cap {\rm{Cut}}(T, \bullet^r)$ is of finite type, and so its restriction $R_{\mu,h}(T) \cap \overline{B}_T(\rho, r)$ is also of finite type. As the argument holds for all $r > 0$, we conclude the desired.
\end{proof}
Consider some Polish $\mathbb{R}$-tree $(T, d, \rho)$, and let $\mu \in \mathscr{M}_{\rm{di}}(T)$. Consider the right limit from \eqref{eq:RightLim}. We define $R_{\mu,h+}^{\circ}(T) := \bigcup_{h' > h} R_{\mu, h'}(T)$ such that $R_{\mu,h+}(T) = \overline{R_{\mu,h+}^{\circ}(T)}$, and note contrary to the notation that $R_{\mu,h+}^{\circ}(T)$ may be a closed subtree of $T$. Analogously to \cite{DW26} Proposition 3.13, we establish some basic properties for mass-erased subtrees. 
\begin{prop}[Properties of mass-erased subtrees]
\label{prop:MEFix3}
Let $(T, d, \rho)$ be a Polish $\mathbb{R}$-tree, let $\mu \in \mathscr{M}_{\rm{di}}(T)$, and let $h > 0$. Then the following statements are true.
\begin{enumerate}[$(i)$]
\item If $T' \subseteq T$ is a closed rooted subtree with $\mu(T \setminus T') < h$, then $ R_{\text{Pr}_{T'}\mu, h}(T) = R_{\mu,h}(T)$.
\item Let $\nu \in \mathscr{M}_{\rm{di}}(T)$. If $R_{\mu,h}(T) = R_{\nu,h}(T)$ for all $h > 0$, then ${\rm{Skel}}(T, \mu) = {\rm{Skel}}(T, \nu)$ and $\mu(\theta_{\sigma}T) = \nu(\theta_{\sigma}T)$ for all $\sigma \in T$. If additionally $\mu|_{{\rm{Skel}}(T, \mu)} = \nu|_{{\rm{Skel}}(T, \nu)}$, then $\mu = \nu$.

\item For all $h' > h > 0$ it holds that $R_{\mu,h'}(T) \subseteq R_{\mu,h}(T)$ and $\bigcap_{h'' \in (0,h)} R_{\mu,h''}(T) = R_{\mu,h}(T)$. Moreover, the map $(0, \infty) \rightarrow \mathfrak{M}(T)$ given by $h \mapsto R_{\mu,h}(T)$ is left-continuous with right-limits (càglàd) with respect to the local Hausdorff distance $d_{\rm{H}}^{\rm{loc}}$ on $\mathfrak{M}(T)$, its set of discontinuity points is at most countable, and $R_{\mu,h+}(T) = \lim_{h' \searrow h} R_{\mu,h'}(T)$.

\item Let as usual $f_{R_{\mu,h+}(T)}$ be the point projection onto the closed rooted subtree $R_{\mu,h+}(T)$. Then
\begin{align*}
\bigcup_{\sigma \in {\rm{Lf}}(R_{\mu,h}(T))} ]f_{R_{\mu,h+}(T)}(\sigma), \sigma] &= R_{\mu,h}(T) \setminus R_{\mu,h+}(T) \\
&\subseteq R_{\mu,h}(T) \setminus R_{\mu,h+}^{\circ}(T) \\
&= \{\sigma \in T ~|~ \mu(\theta_{\sigma}T) = h\}.
\end{align*}

\item If $\text{\emph{supp}}(\mu)$ is a rooted subtree, then $ R_{\mu,h}(T) = R_{\mu,h+}(T)$ for all $h > 0$.

\item It holds that $\overline{\bigcup_{h > 0} R_{\mu,h}(T)} = \overline{\text{\emph{Span}}(\text{\emph{supp}}(\mu))}$.
\end{enumerate}
\end{prop}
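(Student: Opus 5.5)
The overall strategy is to reduce every statement to its finite-measure analogue in \cite{DW26} Proposition 3.13 by the cut-off method, using Lemma \ref{lemma:MEFix1}. Fix $h_0 \geq h$ (for (iii) take $h_0$ above every erasure parameter involved). For every $r > 0$ we then have $R_{\mu,h}(T) \cap {\rm{Cut}}(T, \bullet^r) = R_{{\rm{cut}}_{\bullet^r}\mu, h}(T)$, where ${\rm{cut}}_{\bullet^r}\mu \in \mathscr{M}_{\rm{fin}}(T)$. Since $\bigcup_r {\rm{Cut}}(T,\bullet^r) = T$ and ${\rm{Cut}}(T,\bullet^r) \supseteq \overline{B}_T(\rho,r)$, set-theoretic identities among the $R$'s can be checked on each cut-off tree. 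Local Hausdorff statements likewise follow from the corresponding Hausdorff statements on the compact trees $R_{{\rm{cut}}_{\bullet^r}\mu,h}(T)$, intersected with $\overline{B}_T(\rho,k)$ for $k \leq r$.

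For (i), I would first check that ${\rm{Pr}}_{T'}\mu(\theta_\sigma T) = \mu(\theta_\sigma T)$ for $\sigma \in T'$ and $=0$ otherwise, by Proposition \ref{prop:Proj2}$(ii)$. Any $\sigma \notin T'$ has $\mu(\theta_\sigma T) \leq \mu(T\setminus T') < h$, so both erased trees equal $\{\rho\} \cup \{\sigma \in T' : \mu(\theta_\sigma T)\geq h\}$. This step is direct and needs no cut-off.

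For (ii), note that $\sigma \in {\rm{Skel}}(T,\mu)$ if and only if $\sigma \in R_{\mu,h}(T)$ for all $h$. Moreover $\mu(\theta_\sigma T) = \sup\{h : \sigma \in R_{\mu,h}(T)\}$ for $\sigma \neq \rho$, and the root is handled via $\mu(T)$ being a limit of masses of $\theta_\sigma T$ over children, or infinite. Hence the skeletons and the subtree masses agree. Subtree masses then determine $\mu$ on $F_\mu(T)$: the sets $\theta_\sigma T$ for $\sigma \in F_\mu(T)$, together with their differences, generate the Borel $\sigma$-algebra restricted to $F_\mu(T)$. The proof of uniqueness in \cite{DW26} via a $\pi$-system argument should carry over on each finite-mass component. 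The skeleton part is then the additional hypothesis.

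For (iii) through (vi), monotonicity and the intersection identity $\bigcap_{h''<h}R_{\mu,h''}=R_{\mu,h}$ are pointwise from the definition. For càglàd behaviour, I would apply \cite{DW26} Proposition 3.13 to ${\rm{cut}}_{\bullet^r}\mu$ and $h<h_0$. This gives Hausdorff càglàd behaviour of $h\mapsto R_{\mu,h}(T)\cap{\rm{Cut}}(T,\bullet^r)$ with countably many jumps. I then intersect with balls $\overline{B}_T(\rho,k)$ and sum in $d_{\rm{H}}^{\rm{loc}}$ using dominated convergence, taking a countable union of the discontinuity sets over $r\in\mathbb{N}$.

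The main obstacle is that intersection with a ball need not be Hausdorff continuous. I would handle this using that the trees have finite type, so only finitely many branches cross level $k$ and each contributes a segment varying continuously. The identification $R_{\mu,h+}=\lim_{h'\searrow h}R_{\mu,h'}$ then follows from the monotone-union property of $d_{\rm{H}}^{\rm{loc}}$ recalled in Section \ref{sec:3}.

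Part (iv) transfers because leaves of $R_{\mu,h}(T)$ lie in $F_\mu(T)$, where the skeleton has no leaves, and so they lie in some ${\rm{Cut}}(T,\bullet^r)$ where masses agree. Part (v) transfers since ${\rm{supp}}({\rm{cut}}_{\bullet^r}\mu)$ is again a rooted subtree on ${\rm{Cut}}(T,\bullet^r)$ when $r$ is chosen appropriately. For part (vi), take the union over $r$ of the finite statements, noting that ${\rm{Skel}}\subseteq\overline{{\rm{Span}}({\rm{supp}}(\mu))}$ since points with infinite mass above them lie on spans of support points.
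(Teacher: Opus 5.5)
Apart from one false step at the root in (ii), your proposal is correct. It leans on the cut-off identity of Lemma \ref{lemma:MEFix1} much more than the paper does.

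\textbf{Parts (iii)--(vi).} The paper proves (i), (iv), (v), (vi) by rerunning the arguments of \cite{DW26} Proposition 3.13 directly on $T$. It proves (iii) with no cut-off at all. Each $R_{\mu,h}(T)$ is of boundedly finite type, so:
\begin{itemize}
\item for $h_n\nearrow h$ the decreasing sets $R_{\mu,h_n}(T)$ converge in $d_{\rm H}^{\rm loc}$ to $\bigcap_{h''<h}R_{\mu,h''}(T)=R_{\mu,h}(T)$;
\item for $h_n\searrow h$ the increasing sets converge to $R_{\mu,h+}(T)$;
\item countability of jumps is automatic for a map with one-sided limits.
\end{itemize}
Your detour through the finite measures ${\rm cut}_{\bullet^r}\mu$ also works, but it carries bookkeeping you only gesture at:
\begin{itemize}
\item $h_0$ must run through a sequence tending to infinity to cover all of $(0,\infty)$.
\item Transferring (iv) and (v) needs $R_{\mu,h+}(T)\cap{\rm Cut}(T,\bullet^r)=R_{{\rm cut}_{\bullet^r}\mu,h+}(T)$ for $h<h_0$. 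This holds because ${\rm Cut}(T,\bullet^r)$ meets the closure of its complement only in $\bullet^r\subseteq{\rm Skel}(T,\mu)\subseteq R_{\mu,h+}^{\circ}(T)$.
\item In (iv), the points of $\bullet^r$ are leaves of $R_{{\rm cut}_{\bullet^r}\mu,h}(T)$ but not of $R_{\mu,h}(T)$. You must check that they contribute empty intervals.
\end{itemize}
The obstacle you flag in (iii) is real. The paper's monotone-union property for $d_{\rm H}^{\rm loc}$ tacitly relies on the same point, since that property fails for general closed sets. There is a cleaner fix than branch counting. For compact rooted subtrees $A,B$ of $T$,
\[
d_{\rm H}\bigl(A\cap\overline{B}_T(\rho,k),\,B\cap\overline{B}_T(\rho,k)\bigr)\le 2\,d_{\rm H}(A,B),
\]
obtained by sliding the nearby point of $B$ down its ancestral line to height at most $k$.

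\textbf{Part (ii).} Here your route is genuinely different and a little more direct. The paper restricts to ${\rm Cut}(T,\bullet^r)$. It checks $\mu(\theta_\sigma T\cap{\rm Cut}(T,\bullet^r))=\nu(\theta_\sigma T\cap{\rm Cut}(T,\bullet^r))$ for all $\sigma$, using agreement on the skeleton and on each component of $F_\mu(T)$. It then applies \cite{DW26} Lemma 2.4$(vi)$ to the finite restrictions.

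You instead split $T$ into ${\rm Skel}(T,\mu)$ and the countably many components $C_i$ of $F_\mu(T)$. You run the uniqueness argument on each $C_i$ with the $\pi$-system $\{\theta_\sigma T:\sigma\in C_i\}\cup\{\emptyset\}$. Its exhausting sequence $\theta_{\sigma_{n,i}}T\uparrow C_i$ from Proposition \ref{prop:Proj1}$(ii)$ supplies the needed $\sigma$-finiteness. Both routes rest on the same uniqueness fact from \cite{DW26}.

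\textbf{The gap at the root.} Your claim that $\mu(T)$ is recovered from subtree masses above the root is false. An atom at $\rho$ is invisible to the erased trees. For $\mu=\delta_\rho$ and $\nu=2\delta_\rho$ one has $R_{\mu,h}(T)=R_{\nu,h}(T)=\{\rho\}$ for all $h>0$, yet $\mu(T)\neq\nu(T)$.

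So equality of subtree masses can only be proved for $\sigma\neq\rho$, or at $\sigma=\rho$ when $\mu(T)=\infty$. In that case the skeleta are equal and nontrivial, which forces $\nu(T)=\infty$. This is really a defect of the statement as written, which the paper passes over by deferring to \cite{DW26}. The conclusion $\mu=\nu$ survives, because $\rho\in{\rm Skel}(T,\mu)$ and the extra hypothesis fixes the atom at $\rho$.
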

\begin{proof}
The proofs of $(i)$, $(iv)$, $(v)$ and $(vi)$ are identical to the corresponding proofs in \cite{DW26} Proposition 3.13. Statement $(iii)$ also follows analogously to \cite{DW26} Proposition 3.13$(iii)$ but using the fact that $R_{\mu,h}(T)$ is of boundedly finite type (and thus closed and locally compact) together with the properties of the local Hausdorff distance outlined at the beginning of Section \ref{sec:3}.

For $(ii)$, the fact that $\mu(\theta_{\sigma}T) = \nu(\theta_{\sigma}T)$ for all $\sigma \in T$ is shown in the same way as in \cite{DW26} Proposition 3.13$(ii)$, and we can immediately conclude from this that ${\rm{Skel}}(T, \mu) = {\rm{Skel}}(T, \nu)$, and so in particular for all $r > 0$: ${\rm{Cut}}(T, \bullet^r(\mu)) = {\rm{Cut}}^r(T, \bullet^r(\nu))$. Write ${\rm{Skel}}(T)$ for the common skeleton and $\bullet^r$ for the common cut-off points at level $r$ for $\mu$ and $\nu$. Then as ${\rm{Skel}}(T)$ is a closed rooted subset of $T$, every connected component $C$ of $F_{\mu}(T)$ has $\mu(C) = \nu(C) < \infty$ by Proposition \ref{prop:Proj1}$(ii)$. So if $\mu$ and $\nu$ agree on ${\rm{Skel}}(T)$, and $\sigma \in {\rm{Skel}}(T) \cap {\rm{Cut}}(T, \bullet^r)$, then it is easily checked that $\mu(\theta_{\sigma}T \cap {\rm{Cut}}(T, \bullet^r)) = \nu(\theta_{\sigma}T \cap {\rm{Cut}}(T, \bullet^r))$. The same of course automatically holds true for $\sigma \in F_{\mu}(T) \cap {\rm{Cut}}(T, \bullet^r)$ by construction of the cut-off tree. As $\mu|_{{\rm{Cut}}(T, \bullet^r)}$ and $\nu|_{{\rm{Cut}}(T, \bullet^r)}$ are finite measures, we get from \cite{DW26} Lemma 2.4$(vi)$ that $\mu|_{{\rm{Cut}}(T, \bullet^r)} = \nu|_{{\rm{Cut}}(T, \bullet^r)}$. As this holds for all $r > 0$, and $\bigcup_{r > 0} {\rm{Cut}}(T, \bullet^r) = T$, we conclude that $\mu = \nu$, as desired.
\end{proof}
\subsection{Local Hausdorff-convergence of mass-erased subtrees}
\label{subsec:LocHaus}
Let $(T, d, \rho)$ be a Polish $\mathbb{R}$-tree. Suppose that $h_n \rightarrow h$ in $(0, \infty)$ and that $\mu_n \stackrel{\rm{vg}}{\rightarrow} \mu$ in $\mathscr{M}_{\rm{di}}(T)$ as $n \rightarrow \infty$. As in \cite{DW26} Proposition 3.16, we ask whether $d_{\rm{H}}^{\rm{loc}}(R_{\mu_n, h_n}(T), R_{\mu,h}(T)) \rightarrow 0$ as $n \rightarrow \infty$? As seen in Example \ref{ex:MEConv}, the answer to this is clearly \emph{no}, as we may have (finite or infinite) mass escaping to infinity under vague convergence. To address this problem, we impose a b-tightness condition for the sequence $(\mu_n)_{n \in \mathbb{N}}$ on the cut-off tree ${\rm{Cut}}(T, \bullet^r(\mu))$ associated to the limit measure $\mu$ at different levels $r > 0$. This ensures that i) whatever parts of the tree are supposed to be finite in the limit, have to satisfy b-tightness (ruling out the possibility of finite or infinite mass from $(\mu_n)_{n \in \mathbb{N}}$ escaping out in the tree), and ii) whatever parts of the tree are supposed to be infinite in the limit, are allowed to grow arbitrarily large in their mass.

Let $\bullet^r = \bullet^r(\mu)$. We say that $(\mu_n)_{n \in \mathbb{N}}$ is b-tight on ${\rm{Cut}}(T, \bullet^r)$ if the restricted measures $(\mu_n|_{{\rm{Cut}}(T, \bullet^r)})_{n \in \mathbb{N}}$ are b-tight on $T$, i.e. if
$$\lim_{R \rightarrow \infty} \limsup_{n \rightarrow \infty} \mu_n\left( {\rm{Cut}}(T, \bullet^r) \setminus \overline{B}_T(\rho, R) \right) = 0.$$
\begin{rmk}
Letting $\bullet^r = \bullet^r(\mu)$ for all $r > 0$, we clearly have that ${\rm{Cut}}(T, \bullet^{\tilde{r}}) \subseteq {\rm{Cut}}(T, \bullet^r)$ for all $\tilde{r} \in (0,r)$. Hence, b-tightness on ${\rm{Cut}}(T, \bullet^r)$ automatically implies b-tightness on ${\rm{Cut}}(T, \bullet^{\tilde{r}})$ for all $\tilde{r} \in (0,r)$.
\demo
\end{rmk}

We start by examining what happens on the skeleton ${\rm{Skel}}(T, \mu)$.
\begin{lemma}
\label{lemma:MEFix5}
Let $(T, d, \rho)$ be a Polish $\mathbb{R}$-tree and suppose that $\mu_n \stackrel{\rm{vg}}{\rightarrow} \mu$ in $\mathscr{M}_{\rm{di}}(T)$. Suppose that $\sigma \in {\rm{Skel}}(T, \mu)$ with $\mu(\{\sigma\}) = 0$. Then for all $K > 0$ we have that $ \mu_n(\theta_{\sigma}T) > K$ eventually\footnote{meaning that there exists $N \in \mathbb{N}$ (depending on $K$) such that for all $n \geq N$: $\mu_n(\theta_{\sigma}T) \geq K$.}.
\end{lemma}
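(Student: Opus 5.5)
The plan is to find, for given $K$, a bounded $\mu$-continuity set $\Theta \subseteq \theta_{\sigma}T$ with $\mu(\Theta) > K$. Vague convergence and the Portmanteau theorem then give $\mu_n(\theta_\sigma T) \geq \mu_n(\Theta) > K$ eventually.

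\emph{Step 1: choose a radius.} Since $\sigma \in {\rm{Skel}}(T,\mu)$, we have $\mu(\theta_\sigma T) = +\infty$, so by monotone convergence $\mu(\theta_\sigma T \cap \overline{B}_T(\rho,R)) \to \infty$ as $R \to \infty$. Pick $R > d(\rho,\sigma)$ with $\mu(\theta_\sigma T \cap \overline{B}_T(\rho,R)) > K+1$. The sphere masses $\mu(\partial B_T(\rho,R'))$ are nonzero for at most countably many $R'$, so we may also take $R$ with $\mu(\partial B_T(\rho,R)) = 0$.

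\emph{Step 2: control the boundary.} Set $\Theta := \theta_\sigma T \cap \overline{B}_T(\rho,R)$. It is closed and bounded, and its boundary satisfies
\[ \partial \Theta \subseteq \partial(\theta_\sigma T) \cup \partial B_T(\rho,R). \]
This step is the main obstacle: we need $\partial(\theta_\sigma T) \subseteq \{\sigma\}$.
\begin{itemize}
\item The set $\theta_\sigma T$ is closed.
\item If $\sigma = \rho$, then $\theta_\rho T = T$ and $\partial(\theta_\rho T) = \emptyset$.
\item Otherwise, $\theta_\sigma T \setminus \{\sigma\}$ is a union of connected components of $T \setminus \{\sigma\}$. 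These components are open because $T$ is an $\mathbb{R}$-tree, hence locally path connected.
\end{itemize}
Thus every point of $\theta_\sigma T$ other than $\sigma$ is interior, which gives $\partial(\theta_\sigma T) \subseteq \{\sigma\}$. By the hypotheses $\mu(\{\sigma\})=0$ and $\mu(\partial B_T(\rho,R))=0$, we get $\mu(\partial\Theta) = 0$. The hypothesis $\mu(\{\sigma\})=0$ is used exactly here; an atom at $\sigma$ could otherwise sit on the boundary and escape under weak convergence.

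\emph{Step 3: conclude.} By Proposition \ref{prop:BFMeas1}(a), $\mu_n^{\restr R} \stackrel{\rm{wk}}{\rightarrow} \mu^{\restr R}$. Since $\Theta \subseteq \overline{B}_T(\rho,R)$, we have $\mu^{\restr R}(\Theta) = \mu(\Theta)$. The boundary of $\Theta$ is $\mu^{\restr R}$-null, so the Portmanteau theorem yields $\mu_n(\Theta) = \mu_n^{\restr R}(\Theta) \to \mu(\Theta) > K+1$. Alternatively, Lemma \ref{lemma:BFMeas2} can be applied with the continuity set $\theta_\sigma T$. Hence there is $N$ with $\mu_n(\theta_\sigma T) \geq \mu_n(\Theta) > K$ for all $n \geq N$.

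Note that the argument does not use the discreteness assumption on $\mu$; it only needs $\mu \in \mathscr{M}_{\rm{bf}}(T)$ with $\mu(\theta_\sigma T) = \infty$.
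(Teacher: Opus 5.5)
Your proof is correct and follows essentially the same route as the paper: the paper applies Lemma \ref{lemma:BFMeas2} to the continuity set $\theta_\sigma T$ and then passes to balls of $\mu$-nice radius, which is exactly your Portmanteau argument on $\Theta = \theta_\sigma T \cap \overline{B}_T(\rho,R)$. Your explicit check that $\partial(\theta_\sigma T) \subseteq \{\sigma\}$ supplies the step the paper leaves implicit when it treats $\theta_\sigma T$ as a $\mu$-continuity set.
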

\begin{proof}
By Lemma \ref{lemma:BFMeas2} we have that $\mu_n|_{\theta_{\sigma}T} \stackrel{\rm{vg}}{\rightarrow} \mu|_{\theta_{\sigma}T}$, so for an increasing sequence of $\mu$-nice radii $r_k \rightarrow \infty$ we get that $\mu_n(\theta_{\sigma}T \cap \overline{B}_T(\rho, r_k)) \rightarrow \mu(\theta_{\sigma}T \cap \overline{B}_T(\rho, r_k))$ as $n \rightarrow \infty$. As $\sigma \in {\rm{Skel}}(T, \mu)$ we can choose $k_0$ large enough that $\mu(\theta_{\sigma}T \cap \overline{B}_T(\rho, r_k)) > K$  for all $k \geq k_0$. Hence, we conclude for all $n$ large enough that $\mu_n(\theta_{\sigma}T) \geq \mu_n(\theta_{\sigma}T \cap \overline{B}_T(\rho, r_k)) > K$, as wanted.
\end{proof}
\begin{lemma}
\label{lemma:MEFix6}
Let $(T, d, \rho)$ be a Polish $\mathbb{R}$-tree, and suppose that $h_n \rightarrow h$ in $(0, \infty)$ and $\mu_n \stackrel{\rm{vg}}{\rightarrow} \mu$ in $\mathscr{M}_{\rm{di}}(T)$. Then for every $\mu$-nice $r > 0$, $ {\rm{Skel}}(T, \mu) \cap \overline{B}_T(\rho, r) \subseteq R_{\mu_n,h_n}(T)$ eventually.
\end{lemma}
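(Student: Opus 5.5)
The approach is to reduce the statement to finitely many points using the discrete structure of the skeleton, and then apply Lemma \ref{lemma:MEFix5} at each of those points.

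Fix a $\mu$-nice $r>0$ and write $S_r := {\rm{Skel}}(T,\mu)\cap\overline{B}_T(\rho,r)$. Since $\mu\in\mathscr{M}_{\rm{di}}(T)$, the skeleton is a discrete $\mathbb{R}$-tree without leaves. By Lemma \ref{lemma:Rtree1}, $S_r$ is therefore of finite type. In fact $S_r = {\rm{Span}}(\bullet^r(\mu))$, and $\bullet^r(\mu)$ is finite. (If $\bullet^r(\mu)=\emptyset$, then $S_r=\{\rho\}$ and there is nothing to prove, since $\rho\in R_{\mu_n,h_n}(T)$ always.)

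Each $R_{\mu_n,h_n}(T)$ is a rooted subtree, so $\sigma\in R_{\mu_n,h_n}(T)$ implies $[\rho,\sigma]\subseteq R_{\mu_n,h_n}(T)$. It therefore suffices to show that every $\boldsymbol{\sigma}\in\bullet^r(\mu)$ lies in $R_{\mu_n,h_n}(T)$ for all $n$ large, and then take the maximum of the finitely many thresholds. For this I need $\mu_n(\theta_{\boldsymbol{\sigma}}T)\geq h_n$ eventually.

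Because $r$ is $\mu$-nice, $\mu(\partial B_T(\rho,r))=0$, and in particular $\mu(\{\boldsymbol{\sigma}\})=0$. Also $\boldsymbol{\sigma}\in{\rm{Skel}}(T,\mu)$. So I can apply Lemma \ref{lemma:MEFix5} with $K:=\sup_n h_n<\infty$, which is finite because $h_n\to h$. This gives $\mu_n(\theta_{\boldsymbol{\sigma}}T)>K\geq h_n$ for all $n\geq N_{\boldsymbol{\sigma}}$. Setting $N=\max_{\boldsymbol{\sigma}\in\bullet^r(\mu)}N_{\boldsymbol{\sigma}}$ then yields $S_r\subseteq R_{\mu_n,h_n}(T)$ for all $n\geq N$.

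The main subtlety is the reduction to the finite set $\bullet^r(\mu)$. A pointwise application of Lemma \ref{lemma:MEFix5} to every point of $S_r$ would give thresholds that need not be uniform, and it would also fail at skeleton atoms inside the ball. The argument therefore has to use two facts together:

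\begin{enumerate}[$(i)$]
\item $S_r={\rm{Span}}(\bullet^r(\mu))$, which holds because the skeleton has no leaves, so every skeleton point in the ball lies below some skeleton point on level $r$;
\item niceness of $r$, which guarantees that the spanning points carry no atoms.
\end{enumerate}
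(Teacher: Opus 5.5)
Your proof is correct and follows the same route as the paper. The paper also reduces to the finitely many leaves of ${\rm{Skel}}(T,\mu)\cap\overline{B}_T(\rho,r)$, which are exactly your points $\bullet^r(\mu)$; it then applies Lemma \ref{lemma:MEFix5} with $K=\sup_n h_n$ and concludes because $R_{\mu_n,h_n}(T)$ is a rooted subtree. Your explicit checks that these leaves lie on level $r$ and, by niceness of $r$, carry no $\mu$-atom supply the hypothesis $\mu(\{\sigma\})=0$ of Lemma \ref{lemma:MEFix5}, which the paper uses without stating it.
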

\begin{proof}
As ${\rm{Skel}}(T, \mu)$ is a discrete $\mathbb{R}$-tree, ${\rm{Skel}}(T, \mu) \cap \overline{B}_T(\rho, r)$ is spanned by finitely many leaves. Suppose that $\sigma$ is one such leaf. Then $\mu(\theta_{\sigma}T) = +\infty$, so choosing $K = \sup_{n \in \mathbb{N}} h_n$ and applying Lemma \ref{lemma:MEFix5}, we get that $\sigma \in R_{\mu_n,h_n}(T)$ eventually. As there are only finitely many leaves for which this argument needs to be applied, we conclude that for all $n$ sufficiently large, ${\rm{Lf}}({\rm{Skel}}(T, \mu) \cap \overline{B}_T(\rho, r)) \subseteq R_{\mu_n,h_n}(T)$. As $R_{\mu_n,h_n}(T)$ is a closed rooted subtree of $T$, this yields the desired.
\end{proof}
Going forward we will often see, as in Lemma \ref{lemma:MEFix5} and \ref{lemma:MEFix6} that certain relations only hold \emph{eventually} (in the sense that they hold for all $n$ sufficiently large). We adopt the convention, that if a quantity $x_n$ is only well-defined \emph{eventually}, we may still write $x_n \rightarrow x$ as $n \rightarrow \infty$. An example of this could be the Hausdorff-distance between sets which are only \emph{eventually} compact, or weak convergence of measures (metrized through the Prokhorov distance) which are only \emph{eventually} finite. With this convention in mind, we establish convergence in the local Hausdorff sense of the mass-erased subtrees under vague convergence and appropriate b-tightness assumptions.
\begin{thm}
\label{thm:MEFix7}
Let $(T, d, \rho)$ be a Polish $\mathbb{R}$-tree, and suppose that $h_n \rightarrow h$ in $(0, \infty)$ and $\mu_n \stackrel{\rm{vg}}{\rightarrow} \mu$ in $\mathscr{M}_{\rm{di}}(T)$. Let $h_0 \geq \sup_{n \in \mathbb{N}} h_n$, fix some $\mu$-nice $r > 0$, and let $\bullet^r = \bullet^r(\mu)$. Then
\begin{equation} 
\label{eq:MEFix7}
R_{{\rm{cut}}_{\bullet^r}^{h_0}\mu_n, h_n}(T) = R_{\mu_n, h_n}(T) \cap {\rm{Cut}}(T, \bullet^r) \quad \text{eventually.}
\end{equation}
If additionally, $(\mu_n)_{n \in \mathbb{N}}$ is b-tight on ${\rm{Cut}}(T, \bullet^r)$, then
\begin{enumerate}[$(i)$]
\item ${\rm{cut}}_{\bullet^r}^{h_0}\mu_n$ is eventually a finite measure, and ${\rm{cut}}_{\bullet^r}^{h_0}\mu_n \stackrel{\rm{wk}}{\rightarrow} {\rm{cut}}_{\bullet^r}^{h_0}\mu$ in $\mathscr{M}_{\rm{fin}}(T)$, and
\item $R_{\mu_n,h_n}(T) \cap {\rm{Cut}}(T, \bullet^r)$ is eventually of finite type, and if the map $h' \mapsto R_{{\rm{cut}}_{\bullet^r}^{h_0}\mu,h'}(T)$ is $d_H$-continuous at $h$, then $ d_{\rm{H}}\left( R_{\mu_n,h_n}(T) \cap {\rm{Cut}}(T, \bullet^r), R_{\mu,h}(T) \cap {\rm{Cut}}(T, \bullet^r) \right) \rightarrow 0$.
\end{enumerate}
\end{thm}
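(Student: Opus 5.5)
The plan is to reduce everything to the finite-mass theory of \cite{DW26} via the cut-off measures, following the pattern of Lemma \ref{lemma:MEFix1}. Throughout write $\bullet^r = \bullet^r(\mu)$, which is finite since $\mu \in \mathscr{M}_{\rm{di}}(T)$. The key difficulty compared to Lemma \ref{lemma:MEFix1} is that $\bullet^r$ is intrinsic to $\mu$, not to $\mu_n$. So the cut-off points need not lie on ${\rm{Skel}}(T,\mu_n)$, and the tree ${\rm{Cut}}(T,\bullet^r)$ may contain skeleton points of $\mu_n$.

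\emph{Step 1: identity \eqref{eq:MEFix7}.}
\begin{itemize}
\item \emph{Inclusion $\subseteq$, for every $n$.} We have ${\rm{cut}}_{\bullet^r}^{h_0}\mu_n(\theta_\sigma T) \le \mu_n(\theta_\sigma T)$ whenever $\sigma \in {\rm{Cut}}(T,\bullet^r)$. Indeed, for $\sigma$ below some $\boldsymbol{\sigma} \in \bullet^r$, the point mass $h_0$ at $\boldsymbol{\sigma}$ replaces $\mu_n(\theta_{\boldsymbol{\sigma}} T)$, and this requires $\mu_n(\theta_{\boldsymbol{\sigma}} T) \ge h_0$. This is exactly where "eventually" enters.
\item \emph{Eventual mass at cut-off points.} Since $r$ is $\mu$-nice, $\mu(\{\boldsymbol{\sigma}\})=0$ for each $\boldsymbol{\sigma}\in\bullet^r$. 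Lemma \ref{lemma:MEFix5} with $K=h_0$ and finiteness of $\bullet^r$ then give $\mu_n(\theta_{\boldsymbol{\sigma}}T) > h_0$ for all $\boldsymbol{\sigma} \in \bullet^r$ eventually. Hence ${\rm{cut}}_{\bullet^r}^{h_0}\mu_n(\theta_\sigma T) = (\text{mass of }\mu_n \text{ in } \theta_\sigma T\cap{\rm{Cut}}) + h_0\#(\bullet^r\cap\theta_\sigma T) \le \mu_n(\theta_\sigma T)$, which gives $\subseteq$ since the cut measure is supported on ${\rm{Cut}}(T,\bullet^r)$.
\item \emph{Inclusion $\supseteq$.} Take $\sigma \in R_{\mu_n,h_n}(T)\cap{\rm{Cut}}(T,\bullet^r)$.
  \begin{itemize}
  \item If $\theta_\sigma T$ contains some $\boldsymbol{\sigma}\in\bullet^r$, the cut measure gives it mass at least $h_0 \ge h_n$.
  \item Otherwise $\theta_\sigma T \subseteq {\rm{Cut}}(T,\bullet^r)$, because ${\rm{Cut}}(T,\bullet^r)$ is rooted and $\sigma$ lies off the subtrees above $\bullet^r$. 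Then the two measures agree on $\theta_\sigma T$.
  \end{itemize}
\end{itemize}

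\emph{Step 2, (i).} Under b-tightness on ${\rm{Cut}}(T,\bullet^r)$, I will first show ${\rm{Cut}}(T,\bullet^r)$ is a $\mu$-continuity set. Its topological boundary is $\bullet^r \subseteq \partial B_T(\rho,r)$, which is $\mu$-null by niceness. Lemma \ref{lemma:BFMeas2} then gives $\mu_n|_{{\rm{Cut}}} \stackrel{\rm{vg}}{\rightarrow} \mu|_{{\rm{Cut}}}$. Lemma \ref{lemma:BFMeas7} upgrades this to eventual finiteness and weak convergence. Adding the fixed finite atom part $h_0\sum_{\boldsymbol{\sigma}\in\bullet^r}\delta_{\boldsymbol{\sigma}}$ preserves weak convergence.

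\emph{Step 3, (ii).}
\begin{itemize}
\item \emph{Finite type.} By Step 1, $R_{\mu_n,h_n}(T)\cap{\rm{Cut}}(T,\bullet^r)$ is eventually equal to $R_{{\rm{cut}}\mu_n,h_n}(T)$. This is a mass-erased subtree of an eventually finite measure, hence of finite type by \cite{DW26} Lemma 3.12.
\item \emph{Hausdorff convergence.} Apply the finite-measure continuity result \cite{DW26} Proposition 3.16 to ${\rm{cut}}\mu_n \stackrel{\rm{wk}}{\rightarrow}{\rm{cut}}\mu$ and $h_n\to h$, using the assumed $d_H$-continuity at $h$ of $h'\mapsto R_{{\rm{cut}}\mu,h'}(T)$. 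This yields $d_H(R_{{\rm{cut}}\mu_n,h_n}(T),R_{{\rm{cut}}\mu,h}(T))\to 0$.
\item \emph{Identifying the limit.} Lemma \ref{lemma:MEFix1}, with $h_0\ge h$ because $h$ is a limit of $h_n\le h_0$, identifies $R_{{\rm{cut}}\mu,h}(T)$ with $R_{\mu,h}(T)\cap{\rm{Cut}}(T,\bullet^r)$. Combined with \eqref{eq:MEFix7} this concludes.
\end{itemize}

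\emph{Main obstacle.} I expect the delicate point to be the exact form of the \cite{DW26} continuity statement: whether it needs the continuity hypothesis at $h$, or only gives one-sided inclusions in general. If only upper and lower semicontinuity are available, I would sandwich $R_{{\rm{cut}}\mu_n,h_n}$ between $R_{{\rm{cut}}\mu,h'}$ for $h'$ slightly above and below $h$ and use the continuity assumption. A secondary check is that $\theta_\sigma T$ for $\sigma\in{\rm{Cut}}$ meets the removed region only through points of $\bullet^r$.
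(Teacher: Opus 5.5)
Your proposal is correct and follows essentially the same route as the paper. The paper obtains \eqref{eq:MEFix7} from Lemma \ref{lemma:MEFix6}, which is just Lemma \ref{lemma:MEFix5} applied at the leaves of ${\rm{Skel}}(T,\mu)\cap\overline{B}_T(\rho,r)$, i.e.\ at $\bullet^r$, together with the split ${\rm{Skel}}(T,\mu)$ versus $F_\mu(T)$, which is the same as your case split. It then derives $(i)$ and $(ii)$ exactly as you do, from Lemmas \ref{lemma:BFMeas2}, \ref{lemma:BFMeas7} and \ref{lemma:MEFix1} and \cite{DW26} Lemma 3.12 and Proposition 3.16.

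There is one small slip in your first bullet of Step 1. The inequality ${\rm{cut}}_{\bullet^r}^{h_0}\mu_n(\theta_\sigma T)\le\mu_n(\theta_\sigma T)$ really needs $\mu_n(\theta_{\boldsymbol{\sigma}}T\setminus\{\boldsymbol{\sigma}\})\ge h_0$, because an atom of $\mu_n$ at $\boldsymbol{\sigma}$ appears on both sides. Lemma \ref{lemma:MEFix5} with $K=h_0$ only gives $\mu_n(\theta_{\boldsymbol{\sigma}}T)>h_0$. You do not need that inequality, however. If $\theta_\sigma T$ contains some $\boldsymbol{\sigma}\in\bullet^r$, then directly $\mu_n(\theta_\sigma T)\ge\mu_n(\theta_{\boldsymbol{\sigma}}T)>h_0\ge h_n$. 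Otherwise the two measures agree on $\theta_\sigma T$.
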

\begin{proof}
For every $\sigma \in {\rm{Skel}}(T, \mu) \cap \overline{B}_T(\rho, r)$, we have ${\rm{cut}}_{\bullet^r}\mu_n(\theta_{\sigma}T) \geq h_0 \geq h_n$, so ${\rm{Skel}}(T, \mu) \cap \overline{B}_T(\rho, r) \subseteq R_{{\rm{cut}}_{\bullet^r}\mu_n, h_n}(T)$. Combining with Lemma \ref{lemma:MEFix6} gives that eventually
\begin{align*}
R_{{\rm{cut}}_{\bullet^r}\mu_n, h_n}(T) &= \left({\rm{Skel}}(T, \mu) \cap \overline{B}_T(\rho, r)\right) \cup \left\{ \sigma \in F_{\mu}(T) \cap {\rm{Cut}}(T, \bullet^r) ~|~ {\rm{cut}}_{\bullet^r}\mu_n(\theta_{\sigma}T) \geq h_n \right\} \\
&= \left({\rm{Skel}}(T, \mu) \cap \overline{B}_T(\rho, r)\right) \cup \left\{ \sigma \in F_{\mu}(T) \cap {\rm{Cut}}(T, \bullet^r) ~|~ \mu_n(\theta_{\sigma}T) \geq h_n \right\} \\
&= R_{\mu_n, h_n}(T) \cap {\rm{Cut}}(T, \bullet^r).
\end{align*}

Now, apply the additional assumptions. Note that $\partial {\rm{Cut}}(T, \bullet^r) = \bullet^r \subseteq \partial B_T(\rho, r)$, so indeed $ \mu(\partial {\rm{Cut}}(T, \bullet^r)) = 0$. Hence, by Lemma \ref{lemma:BFMeas2}, $\mu_n|_{{\rm{Cut}}(T, \bullet^r)} \stackrel{\rm{vg}}{\rightarrow} \mu|_{{\rm{Cut}}(T, \bullet^r)}$ as $n \rightarrow \infty$. As $(\mu_n)_{n \in \mathbb{N}}$ is b-tight on ${\rm{Cut}}(T, \bullet^r)$ we conclude by Lemma \ref{lemma:BFMeas7} that $\mu_n|_{{\rm{Cut}}(T, \bullet^r)} \stackrel{\rm{wk}}{\rightarrow} \mu|_{{\rm{Cut}}(T, \bullet^r)}$ in $\mathscr{M}_{\rm{fin}}(T)$ as $n \rightarrow \infty$. As the $h_0$-point-masses in \eqref{eq:CutMeas} are constant across all $n$, we immediately get $(i)$, and \cite{DW26} Lemma 3.12 ensures that $R_{{\rm{cut}}_{\bullet^r}\mu_n, h_n}(T)$ is eventually a subtree of finite type. Finally, we get that $d_{\rm{H}}\left( R_{{\rm{cut}}_{\bullet^r}\mu_n, h_n}(T), R_{{\rm{cut}}_{\bullet^r}\mu, h}(T) \right) \rightarrow 0$ as $n \rightarrow \infty$ by \cite{DW26} Proposition 3.16, and so \eqref{eq:MEFix7} together with Lemma \ref{lemma:MEFix1} give $(ii)$.
\end{proof}
It is easily seen from Theorem \ref{thm:MEFix7}$(ii)$ that if the assumptions of the theorem hold for some increasing sequence of $\mu$-nice radii $r_k \rightarrow \infty$, then $d_{\rm{H}}^{\rm{loc}}(R_{\mu_n,h_n}(T), R_{\mu,h}(T)) \rightarrow 0$ as $n \rightarrow \infty$.

If $\mu \in \mathscr{M}_{\rm{di}}(T)$, then as $\mu(\theta_{\sigma}T) < h < \infty$ for all $\sigma \in T \setminus R_{\mu,h}(T)$, we have that ${\rm{Pr}}_{R_{\mu,h}(T)}\mu \in \mathscr{M}_{\rm{di}}(T)$ by Proposition \ref{prop:Proj4}. Instead of just studying the local Hausdorff convergence of mass-erased subtrees, we can extend Theorem \ref{thm:MEFix7} to also obtain vague convergence of the projected measures on mass-erased subtrees. We start by proving a lemma which allows us to compare the projection of $\mu_n$ onto $R_{\mu_n,h_n}(T)$ with the projection of ${\rm{cut}}_{\bullet^r}\mu_n$ onto $R_{{\rm{cut}}_{\bullet^r}\mu_n, h_n}(T)$ (and similarly for $\mu$ and ${\rm{cut}}_{\bullet^r}\mu$) on a ball of radius strictly less than $r$.
\begin{lemma}
\label{lemma:MEFix8}
Let $(T, d, \rho)$ be a Polish $\mathbb{R}$-tree, and suppose that $h_n \rightarrow h$ in $(0, \infty)$ and $\mu_n \stackrel{\rm{vg}}{\rightarrow} \mu$ in $\mathscr{M}_{\rm{di}}(T)$. Let $h_0 \geq \sup_{n \in \mathbb{N}} h_n$, fix some $\mu$-nice $r > 0$, and let $\bullet^r = \bullet^r(\mu)$. Then for all $\tilde{r} \in (0, r)$:
\begin{enumerate}[$(i)$]
\item $ \left({\rm{Pr}}_{R_{\mu,h}(T)}\mu\right)^{\restr \tilde{r}} = \left( {\rm{Pr}}_{R_{{\rm{cut}}_{\bullet^r}^{h_0}\mu, h}(T)} {\rm{cut}}_{\bullet^r}^{h_0}\mu \right)^{\restr \tilde{r}}$, and 
\item $\left({\rm{Pr}}_{R_{\mu_n,h_n}(T)}\mu_n\right)^{\restr \tilde{r}} = \left( {\rm{Pr}}_{R_{{\rm{cut}}_{\bullet^r}^{h_0}\mu_n, h_n}(T)} {\rm{cut}}_{\bullet^r}^{h_0}\mu_n \right)^{\restr \tilde{r}}$ eventually.
\end{enumerate}
\end{lemma}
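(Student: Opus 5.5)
The plan is to compare the two projected measures directly on the closed ball $\overline{B}_T(\rho,\tilde r)$, using the explicit formula of Proposition \ref{prop:Proj2}$(i)$. The key point is that the two mass-erased subtrees coincide inside the cut-off tree, and on $\overline{B}_T(\rho,\tilde r)$ with $\tilde r<r$ nothing distinguishes $\mu$ from ${\rm{cut}}_{\bullet^r}^{h_0}\mu$.

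For $(i)$, write $R:=R_{\mu,h}(T)$ and $R':=R_{{\rm{cut}}_{\bullet^r}\mu,h}(T)$. Since $h_0\ge \sup_n h_n\ge h$ (as $h_n\to h$), Lemma \ref{lemma:MEFix1} gives $R'=R\cap{\rm{Cut}}(T,\bullet^r)$. Hence $R'\cap\overline{B}_T(\rho,\tilde r)=R\cap\overline{B}_T(\rho,\tilde r)$, because $\overline{B}_T(\rho,r)\subseteq{\rm{Cut}}(T,\bullet^r)$.

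By Proposition \ref{prop:Proj2}$(i)$, $({\rm{Pr}}_R\mu)^{\restr\tilde r}$ consists of two parts:
\begin{itemize}
\item the restriction $\mu(\,\cdot\,\cap R\cap \overline B(\rho,\tilde r))$;
\item the point masses $\mu(C_i)\delta_{\sigma_i}$ over those components $C_i$ of $T\setminus R$ whose root point $\sigma_i$ lies in $\overline B(\rho,\tilde r)$.
\end{itemize}
The same decomposition holds for $R'$ and ${\rm{cut}}_{\bullet^r}\mu$.

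The restriction parts agree, since ${\rm{cut}}_{\bullet^r}\mu=\mu$ on $\overline B(\rho,\tilde r)$: the added atoms sit on $\partial B(\rho,r)$, and no mass is removed inside the ball.

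For the atoms, take $\sigma\in R\cap\overline B(\rho,\tilde r)$. The components of $T\setminus R$ glued at $\sigma$ differ from those of $T\setminus R'$ glued at $\sigma$ only by pieces inside $R\setminus{\rm{Cut}}(T,\bullet^r)$, and those pieces attach at points of $\bullet^r$ at distance $r>\tilde r$. So I will argue via Proposition \ref{prop:Proj2}$(ii)$. For $\sigma\in R\cap\overline B(\rho,\tilde r)$, the mass projected to the atom at $\sigma$ equals $\mu(\theta_\sigma T)$ minus the masses of $\theta_{\sigma'}T$ for $\sigma'$ in the open branches of $R$ above $\sigma$. This is cleanest done through the difference of the two projected measures: I will check that ${\rm{Pr}}_R\mu$ and ${\rm{Pr}}_{R'}{\rm{cut}}\mu$ assign equal mass to every set of the form $\theta_\sigma T\cap\overline B(\rho,\tilde r)$ (and hence, equivalently, to every component-union). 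Then I conclude by the uniqueness statement \cite{DW26} Lemma 2.4$(vi)$ for finite measures, applied to the finite measures restricted to the ball.

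Concretely, for $\sigma\in F_\mu(T)$ we have $\theta_\sigma T\subseteq{\rm{Cut}}(T,\bullet^r)$ when $d(\rho,\sigma)\le\tilde r$ and $\sigma$ is off the skeleton. Hence $\mu$ and ${\rm{cut}}\mu$ agree on $\theta_\sigma T$, and any component $C_i$ of $T\setminus R$ rooted at a point of the ball is contained in such a $\theta_\sigma T$ with $\sigma\in F_\mu(T)$ (because $\mu(C_i)<h$ forces $C_i$ off the skeleton). Hence $\mu(C_i)={\rm{cut}}\mu(C_i)$. The components are identical, since $R$ and $R'$ agree on the ball and $C_i$ lies outside ${\rm{Skel}}$, so the atoms agree. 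This component-by-component matching is the core of $(i)$.

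For $(ii)$, I will use \eqref{eq:MEFix7} from Theorem \ref{thm:MEFix7}, which gives $R_{{\rm{cut}}\mu_n,h_n}(T)=R_{\mu_n,h_n}(T)\cap{\rm{Cut}}(T,\bullet^r)$ eventually. From there the argument of $(i)$ should go through verbatim. The one extra input needed is that, eventually, every component of $T\setminus R_{\mu_n,h_n}(T)$ rooted in $\overline B(\rho,\tilde r)$ lies in ${\rm{Cut}}(T,\bullet^r)$, i.e.\ does not contain a point of $\bullet^r$. This follows from Lemma \ref{lemma:MEFix6}: eventually ${\rm{Skel}}(T,\mu)\cap\overline B(\rho,r)\subseteq R_{\mu_n,h_n}(T)$, so the segments from $\rho$ to each point of $\bullet^r$ are contained in $R_{\mu_n,h_n}(T)$. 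Consequently no component of its complement can reach a point of $\bullet^r$, or pass beyond it into $\theta_{\bullet}T$.

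I expect this geometric verification to be the main obstacle: showing that components rooted inside the smaller ball cannot see the cut-off region. Once it is settled, equality of $\mu_n$ and ${\rm{cut}}\mu_n$ on those components, and on $\overline B(\rho,\tilde r)$, finishes the proof.
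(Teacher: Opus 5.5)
Your proposal is correct and is essentially the paper's proof. You expand both sides with Proposition \ref{prop:Proj2}$(i)$, and use Lemma \ref{lemma:MEFix1} (resp.\ \eqref{eq:MEFix7} together with Lemma \ref{lemma:MEFix6} for $(ii)$) to see that the two mass-erased subtrees agree on $\overline{B}_T(\rho,r)$ and contain $\bullet^r$. The components with closure points in $\overline{B}_T(\rho,\tilde r)$ therefore coincide and lie in ${\rm{Cut}}(T,\bullet^r)$, where $\mu_n$ and ${\rm{cut}}_{\bullet^r}^{h_0}\mu_n$ give them the same mass.

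Two small points. Such a component only satisfies $\mu(C_i)\le h$, not $<h$, though it avoids the skeleton anyway since ${\rm{Skel}}(T,\mu)\subseteq R_{\mu,h}(T)$. The detour through Proposition \ref{prop:Proj2}$(ii)$ and \cite{DW26} Lemma 2.4$(vi)$ is unnecessary, since the component-by-component matching already finishes the proof.
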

\begin{proof}
It will suffice to prove $(ii)$, as $(i)$ follows by a similar (but easier) argument. By Lemma \ref{lemma:MEFix6} and \eqref{eq:MEFix7}, we can choose $N$ sufficiently large such that for all $n \geq N$, ${\rm{Skel}}(T, \mu) \cap \overline{B}_T(\rho, r) \subseteq R_{\mu_n,h_n}(T)$ and $R_{{\rm{cut}}_{\bullet^r}\mu_n, h_n}(T) = R_{\mu_n,h_n}(T) \cap {\rm{Cut}}(T, \bullet^r)$. Going forward, fix some $n \geq N$. Then we have $R_{{\rm{cut}}_{\bullet^r}\mu_n, h_n}(T) \cap \overline{B}_T(\rho, r) = R_{\mu_n,h_n}(T) \cap \overline{B}_T(\rho, r)$. So if we denote by $(C_i)_{i \in I(\tilde{r})}$ the countable collection of connected components of $T \setminus R_{\mu_n,h_n}(T)$ with closure points $(\sigma_i)_{i \in I(\tilde{r})}$ in $\overline{B}_T(\rho, \tilde{r})$, then as all closure points are contained in the interior of $\overline{B}_T(\rho, r)$, we must have that $(C_i)_{i \in I(\tilde{r})}$ are also precisely the connected components of $T \setminus R_{{\rm{cut}}_{\bullet^r}\mu_n, h_n}(T)$ with closure points in $\overline{B}_T(\rho, \tilde{r})$. Moreover, $\left(\bigcup_{i \in I(\tilde{r})} C_i \right) \cap {\rm{Skel}}(T, \mu) = \emptyset$, so indeed $\bigcup_{i \in I(\tilde{r})} C_i \subseteq {\rm{Cut}}^r(T, \mu) \cap F_{\mu}(T)$ and so by definition ${\rm{cut}}_{\bullet^r}\mu_n(C_i) = \mu_n(C_i)$ for every $i \in I(\tilde{r})$. We then finally get that
\begin{align*}
\left( {\rm{Pr}}_{R_{{\rm{cut}}_{\bullet^r}\mu_n, h_n}(T)} {\rm{cut}}_{\bullet^r}\mu_n \right)^{\restr \tilde{r}} &= {\rm{cut}}_{\bullet^r}\mu_n\left( \cdot \cap R_{{\rm{cut}}_{\bullet^r}\mu_n, h_n}(T) \cap \overline{B}_T(\rho, \tilde{r}) \right) + \sum_{i \in I(\tilde{r})} {\rm{cut}}_{\bullet^r}\mu_n(C_i) \delta_{\sigma_i} \\
&= \mu_n\left( \cdot \cap R_{\mu_n,h_n}(T) \cap \overline{B}_T(\rho, \tilde{r})\right) + \sum_{i \in I(\tilde{r})} \mu_n(C_i) \delta_{\sigma_i} \\
&= \left({\rm{Pr}}_{R_{\mu_n,h_n}(T)}\mu_n\right)^{\restr \tilde{r}}
\end{align*}
by applying Proposition \ref{prop:Proj2}$(i)$.
\end{proof}
\begin{prop}
\label{prop:MEFix9}
Let $(T, d, \rho)$ be a Polish $\mathbb{R}$-tree, and suppose that $h_n \rightarrow h$ in $(0, \infty)$ and $\mu_n \stackrel{\rm{vg}}{\rightarrow} \mu$ in $\mathscr{M}_{\rm{di}}(T)$. Let $r_k \rightarrow \infty$ be an increasing sequence of $\mu$-nice radii, let $h_0 \geq \sup_{n \in \mathbb{N}} h_n$, and set $\bullet^{r_k} = \bullet^{r_k}(\mu)$. If $(\mu_n)_{n \in \mathbb{N}}$ is b-tight on ${\rm{Cut}}(T, \bullet^{r_k})$ for all $k$, then
\begin{enumerate}[$(i)$]
\item ${\rm{Skel}}(T, \mu) = {\rm{Skel}}(T, {\rm{Pr}}_{R_{\mu,h}(T)}\mu)$,

\item $\left( {\rm{Pr}}_{R_{\mu_n,h_n}(T)}\mu_n\right)_{n \in \mathbb{N}}$ is b-tight on ${\rm{Cut}}(T, \bullet^{r_k})$ for all $k$, and
\item if $h' \mapsto R_{{\rm{cut}}_{\bullet^{r_k}}^{h_0}\mu, h'}(T)$ is $d_H$-continuous at $h$ for all $k$, then ${\rm{Pr}}_{R_{\mu_n,h_n}(T)}\mu_n \stackrel{\rm{vg}}{\rightarrow} {\rm{Pr}}_{R_{\mu,h}(T)}\mu$.
\end{enumerate}
\end{prop}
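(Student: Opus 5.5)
We treat the three statements separately, throughout writing $\mu^h := {\rm{Pr}}_{R_{\mu,h}(T)}\mu$ and $\mu_n^{h_n} := {\rm{Pr}}_{R_{\mu_n,h_n}(T)}\mu_n$.

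For $(i)$, every $\sigma \in T \setminus R_{\mu,h}(T)$ satisfies $\mu(\theta_\sigma T) < h < \infty$. Condition $(c)$ of Proposition \ref{prop:Proj4} therefore holds with $T' = R_{\mu,h}(T)$, and the last assertion of that proposition gives ${\rm{Skel}}(T,\mu^h) = {\rm{Skel}}(T,\mu)$ directly. The same reasoning applies to each $\mu_n$, so $\mu_n^{h_n} \in \mathscr{M}_{\rm{di}}(T)$ and ${\rm{Skel}}(T,\mu_n^{h_n}) = {\rm{Skel}}(T,\mu_n)$.

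For $(ii)$, fix $k$ and write $C := {\rm{Cut}}(T,\bullet^{r_k})$. The aim is to dominate the tail of $\mu_n^{h_n}$ on $C$ by the tail of $\mu_n$ on $C$. Take $R > r_k$ and $\sigma \in C \setminus \overline{B}_T(\rho,R)$. By Proposition \ref{prop:Proj1}$(iii)$, $f^{-1}_{R_{\mu_n,h_n}(T)}(\sigma)$ consists of $\sigma$ together with components lying in $\theta_\sigma T$. Since $d(\rho,\sigma) > R > r_k$ and $\sigma \in C$, the whole of $\theta_\sigma T$ sits inside $C \setminus \overline{B}_T(\rho,R)$: the point of $\partial B_T(\rho,r_k)$ below $\sigma$ is not in $\bullet^{r_k}$, so the fringe above it stays in $C$. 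Hence
\[ f^{-1}_{R_{\mu_n,h_n}(T)}\bigl(C\setminus \overline{B}_T(\rho,R)\bigr)\subseteq C\setminus \overline{B}_T(\rho,R), \]
which gives $\mu_n^{h_n}(C \setminus \overline{B}_T(\rho,R)) \le \mu_n(C \setminus \overline{B}_T(\rho,R))$. b-tightness of $(\mu_n^{h_n})$ on $C$ then follows from that of $(\mu_n)$. The only point requiring care is that points of $C$ beyond level $r_k$ project within the same fringe.

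For $(iii)$, we localise with the cut-off method and then apply the finite-mass result from \cite{DW26}. Fix $k$ and $\tilde r \in (0,r_k)$ that is $\mu^h$-nice, which holds for Lebesgue-a.e.\ $\tilde r$. By Lemma \ref{lemma:MEFix8} (eventually for the $n$-side),
\[ (\mu_n^{h_n})^{\restr \tilde r} = \bigl({\rm{Pr}}_{R_{{\rm{cut}}\mu_n,h_n}(T)}{\rm{cut}}_{\bullet^{r_k}}\mu_n\bigr)^{\restr\tilde r},\qquad (\mu^h)^{\restr\tilde r} = \bigl({\rm{Pr}}_{R_{{\rm{cut}}\mu,h}(T)}{\rm{cut}}_{\bullet^{r_k}}\mu\bigr)^{\restr\tilde r}. \]
Theorem \ref{thm:MEFix7}$(i)$ gives ${\rm{cut}}_{\bullet^{r_k}}\mu_n \stackrel{\rm{wk}}{\rightarrow} {\rm{cut}}_{\bullet^{r_k}}\mu$ as finite measures, and by hypothesis $h' \mapsto R_{{\rm{cut}}\mu,h'}(T)$ is $d_H$-continuous at $h$. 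These are exactly the hypotheses of the finite-measure continuity result in \cite{DW26} (the analogue of Proposition 3.16 for projected measures, which we expect to be proved there alongside it). That result yields weak convergence of the projected finite measures, and the Portmanteau theorem then gives convergence of the restrictions to $\overline{B}_T(\rho,\tilde r)$, which is a continuity set. Letting $k \to \infty$ and choosing $\tilde r$ close to $r_k$, Proposition \ref{prop:BFMeas1}$(c)$ yields the claimed vague convergence.

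The main obstacle is $(iii)$, because its conclusion rests on whether \cite{DW26} contains precisely this weak-continuity statement for projections under $h_n \to h$. If it does not, the fallback is to prove it directly. For closed sets $A$ in the ball, we would compare the projections through the masses $\mu_n(\theta_\sigma T)$ on the finitely many branches of the finite-type trees, using the Hausdorff convergence in Theorem \ref{thm:MEFix7}$(ii)$ together with the $1$-Lipschitz property of the point-projections.
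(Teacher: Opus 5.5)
Your proposal is correct and follows the paper's proof essentially step for step. Part (ii) rests on the same inclusion $f_{R_{\mu_n,h_n}(T)}^{-1}\bigl({\rm{Cut}}(T,\bullet^{r_k})\setminus\overline{B}_T(\rho,R)\bigr)\subseteq {\rm{Cut}}(T,\bullet^{r_k})\setminus\overline{B}_T(\rho,R)$, and part (iii) uses the same chain of Theorem \ref{thm:MEFix7}$(i)$, Lemma \ref{lemma:MEFix8} and Proposition \ref{prop:BFMeas1}. The only differences are immaterial: you get (i) from Proposition \ref{prop:Proj4} rather than directly from Proposition \ref{prop:Proj2}$(ii)$, and the finite-mass continuity result you were unsure about is exactly what the paper cites as \cite{DW26} Proposition 3.16, so your fallback argument is not needed.
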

\begin{proof}
It is easily seen that ${\rm{Skel}}(T, \mu) = {\rm{Skel}}(T, {\rm{Pr}}_{R_{\mu,h}(T)}\mu)$ by Proposition \ref{prop:Proj2}$(ii)$. Fix some $r \in \{r_k ~|~ k \in \mathbb{N} \}$. For b-tightness, set $\bullet^r = \bullet^r(\mu)$, and consider some $R > r$. Then it is straightforward to check that $f_{R_{\mu_n,h_n}(T)}^{-1}\left( {\rm{Cut}}(T, \bullet^r) \setminus \overline{B}_T(\rho, R) \right) \subseteq {\rm{Cut}}(T, \bullet^r) \setminus \overline{B}_T(\rho, R)$. Hence, we conclude that ${\rm{Pr}}_{R_{\mu_n,h_n}(T)}\mu_n({\rm{Cut}}(T, \bullet^r) \setminus \overline{B}_T(\rho, R)) \leq \mu_n\left( {\rm{Cut}}(T, \bullet^r) \setminus \overline{B}_T(\rho, R) \right)$ since ${\rm{Pr}}_{R_{\mu_n,h_n}(T)}\mu_n$ is the pushforward measure under $f_{R_{\mu_n,h_n}(T)}$, and so b-tightness of $(\mu_n)_{n \in \mathbb{N}}$ on ${\rm{Cut}}(T, \bullet^r)$ gives $(ii)$.

For $(iii)$, we know from Theorem \ref{thm:MEFix7}$(i)$ that ${\rm{cut}}_{\bullet^r}\mu_n \stackrel{\rm{vg}}{\rightarrow} {\rm{cut}}_{\bullet^r}\mu$ in $\mathscr{M}_{\rm{fin}}(T)$, and so \cite{DW26} Proposition 3.16 gives ${\rm{Pr}}_{R_{{\rm{cut}}_{\bullet^r}\mu_n, h_n}(T)}{\rm{cut}}_{\bullet^r}\mu_n \stackrel{\rm{wk}}{\rightarrow} {\rm{Pr}}_{R_{{\rm{cut}}_{\bullet^r}\mu, h}(T)}{\rm{cut}}_{\bullet^r}\mu$ as $n \rightarrow \infty$. Given some $\varepsilon > 0$ choose $\tilde{r} \in (r-\varepsilon, r)$ to be ${\rm{Pr}}_{R_{{\rm{cut}}_{\bullet^r}\mu, h}(T)}{\rm{cut}}_{\bullet^r}\mu$-nice. Then using the characterization from Lemma \ref{lemma:MEFix8}, we conclude that $\left({\rm{Pr}}_{R_{\mu_n,h_n}(T)}\mu_n\right)^{\restr \tilde{r}} \stackrel{\rm{wk}}{\rightarrow} \left({\rm{Pr}}_{R_{\mu,h}(T)}\mu\right)^{\restr \tilde{r}}$ as $n \rightarrow \infty$. Repeating the argument for every $r \in \{r_k ~|~ k \in \mathbb{N}\}$ produces an appropriate sequence $\tilde{r}_k \rightarrow \infty$, and we conclude by Proposition \ref{prop:BFMeas1} that ${\rm{Pr}}_{R_{\mu_n,h_n}(T)}\mu_n \stackrel{\rm{vg}}{\rightarrow} {\rm{Pr}}_{R_{\mu,h}(T)}\mu$ as $n \rightarrow \infty$.
\end{proof}
\subsection{Mass erasure as an operator on measures}
\label{subsec:MEOp}
Given a \emph{finitely} measured Polish $\mathbb{R}$-tree, $(T, d, \rho, \mu)$, we represent $h$-mass erasure via a unique operator $\mathscr{E}_h \colon \mathscr{M}_{\rm{fin}}(T) \rightarrow \mathscr{M}_{\rm{fin}}(T)$ satisfying the $h$-mass erasure property $\mathscr{E}_h\mu(\theta_{\sigma}T) = \left( \mu(\theta_{\sigma}T) - h \right)_+$ for every $\sigma \in T$, as in \cite{DW26} Section 3.3. In \cite{DW26} Proposition 3.18, the $h$-mass erasure operator is more explicitly characterized by setting for each $h > 0$
\begin{equation} 
\label{eq:MEOpFin}
\mathscr{E}_h\mu := {\rm{Pr}}_{R_{\mu,h}(T)} \mu + h \varpi_{R_{\mu,h}(T)},
\end{equation}
where given any subtree of \emph{finite type} $T' \subseteq T$, the signed measure $\varpi_{T'}$ is defined by
$$ \varpi_{T'} := \sum_{\sigma \in T'} \left( \deg_{\rm{out}}(\sigma, T') - 1 \right) = (n(\rho, T') - 1)\delta_{\rho} + \sum_{\sigma \in {\rm{Bp}}(T')} (n(\sigma, T') - 2)\delta_{\sigma} - \sum_{\sigma \in {\rm{Lf}}(T')} \delta_{\sigma}, $$
where $\deg_{\rm{out}}(\sigma, T') = n(\sigma, T') - 1$ for any $\sigma \neq \rho$, and $\deg_{\rm{out}}(\rho, T') = n(\rho, T')$.

If we instead consider some $\mu \in \mathscr{M}_{\rm{di}}(T)$, the mass-erased subtree $R_{\mu,h}(T)$ need only be of \emph{boundedly finite type}, and so it can potentially contain infinitely many leaves and branch points. In that case, the signed measure $\varpi_{R_{\mu,h}(T)}$ becomes ill-defined, and so it is clear that we cannot directly reuse the definition \eqref{eq:MEOpFin}. Instead, we define the mass erasure operator explicitly, by setting for every $h > 0$
\begin{align}
\label{eq:MEOp}
\begin{split}
\mathscr{E}_h\mu &:= {\rm{Pr}}_{R_{\mu,h}(T)}\mu(\cdot \cap (R_{\mu,h}(T) \setminus {\rm{Lf}}(R_{\mu,h}(T)))) + \sum_{\sigma \in {\rm{Lf}}(R_{\mu,h}(T))} (\mu(\theta_{\sigma}T) - h) \delta_{\sigma}\\ 
&\quad + h(n(\rho, R_{\mu,h}(T)) - 1) \delta_{\rho} + h \sum_{\sigma \in {\rm{Bp}}(R_{\mu,h}(T))} (n(\sigma, R_{\mu,h}(T)) - 2) \delta_{\sigma}.
\end{split}
\end{align}
We call $\mathscr{E}_h\mu$ the \emph{$h$-mass erasure of $\mu$}, and set by convention $\mathscr{E}_0\mu := \mu$. 
Although this definition may seem rather complicated, it ensures that the mass erasure operator behaves as in the setting with finite measures from \cite{DW26} Section 3.3. Indeed, the only difference from \eqref{eq:MEOpFin} is that the negative point-masses assigned to the leaves of the mass-erased subtree have been moved into the second term of \eqref{eq:MEOp}, in a way such that the term always has a non-negative contribution. Before proving that the mass erasure operator on $\mathscr{M}_{\rm{di}}(T)$ satisfies the mass erasure property, note that $R_{\mu,h}(\theta_{\sigma}T) = \theta_{\sigma}T \cap R_{\mu,h}(T)$ for any $\sigma \in R_{\mu,h}(T)$, where the left-hand side denotes the $h$-mass-erased subtree of $(\theta_{\sigma}T, d, \sigma)$ with respect to the restriction of $\mu$ to $\theta_{\sigma}T$. In particular, ${\rm{Lf}}(R_{\mu,h}(\theta_{\sigma}T)) = {\rm{Lf}}(R_{\mu,h}(T)) \cap \theta_{\sigma}T$ and ${\rm{Bp}}(R_{\mu,h}(\theta_{\sigma}T)) = {\rm{Bp}}(R_{\mu,h}(T)) \cap (\theta_{\sigma}T \setminus \{\sigma\})$. 
\begin{prop}[Mass erasure]
\label{prop:MEOp1}
Let $(T, d, \rho)$ be a Polish $\mathbb{R}$-tree, let $\mu \in \mathscr{M}_{\rm{di}}(T)$, and let $h > 0$. Then $\mathscr{E}_h \mu \in \mathscr{M}_{\rm{di}}(T)$ with ${\rm{Skel}}(T, \mathscr{E}_h\mu) = {\rm{Skel}}(T, \mu)$, and it satisfies the $h$-mass erasure property
$$ \mathscr{E}_h\mu(\theta_{\sigma}T) = \left( \mu(\theta_{\sigma}T) - h \right)_+ \quad \text{for all } \sigma \in T. $$
\end{prop}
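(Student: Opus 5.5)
The plan is to reduce the statement to the finite-mass theory of \cite{DW26} via the cut-off method, using the identity $\mathscr{E}_h\mu$ restricted to a ball equals $\mathscr{E}_h({\rm{cut}}_{\bullet^r}^{h_0}\mu)$ restricted to that ball. Throughout, fix $h_0 \geq h$ and write $\bullet^r = \bullet^r(\mu)$.

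\emph{Step 1: local identity.} For $\tilde r < r$, compare \eqref{eq:MEOp} with \eqref{eq:MEOpFin} for $\nu_r := {\rm{cut}}_{\bullet^r}^{h_0}\mu$. By Lemma \ref{lemma:MEFix1}, $R_{\nu_r,h}(T) = R_{\mu,h}(T)\cap {\rm{Cut}}(T,\bullet^r)$, so inside $\overline{B}_T(\rho,\tilde r)$ the two subtrees coincide, and so do their degrees, leaves and branch points. (A point of $\overline{B}_T(\rho,\tilde r)$ is a leaf or branch point of one tree iff it is of the other, since the trees only differ above level $r$.) Lemma \ref{lemma:MEFix8}$(i)$ gives that the projection terms agree on $\overline{B}_T(\rho,\tilde r)$. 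For a leaf $\sigma$ with $d(\rho,\sigma)\le\tilde r$, we have $\sigma \in F_\mu(T)$, since the skeleton has no leaves and lies inside $R_{\mu,h}(T)$. Hence $\theta_\sigma T$ lies in ${\rm{Cut}}(T,\bullet^r)$ and avoids $\bullet^r$, so $\nu_r(\theta_\sigma T) = \mu(\theta_\sigma T)$. It follows that the leaf term $(\mu(\theta_\sigma T)-h)\delta_\sigma$ equals ${\rm{Pr}}\,\nu_r(\{\sigma\}) - h$, which is exactly what \eqref{eq:MEOpFin} assigns to $\sigma$. Conclusion: $(\mathscr{E}_h\mu)^{\restr\tilde r} = (\mathscr{E}_h\nu_r)^{\restr\tilde r}$. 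In particular $\mathscr{E}_h\mu$ is a well-defined non-negative boundedly finite measure, because the sums in \eqref{eq:MEOp} are locally finite by Lemma \ref{lemma:MEFix2}.

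\emph{Step 2: mass erasure property.} Fix $\sigma \in T$.
\begin{itemize}
\item If $\sigma \in F_\mu(T)$, choose $r > d(\rho,\sigma)$ so large that $\theta_\sigma T$ meets no point of $\bullet^r$; this is possible because $\theta_\sigma T$ has finite mass, so it contains no skeleton point. Then $\theta_\sigma T \subseteq {\rm{Cut}}(T,\bullet^r)$ and $\mathscr{E}_h\mu|_{\theta_\sigma T} = \mathscr{E}_h\nu_r|_{\theta_\sigma T}$. This restriction equality needs a slightly stronger version of Step 1, namely agreement on the subtree $\theta_\sigma T$ rather than on balls, which follows by the same argument since nothing in $\theta_\sigma T$ is cut. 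Proposition 3.18 of \cite{DW26} then gives $\mathscr{E}_h\nu_r(\theta_\sigma T) = (\nu_r(\theta_\sigma T)-h)_+ = (\mu(\theta_\sigma T)-h)_+$.
\item If $\sigma \in {\rm{Skel}}(T,\mu)$, we need $\mathscr{E}_h\mu(\theta_\sigma T)=\infty$. For $r > d(\rho,\sigma)$, the same argument gives
$$\mathscr{E}_h\mu\bigl(\theta_\sigma T \cap {\rm{Cut}}(T,\bullet^r)\bigr) \geq \mathscr{E}_h\nu_r(\theta_\sigma T) - h_0|\bullet^r\cap\theta_\sigma T| = \nu_r(\theta_\sigma T) - h - h_0|\bullet^r\cap\theta_\sigma T|,$$
where the equality uses the \cite{DW26} erasure property. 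Since $\nu_r(\theta_\sigma T) = \mu(\theta_\sigma T \cap {\rm{Cut}}) + h_0|\bullet^r\cap\theta_\sigma T|$, the bound equals $\mu(\theta_\sigma T\cap{\rm{Cut}}(T,\bullet^r)) - h$. The points of $\bullet^r$ carry no mass of $\mathscr{E}_h\mu$ that is unaccounted for, because $r$ can be taken $\mu$-nice and away from branch points. Letting $r\to\infty$, $\mu(\theta_\sigma T\cap{\rm{Cut}}(T,\bullet^r)) \uparrow \mu(\theta_\sigma T) = \infty$.
\end{itemize}

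\emph{Step 3: conclusion.} The erasure property immediately gives ${\rm{Skel}}(T,\mathscr{E}_h\mu) = {\rm{Skel}}(T,\mu)$, so the skeleton is discrete. Condition $(ii)$ of Definition \ref{def:DIMeas} holds because $\mathscr{E}_h\mu(\theta_\sigma T) \le \mu(\theta_\sigma T)$ for $\sigma\in F_\mu(T)$, and the mass of $\mathscr{E}_h\mu$ on $\partial B_T(\rho,r)$ itself is finite by bounded finiteness. Hence $\mathscr{E}_h\mu \in \mathscr{M}_{\rm{di}}(T)$.

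The main obstacle is the careful bookkeeping in Step 1: checking that leaves, branch points and degrees of $R_{\mu,h}(T)$ and $R_{\nu_r,h}(T)$ coincide below level $r$, and handling the $h_0$-atoms at $\bullet^r$, which become leaves of $R_{\nu_r,h}(T)$ with mass $h_0 - h \ge 0$, in the skeleton case of Step 2.
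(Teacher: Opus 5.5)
Your proof is correct, but it follows a different route from the paper's, which does not use cut-off measures for this proposition. The paper argues directly from \eqref{eq:MEOp}:
\begin{itemize}
\item for $\sigma\notin R_{\mu,h}(T)$ it reads off $\mathscr{E}_h\mu(\theta_\sigma T)=0$ from Proposition~\ref{prop:Proj2}$(ii)$;
\item for $\sigma\in R_{\mu,h}(T)\cap F_\mu(T)$ it verifies $(\mathscr{E}_h\mu)|_{\theta_\sigma T}=\mathscr{E}_h(\mu|_{\theta_\sigma T})$, the finite erasure on the re-rooted tree $(\theta_\sigma T,d,\sigma)$, and applies \cite{DW26} Proposition 3.18;
\item for $\sigma\in{\rm{Skel}}(T,\mu)$ it uses a dichotomy. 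Either $R_{\mu,h}(\theta_\sigma T)$ has infinitely many branch points, each carrying mass at least $h$. Or it has finitely many leaves, and then the first two terms of \eqref{eq:MEOp} already give $\mu(\theta_\sigma T)-h\,|{\rm{Lf}}(R_{\mu,h}(\theta_\sigma T))|=\infty$.
\end{itemize}

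Your route instead compares $\mathscr{E}_h\mu$ with $\mathscr{E}_h{\rm{cut}}^{h_0}_{\bullet^r}\mu$ on the whole tree. This avoids re-rooting at $\sigma$, and it yields as a by-product the identity the paper only records later in Lemma~\ref{lemma:MEOp5}. The price is twofold. You need agreement on all of ${\rm{Cut}}(T,\bullet^r)\setminus\bullet^r$, not just on balls. You also need an extra limit $r\to\infty$ in the skeleton case, where the paper's dichotomy takes two lines.

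The steps you defer to ``the same argument'' do hold:
\begin{itemize}
\item the components of $T\setminus R_{\mu,h}(T)$ with closure point in ${\rm{Cut}}(T,\bullet^r)\setminus\bullet^r$ lie in ${\rm{Cut}}(T,\bullet^r)\cap F_\mu(T)$, so projections, degrees and leaf masses agree there;
\item at each $\boldsymbol\sigma\in\bullet^r$ one has $\mathscr{E}_h{\rm{cut}}^{h_0}_{\bullet^r}\mu(\{\boldsymbol\sigma\})=\mu(\{\boldsymbol\sigma\})+h_0-h\le\mathscr{E}_h\mu(\{\boldsymbol\sigma\})+h_0$.
\end{itemize}
So your lower bound in the skeleton case holds without any niceness assumption on $r$.

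One small slip: by the paper's conventions $\rho\in{\rm{Skel}}(T,\mu)$ and $\rho\notin F_\mu(T)$, even when $\mu$ is finite. So for finite $\mu$ your skeleton case wrongly targets $\mathscr{E}_h\mu(T)=\infty$. That case is simply \cite{DW26}, since then $\bullet^r=\emptyset$ and ${\rm{cut}}^{h_0}_{\bullet^r}\mu=\mu$.
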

\begin{proof}
It is easily checked that $\mathscr{E}_h\mu \in \mathscr{M}_{\rm{bf}}(T)$ by noting that ${\rm{Pr}}_{R_{\mu,h}(T)}\mu \in \mathscr{M}_{\rm{bf}}(T)$ and $R_{\mu,h}(T)$ is of boundedly finite type. In particular, the measure is positive as $\mu(\theta_{\sigma}T) - h \geq 0$ for every $\sigma \in {\rm{Lf}}(R_{\mu,h}(T))$. The fact that ${\rm{Skel}}(T, \mathscr{E}_h\mu) = {\rm{Skel}}(T, \mu)$ and $\mathscr{E}_h\mu \in \mathscr{M}_{\rm{di}}(T)$ will follow immediately if we establish the $h$-mass erasure property.

For the mass erasure property, suppose first that $\sigma \in T \setminus R_{\mu,h}(T)$. Then $\mu(\theta_{\sigma}T) < h < +\infty$, and it is easily seen by Proposition \ref{prop:Proj2}$(ii)$ that $\mathscr{E}_h\mu(\theta_{\sigma}T) = 0 = (\mu(\theta_{\sigma}T) - h)_+$. 

Suppose instead that $\sigma \in R_{\mu,h}(T) \cap F_{\mu}(T)$. If $\sigma = \rho$, then $\mu$ is a finite measure on $T$, and we easily verify that $\mathscr{E}_h\mu$ corresponds to the $h$-mass erasure from \cite{DW26} Section 3.3, which satisfies the $h$-mass erasure property. So we can assume without loss of generality that $\sigma \neq \rho$. As $\mu|_{\theta_{\sigma}T}$ is a finite measure by assumption, $R_{\mu,h}(\theta_{\sigma}T)$ is of finite type, and \cite{DW26} Proposition 3.18 gives that $\mathscr{E}_h(\mu|_{\theta_{\sigma}T}) \in \mathscr{M}_{\rm{fin}}(T)$ with $\mathscr{E}_h(\mu|_{\theta_{\sigma}T})(\theta_{\sigma}T) = (\mu(\theta_{\sigma}T) - h)_+$. Now, since $\rho \notin \theta_{\sigma}T$, we have
\begin{align*}
(\mathscr{E}_h\mu)|_{\theta_{\sigma}T} &= {\rm{Pr}}_{R_{\mu,h}(T)}\mu( \cdot \cap \theta_{\sigma}T \cap (R_{\mu,h}(T) \setminus {\rm{Lf}}(R_{\mu,h}(T)))) + \sum_{\sigma' \in {\rm{Lf}}(R_{\mu,h}(T)) \cap \theta_{\sigma}T} (\mu(\theta_{\sigma'}T) - h) \delta_{\sigma'}\\
&\quad + h \sum_{\sigma' \in {\rm{Bp}}(R_{\mu,h}(T)) \cap \theta_{\sigma}T} (n(\sigma', R_{\mu,h}(T)) - 2) \delta_{\sigma'} \\
&= {\rm{Pr}}_{R_{\mu,h}(T)}\mu(\cdot \cap (R_{\mu,h}(\theta_{\sigma}T) \setminus {\rm{Lf}}(R_{\mu,h}(\theta_{\sigma}T))) + \sum_{\sigma' \in {\rm{Lf}}(R_{\mu,h}(\theta_{\sigma}T))} (\mu(\theta_{\sigma'}T) - h)\delta_{\sigma'}\\
&\quad + h (n(\sigma, R_{\mu,h}(\theta_{\sigma}T)) - 1)\delta_{\sigma} + h\sum_{\sigma' \in {\rm{Bp}}(R_{\mu,h}(\theta_{\sigma}T))} (n(\sigma', R_{\mu,h}(\theta_{\sigma}T)) - 2)\delta_{\sigma'}\\
&= \mathscr{E}_h(\mu|_{\theta_{\sigma}T}),
\end{align*}
and so in particular $\mathscr{E}_h\mu(\theta_{\sigma}T) = (\mathscr{E}_h\mu)|_{\theta_{\sigma}T}(\theta_{\sigma}T) = \mathscr{E}_h(\mu|_{\theta_{\sigma}T})(\theta_{\sigma}T) = (\mu(\theta_{\sigma}T) - h)_+$.

Finally, suppose that $\sigma \in {\rm{Skel}}(T, \mu)$. If $R_{\mu,h}(\theta_{\sigma}T)$ has infinitely many branch points, then $\mathscr{E}_h\mu(\theta_{\sigma}T) = +\infty$ by \eqref{eq:MEOp}. If on the contrary, $R_{\mu,h}(\theta_{\sigma}T)$ has at most finitely many branch points, then it must also have at most finitely many leaves, but then considering the first two terms of \eqref{eq:MEOp} gives by Proposition \ref{prop:Proj2}$(ii)$,
$$ \mathscr{E}_h\mu(\theta_{\sigma}T) \geq {\rm{Pr}}_{R_{\mu,h}(T)}\mu(\theta_{\sigma}T) - h |{\rm{Lf}}(R_{\mu,h}(\theta_{\sigma}T))| = \mu(\theta_{\sigma}T) - h |{\rm{Lf}}(R_{\mu,h}(\theta_{\sigma}T))| = +\infty.$$
We thus conclude that $\mathscr{E}_h\mu$ satisfies the $h$-mass erasure property, as desired.
\end{proof}
The fact that $\mathscr{E}_h\mu$ satisfies the $h$-mass erasure property ensures that whenever $\mu$ is a finite measure, the notion is consistent with that from \cite{DW26} Proposition 3.18. The proof of Proposition \ref{prop:MEOp2} is as in \cite{DW26} Proposition 3.19.
\begin{prop}[Properties of mass erasure]
\label{prop:MEOp2}
Let $(T, d, \rho)$ be a Polish $\mathbb{R}$-tree, let $\mu \in \mathscr{M}_{\rm{di}}(T)$, and let $h > 0$. Suppose that $T' \subseteq T$ is a closed rooted subtree. Then
\begin{enumerate}[$(i)$]
\item if $\mu(T \setminus T') < h$, then $\mathscr{E}_h\mu(\theta_{\sigma}T) = \mathscr{E}_h \text{\emph{Pr}}_{T'}\mu(\theta_{\sigma}T)$ for every $\sigma \in T$.
\item for every connected component $C$ of $T \setminus T'$, we have that $\mathscr{E}_h\mu(C) = (\mu(C) - h)_+$.
\item $\mathscr{E}_h\mu(T \setminus T') \leq \mu(T \setminus T')$.
\end{enumerate}
\end{prop}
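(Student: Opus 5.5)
The plan is to reduce all three statements to the mass erasure property of Proposition \ref{prop:MEOp1} together with the structure of connected components from Proposition \ref{prop:Proj1}. This is essentially what \cite{DW26} Proposition 3.19 does. The only new issue is that masses may now be infinite, and I will handle this by working with the skeleton and with monotone limits.

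For $(ii)$, fix a component $C$ of $T \setminus T'$ and let $\sigma_C \in T'$ be its closure point (Proposition \ref{prop:Proj1}$(i)$). Choose points $\sigma_n \in C$ on a segment issuing from $\sigma_C$ into $C$, with $d(\sigma_n, \sigma_C) \to 0$ and $\sigma_{n+1} \in [\rho, \sigma_n]$. By Proposition \ref{prop:Proj1}$(ii)$ the sets $\theta_{\sigma_n}T$ increase to $C$. Continuity from below for both $\mu$ and $\mathscr{E}_h\mu$ then gives
\[
\mathscr{E}_h\mu(C)=\lim_n \mathscr{E}_h\mu(\theta_{\sigma_n}T)=\lim_n(\mu(\theta_{\sigma_n}T)-h)_+=(\mu(C)-h)_+ .
\]
The middle equality is Proposition \ref{prop:MEOp1}, and the last uses continuity of $x\mapsto(x-h)_+$ on $[0,\infty]$, with the convention $(\infty-h)_+=\infty$.

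For $(iii)$, write $T\setminus T'$ as the countable disjoint union of its components $(C_i)_{i\in I}$. Summing $(ii)$ over $i$ gives $\mathscr{E}_h\mu(T\setminus T')=\sum_i(\mu(C_i)-h)_+\le\sum_i\mu(C_i)=\mu(T\setminus T')$. This holds in $[0,\infty]$ with no finiteness assumption.

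For $(i)$, assume $\mu(T\setminus T')<h$. Then every $\sigma\in T\setminus T'$ satisfies $\mu(\theta_\sigma T)<h<\infty$, so by Proposition \ref{prop:Proj4} we have ${\rm Pr}_{T'}\mu\in\mathscr{M}_{\rm di}(T)$, and $\mathscr{E}_h{\rm Pr}_{T'}\mu$ is well defined. Apply Proposition \ref{prop:MEOp1} to both $\mu$ and ${\rm Pr}_{T'}\mu$. It then suffices to show $(\mu(\theta_\sigma T)-h)_+=({\rm Pr}_{T'}\mu(\theta_\sigma T)-h)_+$ for every $\sigma\in T$, which splits into two cases.
\begin{itemize}
\item If $\sigma\in T'$, then Proposition \ref{prop:Proj2}$(ii)$ gives ${\rm Pr}_{T'}\mu(\theta_\sigma T)=\mu(\theta_\sigma T)$, so the two sides agree.
\item If $\sigma\notin T'$, then ${\rm Pr}_{T'}\mu(\theta_\sigma T)=0$, while $\theta_\sigma T\subseteq T\setminus T'$ gives $\mu(\theta_\sigma T)<h$. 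Both sides therefore vanish.
\end{itemize}

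The main obstacle is the bookkeeping in the infinite-mass case. In $(ii)$ this means checking that the monotone limit argument still works when $C$ meets the skeleton, so that both sides are infinite. Continuity from below applies to $\mathscr{E}_h\mu$ regardless of finiteness, so this causes no difficulty. In $(i)$ it is the discrete-infiniteness of ${\rm Pr}_{T'}\mu$, which Proposition \ref{prop:Proj4} supplies.
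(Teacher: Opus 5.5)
Your proof is correct and follows the route the paper intends. The paper only says the argument is as in \cite{DW26} Proposition 3.19; your version combines the mass erasure property of Proposition \ref{prop:MEOp1} with monotone exhaustion of each component by the subtrees $\theta_{\sigma_n}T$ (the same device as in Lemma \ref{lemma:Proj3}), and uses Proposition \ref{prop:Proj4} in $(i)$ to guarantee ${\rm Pr}_{T'}\mu \in \mathscr{M}_{\rm di}(T)$. This is exactly the adaptation the infinite-mass setting requires.
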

We next prove the semigroup property from Proposition \ref{prop:MEOp3}.
\begin{proof}[Proof of Proposition \ref{prop:MEOp3}]
Suppose that $h,h' > 0$ (otherwise the proof is trivial). Then using the mass erasure property from Proposition \ref{prop:MEOp1} we easily check that $R_{\mathscr{E}_{h'}\mu, h}(T) = R_{\mu, h+h'}(T)$. Applying this to \eqref{eq:MEOp} thus gives
\begin{align*}
\mathscr{E}_h[\mathscr{E}_{h'}\mu] 
&= {\rm{Pr}}_{R_{\mu, h+h'}(T)}[\mathscr{E}_{h'}\mu](\cdot \cap (R_{\mu, h+h'}(T) \setminus {\rm{Lf}}(R_{\mu, h+h'}(T)))) \\
&\quad + \sum_{\sigma \in {\rm{Lf}}(R_{\mu,h+h'}(T))} (\mu(\theta_{\sigma}T) - (h+h')) \delta_{\sigma}\\
&\quad + h(n(\rho, R_{\mu,h+h'}(T)) - 1) \delta_{\rho} + h \sum_{\sigma \in {\rm{Bp}}(R_{\mu,h+h'}(T))} (n(\sigma, R_{\mu,h+h'}(T)) - 2)\delta_{\sigma}.
\end{align*}
For the purposes of this proof, set $R_{\mu,h+h'}^{\circ}(T) := R_{\mu,h+h'}(T) \setminus {\rm{Lf}}(R_{\mu,h+h'}(T))$. What remains in order to conclude the desired is then to show that
\begin{align}
\label{eq:MEOp3a}
\begin{split}
{\rm{Pr}}_{R_{\mu,h+h'}(T)}[\mathscr{E}_{h'}\mu]( \cdot \cap R_{\mu, h+h'}^{\circ}(T) ) &= {\rm{Pr}}_{R_{\mu,h+h'}(T)}\mu( \cdot \cap R_{\mu, h+h'}^{\circ}(T))\\ 
&\quad + h'(n(\rho, R_{\mu,h+h'}(T)) - 1)\delta_{\rho}\\ 
&\quad + h' \sum_{\sigma \in {\rm{Bp}}(R_{\mu,h+h'}(T))} (n(\sigma, R_{\mu,h+h'}(T)) - 2)\delta_{\sigma}.
\end{split}
\end{align}
Let $(C_i)_{i \in I^{\circ}}$ denote the countable collection of open connected components in $T \setminus R_{\mu,h+h'}(T)$ with closure points $(\sigma_i)_{i \in I^{\circ}}$ in $R_{\mu,h+h'}^{\circ}(T)$. Our strategy will then be to argue that i) if $\sigma^* \in \{\sigma_i ~|~ i \in I^{\circ}\}$, then ${\rm{Pr}}_{R_{\mu,h+h'}(T)}[\mathscr{E}_{h'}\mu](\{\sigma^*\})$ has the correct form, and ii) if $A \subseteq R_{\mu,h+h'}^{\circ}(T)$ is a Borel subset with $A \cap \{\sigma_i ~|~ i \in I^{\circ}\} = \emptyset$, then ${\rm{Pr}}_{R_{\mu,h+h'}(T)}[\mathscr{E}_{h'}\mu](A)$ has the correct form. We can then use $\sigma$-additivity to get an explicit expression for ${\rm{Pr}}_{R_{\mu,h+h'}(T)}[\mathscr{E}_{h'}\mu](A \cap R_{\mu,h+h'}^{\circ}(T))$ for every Borel set $A \in \mathscr{B}(T)$, and so we conclude the desired.

To see i), take some $\sigma^* \in \{\sigma_i ~|~ i \in I^{\circ}\}$. Denote by $I^* \subseteq I^{\circ}$ the subset of indices such that $\overline{C}_i = C_i \cup \{\sigma^*\}$ for every $i \in I^*$. Denote the subsubset of indices for which the associated component intersects $R_{\mu,h'}(T)$ by $ I_{h'}^* := \{i \in I^* ~|~ C_i \cap R_{\mu,h'}(T) \neq \emptyset \}$. Now, for every $i \in I^* \setminus I_{h'}^*$ we have that $\mu(\theta_{\sigma}T) < h'$ for all $\sigma \in C_i$, and so in particular it follows that $\mu(C_i) \leq h'$. Whenever $i \in I_{h'}^*$ there must be some $\sigma \in C_i$ with $\mu(\theta_{\sigma}T) \geq h'$, and so $\mu(C_i) \geq h'$. We also note by construction that $ n(\sigma^*, R_{\mu,h'}(T)) = n(\sigma^*, R_{\mu,h+h'}(T)) + |I_{h'}^*|$, and as $R_{\mu,h'}(T)$ is of boundedly finite type we naturally have $n(\sigma^*, R_{\mu,h'}(T)) < \infty$, which implies in particular that $|I_{h'}^*| < \infty$. Now observe by Proposition \ref{prop:Proj2}$(i)$ and Proposition \ref{prop:MEOp2}$(ii)$ that
\begin{align}
\label{eq:MEOp3b}
\begin{split}
{\rm{Pr}}_{R_{\mu,h+h'}(T)}[\mathscr{E}_{h'}\mu]( \{\sigma^*\}) &= \mathscr{E}_{h'}\mu(\{\sigma^*\}) + \sum_{i \in I^*} \mathscr{E}_{h'}\mu(C_i)= \mathscr{E}_{h'}\mu(\{\sigma^*\}) + \sum_{i \in I_{h'}^*}  (\mu(C_i) - h').
\end{split}
\end{align}
To calculate the first term, observe that if $\sigma^* \notin {\rm{Lf}}(R_{\mu,h+h'}(T))$, then it automatically follows that also $\sigma^* \notin {\rm{Lf}}(R_{\mu,h'}(T))$. So in particular $n(\sigma^*, R_{\mu,h'}(T)) \geq 2$ whenever $\sigma^* \neq \rho$. So \eqref{eq:MEOp} gives that $\mathscr{E}_{h'}\mu(\{\sigma^*\}) = {\rm{Pr}}_{R_{\mu,h'}(T)}\mu(\{\sigma^*\}) + h'(n(\sigma^*, R_{\mu,h'}(T)) - 1 - \mathbbm{1}_{(\sigma^* \neq \rho)})$. We now see by Proposition \ref{prop:Proj2}$(i,iii)$ and Lemma \ref{lemma:Proj3}, that
\begin{align*}
{\rm{Pr}}_{R_{\mu,h+h'}(T)}\mu(\{\sigma^*\}) &= {\rm{Pr}}_{R_{\mu,h+h'}(T)} \left[ {\rm{Pr}}_{R_{\mu,h'}(T)} \mu\right](\{\sigma^*\}) \\
&= {\rm{Pr}}_{R_{\mu,h'}(T)}\mu(\{\sigma^*\}) + \sum_{i \in I^*} {\rm{Pr}}_{R_{\mu,h'}(T)}\mu(C_i) \\
&={\rm{Pr}}_{R_{\mu,h'}(T)} \mu(\{\sigma^*\}) + \sum_{i \in I_{h'}^*} \mu(C_i),
\end{align*}
using in the last step that $R_{\mu,h'}(T) \cap C_i \neq \emptyset$ if and only if $i \in I_{h'}^*$. Rearranging and plugging into \eqref{eq:MEOp3b} thus yields that
\begin{align*}
{\rm{Pr}}_{R_{\mu,h+h'}(T)}[\mathscr{E}_{h'}\mu](\{\sigma^*\}) &= {\rm{Pr}}_{R_{\mu,h+h'}(T)}\mu(\{\sigma^*\}) + h'\left( n(\sigma^*, R_{\mu,h'}(T)) - |I_{h'}^*| - 1 - \mathbbm{1}_{(\sigma^* \neq \rho)} \right) \\
&= {\rm{Pr}}_{R_{\mu,h+h'}(T)}\mu(\{\sigma^*\}) + h'\left( n(\sigma^*, R_{\mu,h+h'}(T)) - 1 - \mathbbm{1}_{(\sigma^* \neq \rho)} \right),
\end{align*}
which is precisely the desired form from \eqref{eq:MEOp3a}.

To see ii), take some $A \subseteq R_{\mu,h+h'}^{\circ}(T)$ with $A \cap \{\sigma_i ~|~ i \in I^{\circ}\} = \emptyset$. Then by Proposition \ref{prop:Proj2}$(i)$ and \eqref{eq:MEOp} we have, since no points are projected into $A$ under the map $f_{R_{\mu,h+h'}(T)}$, that
\begin{align}
\label{eq:MEOp3c}
\begin{split}
{\rm{Pr}}_{R_{\mu,h+h'}(T)}[\mathscr{E}_{h'}\mu](A) = \mathscr{E}_{h'}\mu(A) &= {\rm{Pr}}_{R_{\mu,h'}(T)}\mu(A) + h'(n(\rho, R_{\mu,h'}(T)) - 1) \mathbbm{1}_{(\rho \in A)}\\
&\quad + h' \sum_{\sigma \in {\rm{Bp}}(R_{\mu,h'}(T)) \cap A} (n(\sigma, R_{\mu,h'}(T)) - 2)
\end{split}
\end{align}
using that ${\rm{Lf}}(R_{\mu,h'}(T)) \cap R_{\mu,h+h'}^{\circ}(T) = \emptyset$. Observe moreover, again as no points are projected into $A$ under $f_{R_{\mu,h+h'}(T)}$ and using Proposition \ref{prop:Proj2}$(i,iii)$, that ${\rm{Pr}}_{R_{\mu,h+h'}(T)}\mu(A) = {\rm{Pr}}_{R_{\mu,h+h'}(T)}\left[ {\rm{Pr}}_{R_{\mu,h'}(T)} \mu \right] (A) = {\rm{Pr}}_{R_{\mu,h'}(T)}\mu(A)$. As $A \cap \{\sigma_i ~|~ i \in I^{\circ}\} = \emptyset$ we must also have for all $\sigma \in A$ that $n(\sigma, R_{\mu,h+h'}(T)) = n(\sigma, T)$. As $R_{\mu,h+h'}(T) \subseteq R_{\mu,h'}(T) \subseteq T$, we thus get that $ n(\sigma, R_{\mu,h+h'}(T)) = n(\sigma, R_{\mu,h'}(T))$ for all $\sigma \in A$, and so we moreover conclude that ${\rm{Bp}}(R_{\mu,h+h'}(T)) \cap A = {\rm{Bp}}(R_{\mu,h'}(T)) \cap A$. Inserting into \eqref{eq:MEOp3c} thus finally yields
\begin{align*}
{\rm{Pr}}_{R_{\mu,h+h'}(T)}[\mathscr{E}_{h'}\mu](A) &= {\rm{Pr}}_{R_{\mu,h+h'}(T)}\mu(A)  + h'(n(\rho, R_{\mu,h+h'}(T)) - 1)\mathbbm{1}_{(\rho \in A)}\\ 
&\quad + h' \sum_{\sigma \in {\rm{Bp}}(R_{\mu,h+h'}(T)) \cap A} (n(\sigma, R_{\mu,h+h'}(T)) - 2),
\end{align*}
which is the desired form from \eqref{eq:MEOp3a}.
\end{proof}
Our final goal of this section will be to prove a convergence result for mass erasures. We start by noting that when restricted to its cut-off tree at height $r$, the mass erasure has a form reminiscent to that of \cite{DW26} Proposition 3.18.
\begin{lemma}
\label{lemma:MEOp4}
Let $(T, d, \rho)$ be a Polish $\mathbb{R}$-tree, let $\mu \in \mathscr{M}_{\rm{di}}(T)$ and let $h > 0$. Fix some $r > 0$, and let $\bullet^r = \bullet^r(\mu)$. Then for any arbitrary choice of $\varepsilon > 0$, letting $\bullet^{r+\varepsilon} = \bullet^{r+\varepsilon}(\mu)$, we have
\begin{align*}
\mathscr{E}_h\mu|_{{\rm{Cut}}(T, \bullet^r)} &= \left({\rm{Pr}}_{R_{\mu,h}(T)} \mu\right)|_{{\rm{Cut}}(T, \bullet^r)} + h \varpi_{R_{\mu,h}(T) \cap {\rm{Cut}}(T, \bullet^{r+\varepsilon})} |_{{\rm{Cut}}(T, \bullet^r)} \\
&= \left({\rm{Pr}}_{R_{\mu,h+}(T)} \mu\right)|_{{\rm{Cut}}(T, \bullet^r)} + h \varpi_{R_{\mu,h+}(T) \cap {\rm{Cut}}(T, \bullet^{r+\varepsilon})} |_{{\rm{Cut}}(T, \bullet^r)}.
\end{align*}
\end{lemma}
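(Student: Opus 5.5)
The plan is to verify the first identity by comparing the explicit formula \eqref{eq:MEOp} term by term with the measure on the right-hand side, after restricting both to ${\rm{Cut}}(T, \bullet^r)$. The second identity will then be deduced from the first using Proposition \ref{prop:MEFix3}$(iv)$.

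\emph{Setup.} Write $T'_\varepsilon := R_{\mu,h}(T) \cap {\rm{Cut}}(T, \bullet^{r+\varepsilon})$. By Lemma \ref{lemma:MEFix1} this is a subtree of finite type, so $\varpi_{T'_\varepsilon}$ is well defined as a finite signed measure.

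\emph{First identity.} For every $\sigma \in {\rm{Cut}}(T, \bullet^r)$ I will show that $n(\sigma, T'_\varepsilon) = n(\sigma, R_{\mu,h}(T))$. Points of ${\rm{Cut}}(T, \bullet^r)$ have height at most $r$, or lie in fringe subtrees above points of $\partial B_T(\rho,r)\cap F_\mu(T)$. Cutting at level $r+\varepsilon$ only removes the subtrees above $\bullet^{r+\varepsilon}$. Every point of $\bullet^{r+\varepsilon}$ lies on the skeleton strictly above some point of $\bullet^r$, so all of them lie outside ${\rm{Cut}}(T,\bullet^r)$, at distance at least $\varepsilon$ from any point of ${\rm{Cut}}(T,\bullet^r)$ of height at most $r$. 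Hence the local branching structure of $R_{\mu,h}(T)$ at such points is unchanged.

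From this degree equality, the leaves, branch points and root-degree of $T'_\varepsilon$ that lie in ${\rm{Cut}}(T,\bullet^r)$ coincide with those of $R_{\mu,h}(T)$. The only exception would be new leaves at $\bullet^{r+\varepsilon}$, but these lie outside ${\rm{Cut}}(T,\bullet^r)$. Restricting \eqref{eq:MEOp} to ${\rm{Cut}}(T,\bullet^r)$, the leaf term becomes
$$\sum_{\sigma} (\mu(\theta_\sigma T)-h)\delta_\sigma = \sum_\sigma {\rm{Pr}}_{R_{\mu,h}(T)}\mu(\{\sigma\})\,\delta_\sigma - h\sum_\sigma \delta_\sigma .$$
Here I use that at a leaf $\sigma$ of $R_{\mu,h}(T)$, Proposition \ref{prop:Proj2}$(ii)$ gives ${\rm{Pr}}_{R_{\mu,h}(T)}\mu(\{\sigma\}) = {\rm{Pr}}\mu(\theta_\sigma T)=\mu(\theta_\sigma T)$, since $\theta_\sigma T\cap R_{\mu,h}(T)=\{\sigma\}$. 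Adding the first term of \eqref{eq:MEOp} (the projection restricted to non-leaves) reconstitutes $({\rm{Pr}}_{R_{\mu,h}(T)}\mu)|_{{\rm{Cut}}(T,\bullet^r)}$. The remaining terms, namely the $-h\delta$ at the leaves, the root term and the branch-point terms, together give exactly $h\varpi_{T'_\varepsilon}|_{{\rm{Cut}}(T,\bullet^r)}$. One should check that leaves of $R_{\mu,h}(T)$ inside ${\rm{Cut}}(T,\bullet^r)$ are finite in number, which holds since the restriction is of finite type.

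\emph{Second identity.} I will show that ${\rm{Pr}}_{R_{\mu,h}}\mu + h\varpi_{R_{\mu,h}\cap{\rm{Cut}}}$ equals the same expression with $R_{\mu,h+}$ in place of $R_{\mu,h}$. By Proposition \ref{prop:MEFix3}$(iv)$, $R_{\mu,h}(T)\setminus R_{\mu,h+}(T)$ is a union of segments $]f(\sigma),\sigma]$ hanging off at leaves $\sigma$, and along these segments $\mu(\theta_{\cdot}T)=h$. Within ${\rm{Cut}}(T,\bullet^{r+\varepsilon})$ there are finitely many such segments, and no mass of $\mu$ lies on their interiors or beyond them except at $\sigma$'s subtree, which carries total mass $h$.

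Projecting onto $R_{\mu,h+}$ therefore moves mass $h$ per removed segment to its base point $f(\sigma)$. Meanwhile $\varpi$ loses the $-\delta_\sigma$ at the removed leaf and reduces by one the degree at $f(\sigma)$. If $f(\sigma)$ becomes a leaf, the term changes from branch or degree-two weight to $-1$; in every case the change is $-h\delta_{f(\sigma)}$. Summing, the two changes cancel exactly, in the spirit of \cite{DW26} Proposition 3.18, where the same identity holds for finite measures. Alternatively, I can apply that finite result to ${\rm{cut}}_{\bullet^{r+\varepsilon}}\mu$ via Lemma \ref{lemma:MEFix1} and Lemma \ref{lemma:MEFix8}-type localisation.

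\emph{Main obstacle.} The delicate step is the degree bookkeeping near the boundary, specifically whether points of ${\rm{Cut}}(T,\bullet^r)$ can have their degree altered by the cut at $r+\varepsilon$. This is why $\varepsilon>0$ is needed, rather than cutting at level $r$ itself, where the points of $\bullet^r$ would wrongly acquire leaf weight $-h$.
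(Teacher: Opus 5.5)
Your proposal is correct and follows essentially the same route as the paper. The first identity comes from a term-by-term comparison of \eqref{eq:MEOp} with $\varpi$ on the finite-type tree $R_{\mu,h}(T)\cap{\rm{Cut}}(T,\bullet^{r+\varepsilon})$ (Lemma \ref{lemma:MEFix1}), with your explicit degree bookkeeping at points of ${\rm{Cut}}(T,\bullet^r)$ supplying what the paper calls a straightforward calculation. The second comes from Proposition \ref{prop:MEFix3}$(iv)$: the mass $h$ at each removed leaf, or projected onto its base point, is cancelled by the $-h$ leaf weight or by the drop of one in the degree there.
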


\begin{proof}
The first line follows by a straight-forward calculation, using the fact that $R_{\mu,h}(T) \cap {\rm{Cut}}(T, \bullet^r)$ is a subtree of finite type for all $r > 0$ by Lemma \ref{lemma:MEFix1}, and applying the definition of the signed measure. The second line is a direct consequence of Proposition \ref{prop:MEFix3}$(iv)$. Indeed, it is easily checked that $\mu(R_{\mu,h}(T) \setminus R_{\mu,h+}(T)) = 0$, and that every $\sigma \in {\rm{Lf}}(R_{\mu,h}(T)) \setminus R_{\mu,h+}(T)$ must precisely satisfy ${\rm{Pr}}_{R_{\mu,h}(T)}\mu(\{\sigma\}) = h$. The signed measure then subtracts this $h$-mass from $\sigma$, leaving zero mass on the interval $]f_{R_{\mu,h+}(T)}(\sigma), \sigma]$ in the first line. For the second line, a mass of precisely $h$ is projected into the point $f_{R_{\mu,h+}(T)}(\sigma)$ and then subtracted again by the signed measure. So the two lines coincide.
\end{proof}
\begin{lemma}
\label{lemma:MEOp5}
Let $(T, d, \rho)$ be a Polish $\mathbb{R}$-tree, and suppose that $h_n \rightarrow h$ in $(0, \infty)$ and $\mu_n \stackrel{\rm{vg}}{\rightarrow} \mu$ in $\mathscr{M}_{\rm{di}}(T)$. Let $h_0 \geq \sup_{n \in \mathbb{N}} h_n$, fix some $\mu$-nice $r > 0$, and set $\bullet^r = \bullet^r(\mu)$. Then for all $\tilde{r} \in (0,r)$ it holds that $\left( \mathscr{E}_h\mu \right)^{\restr \tilde{r}} = \left( \mathscr{E}_h {\rm{cut}}_{\bullet^r}^{h_0}\mu \right)^{\restr \tilde{r}}$ and $\left( \mathscr{E}_{h_n}\mu_n \right)^{\restr \tilde{r}} = \left( \mathscr{E}_{h_n}{\rm{cut}}_{\bullet^r}^{h_0}\mu_n \right)^{\restr \tilde{r}}$ eventually.
\end{lemma}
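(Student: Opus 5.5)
The plan is to mirror the proof of Lemma \ref{lemma:MEFix8}, using the explicit formula \eqref{eq:MEOp} and comparing term by term on $\overline{B}_T(\rho, \tilde{r})$. I will do the $\mu_n$ case in detail; the $\mu$ case is the same argument without the word ``eventually''.

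First, by Lemma \ref{lemma:MEFix6} and \eqref{eq:MEFix7}, choose $N$ such that for all $n \geq N$ we have ${\rm{Skel}}(T,\mu)\cap \overline{B}_T(\rho,r) \subseteq R_{\mu_n,h_n}(T)$ and $R_{{\rm{cut}}_{\bullet^r}\mu_n,h_n}(T) = R_{\mu_n,h_n}(T)\cap {\rm{Cut}}(T,\bullet^r)$. Since ${\rm{Cut}}(T,\bullet^r)\supseteq \overline{B}_T(\rho,r)$, the two erased subtrees then coincide on $\overline{B}_T(\rho,r)$.

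Next I compare the four ingredients of \eqref{eq:MEOp} restricted to $\overline{B}_T(\rho,\tilde r)$.

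\emph{Projection term.} Lemma \ref{lemma:MEFix8}$(ii)$ gives equality of $({\rm{Pr}}\,\cdot)^{\restr\tilde r}$ directly.

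\emph{Degrees.} For a point $\sigma$ with $d(\rho,\sigma)\le \tilde r<r$, the degree $n(\sigma,\cdot)$ is a local quantity: it counts the directions at $\sigma$ in which the subtree continues. Since the two subtrees agree on the open ball $B_T(\rho,r)$, which contains a neighbourhood of $\sigma$, the degrees coincide. Hence ${\rm{Bp}}$, ${\rm{Lf}}$ and the root degree agree inside $\overline{B}_T(\rho,\tilde r)$, and the branch-point and root terms match.

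\emph{Leaf term.} For $\sigma\in {\rm{Lf}}(R_{\mu_n,h_n}(T))\cap \overline{B}_T(\rho,\tilde r)$, we need $\mu_n(\theta_\sigma T)={\rm{cut}}_{\bullet^r}\mu_n(\theta_\sigma T)$. Such a leaf cannot lie on ${\rm{Skel}}(T,\mu)$: the skeleton has no leaves and extends beyond radius $r$, and it is contained in $R_{\mu_n,h_n}(T)$ within $\overline{B}_T(\rho,r)$, so a skeleton point at height at most $\tilde r$ has the skeleton continuing above it inside the erased tree. Therefore $\sigma\in F_\mu(T)$, so $\theta_\sigma T\subseteq {\rm{Cut}}(T,\bullet^r)$ avoids $\bullet^r$, and the two masses agree.

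The projection term also needs care where it interacts with leaves. Formula \eqref{eq:MEOp} uses ${\rm{Pr}}(\cdot\cap(R\setminus{\rm{Lf}}(R)))$, and since the leaf sets agree in the ball, the equality from Lemma \ref{lemma:MEFix8} passes to this restricted version.

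The main obstacle I expect is the locality-of-degree argument when $\sigma$ is near the boundary. For a branch point, one must ensure that no branch is visible in one tree but not the other. This holds because any branch leaving $\sigma$ immediately enters $B_T(\rho,r)$, where the two subtrees agree, and because ${\rm{Cut}}(T,\bullet^r)$ removes only material above radius $r$; this is exactly why the radius $\tilde r<r$ is needed. For the limit measure $\mu$, Lemma \ref{lemma:MEFix1} replaces the ``eventually'' step, and the same three comparisons apply with Lemma \ref{lemma:MEFix8}$(i)$.
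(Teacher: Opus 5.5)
Your proposal is correct and is essentially the paper's argument. The paper also takes the projection part from Lemma~\ref{lemma:MEFix8}. It handles the combinatorial correction terms via Lemma~\ref{lemma:MEOp4}, which rewrites them as $h\varpi_{R_{\mu,h}(T)\cap{\rm{Cut}}(T,\bullet^{r+\varepsilon})}$ restricted to the cut-off tree, so their agreement on $\overline{B}_T(\rho,\tilde r)$ follows from the two erased subtrees coinciding there. Your term-by-term comparison in \eqref{eq:MEOp}, with the locality-of-degree and leaf arguments, just unpacks that same step.
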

\begin{proof}
This easily follows by using Lemma \ref{lemma:MEOp4} in combination with Lemma \ref{lemma:MEFix8}.
\end{proof}
Using Lemma \ref{lemma:MEOp5}, we may prove the approximation property from Lemma \ref{lemma:MEOp5b}.
\begin{proof}[Proof of Lemma \ref{lemma:MEOp5b}]
Fix some $\varepsilon > 0$ and $r > \varepsilon$, and set $\bullet^r = \bullet^r(\mu)$. As ${\rm{cut}}_{\bullet^r} \mu \in \mathscr{M}_{\rm{fin}}(T)$ it is known from \cite{DW26} Proposition 3.20 that $\mathscr{E}_{h_p} {\rm{cut}}_{\bullet^r}\mu \stackrel{\rm{wk}}{\rightarrow} {\rm{cut}}_{\bullet^r}\mu$ as $p \rightarrow \infty$. Now, restricting to some $\mu$-nice  $\tilde{r} \in (r-\varepsilon, r)$, we get from Lemma \ref{lemma:MEOp5} that $\left( \mathscr{E}_{h_p}\mu \right)^{\restr \tilde{r}} = \left( \mathscr{E}_{h_p}{\rm{cut}}_{\bullet^r}\mu \right)^{\restr \tilde{r}} \stackrel{\rm{wk}}{\rightarrow} \left( {\rm{cut}}_{\bullet^r}\mu \right)^{\restr \tilde{r}} = \mu^{\restr \tilde{r}}$ as $n \rightarrow \infty$. As the argument can be repeated for $(r, \tilde{r})$ arbitrarily large, we get the desired.
\end{proof}
\begin{prop}
\label{prop:MEOp6}
Let $(T, d, \rho)$ be a Polish $\mathbb{R}$-tree, and suppose that $h_n \rightarrow h$ in $(0, \infty)$ and $\mu_n \stackrel{\rm{vg}}{\rightarrow} \mu$ in $\mathscr{M}_{\rm{di}}(T)$. Let $r_k \rightarrow \infty$ be an increasing sequence of $\mu$-nice radii, and set $\bullet^{r_k} = \bullet^{r_k}(\mu)$. If $(\mu_n)_{n \in \mathbb{N}}$ is b-tight on ${\rm{Cut}}(T, \bullet^{r_k})$ for all $k$, then
\begin{enumerate}[$(i)$]
\item $\mathscr{E}_{h_n}\mu_n \stackrel{\rm{vg}}{\rightarrow} \mathscr{E}_h\mu$ as $n \rightarrow \infty$, and
\item $(\mathscr{E}_{h_n}\mu_n)_{n \in \mathbb{N}}$ is b-tight on ${\rm{Cut}}(T, \bullet^{r_k})$ for all $k$.
\end{enumerate}
\end{prop}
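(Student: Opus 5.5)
The plan is to work on one cut-off tree at a time and reduce everything to the finite-mass theory of \cite{DW26}, mirroring the proofs of Proposition \ref{prop:MEFix9} and Lemma \ref{lemma:MEOp5b}. Fix $h_0 \geq \sup_n h_n$, fix $r = r_k$ and write $\bullet^r = \bullet^r(\mu)$. Since $r$ is $\mu$-nice and $(\mu_n)$ is b-tight on ${\rm{Cut}}(T, \bullet^r)$, Theorem \ref{thm:MEFix7}$(i)$ gives that ${\rm{cut}}_{\bullet^r}^{h_0}\mu_n$ is eventually finite and ${\rm{cut}}_{\bullet^r}^{h_0}\mu_n \stackrel{\rm{wk}}{\rightarrow} {\rm{cut}}_{\bullet^r}^{h_0}\mu$ in $\mathscr{M}_{\rm{fin}}(T)$. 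The Gromov--Prokhorov continuity of $(h,\mu) \mapsto \mathscr{E}_h\mu$ from \cite{DW26} is sequential in $h$ as well, and its fixed-tree version (\cite{DW26} Proposition 3.20/3.21) yields
\[ \mathscr{E}_{h_n}{\rm{cut}}_{\bullet^r}^{h_0}\mu_n \stackrel{\rm{wk}}{\rightarrow} \mathscr{E}_h{\rm{cut}}_{\bullet^r}^{h_0}\mu . \]
No $d_H$-continuity assumption at $h$ is needed here, since the measure-level continuity in \cite{DW26} holds for all $h$. This is the reason $(iii)$ of Proposition \ref{prop:MEFix9} is not used directly.

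\textbf{Step for $(i)$.} Given $\varepsilon > 0$, choose $\tilde r \in (r-\varepsilon, r)$ that is nice for $\mathscr{E}_h{\rm{cut}}_{\bullet^r}^{h_0}\mu$; this is possible for all but countably many $\tilde r$. By Lemma \ref{lemma:MEOp5}, $(\mathscr{E}_{h_n}\mu_n)^{\restr\tilde r} = (\mathscr{E}_{h_n}{\rm{cut}}_{\bullet^r}^{h_0}\mu_n)^{\restr \tilde r}$ eventually, and $(\mathscr{E}_h\mu)^{\restr\tilde r} = (\mathscr{E}_h{\rm{cut}}_{\bullet^r}^{h_0}\mu)^{\restr\tilde r}$. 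The Portmanteau theorem then gives $(\mathscr{E}_{h_n}\mu_n)^{\restr\tilde r}\stackrel{\rm{wk}}{\rightarrow}(\mathscr{E}_h\mu)^{\restr\tilde r}$. Doing this for each $k$ produces radii $\tilde r_k \to \infty$, and Proposition \ref{prop:BFMeas1}$(c)$ gives $(i)$.

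\textbf{Step for $(ii)$.} For $R > r$, the set $C_R := {\rm{Cut}}(T,\bullet^r)\setminus \overline{B}_T(\rho,R)$ is a union of fringe subtrees $\theta_\sigma T^{\circ}$ with $\sigma \in \partial B_T(\rho,R)\cap{\rm{Cut}}(T,\bullet^r)$. Each of these lies in $F_\mu(T)$ but possibly not in $F_{\mu_n}(T)$. The plan is to use the mass erasure property, Proposition \ref{prop:MEOp1}, together with Proposition \ref{prop:MEOp2}$(iii)$ applied with the closed rooted subtree $T' := T\setminus C_R$. Because $T \setminus T' = C_R$, this gives
\[ \mathscr{E}_{h_n}\mu_n(C_R) \le \mu_n(C_R), \]
and b-tightness transfers directly. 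I need to check that $T'$ is indeed a closed rooted subtree, i.e.\ that $C_R$ is open and upward closed. Indeed, $C_R$ consists of the points of height $>R$ whose ancestor at level $r$ is not in $\bullet^r$, which is open because $\bullet^r$ is finite. If instead one insists on working with closed balls, one can replace $C_R$ by the complement of the closed tree ${\rm{Cut}}(T,\bullet^r)\cap \overline B_T(\rho,R)\cup \bigcup_{\sigma\in\bullet^r}\theta_\sigma T$.

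\textbf{Main obstacle.} The delicate points are the parameter-varying finite convergence $\mathscr{E}_{h_n}\nu_n \to \mathscr{E}_h\nu$ and the "eventually" in Lemma \ref{lemma:MEOp5} for varying $h_n$. For the first, I would first try quoting the sequential continuity of $(h,\boldsymbol\mu)\mapsto\mathscr{E}_h\boldsymbol\mu$ stated in Section \ref{subsec:MainConvME}. If only the Gromov version is available there, I would instead use the semigroup property with $h_n \ge h$ or $h_n < h$ split into subsequences, together with the fixed-$h$ continuity and the estimate $d_P(\mathscr{E}_{h'}\nu,\mathscr{E}_{h''}\nu)\to0$ as $h'\to h''$ for finite $\nu$.
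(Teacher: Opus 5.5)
Your proposal is correct and follows the paper's proof. For $(i)$ the paper likewise uses Theorem~\ref{thm:MEFix7}$(i)$, \cite{DW26} Proposition 3.20 and Lemma~\ref{lemma:MEOp5} on slightly smaller $\mathscr{E}_h{\rm{cut}}_{\bullet^r}\mu$-nice radii, then Proposition~\ref{prop:BFMeas1}. For $(ii)$ the paper applies Proposition~\ref{prop:MEOp2}$(ii)$ to each connected component of $T\setminus\overline{B}_T(\rho,R)$ lying in ${\rm{Cut}}(T,\bullet^r)$ and sums, which is the same bound as your use of Proposition~\ref{prop:MEOp2}$(iii)$ with $T'=\overline{B}_T(\rho,R)\cup\bigcup_{\boldsymbol{\sigma}\in\bullet^r}\theta_{\boldsymbol{\sigma}}T$.
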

\begin{proof}
Fix some $r \in \{r_k ~|~ k \in \mathbb{N}\}$.
For $(ii)$, consider some $R > r$. Then ${\rm{Cut}}(T, \bullet^r) \setminus \overline{B}_T(\rho, R)$ is the union over a subcollection $(C_i)_{i \in I}$ of the connected components of $T \setminus \overline{B}_T(\rho, R)$. By Proposition \ref{prop:MEOp2}$(ii)$ we have that $\mathscr{E}_{h_n}\mu_n(C_i) \leq \mu_n(C_i)$ for every $i \in I$, and so
$$\mathscr{E}_{h_n}\mu_n\left( {\rm{Cut}}(T, \bullet^r) \setminus \overline{B}_T(\rho, R) \right) = \sum_{i \in I} \mathscr{E}_{h_n}\mu_n(C_i) \leq \sum_{i \in I} \mu_n(C_i) = \mu_n\left( {\rm{Cut}}(T, \bullet^r) \setminus \overline{B}_T(\rho, R) \right).$$ 
Letting $n \rightarrow \infty$ and then $R \rightarrow \infty$, b-tightness easily follows.

For $(i)$, fix some $\varepsilon > 0$. By Theorem \ref{thm:MEFix7}$(i)$ we know that ${\rm{cut}}_{\bullet^r}\mu_n \stackrel{\rm{wk}}{\rightarrow} {\rm{cut}}_{\bullet^r}\mu$ in $\mathscr{M}_{\rm{fin}}(T)$, and so \cite{DW26} Proposition 3.20 yields that $\mathscr{E}_{h_n} {\rm{cut}}_{\bullet^r}\mu_n \stackrel{\rm{wk}}{\rightarrow} \mathscr{E}_h {\rm{cut}}_{\bullet^r}\mu$. Choosing some $\tilde{r} \in (r- \varepsilon, r)$ which is $\mathscr{E}_h {\rm{cut}}_{\bullet^r}\mu$-nice, and applying Lemma \ref{lemma:MEOp5} then gives that $\left( \mathscr{E}_{h_n} \mu_n \right)^{\restr \tilde{r}} \stackrel{\rm{wk}}{\rightarrow} \left( \mathscr{E}_h\mu \right)^{\restr \tilde{r}}$. As this argument may be repeated for every $r \in \{r_k ~|~ k \in \mathbb{N}\}$, we conclude that $\mathscr{E}_{h_n}\mu_n \stackrel{\rm{vg}}{\rightarrow} \mathscr{E}_h\mu$.
\end{proof}
\subsection{Vague convergence in the sense of mass erasure}
\label{subsec:vagueME}
Recall the notion of vague convergence in the sense of mass erasure from Definition \ref{def:MEConv}. From Proposition \ref{prop:MEOp6} we see that if $\mu_n \stackrel{\rm{vg}}{\rightarrow} \mu$ in $\mathscr{M}_{\rm{di}}(T)$ as $n \rightarrow \infty$, and $(\mu_n)_{n \in \mathbb{N}}$ is b-tight on an increasing sequence of cut-off trees ${\rm{Cut}}(T, \bullet^{r_k}(\mu))$ with $r_k \rightarrow \infty$, then $\mu_n \stackrel{{\rm{vme}}}{\rightarrow} \mu$ as $n \rightarrow \infty$. The b-tightness condition here is crucial, as otherwise the statement fails, as illustrated in Example \ref{ex:MEConv}. Note also that vague convergence in the sense of mass erasure does not imply vague convergence, even if the mass erasures $(\mathscr{E}_h\mu_n)_{n \in \mathbb{N}}$ are b-tight on ${\rm{Cut}}(T, \bullet^r(\mu))$ for $r$ arbitrarily large. This is illustrated in \cite{DW26} Example 1.9. 

We also observe as a consequence of the semigroup property and Proposition \ref{prop:MEOp6} that vague convergence in the sense of mass erasure still holds, even if we just check that $\mathscr{E}_{h_p}\mu_n \stackrel{\rm{vg}}{\rightarrow} \mathscr{E}_{h_p}\mu$ as $n \rightarrow \infty$, for any decreasing sequence of erasure parameters $h_p \searrow 0$.

The convergence results from Section \ref{subsec:LocHaus} can be generalized to the case where one replaces the assumption $\mu_n \stackrel{\rm{vg}}{\rightarrow} \mu$ with the assumption $\mu_n \stackrel{\rm{vme}}{\rightarrow} \mu$. As ${\rm{Skel}}(T, \mu) = {\rm{Skel}}(T, \mathscr{E}_h\mu)$ for all $h > 0$, Lemma \ref{lemma:MEFix5}, Lemma \ref{lemma:MEFix6} and Lemma \ref{lemma:MEFix8} follow immediately in this setting. As \eqref{eq:MEFix7} is a direct consequence of  Lemma \ref{lemma:MEFix6}, it is also seen to hold when replacing the assumption of vague convergence with that of vague convergence in the sense of mass erasure.
\begin{prop}
\label{prop:MEOp9}
Let $(T, d, \rho)$ be a Polish $\mathbb{R}$-tree, and suppose that $h_n \rightarrow h$ in $(0, \infty)$ and $\mu_n \stackrel{\rm{vme}}{\rightarrow} \mu$ in $\mathscr{M}_{\rm{di}}(T)$. Let $h_0 \geq \sup_{n \in \mathbb{N}} h_n$, let $r_k \rightarrow \infty$ be an increasing sequence of $\mu$-nice radii, and set $\bullet^{r_k} = \bullet^{r_k}(\mu)$. Suppose that $(\mathscr{E}_h\mu_n)_{n \in \mathbb{N}}$ is b-tight on ${\rm{Cut}}(T, \bullet^{r_k})$ for all $h > 0$ and $k$. 

Then for all $k$, $ {\rm{cut}}_{\bullet^{r_k}}^{h_0}\mu_n \stackrel{\rm{me}}{\rightarrow} {\rm{cut}}_{\bullet^{r_k}}^{h_0}\mu$ in $\mathscr{M}_{\rm{fin}}(T)$, and $R_{\mu_n,h_n}(T) \cap {\rm{Cut}}(T, \bullet^{r_k})$ is eventually a subtree of finite type. Moreover, if the map $h' \mapsto R_{{\rm{cut}}_{\bullet^{r_k}}^{h_0}\mu, h'}(T)$ is $d_H$-continuous at $h$ for all $k$, it follows that
\begin{enumerate}[$(i)$]
\item $d_H\left( R_{\mu_n,h_n}(T) \cap {\rm{Cut}}(T, \bullet^{r_k}), R_{\mu,h}(T) \cap {\rm{Cut}}(T, \bullet^{r_k}) \right) \rightarrow 0$ as $n \rightarrow \infty$,
\item ${\rm{Pr}}_{R_{\mu_n,h_n}(T)}\mu_n \stackrel{\rm{vg}}{\rightarrow} {\rm{Pr}}_{R_{\mu,h}(T)}\mu$ as $n \rightarrow \infty$, and
\item $\left( {\rm{Pr}}_{R_{\mu_n,h_n}(T)}\mu_n \right)_{n \in \mathbb{N}}$ is b-tight on ${\rm{Cut}}(T, \bullet^{r_k})$ for all $k$.
\end{enumerate}
\end{prop}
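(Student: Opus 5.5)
The plan is to reduce everything to the finite-mass theory of \cite{DW26} via the cut-off measures, mirroring the proofs of Theorem \ref{thm:MEFix7} and Proposition \ref{prop:MEFix9}. Fix $r = r_k$ and write $\bullet^r = \bullet^r(\mu)$.

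\textbf{Step 1: the cut-off measures converge in the sense of mass erasure.} First I show that ${\rm{cut}}_{\bullet^r}^{h_0}\mu_n \stackrel{\rm{me}}{\rightarrow} {\rm{cut}}_{\bullet^r}^{h_0}\mu$. For this I need $\mathscr{E}_{h'}{\rm{cut}}_{\bullet^r}\mu_n \stackrel{\rm{wk}}{\rightarrow} \mathscr{E}_{h'}{\rm{cut}}_{\bullet^r}\mu$ for every $h' > 0$. By the semigroup property and the remark after Proposition \ref{prop:MEOp6}, it suffices to treat $h' < h_0$; the remaining $h'$ then follow from $\mathscr{E}_{h'} = \mathscr{E}_{h'-h''}\mathscr{E}_{h''}$ and continuity of finite mass erasure (\cite{DW26} Proposition 3.20), or one may simply enlarge $h_0$ and note that the relevant statements are insensitive to this choice.

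Fix $h' \in (0, h_0]$. I compare $\mathscr{E}_{h'}{\rm{cut}}_{\bullet^r}\mu_n$ with $(\mathscr{E}_{h'}\mu_n)|_{{\rm{Cut}}(T,\bullet^r)}$ corrected at the cut-off points. On ${\rm{Cut}}(T,\bullet^r)$, the mass erasure property gives $\mathscr{E}_{h'}{\rm{cut}}_{\bullet^r}\mu_n(\theta_\sigma T) = (\mu_n(\theta_\sigma T)-h')_+$ for $\sigma \in F_\mu(T)\cap{\rm{Cut}}(T,\bullet^r)$. Each cut-off point $\boldsymbol{\sigma} \in \bullet^r$ carries atom $h_0 - h'$ once it lies in $R_{\mu_n,h'}(T)$, which holds eventually by Lemma \ref{lemma:MEFix6} applied to $\mathscr{E}_{h'}\mu_n$ (skeletons are preserved). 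By Lemma \ref{lemma:MEOp5} applied with $\mathscr{E}_{h'}$ in place of the sequence, the two measures agree on balls of radius $\tilde r<r$ eventually. Near the level-$r$ boundary the remaining mass is controlled by $\mu$-niceness of $r$ and b-tightness of $\mathscr{E}_{h'}\mu_n$ on ${\rm{Cut}}(T,\bullet^r)$.

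Vague convergence $\mathscr{E}_{h'}\mu_n \stackrel{\rm{vg}}{\rightarrow}\mathscr{E}_{h'}\mu$, restricted to the continuity set ${\rm{Cut}}(T,\bullet^r)$ via Lemma \ref{lemma:BFMeas2} and upgraded by b-tightness via Lemma \ref{lemma:BFMeas7}, gives weak convergence of the restrictions. Adding the fixed atoms $(h_0-h')\delta_{\boldsymbol\sigma}$ then yields the weak convergence of $\mathscr{E}_{h'}{\rm{cut}}_{\bullet^r}\mu_n$.

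\textbf{Main obstacle.} I expect the hardest point to be the identity $\mathscr{E}_{h'}{\rm{cut}}_{\bullet^r}\mu_n = (\mathscr{E}_{h'}\mu_n)|_{{\rm{Cut}}(T,\bullet^r)} + (h_0-h')\sum_{\boldsymbol\sigma\in\bullet^r}\delta_{\boldsymbol\sigma}$, which should hold eventually and exactly, not just on smaller balls. The difficulty is the mass of $\mathscr{E}_{h'}\mu_n$ sitting on segments just below the cut-off points and at the points themselves. I would prove it by checking subtree masses $\theta_\sigma T$ for all $\sigma$ on both sides and invoking uniqueness of finite measures determined by subtree masses (\cite{DW26} Lemma 2.4$(vi)$, as in Proposition \ref{prop:MEFix3}$(ii)$). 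Only the case $\sigma \in {\rm{Skel}}(T,\mu)\cap \overline{B}_T(\rho,r)$ is delicate. There, Lemma \ref{lemma:MEFix5} says $\mu_n(\theta_\sigma T)$ is eventually large, but one needs the split of mass inside versus outside ${\rm{Cut}}$. If exactness fails, I fall back on agreement only on $\overline B_T(\rho,\tilde r)$ plus the $\varepsilon$-argument above, which still suffices for weak convergence.

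\textbf{Step 2: deduce the remaining claims.} Finite type of $R_{\mu_n,h_n}(T)\cap{\rm{Cut}}(T,\bullet^r)$ follows from \eqref{eq:MEFix7}, which holds under vme as remarked, together with \cite{DW26} Lemma 3.12. For $(i)$, apply the finite-mass result of \cite{DW26} (the me-analogue of Proposition 3.16 there, giving Hausdorff convergence of mass-erased subtrees under $\stackrel{\rm{me}}{\rightarrow}$ at $d_H$-continuity points) to the cut-off measures, then transfer via \eqref{eq:MEFix7} and Lemma \ref{lemma:MEFix1}.

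For $(ii)$, the same \cite{DW26} result gives weak convergence of ${\rm{Pr}}_{R_{{\rm{cut}}\mu_n,h_n}}{\rm{cut}}\mu_n$. Lemma \ref{lemma:MEFix8}, which still holds under vme, transfers this to balls of radius $\tilde r<r$, and Proposition \ref{prop:BFMeas1} concludes as in Proposition \ref{prop:MEFix9}$(iii)$.

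For $(iii)$, the measure ${\rm{Pr}}_{R_{\mu_n,h_n}}\mu_n$ outside $\overline B_T(\rho,R)$ within ${\rm{Cut}}$ equals the sum of $\mu_n(C_i)$ over components with $C_i \cap R_{\mu_n,h_n}\neq\emptyset$, by Lemma \ref{lemma:Proj3}. For $n$ large each such term is at most $\mathscr{E}_{h'}\mu_n(C_i)+h'$ with $h'<\inf h_n$. Since $R_{\mu_n,h_n}\cap{\rm{Cut}}$ converges in Hausdorff distance to a compact limit, only finitely many such components exist beyond radius $R$, and eventually none exist. Hence the total is bounded by $\mathscr{E}_{h'}\mu_n({\rm{Cut}}\setminus\overline B_T(\rho,R))$, and b-tightness of the mass erasures gives the claim.
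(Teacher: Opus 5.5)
Your architecture matches the paper's. You reduce to the finite cut-off measures through the eventual identity
\[
\mathscr{E}_{h'}{\rm cut}_{\bullet^r}^{h_0}\mu_n=\left(\mathscr{E}_{h'}\mu_n\right)|_{{\rm Cut}(T,\bullet^r)}+(h_0-h')\sum_{\boldsymbol{\sigma}\in\bullet^r}\delta_{\boldsymbol{\sigma}},
\]
conclude with Lemma \ref{lemma:BFMeas2}, b-tightness and Lemma \ref{lemma:BFMeas7}, and then obtain $(i)$--$(ii)$ from the finite-mass results of Duquesne--Winkel via \eqref{eq:MEFix7} and Lemma \ref{lemma:MEFix8}. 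The gap is that this identity, which you correctly single out, is the actual content of the paper's proof, and your route to it does not work. At $\sigma\in{\rm Skel}(T,\mu)\cap\overline{B}_T(\rho,r)$ the masses $\mathscr{E}_{h'}\mu_n(\theta_{\sigma}T)$ and $\mathscr{E}_{h'}\mu_n(\theta_{\boldsymbol{\sigma}}T^{\circ})$ can be $+\infty$. As the paper remarks, the mass erasure property does not determine $\mathscr{E}_{h'}\mu_n$ on the skeleton. So subtree masses cannot give you $\mathscr{E}_{h'}\mu_n(\theta_{\sigma}T\cap{\rm Cut}(T,\bullet^r))$ or the atoms $\mathscr{E}_{h'}\mu_n(\{\boldsymbol{\sigma}\})$; you must use the explicit definition \eqref{eq:MEOp}. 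Your fallback has the same hole. Lemma \ref{lemma:MEOp5} at radius $r$ gives agreement only on $\overline{B}_T(\rho,\tilde r)$ with $\tilde r<r$, so it says nothing about the atoms on $\bullet^r\subseteq\partial B_T(\rho,r)$, which are exactly what must be identified. B-tightness concerns large radii, not level $r$. Also, Lemma \ref{lemma:MEFix6} at radius $r$ only gives $\bullet^r\subseteq R_{\mu_n,h'}(T)$. If some $\boldsymbol{\sigma}$ were a leaf of $R_{\mu_n,h'}(T)$, then \eqref{eq:MEOp} would put the atom $\mu_n(\theta_{\boldsymbol{\sigma}}T)-h'$ there and the identity would fail.

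The missing idea is to look slightly beyond level $r$. Choose $\varepsilon>0$ with $r+\varepsilon$ $\mu$-nice, and $n$ large enough that ${\rm Skel}(T,\mu)\cap\overline{B}_T(\rho,r+\varepsilon)\subseteq R_{\mu_n,h'}(T)$. Since $\boldsymbol{\sigma}\notin{\rm Bp}(T)$ by $\mu$-niceness, each $\boldsymbol{\sigma}\in\bullet^r$ is then a degree-two point of $R_{\mu_n,h'}(T)$ onto which nothing is projected, so $\mathscr{E}_{h'}\mu_n(\{\boldsymbol{\sigma}\})=\mu_n(\{\boldsymbol{\sigma}\})$.

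The paper packages this by writing $\mathscr{E}_{h'}{\rm cut}_{\bullet^r}\mu_n$ via \eqref{eq:MEOpFin} and comparing with Lemma \ref{lemma:MEOp4}, built on ${\rm Cut}(T,\bullet^{r+\varepsilon})$. The only discrepancy is $h_0\delta_{\boldsymbol{\sigma}}-h'\deg_{\rm out}(\boldsymbol{\sigma},R_{\mu_n,h'}(T)\cap{\rm Cut}(T,\bullet^{r+\varepsilon}))\delta_{\boldsymbol{\sigma}}$, and that out-degree equals $1$.

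Within your own framework, applying Lemma \ref{lemma:MEOp5} at radius $r+\varepsilon$ instead of $r$ also works. It replaces $\mathscr{E}_{h'}\mu_n$ near $\bullet^r$ by the finite measure $\mathscr{E}_{h'}{\rm cut}_{\bullet^{r+\varepsilon}}\mu_n$. For that measure your subtree-mass arithmetic is legitimate, using Proposition \ref{prop:MEOp2}$(ii)$ on the single component $\theta_{\boldsymbol{\sigma}}T^{\circ}$.

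You should also justify that ${\rm cut}_{\bullet^r}\mu_n$ is eventually finite, since your finite-type claim needs it. B-tightness makes $(\mathscr{E}_{h'}\mu_n)|_{{\rm Cut}(T,\bullet^r)}$ eventually finite. Hence ${\rm Skel}(T,\mu_n)={\rm Skel}(T,\mathscr{E}_{h'}\mu_n)$ avoids ${\rm Cut}(T,\bullet^r)\setminus\overline{B}_T(\rho,r)$, and Definition \ref{def:DIMeas}$(ii)$ for $\mu_n$ does the rest.

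Finally, your proof of $(iii)$ genuinely differs from the paper's, and it is correct. The paper defers to Proposition \ref{prop:MEFix9}, whose b-tightness argument uses b-tightness of $(\mu_n)$ itself on ${\rm Cut}(T,\bullet^r)$; that is not assumed here. Your use of $(i)$ instead shows that the projected mass on ${\rm Cut}(T,\bullet^r)\setminus\overline{B}_T(\rho,R)$ eventually vanishes for large $R$. Taking $h'\le\frac{1}{2}\inf_n h_n$ in your bound even gives $\mu_n(C_i)\le 2\mathscr{E}_{h'}\mu_n(C_i)$ for every component meeting $R_{\mu_n,h_n}(T)$. This yields $(iii)$ without the continuity hypothesis.
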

\begin{proof}
Fix some $r \in \{r_k ~|~ k \in \mathbb{N}\}$, let $\delta \in (0, h)$ and consider some $h' \in (0, h - \delta]$. Choose $\varepsilon > 0$ such that $r+\varepsilon$ is $\mu$-nice, and suppose that $n$ is chosen sufficiently large such that, 1. ${\rm{Skel}}(T, \mu) \cap \overline{B}_T(\rho, r+\varepsilon) \subseteq R_{\mu_n,h_n}(T)$ and ${\rm{cut}}_{\bullet^r}\mu_n$ is a finite measure\footnote{It is possible to choose such $n$ due to our assumption that (for any arbitrary choice of $h$) $(\mathscr{E}_h\mu_n)_{n \in \mathbb{N}}$ is b-tight on ${\rm{Cut}}(T, \bullet^r)$, and using the fact that ${\rm{Skel}}(T, \mu) = {\rm{Skel}}(T, \mathscr{E}_h\mu)$.}, 2. $R_{{\rm{cut}}_{\bullet^r}\mu_n, h'}(T) = R_{\mu_n,h'}(T) \cap {\rm{Cut}}(T, \bullet^r)$, and 3. $h_n \geq h - \delta$. Using \eqref{eq:MEOpFin}, we then see that 
\begin{align*}
\mathscr{E}_{h'}{\rm{cut}}_{\bullet^r}\mu_n &= {\rm{Pr}}_{R_{\mu_n,h'}(T) \cap {\rm{Cut}}(T, \bullet^r)} {\rm{cut}}_{\bullet^r}\mu_n + h' \varpi_{R_{\mu_n,h'}(T) \cap {\rm{Cut}}(T, \bullet^r)} \\
&= \left({\rm{Pr}}_{R_{\mu_n,h'}(T)}\mu_n\right)|_{{\rm{Cut}}(T, \bullet^r)} + (h_0 - h') \sum_{\boldsymbol{\sigma} \in \bullet^r} \delta_{\boldsymbol{\sigma}} + h' \varpi_{R_{\mu_n,h'}(T) \cap {\rm{Cut}}(T, \bullet^{r+\varepsilon})}|_{{\rm{Cut}}(T, \bullet^r) \setminus \bullet^r}\\
&= \left( {\rm{Pr}}_{R_{\mu_n,h'}(T)}\mu_n + h' \varpi_{R_{\mu_n,h'}(T) \cap {\rm{Cut}}(T, \bullet^{r+\varepsilon})} \right)|_{{\rm{Cut}}(T, \bullet^r)} + h_0 \sum_{\boldsymbol{\sigma} \in \bullet^r} \delta_{\boldsymbol{\sigma}}\\ 
&\quad - h' \left(\varpi_{R_{\mu_n,h'}(T) \cap {\rm{Cut}}(T, \bullet^{r+\varepsilon})}|_{\bullet^r} + \sum_{\boldsymbol{\sigma} \in \bullet^r} \delta_{\boldsymbol{\sigma}} \right) \\
&= \left(\mathscr{E}_{h'}\mu_n\right)|_{{\rm{Cut}}(T,\bullet^r)} + h_0 \sum_{\boldsymbol{\sigma} \in \bullet^r} \delta_{\boldsymbol{\sigma}} - h' \sum_{\boldsymbol{\sigma} \in \bullet^r} \deg_{\rm{out}}(\boldsymbol{\sigma}, R_{\mu_n,h'}(T) \cap {\rm{Cut}}(T, \bullet^{r+\varepsilon})) \delta_{\boldsymbol{\sigma}},
\end{align*}
where Lemma \ref{lemma:MEOp4} has been applied in the last step. For the first term, note that $\mathscr{E}_{h'}\mu(\bullet^r) = 0$ since $r$ is $\mu$-nice\footnote{Here we use the fact that $\bullet^r \cap {\rm{Bp}}(T) = \emptyset$, which ensures that $\mathscr{E}_{h'}\mu$ assigns no point masses to $\bullet^r$.}. Hence, Lemma \ref{lemma:BFMeas2} gives $\left( \mathscr{E}_{h'}\mu_n \right)|_{{\rm{Cut}}(T, \bullet^r)} \stackrel{\rm{vg}}{\rightarrow} \left( \mathscr{E}_{h'}\mu \right)|_{{\rm{Cut}}(T, \bullet^r)}$. The second term is constant in $n$, and for the third term we have for every $\boldsymbol{\sigma} \in \bullet^r$ that $\boldsymbol{\sigma}$ cannot be a branch point in $T$, and so in particular as ${\rm{Skel}}(T, \mu) \cap \overline{B}_T(\rho, r+\varepsilon) \subseteq R_{\mu_n,h_n}(T) \subseteq R_{\mu_n, h'}(T)$ by 2. and 3., we must have that $\deg_{\rm{out}}(\boldsymbol{\sigma}, R_{\mu_n,h'}(T) \cap {\rm{Cut}}(T, \bullet^{r+\varepsilon})) = 1 = \deg_{\rm{out}}(\boldsymbol{\sigma}, R_{\mu,h}(T) \cap {\rm{Cut}}(T, \bullet^{r+\varepsilon}))$. Our previous calculation can be repeated for $\mathscr{E}_{h'}{\rm{cut}}_{\bullet^r}\mu$, and so we conclude that $\mathscr{E}_{h'}{\rm{cut}}_{\bullet^r}\mu_n \stackrel{\rm{vg}}{\rightarrow} \mathscr{E}_{h'}{\rm{cut}}_{\bullet^r}\mu$. For b-tightness, note that $\mathscr{E}_{h'}{\rm{cut}}_{\bullet^r}\mu_n\left( T \setminus \overline{B}_T(\rho, R) \right) = \mathscr{E}_{h'}\mu_n\left( {\rm{Cut}}(T, \bullet^r) \setminus \overline{B}_T(\rho, R) \right)$ for all $R > r$ by our previous calculation. Hence, b-tightness of $(\mathscr{E}_{h'}\mu_n)_{n \in \mathbb{N}}$ on ${\rm{Cut}}(T, \bullet^r)$ implies b-tightness of $\left( \mathscr{E}_{h'}{\rm{cut}}_{\bullet^r}\mu_n \right)_{n \in \mathbb{N}}$. We thus have $\mathscr{E}_{h'}{\rm{cut}}_{\bullet^r}\mu_n \stackrel{\rm{wk}}{\rightarrow} \mathscr{E}_{h'} {\rm{cut}}_{\bullet^r}\mu$. As $h' \in (0, h - \delta]$ was chosen arbitrarily, we conclude that ${\rm{cut}}_{\bullet^r}\mu_n \stackrel{\rm{me}}{\rightarrow} {\rm{cut}}_{\bullet^r}\mu$ in $\mathscr{M}_{\rm{fin}}(T)$.

Statement $(i)$ now follows directly by \cite{DW26} Lemma 3.23 and \eqref{eq:MEFix7}, and \cite{DW26} Lemma 3.23 moreover gives that ${\rm{Pr}}_{R_{{\rm{cut}}_{\bullet^r}\mu_n,h_n}(T)} {\rm{cut}}_{\bullet^r}\mu_n \stackrel{\rm{wk}}{\rightarrow} {\rm{Pr}}_{R_{{\rm{cut}}_{\bullet^r}\mu,h}(T)} {\rm{cut}}_{\bullet^r}\mu$ as $n \rightarrow \infty$. The rest of the proof proceeds identically to that of Proposition \ref{prop:MEFix9}.
\end{proof}
We finally ask in which cases vague convergence in the sense of mass erasure can be strengthened to vague convergence (under sufficient b-tightness). Recall the definition of the height measure from Section \ref{subsec:MainConvME}.
\begin{prop}
\label{prop:MEOp10}
Let $(T, d, \rho)$ be a Polish $\mathbb{R}$-tree, and let $\mu, \mu_n \in \mathscr{M}_{\rm{di}}(T)$, $n \in \mathbb{N}$. Let $r_k \rightarrow \infty$ be an increasing sequence of $\mu$-nice radii, and set $\bullet^{r_k} = \bullet^{r_k}(\mu)$. Then the following two statements are equivalent:
\begin{itemize}
\item[$(a)$] $\mu_n \stackrel{\rm{vg}}{\rightarrow} \mu$ and $(\mu_n)_{n \in \mathbb{N}}$ is b-tight on ${\rm{Cut}}(T, \bullet^{r_k})$ for all $k$.
\item[$(b)$] $\mu_n \stackrel{\rm{vme}}{\rightarrow} \mu$, $(\mathscr{E}_h\mu_n)_{n \in \mathbb{N}}$ is b-tight on ${\rm{Cut}}(T, \bullet^{r_k})$ for all $h > 0$ and $k$, and $\lambda_{{\rm{cut}}_{\bullet^{r_k}}^{h_0}\mu_n} \stackrel{\rm{wk}}{\rightarrow} \lambda_{{\rm{cut}}_{\bullet^{r_k}}^{h_0}\mu}$ in $\mathscr{M}_{\rm{fin}}([0,\infty))$ for some $h_0 > 0$ and all $k$.
\end{itemize}
\end{prop}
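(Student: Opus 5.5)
The plan is to reduce both directions to the finite-measure equivalence from \cite{DW26} (the analogue of Proposition \ref{prop:MEOp10} for $\mathscr{M}_{\rm{fin}}(T)$, namely that weak convergence is equivalent to convergence in the sense of mass erasure together with weak convergence of height measures), applied to the cut-off measures ${\rm{cut}}_{\bullet^{r_k}}^{h_0}\mu_n$ and ${\rm{cut}}_{\bullet^{r_k}}^{h_0}\mu$. Vague convergence is then recovered by restricting to balls of radius slightly less than $r_k$. Fix $r \in \{r_k\}$ throughout and write $\bullet^r = \bullet^r(\mu)$.

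\emph{The direction $(a) \Rightarrow (b)$.} Proposition \ref{prop:MEOp6}, applied with $h_n = h$, gives $\mu_n \stackrel{\rm{vme}}{\rightarrow} \mu$ and b-tightness of $(\mathscr{E}_h\mu_n)_{n \in \mathbb{N}}$ on ${\rm{Cut}}(T, \bullet^{r_k})$ for every $h$ and $k$. Theorem \ref{thm:MEFix7}$(i)$ gives ${\rm{cut}}_{\bullet^r}^{h_0}\mu_n \stackrel{\rm{wk}}{\rightarrow} {\rm{cut}}_{\bullet^r}^{h_0}\mu$ in $\mathscr{M}_{\rm{fin}}(T)$. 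Since $\sigma \mapsto d(\rho,\sigma)$ is continuous, the continuous mapping theorem then yields weak convergence of the height measures, for any $h_0 > 0$.

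\emph{The direction $(b) \Rightarrow (a)$.} By the first part of Proposition \ref{prop:MEOp9}, the hypotheses give ${\rm{cut}}_{\bullet^r}^{h_0}\mu_n \stackrel{\rm{me}}{\rightarrow} {\rm{cut}}_{\bullet^r}^{h_0}\mu$ in $\mathscr{M}_{\rm{fin}}(T)$ for every $r = r_k$. That proposition is stated with an $h_0 \geq \sup_n h_n$, but its proof only uses $h_0$ as the fixed atom mass, so the given $h_0$ can be used directly. Combining this with the assumed weak convergence of the height measures $\lambda_{{\rm{cut}}_{\bullet^r}^{h_0}\mu_n}$, the finite-measure equivalence in \cite{DW26} gives ${\rm{cut}}_{\bullet^r}^{h_0}\mu_n \stackrel{\rm{wk}}{\rightarrow} {\rm{cut}}_{\bullet^r}^{h_0}\mu$.

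Next take any $\mu$-nice $\tilde r < r$. On $\overline{B}_T(\rho,\tilde r)$ the cut-off measure coincides with the original measure, so $({\rm{cut}}_{\bullet^r}\mu_n)^{\restr \tilde r} = \mu_n^{\restr\tilde r}$, and likewise for $\mu$. Since $\mu(\partial B_T(\rho,\tilde r)) = 0$, the Portmanteau theorem gives $\mu_n^{\restr \tilde r} \stackrel{\rm{wk}}{\rightarrow} \mu^{\restr \tilde r}$. Letting $r_k \to \infty$ and applying Proposition \ref{prop:BFMeas1}, we get $\mu_n \stackrel{\rm{vg}}{\rightarrow} \mu$.

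For b-tightness on ${\rm{Cut}}(T,\bullet^r)$, observe that for $R > r$,
\[
\mu_n\bigl({\rm{Cut}}(T,\bullet^r)\setminus \overline{B}_T(\rho,R)\bigr) = {\rm{cut}}_{\bullet^r}\mu_n\bigl(T \setminus \overline{B}_T(\rho,R)\bigr).
\]
Weakly convergent sequences of finite measures on a Polish space are tight, which gives b-tightness.

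\emph{Main obstacle.} The delicate point is the application of the \cite{DW26} equivalence. One must check that it is stated exactly in the form ``$\stackrel{\rm{me}}{\rightarrow}$ plus weak convergence of height measures implies $\stackrel{\rm{wk}}{\rightarrow}$'' on a fixed Polish $\mathbb{R}$-tree. One must also make sure the atoms of mass $h_0$ at $\bullet^r$ cause no trouble. These atoms sit in $\partial B_T(\rho,r)$, so they are invisible after restricting to $\tilde r < r$. On the height-measure side they contribute the same term $h_0|\bullet^r|\delta_r$ for every $n$, which is consistent. If the \cite{DW26} result requires additional hypotheses, such as minimality, I would instead argue directly: tightness of $({\rm{cut}}_{\bullet^r}\mu_n)$ follows from height-measure convergence together with convergence of the mass-erased projections, and the limit is then identified through Lemma \ref{lemma:MEOp5b} and the uniqueness in Proposition \ref{prop:MEFix3}$(ii)$.
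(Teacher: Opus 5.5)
Your proposal is correct and follows the paper's proof essentially step for step. For $(a) \Rightarrow (b)$ you use Proposition \ref{prop:MEOp6}, Theorem \ref{thm:MEFix7}$(i)$ and the continuous mapping theorem; for $(b) \Rightarrow (a)$ you use Proposition \ref{prop:MEOp9}, the finite-measure result (which the paper cites as \cite{DW26} Proposition 3.28), restriction to $\mu$-nice radii $\tilde{r}_k < r_k$, and tightness of the weakly convergent cut-off measures. Your worry about $h_0$ is harmless, since you can apply Proposition \ref{prop:MEOp9} directly with $h_n \equiv h_0$.
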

\begin{proof}
If $(a)$ is true, then it immediately follows that $\mu_n \stackrel{\rm{vme}}{\rightarrow} \mu$ and $(\mathscr{E}_h\mu_n)_{n \in \mathbb{N}}$ is b-tight on ${\rm{Cut}}(T, \bullet^{r_k})$ for all $h > 0$ and $k$. Moreover, Theorem \ref{thm:MEFix7} gives that ${\rm{cut}}_{\bullet^{r_k}}\mu_n \stackrel{\rm{wk}}{\rightarrow} {\rm{cut}}_{\bullet^{r_k}} \mu$ for all $k$. As it can easily be checked that the map $\sigma \mapsto d(\rho, \sigma)$ is continuous, the continuous mapping theorem gives the desired, taking care of the direction $(a) \Rightarrow (b)$.

Suppose instead that $(b)$ is true. Then by Proposition \ref{prop:MEOp9}, ${\rm{cut}}_{\bullet^{r_k}}\mu_n \stackrel{\rm{me}}{\rightarrow} {\rm{cut}}_{\bullet^{r_k}}\mu$ in $\mathscr{M}_{\rm{fin}}(T)$ for all $k$. Applying \cite{DW26} Proposition 3.28 gives that ${\rm{cut}}_{\bullet^{r_k}}\mu_n \stackrel{\rm{wk}}{\rightarrow} {\rm{cut}}_{\bullet^{r_k}}\mu$ for all $k$. Restricting to balls of $\mu$-nice radius $\tilde{r}_k < r_k$, then allows us to conclude that $\mu_n \stackrel{\rm{vg}}{\rightarrow} \mu$, as desired. The b-tightness property is immediately inherited, as weak convergence of the cut-off measures implies b-tightness of $(\mu_n)_{n \in \mathbb{N}}$ on ${\rm{Cut}}(T, \bullet^{r_k})$.
\end{proof}
\begin{rmk}
\label{rmk:MEOp10b}
Fix some $r > 0$. Then $(\mathscr{E}_h\mu_n)_{n \in \mathbb{N}}$ is b-tight on ${\rm{Cut}}(T, \bullet^r)$ for all $h > 0$ if and only if $(\mathscr{E}_{h_p}\mu_n)_{n \in \mathbb{N}}$ is b-tight on ${\rm{Cut}}(T, \bullet^r)$ for some decreasing sequence $h_p \searrow 0$. This follows immediately by utilizing the semigroup property together with an analogous argument to that in Proposition \ref{prop:MEOp6}$(i)$.
\demo
\end{rmk}
Before moving on to the next section, we observe that for any fixed $h > 0$, a convergent sequence of $h$-mass erasures $(\mathscr{E}_h\mu_n)_{n \in \mathbb{N}}$ must have its limit in $\mathscr{M}_{\rm{di}}(T)$.
\begin{lemma}
\label{lemma:MEOp10b}
Let $(T, d, \rho)$ be a Polish $\mathbb{R}$-tree, and let $h > 0$. Then $ \overline{\mathscr{E}_h(\mathscr{M}_{\rm{di}}(T))}^{d_{\rm{V}}} \subseteq \mathscr{M}_{\rm{di}}(T)$.
\end{lemma}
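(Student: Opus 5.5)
Let $\nu_n = \mathscr{E}_h\mu_n$ with $\mu_n \in \mathscr{M}_{\rm{di}}(T)$, and suppose $\nu_n \stackrel{\rm{vg}}{\rightarrow} \nu$ in $\mathscr{M}_{\rm{bf}}(T)$. The task is to show $\nu \in \mathscr{M}_{\rm{di}}(T)$. Since $\nu$ is already boundedly finite, we must verify $(i)$ and $(ii)$ of Definition \ref{def:DIMeas}, or that $\nu$ is finite.

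The structural input is a \emph{uniform atomic lower bound}. By Proposition \ref{prop:MEOp1} and the semigroup property, $\nu_n(\theta_\sigma T) = (\mu_n(\theta_\sigma T)-h)_+$. The definition \eqref{eq:MEOp} also shows that $\nu_n$ places mass $\ge h$ at every branch point of $R_{\mu_n,h}(T)$. Moreover, for every $\sigma$ with $\nu_n(\theta_\sigma T)>0$ the subtree $R_{\mu_n,h}(\theta_\sigma T)$ is nontrivial, so there is ``discreteness at scale $h$''. The cleanest form I would aim for is the following claim.

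\textbf{Claim.} For any $r>0$ and any finite collection of disjoint subtrees $\theta_{\sigma_1}T,\dots,\theta_{\sigma_m}T$ with $\sigma_j \in \partial B_T(\rho,r)$ and $\nu_n(\theta_{\sigma_j}T)>0$, we have $\nu_n(\overline{B}_T(\rho,r)) \ge h\,(m-1)$.

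The count comes from the root and branch point masses of $R_{\mu_n,h}(T)\cap\overline{B}_T(\rho,r)$, whose spanning subtree has at least $m$ ends at level $r$. The key step is then to pass this to the limit. Take $\nu$-nice radii $r<r'$. Vague convergence gives $\nu_n(\overline{B}(\rho,r')) \to \nu(\overline{B}(\rho,r')) < \infty$, so the number $m_n(r)$ of points of $\partial B_T(\rho,r)$ carrying positive $\nu_n$-mass above them is bounded by some $M(r)$ uniformly in $n$, for large $n$.

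To conclude $(i)$, I would argue that any $\sigma \in \partial B_T(\rho,r)$ with $\nu(\theta_\sigma T^\circ)>0$ must be approximated by such points for $\nu_n$. This uses weak convergence on balls: if $\nu(\theta_\sigma T \cap \overline{B}(\rho,r'))>0$ then, by the Portmanteau theorem applied to open neighbourhoods, the corresponding subtrees carry $\nu_n$-mass. Hence at most $M(r)$ points of level $r$ carry positive $\nu$-mass above them, with $M(r)$ uniform over nice $r$. In particular $\overline{{\rm Span}({\rm supp}\,\nu)}$, and a fortiori ${\rm Skel}(T,\nu)$, meets each level in at most $M(r)$ points. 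This gives boundedly finite type, and therefore discreteness by Lemma \ref{lemma:Rtree1}.

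Condition $(ii)$ then follows almost immediately. Since only finitely many $\sigma\in\partial B_T(\rho,r)$ have $\nu(\theta_\sigma T)>0$, the union over $\partial B_T(\rho,r)\cap F_\nu(T)$ is a finite union of finite-mass subtrees. Thus $\nu({\rm Cut}(T,\bullet^r(\nu)))<\infty$, and $\nu\in\mathscr{M}_{\rm{di}}(T)$; the case $\nu(T)<\infty$ is allowed.

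The main obstacle is the limiting step. The points of level $r$ with positive mass above them need not converge in any sense, since $T$ is not locally compact. I would handle this by working with finitely many disjoint open sets $\theta_{\sigma_j}T^\circ\cap B(\rho,r')$ around distinct limit points at a slightly lower level $r''<r$, where they are separated, and applying the Portmanteau liminf inequality to each. The bound then concerns only finitely many fixed sets and avoids any compactness. Some care is needed with nice radii so that boundary effects vanish.
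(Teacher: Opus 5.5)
Your argument is correct and is essentially the paper's: the atoms that \eqref{eq:MEOp} places at the root and branch points of $R_{\mu_n,h}(T)$ give $\mathscr{E}_h\mu_n(\overline{B}_T(\rho,r)) \geq h(M-1)$ whenever $M$ level-$r$ subtrees carry mass, and the Portmanteau liminf bound on the fixed open sets $\theta_{\sigma}T^{\circ}$ transfers positive $\nu$-mass to $\mathscr{E}_h\mu_n$. The paper phrases this as a contradiction rather than as your direct count, and no convergence of the level-$r$ points of $\nu_n$ is needed, so your last paragraph is superfluous.

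The one step to tighten is ``finitely many skeleton points per level gives boundedly finite type''. This does not follow from a per-level count alone, since a subtree can meet every level in boundedly many points yet have infinitely many dead-end branches below level $r$. To fix it, add that each $\sigma \in {\rm Skel}(T,\nu)\cap\overline{B}_T(\rho,r)$ has a skeleton descendant at every higher $\nu$-nice level $r'$. This holds because the infinite mass of $\theta_\sigma T\setminus\overline{B}_T(\rho,r')$ is shared among only finitely many level-$r'$ subtrees, so ${\rm Skel}(T,\nu)\cap\overline{B}_T(\rho,r)$ is spanned by finitely many points. The detour through $\overline{{\rm Span}({\rm supp}\,\nu)}$ is not needed, and your count does not justify it anyway: support points lying exactly at level $r$ are not counted.
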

\begin{proof}
Let $\mu_n \in \mathscr{M}_{\rm{di}}(T)$, $n \in \mathbb{N}$, and suppose that $(\mathscr{E}_h\mu_n)_{n \in \mathbb{N}}$ is a Cauchy sequence with respect to the vague topology. Then as $(\mathscr{M}_{\rm{bf}}(T), d_{\rm{V}})$ is complete, there exists some $\nu \in \mathscr{M}_{\rm{bf}}(T)$ such that $\mathscr{E}_h\mu_n \stackrel{\rm{vg}}{\rightarrow} \nu$ as $n \rightarrow \infty$. We wish to argue that indeed, $\nu \in \mathscr{M}_{\rm{di}}(T)$.

Suppose for contradiction that either i) ${\rm{Skel}}(T, \nu) \cap \overline{B}_T(\rho, r)$ is \emph{not} of finite type or ii) $\nu\left( \bigcup_{\sigma \in \partial B_T(\rho, r) \setminus {\rm{Skel}}(T, \nu)} \theta_{\sigma}T \right) = +\infty$, for some $\nu$-nice $r > 0$. Then for any $M \in \mathbb{N}$ we may choose some $\nu$-nice $R > r$ large enough that $\nu(\theta_{\sigma_i}T \cap \overline{B}_T(\rho, R)) > 0$ for at least $M$ points in $\partial B_T(\rho, r)$. Denote these $M$ points by $\sigma_1, \dots, \sigma_M$. Then by vague convergence and the Portmanteau theorem, we get that $\left(\mathscr{E}_h\mu_n\right)^{\restr R}(\theta_{\sigma_i}T) \rightarrow \nu^{\restr R}(\theta_{\sigma_i}T)$ as $n \rightarrow \infty$ for $1 \leq i \leq M$. Hence, we must have for all $1 \leq i \leq M$ that $\mathscr{E}_h\mu_n(\theta_{\sigma_i}T) > 0$ eventually, which implies by Proposition \ref{prop:MEOp1} that $\mu_n(\theta_{\sigma_i}T) \geq h$ for $1 \leq i \leq M$ eventually, and so $R_{\mu_n,h}(T) \cap \overline{B}_T(\rho, r)$ must eventually have at least $M$ leaves on $\partial B_T(\rho, r)$. But then $(n(\rho, R_{\mu_n,h}(T)) - 1) + \sum_{\sigma \in {\rm{Bp}}(R_{\mu_n,h}(T)) \cap B_T(\rho, r)} (n(\sigma, R_{\mu_n,h}(T)) - 2) \geq M-1$, and so $\mathscr{E}_h\mu_n(\overline{B}_T(\rho, r)) \geq h(M-1)$ eventually by \eqref{eq:MEOp}. As this argument can be repeated for $M$ arbitrarily large, we conclude that $\mathscr{E}_h\mu_n(\overline{B}_T(\rho, r)) \rightarrow \infty$, contradicting our assumptions.
\end{proof}
\subsection{Existence of a limit}
Let $(T, d, \rho)$ be a Polish $\mathbb{R}$-tree, and consider an increasing sequence $T_1 \subseteq T_2 \subseteq \dots \subseteq T$ of closed rooted subtrees of $T$. Suppose that $\mu_n \in \mathscr{M}_{\rm{di}}(T)$, $n \in \mathbb{N}$ with ${\rm{Pr}}_{T_n}\mu_{n+1} = \mu_n$ for all $n \in \mathbb{N}$.

As $\mu_n \in \mathscr{M}_{\rm{di}}(T)$ for each $n$, we must have that each projection ${\rm{Pr}}_{T_n}\mu_{n+1} \in \mathscr{M}_{\rm{di}}(T)$, and so $\mu_{n+1}(\theta_{\sigma}T) < \infty$ for every $\sigma \in T \setminus T_n$ by Proposition \ref{prop:Proj4},  so in particular infinite mass is not projected anywhere. Using Proposition \ref{prop:Proj2}$(ii)$ inductively on $n$ it is thus easily shown that ${\rm{Skel}}(T, \mu_1) = {\rm{Skel}}(T, \mu_2) = \dots = {\rm{Skel}}(T, \mu_n) = \dots$. We can therefore denote by ${\rm{Skel}}(T) := {\rm{Skel}}(T, \mu_1)$ the infinite-mass skeleton with respect to $(\mu_n)_{n \in \mathbb{N}}$, and by $\bullet^r = \bullet^r(\mu_1)$ the cut-off points at level $r$. In particular ${\rm{Skel}}(T) \subseteq T_n$, and so we can write ${\rm{Cut}}(T_n, \bullet^r) := T_n \cap {\rm{Cut}}(T, \bullet^r)$ for all $n \in \mathbb{N}$, in which case we of course have that ${\rm{Cut}}(T_1, \bullet^r) \subseteq {\rm{Cut}}(T_2, \bullet^r) \subseteq \dots \subseteq {\rm{Cut}}(T, \bullet^r)$.
\begin{prop}[Monotone convergence of projections]
\label{prop:MEOp12}
Let $(T, d, \rho)$ be a Polish $\mathbb{R}$-tree, and let $T_1 \subseteq T_2 \subseteq \dots \subseteq T$ be closed rooted subtrees of $T$. Suppose that $\mu_n \in \mathscr{M}_{\rm{di}}(T)$, $n \in \mathbb{N}$, satisfies
\begin{enumerate}[$(i)$]
\item ${\rm{Pr}}_{T_n}\mu_{n+1} = \mu_n$ for all $n \in \mathbb{N}$, and
\item $(\mu_n)_{n \in \mathbb{N}}$ is b-tight on ${\rm{Cut}}(T, \bullet^{r_k})$ for an increasing sequence of radii $r_k \rightarrow \infty$.
\end{enumerate}
Then there exists a unique $\mu \in \mathscr{M}_{\rm{di}}(T)$ with ${\rm{Skel}}(T, \mu) = {\rm{Skel}}(T)$ and ${\rm{supp}}(\mu) \subseteq \overline{\bigcup_{n \in \mathbb{N}} T_n}$ such that ${\rm{Pr}}_{T_n}\mu = \mu_n$ for all $n \in \mathbb{N}$ and $\mu_n \stackrel{\rm{vg}}{\rightarrow} \mu$ in $\mathscr{M}_{\rm{di}}(T)$.
\end{prop}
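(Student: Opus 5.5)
We reduce to the finite setting through the cut-off trees, build a candidate limit $\mu$ on each cut-off tree, and check that these local pieces are consistent. For each $k$ and $n$, set $\bullet^{r_k}=\bullet^{r_k}(\mu_1)$ (common to all $\mu_n$ by the discussion above) and consider the finite measures $\nu_n^{(k)} := \mu_n|_{{\rm{Cut}}(T,\bullet^{r_k})}$. These are finite because $\mu_n \in \mathscr{M}_{\rm{di}}(T)$ and $\bullet^{r_k}$ are its intrinsic cut-off points.

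\textbf{Step 1: a compatible finite projective family.} The projection $f_{T_n}$ maps ${\rm{Cut}}(T,\bullet^{r_k})$ into itself. Indeed, ${\rm{Skel}}(T)\subseteq T_n$, so $\theta_{\boldsymbol\sigma}T\cap{\rm{Cut}}=\{\boldsymbol\sigma\}$ for $\boldsymbol\sigma\in\bullet^{r_k}$, and components of $T\setminus T_n$ inside the cut-off tree project to points of the cut-off tree. Conversely, points outside the cut-off tree project outside it or onto $\bullet^{r_k}$. I expect this gives ${\rm{Pr}}_{T_n}\nu^{(k)}_{n+1}=\nu^{(k)}_n$ up to mass sitting on $\bullet^{r_k}$. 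That boundary issue can be avoided by working with $\mu_n|_{{\rm{Cut}}\setminus\bullet^{r_k}}$, or by choosing $r_k$ so that $\bullet^{r_k}$ carries no atoms.

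By (i) and Proposition~\ref{prop:Proj2}$(ii)$, $\mu_n(\theta_\sigma T)$ is constant in $n$ for $n$ large enough that $\sigma\in T_n$, and it is $0$ before. Hence the total masses $\nu_n^{(k)}(T)$ are nondecreasing. They are bounded: b-tightness (ii) together with $\mu_n(\overline B(\rho,R)\cap{\rm{Cut}})\le \mu_{m}(f_{T_n}^{-1}(\cdots))$ should give $\sup_n\nu_n^{(k)}(T)<\infty$.

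\textbf{Step 2: weak convergence on each cut-off tree.} Next I show that $(\nu^{(k)}_n)_n$ is Cauchy in $d_P$. Since $f_{T_n}$ moves each point of $T_m$ ($m>n$) by at most its distance to $T_n$, and $\nu_n^{(k)}={\rm{Pr}}_{T_n}\nu_m^{(k)}$, we have $d_P(\nu_n^{(k)},\nu_m^{(k)})\le \varepsilon + \nu_m^{(k)}(\{x: d(x,T_n)>\varepsilon\})$. To control the last term, split it at radius $R$. Outside $\overline B(\rho,R)$, b-tightness makes it small. Inside the ball, the mass of $\{d(x,T_n)>\varepsilon\}\cap \overline B(\rho,R)$ equals the mass of subtrees $\theta_\sigma T$ with $\sigma\notin T_n$, which is a decreasing quantity tending to $0$. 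This uses (i), monotonicity in $n$, and finiteness of the limiting total mass.

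This tail control is the main obstacle. I would try to bound it by $\lim_m\nu_m^{(k)}(T)-\nu_n^{(k)}(T\cap\text{region near }T_n)$, using that the masses of the sets $\theta_\sigma T$ stabilise. Completeness of $(\mathscr{M}_{\rm{fin}},d_P)$ then gives a limit $\nu^{(k)}$. Lemma~\ref{lemma:BFMeas7} shows that vague convergence plus b-tightness is equivalent to this weak convergence.

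\textbf{Step 3: gluing, identification and uniqueness.} The limits are consistent, $\nu^{(k+1)}|_{{\rm{Cut}}(T,\bullet^{r_k})}=\nu^{(k)}$, after restricting to $\nu$-continuity sets and using Lemma~\ref{lemma:BFMeas2}. They therefore define a boundedly finite $\mu$ with $\mu|_{{\rm{Cut}}(T,\bullet^{r_k})}=\nu^{(k)}$, and $\mu_n\stackrel{\rm vg}{\to}\mu$ by Proposition~\ref{prop:BFMeas1} on slightly smaller radii.

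The support satisfies ${\rm{supp}}(\mu)\subseteq\overline{\bigcup T_n}$ because that set is closed and carries every $\mu_n$. Since ${\rm{Pr}}_{T_n}$ is continuous under weak convergence (the map $f_{T_n}$ is $1$-Lipschitz), ${\rm{Pr}}_{T_n}\mu=\lim_m{\rm{Pr}}_{T_n}\mu_m=\mu_n$.

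For the skeleton, $\mu(\theta_\sigma T)=\lim_n\mu_n(\theta_\sigma T)$, which is infinite exactly on ${\rm{Skel}}(T)$. Together with $\mu({\rm{Cut}}(T,\bullet^r))<\infty$, this gives $\mu\in\mathscr{M}_{\rm{di}}(T)$ with the stated skeleton.

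Uniqueness follows because any such $\mu'$ has $\mu'(\theta_\sigma T)=\lim_n\mu_n(\theta_\sigma T)$ for all $\sigma$ in the dense set $\bigcup T_n$. The vague limit is unique, and since $\mu_n\stackrel{\rm vg}{\to}\mu'$ is also assumed, $\mu'=\mu$.
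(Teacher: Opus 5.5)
Your architecture matches the paper's: restrict to the finite measures $\mu_n|_{{\rm Cut}(T,\bullet^{r})}$, obtain a weak limit on each cut-off tree, glue over $r$, and then read off ${\rm Pr}_{T_n}\mu=\mu_n$ and the skeleton. Your worry about mass in $\theta_{\boldsymbol\sigma}T^\circ$ being projected onto $\boldsymbol\sigma\in\bullet^{r}$ is legitimate, and the paper's proof does not address it. Your fix works: take radii with $\mu_n(\partial B_T(\rho,r))=0$ for all $n$. This excludes only countably many $r$, and b-tightness descends to smaller radii. The real difference is Step~2. The paper does not reprove the finite-measure limit; it applies \cite{DW26} Proposition~3.9 to $(\mu_n|_{{\rm Cut}(T,\bullet^r)})_n$. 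Your substitute argument has a genuine gap exactly where you flag the obstacle.

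\textbf{The gap.} The inside-the-ball term is not controlled by the masses of subtrees $\theta_\sigma T$ with $\sigma\notin T_n$. Their union is $T\setminus T_n$, and its mass need not vanish. Take $T=[0,1]$, $\rho=0$, $T_n=[0,1-1/n]$ and $\mu_n=\delta_{1-1/n}$. Then ${\rm Pr}_{T_n}\mu_{n+1}=\mu_n$, yet $\mu_m(T\setminus T_n)=1$ for all $m>n$.

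Your proposed bound fails as well. Since ${\rm Pr}_{T_n}$ preserves total mass, $\nu^{(k)}_n(T)$ is constant in $n$, not merely nondecreasing, and no b-tightness is needed to bound it. Moreover $\nu^{(k)}_n$ is carried by $T_n$, so $\lim_m\nu^{(k)}_m(T)-\nu^{(k)}_n(T\cap\cdot\,)$ equals $0$. In the example, however, $\mu_m(\{d(\cdot,T_n)>\varepsilon\})=1$ for $n<1/\varepsilon$ and $m$ large. Stabilisation of $\mu_m(\theta_\sigma T)$ together with finite total mass only yields a decreasing limit, not a zero limit. What is missing is a quantity that records how far mass has been pushed.

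\textbf{How to fill it.} Heights do this. Put $h_R:=d(\rho,\cdot)\wedge R$. Since $f_{T_n}(x)\in[\rho,x]$, we have $h_R\circ f_{T_n}\le h_R$. Also $\nu^{(k)}_n=\nu^{(k)}_m\circ f_{T_n}^{-1}$ for $m\ge n$, by iterating Proposition~\ref{prop:Proj2}$(iii)$. Hence $m\mapsto\int h_R\,d\nu^{(k)}_m$ is nondecreasing and bounded by $R\,\nu^{(k)}_1(T)$, so it converges. For $x\in\overline B_T(\rho,R)$ one has $h_R(x)-h_R(f_{T_n}(x))=d(x,T_n)$. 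Therefore, for $m>n$,
\[
\varepsilon\,\nu^{(k)}_m\bigl(\{x\in\overline B_T(\rho,R): d(x,T_n)>\varepsilon\}\bigr)\le\int h_R\,d\nu^{(k)}_m-\int h_R\,d\nu^{(k)}_n ,
\]
and the right-hand side tends to $0$ uniformly in $m>n$.

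The outer term is also uniform in $m$. Indeed, $\nu^{(k)}_m(T\setminus\overline B_T(\rho,R))$ is nondecreasing in $m$, so its supremum equals the $\limsup$, which b-tightness controls. With this estimate your bound $d_{\rm P}(\nu^{(k)}_n,\nu^{(k)}_m)\le\varepsilon+\nu^{(k)}_m(\{d(\cdot,T_n)>\varepsilon\})$ gives the Cauchy property. Step~3 then goes through essentially as you wrote it, mirroring the paper.
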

\begin{proof}
Fix some $r > 0$, and set $\nu_n^r := \mu_n|_{{\rm{Cut}}(T, \bullet^r)} \in \mathscr{M}_{\rm{fin}}(T)$ for all $n \in \mathbb{N}$. As we saw earlier that infinite mass will not be projected anywhere, any mass outside of $T \setminus {\rm{Cut}}(T, \bullet^r)$ will never be projected into ${\rm{Cut}}(T, \bullet^r)$, and so ${\rm{Pr}}_{{\rm{Cut}}(T_n, \bullet^r)}\nu_{n+1}^r = \left( {\rm{Pr}}_{T_n}\mu_{n+1} \right)|_{{\rm{Cut}}(T, \bullet^r)} = \nu_n^r$ for all $n \in \mathbb{N}$. By assumption $(ii)$ we moreover have that $(\nu_n^r)_{n \in \mathbb{N}}$ is b-tight on $T$, and so we conclude by \cite{DW26} Proposition 3.9 that there exists some $\nu^r \in \mathscr{M}_{\rm{fin}}(T)$ such that ${\rm{supp}}(\nu^r) \subseteq \overline{\bigcup_{n \in \mathbb{N}} {\rm{Cut}}(T_n, \bullet^r)}$, ${\rm{Pr}}_{{\rm{Cut}}(T_n, \bullet^r)} \nu^r = \nu_n^r$ for all $n \in \mathbb{N}$, and $\nu_n^r \stackrel{\rm{wk}}{\rightarrow} \nu^r$ in $\mathscr{M}_{\rm{fin}}(T)$. The argument can of course be repeated for all choices of $r > 0$, giving rise to a family $\{\nu^r ~|~ r > 0\}$ of measures in $\mathscr{M}_{\rm{fin}}(T)$.

Now, observe if $r' > r > 0$ is chosen such that $\nu^{r'}(\bullet^r) = 0$, then as $\nu_n^{r'} \stackrel{\rm{wk}}{\rightarrow} \nu^{r'}$ it holds that also $\nu_n^r = \nu_n^{r'}|_{{\rm{Cut}}(T, \bullet^r)} \stackrel{\rm{wk}}{\rightarrow} \nu^{r'}|_{{\rm{Cut}}(T, \bullet^r)}$ in $\mathscr{M}_{\rm{fin}}(T)$. As we however also know that $\nu_n^r \stackrel{\rm{wk}}{\rightarrow} \nu^r$, we conclude by uniqueness of limits that $\nu^{r'}|_{{\rm{Cut}}(T, \bullet^r)} = \nu^r$ for Lebesgue-almost all $r' > r > 0$. Thus, there is a unique Borel measure $\mu$ on $T$ that satisfies $\mu|_{{\rm{Cut}}(T, \bullet^r)} = \nu^r$ for all $r > 0$. What remains is to check that the measure $\mu$ has the desired properties.

By construction we immediately see that $\mu \in \mathscr{M}_{\rm{bf}}(T)$, and moreover we have for all $\mu$-nice $r > 0$ that $\mu_n^{\restr r} = (\nu_n^r)^{\restr r} \stackrel{\rm{wk}}{\rightarrow} (\nu^r)^{\restr r} = \mu^{\restr r}$ in $\mathscr{M}_{\rm{fin}}(T)$, so indeed $\mu_n \stackrel{\rm{vg}}{\rightarrow} \mu$ in $\mathscr{M}_{\rm{bf}}(T)$. To see that $\mu \in \mathscr{M}_{\rm{di}}(T)$, first observe that ${\rm{Skel}}(T, \mu) \subseteq {\rm{Skel}}(T)$. This is easily seen by noting that if $\sigma \in T \setminus {\rm{Skel}}(T)$, then at height $r = d(\rho, \sigma)$ we will by construction have that $\theta_{\sigma}T \subseteq {\rm{Cut}}(T, \bullet^r)$, but then $\mu(\theta_{\sigma}T) = \nu^r(\theta_{\sigma}T) < \infty$, and so $\sigma \in T \setminus {\rm{Skel}}(T, \mu)$. This directly implies that ${\rm{Skel}}(T, \mu)$ is a discrete $\mathbb{R}$-tree, and moreover $\bullet^r(\mu) \subseteq \bullet^r$. Moreover,
$$\mu\left({\rm{Cut}}(T, \bullet^r(\mu)\right) = \mu\left( {\rm{Cut}}(T, \bullet^r) \cup \bigcup_{\boldsymbol{\sigma} \in \bullet^r \setminus \bullet^r(\mu)} \theta_{\boldsymbol{\sigma}}T \right) \leq \nu^r(T) + \sum_{\boldsymbol{\sigma} \in \bullet^r \setminus \bullet^r(\mu)} \mu\left(\theta_{\boldsymbol{\sigma}}T\right) < \infty,$$
and so $\mu \in \mathscr{M}_{\rm{di}}(T)$ as wanted. Finally, for any fixed $n \in \mathbb{N}$ we observe that $\mu_n|_{{\rm{Cut}}(T, \bullet^r)} = \nu_n^r = {\rm{Pr}}_{T_n}\nu^r = \left({\rm{Pr}}_{T_n}\mu \right)|_{{\rm{Cut}}(T, \bullet^r)}$ for all $r > 0$, and so by upwards continuity of measures we conclude that ${\rm{Pr}}_{T_n}\mu = \mu_n$. From this Proposition \ref{prop:Proj4} yields ${\rm{Skel}}(T, \mu) = {\rm{Skel}}(T)$.
\end{proof}
Next, let $h_n \searrow 0$ be a strictly decreasing sequence of numbers, and let $\mu_n \in \mathscr{M}_{\rm{di}}(T)$, $n \in \mathbb{N}$, with $\mathscr{E}_{h_n - h_{n+1}}\mu_{n+1} = \mu_n$ for all $n \in \mathbb{N}$. Then we again see that ${\rm{Skel}}(T, \mu_1) = {\rm{Skel}}(T, \mu_2) = \dots$, and so similarly to before we can write ${\rm{Skel}}(T) := {\rm{Skel}}(T, \mu_1)$ and $\bullet^r = \bullet^r(\mu_1)$ for all $r > 0$.
\begin{prop}[Monotone convergence of mass erasures]
\label{prop:MEOp13}
Let $(T, d, \rho)$ be a Polish $\mathbb{R}$-tree, let $h_n \searrow 0$ be strictly decreasing in $(0, \infty)$, and let $\mu_n \in \mathscr{M}_{\rm{di}}(T)$, $n \in \mathbb{N}$. Suppose that
\begin{enumerate}[$(i)$]
\item $\mathscr{E}_{h_n - h_{n+1}}\mu_{n+1} = \mu_n$ for all $n \in \mathbb{N}$, and
\item $(\mu_n)_{n \in \mathbb{N}}$ is b-tight on ${\rm{Cut}}(T, \bullet^{r_k})$ for an increasing sequence of radii $r_k \rightarrow \infty$. 
\end{enumerate}
Then there exists a unique $\mu \in \mathscr{M}_{\rm{di}}(T)$ with ${\rm{Skel}}(T, \mu) = {\rm{Skel}}(T)$ such that $\mathscr{E}_{h_n}\mu = \mu_n$ for all $n \in \mathbb{N}$ and $\mu_n \stackrel{\rm{vg}}{\rightarrow} \mu$ in $\mathscr{M}_{\rm{di}}(T)$.
\end{prop}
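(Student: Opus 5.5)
Following the proof of Proposition \ref{prop:MEOp12}, I will use the cut-off method and reduce to the finite-measure monotone convergence result for mass erasures in \cite{DW26} (the analogue of their Proposition 3.9 for erasures, used there to construct limits in the bordification setting).

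\textbf{Step 1: reduction to finite measures.} Fix $r > 0$ and choose $h_0 \geq h_1$. By the semigroup property (Proposition \ref{prop:MEOp3}) and condition $(i)$, we have $\mathscr{E}_{h_n - h_m}\mu_m = \mu_n$ for all $m \geq n$, and all $\mu_n$ share the skeleton ${\rm{Skel}}(T)$. Set $\nu_n^r := {\rm{cut}}_{\bullet^r}^{h_0}\mu_n \in \mathscr{M}_{\rm{fin}}(T)$. I claim that $\mathscr{E}_{h_n - h_{n+1}}\nu_{n+1}^r$ agrees with $\nu_n^r$, possibly up to a correction at the points $\bullet^r$. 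To check this, I will use the mass-erasure property and Lemma \ref{lemma:MEFix1}. For $\sigma \in F_{\mu}(T) \cap {\rm{Cut}}(T, \bullet^r)$ the subtree masses agree, while for $\sigma$ on the skeleton the cut-off masses are shifted by $h_n - h_{n+1}$. I expect the cleanest fix is to let the atom at the cut-off points depend on $n$, namely to use $h_0 + h_n$ in place of a fixed $h_0$, so that the family is exactly consistent under erasure. The computation in the proof of Proposition \ref{prop:MEOp9}, via Lemma \ref{lemma:MEOp4} with out-degree $1$ at the points $\bullet^r$ for Lebesgue-a.e.\ $r$ with no branch points on $\partial B_T(\rho,r)$, should then give $\mathscr{E}_{h_n-h_{n+1}}\tilde\nu_{n+1}^r = \tilde\nu_n^r$ for $\tilde\nu_n^r := {\rm{cut}}^{h_0+h_n}_{\bullet^r}\mu_n$. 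Condition $(ii)$ gives b-tightness of $(\tilde\nu_n^r)_n$, hence tightness, since the atoms are bounded.

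\textbf{Step 2: finite limits and gluing.} Applying the finite monotone-erasure result of \cite{DW26}, I obtain $\tilde\nu^r \in \mathscr{M}_{\rm{fin}}(T)$ with $\mathscr{E}_{h_n}\tilde\nu^r = \tilde\nu^r_n$ and $\tilde\nu^r_n \stackrel{\rm{wk}}{\rightarrow} \tilde\nu^r$. If that result is unavailable in this form, I will build $\tilde\nu^r$ by tightness instead: take a subsequential weak limit, identify its erasures via \cite{DW26} Proposition 3.20 and the semigroup property, and obtain uniqueness from \cite{DW26} (the finite analogue of Lemma \ref{lemma:MEOp5b}, i.e.\ $\mathscr{E}_{h_p}\tilde\nu \to \tilde\nu$). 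Since the atoms $(h_0+h_n)\delta$ converge to $h_0\delta$, the measures $\nu_n^r$ converge weakly to $\tilde\nu^r$. Restricting to ${\rm{Cut}}(T,\bullet^{r})$ away from $\bullet^r$ and arguing exactly as in Proposition \ref{prop:MEOp12}, the restrictions are consistent for a.e.\ $r' > r$. This yields a unique Borel measure $\mu$ with $\mu|_{{\rm{Cut}}(T,\bullet^r)\setminus\bullet^r} = \tilde\nu^r|_{{\rm{Cut}}(T,\bullet^r)\setminus \bullet^r}$, and $\mu_n \stackrel{\rm{vg}}{\rightarrow} \mu$ on $\mu$-nice balls. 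Membership $\mu \in \mathscr{M}_{\rm{di}}(T)$ with ${\rm{Skel}}(T,\mu) \subseteq {\rm{Skel}}(T)$ follows verbatim from the proof of Proposition \ref{prop:MEOp12}.

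\textbf{Step 3: identification and uniqueness.} To show $\mathscr{E}_{h_n}\mu = \mu_n$, I will use Lemma \ref{lemma:MEOp5}: on balls of radius $\tilde r < r$, $(\mathscr{E}_{h_n}\mu)^{\restr\tilde r} = (\mathscr{E}_{h_n}{\rm{cut}}_{\bullet^r}\mu)^{\restr\tilde r}$, and the latter matches $(\tilde\nu_n^r)^{\restr \tilde r} = \mu_n^{\restr\tilde r}$. Letting $r \to \infty$ gives the identity. Proposition \ref{prop:MEOp1} then gives ${\rm{Skel}}(T,\mu) = {\rm{Skel}}(T,\mathscr{E}_{h_n}\mu) = {\rm{Skel}}(T)$. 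Uniqueness follows from Lemma \ref{lemma:MEOp5b}: any such $\mu'$ satisfies $\mu' = \lim_{\rm vg}\mathscr{E}_{h_n}\mu' = \lim_{\rm vg}\mu_n$.

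The main obstacle is Step 1, namely getting exact consistency of the cut-off measures under erasure. The skeleton points carry infinite mass, so the cut-off atoms interact with the erasure, and Lemma \ref{lemma:MEOp5} only matches the measures strictly inside radius $r$. I will therefore first try the $n$-dependent atom $h_0+h_n$. If that fails, I will work only with restrictions to $B(\rho,\tilde r)$ and use Lemma \ref{lemma:MEOp5} in place of exact consistency.
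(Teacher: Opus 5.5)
Your cut-off reduction takes a different route from the paper's and can be made to work. As written, however, Step 1 fails because of a sign error, and Step 2 assumes the hard part without proving it.

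\textbf{Step 1 has the wrong sign.} Mass erasure lowers the cut-off atoms by exactly the erased amount. Suppose $c\geq h$ and $\bullet^r\cap{\rm{Bp}}(T)=\emptyset$. Then each $\boldsymbol{\sigma}\in\bullet^r$ is a leaf of $R_{{\rm{cut}}^{c}_{\bullet^r}\mu,h}(T)$ and receives mass $\mu(\{\boldsymbol{\sigma}\})+c-h$. This gives $\mathscr{E}_h{\rm{cut}}^{c}_{\bullet^r}\mu={\rm{cut}}^{c-h}_{\bullet^r}\mathscr{E}_h\mu$, which is exactly the computation in Proposition \ref{prop:MEOp9} and Lemma \ref{lemma:MEGV21}. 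With your atoms you therefore get $\mathscr{E}_{h_n-h_{n+1}}{\rm{cut}}^{h_0+h_{n+1}}_{\bullet^r}\mu_{n+1}={\rm{cut}}^{h_0+2h_{n+1}-h_n}_{\bullet^r}\mu_n\neq{\rm{cut}}^{h_0+h_n}_{\bullet^r}\mu_n$.

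Exact consistency forces $c_n+h_n$ to be constant. The right choice is therefore $\tilde\nu_n^r:={\rm{cut}}^{h_0-h_n}_{\bullet^r}\mu_n$ with $h_0\geq h_1$. With this choice $\mathscr{E}_{h_n-h_{n+1}}\tilde\nu^r_{n+1}=\tilde\nu^r_n$ holds and the atoms still tend to $h_0$. Your gluing, the identification via Lemma \ref{lemma:MEOp5}, and uniqueness via Lemma \ref{lemma:MEOp5b} then go through as you describe. The fallback of restricting to $B(\rho,\tilde r)$ is unnecessary.

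\textbf{Step 2 carries the real content, and you do not supply it.} You need the following: a b-tight family in $\mathscr{M}_{\rm{fin}}(T)$ with $\mathscr{E}_{h_n-h_{n+1}}\tilde\nu_{n+1}=\tilde\nu_n$ has a limit in $\mathscr{M}_{\rm{fin}}(T)$ itself. This is not automatic, since in general such a limit only exists on the bordification (cf.\ the proof of Proposition \ref{prop:MEOp11}).

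Your fallback does not establish this. On a Polish $\mathbb{R}$-tree that is not locally compact, b-tightness does not imply tightness. For example, unit Dirac masses at the tips of an infinite star with unit-length edges are b-tight but have no weakly convergent subsequence. The boundedness of the atoms is irrelevant to this, so ``take a subsequential weak limit'' is unjustified.

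The paper avoids the issue by un-erasing rather than cutting with adjusted atoms. It proceeds as follows.
\begin{enumerate}
\item With $T_n={\rm{Span}}({\rm{supp}}(\mu_n))$, it sets $\nu_n=\mu_n-h_n\varpi_{T_n}$, defined locally on cut-off trees and then glued.
\item It checks ${\rm{Pr}}_{T_n}\nu_{n+1}=\nu_n$ and b-tightness.
\item It obtains $\mu$ from Proposition \ref{prop:MEOp12}, that is, \cite{DW26} Proposition 3.9 applied on cut-off trees. There, projections commute with restriction to ${\rm{Cut}}(T,\bullet^r)$ with no atom bookkeeping.
\item It identifies $\mathscr{E}_{h_n}\mu=\mu_n$ through subtree masses.
\end{enumerate}

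If you can cite a finite monotone-erasure theorem of exactly the needed form from \cite{DW26}, or prove it by the same un-erasing trick, your route with atoms $h_0-h_n$ is a shorter valid alternative. As it stands, the existence of the finite limit is missing.
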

\begin{proof}
Using Proposition \ref{prop:MEOp2}$(iv)$, set $T_n := {\rm{Span}}({\rm{supp}}(\mu_n)) = R_{\mu_{n+1}, (h_n - h_{n+1})_+}(T)$ for each $n \in \mathbb{N}$. Then $T_n$ is of boundedly finite type (and of course, more generally, a closed rooted subtree of $T$). Moreover, it is easily checked that $T_1 \subseteq T_2 \subseteq \dots$, since $R_{\mu_{n+1}, (h_n - h_{n+1})+}(T) \subseteq \text{Span}(\text{supp}(\mu_{n+1}))$ for all $n \in \mathbb{N}$. Now, set as before ${\rm{Cut}}(T_n, \bullet^r) := T_n \cap {\rm{Cut}}(T, \bullet^r)$ for each $n \in \mathbb{N}$. Then by standard arguments it follows that ${\rm{Cut}}(T_n, \bullet^r)$ is a subtree of finite type.

Fix some $r > 0$ and $\varepsilon > 0$, and set 
\begin{equation}
\label{eq:MEOp13a}
\nu_n^r := \mu_n|_{{\rm{Cut}}(T, \bullet^r)} - h_n \varpi_{{\rm{Cut}}(T_n, \bullet^{r+\varepsilon})}|_{{\rm{Cut}}(T, \bullet^r)} \quad \text{for every } n \in \mathbb{N},
\end{equation}
with $\varpi$ being the signed measure from the beginning of Section \ref{subsec:MEOp}. We wish to prove that
\begin{itemize}
\item[(a)] $\nu_n^r \in \mathscr{M}_{\rm{fin}}(T)$ for every $n \in \mathbb{N}$ and $r > 0$,
\item[(b)] for every $n \in \mathbb{N}$ there exists some $\nu_n \in \mathscr{M}_{\rm{di}}(T)$ such that $\nu_n|_{{\rm{Cut}}(T, \bullet^r)} = \nu_n^r$, $\nu_n^r \stackrel{\rm{vg}}{\rightarrow} \nu_n$ as $r \rightarrow \infty$, and ${\rm{Skel}}(T, \nu_n) = {\rm{Skel}}(T)$,
\item[(c)] there exists a unique measure $\mu \in \mathscr{M}_{\rm{di}}(T)$ with ${\rm{Skel}}(T, \mu) = {\rm{Skel}}(T)$ such that ${\rm{Pr}}_{T_n}\mu = \nu_n$ for every $n \in \mathbb{N}$ and $\nu_n \stackrel{\rm{vg}}{\rightarrow} \mu$ in $\mathscr{M}_{\rm{di}}(T)$ as $n \rightarrow \infty$, and 
\item[(d)] $\mathscr{E}_{h_n}\mu = \mu_n$ for all $n \in \mathbb{N}$ and $\mu_n \stackrel{\rm{vg}}{\rightarrow} \mu$ as $n \rightarrow \infty$.
\end{itemize}

To see (a), first note that $\nu_n^r$ is clearly a (finite) signed measure. So we just need to argue that it is non-negative. Note by a similar argument to the one of \cite{DW26} Lemma 3.21$(ii)$ using the semigroup property in Proposition \ref{prop:MEOp3}, that $\mathscr{E}_{h_n - h_{n+m}}\mu_{n+m} = \mu_n$ and $T_n = R_{\mu_{n+m}, (h_n - h_{n+m})_+}(T)$ for every $n,m \in \mathbb{N}$. We thus see by Lemma \ref{lemma:MEOp4}, that 
\begin{equation}
\label{eq:MEOp13b}
\mu_n|_{{\rm{Cut}}(T, \bullet^r)} = \left({\rm{Pr}}_{T_n}\mu_{n+m}\right)|_{{\rm{Cut}}(T, \bullet^r)} + (h_n - h_{n+m}) \varpi_{{\rm{Cut}}(T_n, \bullet^{r+\varepsilon})}|_{{\rm{Cut}}(T, \bullet^r)}.
\end{equation}
Plugging into \eqref{eq:MEOp13a} thus yields that $\nu_n^r = {\rm{Pr}}_{T_n}\mu_{n+m}|_{{\rm{Cut}}(T, \bullet^r)} - h_{n+m}\varpi_{{\rm{Cut}}(T_n, \bullet^{r+\varepsilon})}|_{{\rm{Cut}}(T, \bullet^r)}$, and noting that the signed part tends to zero as $m \rightarrow \infty$, we conclude that $\nu_n^r$ must be non-negative.

For (b), observe that the exact value of $\varepsilon > 0$ has no influence on \eqref{eq:MEOp13a} as long as it is strictly larger than zero. Hence, we get for all $r' > r > 0$ that $\nu_n^{r'}|_{{\rm{Cut}}(T, \bullet^r)} = \nu_n^r$, and so we can construct a unique Borel measure $\nu_n$ on $T$ satisfying that $ \nu_n|_{{\rm{Cut}}(T, \bullet^r)} = \nu_n^r$ for all $r > 0$. As we know from (a) that $\nu_n^r \in \mathscr{M}_{\rm{fin}}(T)$, we indeed see that $\nu_n$ must be boundedly finite. Now, fix some $\tilde{r} > 0$. Then for all $r \geq \tilde{r}$ we have that $\left(\nu_n^r\right)^{\restr \tilde{r}} = \left( \nu_n|_{{\rm{Cut}}(T, \bullet^r)} \right)^{\restr \tilde{r}} = \nu_n^{\restr \tilde{r}}$. As this argument holds for all $\tilde{r} > 0$ arbitrarily large, we get that $\nu_n^r \stackrel{\rm{vg}}{\rightarrow} \nu_n$ as $r \rightarrow \infty$. What remains is to check that ${\rm{Skel}}(T, \nu_n) = {\rm{Skel}}(T)$ (as this will also immediately imply that $\nu_n \in \mathscr{M}_{\rm{di}}(T)$). Suppose that $\sigma \in {\rm{Skel}}(T, \nu_n)$, such that $\nu_n(\theta_{\sigma}T) = +\infty$. As $\nu_n^r \in \mathscr{M}_{\rm{fin}}(T)$ for all $r > 0$, this is only possible if $\theta_{\sigma}T \cap (T \setminus {\rm{Cut}}(T, \bullet^r)) \neq \emptyset$ for all $r > 0$. But this is the case only if $\sigma \in {\rm{Skel}}(T)$, taking care of the inclusion ${\rm{Skel}}(T, \nu_n) \subseteq {\rm{Skel}}(T)$. For the opposite inclusion, suppose that $\sigma \in {\rm{Skel}}(T)$. Then $\mu_n(\theta_{\sigma}T) = +\infty$, and so for any $K > 0$ we can always choose $r > 0$ sufficiently large such that $\mu_n(\theta_{\sigma}T \cap {\rm{Cut}}(T, \bullet^r)) > K$. Noting by \cite{DW26} Lemma 3.17$(i)$ that $\varpi_{{\rm{Cut}}(T_n, \bullet^{r+\varepsilon})}(\theta_{\sigma}T) \in \{-1,0\}$, we see that $\nu_n^r(\theta_{\sigma}T)$ can also be made arbitrarily large by choosing $r$ sufficiently large, and so we must have that $\nu_n(\theta_{\sigma}T) = +\infty$, i.e. $\sigma \in {\rm{Skel}}(T, \nu_n)$, as wanted.

For (c), start by fixing some $r > 0$. As the infinite-mass skeleton is fixed in our set-up, mass from outside ${\rm{Cut}}(T, \bullet^r)$ will never be projected into ${\rm{Cut}}(T, \bullet^r)$, and so we have in particular for any $\nu \in \mathscr{M}_{\rm{di}}(T)$ with ${\rm{Skel}}(T, \nu) \subseteq {\rm{Skel}}(T)$, that $\left( {\rm{Pr}}_{T_n}\nu \right)|_{{\rm{Cut}}(T, \bullet^r)} = {\rm{Pr}}_{{\rm{Cut}}(T_n, \bullet^r)}\left[ \nu|_{{\rm{Cut}}(T, \bullet^r)}\right]$. This also holds if $\nu$ is a (finite) signed measure. Hence, using \eqref{eq:MEOp13a}, \eqref{eq:MEOp13b} and \cite{DW26} Lemma 3.17,
\begin{align*}
\left( {\rm{Pr}}_{T_n}\nu_{n+1} \right)|_{{\rm{Cut}}(T, \bullet^r)} &= {\rm{Pr}}_{{\rm{Cut}}(T_n, \bullet^r)} \nu_{n+1}^r \\
&= {\rm{Pr}}_{{\rm{Cut}}(T_n, \bullet^r)}\left[ \mu_{n+1}|_{{\rm{Cut}}(T, \bullet^r)} \right] - h_{n+1} {\rm{Pr}}_{{\rm{Cut}}(T_n, \bullet^r)} \left[ \varpi_{{\rm{Cut}}(T_{n+1}, \bullet^{r+\varepsilon})}|_{{\rm{Cut}}(T, \bullet^r)} \right] \\
&= \left({\rm{Pr}}_{T_n}\mu_{n+1} \right)|_{{\rm{Cut}}(T, \bullet^r)} - h_{n+1} \left( {\rm{Pr}}_{T_n} \varpi_{{\rm{Cut}}(T_{n+1}, \bullet^{r+\varepsilon})} \right)|_{{\rm{Cut}}(T, \bullet^r)}\\
&= \mu_n|_{{\rm{Cut}}(T, \bullet^r)} - (h_n - h_{n+1}) \varpi_{{\rm{Cut}}(T_n, \bullet^{r+\varepsilon})}|_{{\rm{Cut}}(T, \bullet^r)}\\ 
&\quad - h_{n+1} \varpi_{{\rm{Cut}}(T_n, \bullet^{r+\varepsilon})}|_{{\rm{Cut}}(T, \bullet^r)} \\
&= \nu_n^r.
\end{align*}
As this holds for all $r > 0$ arbitrarily large, we easily conclude by upwards continuity of measures that ${\rm{Pr}}_{T_n}\nu_{n+1} = \nu_n$ for all $n \in \mathbb{N}$. Next, fix some $r > 0$ and $\varepsilon > 0$. Then ${\rm{Cut}}(T_n, \bullet^{r+\varepsilon})$ is a subtree of finite type, so in particular we can find some $R > r$ sufficiently large such that ${\rm{Cut}}(T_n, \bullet^{r+\varepsilon}) \subseteq \overline{B}_T(\rho, R)$, in which case $\varpi_{{\rm{Cut}}(T_n, \bullet^{r+\varepsilon})}(T \setminus \overline{B}_T(\rho, R)) = 0$. Hence, \eqref{eq:MEOp13a} yields for all $R$ sufficiently large that $\nu_n({\rm{Cut}}(T, \bullet^r) \setminus \overline{B}_T(\rho, R)) = \nu_n^r(T \setminus \overline{B}_T(\rho, R)) = \mu_n({\rm{Cut}}(T, \bullet^r) \setminus \overline{B}_T(\rho, r))$, and so assumption $(ii)$ allows us to conclude that $(\nu_n)_{n \in \mathbb{N}}$ is b-tight on ${\rm{Cut}}(T, \bullet^r)$. As $r > 0$ was chosen arbitrarily, we apply Proposition \ref{prop:MEOp12} to get the existence of a limit measure $\mu$ satisfying the properties from (c).

For (d), we start by checking that for all $\mu$-nice $r > 0$,
\begin{equation}
\label{eq:MEOp13c}
\mathscr{E}_{h_n}\mu(\theta_{\sigma}T) = \mu_n(\theta_{\sigma}T) \quad \text{for all }\sigma \in {\rm{Cut}}(T, \bullet^r).
\end{equation}
Suppose first that $\mu(\theta_{\sigma}T) = 0$, such that also $\mathscr{E}_{h_n}\mu(\theta_{\sigma}T) = 0$. Then $\theta_{\sigma}T \subseteq {\rm{Cut}}(T, \bullet^r)$, and by Proposition \ref{prop:Proj2}$(ii)$ and \eqref{eq:MEOp13a} we have that $ 0 = {\rm{Pr}}_{T_n}\mu(\theta_{\sigma}T) = \nu_n(\theta_{\sigma}T) = \nu_n^r(\theta_{\sigma}T) = \mu_n(\theta_{\sigma}T) - h_n \varpi_{{\rm{Cut}}(T_n, \bullet^{r+\varepsilon})}(\theta_{\sigma}T)$. As $ \varpi_{{\rm{Cut}}(T_n, \bullet^{r+\varepsilon})}(\theta_{\sigma}T) \in \{-1, 0\}$ by \cite{DW26} Lemma 3.17$(i)$ and $\mu_n$ is non-negative, we must necessarily have that $\mu_n(\theta_{\sigma}T) = 0$, and so \eqref{eq:MEOp13c} is satisfied. 

Suppose next that $0 < \mu(\theta_{\sigma}T) < \infty$ with $\mu(\{\sigma\}) = 0$ if $\sigma \neq \rho$. As $\nu_m \stackrel{\rm{vg}}{\rightarrow} \mu$ as $m \rightarrow \infty$ and $(\nu_m)_{m \in \mathbb{N}}$ is known to be b-tight on ${\rm{Cut}}(T, \bullet^r)$ by the proof of c), we have that $\nu_m^r \stackrel{\rm{wk}}{\rightarrow} \mu|_{{\rm{Cut}}(T, \bullet^r)}$. So the Portmanteau theorem yields that $\lim_{m \rightarrow \infty} \nu_m(\theta_{\sigma}T) = \mu(\theta_{\sigma}T) > 0$. In particular for all $m$ sufficiently large, ${\rm{Pr}}_{T_m}\mu(\theta_{\sigma}T) = \nu_m(\theta_{\sigma}T) > 0$, which by Proposition \ref{prop:Proj2}$(ii)$ can only be the case if $\sigma \in T_m$ and $\nu_m(\theta_{\sigma}T) = \mu(\theta_{\sigma}T)$. Now, if $m > n$, we moreover get by \eqref{eq:MEOp13a} that
\begin{align*}
\mathscr{E}_{h_n}\mu(\theta_{\sigma}T) &= \left( \nu_m(\theta_{\sigma}T)  - h_n \right)_+ \\
&= \left( \mu_m(\theta_{\sigma}T) - h_m\varpi_{{\rm{Cut}}(T_m, \bullet^{r+\varepsilon})}(\theta_{\sigma}T) - h_n \right)_+ \\
&= \left( \mu_m(\theta_{\sigma}T) - (h_n - h_m) \right)_+ \\
&= \mathscr{E}_{h_n - h_m}\mu_m(\theta_{\sigma}T) \\
&= \mu_n(\theta_{\sigma}T),
\end{align*}
using that $\varpi_{{\rm{Cut}}(T_m, \bullet^{r+\varepsilon})}(\theta_{\sigma}T) = -\mathbbm{1}_{{\rm{Cut}}(T_m, \bullet^{r+\varepsilon}) \cap \theta_{\sigma}T \neq \emptyset} = -1$ by \cite{DW26} Lemma 3.17$(i)$. 

The identity is easily generalized to the case where $0 < \mu(\theta_{\sigma}T) < \infty$ with $\mu(\{\sigma\}) > 0$ and $\sigma \neq \rho$, by noting since $d(\rho, \sigma) > 0$ that there must exist a sequence $(\sigma_m)_{m \in \mathbb{N}}$ of points in $[\rho, \sigma]$ with $\mu(\{\sigma_m\}) = 0$ such that $d(\sigma_m, \sigma) \rightarrow 0$ as $m \rightarrow \infty$. Hence, $\theta_{\sigma}T = \bigcap_{m \in \mathbb{N}} \theta_{\sigma_m}T$, and so \eqref{eq:MEOp13c} generalizes by downwards continuity of measures.

Finally, if $\mu(\theta_{\sigma}T) = + \infty$, \eqref{eq:MEOp13c} just becomes $+ \infty = + \infty$, as both measures share the same infinite-mass skeleton ${\rm{Skel}}(T)$. This shows \eqref{eq:MEOp13c}, and as $r > 0$ was chosen arbitrarily, we conclude that $\mathscr{E}_{h_n}\mu(\theta_{\sigma}T) = \mu_n(\theta_{\sigma}T) = \mathscr{E}_{h_n - h_{n+1}}\mu_{n+1}(\theta_{\sigma}T)$ for all $\sigma \in T$, where we have moreover applied assumption $(i)$. So in particular we must have that $T_n = R_{\mu_{n+1}, (h_n - h_{n+1})+}(T) = R_{\mu,h_n+}(T)$, and so we conclude by Lemma \ref{lemma:MEOp4} and \eqref{eq:MEOp13a} that for any $\mu$-nice $r > 0$,
\begin{align*}
\mathscr{E}_{h_n}\mu|_{{\rm{Cut}}(T, \bullet^r)} &= \left( {\rm{Pr}}_{T_n}\mu + h_n \varpi_{{\rm{Cut}}(T_n, \bullet^{r+\varepsilon})} \right)|_{{\rm{Cut}}(T, \bullet^r)}\\ 
&= \nu_n^r + h_n \varpi_{{\rm{Cut}}(T_n, \bullet^{r+\varepsilon})}|_{{\rm{Cut}}(T, \bullet^r)}\\
&= \mu_n|_{{\rm{Cut}}(T, \bullet^r)}.
\end{align*}
We conclude that $\mathscr{E}_{h_n}\mu = \mu_n$ for all $n \in \mathbb{N}$, implying by Lemma \ref{lemma:MEOp5b} that $\mu_n = \mathscr{E}_{h_n}\mu \stackrel{\rm{vg}}{\rightarrow} \mu$.
\end{proof}
Let $(T, d, \rho)$ be a Polish $\mathbb{R}$-tree. Let $h_p \searrow 0$ be strictly decreasing in $(0, \infty)$, and let $\mu_n \in \mathscr{M}_{\rm{di}}(T)$, $n \in \mathbb{N}$. If there exists some $\nu_p \in \mathscr{M}_{\rm{di}}(T)$ such that $\mathscr{E}_{h_p}\mu_n \stackrel{\rm{vg}}{\rightarrow} \nu_p$ as $n \rightarrow \infty$, and $(\mathscr{E}_{h_p}\mu_n)_{n \in \mathbb{N}}$ is b-tight on ${\rm{Cut}}(T, \bullet^r(\nu_p))$ for all $p$ and $r$ arbitrarily large, then by the semigroup property and Proposition \ref{prop:MEOp6}, $\mathscr{E}_{h_p}\mu_n = \mathscr{E}_{h_p - h_{p+1}}\mathscr{E}_{h_{p+1}}\mu_n \stackrel{\rm{vg}}{\rightarrow} \mathscr{E}_{h_p - h_{p+1}}\nu_{p+1}$ as $n \rightarrow \infty$. By uniqueness of limits we thus get that $\mathscr{E}_{h_p - h_{p+1}}\nu_{p+1} = \nu_p$, and so as seen before, the sequence $(\nu_p)_{p \in \mathbb{N}}$ must share a common infinite-mass skeleton, which we shall denote by ${\rm{Skel}}(T) = {\rm{Skel}}(T, \nu_1)$. We write $\bullet^r = \bullet^r(\nu_1)$ for the associated cut-off points at level $r > 0$.
\begin{thm}
\label{thm:MEOp14}
Let $(T, d, \rho)$ be a Polish $\mathbb{R}$-tree, let $h_p \searrow 0$ be strictly decreasing in $(0, \infty)$, and let $\mu_n \in \mathscr{M}_{\rm{di}}(T)$, $n \in \mathbb{N}$. Let $r_k \rightarrow \infty$ be an increasing sequence of radii, and suppose for all $p$ and $k$ that
\begin{enumerate}[$(i)$]
\item there exists some $\nu_p \in \mathscr{M}_{\rm{di}}(T)$ such that $\mathscr{E}_{h_p} \mu_n \stackrel{\rm{vg}}{\rightarrow} \nu_p$ as $n \rightarrow \infty$,
\item $(\mathscr{E}_{h_p}\mu_n)_{n \in \mathbb{N}}$ is b-tight on ${\rm{Cut}}(T, \bullet^{r_k})$, and
\item $(\nu_p)_{p \in \mathbb{N}}$ is b-tight on ${\rm{Cut}}(T, \bullet^{r_k})$.
\end{enumerate}
Then there exists a unique $\mu \in \mathscr{M}_{\rm{di}}(T)$ with ${\rm{Skel}}(T, \mu) = {\rm{Skel}}(T)$, such that $\mu_n \stackrel{\rm{vme}}{\rightarrow} \mu$ and $\mathscr{E}_{h_p}\mu = \nu_p$ for all $p \in \mathbb{N}$.
\end{thm}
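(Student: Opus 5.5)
The plan is to reduce Theorem \ref{thm:MEOp14} to the monotone convergence result, Proposition \ref{prop:MEOp13}, applied to the sequence $(\nu_p)_{p \in \mathbb{N}}$.

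\textbf{Step 1: consistency of the limits.} As in the discussion preceding the theorem, the semigroup property (Proposition \ref{prop:MEOp3}) gives
$$\mathscr{E}_{h_p}\mu_n = \mathscr{E}_{h_p - h_{p+1}}\mathscr{E}_{h_{p+1}}\mu_n .$$
By $(i)$ for $p+1$, $\mathscr{E}_{h_{p+1}}\mu_n \stackrel{\rm{vg}}{\rightarrow} \nu_{p+1}$. By $(ii)$, this sequence is b-tight on ${\rm{Cut}}(T, \bullet^{r_k})$. Since ${\rm{Skel}}(T,\nu_{p+1})={\rm{Skel}}(T)$, these are the intrinsic cut-off trees of $\nu_{p+1}$. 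To meet the hypotheses of Proposition \ref{prop:MEOp6}, I will replace $r_k$ by $\nu_{p+1}$-nice radii. This is harmless: for $\tilde r<r_k$ we have ${\rm{Cut}}(T,\bullet^{\tilde r})\subseteq {\rm{Cut}}(T,\bullet^{r_k})$, so b-tightness passes to smaller radii, and nice radii are Lebesgue-a.e. Proposition \ref{prop:MEOp6}$(i)$ then gives $\mathscr{E}_{h_p}\mu_n \stackrel{\rm{vg}}{\rightarrow} \mathscr{E}_{h_p-h_{p+1}}\nu_{p+1}$. Uniqueness of vague limits (Proposition \ref{prop:BFMeas5}) then yields $\nu_p = \mathscr{E}_{h_p-h_{p+1}}\nu_{p+1}$ for all $p$.

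One point needs care here. ${\rm{Skel}}(T)$ is defined via $\nu_1$, so the statement "$\bullet^{r_k}=\bullet^{r_k}(\nu_{p+1})$" must be justified. It follows from Proposition \ref{prop:MEOp1}, since mass erasure preserves skeletons; this makes the argument non-circular.

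\textbf{Step 2: construction of $\mu$.} With the consistency relation from Step 1 and assumption $(iii)$, Proposition \ref{prop:MEOp13} applies directly to $(\nu_p)_{p\in\mathbb{N}}$ with the parameters $h_p$. It yields a unique $\mu\in\mathscr{M}_{\rm{di}}(T)$ such that:
\begin{itemize}
\item ${\rm{Skel}}(T,\mu)={\rm{Skel}}(T)$;
\item $\mathscr{E}_{h_p}\mu=\nu_p$ for all $p$;
\item $\nu_p\stackrel{\rm{vg}}{\rightarrow}\mu$.
\end{itemize}

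\textbf{Step 3: vague convergence in the sense of mass erasure.} For each $p$ we now have $\mathscr{E}_{h_p}\mu_n\stackrel{\rm{vg}}{\rightarrow}\mathscr{E}_{h_p}\mu$. To extend this to an arbitrary $h>0$, pick $p$ with $h_p<h$ and write $\mathscr{E}_h = \mathscr{E}_{h-h_p}\mathscr{E}_{h_p}$. Applying Proposition \ref{prop:MEOp6} once more, with limit $\nu_p$, b-tightness from $(ii)$ (again on nice radii), and constant erasure parameter $h-h_p$, gives
$$\mathscr{E}_h\mu_n\stackrel{\rm{vg}}{\rightarrow}\mathscr{E}_{h-h_p}\nu_p=\mathscr{E}_h\mu .$$
Hence $\mu_n\stackrel{\rm{vme}}{\rightarrow}\mu$.

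\textbf{Step 4: uniqueness.} Suppose $\mu'\in\mathscr{M}_{\rm{di}}(T)$ also satisfies $\mathscr{E}_{h_p}\mu'=\nu_p$ for all $p$. By Lemma \ref{lemma:MEOp5b}, $\nu_p\stackrel{\rm{vg}}{\rightarrow}\mu'$. Since $\nu_p\stackrel{\rm{vg}}{\rightarrow}\mu$ as well, uniqueness of vague limits gives $\mu'=\mu$.

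The main obstacle is Step 1: matching the cut-off trees in the hypotheses to the intrinsic ones of each limit $\nu_{p+1}$, and handling the niceness of radii. Everything after that is a direct invocation of Propositions \ref{prop:MEOp13} and \ref{prop:MEOp6}.
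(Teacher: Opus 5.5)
Your overall route is the paper's. You use the semigroup property, Proposition \ref{prop:MEOp6} and uniqueness of vague limits to get $\nu_p=\mathscr{E}_{h_p-h_{p+1}}\nu_{p+1}$. You then apply Proposition \ref{prop:MEOp13} to $(\nu_p)_p$ using $(iii)$. Your Step 3 is what lies behind the paper's appeal to Proposition \ref{prop:MEOp11}. Your uniqueness argument via Lemma \ref{lemma:MEOp5b} is fine. However, the point you single out as delicate is resolved circularly.

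\textbf{The circular step.} To apply Proposition \ref{prop:MEOp6} to $\mathscr{E}_{h_{p+1}}\mu_n\stackrel{\rm{vg}}{\rightarrow}\nu_{p+1}$, you need b-tightness on ${\rm{Cut}}(T,\bullet^{r_k}(\nu_{p+1}))$. You get it from $(ii)$ by asserting $\bullet^{r_k}(\nu_1)=\bullet^{r_k}(\nu_{p+1})$ ``by Proposition \ref{prop:MEOp1}''. That proposition only compares ${\rm{Skel}}(T,\mathscr{E}_h\lambda)$ with ${\rm{Skel}}(T,\lambda)$ for a single measure $\lambda$. To deduce ${\rm{Skel}}(T,\nu_{p+1})={\rm{Skel}}(T,\nu_1)$ from it, you need $\nu_1=\mathscr{E}_{h_1-h_{p+1}}\nu_{p+1}$, which is exactly what Step 1 is supposed to produce.

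Nor can you use the per-$n$ identity ${\rm{Skel}}(T,\mathscr{E}_{h_p}\mu_n)={\rm{Skel}}(T,\mu_n)$. Skeletons are not preserved under vague limits, because infinite mass can escape: Lebesgue measure on $[n,\infty)$ converges vaguely to $\mathbf{0}$. Under the literal reading $\bullet^{r_k}=\bullet^{r_k}(\nu_1)$, what you would actually need is $\bullet^{r_k}(\nu_1)\subseteq\bullet^{r_k}(\nu_{p+1})$. That gives ${\rm{Cut}}(T,\bullet^{r_k}(\nu_{p+1}))\subseteq{\rm{Cut}}(T,\bullet^{r_k}(\nu_1))$, so b-tightness transfers. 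In other words, you would need that no skeleton point of $\nu_1$ is lost in $\nu_{p+1}$, and this requires a separate argument you do not supply.

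\textbf{How the paper avoids it.} The paper reads hypothesis $(ii)$ as in the paragraph preceding the theorem, and as in Theorem \ref{thm:MEGV27}. There, $(\mathscr{E}_{h_p}\mu_n)_{n}$ is b-tight on the intrinsic cut-off trees ${\rm{Cut}}(T,\bullet^{r_k}(\nu_p))$ of its own limit $\nu_p$. With that reading, Step 1 needs no identification of cut-off points at all; you only pass to slightly smaller $\nu_{p+1}$-nice radii, as you already do. The consistency relation then follows. Only afterwards does Proposition \ref{prop:MEOp1} show that all $\nu_p$ share ${\rm{Skel}}(T,\nu_1)$, which is what makes the notation $\bullet^{r_k}$ in $(ii)$--$(iii)$ unambiguous. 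With this correction, your Steps 2--4 go through as written.
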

\begin{proof}
As seen above, we have that $\mathscr{E}_{h_p - h_{p+1}}\nu_{p+1} = \nu_p$ for all $p \in \mathbb{N}$ by assumptions $(i)$ and $(ii)$. By Proposition \ref{prop:MEOp13} we are thus able to conclude, using $(iii)$, that there exists a unique measure $\mu \in \mathscr{M}_{\rm{di}}(T)$ such that ${\rm{Skel}}(T, \mu) = {\rm{Skel}}(T)$ and $\mathscr{E}_{h_p}\mu = \nu_p$ for all $p \in \mathbb{N}$. Replacing $\nu_p$ by $\mathscr{E}_{h_p}\mu$ in assumption $(i)$ and using Proposition \ref{prop:MEOp11} thus shows that $\mu_n \stackrel{\rm{vme}}{\rightarrow} \mu$ as $n \rightarrow \infty$.
\end{proof}
Vague convergence in the sense of mass erasure can be metrized to obtain a topology on $\mathscr{M}_{\rm{di}}(T)$ in an analogous way to how (weak) convergence in the sense of mass erasure is metrized in \cite{DW26} equation (1.10). For any strictly decreasing sequence $h_p \searrow 0$ in $(0, \infty)$, we denote the metric for (weak) convergence in the sense of mass erasure on $\mathscr{M}_{\rm{fin}}(T)$ by
$$ d_{\rm{P}}^{\rm{era}}\left(\mu, \nu \right) = \sum_{p=1}^{\infty} 2^{-p}\left(1 \wedge d_{\rm{P}}\left( \mathscr{E}_{h_p}\mu, \mathscr{E}_{h_p}\nu \right) \right) \quad \text{for all } \mu, \nu \in \mathscr{M}_{\rm{fin}}(T), $$
and note that the topology generated by this metric is invariant under different choices of $h_p \searrow 0$ by \cite{DW26} Theorem 3.24. It is also known from \cite{DW26} Theorem 3.24 that $(\mathscr{M}_{\rm{fin}}(T), d_{\rm{P}}^{\rm{era}})$ is separable but not complete (indeed, in order to complete the space, one can consider finite measures defined on the \emph{bordification} of $T$, see \cite{DW26} for a discussion of this approach).
\begin{prop}
\label{prop:MEOp11}
Let $h_p \searrow 0$ be a decreasing sequence in $(0, \infty)$, and set
$$ d_{\rm{V}}^{\rm{era}}(\mu, \nu) := \sum_{p=1}^{\infty} 2^{-p}\left(1 \wedge d_{\rm{V}}(\mathscr{E}_{h_p}\mu, \mathscr{E}_{h_p}\nu) \right) \quad \text{for all } \mu, \nu \in \mathscr{M}_{\rm{di}}(T). $$
Then $(\mathscr{M}_{\rm{di}}(T), d_{\rm{V}}^{\rm{era}})$ is a metric space, and $d_{\rm{V}}^{\rm{era}}$ metrizes vague convergence in the sense of mass erasure on $\mathscr{M}_{\rm{di}}(T)$. The space $(\mathscr{M}_{\rm{di}}(T), d_{\rm{V}}^{\rm{era}})$ is separable but \emph{not} complete.
\end{prop}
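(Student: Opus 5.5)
We check the three claims in turn: that $d_{\rm{V}}^{\rm{era}}$ is a metric, that it metrizes vague convergence in the sense of mass erasure, and then separability and non-completeness.

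\emph{Metric.} Symmetry and the triangle inequality follow termwise from those of $d_{\rm{V}}$, since $x \mapsto 1 \wedge x$ is subadditive and nondecreasing. Finiteness is clear because every term is at most $2^{-p}$. For definiteness, suppose $d_{\rm{V}}^{\rm{era}}(\mu,\nu)=0$. Then $\mathscr{E}_{h_p}\mu = \mathscr{E}_{h_p}\nu$ for all $p$. By Lemma \ref{lemma:MEOp5b}, $\mathscr{E}_{h_p}\mu \stackrel{\rm{vg}}{\rightarrow} \mu$ and $\mathscr{E}_{h_p}\nu \stackrel{\rm{vg}}{\rightarrow} \nu$. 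Uniqueness of vague limits (Proposition \ref{prop:BFMeas5}) then gives $\mu=\nu$.

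\emph{Metrization.} If $\mu_n \stackrel{\rm{vme}}{\rightarrow} \mu$, then every summand tends to zero, and dominated convergence over $p$ gives $d_{\rm{V}}^{\rm{era}}(\mu_n,\mu)\to 0$. Conversely, suppose $d_{\rm{V}}^{\rm{era}}(\mu_n,\mu)\to 0$. Then $\mathscr{E}_{h_p}\mu_n \stackrel{\rm{vg}}{\rightarrow} \mathscr{E}_{h_p}\mu$ for each $p$, and we must pass from the discrete set $\{h_p\}$ to all $h>0$. Fix $h>0$ and choose $p$ with $h_p<h$. By the semigroup property (Proposition \ref{prop:MEOp3}), $\mathscr{E}_h\mu_n=\mathscr{E}_{h-h_p}\mathscr{E}_{h_p}\mu_n$. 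We then apply Proposition \ref{prop:MEOp6} to the sequence $\mathscr{E}_{h_p}\mu_n \stackrel{\rm{vg}}{\rightarrow} \mathscr{E}_{h_p}\mu$ with the constant parameter $h-h_p$.

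\emph{Main obstacle.} Proposition \ref{prop:MEOp6} requires b-tightness of $(\mathscr{E}_{h_p}\mu_n)_n$ on the cut-off trees ${\rm{Cut}}(T,\bullet^{r_k}(\mu))$, and Example \ref{ex:MEConv} shows this can fail. So the plan must either (a) deduce the needed tightness from the metric convergence, or (b) interpret ``metrizes'' in the same sense as the remark following Definition \ref{def:MEConv}, which states that checking $\mathscr{E}_{h_p}\mu_n \stackrel{\rm{vg}}{\rightarrow} \mathscr{E}_{h_p}\mu$ along a sequence $h_p\searrow 0$ suffices. I would adopt (b), making the appeal to Proposition \ref{prop:MEOp6} explicit. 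If that fails without tightness, I would restrict the equivalence to sequences satisfying the b-tightness of Remark \ref{rmk:MEOp10b} and note this. Either way, the key identity is the semigroup factorization above.

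\emph{Separability.} Vague convergence with b-tightness on cut-off trees implies vague convergence in the sense of mass erasure (Proposition \ref{prop:MEOp6}). Lemma \ref{lemma:MEOp5b} shows that $\mathscr{E}_{h_q}\mu \to \mu$ in $d_{\rm{V}}^{\rm{era}}$, because $\mathscr{E}_{h_p}\mathscr{E}_{h_q}\mu=\mathscr{E}_{h_p+h_q}\mu \stackrel{\rm{vg}}{\rightarrow} \mathscr{E}_{h_p}\mu$ by Proposition \ref{prop:MEOp6} and the semigroup property. The approximants $\mathscr{E}_{h_q}\mu$ have supports of boundedly finite type, $R_{\mu,h_q+}(T)$, with finitely many atoms on each cut-off tree. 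Next take a countable dense set $D\subseteq T$. Approximate each $\mathscr{E}_{h_q}\mu$ by measures built from finitely many rational atoms at points of $D$, together with rational multiples of length measure on finitely many rays and segments with endpoints in $D$. This yields a countable family $\mathcal{D}$, in which the skeletal rays carry infinite mass and the approximating sequences are b-tight on cut-off trees. By Proposition \ref{prop:MEOp6}, such vague approximations with b-tightness are also $d_{\rm{V}}^{\rm{era}}$-approximations, so $\mathcal{D}$ is dense.

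\emph{Non-completeness.} We exhibit a $d_{\rm{V}}^{\rm{era}}$-Cauchy sequence without a limit, in two possible ways. The first adapts the finite-mass counterexample of \cite{DW26} Theorem 3.24: take any $d_{\rm{P}}^{\rm{era}}$-Cauchy, nonconvergent sequence in $\mathscr{M}_{\rm{fin}}(T)$ for a suitable $T$. Weak convergence dominates vague convergence, so the sequence stays Cauchy. A limit in $\mathscr{M}_{\rm{di}}(T)$ would, by uniqueness via Lemma \ref{lemma:MEGV18}-type arguments (Proposition \ref{prop:MEFix3}$(ii)$), have to be a finite limit, which does not exist. The second uses Example \ref{ex:MEConv}-style mass escaping to infinity.
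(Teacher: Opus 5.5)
\textbf{Where the proposal falls short: the converse direction of metrization.} Your metric axioms and the direction $\mu_n \stackrel{\rm{vme}}{\rightarrow}\mu \Rightarrow d_{\rm{V}}^{\rm{era}}(\mu_n,\mu)\to 0$ match the paper. The converse is not proved. The factorization $\mathscr{E}_h\mu_n=\mathscr{E}_{h-h_p}\mathscr{E}_{h_p}\mu_n$ only helps if $\mathscr{E}_{h-h_p}$ can be passed through the vague limit. Proposition \ref{prop:MEOp6} allows that only when $(\mathscr{E}_{h_p}\mu_n)_n$ is b-tight on the cut-off trees of $\mathscr{E}_{h_p}\mu$.

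Your option (b) is circular. The observation at the start of Section \ref{subsec:vagueME} that you want to invoke is itself justified by exactly this semigroup-plus-Proposition \ref{prop:MEOp6} argument.

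Option (a) cannot work as stated. Take $T=[0,\infty)$ and $\mu_n=c\,\delta_n$ with $c>0$. Then $\mathscr{E}_h\mu_n=(c-h)_+\delta_n\stackrel{\rm{vg}}{\rightarrow}\mathbf{0}=\mathscr{E}_h\mathbf{0}$ for every $h>0$, so $d_{\rm{V}}^{\rm{era}}(\mu_n,\mathbf{0})\to 0$. Yet $(\mathscr{E}_{h_p}\mu_n)_n$ is not b-tight on ${\rm{Cut}}(T,\emptyset)=T$ whenever $h_p<c$. So metric convergence does not yield the tightness that Proposition \ref{prop:MEOp6} needs.

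The same example shows that your fallback (restricting to b-tight sequences) proves a strictly weaker statement than the proposition. What is missing is an argument showing that, when convergence holds along $h_q\searrow 0$, any mass escaping from the cut-off trees of $\mathscr{E}_{h_q}\mu$ cannot affect $\mathscr{E}_{h-h_q}$ on bounded balls. The paper disposes of this direction in one sentence (``easily checked using the semigroup property''). So you have identified a point the paper does not spell out, but your proposal does not close it either.

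\textbf{Separability.} Your route differs from the paper's and is much heavier. The paper never builds infinite-mass approximants. For $\mu$-nice $R$ and $h_0\ge h_1$, it notes that $\mathscr{E}_h{\rm{cut}}^{h_0}_{\bullet^R}\mu$ and $\mathscr{E}_h\mu$ agree on $\overline{B}_T(\rho,r)$ for $r<R$, so $d_{\rm{V}}^{\rm{era}}(\mu,{\rm{cut}}^{h_0}_{\bullet^R}\mu)\le e^{-R}$. It then uses a countable $d_{\rm{P}}^{\rm{era}}$-dense subset of $\mathscr{M}_{\rm{fin}}(T)$ from \cite{DW26} together with $d_{\rm{V}}^{\rm{era}}\le d_{\rm{P}}^{\rm{era}}$.

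Your construction needs more than you check. To apply Proposition \ref{prop:MEOp6}, the approximants' infinite mass must eventually pass through the cut-off points of $\mathscr{E}_{h_q}\mu$ at every level. A Polish $\mathbb{R}$-tree can carry uncountably many rays, so the countable family of rays must be chosen with care, for example one ray through each point of a countable dense subset of the union of all rays. Also, $\mathscr{E}_{h_q}\mu$ need not have finitely many atoms on cut-off trees, though that is harmless for weak approximation.

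\textbf{Non-completeness.} Your first route is the paper's. The clean reason no limit exists is this: uniqueness of vague limits forces $\mathscr{E}_{h_p}\mu=\nu_p$, and then $(\mu(T)-h_p)_+=\nu_p(T)<\infty$ makes $\mu$ finite. That contradicts non-convergence in $d_{\rm{P}}^{\rm{era}}$.

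Your second, escaping-mass route actually works and is simpler. Take $\mu_n=\delta_1+\delta_n$ on $[0,\infty)$. Each $(\mathscr{E}_{h_p}\mu_n)_n$ converges vaguely, to $\delta_1$ when $h_p\le 1$, so the sequence is $d_{\rm{V}}^{\rm{era}}$-Cauchy. But no $\mu$ can satisfy $(\mu(T)-h_p)_+=1$ for two distinct values of $h_p$, so there is no limit.
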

\begin{proof}
The fact that $d_{\rm{V}}^{\rm{era}}$ is symmetric and satisfies the triangle inequality is easily checked using the corresponding properties for $d_V$. Suppose that $d_{\rm{V}}^{\rm{era}}(\mu, \nu) = 0$. Then $\mathscr{E}_{h_p}\mu = \mathscr{E}_{h_p}\nu$ for all $p \in \mathbb{N}$. By Lemma \ref{lemma:MEOp5b} we have that $\mathscr{E}_{h_p}\mu \stackrel{\rm{vg}}{\rightarrow} \mu$ and $\mathscr{E}_{h_p}\nu \stackrel{\rm{vg}}{\rightarrow} \nu$ as $p \rightarrow \infty$, and so uniqueness of limits ensures that $\mu = \nu$, as wanted. Hence, $(\mathscr{M}_{\rm{di}}(T), d_{\rm{V}}^{\rm{era}})$ is indeed a metric space. It is easily checked that $d_{\rm{V}}^{\rm{era}}$ metrizes vague convergence in the sense of mass erasure, irrespective of the choice of sequence $h_p \rightarrow 0$, using the semigroup property from Proposition \ref{prop:MEOp3}. 

To see that $(\mathscr{M}_{\rm{di}}(T), d_{\rm{V}}^{\rm{era}})$ is separable, let $D \subseteq \mathscr{M}_{\rm{fin}}(T)$ be a $d_{\rm{P}}^{\rm{era}}$-dense subset. Fix some $\varepsilon > 0$ and $\mu \in \mathscr{M}_{\rm{di}}(T)$, and let $R > 0$ be chosen large enough that $\int_R^{\infty} e^{-r} dr < \frac{\varepsilon}{2}$. Let $\bullet^R = \bullet^R(\mu)$. Then from a similar calculation to that in Proposition \ref{prop:MEOp9} we get for $h_0 \geq h > 0$ that $\mathscr{E}_h {\rm{cut}}_{\bullet^R}^{h_0}\mu = \left( \mathscr{E}_h \mu \right)|_{{\rm{Cut}}(T, \bullet^R)} + (h_0 - h) \sum_{\sigma \in \bullet^R} \delta_{\sigma}$, and so in particular for every $r \in (0, R)$ we have that $\left( \mathscr{E}_h {\rm{cut}}_{\bullet^R}^{h_0}\mu \right)^{\restr r} = \left(\mathscr{E}_h\mu \right)^{\restr r}$. Hence, for any $h > 0$, $ d_{\rm{V}}\left(\mathscr{E}_h\mu,\mathscr{E}_h {\rm{cut}}_{\bullet^R}^{h_0}\mu\right) \leq \int_R^{\infty} e^{-r} dr < \frac{\varepsilon}{2}$, and so we conclude that also $d_{\rm{V}}^{\rm{era}}\left( \mu, {\rm{cut}}_{\bullet^R}^{h_0}\mu \right) < \frac{\varepsilon}{2}$. Now, as ${\rm{cut}}_{\bullet^R}^{h_0}\mu \in \mathscr{M}_{\rm{fin}}(T)$ there exists $\nu \in D$ such that $d_{\rm{P}}\left( {\rm{cut}}_{\bullet^R}^{h_0}\mu, \nu \right) < \frac{\varepsilon}{2}$. As $d_{\rm{V}} \leq d_{\rm{P}}$ we immediately get that also $d_{\rm{V}}^{\rm{era}} \leq d_{\rm{P}}^{\rm{era}}$, and so by the triangle inequality we conclude that $d_{\rm{V}}^{\rm{era}}(\mu, \nu) < \varepsilon$, showing that $D$ is dense, as desired.

To see that $(\mathscr{M}_{\rm{di}}(T), d_{\rm{V}}^{\rm{era}})$ is \emph{not} complete, recall that the space $(\mathscr{M}_{\rm{fin}}(T), d_{\rm{P}}^{\rm{era}})$ is also \emph{not} complete (indeed, a completion of this space is characterized in \cite{DW26} Section 3.4 as a space of finite measures on the \emph{bordification} of $T$ which are diffuse on its boundary, denoted $\mathscr{M}_{\rm{fin}}^{\rm{era}}(T)$ -- see also \cite{DW26} Example 1.8). Hence, one may construct a Cauchy sequence $(\mu_n)_{n \in \mathbb{N}}$ on $(\mathscr{M}_{\rm{fin}}(T), d_{\rm{P}}^{\rm{era}})$ which is \emph{not} convergent, more specifically satisfying for all $p$ that $\mathscr{E}_{h_p}\mu_n \stackrel{\rm{wk}}{\rightarrow} \nu_p$ as $n \rightarrow \infty$ for some $\nu_p \in \mathscr{M}_{\rm{fin}}(T)$, but where the only possible measure $\mu$ which could satisfy $\mathscr{E}_{h_p}\mu = \nu_p$ is one which is non-zero on the boundary of $T$, and so is neither an element of $\mathscr{M}_{\rm{fin}}(T)$ nor $\mathscr{M}_{\rm{di}}(T)$. From this we clearly also get that $\mathscr{E}_{h_p}\mu_n \stackrel{\rm{vg}}{\rightarrow} \nu_p$ as $n \rightarrow \infty$ for all $p$, and so $(\mu_n)_{n \in \mathbb{N}}$ is clearly also a Cauchy sequence in $(\mathscr{M}_{\rm{di}}(T), d_{\rm{V}}^{\rm{era}})$ which is not convergent.
\end{proof}
Recall the discussion from Remark \ref{rmk:OpenQuestion}. In the setting here, where we consider a fixed Polish $\mathbb{R}$-tree, $(T, d, \rho)$, it remains an open question to show that the space $(\mathscr{M}_{\rm{di}}(T), d_{\rm{V}}^{\rm{era}})$ is Lusin. 
\section{Mass erasure in the Gromov setting}
\label{sec:5}
In Section \ref{sec:4} we established limit theorems for sequences of discretely infinite measures defined on a fixed Polish $\mathbb{R}$-tree. In this context it was necessary to impose a b-tightness condition on the cut-off tree associated to the limit measure, to avoid (finite or infinite) mass escaping to infinity in the tree. Our goal in this section will be to establish analogous limit theorems for sequences of \emph{isometry classes}, $\boldsymbol{\mu}_n = [T_n, d_n, \rho_n, \mu_n] \in \mathbb{T}_{\rm{min}}^{\rm{di}}$, $n \in \mathbb{N}$, tending to some limit $\boldsymbol{\mu} = [T, d, \rho, \mu] \in \mathbb{T}_{\rm{min}}^{\rm{di}}$. As $(\boldsymbol{\mu}_n)_{n \in \mathbb{N}}$ and $\boldsymbol{\mu}$ may not be embedded into a common \emph{$\mathbb{R}$-tree}, the question of how to identify which parts of the trees in $(\boldsymbol{\mu}_n)_{n \in \mathbb{N}}$ have mass tending to infinity, becomes more complicated. A large part of this section will be spent dealing with this challenge.

Suppose that $\boldsymbol{\mu}_n \stackrel{\rm{GV}}{\rightarrow} \boldsymbol{\mu}$. The challenge of identifying the right cut-off points in $T_n$, $n \in \mathbb{N}$, is initially addressed by considering an appropriate embedding space capturing the Gromov-vague convergence as in Proposition \ref{prop:GV3}. Heuristically, we identify for each $r > 0$, the points in $\partial B_{T_n}(\rho_n, r)$ whose subtrees move within some $\eta$-distance of the infinite-mass subtrees belonging to $\boldsymbol{\mu}$ in the embedding space. This being with the technical caveats that we consider the infinite-mass subtrees above some level $r+\kappa$ for $\boldsymbol{\mu}$, and require that the subtrees need to be $\eta$-close above level $r+2\kappa$ in the embedding space. We denote the points in $\partial B_{T_n}(\rho_n, r)$ identified by this procedure by $\bullet_n^{r,\kappa,\eta} = \bullet_n^{r,\kappa,\eta}(\mu)$, and prove, among other things, that $|{\bullet_n^{r,\kappa,\eta}}(\mu)| = |{\bullet^r(\mu)}|$ eventually in Section \ref{subsec:CutOffEmbed}. The geometry in the embedding space may be utilized to prove limit theorems under the b-tightness condition, $\lim_{R \rightarrow \infty} \limsup_{n \rightarrow \infty} \mu_n\left( {\rm{Cut}}(T_n, \bullet_n^{r_k,\kappa,\eta}) \setminus \overline{B}_{T_n}(\rho_n, R) \right) = 0$ for all $k$ in some increasing sequence of radii $r_k \rightarrow \infty$.

Other than being an intricate construction, the main downside of working with the cut-off points $\bullet_n^{r,\kappa,\eta}$ is the inherent dependence on the choice of embedding space in their construction. This dependence is in particular unsuitable for studying random isometry classes, as we will do in subsequent work \cite{Draft2}. For some fixed $r > 0$, we may instead think of alternative ways of choosing or representing the cut-off points. It turns out that no matter how one chooses the cut-off points, as long as i) Gromov-vague convergence holds, ii) the number of points coincides with $|{\bullet^r}(\mu)|$, and iii) a b-tightness condition is satisfied on the cut-off trees associated to the chosen cut-off points, the cut-off points will eventually coincide with $\bullet_n^{r,\kappa,\eta}$. For applications, this means that we can construct cut-off points in any way that we like, as long as they satisfy conditions i)--iii), and that this will always produce cut-off points which coincide with $\bullet_n^{r,\kappa,\eta}$.

The idea behind the three conditions essentially boils down to an argument which roughly goes as follows. Suppose that $|{\bullet^r}(\mu)| = m$. Then Gromov-vague convergence will force $m$ points in $T_n$ to eventually have large mass above them tending to infinity (with these points precisely being $\bullet_n^{r,\kappa,\eta}$). But if one chooses $m$ cut-off points in $T_n$ in a way which does not coincide with $\bullet_n^{r,\kappa,\eta}$, then by the pigeonhole principle, assumption iii) will imply that a b-tightness estimate will necessarily have to be imposed on one of the points in $\bullet_n^{r,\kappa,\eta}$. This will contradict the fact that the mass above this point has to tend to infinity. 
\subsection{Cut-off points in embedding systems}
\label{subsec:CutOffEmbed}
Let $\boldsymbol{\mu} = [T, d, \rho, \mu]$ and $\boldsymbol{\mu}_n = [T_n, d_n, \rho_n, \mu_n]$, $n \in \mathbb{N}$, be isometry classes in $\mathbb{T}_{\rm{min}}^{\rm{di}}$, and let as usual $\bullet^r = \bullet^r(\mu)$. Let $(\mathscr{Z}, \delta, \varrho)$ be a pointed Polish space, and let $\phi \colon T \rightarrow \mathscr{Z}$, $\phi_n \colon T_n \rightarrow \mathscr{Z}$, $n \in \mathbb{N}$, be pointed isometries. Writing $\Phi = (\phi_n)_{n \in \mathbb{N}}$, we refer to the triplet $(\mathscr{Z}, \Phi, \phi)$ as an \emph{embedding system}, and we call $(\mathscr{Z}, \delta, \varrho)$ the \emph{embedding space}. We will want to define a contour area around each infinite-mass subtree of $(T, d, \rho, \mu)$ above a certain level in the embedding system in order to identify any subtrees of $(T_n, d_n, \rho_n, \mu_n)$ `moving close' to the infinite $\mu$-mass. Fix some $r > 0$ and $\kappa > 0$. For technical purposes we shall distinguish between `cut-off points' at height $r$ and $r+\kappa$, respectively. We refer to the cut-off points at height $r+\kappa$ as \emph{check points}, as they will be used to identify which points should be cut off at height $r$, and we denote them by $\circ^{r,\kappa} = \circ^{r,\kappa}(\mu) := \bullet^{r+\kappa}(\mu)$. It is easily seen that $f_{\overline{B}_T(\rho, r)}\left( \circ^{r,\kappa} \right) = \bullet^r$, with $f_{\overline{B}_T(\rho, r)}$ denoting the point-projection map onto $\overline{B}_T(\rho, r)$, so in particular $|{\bullet^r}| \leq |{\circ^{r,\kappa}}|$. For any given $r > 0$, $\kappa > 0$ and $\eta > 0$, we define the contour areas:
\begin{align*}
J_{\sigma}^{\kappa, \eta} &:= \phi(\theta_{\sigma}T)^{\eta} \setminus B_{\mathscr{Z}}(\varrho, r+2\kappa) &&\text{for every } \sigma \in \circ^{r,\kappa}, \\
\mathbf{J}_{\boldsymbol{\sigma}}^{\kappa, \eta} &:= \bigcup_{\sigma \in \circ^ {r,\kappa}: ~ f_{\overline{B}_T(\rho, r)}(\sigma) = \boldsymbol{\sigma}} J_{\sigma}^{\kappa,\eta} &&\text{for every } \boldsymbol{\sigma} \in \bullet^r.
\end{align*}
The entire contour area at level $r$ is defined by 
$$ \mathscr{J}_r^{\kappa,\eta} := \bigcup_{\sigma \in \circ^{r,\kappa}} J_{\sigma}^{\kappa,\eta} = \phi\left( T \setminus {\rm{Cut}}(T, \circ^{r,\kappa}) \right)^{\eta} \setminus B_{\mathscr{Z}}(\varrho, r+2\kappa).$$
See Figure \ref{fig:Contour} for an illustration. As ${\rm{Cut}}(T, \bullet^r) \subseteq {\rm{Cut}}(T, \bullet^{r'})$, it is easily checked that $\mathscr{J}_r^{\kappa,\eta} \subseteq \mathscr{J}_{r'}^{\kappa',\eta'}$ whenever $r \geq r'$, $\eta \leq \eta'$ and $\kappa \geq \kappa'$. Now, to determine the appropriate check- and cut-off points for $(T_n, d_n, \rho_n, \mu_n)$, $n \in \mathbb{N}$, in the embedding system, we set:
\begin{align*}
\circ_n^{r,\kappa,\eta} = \circ_n^{r,\kappa,\eta}(\mu) = \circ_n^{r,\kappa,\eta}(T_n, \mu) &:= \left\{ \sigma_n \in \partial B_{T_n}(\rho_n, r+\kappa) ~|~ \phi_n(\theta_{\sigma_n}T_n) \cap \mathscr{J}_r^{\kappa,\eta} \neq \emptyset \right\}, \\
\bullet_n^{r,\kappa,\eta} = \bullet_n^{r,\kappa,\eta}(\mu) = \bullet_n^{r,\kappa,\eta}(T_n, \mu) &:= f_{\overline{B}_{T_n}(\rho_n, r)}(\circ_n^{r,\kappa,\eta}(T_n, \mu)).
\end{align*}
Then, as before, it is easily seen that $|{\bullet_n^{r,\kappa,\eta}}| \leq |{\circ_n^{r,\kappa,\eta}}|$, noting that one or both of these quantities may in principle be infinite.

\begin{figure}[t]
\center
\includegraphics[width=0.6\textwidth]{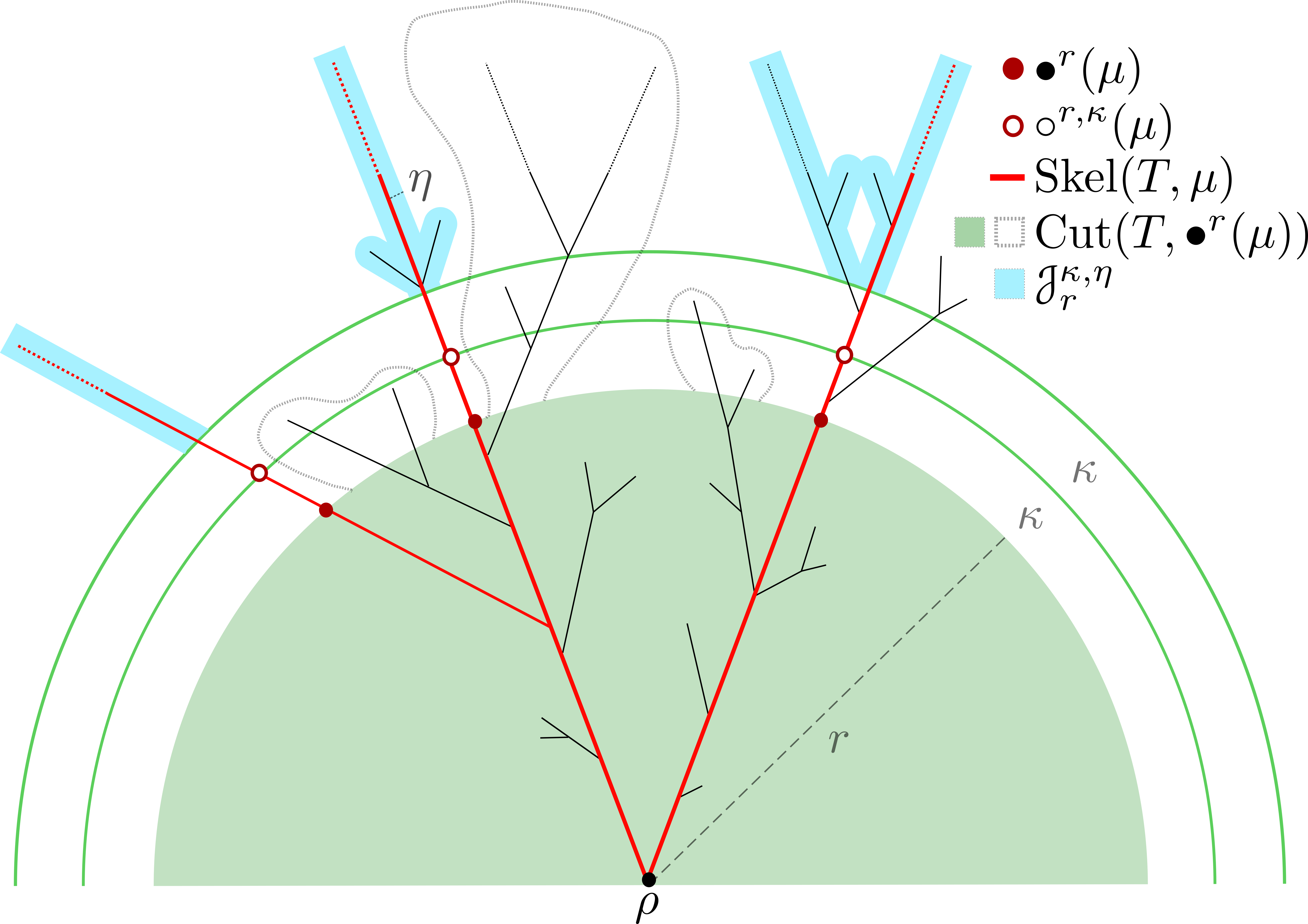}
\caption{Illustration of the cut-off points, check points and the contour area $\mathscr{J}_r^{\kappa,\eta}$ for the embedding of $(T, d, \rho, \mu)$ in some suitable space.}
\label{fig:Contour}
\end{figure}

Fix some $\boldsymbol{\mu}$-nice $r > 0$, and some $\kappa > 0$. Our first goal will be to show that under the assumption $\boldsymbol{\mu}_n \stackrel{\rm{GV}}{\rightarrow} \boldsymbol{\mu}$, whenever $\eta$ is sufficiently small and $n$ is sufficiently large it will follow in a suitable embedding system that i) there is a one-to-one correspondence between the cut-off points $\bullet_n^{r,\kappa,\eta}$ and $\bullet^r$, and ii) this correspondence is well-approximating. As ${\rm{Skel}}(T, \mu)$ is a discrete $\mathbb{R}$-tree, we can throughout this section let $\eta_0 = \eta_0(r, \kappa) > 0$ be chosen small enough that $\left(\overline{B}_T(\rho, r+2\eta_0) \setminus B_T(\rho, r)\right) \cap {\rm{Bp}}({\rm{Skel}}(T, \mu)) = \emptyset$ and 
$$ \eta_0 \leq  \frac{\kappa}{2} \wedge \left\{ \begin{array}{rl}
1 \wedge \frac{1}{4} \min_{\substack{\sigma, \sigma' \in {\rm{Cut}}(T, \bullet^r) \\ \sigma \neq \sigma'}} d(\sigma, \sigma') & \text{if } |{\rm{Cut}}(T,\bullet^r)| \geq 2, \\
1 & \text{if } |{\rm{Cut}}(T,\bullet^r)| \leq 1.
\end{array} \right. $$
Note throughout this section that our results will often be stated for some fixed choice of $n \in \mathbb{N}$, and indeed, our results hold true when comparing any suitable isometry class in $\mathbb{T}_{\rm{min}}^{\rm{di}}$ against $\boldsymbol{\mu}$. Our first result shows the existence of a correspondence map between the points $\bullet_n^{r,\kappa,\eta}$ and $\bullet^r$.
\begin{prop}[The map $u_n$]
\label{prop:MEGV1}
Let $\boldsymbol{\mu} = [T, d, \rho, \mu]$ and $\boldsymbol{\mu}_n = [T_n, d_n, \rho_n, \mu_n]$ be isometry classes in $\mathbb{T}_{\rm{min}}^{\rm{di}}$, and let $(\mathscr{Z}, \Phi, \phi)$ be an embedding system. Fix some $\boldsymbol{\mu}$-nice $r > 0$, and let $\kappa > 0$ and $\eta \in (0, \eta_0)$. Then for every $\boldsymbol{\sigma}_n \in \bullet_n^{r,\kappa,\eta}$ there exists a unique $\boldsymbol{\sigma} \in  \bullet^r$ such that 
$\phi_n(\theta_{\boldsymbol{\sigma}_n}T_n) \cap \mathbf{J}_{\boldsymbol{\sigma}}^{\kappa, \eta} \neq \emptyset$. This yields a unique map $u_n := u_n^{r, \kappa, \eta} \colon \bullet_n^{r,\kappa,\eta} \rightarrow \bullet^r$.
\end{prop}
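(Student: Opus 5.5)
Existence comes straight from the definitions; uniqueness reduces to a separation estimate in the embedding space that uses the choice of $\eta_0$.

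\emph{Existence.} Let $\boldsymbol{\sigma}_n \in \bullet_n^{r,\kappa,\eta}$. By definition there is some $\sigma_n \in \circ_n^{r,\kappa,\eta}$ with $f_{\overline{B}_{T_n}(\rho_n, r)}(\sigma_n) = \boldsymbol{\sigma}_n$. Since $\sigma_n \in \theta_{\boldsymbol{\sigma}_n}T_n$, we have $\theta_{\sigma_n}T_n \subseteq \theta_{\boldsymbol{\sigma}_n}T_n$, and $\phi_n(\theta_{\sigma_n}T_n)$ meets $\mathscr{J}_r^{\kappa,\eta} = \bigcup_{\sigma \in \circ^{r,\kappa}} J_\sigma^{\kappa,\eta}$. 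Every $\sigma \in \circ^{r,\kappa}$ projects onto some point of $\bullet^r$, so the intersection point lies in $\mathbf{J}_{\boldsymbol{\sigma}}^{\kappa,\eta}$ for at least one $\boldsymbol{\sigma} \in \bullet^r$.

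\emph{Uniqueness.} Suppose $\phi_n(\theta_{\boldsymbol{\sigma}_n}T_n)$ meets both $J_{\sigma}^{\kappa,\eta}$ and $J_{\sigma'}^{\kappa,\eta}$, where $\sigma,\sigma' \in \circ^{r,\kappa}$ project onto distinct points $\boldsymbol{\sigma} \neq \boldsymbol{\sigma}' \in \bullet^r$. Then there are $x, x' \in \theta_{\boldsymbol{\sigma}_n}T_n$ and $y \in \theta_\sigma T$, $y' \in \theta_{\sigma'}T$ with $\delta(\phi_n(x), \phi(y)) \le \eta$ and $\delta(\phi_n(x'),\phi(y')) \le \eta$. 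Because the contour areas exclude $B_{\mathscr{Z}}(\varrho, r+2\kappa)$, we get $d_n(\rho_n,x), d_n(\rho_n,x') \ge r+2\kappa$, and hence $d(\rho,y), d(\rho,y') \ge r+2\kappa-\eta$. The aim is to compare $d(y,y')$ with $d_n(x,x')$ through the triangle inequality in $\mathscr{Z}$, using that the embeddings are pointed isometries. In $T$, the geodesic from $y$ to $y'$ passes through $y \wedge y'$, which lies at height strictly below $r$ because $\boldsymbol{\sigma} \neq \boldsymbol{\sigma}'$. Hence
$$d(y,y') = d(\rho,y) + d(\rho,y') - 2 d(\rho, y\wedge y') \ge d(\rho,y)+d(\rho,y') - 2r + 2\cdot 4\eta_0,$$
where the extra $8\eta_0$ comes from the choice $\eta_0 \le \tfrac14 d(\boldsymbol{\sigma},\boldsymbol{\sigma}')$: the branch point $y\wedge y'$ lies at distance at least $d(\boldsymbol{\sigma},\boldsymbol{\sigma}')/2 \ge 4\eta_0$ below level $r$. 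In $T_n$ the points $x, x'$ both lie above $\boldsymbol{\sigma}_n$, which sits at height $r$, so $d_n(\rho_n, x\wedge x') \ge r$ and
$$d_n(x,x') \le d_n(\rho_n,x)+d_n(\rho_n,x') - 2r.$$
Since $\phi_n,\phi$ are pointed isometries, $d_n(\rho_n,x)$ and $d(\rho,y)$ differ by at most $\eta$, and likewise for the primed points. The triangle inequality in $\mathscr{Z}$ gives $d(y,y') \le d_n(x,x') + 2\eta$. Combining the three bounds yields $8\eta_0 \le 4\eta$, which contradicts $\eta < \eta_0$. So $\boldsymbol{\sigma}$ is unique, and $u_n(\boldsymbol{\sigma}_n) := \boldsymbol{\sigma}$ is well defined.

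\emph{Main obstacle.} The delicate step is the separation estimate: showing that distinct cut-off points $\boldsymbol{\sigma}\neq\boldsymbol{\sigma}'$ force $y\wedge y'$ strictly below level $r$ by a margin controlled by $\eta_0$. This needs the precise use of the minimal distance between distinct points of ${\rm{Cut}}(T,\bullet^r)$ in the definition of $\eta_0$, together with geodesic structure of $\mathbb{R}$-trees. The degenerate case $|\bullet^r| \le 1$ is trivial, since then uniqueness is automatic.
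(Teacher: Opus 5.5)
Your proof is correct and is essentially the paper's argument; your bound $d_n(\rho_n, x\wedge x')\ge r$ simply merges the paper's two cases for $\tau_n$ into one. One small arithmetic slip: $y\wedge y'$ lies only $d(\boldsymbol{\sigma},\boldsymbol{\sigma}')/2\ge 2\eta_0$ below level $r$, not $4\eta_0$, so combining the bounds gives $4\eta_0\le 4\eta$ rather than $8\eta_0\le 4\eta$, and this still contradicts $\eta<\eta_0$.
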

After proving Proposition \ref{prop:MEGV1}, we will show that the map $u_n \colon \bullet_n^{r,\kappa,\eta} \rightarrow \bullet^r$ is eventually \emph{bijective} if the underlying isometry classes converge in the Gromov-vague sense. This will be proven through a series of lemmas, leading up to Proposition \ref{prop:MEGV2} -- indeed, some of these lemmas will prove useful later on. The lemmas are written out in terms of explicit estimates, and indeed Proposition \ref{prop:MEGV2} is immediately obtained from the estimates in Corollary \ref{cor:MEGV5}.
\begin{prop}[$u_n$ is bijective]
\label{prop:MEGV2}
Let $\boldsymbol{\mu} = [T, d, \rho, \mu]$ and $\boldsymbol{\mu}_n = [T_n, d_n, \rho_n, \mu_n]$, $n \in \mathbb{N}$, be isometry classes in $\mathbb{T}_{\rm{min}}^{\rm{di}}$. Fix some $\boldsymbol{\mu}$-nice $r > 0$, and let $\kappa > 0$ and $\eta \in (0, \eta_0)$. If $\boldsymbol{\mu}_n \stackrel{\rm{GV}}{\rightarrow} \boldsymbol{\mu}$, then there exists an embedding system $(\mathscr{Z}, \Phi, \phi)$ such that $u_n \colon \bullet_n^{r,\kappa,\eta} \rightarrow \bullet^r$ is eventually bijective.
\end{prop}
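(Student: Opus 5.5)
We take the embedding system from Proposition~\ref{prop:GV3}: pointed isometries into a Polish $(\mathscr{Z}, \delta, \varrho)$ such that $(\mu_n \circ \phi_n^{-1})^{\restr R} \stackrel{\rm{wk}}{\rightarrow} (\mu \circ \phi^{-1})^{\restr R}$ for all $\boldsymbol{\mu}$-nice $R$. Injectivity and surjectivity of $u_n$ are then proved separately, each for all $n$ sufficiently large. The constraints on $\eta_0$ are what make this work. The sets $\phi(\theta_{\sigma}T)$ for distinct check points are separated above level $r+2\kappa$ by more than $2\eta$, so the contour areas $\mathbf{J}_{\boldsymbol{\sigma}}^{\kappa,\eta}$ for distinct $\boldsymbol{\sigma}\in\bullet^r$ are disjoint. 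Moreover, there are no branch points of the skeleton in the annulus between $r$ and $r+2\eta_0$.

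\textbf{Surjectivity.} Fix $\boldsymbol{\sigma}\in\bullet^r$ and choose a check point $\sigma\in\circ^{r,\kappa}$ above it. Since $\mu(\theta_\sigma T)=\infty$, we can pick a $\boldsymbol{\mu}$-nice $R>r+2\kappa+1$ such that the open set $U:=\phi(\theta_\sigma T)^{\eta/2}\cap (B_{\mathscr{Z}}(\varrho,R)\setminus \overline{B}_{\mathscr{Z}}(\varrho,r+2\kappa))$ satisfies $\mu\circ\phi^{-1}(U)>0$. Because $\theta_\sigma T$ is above $r+\kappa$ and $\mu$ restricted to the ball is finite, its mass in this annulus region becomes positive for $R$ large. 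By the Portmanteau theorem, $\mu_n(\phi_n^{-1}(U))>0$ eventually, so there is a point $x_n\in T_n$ with $\phi_n(x_n)\in U\subseteq J_\sigma^{\kappa,\eta}$ and $d_n(\rho_n,x_n)\ge r+2\kappa$. Let $\sigma_n$ be the ancestor of $x_n$ at height $r+\kappa$. Then $\sigma_n\in\circ_n^{r,\kappa,\eta}$, and its projection $\boldsymbol{\sigma}_n$ to level $r$ satisfies $u_n(\boldsymbol{\sigma}_n)=\boldsymbol{\sigma}$ by the uniqueness in Proposition~\ref{prop:MEGV1}. Since $\bullet^r$ is finite, finitely many such $N$ suffice.

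\textbf{Injectivity.} This is the main obstacle. Suppose $\boldsymbol{\sigma}_n\neq\boldsymbol{\sigma}_n'$ in $\bullet_n^{r,\kappa,\eta}$ both map to $\boldsymbol{\sigma}$, along infinitely many $n$. Then there are points $x_n\in\theta_{\boldsymbol{\sigma}_n}T_n$ and $x_n'\in\theta_{\boldsymbol{\sigma}_n'}T_n$ whose images lie within $\eta$ of $\phi(\theta_\sigma T)$ and $\phi(\theta_{\sigma'} T)$ respectively, with $\sigma,\sigma'$ above $\boldsymbol{\sigma}$ and both at heights $\ge r+2\kappa$. The tree metric gives $d_n(x_n,x_n')\ge d_n(x_n,\rho_n)+d_n(x_n',\rho_n)-2r$, since their most recent common ancestor lies at height strictly below $r$. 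In $T$, however, the corresponding nearby points $y\in\theta_\sigma T$ and $y'\in\theta_{\sigma'}T$ have $y\wedge y'$ at height at least $r$, and in fact at least $r+2\eta_0$ by the choice of $\eta_0$. This gives a gap of at least $4\eta_0-4\eta>0$ between the two distances.

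The contradiction is then derived by stabilizing the points. The $x_n$ may escape, so we first replace them by points of positive mass in a bounded window. Repeating the surjectivity argument with the nice radius $R$, we get near each of $y,y'$ a set of positive $\mu$-mass within a small ball, and these balls receive positive $\mu_n$-mass eventually. The remaining point to verify carefully is that this mass sits inside $\theta_{\boldsymbol{\sigma}_n}T_n$ and $\theta_{\boldsymbol{\sigma}_n'}T_n$ respectively. To see this, use that distances in $\mathscr{Z}$ between $\phi_n$-images equal $d_n$-distances. A point of $T_n$ mapping near $\phi(y)$ lies at $d_n$-distance about $d(y,z)$ from any point mapping near $\phi(z)$. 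Evaluating this on the two branches through $\boldsymbol{\sigma}_n$ and $\boldsymbol{\sigma}_n'$ forces the branch points to separate below $r$, which contradicts the pairwise distances computed in $T$ by more than the $2\eta$ slack. If this mass-localization is too delicate, the fallback is to invoke minimality of $T_n$ to choose $x_n$ in ${\rm{supp}}(\mu_n)$ and to control the approximation through Lemma~\ref{lemma:GV2}.
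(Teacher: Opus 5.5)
Your proposal is correct and is essentially the paper's argument. Surjectivity is Lemma~\ref{lemma:MEGV4} and Corollary~\ref{cor:MEGV5}, except that you apply the Portmanteau theorem in a single embedding system from Proposition~\ref{prop:GV3} instead of an explicit Prokhorov bound. For this, take the \emph{open} $\eta/2$-neighbourhood of $\phi(\theta_\sigma T)$, so that $U$ is genuinely open. Injectivity is Lemma~\ref{lemma:MEGV3}.

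Note, though, that your injectivity contradiction is already complete in your first paragraph. The inequalities $d(\rho,y)+d(\rho,y')-2r-2\eta < d_n(x_n,x_n') \le d(y,y')+2\eta \le d(\rho,y)+d(\rho,y')-2r-4\eta_0+2\eta$ force $\eta_0<\eta$. This holds for every fixed $n$ and in any embedding system, with no convergence used. So the whole ``stabilization''/mass-localization paragraph is unnecessary and, being unfinished, should simply be deleted.
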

Finally, we will prove in Lemma \ref{lemma:MEGV6} that the embedding system $(\mathscr{Z}, \Phi, \phi)$ can be constructed in a way such that the cut-off points $\bullet_n^{r,\kappa,\eta}$ eventually approximate $\bullet^r$ well.
\begin{figure}[t]
\center
\includegraphics[width=.8\textwidth]{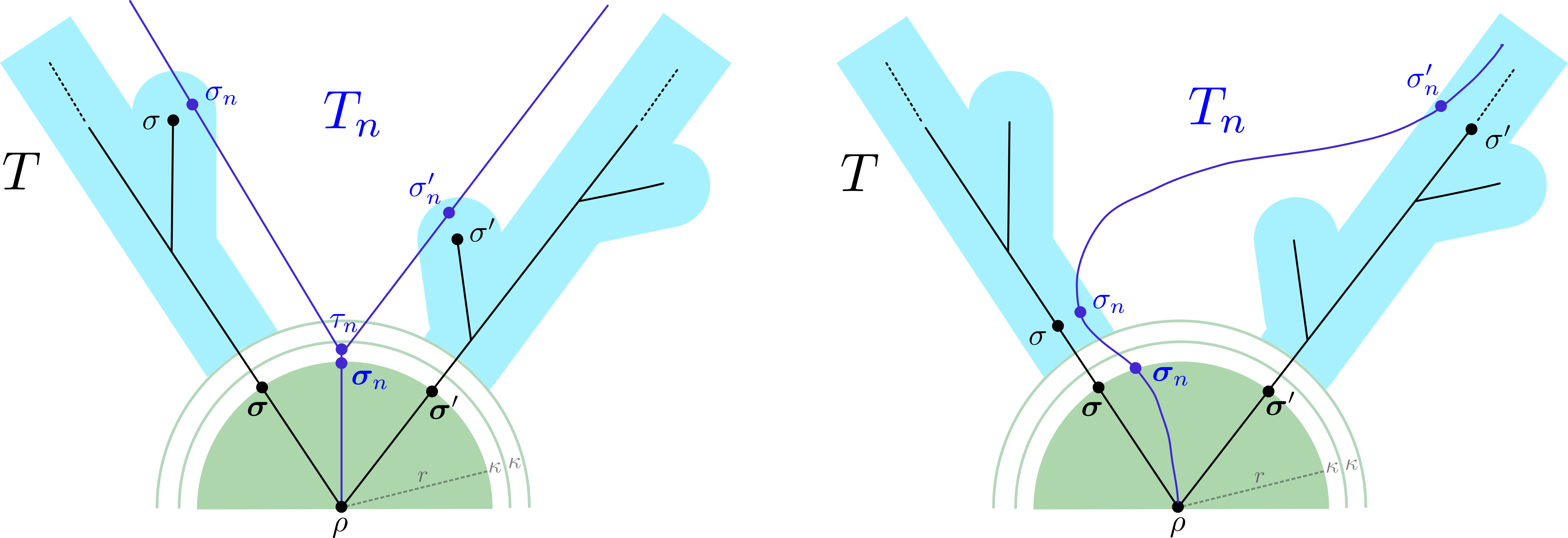}
\caption{$\tau_n \notin \{\sigma_n, \sigma_n'\}$ (left) and $\tau_n \in \{\sigma_n, \sigma_n'\}$ (right) in the proof of Proposition \ref{prop:MEGV1}.}
\label{fig:Fig3}
\end{figure}
\begin{proof}[Proof of Proposition \ref{prop:MEGV1}]
If $\bullet^r = \emptyset$, then also $\bullet_n^{r,\kappa,\eta} = \emptyset$, and so $u_n$ can be chosen to be the empty map. If $|{\bullet^r}| = 1$, then either $\bullet_n^{r,\kappa,\eta} = \emptyset$ in which case $u_n$ is chosen to be the empty map, or $\bullet_n^{r,\kappa,\eta} \neq \emptyset$ in which case $u_n$ is the constant map which maps everything into the single point of $\bullet^r$. We can thus assume for the rest of the proof that $|{\bullet^r}| \geq 2$.

Take some $\boldsymbol{\sigma}_n \in \bullet_n^{r,\kappa,\eta}$ and suppose for contradiction that there exist $\boldsymbol{\sigma}, \boldsymbol{\sigma}' \in \bullet^r$ with $\boldsymbol{\sigma} \neq \boldsymbol{\sigma}'$ such that $\phi_n(\theta_{\boldsymbol{\sigma}_n}T_n) \cap \mathbf{J}_{\boldsymbol{\sigma}}^{\kappa,\eta} \neq \emptyset$, and $\phi_n(\theta_{\boldsymbol{\sigma}_n}T_n) \cap \mathbf{J}_{\boldsymbol{\sigma}'}^{\kappa,\eta} \neq \emptyset$. Then there exist points $\sigma_n, \sigma_n' \in \theta_{\boldsymbol{\sigma}_n}T_n$, $\sigma \in \theta_{\boldsymbol{\sigma}}T \setminus B_T(\rho, r+2\kappa)$ and $\sigma' \in \theta_{\boldsymbol{\sigma}'}T \setminus B_T(\rho, r+2\kappa)$ such that $\delta(\phi_n(\sigma_n), \phi(\sigma)) \leq \eta$ and $\delta(\phi_n(\sigma_n'), \phi(\sigma')) \leq \eta$. Set $R := d(\rho, \sigma)$ and $R' := d(\rho, \sigma')$. Then we immediately have (since the roots are identified in the embedding system) that $|d_n(\rho_n, \sigma_n) - R| \leq \eta$ and $|d_n(\rho_n, \sigma_n') - R'| \leq \eta$.

Now, observe in $T$ that $d(\sigma, \sigma') = d(\sigma, \boldsymbol{\sigma}) + d(\boldsymbol{\sigma}, \boldsymbol{\sigma}') + d(\boldsymbol{\sigma}', \sigma') = R + R' - 2r + d(\boldsymbol{\sigma}, \boldsymbol{\sigma}')$. We must moreover have in the embedding space that $d(\sigma, \sigma') \leq \delta(\phi(\sigma), \phi_n(\sigma_n)) + d_n(\sigma_n, \sigma_n') + \delta(\phi_n(\sigma_n'), \phi(\sigma')) \leq 2\eta + d_n(\sigma_n, \sigma_n')$. Combining the two observations gives 
\begin{equation}
\label{eq:MEGV1}
2\eta + d_n(\sigma_n, \sigma_n') \geq R + R' - 2r + d(\boldsymbol{\sigma}, \boldsymbol{\sigma}').
\end{equation}
Let $\tau_n := \sigma_n \wedge \sigma_n'$ (see Figure \ref{fig:Fig3}). Suppose first that $\tau_n = \sigma_n$ (the argument for the case $\tau_n = \sigma_n'$ is symmetric). Then $d_n(\sigma_n, \sigma_n') = d_n(\rho_n, \sigma_n') - d_n(\rho_n, \sigma_n) \leq R' - R + 2\eta$. Hence, \eqref{eq:MEGV1} gives that $4\eta + R' - R \geq R + R' - 2r + d(\boldsymbol{\sigma}, \boldsymbol{\sigma}')$, which we can rewrite as $4\eta \geq 2(R - r) + d(\boldsymbol{\sigma}, \boldsymbol{\sigma}') \geq 4\kappa + 4\eta_0$. This violates the assumption that $\eta < \eta_0$, and so we obtain a contradiction. 

Suppose instead that $\tau_n \notin \{\sigma_n, \sigma_n'\}$. Set $r_n = d_n(\rho_n, \tau_n)$. Then $d_n(\sigma_n, \sigma_n') = d_n(\sigma_n, \tau_n) + d_n(\tau_n, \sigma_n') = d_n(\rho_n, \sigma_n) + d_n(\rho_n, \sigma_n') - 2r_n \leq R + R' - 2r_n + 2\eta$. So \eqref{eq:MEGV1} gives that $4\eta + R + R' - 2r_n \geq R + R' - 2r + d(\boldsymbol{\sigma}, \boldsymbol{\sigma}')$ which can be rewritten as $4\eta \geq 2(r_n - r) + d(\boldsymbol{\sigma}, \boldsymbol{\sigma}') \geq d(\boldsymbol{\sigma}, \boldsymbol{\sigma}') \geq 4\eta_0$ using that $r_n \geq r$. But as $\eta < \eta_0$, this of course again yields a contradiction.

We thus conclude by contradiction that there can at most exist one point $\boldsymbol{\sigma} \in \bullet^r$ such that $\phi_n(\theta_{\boldsymbol{\sigma}_n}T_n) \cap \mathbf{J}_{\boldsymbol{\sigma}}^{\kappa,\eta} \neq \emptyset$. As we also have by construction that there must exist at least one such $\boldsymbol{\sigma}$, this takes care of our statement.
\end{proof}
\begin{lemma}[Injectivity of $u_n$]
\label{lemma:MEGV3}
Let $\boldsymbol{\mu} = [T, d, \rho, \mu]$ and $\boldsymbol{\mu}_n = [T_n, d_n, \rho_n, \mu_n]$ be isometry classes in $\mathbb{T}_{\rm{min}}^{\rm{di}}$, and let $(\mathscr{Z}, \Phi, \phi)$ be an embedding system. Fix some $\boldsymbol{\mu}$-nice $r > 0$, and let $\kappa > 0$ and $\eta \in (0, \eta_0)$. Consider some $\boldsymbol{\sigma} \in \bullet^r$. Then there exists at most one $\boldsymbol{\sigma}_n \in \partial B_{T_n}(\rho_n, r)$ with $\phi_n(\theta_{\boldsymbol{\sigma}_n}T_n) \cap \mathbf{J}_{\boldsymbol{\sigma}}^{\kappa,\eta} \neq \emptyset$. In particular, the map $u_n \colon \bullet_n^{r, \kappa, \eta} \rightarrow \bullet^r$ is injective.
\end{lemma}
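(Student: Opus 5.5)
We argue by contradiction, mirroring the distance computation in the proof of Proposition \ref{prop:MEGV1}, but with the roles of the two trees exchanged. Suppose $\boldsymbol{\sigma}_n \neq \boldsymbol{\sigma}_n'$ are two points of $\partial B_{T_n}(\rho_n, r)$ with $\phi_n(\theta_{\boldsymbol{\sigma}_n}T_n) \cap \mathbf{J}_{\boldsymbol{\sigma}}^{\kappa,\eta} \neq \emptyset$ and $\phi_n(\theta_{\boldsymbol{\sigma}_n'}T_n) \cap \mathbf{J}_{\boldsymbol{\sigma}}^{\kappa,\eta} \neq \emptyset$. This gives points $\sigma_n \in \theta_{\boldsymbol{\sigma}_n}T_n$ and $\sigma_n' \in \theta_{\boldsymbol{\sigma}_n'}T_n$, together with points $\sigma, \sigma' \in \theta_{\boldsymbol{\sigma}}T$, such that $\delta(\phi_n(\sigma_n), \phi(\sigma)) \leq \eta$ and $\delta(\phi_n(\sigma_n'), \phi(\sigma')) \leq \eta$. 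Moreover, each of $\sigma$ and $\sigma'$ lies above some check point in $\circ^{r,\kappa}$ projecting to $\boldsymbol{\sigma}$, at height at least $r+2\kappa-\eta$. Set $R = d(\rho,\sigma)$ and $R' = d(\rho,\sigma')$. Since the roots are identified, $|d_n(\rho_n,\sigma_n) - R| \leq \eta$, and likewise for the primed points.

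The key geometric input on the $T$ side is the following. By the choice of $\eta_0$, the skeleton has no branch points in $\overline{B}_T(\rho, r+2\eta_0)\setminus B_T(\rho,r)$. Hence the check points above $\boldsymbol{\sigma}$ branch apart only above height $r+2\eta_0$; in any case $\tau := \sigma \wedge \sigma'$ lies in $\theta_{\boldsymbol{\sigma}}T$, so $d(\rho,\tau) \geq r$. In fact I will use the check points more carefully: both $\sigma$ and $\sigma'$ lie above points of $\circ^{r,\kappa}$ that sit over $\boldsymbol{\sigma}$, and the segment from $\boldsymbol{\sigma}$ up to height $r+2\eta_0$ is shared. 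This gives $d(\rho,\tau) \geq r + 2\eta_0$, and therefore $d(\sigma,\sigma') \leq R + R' - 2r - 4\eta_0$. I expect this to be the delicate step. If the branching bound fails, I fall back on $d(\rho,\tau)\geq r$, which still suffices.

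On the $T_n$ side, $\boldsymbol{\sigma}_n \neq \boldsymbol{\sigma}_n'$ are distinct points at height $r$, so $\sigma_n \wedge \sigma_n'$ has height strictly less than $r$. Let $s := r - d_n(\rho_n, \sigma_n\wedge\sigma_n') > 0$. Then
\[
d_n(\sigma_n,\sigma_n') = d_n(\rho_n,\sigma_n) + d_n(\rho_n,\sigma_n') - 2(r - s) \geq R + R' - 2r + 2s - 2\eta .
\]
The triangle inequality in $\mathscr{Z}$ gives $d_n(\sigma_n,\sigma_n') \leq d(\sigma,\sigma') + 2\eta$. Combining the two displays yields $R + R' - 2r + 2s - 2\eta \leq R + R' - 2 d(\rho,\tau) + 2\eta$, that is, $2s + 2(d(\rho,\tau) - r) \leq 4\eta$.

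This inequality alone does not give a contradiction, because $s$ may be tiny. The extra ingredient comes from isometry. The geodesics $[\rho_n,\sigma_n]$ and $[\rho_n,\sigma_n']$ separate at height $r - s$, while $[\rho,\sigma]$ and $[\rho,\sigma']$ stay together up to height $d(\rho,\tau) \geq r$. Four-point (0-hyperbolicity) comparisons with the $\eta$-close pairs let us compare Gromov products: $(\sigma_n|\sigma_n')_{\rho_n}$ and $(\sigma|\sigma')_{\rho}$ differ by at most $2\eta$. Since $(\sigma|\sigma')_\rho = d(\rho,\tau)$ and $(\sigma_n|\sigma_n')_{\rho_n} = r - s$, we get $d(\rho,\tau) \leq r - s + 2\eta$. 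The payoff of sharpening $d(\rho,\tau) \geq r + 2\eta_0$ is that this inequality contradicts $\eta < \eta_0$. The main obstacle is therefore justifying that lower bound on $d(\rho,\tau)$ from the definitions of $\eta_0$ and of the contour areas, which place $\sigma$ and $\sigma'$ at height at least $r + 2\kappa - \eta > r + 2\eta_0$ above check points over the same $\boldsymbol{\sigma}$. If distinct check points can branch below $r + 2\eta_0$, I would instead intersect with the $\eta$-neighbourhood structure near height $r+\eta_0$ directly. Finally, injectivity of $u_n$ follows at once: if $u_n(\boldsymbol{\sigma}_n) = u_n(\boldsymbol{\sigma}_n') = \boldsymbol{\sigma}$, then both subtrees meet $\mathbf{J}_{\boldsymbol{\sigma}}^{\kappa,\eta}$.
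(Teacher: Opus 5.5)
Your argument is correct and is essentially the paper's proof run in the opposite order. The paper first uses $\boldsymbol{\sigma}_n\neq\boldsymbol{\sigma}_n'$ to force $d(\rho,\tau)\leq r+2\eta<r+\kappa$, and then observes that $\tau$ is the meeting point of two distinct check points, i.e.\ a branch point of ${\rm{Skel}}(T,\mu)$ in $\overline{B}_T(\rho,r+2\eta_0)\setminus B_T(\rho,r)$. You instead derive $d(\rho,\tau)\geq r+2\eta_0$ from that same annulus condition first, and then contradict it with the same distance comparison. Your first step is valid, and it also spares you the paper's separate case $\tau\in\{\sigma,\sigma'\}$.

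The only blemishes are in your commentary, not in the logic. Your Gromov-product ``extra ingredient'' merely rearranges the inequality $2s+2(d(\rho,\tau)-r)\leq 4\eta$ that you had already derived, and that inequality together with $d(\rho,\tau)\geq r+2\eta_0$ immediately contradicts $\eta<\eta_0$. Your announced fallback to $d(\rho,\tau)\geq r$ would \emph{not} suffice, since it only yields $s\leq 2\eta$. It is never needed, because the sharpened bound holds.
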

\begin{proof}
Suppose for contradiction that $\phi_n(\theta_{\boldsymbol{\sigma}_n}T_n) \cap \mathbf{J}_{\boldsymbol{\sigma}}^{\kappa,\eta} \neq \emptyset$ and $\phi_n(\theta_{\boldsymbol{\sigma}_n'}T_n) \cap \mathbf{J}_{\boldsymbol{\sigma}}^{\kappa,\eta} \neq \emptyset$ for two distinct $\boldsymbol{\sigma}_n, \boldsymbol{\sigma}_n' \in \partial B_{T_n}(\rho_n, r)$. 
\begin{figure}[t]
\center
\includegraphics[width=0.6\textwidth]{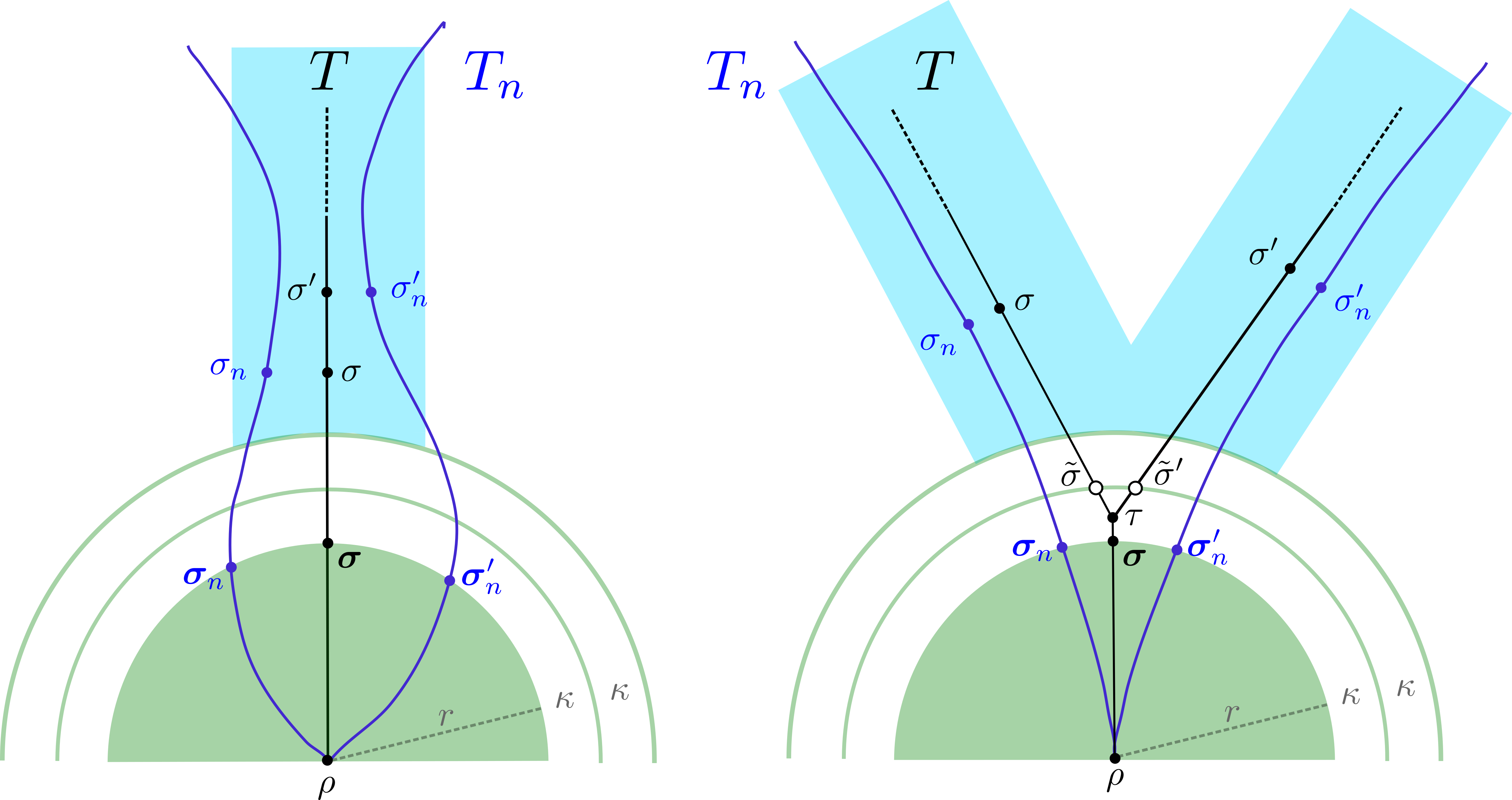}
\caption{$\tau \in \{\sigma, \sigma'\}$ (left) and $\tau \notin \{\sigma, \sigma'\}$ (right) in the proof of Lemma \ref{lemma:MEGV3}.}
\label{fig:Fig4}
\end{figure}
Then there must exist some points $\sigma, \sigma' \in \theta_{\boldsymbol{\sigma}}T \setminus B_T(\rho, r+2\kappa)$ (which are mapped into $\mathbf{J}_{\boldsymbol{\sigma}}^{\kappa,\eta}$ under $\phi$), and some points $\sigma_n \in \theta_{\boldsymbol{\sigma}_n}T_n \setminus B_{T_n}(\rho_n, r+2\kappa)$, $\sigma_n' \in \theta_{\boldsymbol{\sigma}_n'}T_n \setminus B_{T_n}(\rho_n, r+2\kappa)$, such that $\delta(\phi(\sigma), \phi_n(\sigma_n)) \leq \eta$ and $\delta(\phi(\sigma'), \phi_n(\sigma_n')) \leq \eta$. In particular, the triangle inequality gives $d_n(\sigma_n, \sigma_n') \leq 2\eta + d(\sigma, \sigma')$. Set $R := d(\rho, \sigma)$ and $R' := d(\rho, \sigma')$. Then $|d_n(\rho_n, \sigma_n) - R| \leq \eta$ and $|d_n(\rho_n, \sigma_n') - R'| \leq \eta$, and in particular $d_n(\sigma_n, \sigma_n')  = d_n(\sigma_n, \boldsymbol{\sigma}_n) + d_n(\boldsymbol{\sigma}_n, \boldsymbol{\sigma}_n') + d_n(\boldsymbol{\sigma}_n', \sigma_n') \geq R + R' - 2r - 2 \eta + d_n(\boldsymbol{\sigma}_n, \boldsymbol{\sigma}_n')$. Combining the inequalities gives
\begin{equation}
\label{eq:MEGV2a}
4\eta + d(\sigma, \sigma') \geq R + R' - 2r + d_n(\boldsymbol{\sigma}_n, \boldsymbol{\sigma}_n').
\end{equation}
Set $\tau := \sigma \wedge \sigma'$ (see Figure \ref{fig:Fig4}). Then $\tau \in \theta_{\boldsymbol{\sigma}}T$, and we can set $\tilde{r} := d(\rho, \tau)$ such that $r \leq \tilde{r} \leq \min\{R, R'\}$. Suppose for contradiction that $\tau \in \{\sigma, \sigma'\}$ (by symmetry we can assume without loss of generality that $\tau = \sigma$). Then $d(\sigma, \sigma') = R' - R$, and so \eqref{eq:MEGV2a} gives $4\eta \geq 2(R' - r) + d_n(\boldsymbol{\sigma}_n, \boldsymbol{\sigma}_n') \geq 4\kappa$ as $R' \geq r+2\kappa$. This contradicts the assumption that $\eta \in (0, \eta_0)$, and so we must have that $\tau \notin \{\sigma, \sigma'\}$. Hence, we see that $d(\sigma, \sigma') = d(\sigma, \tau) + d(\tau, \sigma') = R + R' - 2\tilde{r}$, and so \eqref{eq:MEGV2a} gives
\begin{equation}
\label{eq:MEGV2b}
4\eta \geq 2(\tilde{r} - r) + d_n(\boldsymbol{\sigma}_n, \boldsymbol{\sigma}_n').
\end{equation}
Now, as $\boldsymbol{\sigma}_n \neq \boldsymbol{\sigma}_n'$ we get from \eqref{eq:MEGV2b} that $\tilde{r} < r + \kappa$. As $\sigma, \sigma'$ are mapped into $\mathbf{J}_{\boldsymbol{\sigma}}^{\kappa,\eta}$ under $\phi$, there must exist check points $\tilde{\sigma}, \tilde{\sigma}' \in \circ^{r,\kappa}$ such that $\sigma \in \theta_{\tilde{\sigma}}T$ and $\sigma' \in \theta_{\tilde{\sigma}'}T$. But as the check points are located at level $r+\kappa$, this must necessarily imply that $\tilde{\sigma}, \tilde{\sigma}' \in \theta_{\tau}T$, since $\tau = \sigma \wedge \sigma'$. As $\tilde{\sigma}, \tilde{\sigma}' \in {\rm{Skel}}(T, \mu)$, this gives that $\tau \in {\rm{Bp}}({\rm{Skel}}(T, \mu))$. But now by \eqref{eq:MEGV2b}, $2\eta_0 > 2\eta \geq d(\boldsymbol{\sigma}, \tau)$. This yields a contradiction, as $\left(\overline{B}_T(\rho, r+2\eta_0) \setminus B_T(\rho, r)\right) \cap {\rm{Bp}}({\rm{Skel}}(T, \mu) =\emptyset$.
\end{proof}
Given $r > 0$ and $\kappa > 0$, fix some constant $K > 0$. We let $R_0(K) = R_0(r, \kappa, K)$ be chosen large enough such that $\mu(\theta_{\boldsymbol{\sigma}}T \cap \overline{B}_T(\rho, R- \kappa) \setminus B_T(\rho, r+3\kappa)) \geq K$ for all $R \geq R_0(K)$ and $\boldsymbol{\sigma} \in \bullet^r$.
\begin{lemma}
\label{lemma:MEGV4}
Let $\boldsymbol{\mu} = [T, d, \rho, \mu]$ and $\boldsymbol{\mu}_n = [T_n, d_n, \rho_n, \mu_n]$ be isometry classes in $\mathbb{T}_{\rm{min}}^{\rm{di}}$. Fix $r > 0$, $\kappa > 0$ and $\eta \in (0, \kappa)$, and let $K > 0$. Let $R \geq R_0(K)$. If $(\mathscr{Z}, \Phi, \phi)$ is an embedding system in which $\delta_{\rm{P}}( ( \mu_n \circ \phi_n^{-1} )^{\restr R}, ( \mu \circ \phi^{-1})^{\restr R}) \leq \eta$, then
\begin{equation}
\label{eq:MEGV4}
\mu_n \circ \phi_n^{-1}\left( \mathbf{J}_{\boldsymbol{\sigma}}^{\kappa,\eta} \cap \overline{B}_{\mathscr{Z}}(\varrho, R) \right) + \eta \geq K \quad \text{for all } \boldsymbol{\sigma} \in \bullet^r.
\end{equation}
\end{lemma}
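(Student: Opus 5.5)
The plan is to apply the Prokhorov inequality directly to a suitably chosen closed set in the embedding space, and then use the definition of $R_0(K)$ to bound the $\mu$-mass of that set from below. Fix $\boldsymbol{\sigma} \in \bullet^r$ and consider the closed set
$$ A_{\boldsymbol{\sigma}} := \phi\left( \theta_{\boldsymbol{\sigma}}T \cap \overline{B}_T(\rho, R-\kappa) \setminus B_T(\rho, r+3\kappa) \right) \subseteq \mathscr{Z}. $$
It is closed because $\theta_{\boldsymbol{\sigma}}T$ is closed and $\phi$ is an isometry from a complete space. By the choice $R \geq R_0(K)$ we have $(\mu \circ \phi^{-1})^{\restr R}(A_{\boldsymbol{\sigma}}) = \mu(\theta_{\boldsymbol{\sigma}}T \cap \overline{B}_T(\rho, R-\kappa) \setminus B_T(\rho, r+3\kappa)) \geq K$. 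Here we use that the points lie within distance $R-\kappa \leq R$ of the root and that $\phi$ is pointed, so distances to $\varrho$ are preserved. The Prokhorov bound $\delta_{\rm{P}} \leq \eta$ then gives
$$ K \leq (\mu\circ\phi^{-1})^{\restr R}(A_{\boldsymbol{\sigma}}) \leq (\mu_n \circ \phi_n^{-1})^{\restr R}(A_{\boldsymbol{\sigma}}^{\eta}) + \eta . $$

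It remains to show that $A_{\boldsymbol{\sigma}}^{\eta} \cap \overline{B}_{\mathscr{Z}}(\varrho, R) \subseteq \mathbf{J}_{\boldsymbol{\sigma}}^{\kappa,\eta} \cap \overline{B}_{\mathscr{Z}}(\varrho, R)$. The ball intersection is automatic from the restriction to $\overline{B}(\varrho,R)$, once we note that the support of $(\mu_n\circ\phi_n^{-1})^{\restr R}$ lies in $\overline{B}_{\mathscr{Z}}(\varrho,R)$.

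\emph{Step 1.} Show that each point $\sigma \in \theta_{\boldsymbol{\sigma}}T$ with $d(\rho,\sigma) \geq r+3\kappa$ and $\mu$-relevant position lies in $\theta_{\tilde\sigma}T$ for some check point $\tilde\sigma \in \circ^{r,\kappa}$ with $f_{\overline{B}_T(\rho,r)}(\tilde\sigma) = \boldsymbol{\sigma}$. This step has a subtlety: $A_{\boldsymbol{\sigma}}$ as defined may contain finite-mass branches above level $r+\kappa$ that do not sit above a check point. The remedy is to replace $\theta_{\boldsymbol{\sigma}}T$ by $\bigcup_{\tilde\sigma} \theta_{\tilde\sigma}T$, the union over check points projecting to $\boldsymbol{\sigma}$. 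The choice of $R_0(K)$ still works after this replacement: those check-point subtrees carry infinite mass, so for $R$ large they carry mass $\geq K$ in the annulus. The definition of $R_0(K)$ should be read, or if necessary re-chosen, with this union in place of $\theta_{\boldsymbol{\sigma}}T$. I expect this bookkeeping to be the main obstacle. If the paper's $R_0$ genuinely uses $\theta_{\boldsymbol{\sigma}}T$, then the remaining finite-mass parts between levels $r$ and $r+\kappa$ are excluded anyway by the cutoff $r+3\kappa$, and only finite-mass branches above $r+\kappa$ off the skeleton cause trouble. These can be handled by slightly enlarging $R_0$, since their total mass is finite by Definition \ref{def:DIMeas}$(ii)$ and the infinite part eventually dominates.

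\emph{Step 2.} For $z \in A_{\boldsymbol{\sigma}}^{\eta}$, pick $\sigma \in A_{\boldsymbol{\sigma}}$ with $\delta(z,\phi(\sigma)) \leq \eta$. Then $z \in \phi(\theta_{\tilde\sigma}T)^{\eta}$. Moreover $\delta(\varrho, z) \geq d(\rho,\sigma) - \eta \geq r+3\kappa-\eta > r+2\kappa$ because $\eta<\kappa$, so $z \notin B_{\mathscr{Z}}(\varrho, r+2\kappa)$. Hence $z \in J_{\tilde\sigma}^{\kappa,\eta} \subseteq \mathbf{J}_{\boldsymbol{\sigma}}^{\kappa,\eta}$.

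Combining this inclusion with monotonicity of measures in the displayed inequality yields \eqref{eq:MEGV4} for each $\boldsymbol{\sigma} \in \bullet^r$.
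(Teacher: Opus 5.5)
Your proposal follows the paper's proof: fix $\boldsymbol{\sigma}\in\bullet^r$, take a closed annular piece $A$ of $\phi(\theta_{\boldsymbol{\sigma}}T)$ with $\mu\circ\phi^{-1}(A)\geq K$, apply the Prokhorov bound, and show $A^{\eta}\cap\overline{B}_{\mathscr{Z}}(\varrho,R)\subseteq\mathbf{J}_{\boldsymbol{\sigma}}^{\kappa,\eta}$. The paper uses the radii $r+2\kappa+\eta$ and $R-\eta$, whose annulus contains yours since $\eta<\kappa$.

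Your Step 1 caveat is correct, and it is a point the paper glosses over. The paper asserts $A^{\eta}=\mathbf{J}_{\boldsymbol{\sigma}}^{\kappa,\eta}\cap\overline{B}_{\mathscr{Z}}(\varrho,R)$, which fails when finite-mass branches leave the skeleton between levels $r$ and $r+\kappa$: their points above level $r+2\kappa$ are at distance $>\kappa>\eta$ from every check-point subtree. So $R_0(K)$ really must be chosen with $\bigcup_{\tilde\sigma}\theta_{\tilde\sigma}T$ in place of $\theta_{\boldsymbol{\sigma}}T$, exactly as you propose. This is harmless, since $R_0(K)$ is only ever used as a sufficiently large threshold.
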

\begin{proof}
Fix some $\boldsymbol{\sigma} \in \bullet^r$, and set $A = \phi(\theta_{\boldsymbol{\sigma}}T) \cap \overline{B}_{\mathscr{Z}}(\varrho, R- \eta) \setminus B_{\mathscr{Z}}(\varrho, r+2\kappa+\eta)$. By our choice of $R$, we immediately see as $\eta < \kappa$, that $\left( \mu \circ \phi^{-1} \right)^{\restr R}(A) = \mu \circ \phi^{-1}(A) \geq K$. Moreover, $A^{\eta} = \phi(\theta_{\boldsymbol{\sigma}}T)^{\eta} \cap \overline{B}_{\mathscr{Z}}(\varrho, R) \setminus B_{\mathscr{Z}}(\varrho, r+2\kappa) = \mathbf{J}_{\boldsymbol{\sigma}}^{\kappa,\eta} \cap \overline{B}_{\mathscr{Z}}(\varrho, R)$, and so by definition of the Prokhorov distance, $K \leq (\mu \circ \phi^{-1})^{\restr R}(A) \leq \mu_n \circ \phi_n^{-1}(\mathbf{J}_{\boldsymbol{\sigma}}^{\kappa,\eta} \cap \overline{B}_{\mathscr{Z}}(\varrho, R)) + \eta$, as desired.
\end{proof}
Note by Lemma \ref{lemma:GV2}$(ii)$ that if $d_{\rm{GV}}(\boldsymbol{\mu}_n, \boldsymbol{\mu}) \leq \frac{\eta}{2} e^{-R_0(K)}$, then there exists an embedding system $(\mathscr{Z}, \Phi, \phi)$ and some $R \geq R_0(K)$ such that $\delta_{\rm{P}}((\mu_n \circ \phi_n^{-1})^{\restr R}, (\mu \circ \phi^{-1})^{\restr R}) \leq \eta$.
\begin{cor}[Bijectivity of $u_n$]
\label{cor:MEGV5}
Let $\boldsymbol{\mu} = [T, d, \rho, \mu]$ and $\boldsymbol{\mu}_n = [T_n, d_n, \rho_n, \mu_n]$ be isometry classes in $\mathbb{T}_{\rm{min}}^{\rm{di}}$. Fix some $\boldsymbol{\mu}$-nice $r > 0$, and let $\kappa > 0$, $\eta \in (0, \eta_0)$, and $R \geq R_0(\eta_0)$. If $(\mathscr{Z}, \Phi, \phi)$ is an embedding system with $\delta_{\rm{P}}((\mu_n \circ \phi_n^{-1})^{\restr R}, (\mu \circ \phi^{-1})^{\restr R}) \leq \eta$, then $u_n \colon \bullet_n^{r,\kappa,\eta} \rightarrow \bullet^r$ is bijective.
\end{cor}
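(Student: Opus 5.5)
Injectivity is already given by Lemma \ref{lemma:MEGV3}, which holds in any embedding system. It therefore remains to prove surjectivity: for every $\boldsymbol{\sigma} \in \bullet^r$ we must produce some $\boldsymbol{\sigma}_n \in \bullet_n^{r,\kappa,\eta}$ with $u_n(\boldsymbol{\sigma}_n) = \boldsymbol{\sigma}$. By Proposition \ref{prop:MEGV1}, $u_n(\boldsymbol{\sigma}_n)$ is the unique $\boldsymbol{\sigma}'$ with $\phi_n(\theta_{\boldsymbol{\sigma}_n}T_n) \cap \mathbf{J}_{\boldsymbol{\sigma}'}^{\kappa,\eta} \neq \emptyset$. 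So it suffices to find a point $\sigma_n \in T_n$ with $\phi_n(\sigma_n) \in \mathbf{J}_{\boldsymbol{\sigma}}^{\kappa,\eta}$ and $d_n(\rho_n, \sigma_n) \geq r+\kappa$. Once we have such a point, let $\tilde\sigma_n \in [\rho_n, \sigma_n]$ be the point at height $r+\kappa$. Then $\tilde\sigma_n \in \circ_n^{r,\kappa,\eta}$, since $\phi_n(\theta_{\tilde\sigma_n}T_n) \ni \phi_n(\sigma_n) \in \mathbf{J}_{\boldsymbol{\sigma}}^{\kappa,\eta} \subseteq \mathscr{J}_r^{\kappa,\eta}$. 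Its projection $\boldsymbol{\sigma}_n := f_{\overline{B}_{T_n}(\rho_n,r)}(\tilde\sigma_n)$ lies in $\bullet_n^{r,\kappa,\eta}$, and $\theta_{\boldsymbol{\sigma}_n}T_n \ni \sigma_n$. Hence $u_n(\boldsymbol{\sigma}_n) = \boldsymbol{\sigma}$.

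To produce $\sigma_n$, apply Lemma \ref{lemma:MEGV4} with $K = \eta_0$ and $R \geq R_0(\eta_0)$, which is allowed since $\eta < \eta_0 \leq \kappa/2 < \kappa$. This gives
$$\mu_n \circ \phi_n^{-1}\bigl(\mathbf{J}_{\boldsymbol{\sigma}}^{\kappa,\eta} \cap \overline{B}_{\mathscr{Z}}(\varrho, R)\bigr) \geq \eta_0 - \eta > 0.$$
The set $\phi_n^{-1}(\mathbf{J}_{\boldsymbol{\sigma}}^{\kappa,\eta})$ is therefore nonempty, so some $\sigma_n \in T_n$ has $\phi_n(\sigma_n) \in \mathbf{J}_{\boldsymbol{\sigma}}^{\kappa,\eta}$.

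Every point of $\mathbf{J}_{\boldsymbol{\sigma}}^{\kappa,\eta}$ lies outside $B_{\mathscr{Z}}(\varrho, r+2\kappa)$. Since $\phi_n$ is pointed, $d_n(\rho_n, \sigma_n) = \delta(\varrho, \phi_n(\sigma_n)) \geq r+2\kappa > r+\kappa$. So the height condition holds automatically, and the argument of the first paragraph applies.

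The only delicate point is making sure the constants line up. We need $K = \eta_0$ to dominate $\eta$ strictly, so that the mass is positive, and we need $\eta < \kappa$ for Lemma \ref{lemma:MEGV4}. Both follow from $\eta < \eta_0 \leq \kappa/2$. Combining injectivity from Lemma \ref{lemma:MEGV3} with this surjectivity shows that $u_n$ is bijective.
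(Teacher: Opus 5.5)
Your argument is the paper's own proof. Injectivity comes from Lemma~\ref{lemma:MEGV3}, and surjectivity from Lemma~\ref{lemma:MEGV4} with $K=\eta_0$, giving $\mu_n\circ\phi_n^{-1}\bigl(\mathbf{J}_{\boldsymbol{\sigma}}^{\kappa,\eta}\cap\overline{B}_{\mathscr{Z}}(\varrho,R)\bigr)\geq\eta_0-\eta>0$. Your passage from a point of $\phi_n^{-1}(\mathbf{J}_{\boldsymbol{\sigma}}^{\kappa,\eta})$ to a check point in $\circ_n^{r,\kappa,\eta}$, and then to a cut-off point with $u_n$-image $\boldsymbol{\sigma}$, correctly fills in a step the paper leaves implicit.

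The one weak point is inherited from the paper, not introduced by you: Lemma~\ref{lemma:MEGV4} is false as literally stated. The reason is that $R_0(K)$ is defined through the $\mu$-mass of all of $\theta_{\boldsymbol{\sigma}}T$, whereas $\mathbf{J}_{\boldsymbol{\sigma}}^{\kappa,\eta}$ only contains $\eta$-neighbourhoods of the subtrees above the check points. Consequently the identity $A^{\eta}=\mathbf{J}_{\boldsymbol{\sigma}}^{\kappa,\eta}\cap\overline{B}_{\mathscr{Z}}(\varrho,R)$ used in its proof fails when finite-mass branches leave $\theta_{\boldsymbol{\sigma}}T$ between heights $r$ and $r+\kappa$.

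For a counterexample, let $T$ be a ray carrying $\varepsilon$ times Lebesgue measure. Attach a side branch at height $r+\kappa/2$ whose tip, at height $r+10\kappa$, carries an atom of mass $1$. Then $R=r+11\kappa$ is an admissible value of $R_0(\eta_0)$. Let $T_n\subseteq T$ be the side branch together with the segment from $\rho$ to the ray point at height $r+\kappa$, with $\mu_n=\mu|_{T_n}$ and $\mathscr{Z}=T$. The Prokhorov hypothesis holds once $10\kappa\varepsilon\leq\eta$. Yet $\bullet_n^{r,\kappa,\eta}=\emptyset$, since points of the side branch above height $r+2\kappa$ are at distance at least $2\kappa$ from the ray above height $r+\kappa$. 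Meanwhile $|\bullet^r|=1$.

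The repair is to define $R_0(K)$ through the mass of
$U_{\boldsymbol{\sigma}}:=\bigcup\{\theta_{\sigma}T : \sigma\in\circ^{r,\kappa},\ f_{\overline{B}_T(\rho,r)}(\sigma)=\boldsymbol{\sigma}\}$ in $\overline{B}_T(\rho,R-\kappa)\setminus B_T(\rho,r+3\kappa)$. Such an $R_0(K)$ still exists because check points carry infinite mass. Then take $A:=\phi(U_{\boldsymbol{\sigma}})\cap\overline{B}_{\mathscr{Z}}(\varrho,R-\eta)\setminus B_{\mathscr{Z}}(\varrho,r+2\kappa+\eta)$, so that $A^{\eta}\subseteq\mathbf{J}_{\boldsymbol{\sigma}}^{\kappa,\eta}\cap\overline{B}_{\mathscr{Z}}(\varrho,R)$. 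With this change Lemma~\ref{lemma:MEGV4} holds, and your proof goes through verbatim.
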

\begin{proof}
As we have chosen $K = \eta_0$, it follows immediately from Lemma \ref{lemma:MEGV4} that for all $\boldsymbol{\sigma} \in \bullet^r$, $\phi_n(\mathbf{J}_{\boldsymbol{\sigma}}^{\kappa,\eta} \cap \overline{B}_{\mathscr{Z}}(\varrho, R)) \geq \eta_0 - \eta > 0$, implying in particular that $\phi_n(T_n) \cap \mathbf{J}_{\boldsymbol{\sigma}}^{\kappa,\eta} \neq \emptyset$, giving the existence of some $\boldsymbol{\sigma}_n \in \bullet_n^{r,\kappa,\eta}$ with $u_n(\boldsymbol{\sigma}_n) = \boldsymbol{\sigma}$.
\end{proof}
Corollary \ref{cor:MEGV5} implies in particular that if $\boldsymbol{\mu}_n \stackrel{\rm{GV}}{\rightarrow} \boldsymbol{\mu}$ then for $R$ chosen as above and applying Lemma \ref{lemma:GV2}$(ii)$, it eventually holds that \eqref{eq:MEGV4} is true and $u_n$ is bijective, proving Proposition \ref{prop:MEGV2}. Moreover, bijectivity of course ensures that $|{\bullet_n^{r,\kappa,\eta}}| = |{\bullet^r}| < \infty$.
\begin{lemma}
\label{lemma:MEGV6}
Let $\boldsymbol{\mu} = [T, d, \rho, \mu]$ and $\boldsymbol{\mu}_n = [T_n, d_n, \rho_n, \mu_n]$ be isometry classes in $\mathbb{T}_{\rm{min}}^{\rm{di}}$. Fix some $\boldsymbol{\mu}$-nice $r > 0$, and let $\kappa > 0$ and $\eta \in (0, \eta_0)$. Let $R \geq R_0(2\eta_0)$, and suppose that $d_{\rm{GV}}(\boldsymbol{\mu}_n, \boldsymbol{\mu}) \leq \frac{\eta}{20} e^{-R}$. Then there exists some $R' \geq R$ and an embedding system $(\mathscr{Z}, \Phi, \phi)$ in which $\delta_{\rm{P}}((\mu_n \circ \phi_n^{-1})^{\restr R'}, (\mu \circ \phi^{-1})^{\restr R'}) \leq \eta$ and $\delta(\phi(\boldsymbol{\sigma}), \phi_n(u_n(\boldsymbol{\sigma}))) \leq 6\eta$ for every $\boldsymbol{\sigma} \in \bullet^r$.
\end{lemma}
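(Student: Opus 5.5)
We would first produce the embedding system from the Gromov-vague bound. Since $d_{\rm{GV}}(\boldsymbol{\mu}_n, \boldsymbol{\mu}) \leq \frac{\eta}{20} e^{-R} \leq \varepsilon := \frac{\eta}{20}e^{-R}$, Lemma \ref{lemma:GV2}$(ii)$ gives some $R' \geq R$ with $d_{\rm{GP}}(\boldsymbol{\mu}_n^{\restr R'}, \boldsymbol{\mu}^{\restr R'}) \leq \frac{\eta}{20}(e^{-R}+1) \leq \frac{\eta}{10}$. By the definition of $d_{\rm{GP}}$ (with pointed isometries), there is then a pointed metric space $(\mathscr{Z}, \delta, \varrho)$ and pointed isometries $\phi_n, \phi$ in which $\delta_{\rm{P}}((\mu_n \circ \phi_n^{-1})^{\restr R'}, (\mu \circ \phi^{-1})^{\restr R'}) \leq \eta/5 \leq \eta$. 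Here we extend the isometries defined on the minimal subtrees to the full trees $T_n, T$ by a standard gluing, and we may take $\mathscr{Z}$ Polish by passing to the closure of the images. Since $R' \geq R \geq R_0(2\eta_0) \geq R_0(\eta_0)$, Corollary \ref{cor:MEGV5} applies, so $u_n$ is a bijection $\bullet_n^{r,\kappa,\eta} \to \bullet^r$. Moreover Lemma \ref{lemma:MEGV4} with $K = 2\eta_0$ gives $\mu_n\circ\phi_n^{-1}(\mathbf{J}^{\kappa,\eta}_{\boldsymbol{\sigma}} \cap \overline{B}_{\mathscr{Z}}(\varrho,R')) \geq 2\eta_0 - \eta > \eta_0$ for each $\boldsymbol{\sigma} \in \bullet^r$.

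Next we fix $\boldsymbol{\sigma} \in \bullet^r$ and write $\boldsymbol{\sigma}_n := u_n^{-1}(\boldsymbol{\sigma})$. We want a single pair of points $\sigma \in \theta_{\boldsymbol{\sigma}}T \setminus B_T(\rho, r+2\kappa)$ and $\sigma_n \in \theta_{\boldsymbol{\sigma}_n}T_n$ with $\delta(\phi(\sigma), \phi_n(\sigma_n)) \leq \eta$. Such a pair comes from the definition of $u_n$, using Lemma \ref{lemma:MEGV3}: every point of $T_n$ mapped into $\mathbf{J}_{\boldsymbol{\sigma}}^{\kappa,\eta}$ lies above the unique $\boldsymbol{\sigma}_n$. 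The mass estimate guarantees that such points exist, but the pair alone does not control $\delta(\phi(\boldsymbol{\sigma}), \phi_n(\boldsymbol{\sigma}_n))$. The key step is the following tree-geometry (0-hyperbolicity) argument in $\mathscr{Z}$. We would use two such pairs $(\sigma, \sigma_n)$ and $(\sigma', \sigma_n')$ lying above two \emph{different} cut-off points, or alternatively a pair together with the root, and compute Gromov products. In $T$ the branch point of $\sigma$ and the root is trivial, so we instead use the isometric relations $d(\rho,\boldsymbol{\sigma}) = r = d_n(\rho_n,\boldsymbol{\sigma}_n)$ together with the triangle inequality through $\phi(\sigma) \approx \phi_n(\sigma_n)$.

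The main obstacle is exactly this step. The point $\phi_n(\boldsymbol{\sigma}_n)$ sits at distance $r$ from $\varrho$ on the geodesic to $\phi_n(\sigma_n)$, and $\phi(\boldsymbol{\sigma})$ likewise on the geodesic to $\phi(\sigma)$. In a general metric space $\mathscr{Z}$, points at equal distance from $\varrho$ along two nearly equal endpoints need not be close. We plan to overcome this with mass. Prokhorov closeness transfers mass of $\mu\circ\phi^{-1}$ near $\phi(\overline{B}_T(\rho,r)\cap\cdots)$, and minimality lets us pick a point $\tau \in T$ with $\mu$-mass nearby, close to $\boldsymbol{\sigma}$ but below level $r$. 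Using $\boldsymbol{\mu}$-niceness and $\eta_0$-separation, we then find a point $\tau_n \in T_n$ with $\delta(\phi(\tau),\phi_n(\tau_n)) \leq \eta$ and $d(\tau, \boldsymbol{\sigma}) \leq \eta$. Comparing the four-point distances among $\rho,\tau,\sigma$ and $\rho_n,\tau_n,\sigma_n$ (each perturbed by at most $2\eta$) shows that $\tau_n \wedge \sigma_n$ is within $O(\eta)$ of $\boldsymbol{\sigma}_n$ along $[\rho_n,\sigma_n]$. The triangle inequality then gives $\delta(\phi(\boldsymbol{\sigma}),\phi_n(\boldsymbol{\sigma}_n)) \leq d(\boldsymbol{\sigma},\tau) + \eta + d_n(\tau_n,\boldsymbol{\sigma}_n) \leq 6\eta$. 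Collecting the constants, $\eta$ from each Prokhorov coupling and $2\eta$ from each geodesic comparison, is where the factor $6$ and the slack $\eta/20$ should be spent. If a suitable $\tau$ with mass is unavailable, for instance when $\mu$ vanishes near $\boldsymbol{\sigma}$, we would instead modify $\mathscr{Z}$ by gluing, identifying $\phi(\boldsymbol{\sigma})$ with $\phi_n(\boldsymbol{\sigma}_n)$ via the quotient construction of Remark \ref{rmk:Gromov}. We would then check that this changes the Prokhorov distance by at most a constant multiple of $\eta$.
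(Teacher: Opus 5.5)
Your opening steps are fine: you obtain $R'$, a Prokhorov-close embedding, bijectivity of $u_n$ from Corollary~\ref{cor:MEGV5}, and an $\eta$-close pair $(\sigma,\sigma_n)$ above $(\boldsymbol{\sigma},\boldsymbol{\sigma}_n)$. The step that carries the lemma, bounding $\delta(\phi(\boldsymbol{\sigma}),\phi_n(\boldsymbol{\sigma}_n))$, is not established, and your main mechanism for it fails.

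\emph{Why the mass argument cannot work.} Minimality only gives $T=\overline{{\rm{Span}}({\rm{supp}}(\mu))}$; it puts no $\mu$-mass near $\boldsymbol{\sigma}$. For $T=[0,\infty)$, $\mu=\sum_{k\geq 1}\delta_k$ and the $\boldsymbol{\mu}$-nice radius $r=3/2$, no mass lies within distance $1/2$ of $\boldsymbol{\sigma}$. Even where mass exists, Prokhorov closeness at scale $\eta$ only produces $\tau_n$ with $\delta(\phi(\tau),\phi_n(\tau_n))\leq\eta$ if that mass exceeds $\eta$, which nothing guarantees within $O(\eta)$ of $\boldsymbol{\sigma}$. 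In fact no argument starting from an arbitrary Prokhorov-close embedding can succeed. Glue two copies of this ray at every integer point: both copies embed isometrically in the resulting Polish pointed space and $\delta_{\rm{P}}=0$, yet the two copies of $\boldsymbol{\sigma}$ are at distance $1$. The embedding therefore has to be built with extra geometric control. The paper gets this from \cite{DW26} Proposition 4.15, used as in the proof of \cite{DW26} Theorem 4.21. That result gives compact rooted subtrees $T'\subseteq T$, $T_n'\subseteq T_n$ carrying all but $2\eta/10$ of the restricted masses, and an embedding with both $\delta_{\rm{P}}\leq\eta$ and $\delta_{\rm{H}}(\phi_n(T_n'),\phi(T'))\leq\eta$. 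Lemma~\ref{lemma:MEGV4} with $K=2\eta_0$ places $\boldsymbol{\sigma}$, $\boldsymbol{\sigma}_n$ and an $\eta$-close pair above them inside these subtrees. The Hausdorff bound then keeps all of $\phi_n([\boldsymbol{\sigma}_n,\sigma_n])$ within $\eta$ of $\phi(T')$, and a case distinction gives $2\eta$ or $6\eta$.

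\emph{Why the fallback fails as stated, and how to repair it.} The thing to check is not the Prokhorov distance, since shrinking cross-distances can only decrease $\delta_{\rm{P}}$. It is whether $\phi$ and $\phi_n$ remain isometries. Exactly identifying $\phi(\boldsymbol{\sigma})$ with $\phi_n(\boldsymbol{\sigma}_n)$ keeps $\phi$ isometric only if $\delta(\phi(x),\phi_n(\boldsymbol{\sigma}_n))\geq d(x,\boldsymbol{\sigma})$ for all $x\in T$, and this is false in general. For example, if $d_n(\boldsymbol{\sigma}_n,\boldsymbol{\sigma}_n')<d(\boldsymbol{\sigma},\boldsymbol{\sigma}')$ for two cut-off points, identifying both pairs shortens $d(\boldsymbol{\sigma},\boldsymbol{\sigma}')$. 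What your $\eta$-close pairs do give are versions with slack:
\begin{itemize}
\item[(a)] $d(x,\boldsymbol{\sigma})\leq\delta(\phi(x),\phi_n(\boldsymbol{\sigma}_n))+2\eta$ for all $x\in T$: use the root if $x\in\theta_{\boldsymbol{\sigma}}T$, and use $\sigma$ otherwise, since then $d(x,\sigma)=d(x,\boldsymbol{\sigma})+d(\boldsymbol{\sigma},\sigma)$;
\item[(b)] the symmetric inequality for points of $T_n$;
\item[(c)] $|d(\boldsymbol{\sigma},\boldsymbol{\sigma}')-d_n(\boldsymbol{\sigma}_n,\boldsymbol{\sigma}_n')|\leq 4\eta$ for distinct cut-off points.
\end{itemize}
These are exactly the triangle inequalities needed on $T\sqcup T_n$ for the cross-distances $\min\{\delta(\phi(x),\phi_n(y)),\min_{\boldsymbol{\sigma}\in\bullet^r}(d(x,\boldsymbol{\sigma})+2\eta+d_n(\boldsymbol{\sigma}_n,y))\}$. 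In that space both trees embed isometrically, the roots stay identified, and $\delta_{\rm{P}}$ does not increase. The contour areas only grow, so by Proposition~\ref{prop:MEGV1} and Corollary~\ref{cor:MEGV5} the map $u_n$ is unchanged, and matched cut-off points are within $2\eta$. Gluing at distance $2\eta$ rather than $0$, and verifying isometry rather than Prokhorov closeness, would make your fallback a valid and more elementary alternative to the paper's coupling. As written, the key estimate is missing.
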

\begin{proof}
By Lemma \ref{lemma:BFMeas3}$(ii)$ there exists some $R' \geq R$ such that $d_{\rm{GP}}(\boldsymbol{\mu}_n^{\restr R'}, \boldsymbol{\mu}^{\restr R'}) \leq \frac{\eta}{20} (1 + e^{-R}) \leq \frac{\eta}{10}$. Applying \cite{DW26} Proposition 4.15 in the same way as in the proof of \cite{DW26} Theorem 4.21, we get the existence of compact rooted subtrees $T' \subseteq T$ and $T_n' \subseteq T_n$ with $\mu^{\restr R'}(T \setminus T') \vee \mu_n^{\restr R'}(T_n \setminus T_n') \leq \frac{2\eta}{10}$, and an embedding system\footnote{Note that we have multiplied the Gromov--Prokhorov and Gromov--Hausdorff estimates by two, as we wish to identify the roots in the embedding system. Recall the discussion from Section \ref{subsec:Gromov}.} $(\mathscr{Z}, \Phi, \phi)$ such that $\delta_P(\mu_n^{\restr R'} \circ \phi_n^{-1}, \mu^{\restr R'} \circ \phi^{-1}) \leq \eta$ and $\delta_H(\phi_n(T_n'), \phi(T')) \leq \eta$. Note that the embedding system $(\mathscr{Z}, \Phi, \phi)$ only embeds the finite isometry classes $\boldsymbol{\mu}_n^{\restr R'}$ and $\boldsymbol{\mu}^{\restr R'}$, but that it can easily be extended to an embedding space of $\boldsymbol{\mu}_n$ and $\boldsymbol{\mu}$ (with no guarantee, however, that $\mu_n \circ \phi_n^{-1}$ and $\mu \circ \phi^{-1}$ are close above level $R'$). Let $\bullet_n^{r,\kappa,\eta}$ be the cut-off points in this embedding system. Then by Corollary \ref{cor:MEGV5} the map $u_n \colon \bullet_n^{r,\kappa,\eta} \rightarrow \bullet^r$ is bijective. Fix some $\boldsymbol{\sigma} \in \bullet^r$ and $\boldsymbol{\sigma}_n = u_n(\boldsymbol{\sigma})$. Then by Lemma \ref{lemma:MEGV4},
$$\mu_n\left(\theta_{\boldsymbol{\sigma}_n}T_n \cap \overline{B}_T(\rho, R') \setminus B_T(\rho, r+2\kappa) \right) \geq \mu_n \circ \phi_n^{-1}\left( \mathbf{J}_{\boldsymbol{\sigma}}^{\kappa,\eta} \cap \overline{B}_{\mathscr{Z}}(\varrho, R') \right) \geq 2\eta_0 - \eta > \eta > \frac{2}{10}\eta.$$ 
So indeed, we must have that $\boldsymbol{\sigma} \in T'$ and $\boldsymbol{\sigma}_n \in T_n'$, and moreover, there must exist points $\sigma \in T' \cap \theta_{\boldsymbol{\sigma}}T \cap \overline{B}_T(\rho, R') \setminus B_T(\rho, r+2\kappa)$ and $\sigma_n \in T_n' \cap \theta_{\boldsymbol{\sigma}_n}T_n \cap \overline{B}_{T_n}(\rho_n, R') \setminus B_{T_n}(\rho_n, r+2\kappa)$ such that $\delta(\phi(\sigma), \phi_n(\sigma_n)) \leq \eta$. Now, as $\delta_H(\phi_n(T_n'), \phi(T')) \leq \eta$, every point in the segment $\phi_n([\boldsymbol{\sigma}_n, \sigma_n])$ has to be within distance $\eta$ of $T'$ at all times. This will of course immediately be the case if every point in $\phi_n([\boldsymbol{\sigma}_n, \sigma_n])$ is at distance at most $\eta$ from some point in the subtree $\phi(\theta_{\boldsymbol{\sigma}}T')$ (which of course includes the segment $\phi([\boldsymbol{\sigma}, \sigma])$), in which case it is easily checked that $\delta(\phi(\boldsymbol{\sigma}), \phi_n(\boldsymbol{\sigma}_n)) \leq 2 \eta$. Suppose now instead that some points in $\phi_n([\boldsymbol{\sigma}_n, \sigma_n])$ are more than distance $\eta$ away from $\phi(\theta_{\boldsymbol{\sigma}}T')$ (see Figure \ref{fig:Fig5}). Then any such points must be within distance $\eta$ from $T' \setminus \theta_{\boldsymbol{\sigma}}T'$. In particular, there must exist some `intersection point' $\sigma_n' \in [\boldsymbol{\sigma}_n, \sigma_n]$ for the two respective $\eta$-neighbourhoods, such that for some $\sigma' \in \theta_{\boldsymbol{\sigma}}T'$ and some $\tilde{\sigma}' \in T' \setminus \theta_{\boldsymbol{\sigma}}T'$ it holds that $\delta(\phi_n(\sigma_n'), \phi(\sigma')) \leq \eta$ and $\delta(\phi_n(\sigma_n'), \phi(\tilde{\sigma}')) \leq \eta$. Denote by $\tilde{\boldsymbol{\sigma}}$ the unique point in $\partial B_T(\rho, r)$ such that $\tilde{\boldsymbol{\sigma}} \in [\rho, \tilde{\sigma}']$. Then we must have that $d(\sigma', \tilde{\sigma}') \leq 2 \eta$, but at the same time we know that $d(\sigma', \tilde{\sigma}') = d(\sigma', \boldsymbol{\sigma}) + d(\boldsymbol{\sigma}, \tilde{\boldsymbol{\sigma}}) + d(\tilde{\boldsymbol{\sigma}}, \tilde{\sigma}')$. In particular, setting $r' = d(\rho, \sigma')$, we get that $r' - r = d(\sigma', \boldsymbol{\sigma}) \leq 2\eta$ and also $d_n(\boldsymbol{\sigma}_n, \sigma_n') \leq r' - r +\eta \leq 3\eta$. Hence, we conclude that $\delta(\phi_n(\boldsymbol{\sigma}_n), \phi(\boldsymbol{\sigma})) \leq d_n(\boldsymbol{\sigma}_n, \sigma_n') + \delta(\phi_n(\sigma_n'), \phi(\sigma'))  + d(\sigma', \boldsymbol{\sigma}) \leq 6\eta$, as desired.
\begin{figure}[t]
\center
\includegraphics[width=0.6\textwidth]{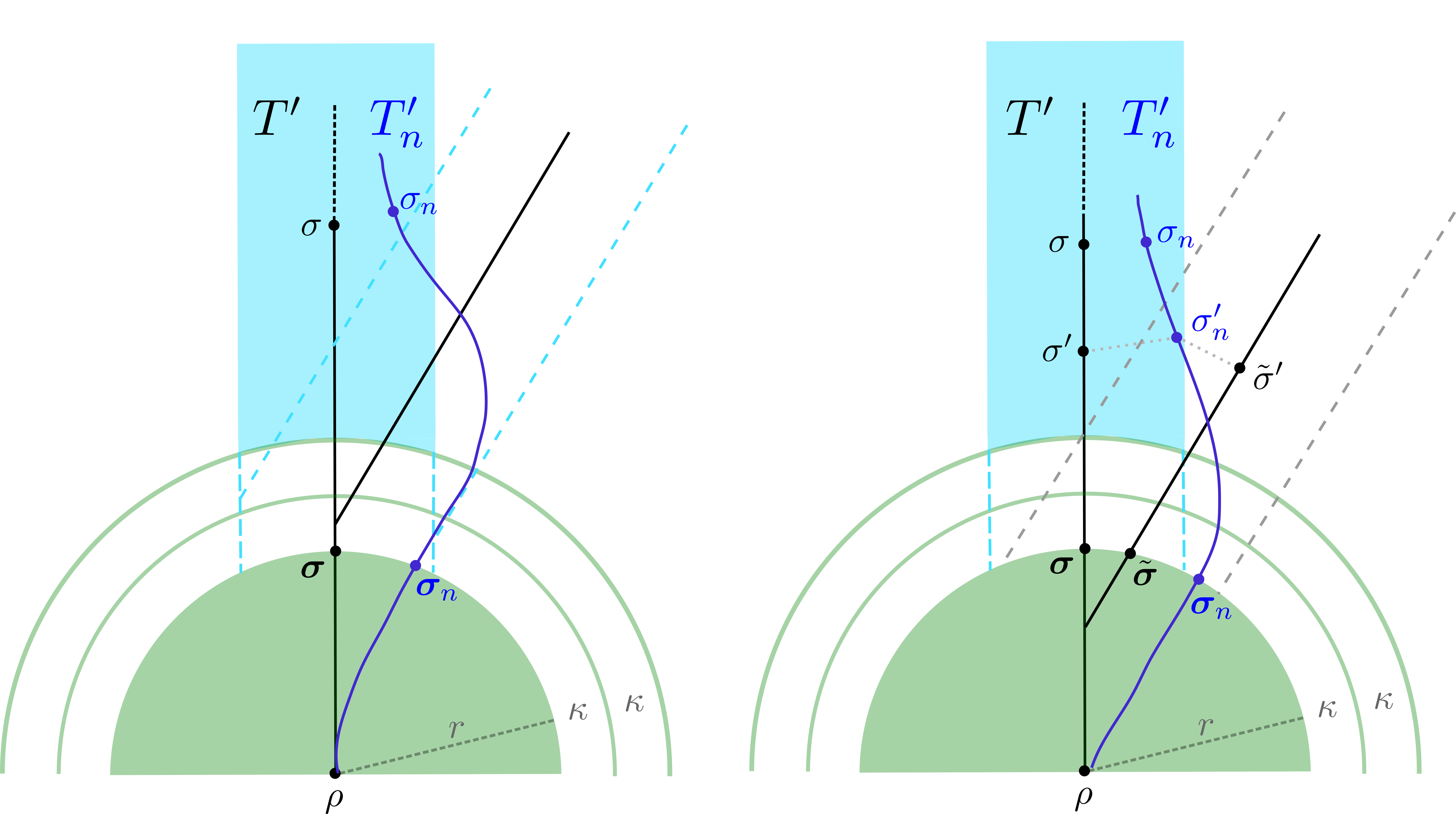}
\caption{$\phi_n([\boldsymbol{\sigma}_n, \sigma_n]) \subseteq \phi(\theta_{\boldsymbol{\sigma}}T')^{\eta}$ (left) and $\phi(\theta_{\boldsymbol{\sigma}}T')^{\eta} \setminus \phi_n([\boldsymbol{\sigma}_n, \sigma_n]) \neq \emptyset$ (right) in the proof of Lemma \ref{lemma:MEGV6}}
\label{fig:Fig5}
\end{figure} 
\end{proof}
\subsection{General cut-off points and b-tightness conditions}
\label{subsec:GeneralCutOff}
We may now prove that the three conditions outlined at the beginning of Section \ref{sec:5} are sufficient for any collection of cut-off points $\blackdiamond_n^r$ to coincide with the points $\bullet_n^{r,\kappa,\eta}$ from Section \ref{subsec:CutOffEmbed}. Recall the notion of \emph{compatible} cut-off points from Section \ref{subsec:MainConvME}. Recall also how the notion of b-tightness extends to the representative measures for a sequence of isometry classes.
\begin{prop}
\label{prop:MEGV8}
Let $\boldsymbol{\mu} = [T, d, \rho, \mu]$ and $\boldsymbol{\mu}_n = [T_n, d_n, \rho_n, \mu_n]$, $n \in \mathbb{N}$, be isometry classes in $\mathbb{T}_{\rm{min}}^{\rm{di}}$. Fix some $\boldsymbol{\mu}$-nice $r > 0$, and let $\kappa > 0$ and $\eta \in (0, \eta_0)$. For each $n \in \mathbb{N}$, let $\blackdiamond_n^r$ be $\bullet^r$-compatible cut-off points in $\partial B_{T_n}(\rho_n, r)$. If
\begin{enumerate}[$(i)$]
\item $\boldsymbol{\mu}_n \stackrel{\rm{GV}}{\rightarrow} \boldsymbol{\mu}$ as $n \rightarrow \infty$, and
\item $\left( \mu_n|_{{\rm{Cut}}(T_n, \blackdiamond_n^r)} \right)_{n \in \mathbb{N}}$ is b-tight,
\end{enumerate}
then for any embedding system $(\mathscr{Z}, \Phi, \phi)$ realizing condition $(ii)$, we have $\blackdiamond_n^r = \bullet_n^{r,\kappa,\eta}$ eventually.
\end{prop}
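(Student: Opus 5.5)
The plan follows the pigeonhole heuristic sketched at the start of Section \ref{sec:5}. The phrase ``embedding system realizing condition $(ii)$'' is read as one in which $\delta_{\rm P}((\mu_n\circ\phi_n^{-1})^{\restr R},(\mu\circ\phi^{-1})^{\restr R})\le\eta$ eventually, for suitably large $R$. Such a system exists by condition $(i)$, Lemma \ref{lemma:GV2}$(ii)$ and Lemma \ref{lemma:MEGV6}.

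First, from condition $(i)$, Corollary \ref{cor:MEGV5} (through Proposition \ref{prop:MEGV2}) gives that $u_n\colon\bullet_n^{r,\kappa,\eta}\to\bullet^r$ is eventually bijective, so $|\bullet_n^{r,\kappa,\eta}|=|\bullet^r|=:m$ eventually. Since $|\partial B_{T_n}(\rho_n,r)|\ge|\bullet_n^{r,\kappa,\eta}|=m$, compatibility forces $|\blackdiamond_n^r|=m$ eventually. Two finite sets of equal size coincide as soon as one is contained in the other. It therefore suffices to show $\bullet_n^{r,\kappa,\eta}\subseteq\blackdiamond_n^r$ eventually.

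Second, argue by contradiction. Suppose that along a subsequence some $\boldsymbol{\sigma}_n\in\bullet_n^{r,\kappa,\eta}\setminus\blackdiamond_n^r$ exists. Then $\theta_{\boldsymbol{\sigma}_n}T_n\subseteq{\rm Cut}(T_n,\blackdiamond_n^r)$ by definition of the cut-off tree. Passing to a further subsequence, we may take $u_n(\boldsymbol{\sigma}_n)=\boldsymbol{\sigma}$ for a fixed $\boldsymbol{\sigma}\in\bullet^r$, since $\bullet^r$ is finite. By b-tightness, choose $R_1$ with $\limsup_n\mu_n({\rm Cut}(T_n,\blackdiamond_n^r)\setminus\overline B_{T_n}(\rho_n,R_1))<1$. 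Then fix $K$ large, and take $R\ge R_0(K)\vee(R_1+\kappa+\eta)$ with $R$ nice.

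Lemma \ref{lemma:MEGV4} gives $\mu_n\circ\phi_n^{-1}(\mathbf J_{\boldsymbol{\sigma}}^{\kappa,\eta}\cap\overline B_{\mathscr Z}(\varrho,R))\ge K-\eta$ eventually. The key geometric claim is that, for $\eta<\eta_0$, every $x\in T_n$ with $\phi_n(x)\in\mathbf J_{\boldsymbol{\sigma}}^{\kappa,\eta}$ lies in $\theta_{u_n^{-1}(\boldsymbol{\sigma})}T_n=\theta_{\boldsymbol{\sigma}_n}T_n$. This is the content of Lemma \ref{lemma:MEGV3}: only one point of $\partial B_{T_n}(\rho_n,r)$ has its subtree meeting $\mathbf J_{\boldsymbol{\sigma}}^{\kappa,\eta}$. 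Hence $\mu_n(\theta_{\boldsymbol{\sigma}_n}T_n\cap\overline B(\rho_n,R))\ge K-\eta$.

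The remaining step is to localise this mass beyond level $R_1$. If $R_0(K)$ is chosen so that the mass of $\theta_{\boldsymbol{\sigma}}T$ between levels $R_1+\kappa+\eta$ and $R-\kappa$ already exceeds $K$, then rerunning the proof of Lemma \ref{lemma:MEGV4} with the set $\mathbf J^{\kappa,\eta}_{\boldsymbol{\sigma}}\setminus B_{\mathscr Z}(\varrho,R_1+\kappa)$ gives mass at least $K-\eta$ in $\theta_{\boldsymbol{\sigma}_n}T_n\setminus\overline B(\rho_n,R_1)$. Taking $K>2$ contradicts the choice of $R_1$.

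The main obstacle is this localisation step: adapting Lemma \ref{lemma:MEGV4} to an annulus starting above $R_1$. It also requires checking that the contour-area argument of Lemma \ref{lemma:MEGV3} transfers to all of $\theta_{\boldsymbol{\sigma}_n}T_n$ uniformly along the subsequence. The choice of $R_0$ is harmless here, because $\mu(\theta_{\boldsymbol{\sigma}}T)=\infty$ for $\boldsymbol{\sigma}\in\bullet^r$.
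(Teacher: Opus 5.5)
Your proof is correct and follows the paper's skeleton. Corollary~\ref{cor:MEGV5} together with compatibility gives $|\bullet_n^{r,\kappa,\eta}|=|\blackdiamond_n^r|=|\bullet^r|$. Any point $\boldsymbol{\sigma}_n\in\bullet_n^{r,\kappa,\eta}\setminus\blackdiamond_n^r$ has $\theta_{\boldsymbol{\sigma}_n}T_n\subseteq{\rm Cut}(T_n,\blackdiamond_n^r)$. Lemma~\ref{lemma:MEGV4} plus the uniqueness in Lemma~\ref{lemma:MEGV3} then put mass at least $K-\eta$ into $\theta_{\boldsymbol{\sigma}_n}T_n$.

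Where you differ is in how this mass is played off against b-tightness.
\begin{itemize}
\item The paper, through Lemma~\ref{lemma:MEGV7}, keeps Lemma~\ref{lemma:MEGV4} as stated, so the large mass sits in $\overline B_{T_n}(\rho_n,R)$. B-tightness pushes all but $\eta_0$ of it into $\overline B_{T_n}(\rho_n,R_0)$. A second, reverse application of the Prokhorov bound then caps $\mu_n(\overline B_{T_n}(\rho_n,R_0))$ by $\mu(\overline B_T(\rho,R_0+\eta_0))+\eta_0$. This is why the paper takes $K=\mu(\overline B_T(\rho,R_0+\eta_0))+3\eta_0$.
\item You instead move the annulus of Lemma~\ref{lemma:MEGV4} above the b-tightness radius, so the mass lands directly in ${\rm Cut}(T_n,\blackdiamond_n^r)\setminus\overline B_{T_n}(\rho_n,R_1)$.
\end{itemize}
Your route avoids the second Prokhorov comparison and is a bit more direct. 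The paper's route instead yields an explicit estimate for each fixed $n$ (Lemma~\ref{lemma:MEGV7}), with constants depending only on $\mu$, which it reuses in Lemmas~\ref{lemma:MEGV11}, \ref{lemma:MEGV13} and~\ref{lemma:MEGV15}.

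When you rerun Lemma~\ref{lemma:MEGV4}, take your annulus inside $\phi\bigl(\bigcup\{\theta_\sigma T:\sigma\in\circ^{r,\kappa},\ f_{\overline B_T(\rho,r)}(\sigma)=\boldsymbol{\sigma}\}\bigr)$ rather than all of $\phi(\theta_{\boldsymbol{\sigma}}T)$, and take $R_1\ge r+\kappa$. Then its $\eta$-neighbourhood genuinely lies in $\mathbf J^{\kappa,\eta}_{\boldsymbol{\sigma}}\setminus B_{\mathscr Z}(\varrho,R_1+\kappa)$. Without this restriction, finite-mass branches leaving the skeleton just above level $r$ would be included. Their $\eta$-neighbourhoods need not lie in $\mathbf J^{\kappa,\eta}_{\boldsymbol{\sigma}}$, so Lemma~\ref{lemma:MEGV3} says nothing about them. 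The restricted set still carries mass at least $K$ for large $R$, because these subtrees have infinite mass.

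Finally, the uniformity you worry about is a non-issue. Lemma~\ref{lemma:MEGV3} is applied for each fixed $n$. You also do not need the subsequence, since the contradiction can be derived for every sufficiently large $n$ individually, as in the paper.
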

We first prove an exact estimate.
\begin{lemma}
\label{lemma:MEGV7}
Let $\boldsymbol{\mu} = [T, d, \rho, \mu]$ and $\boldsymbol{\mu}_n = [T_n, d_n, \rho_n, \mu_n]$ be isometry classes in $\mathbb{T}_{\rm{min}}^{\rm{di}}$. Fix some $\boldsymbol{\mu}$-nice $r > 0$, and let $\kappa > 0$ and $\eta \in (0, \eta_0)$. Let $\blackdiamond_n^r$ be $\bullet^r$-compatible cut-off points in $\partial B_{T_n}(\rho_n, r)$, and suppose that 
\begin{enumerate}[$(i)$]
\item $\mu_n\left( {\rm{Cut}}(T_n, \blackdiamond_n^r) \setminus \overline{B}_{T_n}(\rho_n, R_0) \right) \leq \eta_0$ for some $R_0 > 0$, and
\item $d_{\rm{GP}}\left( \boldsymbol{\mu}_n^{\restr R}, \boldsymbol{\mu}^{\restr R} \right) \leq \eta$ for some $R > R_0 \vee R_0(K)$ where $K = \mu(\overline{B}_T(\rho, R_0 + \eta_0)) + 3\eta_0$. 
\end{enumerate}
Then for any embedding system $(\mathscr{Z}, \Phi, \phi)$ realizing condition $(ii)$, it holds that $\blackdiamond_n^r = \bullet_n^{r,\kappa,\eta}$.
\end{lemma}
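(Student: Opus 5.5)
Everything takes place inside a fixed embedding system $(\mathscr{Z}, \Phi, \phi)$ realizing $(ii)$, i.e.\ with $\delta_{\rm{P}}((\mu_n\circ\phi_n^{-1})^{\restr R},(\mu\circ\phi^{-1})^{\restr R})\le \eta$.

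\textbf{Step 1.} Since $R \geq R_0(K)$ and $\eta<\eta_0\le\kappa/2$, Lemma \ref{lemma:MEGV4} applies. For every $\boldsymbol{\sigma}\in\bullet^r$ it gives
$$\mu_n\circ\phi_n^{-1}\big(\mathbf{J}_{\boldsymbol{\sigma}}^{\kappa,\eta}\cap\overline{B}_{\mathscr{Z}}(\varrho,R)\big)\ge K-\eta > \mu(\overline{B}_T(\rho,R_0+\eta_0))+2\eta_0 .$$
In particular this mass is positive, so Corollary \ref{cor:MEGV5} (or directly Proposition \ref{prop:MEGV1} with Lemma \ref{lemma:MEGV3}) shows that $u_n\colon\bullet_n^{r,\kappa,\eta}\to\bullet^r$ is bijective. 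Hence $|\bullet_n^{r,\kappa,\eta}|=|\bullet^r|$ and $\bullet_n^{r,\kappa,\eta}\subseteq\partial B_{T_n}(\rho_n,r)$. Since each of these points is nonempty level-$r$ data, $|\partial B_{T_n}(\rho_n,r)|\ge|\bullet^r|$. By compatibility, $|\blackdiamond_n^r|=|\bullet^r|=|\bullet_n^{r,\kappa,\eta}|$.

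\textbf{Step 2 (pigeonhole).} Because the two finite sets have equal size, it suffices to show $\bullet_n^{r,\kappa,\eta}\subseteq\blackdiamond_n^r$. Suppose instead that some $\boldsymbol{\sigma}_n\in\bullet_n^{r,\kappa,\eta}\setminus\blackdiamond_n^r$ exists, and set $\boldsymbol{\sigma}:=u_n(\boldsymbol{\sigma}_n)$. Then $\theta_{\boldsymbol{\sigma}_n}T_n\subseteq{\rm{Cut}}(T_n,\blackdiamond_n^r)$. By Lemma \ref{lemma:MEGV3}, $\boldsymbol{\sigma}_n$ is the only point of $\partial B_{T_n}(\rho_n,r)$ whose subtree meets $\mathbf{J}_{\boldsymbol{\sigma}}^{\kappa,\eta}$. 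Since $\mathbf{J}_{\boldsymbol{\sigma}}^{\kappa,\eta}$ lies outside $B_{\mathscr{Z}}(\varrho,r+2\kappa)$, every point of $\phi_n(T_n)$ in $\mathbf{J}_{\boldsymbol{\sigma}}^{\kappa,\eta}$ lies in $\phi_n(\theta_{\boldsymbol{\sigma}_n}T_n)$. Therefore
$$\mu_n\big(\theta_{\boldsymbol{\sigma}_n}T_n\cap\overline{B}_{T_n}(\rho_n,R)\big)\ge K-\eta.$$

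\textbf{Step 3 (upper bound).} Split the mass on the left of Step 2 at radius $R_0$. The part outside $\overline{B}_{T_n}(\rho_n,R_0)$ lies in ${\rm{Cut}}(T_n,\blackdiamond_n^r)\setminus\overline{B}_{T_n}(\rho_n,R_0)$, so by $(i)$ it has mass at most $\eta_0$. For the part inside the ball, apply the Prokhorov bound with the closed set $\overline{B}_{\mathscr{Z}}(\varrho,R_0)$, noting $R>R_0$:
$$\mu_n(\overline{B}_{T_n}(\rho_n,R_0))\le \mu(\overline{B}_T(\rho,R_0+\eta))+\eta\le\mu(\overline{B}_T(\rho,R_0+\eta_0))+\eta_0 .$$
Combining the two parts gives
$$K-\eta\le \mu(\overline{B}_T(\rho,R_0+\eta_0))+2\eta_0,$$
that is, $K< \mu(\overline{B}_T(\rho,R_0+\eta_0))+3\eta_0$. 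This contradicts the definition of $K$. Hence $\bullet_n^{r,\kappa,\eta}\subseteq\blackdiamond_n^r$, and equality follows by cardinality.

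\textbf{Main obstacle.} The delicate step is Step 2: showing that all the $\mathbf{J}_{\boldsymbol{\sigma}}$-mass of $\mu_n$ lies in the single subtree $\theta_{\boldsymbol{\sigma}_n}T_n$. One must rule out that it sits below level $r$ or in another subtree; the first is excluded because $\mathbf{J}_{\boldsymbol{\sigma}}^{\kappa,\eta}$ lies beyond radius $r+2\kappa$, the second by Lemma \ref{lemma:MEGV3}. A second point to check is that $\eta<\kappa$ holds, so that Lemma \ref{lemma:MEGV4} applies; this follows from $\eta<\eta_0\le\kappa/2$.
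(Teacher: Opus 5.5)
Your proof is correct and follows the paper's argument. Corollary~\ref{cor:MEGV5} together with compatibility gives $|\blackdiamond_n^r| = |\bullet_n^{r,\kappa,\eta}|$. A point of $\bullet_n^{r,\kappa,\eta}\setminus\blackdiamond_n^r$ would then carry $\mu_n$-mass at least $K-\eta$ by Lemma~\ref{lemma:MEGV4}, whereas condition $(i)$ and the Prokhorov bound on $\overline{B}_{T_n}(\rho_n,R_0)$ cap that mass strictly below $K-\eta$, contradicting the choice of $K$; your use of Lemma~\ref{lemma:MEGV3} to place all of the $\mathbf{J}_{\boldsymbol{\sigma}}^{\kappa,\eta}$-mass inside $\theta_{\boldsymbol{\sigma}_n}T_n$ just makes explicit an inclusion the paper uses tacitly.
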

\begin{proof}
Let $(\mathscr{Z}, \Phi, \phi)$ be an embedding system realizing condition $(ii)$, meaning that it satisfies $d_{\rm{P}}((\mu_n \circ \phi_n^{-1})^{\restr R}, (\mu \circ \phi^{-1})^{\restr R}) \leq \eta$. Then by Corollary \ref{cor:MEGV5}, the map $u_n \colon \bullet_n^{r,\kappa,\eta} \rightarrow \bullet^r$ is bijective, so in particular $|{\bullet_n^{r,\kappa,\eta}}| = |{\bullet^r}|$. This moreover gives that $|\partial B_{T_n}(\rho_n, r)| \geq |{\bullet^r}|$, so by construction we must also have $|{\blackdiamond_n^r}| = |{\bullet^r}|$. Suppose for contradiction that $\blackdiamond_n^r \neq \bullet_n^{r,\kappa,\eta}$. Then as $|{\blackdiamond_n^r}| = |{\bullet_n^{r,\kappa,\eta}}|$ there must necessarily exist some $\boldsymbol{\sigma}_n \in \bullet_n^{r,\kappa,\eta} \setminus \blackdiamond_n^r$. By construction, this point must satisfy that $\theta_{\boldsymbol{\sigma}_n}T_n \subseteq {\rm{Cut}}(T_n, \blackdiamond_n^r)$, and so condition $(i)$ gives $\mu_n(\theta_{\boldsymbol{\sigma}_n}T_n \setminus \overline{B}_{T_n}(\rho_n, R_0)) \leq \eta_0$. At the same time, as $R \geq R_0(K)$, Lemma \ref{lemma:MEGV4} gives that $\mu_n(\theta_{\boldsymbol{\sigma}_n} T_n \cap \overline{B}_{T_n}(\rho_n, R)) \geq \mu_n \circ \phi_n^{-1}(\mathbf{J}_{\boldsymbol{\sigma}}^{\kappa,\eta} \cap \overline{B}_{\mathscr{Z}}(\varrho, R)) \geq K-\eta$ for $\boldsymbol{\sigma} = u_n^{-1}(\boldsymbol{\sigma}_n)$. But then using that $\mu_n(\theta_{\boldsymbol{\sigma}_n}T_n \cap \overline{B}_{T_n}(\rho_n, R)) \leq \mu_n(\overline{B}_{T_n}(\rho_n, R_0)) + \eta_0$ and applying the definition of $K$, we conclude that $\mu_n\left( \overline{B}_{T_n}(\rho_n, R_0) \right) > \mu\left( \overline{B}_T(\rho, R_0 + \eta_0) \right) + \eta_0$. This clearly violates condition $(ii)$, and so we conclude by contradiction that $\blackdiamond_n^r = \bullet_n^{r,\kappa,\eta}$.
\end{proof}
\begin{proof}[Proof of Proposition \ref{prop:MEGV8}]
By condition $(ii)$ $\limsup_{n \rightarrow \infty} \mu_n\left( {\rm{Cut}}(T_n, \blackdiamond_n^r) \setminus \overline{B}_{T_n}(\rho_n, R_0) \right) < \eta_0$ for some $R_0$ sufficiently large. Hence, we can find some $N_0$ sufficiently large such that for all $n \geq N_0$ it holds that $\mu_n\left( {\rm{Cut}}(T_n, \blackdiamond_n^r) \setminus \overline{B}_{T_n}(\rho_n, R_0) \right) \leq \eta_0$. Let $R > R_0 \wedge R_0(K)$ with $K$ given as in Lemma \ref{lemma:MEGV7}. Then condition $(i)$ gives in particular that $\boldsymbol{\mu}_n^{\restr R} \stackrel{\rm{GP}}{\rightarrow} \boldsymbol{\mu}^{\restr R}$, and so we can find some $N \geq N_0$ sufficiently large that $d_{\rm{GP}}(\boldsymbol{\mu}_n^{\restr R}, \boldsymbol{\mu}^{\restr R}) \leq \eta$ for all $n \geq N$. As all $n \geq N$ now satisfy conditions $(i)$ and $(ii)$ from Lemma \ref{lemma:MEGV7}, the conclusion follows.
\end{proof}
\subsection{Local Gromov--Hausdorff convergence of mass-erased subtrees}
The aim of this section is to prove local Gromov-Hausdorff convergence of mass-erased subtrees (Theorem \ref{thm:MEGV14}), and an analogous limit theorem for projected measures (Proposition \ref{prop:MEGV16}). We start by proving Gromov--Prokhorov convergence of the (isometry classes of) cut-off measures (Theorem \ref{thm:MEGV12}) through a series of lemmas. Observe that the contents of Theorem \ref{thm:MEGV12} are written out in terms of explicit estimates in Lemma \ref{lemma:MEGV11}. Going forward we will often write out explicit estimates as lemmas, as these will be useful in our companion paper \cite{Draft2}.
\begin{lemma}
\label{lemma:MEGV9}
Let $\boldsymbol{\mu} = [T, d, \rho, \mu]$ and $\boldsymbol{\mu}_n = [T_n, d_n, \rho_n, \mu_n]$ be isometry classes in $\mathbb{T}_{\rm{min}}^{\rm{di}}$, and let $(\mathscr{Z}, \Phi, \phi)$ be an embedding system. Fix some $\boldsymbol{\mu}$-nice $r > 0$, and let $\kappa > 0$ and $\eta \in (0, \frac{\kappa}{2})$. Then
\begin{align*}
\phi^{-1}\left(\phi_n\left( {\rm{Cut}}(T_n, \bullet_n^{r,\kappa,\eta}) \right)^{\eta}\right)  &\subseteq {\rm{Cut}}(T, \bullet^{r+\kappa}) \cup B_{T}(\rho, r+2\kappa) &&\text{and} \\
\phi_n^{-1}\left(\phi\left( {\rm{Cut}}(T, \bullet^r) \right)^{\eta}\right) &\subseteq {\rm{Cut}}(T_n, \bullet_n^{r+\kappa,\kappa,\eta}) \cup B_{T_n}(\rho_n, r+2\kappa). &&
\end{align*}
\end{lemma}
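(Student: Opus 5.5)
Both inclusions will be proved by contraposition: take a point lying outside the right-hand side and show that its image is at distance more than $\eta$ from the relevant embedded cut-off tree. Throughout, I use that the isometries are pointed. Consequently, for $x\in T$ and $y\in T_n$ with $\delta(\phi(x),\phi_n(y))\le\eta$ we have $|d(\rho,x)-d_n(\rho_n,y)|\le\eta$.

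For the first inclusion, let $\sigma\in T$ with $\sigma\notin B_T(\rho,r+2\kappa)$ and $\sigma\notin{\rm Cut}(T,\bullet^{r+\kappa})$. Then $d(\rho,\sigma)\ge r+2\kappa$, and $\sigma\in\theta_{\tilde\sigma}T$ for some $\tilde\sigma\in\bullet^{r+\kappa}=\circ^{r,\kappa}$. Hence $\phi(\sigma)\in\phi(T\setminus{\rm Cut}(T,\circ^{r,\kappa}))$ lies at height at least $r+2\kappa$. Suppose, aiming for a contradiction, that $\delta(\phi(\sigma),\phi_n(\sigma_n))\le\eta$ for some $\sigma_n\in{\rm Cut}(T_n,\bullet_n^{r,\kappa,\eta})$. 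Then $\phi_n(\sigma_n)\in\phi(T\setminus{\rm Cut}(T,\circ^{r,\kappa}))^\eta$, and $d_n(\rho_n,\sigma_n)\ge r+2\kappa-\eta>r+\kappa$ because $\eta<\kappa/2$. Two steps now need care.

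First, I will show that $\phi_n(\sigma_n)\in\mathscr J_r^{\kappa,\eta}$, which requires $\phi_n(\sigma_n)\notin B_{\mathscr Z}(\varrho,r+2\kappa)$. Height alone only yields $d_n(\rho_n,\sigma_n)\ge r+2\kappa-\eta$, which is not enough. I therefore replace $\sigma_n$ by an ancestor-or-descendant adjustment. The cleaner route is to move along $[\sigma,\cdot]$ upward in $T$: pick $\sigma'\in\theta_\sigma T$ with $d(\sigma,\sigma')=\eta$, which is possible since the skeleton has no leaves, or else argue directly with $\sigma$. Alternatively I simply take $\sigma_n'$ on $[\rho_n,\sigma_n]$, or its extension, at the right height. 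I expect this height bookkeeping to be the main nuisance. The simplest fix I will try first is to note that $\mathscr J$ is defined with $\phi(\theta\cdot)^\eta$ intersected with the complement of the open ball, and that $\delta(\varrho,\phi_n(\sigma_n))=d_n(\rho_n,\sigma_n)$. Then I use the triangle inequality to compare $\sigma_n$ with a point $\sigma$ chosen at height at least $r+2\kappa+\eta$, if such a point is available; otherwise I treat the borderline case by approximation.

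Second, once $\phi_n(\sigma_n)\in\mathscr J_r^{\kappa,\eta}$, let $\tau_n\in\partial B_{T_n}(\rho_n,r+\kappa)\cap[\rho_n,\sigma_n]$. Then $\tau_n\in\circ_n^{r,\kappa,\eta}$, so $f_{\overline B}(\tau_n)\in\bullet_n^{r,\kappa,\eta}$. Since $d_n(\rho_n,\sigma_n)>r$, this forces $\sigma_n\in\theta_{f(\tau_n)}T_n\setminus\{f(\tau_n)\}$, that is, $\sigma_n\notin{\rm Cut}(T_n,\bullet_n^{r,\kappa,\eta})$. This contradicts the choice of $\sigma_n$ and proves the first inclusion.

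The second inclusion follows the same scheme with the roles of the trees exchanged. Take $\sigma_n\in T_n$ with $d_n(\rho_n,\sigma_n)\ge r+2\kappa$ and $\sigma_n\in\theta_{\boldsymbol\sigma_n}T_n\setminus\{\boldsymbol\sigma_n\}$ for some $\boldsymbol\sigma_n\in\bullet_n^{r+\kappa,\kappa,\eta}$. Suppose $\delta(\phi_n(\sigma_n),\phi(\sigma))\le\eta$ with $\sigma\in{\rm Cut}(T,\bullet^r)$. Then $d(\rho,\sigma)>r$, so $\sigma$ lies above some point of $\partial B_T(\rho,r)\setminus\bullet^r$, and therefore $\mu(\theta_\sigma T)<\infty$. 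On the other hand, $\boldsymbol\sigma_n\in\bullet_n^{r+\kappa,\kappa,\eta}$ means that some point of $\theta_{\boldsymbol\sigma_n}T_n$ lies $\eta$-close to $\phi(\theta_{\sigma^*}T)$ for a check point $\sigma^*\in\circ^{r+\kappa,\kappa}=\bullet^{r+2\kappa}$, at height at least $r+3\kappa$. I will then run the four-point tree-distance argument from the proofs of Proposition \ref{prop:MEGV1} and Lemma \ref{lemma:MEGV3}. Comparing the most recent common ancestors in $T_n$ and in $T$ shows that $\sigma$ and $\sigma^*$ must branch at height at least $r+\kappa-2\eta>r$, so $\sigma^*\in\theta_{\boldsymbol\sigma}T$ with $\boldsymbol\sigma=f_{\overline B_T(\rho,r)}(\sigma)$. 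Since $\sigma^*$ lies on the skeleton, this gives $\boldsymbol\sigma\in\bullet^r$, contradicting $\sigma\in{\rm Cut}(T,\bullet^r)$ at height above $r$. The obstacle here is the same as before: getting the ancestor-height inequality with constants compatible with $\eta<\kappa/2$. I will do this by writing $d_n(\sigma_n,x_n)$ via the branch point of $\sigma_n$ and $x_n$, which has height at least $r+\kappa$, and transferring it with $2\eta$ error to $d(\sigma,x)$.
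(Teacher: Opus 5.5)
Your second step for the first inclusion is correct: once $\phi_n(\sigma_n)\in\mathscr J_r^{\kappa,\eta}$, the level-$(r+\kappa)$ ancestor of $\sigma_n$ lies in $\circ_n^{r,\kappa,\eta}$, so $\sigma_n$ sits strictly above a point of $\bullet_n^{r,\kappa,\eta}$. Your treatment of the second inclusion is also sound and is essentially the paper's argument. Both reduce to the four-point estimate showing that $\sigma$ and the witness in $T$ branch at height at least $r+\kappa-2\eta>r$; the paper writes this as $2(r_{\tau_n}-r)+d(\boldsymbol\sigma,\boldsymbol\sigma')\le 4\eta$.

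The gap is the first step of the first inclusion. You flag it but do not close it, and it cannot be closed: as stated, the first inclusion is false for points of $T$ at heights in $[r+2\kappa,r+2\kappa+\eta)$. For a counterexample, let $T=\mathscr Z=[0,\infty)$, rooted at $0$, carry Lebesgue measure, with $\phi$ the identity. Let $T_n=[0,r+2\kappa-\eta/2]$ carry Lebesgue measure, with $\phi_n$ the inclusion. Both classes lie in $\mathbb{T}_{\rm{min}}^{\rm{di}}$, and every $r>0$ is $\boldsymbol\mu$-nice.
\begin{itemize}
\item[(1)] We have $\bullet^{r+\kappa}=\{r+\kappa\}$ and $\mathscr J_r^{\kappa,\eta}=[r+2\kappa,\infty)$.
\item[(2)] $\phi_n(T_n)$ never meets $\mathscr J_r^{\kappa,\eta}$, so $\bullet_n^{r,\kappa,\eta}=\emptyset$ and ${\rm Cut}(T_n,\bullet_n^{r,\kappa,\eta})=T_n$.
\item[(3)] Hence the left-hand side is $[0,r+2\kappa+\eta/2]$, while the right-hand side is $[0,r+\kappa]\cup[0,r+2\kappa)=[0,r+2\kappa)$.
\end{itemize}

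None of your proposed repairs can work. Moving $\sigma$ up in $T$ does not move its witness $\sigma_n$. $T_n$ need not extend above $\sigma_n$; in the example $\sigma_n$ is a leaf. And the bad set is a band of positive width, not a boundary effect that approximation could remove.

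The paper's one-line proof has the same defect. It asserts $\phi_n({\rm Cut}(T_n,\bullet_n^{r,\kappa,\eta}))^{\eta}\cap\mathscr J_r^{\kappa,\eta}=\emptyset$, but only the unfattened intersection is empty. In the example the fattened intersection is $[r+2\kappa,r+2\kappa+\eta/2]$.

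The correct statement replaces $B_T(\rho,r+2\kappa)$ by $B_T(\rho,r+2\kappa+\eta)$. Then $d(\rho,\sigma)\ge r+2\kappa+\eta$ forces $d_n(\rho_n,\sigma_n)\ge r+2\kappa$, hence $\phi_n(\sigma_n)\in\mathscr J_r^{\kappa,\eta}$, and your argument goes through verbatim. This weaker version still suffices where the first inclusion is used in Lemma \ref{lemma:MEGV11}. You only need to choose $\kappa$ so that $\mu(\overline B_T(\rho,r+3\kappa)\setminus\overline B_T(\rho,r-\kappa))\le\varepsilon$, using $\eta<\kappa/2$.
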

\begin{proof}
The first inclusion holds since $\phi_n\left( {\rm{Cut}}(T_n, \bullet_n^{r,\kappa,\eta}) \right)^{\eta} \cap \mathscr{J}_r^{\kappa,\eta} = \emptyset$ and $\phi^{-1}\left( \mathscr{Z} \setminus \mathscr{J}_r^{\kappa,\eta} \right) = {\rm{Cut}}(T, \bullet^{r+\kappa}) \cup B_T(\rho, r+2\kappa)$. For the second inclusion, suppose for contradiction that there exists some $\sigma_n' \in T_n \setminus \left( {\rm{Cut}}(T_n, \bullet_n^{r+\kappa,\kappa,\eta}) \cup B_{T_n}(\rho_n, r+2\kappa) \right)$ and $\sigma' \in {\rm{Cut}}(T, \bullet^r)$ such that $\delta(\phi(\sigma'), \phi_n(\sigma_n')) \leq \eta$. Denote by $\boldsymbol{\sigma}'$ the point at level $r$ in $[\rho, \sigma']$, and denote by $\boldsymbol{\sigma}_n$ the point at level $r$ in $[\rho, \sigma_n']$. As $\sigma_n' \notin {\rm{Cut}}(T_n, \bullet_n^{r+\kappa,\kappa,\eta})$ and it is above level $r+2\kappa$, we must have that $[\boldsymbol{\sigma}_n, \sigma_n'] \cap \bullet_n^{r+\kappa, \kappa, \eta} \neq \emptyset$. As $\bullet_n^{r+\kappa, \kappa, \eta} \subseteq \circ_n^{r,\kappa,\eta}$ this gives that $[\boldsymbol{\sigma}_n, \sigma_n'] \cap \circ_n^{r, \kappa, \eta} \neq \emptyset$, and so $\boldsymbol{\sigma}_n \in \bullet_n^{r,\kappa,\eta}$. Denote by $\tilde{\sigma}_n$ the check point in $\circ_n^{r,\kappa,\eta}$ intersecting $[\boldsymbol{\sigma}_n, \sigma_n']$. See Figure \ref{fig:Fig6}.

As $\boldsymbol{\sigma}_n \in \bullet_n^{r,\kappa,\eta}$ there must exist some $\boldsymbol{\sigma} \in \bullet^r$ such that $\phi_n(\theta_{\boldsymbol{\sigma}_n}T_n) \cap \mathbf{J}_{\boldsymbol{\sigma}}^{\kappa,\eta} \neq \emptyset$. Indeed, as $\tilde{\sigma}_n$ is a check point, there must be some check point $\tilde{\sigma} \in \circ^{r,\kappa} \cap \theta_{\boldsymbol{\sigma}}T$ and some points  $\sigma \in \theta_{\tilde{\sigma}}T \setminus B_T(\rho, r+2\kappa)$ and $\sigma_n \in \theta_{\tilde{\sigma}_n}T_n \setminus B_T(\rho, r+2\kappa)$ with $\delta(\phi(\sigma), \phi_n(\sigma_n)) \leq \eta$. Set $\tau_n = \sigma_n \wedge \sigma_n'$, and denote by $r_{\sigma} = d(\rho, \sigma)$ the level of the point $\sigma$ (and similarly for every other point in the construction). Then we easily see that $d(\sigma, \sigma') = r_{\sigma} + r_{\sigma'} - 2r + d(\boldsymbol{\sigma}, \boldsymbol{\sigma}')$. At the same time, we have that $d(\sigma, \sigma') \leq \delta(\phi(\sigma), \phi_n(\sigma_n)) + d_n(\sigma_n, \sigma_n') + \delta(\phi_n(\sigma_n'), \phi(\sigma')) \leq 2\eta + d_n(\sigma_n, \tau_n) + d_n(\tau_n, \sigma_n') = 2\eta + r_{\sigma_n} + r_{\sigma_n'} - 2r_{\tau_n}$. Combining the two estimates gives $2(r_{\tau_n} - r) + d(\boldsymbol{\sigma}, \boldsymbol{\sigma}') \leq 4\eta$. 

Now, note that $\tilde{\sigma}_n$ is a common ancestor of $\sigma_n$ and $\sigma_n'$. So $r_{\tau_n} \geq r_{\tilde{\sigma}_n} = r+\kappa$. But inserting this in the aforementioned estimate gives $2\kappa + d(\boldsymbol{\sigma}, \boldsymbol{\sigma}') \leq 4\eta$, which clearly contradicts the assumption $\eta < \frac{\kappa}{2}$.
\begin{figure}[t]
\center
\includegraphics[width=0.8\textwidth]{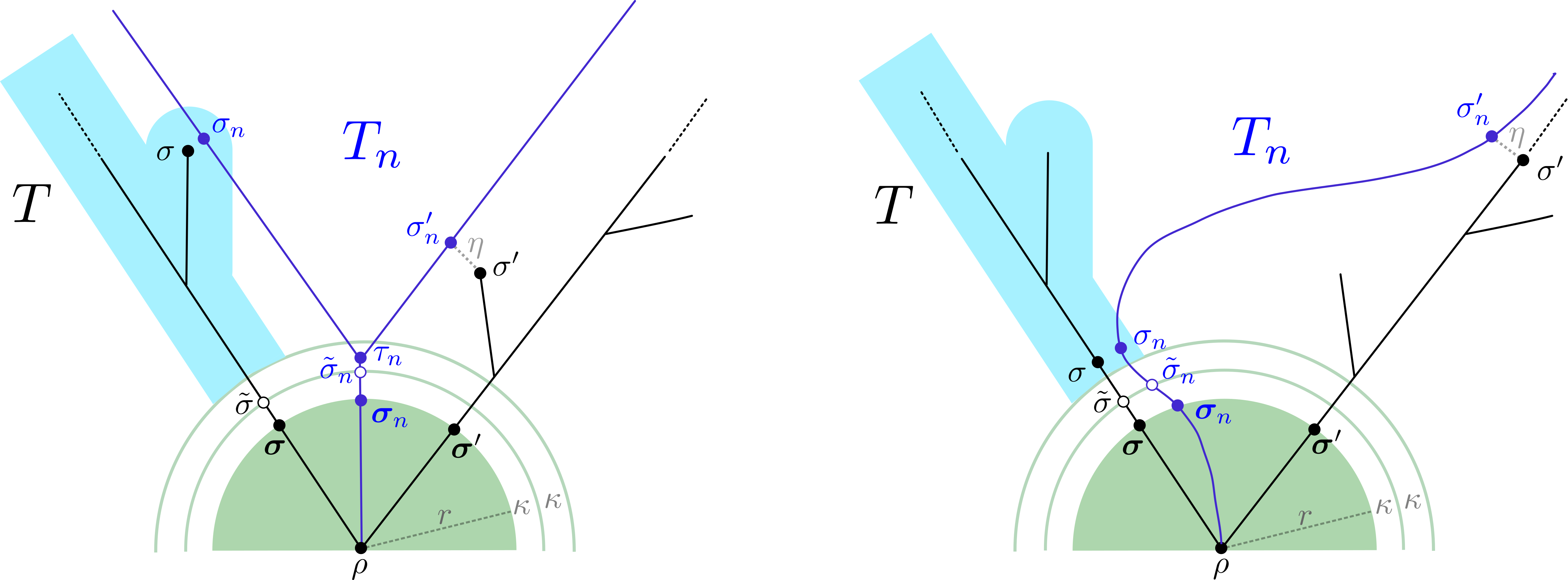}
\caption{$\tau_n \in \{\sigma_n, \sigma_n'\}$ (left) or $\tau_n \notin \{\sigma_n, \sigma_n'\}$ (right) in the proof of Lemma \ref{lemma:MEGV9}}
\label{fig:Fig6}
\end{figure}
\end{proof}
\begin{rmk}
\label{rmk:MEGV9}
Note that the radius $r$ in Lemma \ref{lemma:MEGV9} may of course be shifted by $-\kappa$. More specifically, if we have that $r- \kappa > 0$ is $\boldsymbol{\mu}$-nice, and $\eta \in \left( 0, \frac{\kappa}{2} \right)$, then $ \phi_n^{-1}\left(\phi\left( {\rm{Cut}}(T, \bullet^{r-\kappa}) \right)^{\eta}\right) \subseteq {\rm{Cut}}(T_n, \bullet_n^{r,\kappa,\eta}) \cup B_{T_n}(\rho_n, r+\kappa)$.
\demo
\end{rmk}
\begin{lemma}
\label{lemma:MEGV10}
Let $(T, d, \rho)$ be a Polish $\mathbb{R}$-tree with $\mu \in \mathscr{M}_{\rm{di}}(T)$, and let $r > 0$ be $\mu$-nice. Then for every $\varepsilon > 0$ there exists some $\kappa > 0$ sufficiently small that
$$ d_{\rm{P}}\left( \mu|_{{\rm{Cut}}(T, \bullet^{r-\kappa})}, \mu|_{{\rm{Cut}}(T, \bullet^r)} \right) \leq \varepsilon \quad \text{and} \quad d_{\rm{P}}\left( \mu|_{{\rm{Cut}}(T, \bullet^r)}, \mu|_{{\rm{Cut}}(T, \bullet^{r+\kappa})} \right) \leq \varepsilon. $$
\end{lemma}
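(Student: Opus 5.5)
The plan is to bound each Prokhorov distance by the total variation of the difference of the two restrictions, that is, by the $\mu$-mass of the symmetric difference of the two cut-off trees. Since $\bullet^r$ is finite and the skeleton is discrete, the sets ${\rm{Cut}}(T, \bullet^{r'})$ change only near level $r$ as $r'$ varies near $r$, and the mass there should vanish as $\kappa \to 0$ because $r$ is $\mu$-nice.

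First, I choose $\kappa_0 > 0$ such that $\overline{B}_T(\rho, r+\kappa_0) \setminus B_T(\rho, r-\kappa_0)$ contains no branch points of ${\rm{Skel}}(T, \mu)$. This is possible since ${\rm{Skel}}(T,\mu)$ is discrete, so only finitely many of its branch points lie in a bounded ball, and $r$ is $\mu$-nice, so none lies on $\partial B_T(\rho, r)$. For $\kappa < \kappa_0$ the skeleton in this annulus is then a disjoint union of $|\bullet^r|$ segments, one through each $\boldsymbol{\sigma} \in \bullet^r$. Each segment $[\boldsymbol{\sigma}^{-}, \boldsymbol{\sigma}^{+}]$ has endpoints $\boldsymbol{\sigma}^{\pm} \in \bullet^{r\pm\kappa}$, and the projection $f_{\overline{B}_T(\rho,r)}$ gives bijections between $\bullet^{r-\kappa}$, $\bullet^r$ and $\bullet^{r+\kappa}$.

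Next, I identify the differences. Using ${\rm{Cut}}(T,\bullet^{r'}) = \overline{B}_T(\rho,r') \cup \bigcup_{\sigma \in \partial B_T(\rho,r')\setminus \bullet^{r'}} \theta_\sigma T$, the tree ${\rm{Cut}}(T, \bullet^r)$ contains ${\rm{Cut}}(T,\bullet^{r-\kappa})$. Their difference is contained in
$$D_\kappa^- := \bigcup_{\boldsymbol{\sigma} \in \bullet^r} \left( \theta_{\boldsymbol{\sigma}^-}T^{\circ} \cap \overline{B}_T(\rho, r) \right) \cup \bigcup_{\boldsymbol{\sigma}\in\bullet^r}\ \bigcup_{\sigma \in \partial B_T(\rho,r)\cap \theta_{\boldsymbol{\sigma}^-}T\setminus\{\boldsymbol{\sigma}\}} \theta_\sigma T.$$
This is the part of $\theta_{\boldsymbol{\sigma}^-}T$ lying either below level $r$ or in finite-mass subtrees branching off the segment $]\boldsymbol{\sigma}^-,\boldsymbol{\sigma}[$. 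In the same way, ${\rm{Cut}}(T,\bullet^{r+\kappa}) \setminus {\rm{Cut}}(T,\bullet^r)$ is contained in the set $D_\kappa^+$ of points of $\theta_{\boldsymbol{\sigma}}T^{\circ}$ that either lie in $\overline{B}_T(\rho,r+\kappa)$ or belong to subtrees $\theta_\sigma T$ with $\sigma \in \partial B_T(\rho,r+\kappa) \cap F_\mu(T)$. All of these sets have finite $\mu$-mass by Definition \ref{def:DIMeas}$(ii)$ applied at levels $r \pm \kappa_0$.

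Then I show $\mu(D_\kappa^\pm) \to 0$ as $\kappa \downarrow 0$ by monotone continuity of measure. The sets $D_\kappa^-$ decrease as $\kappa \downarrow 0$, and any point in all of them lies in $\theta_{\boldsymbol{\sigma}^-}T$ for every $\kappa$. Such a point is therefore in $\theta_{\boldsymbol{\sigma}}T$ while also lying below level $r$ or strictly off the skeleton branch below $\boldsymbol{\sigma}$, which leaves only $\boldsymbol{\sigma}$ itself. Hence $\bigcap_\kappa D^-_\kappa \subseteq \bullet^r \subseteq \partial B_T(\rho,r)$, which is $\mu$-null since $r$ is $\mu$-nice. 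Similarly, $\bigcap_\kappa D^+_\kappa = \emptyset$: a point $\sigma' \in \theta_{\boldsymbol{\sigma}}T^{\circ}$ has $d(\rho,\sigma')>r$, so for small $\kappa$ it lies neither in the ball nor in a finite-mass subtree rooted at level $r+\kappa$, since $[\boldsymbol{\sigma},\sigma']$ starts along the skeleton. The latter holds because $\boldsymbol{\sigma}$ is not a branch point of $T$, so the path leaves $\boldsymbol{\sigma}$ along the unique skeleton direction. Because $\mu(D^\pm_{\kappa_0}) < \infty$, continuity from above gives $\mu(D^\pm_\kappa) \le \varepsilon$ for $\kappa$ small. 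Finally, for measures $\nu \le \nu'$ with $\nu'-\nu$ of total mass at most $\varepsilon$, we have $d_{\rm{P}}(\nu,\nu') \le \varepsilon$, which gives both estimates.

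I expect the main obstacle to be the careful bookkeeping of exactly which finite-mass subtrees move between the cut-off trees, in particular checking that the intersection of the $D_\kappa^+$ is empty. This relies on $\boldsymbol{\sigma}$ not being a branch point of $T$, not merely of the skeleton, which is precisely where $\mu$-niceness of $r$ is used.
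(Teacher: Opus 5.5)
Your proposal is correct and follows essentially the same route as the paper. Both arguments bound each Prokhorov distance by the $\mu$-mass of the difference of the nested cut-off trees. That mass is finite by Definition \ref{def:DIMeas}$(ii)$ and tends to zero as $\kappa \downarrow 0$ because $r$ is $\mu$-nice.

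You make the continuity-from-above step, the lower estimate, and the role of $\partial B_T(\rho,r) \cap {\rm{Bp}}(T) = \emptyset$ more explicit than the paper. The paper covers the upper difference by off-skeleton components hanging from branch points in $\theta_{\boldsymbol{\sigma}}T \cap \overline{B}_T(\rho, r+\kappa)$ plus a thin annulus, and calls the lower estimate analogous. Two small fixes: finiteness of $\mu(D_\kappa^-)$ comes from condition $(ii)$ at level $r$ together with bounded finiteness, not from level $r-\kappa_0$; and you should take $\kappa_0 < r$.
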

\begin{proof}
We may assume without loss of generality that ${\rm{Skel}}(T, \mu) \neq \{\rho\}$, such that $|{\bullet^r}| \geq 1$, as the result is otherwise trivial. Let $A \subseteq T$ be a closed set. As ${\rm{Cut}}(T, \bullet^{r-\kappa}) \subseteq {\rm{Cut}}(T, \bullet^r) \subseteq {\rm{Cut}}(T, \bullet^{r+\kappa})$ we just have to argue that $\mu|_{{\rm{Cut}}(T, \bullet^{r+\kappa})}(A) \leq \mu|_{{\rm{Cut}}(T, \bullet^r)}(A^{\varepsilon}) + \varepsilon$ and $\mu|_{{\rm{Cut}}(T, \bullet^r)}(A) \leq \mu|_{{\rm{Cut}}(T, \bullet^{r-\kappa})}(A^{\varepsilon}) + \varepsilon$. This will be the case if we show that $\mu\left( {\rm{Cut}}(T, \bullet^{r+\kappa}) \setminus {\rm{Cut}}(T, \bullet^r) \right) \leq \varepsilon$ and $\mu\left( {\rm{Cut}}(T, \bullet^r) \setminus {\rm{Cut}}(T, \bullet^{r-\kappa}) \right) \leq \varepsilon$. We will only prove the first of these two estimates, as the argument from `below' is essentially analogous. Fix some $\boldsymbol{\sigma} \in \bullet^r$. Denote for each $\sigma' \in {\rm{Bp}}(T)$ by $\mathscr{C}_{\sigma'}$ the collection of connected components of $\theta_{\sigma'}T \setminus \{\sigma'\}$ entirely contained in $T \setminus {\rm{Skel}}(T, \mu)$, and set $\mathscr{C}(\kappa) = \bigcup_{\sigma' \in {\rm{Bp}}(T) \cap \overline{B}_T(\rho, r+\kappa) \cap \theta_{\boldsymbol{\sigma}}T} \mathscr{C}_{\sigma'}$. Then $\theta_{\boldsymbol{\sigma}}T \cap \left( {\rm{Cut}}(T, \bullet^{r+\kappa}) \setminus {\rm{Cut}}(T, \bullet^r) \right) \subseteq \left( \bigcup_{C \in \mathscr{C}(\kappa)} C \right) \cup \overline{B}_T(\rho, r+\kappa) \setminus \overline{B}_T(\rho, r)$. But $\mu\left( \left( \bigcup_{C \in \mathscr{C}(\kappa)} C \right) \cup \overline{B}_T(\rho, r+\kappa) \setminus \overline{B}_T(\rho, r) \right) < \infty$ since $\mu \in \mathscr{M}_{\rm{di}}(T)$, and so we may choose $\kappa$ sufficiently small that this mass becomes less than $\frac{\varepsilon}{|{\bullet^r}|}$.
\end{proof}
\begin{lemma}
\label{lemma:MEGV11}
Let $\boldsymbol{\mu} = [T, d, \rho, \mu]$ and $\boldsymbol{\mu}_n = [T_n, d_n, \rho_n, \mu_n]$ be isometry classes in $\mathbb{T}_{\rm{min}}^{\rm{di}}$. Fix some $\boldsymbol{\mu}$-nice $r > 0$, and let $\varepsilon > 0$. Choose $\kappa \in (0, r)$ such that $r- \kappa$ is $\boldsymbol{\mu}$-nice, the estimates from Lemma \ref{lemma:MEGV10} hold, and $\mu\left( \overline{B}_T(\rho, r+2\kappa) \setminus \overline{B}_T(\rho, r - \kappa) \right) \leq \varepsilon$. Let $\eta \in (0, \eta_0)$, and let $\blackdiamond_n^r$ be $\bullet^r$-compatible cut-off points in $\partial B_{T_n}(\rho_n, r)$. Suppose that
\begin{enumerate}[$(i)$]
\item $\mu_n\left( {\rm{Cut}}(T_n, \blackdiamond_n^r) \setminus \overline{B}_{T_n}(\rho_n, R_0) \right) \leq \varepsilon \wedge \eta_0$ and $\mu\left( {\rm{Cut}}(T, \bullet^r) \setminus \overline{B}_T(\rho, R_0) \right) \leq \varepsilon$ for some $R_0 > 0$, and

\item $d_{\rm{GV}}\left( \boldsymbol{\mu}_n, \boldsymbol{\mu} \right) \leq \frac{\eta}{20} e^{-R}$ for some $R > R_0 \vee R_0(K)$ where $K = \mu\left( \overline{B}_T(\rho, R_0 + \eta_0) \right) + 3\eta_0$.
\end{enumerate}
Then it follows for any $h_0 > 0$ that $d_{\rm{GP}}\left( {\rm{cut}}_{\blackdiamond_n^r}^{h_0}\boldsymbol{\mu}_n, {\rm{cut}}_{\bullet^r}^{h_0}\boldsymbol{\mu} \right) \leq 4\varepsilon + (1+ 6h_0|{\bullet^r}|) \eta$.
\end{lemma}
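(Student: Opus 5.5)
We work in the embedding system $(\mathscr{Z}, \Phi, \phi)$ provided by Lemma \ref{lemma:MEGV6}: since $R > R_0(K)$ and $K \geq 2\eta_0$, condition $(ii)$ gives some $R' \geq R$ with $\delta_{\rm{P}}((\mu_n \circ \phi_n^{-1})^{\restr R'}, (\mu \circ \phi^{-1})^{\restr R'}) \leq \eta$, together with $\delta(\phi(\boldsymbol{\sigma}), \phi_n(u_n(\boldsymbol{\sigma}))) \leq 6\eta$ for all $\boldsymbol{\sigma} \in \bullet^r$. In this system $u_n$ is bijective (Corollary \ref{cor:MEGV5}). Since $R' \geq R > R_0 \vee R_0(K)$ and $d_{\rm{GP}}(\boldsymbol{\mu}_n^{\restr R'}, \boldsymbol{\mu}^{\restr R'}) \leq \eta$, Lemma \ref{lemma:MEGV7} together with $(i)$ gives $\blackdiamond_n^r = \bullet_n^{r,\kappa,\eta}$. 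We therefore only need to compare ${\rm{cut}}^{h_0}_{\bullet_n^{r,\kappa,\eta}}\mu_n \circ \phi_n^{-1}$ with ${\rm{cut}}^{h_0}_{\bullet^r}\mu \circ \phi^{-1}$ in $\mathscr{Z}$, which bounds $d_{\rm{GP}}$ from above.

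First split off the atoms. The Dirac parts are $h_0 \sum_{\boldsymbol{\sigma}} \delta_{\phi(\boldsymbol{\sigma})}$ and $h_0 \sum_{\boldsymbol{\sigma}} \delta_{\phi_n(u_n(\boldsymbol{\sigma}))}$. Under the matching $u_n$ these atoms are $6\eta$-close, so a closed set $A$ loses at most the $6\eta$-enlargement. Since $d_{\rm{P}}(\alpha + \beta, \alpha' + \beta') \leq d_{\rm{P}}(\alpha, \alpha') + d_{\rm{P}}(\beta, \beta')$ is not exact for Prokhorov, we instead directly verify the defining inequalities with enlargement $\eta$ and an additive error $6h_0|{\bullet^r}|\eta$ (crudely, we allow each mismatched atom to cost $h_0$ times a mass-transport distance of $6\eta$). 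We bound via the Lévy--Prokhorov coupling form, or simply by absorbing the atoms into the additive constant $6h_0|{\bullet^r}|\eta$. This explains the term $6h_0|{\bullet^r}|\eta$.

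The main step is the restricted parts: show that for closed $A \subseteq \mathscr{Z}$,
$$\mu_n\bigl(\phi_n^{-1}(A) \cap {\rm{Cut}}(T_n, \bullet_n^{r,\kappa,\eta})\bigr) \leq \mu\bigl(\phi^{-1}(A^{\eta}) \cap {\rm{Cut}}(T, \bullet^r)\bigr) + 4\varepsilon + \eta,$$
and symmetrically. For the first direction, intersect with $\overline{B}(\rho_n, R_0)$, paying $\varepsilon$ by $(i)$. Apply the Prokhorov bound at level $R'$ to the closed set $A \cap \phi_n({\rm{Cut}}(T_n, \bullet_n^{r,\kappa,\eta}) \cap \overline{B}(\rho_n, R_0))$, paying $\eta$ and passing to the $\eta$-enlargement. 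By Lemma \ref{lemma:MEGV9}, the $\phi$-preimage of this enlargement lies in ${\rm{Cut}}(T, \bullet^{r+\kappa}) \cup B_T(\rho, r+2\kappa)$. The difference from ${\rm{Cut}}(T, \bullet^r)$ is controlled by Lemma \ref{lemma:MEGV10} ($\varepsilon$) and by $\mu(\overline{B}(\rho, r+2\kappa) \setminus \overline{B}(\rho, r-\kappa)) \leq \varepsilon$. For the reverse direction, truncate $\mu|_{{\rm{Cut}}(T, \bullet^r)}$ at $R_0$ (cost $\varepsilon$), then pass to ${\rm{Cut}}(T, \bullet^{r-\kappa})$ via Lemma \ref{lemma:MEGV10} (cost $\varepsilon$). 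Apply Prokhorov (cost $\eta$), and use Remark \ref{rmk:MEGV9} to land in ${\rm{Cut}}(T_n, \bullet_n^{r,\kappa,\eta}) \cup B_{T_n}(\rho_n, r+\kappa)$. The excess region $B_{T_n}(\rho_n, r+\kappa) \setminus {\rm{Cut}}$ lies above the cut-off points between levels $r$ and $r+\kappa$. We bound its $\mu_n$-mass by a further Prokhorov comparison with the annulus mass $\leq \varepsilon$ of $\mu$, costing another $\eta$. We expect this last bookkeeping to be the main obstacle, since it must fit into the budget $4\varepsilon + \eta$. If it does not, we will absorb the extra $\eta$ into the constant. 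Summing the pieces gives $4\varepsilon + (1 + 6h_0|{\bullet^r}|)\eta$.
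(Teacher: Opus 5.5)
Your proposal is essentially the paper's proof. The ingredients coincide: the embedding system from Lemma~\ref{lemma:MEGV6}, the identification $\blackdiamond_n^r=\bullet_n^{r,\kappa,\eta}$ via Lemma~\ref{lemma:MEGV7}, and the pairing of the atoms through $u_n$. The two chains are also the same: truncation at $R_0$, Prokhorov at level $R'$, Lemma~\ref{lemma:MEGV9} and Remark~\ref{rmk:MEGV9}, Lemma~\ref{lemma:MEGV10}, and the annulus hypothesis. Your bookkeeping worry is justified, and the paper has the same slip. Its step from $2\varepsilon+\eta$ to $3\varepsilon+\eta$ is exactly the Prokhorov comparison of $\overline{B}_{T_n}(\rho_n,r+\kappa)\setminus B_{T_n}(\rho_n,r)$ with the $\mu$-annulus, and that comparison costs $\eta$, not $\varepsilon$. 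The honest count is therefore $3\varepsilon+2\eta$. This is at most $4\varepsilon+\eta$ when $\eta\le\varepsilon$, which is the regime used in Theorem~\ref{thm:MEGV12}.

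The step to repair is your treatment of the atoms. The subadditivity you dismiss does hold. If $a$ is admissible in the definition of $d_{\rm{P}}(\alpha,\alpha')$ and $b$ in that of $d_{\rm{P}}(\beta,\beta')$, then $(\alpha+\beta)(A)\le\alpha'(A^{a})+\beta'(A^{b})+a+b\le(\alpha'+\beta')(A^{a+b})+a+b$, and symmetrically. Hence $d_{\rm{P}}(\alpha+\beta,\alpha'+\beta')\le d_{\rm{P}}(\alpha,\alpha')+d_{\rm{P}}(\beta,\beta')$, and you may bound the restricted and atomic parts separately.

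Your replacement, by contrast, does not work, because Prokhorov distance is not a transport cost. With enlargement only $\eta$, an atom displaced by up to $6\eta$ can force an additive error equal to its full mass $h_0$, not $6h_0\eta$; for the atomic parts alone, take $A=\{\phi_n(u_n(\boldsymbol{\sigma}))\}$. Use the $6\eta$-enlargement instead, as the paper does. By bijectivity of $u_n$ and Lemma~\ref{lemma:MEGV6}, $\sum_{\boldsymbol{\sigma}\in\bullet^r}\delta_{\phi_n(u_n(\boldsymbol{\sigma}))}(A)\le\sum_{\boldsymbol{\sigma}\in\bullet^r}\delta_{\phi(\boldsymbol{\sigma})}(A^{6\eta})$, and vice versa. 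The resulting constant then differs slightly from the stated one, and the paper's own combination is equally loose here. This is immaterial, since downstream only the vanishing of the bound as $\varepsilon,\eta\to 0$ is used.
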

\begin{proof}
By Lemma \ref{lemma:MEGV6} there exists some $R' \geq R$ and an embedding system $(\mathscr{Z}, \Phi, \phi)$ such that $\delta_{\rm{P}}\left( (\mu_n \circ \phi_n^{-1})^{\restr R'}, (\mu \circ \phi^{-1})^{\restr R'} \right) \leq \eta$ and $\delta(\phi(\boldsymbol{\sigma}), \phi_n(u_n(\boldsymbol{\sigma}))) \leq 6\eta$ for every $\boldsymbol{\sigma} \in \bullet^r$. Moreover, by Lemma \ref{lemma:MEGV7} we have that $\blackdiamond_n^r = \bullet_n^{r,\kappa,\eta}$. Let $A \subseteq \mathscr{Z}$ be a closed set. Then we have by our assumptions, together with Lemma \ref{lemma:MEGV9}, Remark \ref{rmk:MEGV9} and Lemma \ref{lemma:MEGV10}, that
\begin{align*}
\lefteqn{\mu\left( \phi^{-1}(A) \cap {\rm{Cut}}(T, \bullet^r) \right)} \quad &\\ 
&\leq \mu(\phi^{-1}(A^{\varepsilon}) \cap {\rm{Cut}}(T, \bullet^{r-\kappa})) + \varepsilon \\
&\leq  \mu^{\restr R_0}\left( \phi^{-1}(A^{\varepsilon}) \cap {\rm{Cut}}(T, \bullet^{r-\kappa}) \right) + 2\varepsilon \\
&\leq \mu_n^{\restr R_0}\left(\phi_n^{-1}(A^{\varepsilon+ \eta}) \cap \phi_n^{-1}\left(\phi\left( {\rm{Cut}}(T, \bullet^{r-\kappa}) \right)^{\eta} \right)\right) + 2\varepsilon + \eta \\
&\leq \mu_n^{\restr R_0} \left(\phi_n^{-1}(A^{\varepsilon + \eta}) \cap \left( {\rm{Cut}}(T_n, \bullet_n^{r,\kappa,\eta}) \cup B_{T_n}(\rho_n, r+\kappa) \right) \right) + 2\varepsilon + \eta \\
&\leq \mu_n^{\restr R_0}\left( \phi_n^{-1}(A^{\varepsilon + \eta}) \cap {\rm{Cut}}(T_n, \bullet_n^{r,\kappa,\eta}) \right) + \mu_n^{\restr R_0}\left( \overline{B}_{T_n}(\rho_n, r+\kappa) \setminus B_{T_n}(\rho_n, r) \right) + 2\varepsilon + \eta \\
&\leq \mu_n\left( \phi_n^{-1}(A^{\varepsilon + \eta}) \cap {\rm{Cut}}(T_n, \bullet_n^{r,\kappa,\eta}) \right) + \mu^{\restr R_0}\left( \overline{B}_T(\rho, r+2\kappa) \setminus B_{T}(\rho, r-\kappa) \right) + 3\varepsilon + \eta \\
&\leq \mu_n\left( \phi_n^{-1}(A^{\varepsilon + \eta}) \cap {\rm{Cut}}(T_n, \bullet_n^{r,\kappa,\eta}) \right) + 4\varepsilon + \eta.
\end{align*}
In the other direction, we see that
\begin{align*}
\mu_n\left( \phi_n^{-1}(A) \cap {\rm{Cut}}(T_n, \bullet_n^{r,\kappa,\eta}) \right) &\leq \mu_n^{\restr R_0}\left( \phi_n^{-1}(A) \cap {\rm{Cut}}(T_n, \bullet_n^{r,\kappa,\eta}) \right) + \varepsilon \\
&\leq \mu^{\restr R_0}\left(\phi^{-1}(A^{\eta}) \cap \phi^{-1}\left( \phi_n\left( {\rm{Cut}}(T_n, \bullet_n^{r,\kappa,\eta}) \right)^{\eta} \right) \right) + \varepsilon + \eta \\
&\leq \mu^{\restr R_0}\left( \phi^{-1}(A^{\eta}) \cap \left( {\rm{Cut}}(T, \bullet^{r+\kappa}) \cup B_T(\rho, r+2\kappa) \right) \right) + \varepsilon + \eta \\
&\leq \mu\left( \phi^{-1}(A^{\eta}) \cap {\rm{Cut}}(T, \bullet^{r+\kappa}) \right) + 2\varepsilon + \eta \\
&\leq \mu\left(\phi^{-1}(A^{\varepsilon + \eta}) \cap {\rm{Cut}}(T, \bullet^r)\right) + 3\varepsilon + \eta.
\end{align*}
Combining the two estimates gives $\delta_{\rm{P}}\left( \phi(\mu|_{{\rm{Cut}}(T, \bullet^r)}), \phi_n\left( \mu_n|_{{\rm{Cut}}(T_n, \bullet_n^{r,\kappa,\eta})} \right) \right) \leq 4\varepsilon + \eta$.

Now, denote by $(\boldsymbol{\sigma}, \boldsymbol{\sigma}_n) = (\boldsymbol{\sigma}, u_n(\boldsymbol{\sigma}))$ each pair of cut-off points under the bijective map $u_n$. As $\delta(\phi(\boldsymbol{\sigma}), \phi_n(\boldsymbol{\sigma}_n)) \leq 6\eta$, it follows that the Dirac point masses are also close in the sense that $\delta_{\rm{P}}\left( \delta_{\phi(\boldsymbol{\sigma})}, \delta_{\phi_n(\boldsymbol{\sigma}_n)} \right) \leq 6 \eta$. As $u_n$ is bijective, this allows us to write for each $A \subseteq \mathscr{Z}$ closed
\begin{align*}
h_0 \sum_{\boldsymbol{\sigma}_n \in \blackdiamond_n^r} \delta_{\phi_n(\boldsymbol{\sigma}_n)}(A) &\leq h_0 \sum_{\boldsymbol{\sigma} \in \bullet^r} \left( \delta_{\phi(\boldsymbol{\sigma})}(A^{6\eta}) + 6\eta \right) = h_0 \sum_{\boldsymbol{\sigma} \in \bullet^r} \delta_{\phi(\boldsymbol{\sigma})}(A^{6\eta}) + 6h_0 |{\bullet^r}| \eta,
\end{align*}
and similarly the other way around using that $|{\bullet^r}| = |{\blackdiamond_n^r}|$. Combining with our previous observations we thus conclude that $ d_{\rm{GP}}\left({\rm{cut}}_{\blackdiamond_n^r}\boldsymbol{\mu}_n, {\rm{cut}}_{\bullet^r}\boldsymbol{\mu} \right) \leq 4\varepsilon + (1+ 6h_0|{\bullet^r}|) \eta$.
\end{proof}
\begin{thm}[Convergence of cut-off measures]
\label{thm:MEGV12}
Let $\boldsymbol{\mu} = [T, d, \rho, \mu]$ and $\boldsymbol{\mu}_n = [T_n, d_n, \rho_n, \mu_n]$, $n \in \mathbb{N}$, be isometry classes in $\mathbb{T}_{\rm{min}}^{\rm{di}}$. Fix some $\boldsymbol{\mu}$-nice $r > 0$. Let for each $n \in \mathbb{N}$, $\blackdiamond_n^r$ be $\bullet^r$-compatible cut-off points in $\partial B_{T_n}(\rho_n, r)$. Suppose that
\begin{enumerate}[$(i)$]
\item $\boldsymbol{\mu}_n \stackrel{\rm{GV}}{\rightarrow} \boldsymbol{\mu}$ as $n \rightarrow \infty$, and
\item $\left( \mu_n|_{{\rm{Cut}}(T_n, \blackdiamond_n^r)} \right)_{n \in \mathbb{N}}$ is b-tight.
\end{enumerate}
Then for any $h_0 > 0$ it follows that ${\rm{cut}}_{\blackdiamond_n^r}^{h_0} \boldsymbol{\mu}_n \stackrel{\rm{GP}}{\rightarrow} {\rm{cut}}_{\bullet^r}^{h_0}\boldsymbol{\mu}$ as $n \rightarrow \infty$.
\end{thm}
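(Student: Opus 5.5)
The plan is to derive Theorem \ref{thm:MEGV12} from the explicit estimate in Lemma \ref{lemma:MEGV11}: fix $\varepsilon > 0$, choose the parameters that lemma requires, and show that its hypotheses $(i)$ and $(ii)$ hold for all $n$ large enough. The lemma then gives
$$\limsup_{n \rightarrow \infty} d_{\rm{GP}}\left( {\rm{cut}}_{\blackdiamond_n^r}^{h_0}\boldsymbol{\mu}_n, {\rm{cut}}_{\bullet^r}^{h_0}\boldsymbol{\mu}\right) \leq 4\varepsilon + (1 + 6h_0|{\bullet^r}|)\eta,$$
and letting first $\eta \downarrow 0$ and then $\varepsilon \downarrow 0$ finishes the proof.

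\emph{Choice of $\kappa$.} Fix $\varepsilon > 0$. I would choose $\kappa \in (0, r)$ with three properties:
\begin{itemize}
\item[(a)] $r - \kappa$ is $\boldsymbol{\mu}$-nice, which holds for Lebesgue-almost every $\kappa$;
\item[(b)] the two Prokhorov estimates of Lemma \ref{lemma:MEGV10} hold with this $\varepsilon$;
\item[(c)] $\mu(\overline{B}_T(\rho, r+2\kappa) \setminus \overline{B}_T(\rho, r-\kappa)) \leq \varepsilon$.
\end{itemize}
Property (c) follows from continuity from above of the finite measure $\mu^{\restr r+1}$: the annuli shrink to $\partial B_T(\rho, r)$, which is $\mu$-null because $r$ is $\boldsymbol{\mu}$-nice. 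Since each condition holds for all small $\kappa$ outside a null set, all three can be met simultaneously. With $\kappa$ fixed, $\eta_0 = \eta_0(r,\kappa)$ is determined, and we pick $\eta \in (0, \eta_0)$.

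\emph{Hypothesis $(i)$ of Lemma \ref{lemma:MEGV11}.} By b-tightness (assumption $(ii)$), there is $R_0$ with $\limsup_n \mu_n({\rm{Cut}}(T_n, \blackdiamond_n^r) \setminus \overline{B}_{T_n}(\rho_n, R_0)) < \varepsilon \wedge \eta_0$. Since $\mu({\rm{Cut}}(T, \bullet^r)) < \infty$ for $\mu \in \mathscr{M}_{\rm{di}}(T)$, enlarging $R_0$ also gives $\mu({\rm{Cut}}(T, \bullet^r) \setminus \overline{B}_T(\rho, R_0)) \leq \varepsilon$. Hence $(i)$ holds for all $n \geq N_0$.

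\emph{Hypothesis $(ii)$ of Lemma \ref{lemma:MEGV11}.} Set $K = \mu(\overline{B}_T(\rho, R_0+\eta_0)) + 3\eta_0$, which is finite, and take any $R > R_0 \vee R_0(K)$. Assumption $(i)$ gives $d_{\rm{GV}}(\boldsymbol{\mu}_n, \boldsymbol{\mu}) \to 0$, so $d_{\rm{GV}}(\boldsymbol{\mu}_n, \boldsymbol{\mu}) \leq \frac{\eta}{20}e^{-R}$ for all $n \geq N \geq N_0$. Lemma \ref{lemma:MEGV11} then applies for every $n \geq N$ and yields the displayed bound.

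The only delicate step is choosing $\kappa$ so that Lemma \ref{lemma:MEGV10}, the annulus bound and niceness of $r - \kappa$ hold together; the point to respect is the order of choices ($\varepsilon$, then $\kappa$, then $\eta_0$ and $\eta$, then $R_0$, $K$, $R$, then $N$). Once that is in place, the rest is bookkeeping, since the geometric work is already contained in Lemmas \ref{lemma:MEGV6}, \ref{lemma:MEGV7} and \ref{lemma:MEGV9}.
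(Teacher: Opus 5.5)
Your proposal is correct and is essentially the paper's proof. In both, you fix $\varepsilon$, choose $\kappa$ as in Lemma \ref{lemma:MEGV11}, obtain its hypothesis $(i)$ eventually from b-tightness and its hypothesis $(ii)$ eventually from Gromov-vague convergence, and then conclude from the explicit bound. The paper instead takes $\eta<\eta_0\wedge\varepsilon$ directly to get $(5+6h_0|{\bullet^r}|)\varepsilon$, which is equivalent to your letting $\eta\downarrow 0$. Your extra justification that the three conditions on $\kappa$ can be met simultaneously, and that $\mu({\rm{Cut}}(T,\bullet^r)\setminus\overline{B}_T(\rho,R_0))\le\varepsilon$ for large $R_0$, fills in details the paper leaves implicit.
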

\begin{proof}[Proof of Theorem \ref{thm:MEGV12}]
Let $\varepsilon > 0$, let $\kappa \in (0,r)$ be given as in Lemma \ref{lemma:MEGV11}, and let $\eta \in (0, \eta_0 \wedge \varepsilon)$. By assumption $(ii)$, we can find some $R_0 > 0$ such that for all $n$ sufficiently large, condition $(i)$ of Lemma \ref{lemma:MEGV11} is true. Choosing $R > R_0 \vee R_0(K)$ with $K$ given as in Lemma \ref{lemma:MEGV11}, we also get by assumption $(i)$ that $d_{\rm{GV}}(\boldsymbol{\mu}_n, \boldsymbol{\mu}) \leq \frac{\eta}{20}e^{-R}$ for all $n$ sufficiently large. We thus conclude by Lemma \ref{lemma:MEGV11} that for all $n$ sufficiently large, $d_{\rm{GP}}\left( {\rm{cut}}_{\blackdiamond_n^r}\boldsymbol{\mu}_n, {\rm{cut}}_{\bullet^r}\boldsymbol{\mu} \right) \leq \left(5 + 6h_0 |{\bullet^r}| \right) \varepsilon$. As $|{\bullet^r}|$ and $h_0$ are fixed and finite, and $\varepsilon$ is arbitrary, we conclude that $d_{\rm{GP}}\left( {\rm{cut}}_{\blackdiamond_n^r}\boldsymbol{\mu}_n, {\rm{cut}}_{\bullet^r}\boldsymbol{\mu} \right) \rightarrow 0$.
\end{proof}
\begin{rmk}
Theorem \ref{thm:MEGV12} may be strengthened to an \emph{`if and only if'} statement by some minor adjustments. Indeed, if ${\rm{cut}}_{\blackdiamond_n^{r_k}}^{h_0} \boldsymbol{\mu}_n \stackrel{\rm{GP}}{\rightarrow} {\rm{cut}}_{\bullet^{r_k}}^{h_0}\boldsymbol{\mu}$ as $n \rightarrow \infty$ for all $k$ and some increasing sequence of radii $r_k \rightarrow \infty$, then it follows that $\boldsymbol{\mu}_n \stackrel{\rm{GV}}{\rightarrow} \boldsymbol{\mu}$ and $\left( \mu_n|_{{\rm{Cut}}(T_n, \blackdiamond_n^{r_k})} \right)_{n \in \mathbb{N}}$ is b-tight for all $k$. This is easily seen by restricting the cut-off measures to some smaller $\boldsymbol{\mu}$-nice radii $\tilde{r}_k < r_k$ in order to get Gromov-vague convergence. B-tightness on the cut-off trees follows immediately by definition of the cut-off measures and Gromov-Prokhorov convergence.
\demo
\end{rmk}
Using Theorem \ref{thm:MEGV12} we may prove local Gromov-Hausdorff convergence of the mass-erased subtrees.
\begin{thm}[Convergence of mass-erased subtrees 2]
\label{thm:MEGV14}
Let for all $n$, $\boldsymbol{\mu} = [T, d, \rho, \mu]$ and $\boldsymbol{\mu}_n = [T_n, d_n, \rho_n, \mu_n]$ be isometry classes in $\mathbb{T}_{\rm{min}}^{\rm{di}}$. Fix some $\boldsymbol{\mu}$-nice $r > 0$. Let for each $n$, $\blackdiamond_n^r$ be $\bullet^r$-compatible cut-off points in $\partial B_{T_n}(\rho_n, r)$. Let $h, h_n \in (0,\infty)$, $n \in \mathbb{N}$, and let $h_0 \geq \sup_{n \in \mathbb{N}} h_n$. Suppose that
\begin{enumerate}[$(i)$]
\item $\boldsymbol{\mu}_n \stackrel{\rm{GV}}{\rightarrow} \boldsymbol{\mu}$ and $\left( \mu_n|_{{\rm{Cut}}(T_n, \blackdiamond_n^r)} \right)_{n \in \mathbb{N}}$ is b-tight, and
\item $h_n \rightarrow h$ and the map $h' \mapsto R_{{\rm{cut}}_{\bullet^r}^{h_0}\mu, h'}(T)$ is $d_H$-continuous at $h$.\footnote{This condition may be removed if $h_n = h$ eventually.}
\end{enumerate}
Then $\lim_{n \rightarrow \infty} d_{\rm{GH}}\left( R_{\mu_n,h_n}(T_n) \cap {\rm{Cut}}(T_n, \blackdiamond_n^r), R_{\mu,h}(T) \cap {\rm{Cut}}(T, \bullet^r) \right) = 0$. If $(i)$--$(ii)$ are true for some increasing sequence of radii $r_k \rightarrow \infty$, then $\lim_{n \rightarrow \infty} d_{\rm{GH}}^{\rm{loc}}(R_{\mu_n, h_n}(T_n), R_{\mu,h}(T)) = 0$.
\end{thm}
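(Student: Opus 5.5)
The plan is to reduce to the finite-mass result of \cite{DW26} via the cut-off measures, exactly as in the fixed-tree case (Theorem \ref{thm:MEFix7}). By Theorem \ref{thm:MEGV12}, assumption $(i)$ gives ${\rm{cut}}_{\blackdiamond_n^r}^{h_0}\boldsymbol{\mu}_n \stackrel{\rm{GP}}{\rightarrow} {\rm{cut}}_{\bullet^r}^{h_0}\boldsymbol{\mu}$ in $\mathbb{T}_{\rm{min}}^{\rm{fin}}$. The Gromov--Prokhorov analogue of \cite{DW26} Proposition 3.16 (their Gromov-setting result on Gromov--Hausdorff convergence of mass-erased subtrees, under $h_n \rightarrow h$ and $d_H$-continuity of $h' \mapsto R_{{\rm{cut}}_{\bullet^r}^{h_0}\mu,h'}(T)$ at $h$) then yields
$$ d_{\rm{GH}}\left( R_{{\rm{cut}}_{\blackdiamond_n^r}^{h_0}\mu_n, h_n}(T_n), R_{{\rm{cut}}_{\bullet^r}^{h_0}\mu, h}(T) \right) \rightarrow 0 .$$
One caveat needs checking here. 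The finite mass-erased subtrees are computed in the minimal tree $\overline{{\rm{Span}}({\rm{supp}}(\cdot))}$, and this coincides with the one in $T$ because $R_{\nu,h}$ only depends on subtree masses.

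By Lemma \ref{lemma:MEFix1}, the limit side equals $R_{\mu,h}(T)\cap{\rm{Cut}}(T,\bullet^r)$. The remaining step, which I expect to be the main obstacle, is the $T_n$-side identity
$$ R_{{\rm{cut}}_{\blackdiamond_n^r}^{h_0}\mu_n,h_n}(T_n) = R_{\mu_n,h_n}(T_n)\cap{\rm{Cut}}(T_n,\blackdiamond_n^r) \quad \text{eventually}.$$
The trouble is that $\blackdiamond_n^r$ need not be the intrinsic skeleton points of $\mu_n$, which may even be finite measures. I would mimic the proof of \eqref{eq:MEFix7}.

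For $\sigma_n \in {\rm{Cut}}(T_n,\blackdiamond_n^r)$ not lying on a segment $[\rho_n,\boldsymbol{\sigma}_n]$ with $\boldsymbol{\sigma}_n\in\blackdiamond_n^r$, the subtree $\theta_{\sigma_n}T_n$ lies inside the cut-off tree. Hence both measures give it the same mass, and membership agrees.

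For $\sigma_n \in [\rho_n,\boldsymbol{\sigma}_n]$, the cut measure gives mass at least $h_0\ge h_n$, so $\sigma_n$ lies in the left-hand side. It therefore suffices to show $\mu_n(\theta_{\boldsymbol{\sigma}_n}T_n)\ge h_n$ eventually for every $\boldsymbol{\sigma}_n\in\blackdiamond_n^r$. For this I would use Proposition \ref{prop:MEGV8}, which gives $\blackdiamond_n^r=\bullet_n^{r,\kappa,\eta}$ eventually in a suitable embedding system, combined with Lemma \ref{lemma:MEGV4}. Taking $K>\sup_n h_n+\eta$, Lemma \ref{lemma:MEGV4} shows that each $\theta_{u_n^{-1}}$-subtree carries $\mu_n$-mass at least $K-\eta$ eventually, via the bijection $u_n$ of Corollary \ref{cor:MEGV5}. (Note $u_n$ maps onto $\bullet^r$ and $|\blackdiamond_n^r|=|\bullet^r|$ eventually.) If $\bullet^r=\emptyset$, this step is vacuous.

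Combining these steps gives the first claim.

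For the local statement, repeat along $r_k$. Choose $\boldsymbol{\mu}$-nice radii and, for fixed $k'$, note that $\overline{B}(\rho,k')\subseteq$ the cut-off tree at level $r_k>k'$, so intersecting with closed balls of nice radius is continuous under Gromov--Hausdorff convergence of compact discrete (finite type) trees. The one thing to verify is that the ball boundary avoids branch points and that there are finitely many leaves. This gives convergence of each ball, and dominated convergence in the series defining $d_{\rm{GH}}^{\rm{loc}}$ concludes. The restriction-to-balls step may need choosing radii avoiding branch points of $R_{\mu,h}(T)$, which is possible for all but countably many radii, with the standard monotonicity argument covering integer radii $k'$.
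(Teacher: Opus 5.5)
Your proposal is correct and follows the paper's proof. Theorem \ref{thm:MEGV12} combined with the Gromov--Prokhorov result of \cite{DW26} (their Theorem 4.21$(ii)$) gives convergence of the mass-erased subtrees of the cut-off measures, Lemma \ref{lemma:MEFix1} handles the limit side, and the $T_n$-side identity you reprove is exactly the paper's Lemma \ref{lemma:MEGV13}, obtained via Lemma \ref{lemma:MEGV7}, Corollary \ref{cor:MEGV5} and Lemma \ref{lemma:MEGV4} with $K$ chosen larger than $h_0$.

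For the local statement you do not need nice radii or a monotonicity argument. In a rooted $\mathbb{R}$-tree, retracting a point onto $[\rho,\cdot]$ at height $s$ gives $d_{\rm{GH}}(\overline{B}_{A_n}(\rho_n,s),\overline{B}_{A}(\rho,s)) \leq 2\, d_{\rm{GH}}(A_n,A)$ for every $s>0$.
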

\begin{lemma}
\label{lemma:MEGV13}
Let $\boldsymbol{\mu} = [T, d, \rho, \mu]$ and $\boldsymbol{\mu}_n = [T_n, d_n, \rho_n, \mu_n]$ be isometry classes in $\mathbb{T}_{\rm{min}}^{\rm{di}}$. Fix some $\boldsymbol{\mu}$-nice $r > 0$, and let $\kappa > 0$ and $\eta \in (0, \eta_0)$. Let for $n \in \mathbb{N}$, $\blackdiamond_n^r$ be $\bullet^r$-compatible cut-off points in $\partial B_{T_n}(\rho_n, r)$, and let $h_0 > 0$. Suppose that
\begin{enumerate}[$(i)$]
\item $\mu_n\left( {\rm{Cut}}(T_n, \blackdiamond_n^r) \setminus \overline{B}_{T_n}(\rho_n, R_0) \right) \leq \eta_0$ for some $R_0 > 0$, and
\item $d_{\rm{GP}}\left( \boldsymbol{\mu}_n^{\restr R}, \boldsymbol{\mu}^{\restr R} \right) \leq \eta$ for some $R > R_0 \vee R_0(K)$ where $K = \mu\left( \overline{B}_T(\rho, R_0 + \eta_0) \right) + 3 \eta_0 + h_0$.
\end{enumerate}
Then for every $h \in (0, h_0]$ it holds that $R_{\mu_n, h}(T_n) \cap {\rm{Cut}}(T_n, \blackdiamond_n^r) = R_{ {\rm{cut}}_{\blackdiamond_n^r}^{h_0}\mu_n, h}(T_n)$.
\end{lemma}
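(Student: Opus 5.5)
The plan is to mirror the proof of Lemma \ref{lemma:MEFix1} and of \eqref{eq:MEFix7}, using the Gromov-vague estimates already available to show that every cut-off point carries mass at least $h_0$ above it.

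\emph{Step 1: identify the cut-off points.} Condition $(ii)$ gives $R \geq R_0(K)$ with $K$ larger than the constant in Lemma \ref{lemma:MEGV7}. Hence both Lemma \ref{lemma:MEGV7} and Lemma \ref{lemma:MEGV4} apply to any embedding system realizing $(ii)$. Lemma \ref{lemma:MEGV7} gives $\blackdiamond_n^r = \bullet_n^{r,\kappa,\eta}$, and Corollary \ref{cor:MEGV5} makes $u_n$ a bijection onto $\bullet^r$.

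\emph{Step 2: lower bound on the mass above cut-off points.} For $\boldsymbol{\sigma}_n \in \blackdiamond_n^r$ with $\boldsymbol{\sigma} = u_n(\boldsymbol{\sigma}_n)$, I would argue as in the proof of Lemma \ref{lemma:MEGV7}. By Proposition \ref{prop:MEGV1} and Lemma \ref{lemma:MEGV3}, $\boldsymbol{\sigma}_n$ is the only point of $\partial B_{T_n}(\rho_n,r)$ whose subtree meets $\mathbf{J}_{\boldsymbol{\sigma}}^{\kappa,\eta}$, so $\phi_n^{-1}(\mathbf{J}_{\boldsymbol{\sigma}}^{\kappa,\eta}) \subseteq \theta_{\boldsymbol{\sigma}_n}T_n$. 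Lemma \ref{lemma:MEGV4} then gives
$$\mu_n(\theta_{\boldsymbol{\sigma}_n}T_n) \geq K - \eta \geq h_0 + 2\eta_0 > h_0.$$
The same bound holds for every $\sigma \in [\boldsymbol{\sigma}_n$'s ancestors$] \cap [\rho_n, \boldsymbol{\sigma}_n]$, since their subtrees contain $\theta_{\boldsymbol{\sigma}_n}T_n$. So $[\rho_n, \boldsymbol{\sigma}_n] \subseteq R_{\mu_n,h}(T_n)$ for every $h \leq h_0$.

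\emph{Step 3: the set equality.} Take $\sigma \in {\rm{Cut}}(T_n,\blackdiamond_n^r)$ and compare ${\rm{cut}}_{\blackdiamond_n^r}^{h_0}\mu_n(\theta_\sigma T_n)$ with $\mu_n(\theta_\sigma T_n)$. There are two cases.
\begin{itemize}
\item If $\theta_\sigma T_n$ contains no point of $\blackdiamond_n^r$, then $\theta_\sigma T_n \subseteq {\rm{Cut}}(T_n,\blackdiamond_n^r)$. This holds because the subtree either lies above level $r$ in a non-cut branch, or, when $d_n(\rho_n,\sigma)\le r$, every branch through level $r$ is uncut. The two masses therefore agree.
\item Otherwise $\sigma \in [\rho_n,\boldsymbol{\sigma}_n]$ for some $\boldsymbol{\sigma}_n \in \blackdiamond_n^r$. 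Then ${\rm{cut}}(\theta_\sigma T_n) \geq h_0 \geq h$ and $\mu_n(\theta_\sigma T_n) \geq h$ by Step 2, so $\sigma$ lies in both sets.
\end{itemize}
Since ${\rm{cut}}_{\blackdiamond_n^r}^{h_0}\mu_n$ is supported on the cut-off tree and is dominated there by $\mu_n$ off the cut-off points, the trivial inclusion $R_{{\rm{cut}}\mu_n,h}(T_n) \subseteq {\rm{Cut}}(T_n,\blackdiamond_n^r)$ also holds. The root is included in both sets by definition.

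The main obstacle is Step 2, specifically pinning down that the mass Lemma \ref{lemma:MEGV4} locates in $\mathbf{J}_{\boldsymbol{\sigma}}^{\kappa,\eta}$ actually lies above $\boldsymbol{\sigma}_n$, rather than being only a mass bound in $\mathscr{Z}$. Step 1 (the identification $\blackdiamond_n^r=\bullet_n^{r,\kappa,\eta}$) together with the uniqueness in Lemma \ref{lemma:MEGV3} is exactly what is needed here. One should also check that $K$ in $(ii)$ dominates the constant of Lemma \ref{lemma:MEGV7}, which it does since it adds $h_0$.
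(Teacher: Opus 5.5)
Your proposal is correct and follows essentially the same route as the paper. You identify $\blackdiamond_n^r=\bullet_n^{r,\kappa,\eta}$ via Lemma \ref{lemma:MEGV7} and Corollary \ref{cor:MEGV5}, use Lemma \ref{lemma:MEGV3} to place $\phi_n^{-1}(\mathbf{J}_{\boldsymbol{\sigma}}^{\kappa,\eta})$ inside $\theta_{\boldsymbol{\sigma}_n}T_n$ so that Lemma \ref{lemma:MEGV4} with the enlarged $K$ gives $\mu_n(\theta_{\boldsymbol{\sigma}_n}T_n)>h_0$, and then split on whether $\theta_\sigma T_n$ meets $\blackdiamond_n^r$; the paper only differs in presenting this as the two inclusions $(\subseteq)$ and $(\supseteq)$, each with the same two cases.
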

\begin{proof}
Let $(\mathscr{Z}, \Phi, \phi)$ be an embedding system realizing condition $(ii)$. Then by Lemma \ref{lemma:MEGV7}, $\blackdiamond_n^r = \bullet_n^{r,\kappa,\eta}$, and by Corollary \ref{cor:MEGV5} the map $u_n \colon \bullet_n^{r,\kappa,\eta} \rightarrow \bullet^r$ is bijective.

For the inclusion $(\subseteq)$, take some $\sigma_n \in R_{\mu_n, h}(T_n) \cap {\rm{Cut}}(T_n, \blackdiamond_n^r)$, and note that $\mu_n(\theta_{\sigma_n}T_n) \geq h$. If $\theta_{\sigma_n}T_n \cap \blackdiamond_n^r = \emptyset$, then by construction ${\rm{cut}}_{\blackdiamond_n^r}\mu_n(\theta_{\sigma_n}T_n) = \mu_n(\theta_{\sigma_n}T_n) \geq h$. If on the other hand $\theta_{\sigma_n}T_n \cap \blackdiamond_n^r \neq \emptyset$, then ${\rm{cut}}_{\blackdiamond_n^r}\mu_n(\theta_{\sigma_n}T_n) \geq h_0 \geq h$. In either case we have that $\sigma_n \in R_{ {\rm{cut}}_{\blackdiamond_n^r}\mu_n, h}(T_n)$.

For $(\supseteq)$, take some $\sigma_n \in R_{ {\rm{cut}}_{\blackdiamond_n^r}\mu_n, h}(T_n)$, and note that then ${\rm{cut}}_{\blackdiamond_n^r}\mu_n(\theta_{\sigma_n}T_n) \geq h$ (and so in particular $\sigma_n \in {\rm{Cut}}(T_n, \blackdiamond_n^r)$). If $\theta_{\sigma_n}T_n \cap \blackdiamond_n^r = \emptyset$, then as before $\mu_n(\theta_{\sigma_n}T_n) = {\rm{cut}}_{\blackdiamond_n^r}\mu_n(\theta_{\sigma_n}T_n) \geq h$. If on the other hand $\theta_{\sigma_n}T_n \cap \blackdiamond_n^r \neq \emptyset$, then there exists some $\boldsymbol{\sigma}_n \in \blackdiamond_n^r$ with $\theta_{\boldsymbol{\sigma}_n}T_n \subseteq \theta_{\sigma_n}T_n$. As $\blackdiamond_n^r = \bullet_n^{r,\kappa,\eta}$ and $u_n \colon \bullet_n^{r,\kappa,\eta} \rightarrow \bullet^r$ is bijective, there must exist some $\boldsymbol{\sigma} \in \bullet^r$ such that $\mathbf{J}_{\boldsymbol{\sigma}}^{\kappa,\eta} \cap \phi_n(T_n) \subseteq \phi_n(\theta_{\boldsymbol{\sigma}_n}T_n) \subseteq \phi_n(\theta_{\sigma_n}T_n)$. By Lemma \ref{lemma:MEGV4} we moreover have from our choice of constant $K$, that $\mu_n \circ \phi_n^{-1}\left( \mathbf{J}_{\boldsymbol{\sigma}}^{\kappa,\eta} \cap \overline{B}_{\mathscr{Z}}(\varrho, R) \right) > h_0$. We thus get that $\mu_n(\theta_{\sigma_n}T_n) \geq \mu_n \circ \phi_n^{-1}(\mathbf{J}_{\boldsymbol{\sigma}}^{\kappa,\eta}) > h_0 \geq h$, and so we conclude that $\sigma_n \in R_{\mu_n, h}(T_n)$, as desired.
\end{proof}
\begin{proof}[Proof of Theorem \ref{thm:MEGV14}]
We immediately get that $d_{\rm{GH}}\left( R_{ {\rm{cut}}_{\blackdiamond_n^r}\mu_n, h_n}(T_n), R_{ {\rm{cut}}_{\bullet^r}\mu, h}(T) \right) \rightarrow 0$ by Theorem \ref{thm:MEGV12} and \cite{DW26} Theorem 4.21$(ii)$. By Lemma \ref{lemma:MEGV13} we have $R_{\mu_n, h_n}(T_n) \cap {\rm{Cut}}(T_n, \blackdiamond_n^r) = R_{{\rm{cut}}_{\blackdiamond_n^r}\mu_n, h_n}(T_n)$ eventually, and similarly $R_{\mu,h}(T) \cap {\rm{Cut}}(T, \bullet^r) = R_{{\rm{cut}}_{\bullet^r}\mu, h}(T)$.
\end{proof}
We finally show Gromov-vague convergence of the isometry classes of projected measures.
\begin{prop}[Convergence of projected measures]
\label{prop:MEGV16}
Let for all $n$, $\boldsymbol{\mu} = [T, d, \rho, \mu]$ and $\boldsymbol{\mu}_n = [T_n, d_n, \rho_n, \mu_n]$ be isometry classes in $\mathbb{T}_{\rm{min}}^{\rm{di}}$. Let $r_k \rightarrow \infty$ be an increasing sequence of $\boldsymbol{\mu}$-nice radii. Let for each $n$ and $k$, $\blackdiamond_n^{r_k}$ be $\bullet^{r_k}$-compatible cut-off points in $\partial B_{T_n}(\rho_n, r_k)$. Let $h, h_n \in (0, \infty)$, $n \in \mathbb{N}$, let $h_0 \geq \sup_{n \in \mathbb{N}} h_n$, and suppose that for all $k$,
\begin{enumerate}[$(i)$]
\item $\boldsymbol{\mu}_n \stackrel{\rm{GV}}{\rightarrow} \boldsymbol{\mu}$ and $\left( \mu_n|_{{\rm{Cut}}(T_n, \blackdiamond_n^{r_k})} \right)_{n \in \mathbb{N}}$ is b-tight, and
\item $h_n \rightarrow h$ and the map $h' \mapsto R_{{\rm{cut}}_{\bullet^{r_k}}^{h_0}\mu, h'}(T)$ is $d_H$-continuous at $h$.\footnote{This condition may be removed if $h_n = h$ eventually.}
\end{enumerate}
Then $\left[{\rm{Pr}}_{R_{\mu_n,h_n}(T_n)}\mu_n\right] \stackrel{\rm{GV}}{\rightarrow} \left[{\rm{Pr}}_{R_{\mu,h}(T)}\mu\right]$,\footnote{Here, the square-brackets denote the Gromov-vague isometry classes generated by the projected measures.} and $\left( \left({\rm{Pr}}_{R_{\mu_n,h_n}(T_n)}\mu_n \right)|_{{\rm{Cut}}(T_n, \blackdiamond_n^{r_k})} \right)_{n \in \mathbb{N}}$ is b-tight for all $k$.
\end{prop}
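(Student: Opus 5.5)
The plan is to transfer the fixed-tree argument of Proposition \ref{prop:MEFix9} to the Gromov setting, working one radius $r = r_k$ at a time through the cut-off measures and then gluing the radii together through Proposition \ref{prop:GV1}.

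\emph{Step 1 (b-tightness).} Fix $r=r_k$ and $R>r$. The point-projection $f_{R_{\mu_n,h_n}(T_n)}$ is non-expansive in height. It maps $\theta_{\boldsymbol{\sigma}}T_n$ into itself for every $\boldsymbol{\sigma}\in\partial B_{T_n}(\rho_n,r)$ above which the subtree meets $R_{\mu_n,h_n}(T_n)$. Hence $f^{-1}\bigl({\rm{Cut}}(T_n,\blackdiamond_n^r)\setminus\overline{B}_{T_n}(\rho_n,R)\bigr)\subseteq {\rm{Cut}}(T_n,\blackdiamond_n^r)\setminus\overline{B}_{T_n}(\rho_n,R)$, exactly as in Proposition \ref{prop:MEFix9}$(ii)$. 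The projected mass there is therefore bounded by $\mu_n$ of the same set, and b-tightness is inherited from assumption $(i)$.

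\emph{Step 2 (the finite problem).} By Theorem \ref{thm:MEGV12}, ${\rm{cut}}_{\blackdiamond_n^r}^{h_0}\boldsymbol{\mu}_n\stackrel{\rm{GP}}{\rightarrow}{\rm{cut}}_{\bullet^r}^{h_0}\boldsymbol{\mu}$. I would then invoke the Gromov--Prokhorov analogue from \cite{DW26} (Theorem 4.21, the projected-measure part) of \cite{DW26} Proposition 3.16. Together with $h_n\to h$ and the $d_H$-continuity assumption $(ii)$, it should give GP convergence of the isometry classes of ${\rm{Pr}}_{R_{{\rm{cut}}\,\mu_n,h_n}(T_n)}{\rm{cut}}_{\blackdiamond_n^r}\mu_n$ to those of ${\rm{Pr}}_{R_{{\rm{cut}}\,\mu,h}(T)}{\rm{cut}}_{\bullet^r}\mu$.

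\emph{Step 3 (identification on balls).} I need the Gromov analogue of Lemma \ref{lemma:MEFix8}. For $\tilde r<r$ and $n$ large, I want
$$\bigl({\rm{Pr}}_{R_{\mu_n,h_n}(T_n)}\mu_n\bigr)^{\restr\tilde r}=\bigl({\rm{Pr}}_{R_{{\rm{cut}}\,\mu_n,h_n}(T_n)}{\rm{cut}}_{\blackdiamond_n^r}^{h_0}\mu_n\bigr)^{\restr\tilde r},$$
and similarly for $\mu$. Lemma \ref{lemma:MEGV13} gives $R_{\mu_n,h_n}(T_n)\cap{\rm{Cut}}(T_n,\blackdiamond_n^r)=R_{{\rm{cut}}\,\mu_n,h_n}(T_n)$ eventually. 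Its proof moreover shows that each $\boldsymbol{\sigma}_n\in\blackdiamond_n^r$ carries mass $>h_0$ above it. So $\blackdiamond_n^r\subseteq R_{\mu_n,h_n}(T_n)$, and the two trees agree on $\overline{B}_{T_n}(\rho_n,r)$. The components of the complement whose closure points lie in $\overline{B}(\rho_n,\tilde r)$ then coincide and avoid the subtrees above $\blackdiamond_n^r$. On them the two measures agree, and Proposition \ref{prop:Proj2}$(i)$ yields the identity, exactly as in Lemma \ref{lemma:MEFix8}.

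\emph{Step 4 (conclusion).} I choose $\tilde r_k\in(r_k-\varepsilon,r_k)$ to be nice for the limit projected cut-off measure. GP convergence then passes to restrictions to $\overline{B}(\rho,\tilde r_k)$ by the embedding characterization (Proposition \ref{prop:GV3} together with Portmanteau), and Proposition \ref{prop:GV1}$(c)$ gives the Gromov-vague convergence. A minor point is that the restriction of a minimal class to a ball equals the minimal class of the restricted measure, so the identification in Step 3 transfers directly to isometry classes.

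The main obstacle I expect is Step 3. On the fixed tree, the inclusion of the skeleton came from Lemma \ref{lemma:MEFix6}. Here it has to be extracted from the mass lower bound in Lemma \ref{lemma:MEGV4}, with the constant $K$ enlarged by $h_0$ as in Lemma \ref{lemma:MEGV13}. In addition, the segments $[\rho_n,\boldsymbol{\sigma}_n]$ must be shown to lie in $R_{\mu_n,h_n}(T_n)$ uniformly in $n$.
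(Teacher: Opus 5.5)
Your proposal is correct and follows essentially the same route as the paper:
\begin{itemize}
\item b-tightness comes from the preimage inclusion under $f_{R_{\mu_n,h_n}(T_n)}$;
\item GP convergence of the projected cut-off measures comes from Theorem \ref{thm:MEGV12} together with \cite{DW26} Theorem 4.21;
\item the identification on balls of radius $\tilde r<r$ uses Lemma \ref{lemma:MEGV13} and the mass bound $>h_0$ above each point of $\blackdiamond_n^r$, which is precisely the paper's Lemma \ref{lemma:MEGV15};
\item the conclusion passes to nice radii $\tilde r_k\to\infty$ via Proposition \ref{prop:GV1}.
\end{itemize}
The Step~3 obstacle you flag is resolved just as you suggest, since $\boldsymbol{\sigma}_n\in R_{\mu_n,h_n}(T_n)$ eventually already forces $[\rho_n,\boldsymbol{\sigma}_n]\subseteq R_{\mu_n,h_n}(T_n)$ by rootedness.
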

\begin{lemma}
\label{lemma:MEGV15}
Let $\boldsymbol{\mu} = [T, d, \rho, \mu]$ and $\boldsymbol{\mu}_n = [T_n, d_n, \rho_n, \mu_n]$ be isometry classes in $\mathbb{T}_{\rm{min}}^{\rm{di}}$. Fix some $\boldsymbol{\mu}$-nice $r > 0$, and let $\kappa > 0$ and $\eta \in (0, \eta_0)$. Let $\blackdiamond_n^r$ be $\bullet^r$-compatible cut-off points in $\partial B_{T_n}(\rho_n, r)$, and let $h_0 > 0$. Suppose that
\begin{enumerate}[$(i)$]
\item $\mu_n\left( {\rm{Cut}}(T_n, \blackdiamond_n^r) \setminus \overline{B}_{T_n}(\rho_n, R_0) \right) \leq \eta_0$ for some $R_0 > 0$, and
\item $d_{\rm{GP}}\left( \boldsymbol{\mu}_n^{\restr R}, \boldsymbol{\mu}^{\restr R} \right) \leq \eta$ for some $R > R_0 \vee R_0(K)$ where $K = \mu\left( \overline{B}_T(\rho, R_0 + \eta_0) \right) + 3 \eta_0 + h_0$.
\end{enumerate}
Then for every $h \in (0, h_0]$ and $\tilde{r} \in (0, r)$, $\left({\rm{Pr}}_{R_{\mu_n,h}(T_n)}\mu_n\right)^{\restr \tilde{r}} = \left({\rm{Pr}}_{R_{{\rm{cut}}_{\blackdiamond_n^r}^{h_0}\mu_n, h}(T_n)} {\rm{cut}}_{\blackdiamond_n^r}^{h_0}\mu_n\right)^{\restr \tilde{r}}$.
\end{lemma}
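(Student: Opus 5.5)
The plan is to mirror the fixed-tree argument of Lemma \ref{lemma:MEFix8}, replacing Lemma \ref{lemma:MEFix6} and \eqref{eq:MEFix7} with their Gromov-setting analogues, Lemma \ref{lemma:MEGV13} and Lemma \ref{lemma:MEGV7}. Fix $n$ satisfying $(i)$--$(ii)$ and an embedding system realizing $(ii)$. By Lemma \ref{lemma:MEGV7} we have $\blackdiamond_n^r = \bullet_n^{r,\kappa,\eta}$, and by Corollary \ref{cor:MEGV5} the map $u_n$ is a bijection. By Lemma \ref{lemma:MEGV13} we then get $R_{\mu_n,h}(T_n) \cap {\rm{Cut}}(T_n, \blackdiamond_n^r) = R_{{\rm{cut}}_{\blackdiamond_n^r}\mu_n, h}(T_n)$. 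Since ${\rm{Cut}}(T_n, \blackdiamond_n^r) \supseteq \overline{B}_{T_n}(\rho_n, r)$, the two mass-erased subtrees $R_{\mu_n,h}(T_n)$ and $R_{{\rm{cut}}_{\blackdiamond_n^r}\mu_n,h}(T_n)$ coincide on $\overline{B}_{T_n}(\rho_n, r)$.

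The next step is to show that every $\boldsymbol{\sigma}_n \in \blackdiamond_n^r$, together with the segment $[\rho_n, \boldsymbol{\sigma}_n]$, lies in $R_{\mu_n,h}(T_n)$. This replaces the role of Lemma \ref{lemma:MEFix6}. It follows from the estimate in the proof of Lemma \ref{lemma:MEGV13}: for $\boldsymbol{\sigma}_n = u_n^{-1}(\boldsymbol{\sigma})$, Lemma \ref{lemma:MEGV4} with the chosen $K$ gives $\mu_n(\theta_{\boldsymbol{\sigma}_n}T_n) > h_0 \geq h$. In particular $\theta_{\boldsymbol{\sigma}_n}T_n \setminus \{\boldsymbol{\sigma}_n\}$ lies entirely outside the relevant components in the following sense. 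Let $(C_i)_{i \in I(\tilde r)}$ be the connected components of $T_n \setminus R_{\mu_n,h}(T_n)$ whose closure points $\sigma_i$ lie in $\overline{B}_{T_n}(\rho_n, \tilde r)$. Because $\tilde r < r$ and the two subtrees agree on the ball of radius $r$, these are exactly the components of $T_n \setminus R_{{\rm{cut}}_{\blackdiamond_n^r}\mu_n,h}(T_n)$ with closure points in that ball.

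The remaining check is that no $C_i$ with $i \in I(\tilde r)$ meets $\theta_{\boldsymbol{\sigma}_n}T_n$ for any $\boldsymbol{\sigma}_n \in \blackdiamond_n^r$. Suppose such a $C_i$ did. Since $\sigma_i$ lies at height at most $\tilde r < r$, the set $C_i$ would then contain $\boldsymbol{\sigma}_n$, which contradicts $\boldsymbol{\sigma}_n \in R_{\mu_n,h}(T_n)$. Hence every such $C_i$ is contained in ${\rm{Cut}}(T_n, \blackdiamond_n^r)$ and avoids the cut-off points, so ${\rm{cut}}_{\blackdiamond_n^r}\mu_n(C_i) = \mu_n(C_i)$. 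Likewise the restrictions of $\mu_n$ and ${\rm{cut}}_{\blackdiamond_n^r}\mu_n$ agree on $\overline{B}_{T_n}(\rho_n,\tilde r)$, because the $h_0$-atoms sit at height $r > \tilde r$.

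Finally, apply Proposition \ref{prop:Proj2}$(i)$ to both projections, restricted to $\overline{B}_{T_n}(\rho_n, \tilde r)$, exactly as in the displayed computation of Lemma \ref{lemma:MEFix8}. The two expressions then agree term by term. The main obstacle is the step showing that the cut-off points lie in the mass-erased subtree, since this is what makes the component structures agree below level $r$. It relies on the constant $K$ including $h_0$, together with the identification $\blackdiamond_n^r = \bullet_n^{r,\kappa,\eta}$; if $h = h_0$ exactly, the strict inequality from Lemma \ref{lemma:MEGV4} is needed.
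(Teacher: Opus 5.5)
Your proposal is correct and follows the paper's proof essentially step for step. Lemma~\ref{lemma:MEGV13} identifies $R_{\mu_n,h}(T_n)$ with $R_{{\rm{cut}}_{\blackdiamond_n^r}^{h_0}\mu_n,h}(T_n)$ on $\overline{B}_{T_n}(\rho_n,r)$, the bound $\mu_n(\theta_{\boldsymbol{\sigma}_n}T_n)>h_0$ from its proof places $\blackdiamond_n^r$ inside $R_{\mu_n,h}(T_n)$ so that the components $(C_i)_{i\in I(\tilde r)}$ carry the same mass under both measures, and Proposition~\ref{prop:Proj2}$(i)$ finishes. You make explicit a point the paper leaves implicit, namely that such a $C_i$ avoiding the cut-off points lies inside ${\rm{Cut}}(T_n,\blackdiamond_n^r)$, but your closing remark overstates things slightly: $\mu_n(\theta_{\boldsymbol{\sigma}_n}T_n)\geq h_0\geq h$ would already suffice, because $R_{\mu_n,h}(T_n)$ is defined with a non-strict inequality.
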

\begin{proof}
Fix some $\tilde{r} \in (0, r)$, and let $(C_i)_{i \in I(\tilde{r})}$ be the connected components of $T_n \setminus R_{\mu_n, h}(T_n)$ which have closure points $(\sigma_i)_{i \in I(\tilde{r})}$ in $\overline{B}_{T_n}(\rho_n, \tilde{r})$. These are also the connected components of $T_n \setminus \left( R_{\mu_n,h}(T_n) \cap {\rm{Cut}}(T_n, \blackdiamond_n^r) \right)$ with closure points in $\overline{B}_{T_n}(\rho_n, \tilde{r})$. By Lemma \ref{lemma:MEGV13} we know that $ R_{\mu_n,h}(T_n) \cap {\rm{Cut}}(T_n, \blackdiamond_n^r) = R_{{\rm{cut}}_{\blackdiamond_n^r}\mu_n, h}(T_n)$. Hence, we see by Proposition \ref{prop:Proj2}$(i)$ that
\begin{align*}
\left({\rm{Pr}}_{R_{{\rm{cut}}_{\blackdiamond_n^r}\mu_n, h}(T_n)} {\rm{cut}}_{\blackdiamond_n^r}\mu_n\right)^{\restr \tilde{r}} &= {\rm{cut}}_{\blackdiamond_n^r}\mu_n\left( \cdot \cap R_{\mu_n,h}(T_n) \cap \overline{B}_{T_n}(\rho_n, \tilde{r}) \right) + \sum_{i \in I(\tilde{r})} {\rm{cut}}_{\blackdiamond_n^r}\mu_n(C_i) \delta_{\sigma_i} \\
&= \mu_n\left( \cdot \cap R_{\mu_n,h}(T_n) \cap \overline{B}_{T_n}(\rho_n, \tilde{r}) \right) + \sum_{i \in I(\tilde{r})} {\rm{cut}}_{\blackdiamond_n^r}\mu_n(C_i) \delta_{\sigma_i},
\end{align*}
using in the last step the definition of ${\rm{cut}}_{\blackdiamond_n^r}\mu_n$ and the fact that all cut-off points in $\blackdiamond_n^r$ are outside $\overline{B}_{T_n}(\rho_n, \tilde{r})$. What remains is to check that ${\rm{cut}}_{\blackdiamond_n^r}\mu_n(C_i) = \mu_n(C_i)$ for every $i \in I(\tilde{r})$. This will be the case if $\left( \bigcup_{i \in I(\tilde{r})} C_i \right) \cap \blackdiamond_n^r = \emptyset$, which holds since we know from the proof of Lemma \ref{lemma:MEGV13} that $\mu_n(\theta_{\boldsymbol{\sigma}_n}T_n) > h_0$ for every $\boldsymbol{\sigma}_n \in \blackdiamond_n^r$, and so in particular $\blackdiamond_n^r \subseteq R_{\mu_n,h}(T_n)$, which means that it cannot intersect any of the components $(C_i)_{i \in I(\tilde{r})}$. Plugging into our calculation, and applying Proposition \ref{prop:Proj2}(1) again, gives the desired.
\end{proof}
\begin{proof}[Proof of Proposition \ref{prop:MEGV16}]
Fix some $k$. Then we have by Theorem \ref{thm:MEGV12} and \cite{DW26} Theorem 4.21, that $\left[{\rm{Pr}}_{R_{{\rm{cut}}_{\blackdiamond_n^{r_k}}\mu_n, h}(T_n)} {\rm{cut}}_{\blackdiamond_n^{r_k}}\mu_n\right] \stackrel{\rm{GP}}{\rightarrow} \left[{\rm{Pr}}_{R_{{\rm{cut}}_{\bullet^{r_k}}\mu, h}(T)} {\rm{cut}}_{\bullet^{r_k}}\mu\right]$. Let $\varepsilon > 0$, and choose some ${\rm{Pr}}_{R_{{\rm{cut}}_{\bullet^{r_k}}\mu, h}(T)} {\rm{cut}}_{\bullet^{r_k}}\mu$-nice $\tilde{r}_k \in (r_k - \varepsilon, r_k)$. Then by Lemma \ref{lemma:MEGV15}, $\left[\left({\rm{Pr}}_{R_{\mu_n,h_n}(T_n)}\mu_n\right)^{\restr \tilde{r}_k}\right] \stackrel{\rm{GP}}{\rightarrow} \left[\left( {\rm{Pr}}_{R_{\mu,h}(T)}\mu \right)^{\restr \tilde{r}_k}\right]$. As $\tilde{r}_k \rightarrow \infty$, we conclude that $\left[{\rm{Pr}}_{R_{\mu_n,h_n}(T_n)}\mu_n\right] \stackrel{\rm{GV}}{\rightarrow} \left[{\rm{Pr}}_{R_{\mu,h}(T)}\right]$.

For b-tightness, fix some $r \in \{r_k ~|~ k \in \mathbb{N}\}$. Take some $\sigma_n \in T_n$ with $f_{R_{\mu_n,h_n}(T_n)}(\sigma_n) \in {\rm{Cut}}(T_n, \blackdiamond_n^r) \setminus \overline{B}_{T_n}(\rho_n, R)$. Then we must also have $\sigma_n \in {\rm{Cut}}(T_n, \blackdiamond_n^r) \setminus \overline{B}_{T_n}(\rho_n, R)$, as any point from outside ${\rm{Cut}}(T_n, \blackdiamond_n^r)$ can only be projected into ${\rm{Cut}}(T_n, \blackdiamond_n^r)$ if it is projected into a point at height $r$ or below. I.e. $f_{R_{\mu_n,h_n}(T_n)}^{-1}\left( {\rm{Cut}}(T_n, \blackdiamond_n^r) \setminus \overline{B}_{T_n}(\rho_n, R) \right) \subseteq {\rm{Cut}}(T_n, \blackdiamond_n^r) \setminus \overline{B}_{T_n}(\rho_n, R)$. As ${\rm{Pr}}_{R_{\mu_n,h_n}(T_n)}\mu_n = \mu_n \circ f_{R_{\mu_n,h_n}(T_n)}^{-1}$, we conclude that $ {\rm{Pr}}_{R_{\mu_n,h_n}(T_n)}\mu_n\left( {\rm{Cut}}(T_n, \blackdiamond_n^r) \setminus \overline{B}_{T_n}(\rho_n, R) \right) \leq \mu_n\left( {\rm{Cut}}(T_n, \blackdiamond_n^r) \setminus \overline{B}_{T_n}(\rho_n, R) \right)$, giving the desired.
\end{proof}
\subsection{Gromov-vague convergence in the sense of mass erasure}
Recall the definition of convergence in the sense of mass erasure from Definition \ref{def:MEConv}. We first prove the approximation property from Lemma \ref{lemma:MEGV18}, and the limit theorem from Theorem \ref{thm:MEGV17}.
\begin{proof}[Proof of Lemma \ref{lemma:MEGV18}]
For $(i)$ we observe that $(T, d, \rho)$ is a natural embedding space of $\mathscr{E}_{h_p}\mu$ and $\mu$. We can thus apply Lemma \ref{lemma:MEOp5b} directly to get that $\mathscr{E}_{h_p}\boldsymbol{\mu} \stackrel{\rm{GV}}{\rightarrow} \boldsymbol{\mu}$. For b-tightness, we apply a similar argument to that in Theorem \ref{thm:MEGV17} in order to get that $\mathscr{E}_{h_p}\mu\left( {\rm{Cut}}(T, \bullet^r) \setminus \overline{B}_{T}(\rho, R)\right) \leq \mu\left( {\rm{Cut}}(T, \bullet^r) \setminus \overline{B}_T(\rho, R) \right)$ for all $p$. As $\mu|_{{\rm{Cut}}(T, \bullet^r)} \in \mathscr{M}_{\rm{fin}}(T)$, we automatically have for any $\varepsilon > 0$ that $R > r$ can be chosen sufficiently large that $\mu\left( {\rm{Cut}}(T, \bullet^r) \setminus \overline{B}_T(\rho, R) \right) < \varepsilon$, as wanted.

For $(ii)$, observe that if $\mathscr{E}_{h_p}\boldsymbol{\nu} = \mathscr{E}_{h_p}\boldsymbol{\mu}$ for all $p$, then by $(i)$ we have $\mathscr{E}_{h_p}\boldsymbol{\mu} \stackrel{\rm{GV}}{\rightarrow} \boldsymbol{\mu}$ and $\mathscr{E}_{h_p}\boldsymbol{\mu} = \mathscr{E}_{h_p}\boldsymbol{\nu} \stackrel{\rm{GV}}{\rightarrow} \boldsymbol{\nu}$ as $p \rightarrow \infty$. Uniqueness of limits then gives $\boldsymbol{\mu} = \boldsymbol{\nu}$, as desired.
\end{proof}
\begin{proof}[Proof of Theorem \ref{thm:MEGV17}]
Let $h_0 \geq \sup_{n \in \mathbb{N}} h_n$. Fix some $k$. Then we immediately have by Theorem \ref{thm:MEGV12} and \cite{DW26} Theorem 4.21$(i)$ that $\mathscr{E}_{h_n}{\rm{cut}}_{\blackdiamond_n^{r_k}}\boldsymbol{\mu}_n \stackrel{\rm{GP}}{\rightarrow} \mathscr{E}_h{\rm{cut}}_{\bullet^{r_k}}\boldsymbol{\mu}$. Let $\varepsilon > 0$ and choose some $\mathscr{E}_h\mu$-nice $\tilde{r}_k \in (r_k - \varepsilon, r_k)$. Using Lemma \ref{lemma:MEGV13} and Lemma \ref{lemma:MEGV15}, and a similar argument as in Lemma \ref{lemma:MEOp5}, we get that $\left( \mathscr{E}_{h_n}\boldsymbol{\mu}_n \right)^{\restr \tilde{r}_k} = \left( \mathscr{E}_{h_n}{\rm{cut}}_{\blackdiamond_n^{r_k}}\boldsymbol{\mu}_n \right)^{\restr \tilde{r}_k}$ eventually. Moreover, we know automatically from Lemma \ref{lemma:MEOp5} that $\left( \mathscr{E}_h\boldsymbol{\mu} \right)^{\restr \tilde{r}_k} = \left( \mathscr{E}_h {\rm{cut}}_{\bullet^{r_k}} \boldsymbol{\mu} \right)^{\restr \tilde{r}_k}$. Hence, $\left( \mathscr{E}_{h_n}\boldsymbol{\mu}_n \right)^{\restr \tilde{r}_k} \stackrel{\rm{GP}}{\rightarrow} \left( \mathscr{E}_h\boldsymbol{\mu} \right)^{\restr \tilde{r}_k}$. As $\tilde{r}_k \rightarrow \infty$ is an increasing sequence of radii, we conclude that $\mathscr{E}_{h_n}\boldsymbol{\mu}_n \stackrel{\rm{GV}}{\rightarrow} \mathscr{E}_h\boldsymbol{\mu}$.

For b-tightness, fix some $r \in \{r_k ~|~ k \in \mathbb{N}\}$. Note that ${\rm{Cut}}(T_n, \blackdiamond_n^r) \setminus \overline{B}_{T_n}(\rho_n, R) = T_n \setminus \left( \overline{B}_{T_n}(\rho_n, R) \cup \bigcup_{\boldsymbol{\sigma}_n \in \blackdiamond_n^r} \theta_{\boldsymbol{\sigma}_n}T_n^{\circ} \right)$. But if $R > r$, then $ \overline{B}_{T_n}(\rho_n, R) \cup \bigcup_{\boldsymbol{\sigma}_n \in \blackdiamond_n^r} \theta_{\boldsymbol{\sigma}_n}T_n^{\circ}$ is a closed rooted subtree of $T_n$, and so we get by Proposition \ref{prop:MEOp2}$(iii)$ that $\mathscr{E}_h\mu_n\left( {\rm{Cut}}(T_n, \blackdiamond_n^r) \setminus \overline{B}_{T_n}(\rho_n, R) \right) \leq \mu_n\left( {\rm{Cut}}(T_n, \blackdiamond_n^r) \setminus \overline{B}_{T_n}(\rho_n, R) \right)$ for all $R > r$. Applying assumption $(i)$ then gives the desired.
\end{proof}
Our next goal will be to prove Theorem \ref{thm:MEGV23b} and Theorem \ref{thm:MEGV24}. We will do this through a series of lemmas, culminating in Theorem \ref{thm:MEGV22}. Given $r > 0$, $\kappa > 0$, and $K > 0$, we let $R_h(K) = R_h(r,\kappa,K)$ be chosen large enough such that $\mathscr{E}_h\mu\left( \theta_{\boldsymbol{\sigma}}T \cap \overline{B}_T(\rho, R - \kappa) \setminus B_T(\rho, r+3\kappa) \right) \geq K$ for all $R \geq R_h(K)$ and $\boldsymbol{\sigma} \in \bullet^r$.
\begin{lemma}
\label{lemma:MEGV19}
Let $\boldsymbol{\mu} = [T, d, \rho, \mu]$ and $\boldsymbol{\mu}_n = [T_n, d_n, \rho_n, \mu_n]$ be isometry classes in $\mathbb{T}_{\rm{min}}^{\rm{di}}$. Fix some $\boldsymbol{\mu}$-nice $r > 0$, and let $\kappa > 0$, $\eta \in (0, \eta_0)$ and $h > 0$. Let $R \geq R_h(\eta_0)$. If $(\mathscr{Z}, \Phi, \phi)$ is an embedding system in which $\delta_{\rm{P}}\left( (\mathscr{E}_h \mu_n \circ \phi_n^{-1})^{\restr R}, ( \mathscr{E}_h \mu \circ \phi^{-1} )^{\restr R} \right) \leq \eta$, then $\bullet_n^{r,\kappa,\eta}(\mathscr{E}_h\mu) = \bullet_n^{r,\kappa,\eta}(\mu)$, and the map $u_n \colon \bullet_n^{r,\kappa,\eta}(\mu) \rightarrow \bullet^r$ from Proposition \ref{prop:MEGV1} is bijective.
\end{lemma}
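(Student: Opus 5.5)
The proof has two parts. First we show that the contour areas used to define the check points do not depend on whether we work with $\mu$ or $\mathscr{E}_h\mu$. Then we redo the argument of Corollary \ref{cor:MEGV5} with $\mathscr{E}_h\mu$ in place of $\mu$.

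For the first part, recall from Proposition \ref{prop:MEOp1} that ${\rm{Skel}}(T, \mathscr{E}_h\mu) = {\rm{Skel}}(T, \mu)$. Hence $\bullet^r(\mathscr{E}_h\mu) = \bullet^r(\mu)$ and $\circ^{r,\kappa}(\mathscr{E}_h\mu) = \circ^{r,\kappa}(\mu)$. The contour areas $J_\sigma^{\kappa,\eta}$, $\mathbf{J}_{\boldsymbol{\sigma}}^{\kappa,\eta}$ and $\mathscr{J}_r^{\kappa,\eta}$ are built only from the tree $T$, its embedding $\phi$ and the check points; they do not involve the measure beyond the skeleton. They are therefore identical for $\mu$ and $\mathscr{E}_h\mu$. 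The set $\circ_n^{r,\kappa,\eta}$ is defined through the condition $\phi_n(\theta_{\sigma_n}T_n) \cap \mathscr{J}_r^{\kappa,\eta} \neq \emptyset$. This condition depends only on $T_n$, $\phi_n$ and $\mathscr{J}_r^{\kappa,\eta}$, not on $\mu_n$. It follows directly that $\circ_n^{r,\kappa,\eta}(\mathscr{E}_h\mu) = \circ_n^{r,\kappa,\eta}(\mu)$, and after projecting onto $\overline{B}_{T_n}(\rho_n,r)$, that $\bullet_n^{r,\kappa,\eta}(\mathscr{E}_h\mu) = \bullet_n^{r,\kappa,\eta}(\mu)$.

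The same observation applies to the map $u_n$. Proposition \ref{prop:MEGV1} (well-definedness) and Lemma \ref{lemma:MEGV3} (injectivity) are purely geometric: they use only the embedding system, $\eta < \eta_0$ and the structure of the skeleton near level $r$. Two caveats need checking. The constant $\eta_0$ depends on ${\rm{Cut}}(T,\bullet^r)$ and on ${\rm{Bp}}({\rm{Skel}})$, which agree for both measures. The $\boldsymbol{\mu}$-niceness of $r$ is used only through $\partial B \cap {\rm{Bp}}(T) = \emptyset$. So $u_n$ is the same well-defined injective map in both settings.

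For surjectivity we repeat Lemma \ref{lemma:MEGV4} with $\mathscr{E}_h\mu$ in place of $\mu$ and $R_h(\eta_0)$ in place of $R_0(\eta_0)$. Fix $\boldsymbol{\sigma}\in\bullet^r$ and set
\[
A = \phi(\theta_{\boldsymbol{\sigma}}T) \cap \overline{B}_{\mathscr{Z}}(\varrho, R-\eta) \setminus B_{\mathscr{Z}}(\varrho, r+2\kappa+\eta).
\]
Since $\eta<\eta_0\le\kappa/2$ and $R\ge R_h(\eta_0)$, the definition of $R_h$ gives $\mathscr{E}_h\mu\circ\phi^{-1}(A)\ge\eta_0$. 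We have $A^\eta \subseteq \mathbf{J}_{\boldsymbol{\sigma}}^{\kappa,\eta}\cap\overline{B}_{\mathscr{Z}}(\varrho,R)$, so the Prokhorov bound yields
\[
\mathscr{E}_h\mu_n\circ\phi_n^{-1}\bigl(\mathbf{J}_{\boldsymbol{\sigma}}^{\kappa,\eta}\cap\overline{B}_{\mathscr{Z}}(\varrho,R)\bigr)\ \ge\ \eta_0-\eta\ >\ 0 .
\]
In particular $\phi_n(T_n)$ meets $\mathbf{J}_{\boldsymbol{\sigma}}^{\kappa,\eta}$, say at $\phi_n(\sigma_n')$ with $d_n(\rho_n,\sigma_n')\ge r+2\kappa$. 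Let $\sigma_n$ be the ancestor of $\sigma_n'$ at level $r+\kappa$. Then $\sigma_n\in\circ_n^{r,\kappa,\eta}$, so $f_{\overline{B}_{T_n}(\rho_n,r)}(\sigma_n)\in\bullet_n^{r,\kappa,\eta}$, and this point is mapped to $\boldsymbol{\sigma}$ by $u_n$. This proves surjectivity, and combined with injectivity, bijectivity.

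The main obstacle is the first part: confirming that nothing in the construction of $\bullet_n^{r,\kappa,\eta}$ or of $u_n$ depends on the measure on $T_n$, or on $\mu$ beyond its skeleton. Once that invariance is in place, the remaining work is a transcription of Lemma \ref{lemma:MEGV4} and Corollary \ref{cor:MEGV5}.
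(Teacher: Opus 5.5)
Your treatment of $u_n$ on $\bullet_n^{r,\kappa,\eta}(\mu)$ is essentially sound, modulo a minor point below: well-definedness, injectivity via Lemma~\ref{lemma:MEGV3}, and surjectivity from the erased Prokhorov bound. The first conclusion, however, is not proved. You read $\bullet_n^{r,\kappa,\eta}(\mathscr{E}_h\mu)$ as the same construction carried out on the unchanged trees $T$ and $T_n$, and that reading makes the equality a tautology. In the paper it refers to the pair of isometry classes $\mathscr{E}_h\boldsymbol{\mu}_n=[R_{\mu_n,h+}(T_n),d_n,\rho_n,\mathscr{E}_h\mu_n]$ and $\mathscr{E}_h\boldsymbol{\mu}=[R_{\mu,h+}(T),d,\rho,\mathscr{E}_h\mu]$ of Definition~\ref{def:MEIsom}. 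Everything is therefore built in the minimal trees $R_{\mu,h+}(T)$ and $R_{\mu_n,h+}(T_n)$. The contour area becomes $\mathscr{J}_r^{\kappa,\eta}(h)=\phi\bigl(R_{\mu,h+}(T)\setminus\mathrm{Cut}(T,\bullet^{r+\kappa})\bigr)^{\eta}\setminus B_{\mathscr{Z}}(\varrho,r+2\kappa)\subseteq\mathscr{J}_r^{\kappa,\eta}$, which can be strictly smaller. Moreover, $\sigma_n\in\circ_n^{r,\kappa,\eta}(\mathscr{E}_h\mu)$ requires $\phi_n\bigl(\theta_{\sigma_n}T_n\cap R_{\mu_n,h+}(T_n)\bigr)\cap\mathscr{J}_r^{\kappa,\eta}(h)\neq\emptyset$. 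Both objects depend on $\mu$ and $\mu_n$ beyond the skeleton. So the step you single out as the main point, that nothing in the construction depends on the measures, fails here. Only the inclusion $\bullet_n^{r,\kappa,\eta}(\mathscr{E}_h\mu)\subseteq\bullet_n^{r,\kappa,\eta}(\mu)$ is automatic. Note that under your reading the equality would hold with no hypothesis at all. In the intended reading, the reverse inclusion is exactly what the Prokhorov bound on the erased measures is needed for.

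Your ingredients close this once the surjectivity step is run for the erased pair. Let $\mathbf{J}_{\boldsymbol{\sigma}}^{\kappa,\eta}(h)$ be the union of $\phi\bigl(\theta_{\sigma}T\cap R_{\mu,h+}(T)\bigr)^{\eta}\setminus B_{\mathscr{Z}}(\varrho,r+2\kappa)$ over $\sigma\in\circ^{r,\kappa}$ with $f_{\overline{B}_T(\rho,r)}(\sigma)=\boldsymbol{\sigma}$. Take $A$ inside $\phi$ of the corresponding union of the sets $\theta_{\sigma}T\cap R_{\mu,h+}(T)$. Intersecting with $R_{\mu,h+}(T)$ costs no $\mathscr{E}_h\mu$-mass, since $\mathrm{supp}(\mathscr{E}_h\mu)\subseteq R_{\mu,h+}(T)$ by Lemma~\ref{lemma:MEOp2b}. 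Restricting to the check-point subtrees is needed anyway. Your $A$ may contain points of fringe subtrees branching off $\theta_{\boldsymbol{\sigma}}T$ between levels $r$ and $r+\kappa$, and these lie at distance more than $\kappa+\eta$ from every $\phi(\theta_{\sigma}T)$, $\sigma\in\circ^{r,\kappa}$. So your inclusion $A^{\eta}\subseteq\mathbf{J}_{\boldsymbol{\sigma}}^{\kappa,\eta}$ fails as written. This imprecision is already present in Lemma~\ref{lemma:MEGV4}, and it is harmless: the check-point subtrees carry infinite mass, so $R_h$ can be defined with them. With this choice, $A^{\eta}\subseteq\mathbf{J}_{\boldsymbol{\sigma}}^{\kappa,\eta}(h)$. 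The Prokhorov bound then gives $\sigma_n'\in\mathrm{supp}(\mathscr{E}_h\mu_n)\subseteq R_{\mu_n,h+}(T_n)$ with $\phi_n(\sigma_n')\in\mathbf{J}_{\boldsymbol{\sigma}}^{\kappa,\eta}(h)$. The ancestor of $\sigma_n'$ at level $r+\kappa$ lies in $\circ_n^{r,\kappa,\eta}(\mathscr{E}_h\mu)$. This is Corollary~\ref{cor:MEGV5} for the pair $\mathscr{E}_h\boldsymbol{\mu}_n,\mathscr{E}_h\boldsymbol{\mu}$, and it shows that $u_n$ maps $\bullet_n^{r,\kappa,\eta}(\mathscr{E}_h\mu)$ onto $\bullet^r$. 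Combine this with the inclusion above and with injectivity of $u_n$ on $\bullet_n^{r,\kappa,\eta}(\mu)$. Every point of $\bullet_n^{r,\kappa,\eta}(\mu)$ is then the unique preimage of its image, so it already lies in $\bullet_n^{r,\kappa,\eta}(\mathscr{E}_h\mu)$. This gives equality and bijectivity together, and it is the paper's counting argument $|\bullet^r|\le|\bullet_n^{r,\kappa,\eta}(\mathscr{E}_h\mu)|\le|\bullet_n^{r,\kappa,\eta}(\mu)|\le|\bullet^r|$.
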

\begin{proof}
Recall that ${\rm{Skel}}(T, \mu) = {\rm{Skel}}(T, \mathscr{E}_h\mu)$, so the cut-off points $\bullet^r$ are the same in $(T, d, \rho, \mu)$ and $(R_{\mu,h+}(T), d, \rho, \mathscr{E}_h\mu)$. We may consider the contour area above level $r$ associated to the minimal $\mathbb{R}$-tree $(R_{\mu,h+}(T), d, \rho, \mathscr{E}_h\mu)$, by setting $\mathscr{J}_r^{\kappa,\eta}(h) := \phi\left(R_{\mu,h+}(T) \setminus {\rm{Cut}}^{r+\kappa}(T)\right)^{\eta} \setminus B_{\mathscr{Z}}(\varrho, r+2\kappa)$. Then $\mathscr{J}_r^{\kappa,\eta}(h) \subseteq \mathscr{J}_r^{\kappa, \eta}$. As $\sigma_n \in \circ_n^{r,\kappa,\eta}(\mathscr{E}_h\mu)$ if and only if $\phi_n(\theta_{\sigma_n}T_n \cap R_{\mu_n,h+}(T_n)) \cap \mathscr{J}_r^{\kappa,\eta}(h) \neq \emptyset$, and the latter implies $\phi_n(\theta_{\sigma_n}T_n) \cap \mathscr{J}_r^{\kappa,\eta} \neq \emptyset$, we must necessarily have that $\circ_n^{r, \kappa, \eta}(\mathscr{E}_h\mu) \subseteq \circ_n^{r,\kappa,\eta}(\mu)$, which gives $\bullet_n^{r,\kappa,\eta}(\mathscr{E}_h\mu) \subseteq \bullet_n^{r,\kappa,\eta}(\mu)$. As $u_n \colon \bullet_n^{r,\kappa,\eta}(\mu) \rightarrow \bullet^r$ is injective by Lemma \ref{lemma:MEGV3}, we thus conclude that $|{\bullet_n^{r,\kappa,\eta}(\mathscr{E}_h\mu)}| \leq |{\bullet_n^{r,\kappa,\eta}(\mu)}| \leq |{\bullet^r}|$. 

Now, considering $(\mathscr{Z}, \Phi, \phi)$ as an embedding system of $\mathscr{E}_h\boldsymbol{\mu}_n$ and $\mathscr{E}_h\boldsymbol{\mu}$ with contour area $\mathscr{J}_r^{\kappa,\eta}(h)$, we get by Corollary \ref{cor:MEGV5} that the map $u_n^h \colon \bullet_n^{r,\kappa,\eta}(\mathscr{E}_h\mu) \rightarrow \bullet^r$ is bijective, and so $|{\bullet_n^{r,\kappa,\eta}(\mathscr{E}_h\mu)}| =  |{\bullet^r}|$. By our previous inclusion, this is enough to conclude that $\bullet_n^{r,\kappa,\eta}(\mathscr{E}_h\mu) = \bullet_n^{r,\kappa,\eta}(\mu)$ and $u_n$ is bijective, as desired.
\end{proof}
\begin{lemma}
\label{lemma:MEGV20}
Let $\boldsymbol{\mu} = [T, d, \rho, \mu]$ and $\boldsymbol{\mu}_n = [T_n, d_n, \rho_n, \mu_n]$ be isometry classes in $\mathbb{T}_{\rm{min}}^{\rm{di}}$. Fix some $\boldsymbol{\mu}$-nice $r > 0$, and let $\kappa > 0$ and $\eta \in (0, \eta_0)$. Let $\blackdiamond_n^r$ be $\bullet^r$-compatible cut-off points in $\partial B_{T_n}(\rho_n, r)$. Fix some $h > 0$ and $h_0 > 0$. If
\begin{enumerate}[$(i)$]
\item $\mathscr{E}_h\mu_n\left( {\rm{Cut}}(T_n, \blackdiamond_n^r) \setminus \overline{B}_{T_n}(\rho_n, R_0) \right) \leq \eta_0$ for some $R_0 > 0$, and 
\item $d_{\rm{GP}}\left( (\mathscr{E}_h\boldsymbol{\mu}_n)^{\restr R}, (\mathscr{E}_h\boldsymbol{\mu})^{\restr R} \right) \leq \eta$ for some $R > R_h \vee R_h(K)$ where $K = \mathscr{E}_h\mu\left( \overline{B}_T(\rho, R_0 + \eta_0) \right) + 3\eta_0 + h_0$,
\end{enumerate}
then for every $h' \in (0, h_0]$ it holds that $R_{\mu_n,h'}(T_n) \cap {\rm{Cut}}(T_n, \blackdiamond_n^r) = R_{{\rm{cut}}_{\blackdiamond_n^r}^{h_0}\mu_n, h'}(T_n)$-
\end{lemma}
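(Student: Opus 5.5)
The plan is to mirror the proof of Lemma \ref{lemma:MEGV13}, with the mass-erased measures $\mathscr{E}_h\mu_n$ and $\mathscr{E}_h\mu$ taking over the role of $\mu_n$ and $\mu$ in the embedding argument. First fix an embedding system $(\mathscr{Z}, \Phi, \phi)$ that realizes condition $(ii)$ for the mass erasures, i.e. one in which $\delta_{\rm{P}}((\mathscr{E}_h\mu_n \circ \phi_n^{-1})^{\restr R}, (\mathscr{E}_h\mu \circ \phi^{-1})^{\restr R}) \leq \eta$. Since $R \geq R_h(K) \geq R_h(\eta_0)$, Lemma \ref{lemma:MEGV19} gives $\bullet_n^{r,\kappa,\eta}(\mathscr{E}_h\mu) = \bullet_n^{r,\kappa,\eta}(\mu)$ and shows that $u_n$ is bijective.

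Next, apply Lemma \ref{lemma:MEGV7} to the pair $(\mathscr{E}_h\boldsymbol{\mu}_n, \mathscr{E}_h\boldsymbol{\mu})$. Its hypotheses hold for three reasons:
\begin{enumerate}[$(a)$]
\item condition $(i)$ is precisely its b-tightness hypothesis for $\mathscr{E}_h\mu_n$;
\item the intrinsic cut-off points agree, because ${\rm{Skel}}(T,\mathscr{E}_h\mu) = {\rm{Skel}}(T,\mu)$ (and so $\bullet^r$-compatibility is unaffected);
\item $K$ is defined with $\mathscr{E}_h\mu$ in place of $\mu$, and the extra $+h_0$ only enlarges it.
\end{enumerate}
Lemma \ref{lemma:MEGV7} then yields $\blackdiamond_n^r = \bullet_n^{r,\kappa,\eta}(\mathscr{E}_h\mu) = \bullet_n^{r,\kappa,\eta}(\mu)$. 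A minor point to check is that $\mathscr{E}_h\boldsymbol{\mu}$ is represented on $R_{\mu,h+}(T)$, whose sphere at level $r$ may differ from that of $T$. The cut-off points lie on the skeleton, which is contained in $R_{\mu,h+}(T)$, so the constructions coincide.

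The inclusion $R_{\mu_n,h'}(T_n) \cap {\rm{Cut}}(T_n,\blackdiamond_n^r) \subseteq R_{{\rm{cut}}_{\blackdiamond_n^r}^{h_0}\mu_n,h'}(T_n)$ is elementary, exactly as in Lemma \ref{lemma:MEGV13}. For $\sigma_n$ in the left-hand side, if $\theta_{\sigma_n}T_n$ avoids $\blackdiamond_n^r$ the two measures agree on $\theta_{\sigma_n}T_n$. Otherwise the cut measure gives mass at least $h_0 \geq h'$.

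For the reverse inclusion, take $\sigma_n$ with $\theta_{\sigma_n}T_n \ni \boldsymbol{\sigma}_n \in \blackdiamond_n^r$ and let $\boldsymbol{\sigma} = u_n(\boldsymbol{\sigma}_n)$. By Lemma \ref{lemma:MEGV4}, applied to $\mathscr{E}_h\mu$ with the constant $K$ (this is where $R \geq R_h(K)$ is used in place of $R_0(K)$), we obtain
\[
\mathscr{E}_h\mu_n \circ \phi_n^{-1}\bigl(\mathbf{J}_{\boldsymbol{\sigma}}^{\kappa,\eta} \cap \overline{B}_{\mathscr{Z}}(\varrho,R)\bigr) \geq K - \eta > h_0 .
\]
By Proposition \ref{prop:MEGV1} and Lemma \ref{lemma:MEGV3}, this mass sits inside $\phi_n(\theta_{\boldsymbol{\sigma}_n}T_n)$, so $\mathscr{E}_h\mu_n(\theta_{\boldsymbol{\sigma}_n}T_n) > h_0$. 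The mass erasure property (Proposition \ref{prop:MEOp1}) then gives $\mu_n(\theta_{\sigma_n}T_n) \geq \mu_n(\theta_{\boldsymbol{\sigma}_n}T_n) > h_0 + h \geq h'$.

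The main obstacle is this last transfer of mass from the erased measure back to $\mu_n$, together with making sure that $\mathbf{J}_{\boldsymbol{\sigma}}^{\kappa,\eta}$ for $\mathscr{E}_h\mu$ really lands in $\theta_{\boldsymbol{\sigma}_n}T_n$. I expect to handle it via the inclusion $\mathscr{J}_r^{\kappa,\eta}(h) \subseteq \mathscr{J}_r^{\kappa,\eta}$ from the proof of Lemma \ref{lemma:MEGV19}. The mass erasure property only ever adds $h$, so this step only helps.
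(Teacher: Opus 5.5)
Your proposal is correct and follows essentially the same route as the paper. You realize $(ii)$ by an embedding system, use Lemma \ref{lemma:MEGV19} and then Lemma \ref{lemma:MEGV7} (applied to $\mathscr{E}_h\boldsymbol{\mu}_n$, $\mathscr{E}_h\boldsymbol{\mu}$) to get $\blackdiamond_n^r = \bullet_n^{r,\kappa,\eta}(\mu)$, and handle the easy inclusion as in Lemma \ref{lemma:MEGV13}. You then get the reverse inclusion from Lemma \ref{lemma:MEGV4} for $\mathscr{E}_h\mu$ on the contour area $\mathbf{J}_{\boldsymbol{\sigma}}^{\kappa,\eta}(h) \subseteq \mathbf{J}_{\boldsymbol{\sigma}}^{\kappa,\eta}$, transferring mass back to $\mu_n$ via the mass erasure property; you use the exact identity to get the bound $> h_0 + h$, where the paper only needs $\mu_n(\theta_{\sigma_n}T_n) \geq \mathscr{E}_h\mu_n(\theta_{\sigma_n}T_n)$.
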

\begin{proof}
Let $(\mathscr{Z}, \Phi, \phi)$ be an embedding system realizing assumption $(ii)$. Then by Lemma \ref{lemma:MEGV19}, $\bullet_n^{r,\kappa,\eta}(\mathscr{E}_h\mu) = \bullet_n^{r,\kappa,\eta}(\mu)$ and $u_n \colon \bullet_n^{r,\kappa,\eta}(\mu) \rightarrow \bullet^r(T)$ is bijective. Hence, applying Lemma \ref{lemma:MEGV7} to $\mathscr{E}_h\boldsymbol{\mu}_n$ and $\mathscr{E}_h\boldsymbol{\mu}$ gives $\blackdiamond_n^r = \bullet_n^{r,\kappa,\eta}(\mu)$. The fact that $R_{\mu_n,h'}(T_n) \cap {\rm{Cut}}(T_n, \blackdiamond_n^r) \subseteq R_{{\rm{cut}}_{\blackdiamond_n^r}\mu_n, h'}(T_n)$ is shown exactly as in Lemma \ref{lemma:MEGV13}. For the opposite inclusion, take some $\sigma_n \in R_{{\rm{cut}}_{\blackdiamond_n^r}\mu_n, h'}(T_n)$. The case where $\theta_{\sigma_n} T_n \cap \blackdiamond_n^r = \emptyset$ is also treated as in the proof of Lemma \ref{lemma:MEGV13}. For the case where $\theta_{\sigma_n} T_n \cap \blackdiamond_n^r \neq \emptyset$, there must exist some $\boldsymbol{\sigma} \in \bullet^r(T)$ such that $\mathbf{J}_{\boldsymbol{\sigma}}^{\kappa, \eta}(h) \cap \phi_n(R_{\mu_n,h+}(T_n)) \subseteq \phi_n(\theta_{\sigma_n}T_n)$, where $\mathbf{J}_{\boldsymbol{\sigma}}^{\kappa,\eta}(h)$ denotes the part of the contour area $\mathscr{J}_r^{\kappa,\eta}(h)$ (defined as in the proof of Lemma \ref{lemma:MEGV19}) associated to $\boldsymbol{\sigma}$. Applying Lemma \ref{lemma:MEGV4} we thus have that $\mu_n(\theta_{\sigma_n}T_n) \geq \mathscr{E}_h\mu_n(\theta_{\sigma_n}T_n) \geq \mathscr{E}_h\mu_n \circ \phi_n^{-1}\left( \mathbf{J}_{\boldsymbol{\sigma}}^{\kappa,\eta}(h) \cap \overline{B}_{\mathscr{Z}}(\varrho, R) \right) > h_0$, and so $\sigma_n \in R_{\mu_n,h'}(T_n)$, as desired.
\end{proof}
\begin{lemma}
\label{lemma:MEGV21}
Let $\boldsymbol{\mu} = [T, d, \rho, \mu]$ and $\boldsymbol{\mu}_n = [T_n, d_n, \rho_n, \mu_n]$ be isometry classes in $\mathbb{T}_{\rm{min}}^{\rm{di}}$. Fix some $\boldsymbol{\mu}$-nice $r > 0$, and let $\varepsilon > 0$. Fix some $h_0 > h > 0$, and choose $\kappa \in (0, r)$ such that $r- \kappa$ is $\boldsymbol{\mu}$-nice, the estimates from Lemma \ref{lemma:MEGV10} hold for $\mathscr{E}_h\mu$, and $\mathscr{E}_h\mu\left( \overline{B}_T(\rho, r+2\kappa) \setminus \overline{B}_T(\rho, r- \kappa) \right) \leq \varepsilon$. Let moreover $\eta \in (0, \eta_0)$, and let $\blackdiamond_n^r$ be $\bullet^r$-compatible cut-off points in $\partial B_{T_n}(\rho_n, r)$. Suppose that
\begin{enumerate}[$(i)$]
\item $\mathscr{E}_h\mu_n\left( {\rm{Cut}}(T_n, \blackdiamond_n^r) \setminus \overline{B}_{T_n}(\rho_n, R_0) \right) \leq \varepsilon \wedge \eta_0$ and $\mathscr{E}_h\mu\left( {\rm{Cut}}(T, \bullet^r) \setminus \overline{B}_T(\rho, R_0) \right) \leq \varepsilon$ for some $R_0 > 0$, and

\item $d_{\rm{GV}}\left( \mathscr{E}_h\boldsymbol{\mu}_n, \mathscr{E}_h\boldsymbol{\mu} \right) \leq \frac{\eta}{20} e^{-R}$ for some $R > R_0 \vee R_h(K)$ where $K = \mathscr{E}_h\mu\left( \overline{B}_T(\rho, R_0 + \eta_0) \right) + 3\eta_0 + h_0$.
\end{enumerate}
Then it follows that $d_{\rm{GP}}\left( \mathscr{E}_h{\rm{cut}}_{\blackdiamond_n^r}^{h_0}\boldsymbol{\mu}_n, \mathscr{E}_h{\rm{cut}}_{\bullet^r}^{h_0}\boldsymbol{\mu} \right) \leq 4\varepsilon + (1+ 6h_0|{\bullet^r}|) \eta$.
\end{lemma}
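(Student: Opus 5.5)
The plan is to mirror the proof of Lemma \ref{lemma:MEGV11}, applied now to the mass-erased classes $\mathscr{E}_h\boldsymbol{\mu}_n$ and $\mathscr{E}_h\boldsymbol{\mu}$ rather than to $\boldsymbol{\mu}_n$ and $\boldsymbol{\mu}$. The first step is to identify the mass erasure of the cut-off measure with a cut-off of the mass erasure. In the proof of Proposition \ref{prop:MEOp11} (a calculation like that in Proposition \ref{prop:MEOp9}) we saw that for $h_0 \geq h$,
$$\mathscr{E}_h{\rm{cut}}_{\bullet^r}^{h_0}\mu = (\mathscr{E}_h\mu)|_{{\rm{Cut}}(T,\bullet^r)} + (h_0-h)\sum_{\boldsymbol{\sigma}\in\bullet^r}\delta_{\boldsymbol{\sigma}} = {\rm{cut}}_{\bullet^r}^{h_0-h}\mathscr{E}_h\mu.$$
I would establish the analogous identity on $T_n$ with $\blackdiamond_n^r$ in place of $\bullet^r$. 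This needs two facts. First, $R_{\mu_n,h}(T_n)\cap{\rm{Cut}}(T_n,\blackdiamond_n^r) = R_{{\rm{cut}}_{\blackdiamond_n^r}^{h_0}\mu_n,h}(T_n)$, which is Lemma \ref{lemma:MEGV20} and is exactly why $h_0$ was added to $K$. Second, each $\boldsymbol{\sigma}_n\in\blackdiamond_n^r$ is not a branch point of $R_{\mu_n,h}(T_n)\cap{\rm{Cut}}(T_n,\blackdiamond_n^{r+\varepsilon'})$ and has out-degree one there, so that the $\varpi$-corrections at the cut-off points cancel as in Proposition \ref{prop:MEOp9}. Combining these with Lemma \ref{lemma:MEOp4} gives $\mathscr{E}_h{\rm{cut}}_{\blackdiamond_n^r}^{h_0}\mu_n = {\rm{cut}}_{\blackdiamond_n^r}^{h_0-h}\mathscr{E}_h\mu_n$, and the same identity holds at the level of isometry classes after passing to minimal spans.

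Once this reduction is in place, the estimate should follow by rerunning Lemma \ref{lemma:MEGV11} with $\mu_n,\mu$ replaced by $\mathscr{E}_h\mu_n,\mathscr{E}_h\mu$ and $h_0$ replaced by $h_0-h$. Since ${\rm{Skel}}(T,\mathscr{E}_h\mu)={\rm{Skel}}(T,\mu)$, the intrinsic cut-off points $\bullet^r$ are unchanged, and $\eta_0$ depends only on the skeleton and on ${\rm{Cut}}(T,\bullet^r)$, so it is also unchanged. Assumption $(i)$ is precisely hypothesis $(i)$ of Lemma \ref{lemma:MEGV11} for the erased measures, and assumption $(ii)$, with $R_h(K)$ in place of $R_0(K)$, is its hypothesis $(ii)$. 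The choice of $\kappa$ was made using $\mathscr{E}_h\mu$ in both Lemma \ref{lemma:MEGV10} and the annulus bound.

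The remaining ingredients transfer directly. Lemma \ref{lemma:MEGV6} can be applied to $\mathscr{E}_h\boldsymbol{\mu}_n,\mathscr{E}_h\boldsymbol{\mu}$ to obtain an embedding system with $\delta_{\rm{P}}\leq\eta$ beyond radius $R$ and cut-off points at distance at most $6\eta$. Lemma \ref{lemma:MEGV19} shows that the cut-off points built from $\mathscr{E}_h\mu$ coincide with $\bullet_n^{r,\kappa,\eta}(\mu)$, and that $u_n$ is a bijection. Lemma \ref{lemma:MEGV7}, applied to the erased classes as in the proof of Lemma \ref{lemma:MEGV20}, gives $\blackdiamond_n^r=\bullet_n^{r,\kappa,\eta}$. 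The two chains of Prokhorov inequalities then go through verbatim, using Lemma \ref{lemma:MEGV9} and Remark \ref{rmk:MEGV9}, which are purely geometric statements about the skeleton. This yields a bound of $4\varepsilon+\eta$ for the restricted parts. The point masses contribute at most $6(h_0-h)|{\bullet^r}|\eta\leq 6h_0|{\bullet^r}|\eta$.

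I expect the main obstacle to be the first step, namely verifying the cut-off identity on $T_n$. It requires that every $\boldsymbol{\sigma}_n\in\blackdiamond_n^r$ lies in $R_{\mu_n,h}(T_n)$ with a single branch above it reaching level $r+\varepsilon'$, which is the only configuration in which the leaf and branch point terms of \eqref{eq:MEOp} give exactly $-h$ there. Membership in $R_{\mu_n,h}(T_n)$ comes from $\mathscr{E}_h\mu_n(\theta_{\boldsymbol{\sigma}_n}T_n)>h_0$, as shown in the proof of Lemma \ref{lemma:MEGV20}. The absence of branching in a small window above $\boldsymbol{\sigma}_n$ should follow from the Hausdorff-closeness of the skeleta in the embedding system, since $r$ is $\boldsymbol{\mu}$-nice. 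If this proves delicate, I would avoid it by comparing the two sides only on closed sets, with the discrepancy at the cut-off points absorbed into the Dirac term.
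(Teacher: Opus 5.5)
Your plan is the paper's proof. It asserts $\mathscr{E}_h{\rm{cut}}_{\blackdiamond_n^r}^{h_0}\mu_n={\rm{cut}}_{\blackdiamond_n^r}^{h_0-h}\mathscr{E}_h\mu_n$ from Lemma~\ref{lemma:MEGV20} and \eqref{eq:MEOp}, does the same on $T$, and quotes Lemma~\ref{lemma:MEGV11} for the erased classes with $h_0-h$ in place of $h_0$. You are right that this identity needs checking at the cut-off points, but you check the wrong thing, and your justification does not work.

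Since $\boldsymbol{\sigma}_n\in\blackdiamond_n^r$ is a leaf of $R_{{\rm{cut}}^{h_0}_{\blackdiamond_n^r}\mu_n,h}(T_n)$, one gets $\mathscr{E}_h{\rm{cut}}^{h_0}_{\blackdiamond_n^r}\mu_n(\{\boldsymbol{\sigma}_n\})=\mu_n(\{\boldsymbol{\sigma}_n\})+h_0-h$. By \eqref{eq:MEOp} ($\boldsymbol{\sigma}_n$ is not a leaf of $R_{\mu_n,h}(T_n)$ here), $\mathscr{E}_h\mu_n(\{\boldsymbol{\sigma}_n\})$ is the sum of three terms:
\begin{enumerate}[$(a)$]
\item $\mu_n(\{\boldsymbol{\sigma}_n\})$;
\item $h\big(n(\boldsymbol{\sigma}_n,R_{\mu_n,h}(T_n))-2\big)$;
\item the projection term: the $\mu_n$-mass of the components of $T_n\setminus R_{\mu_n,h}(T_n)$ attached at $\boldsymbol{\sigma}_n$.
\end{enumerate}
The components in $(c)$ lie above level $r$, so ${\rm{cut}}^{h_0}\mu_n$ never sees them. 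Your analysis of the leaf and branch-point terms alone misses $(c)$.

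Nor can $(c)$ be ruled out. $\boldsymbol{\mu}$-niceness of $r$ constrains $T$ only. Gromov-vague convergence cannot see a small fringe subtree, and neither can the Hausdorff closeness of the compact subtrees in Lemma~\ref{lemma:MEGV6}; there is no Hausdorff control of skeleta. For instance, take $T=[0,\infty)$ with Lebesgue measure and $r=1$. Let $T_n$ be the same ray with a segment of length $1/n$ grafted at $1$, also carrying Lebesgue measure. Then $\blackdiamond_n^1=\{1\}$, and for $n>1/h$ the subtree $R_{\mu_n,h}(T_n)$ is the ray, so your single-branch condition holds. Yet $\mathscr{E}_h\mu_n(\{1\})=1/n$ while $\mathscr{E}_h{\rm{cut}}^{h_0}_{\{1\}}\mu_n(\{1\})=h_0-h$, so the identity fails. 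It fails in the paper's proof as well, which takes it for granted. On $T$ the identity is exact, because $\bullet^r\cap{\rm{Bp}}(T)=\emptyset$.

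What does hold is that both sides agree off $\blackdiamond_n^r$, by Lemma~\ref{lemma:MEGV20} and $\blackdiamond_n^r\subseteq R_{\mu_n,h}(T_n)$. Hence $\mathscr{E}_h{\rm{cut}}^{h_0}_{\blackdiamond_n^r}\mu_n={\rm{cut}}^{h_0-h}_{\blackdiamond_n^r}\mathscr{E}_h\mu_n-\sum_{\boldsymbol{\sigma}_n\in\blackdiamond_n^r}c_n(\boldsymbol{\sigma}_n)\delta_{\boldsymbol{\sigma}_n}$, where $0\le c_n(\boldsymbol{\sigma}_n)=\mathscr{E}_h\mu_n(\{\boldsymbol{\sigma}_n\})-\mu_n(\{\boldsymbol{\sigma}_n\})$.

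To reach the stated bound, rerun the two Prokhorov chains in the proof of Lemma~\ref{lemma:MEGV11} instead of quoting it. Put $\beta_n:=\mathscr{E}_h{\rm{cut}}^{h_0}_{\blackdiamond_n^r}\mu_n-(h_0-h)\sum_{\boldsymbol{\sigma}_n\in\blackdiamond_n^r}\delta_{\boldsymbol{\sigma}_n}$ on the $T_n$-side.
\begin{enumerate}[$(i)$]
\item The second chain only uses $\beta_n\le(\mathscr{E}_h\mu_n)|_{{\rm{Cut}}(T_n,\blackdiamond_n^r)}$.
\item In the first chain, the missing atoms sit on $\partial B_{T_n}(\rho_n,r)$, away from $\bigcup_{\boldsymbol{\sigma}_n}\theta_{\boldsymbol{\sigma}_n}T_n^{\circ}$. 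Together with the mass of $\mathscr{E}_h\mu_n$ on $B_{T_n}(\rho_n,r+\kappa)\setminus{\rm{Cut}}(T_n,\blackdiamond_n^r)$, they are dominated by $\mathscr{E}_h\mu_n(\overline{B}_{T_n}(\rho_n,r+\kappa)\setminus B_{T_n}(\rho_n,r))$. The chain already pays for that term.
\end{enumerate}
Hence $4\varepsilon+\eta$ survives for the restricted parts, and the Dirac parts contribute $6(h_0-h)|{\bullet^r}|\eta$.

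Your fallback of absorbing the discrepancy into the Dirac term would not give this. That term controls a $6\eta$-displacement of equal masses. A mass mismatch $c_n(\boldsymbol{\sigma}_n)$ instead costs $c_n(\boldsymbol{\sigma}_n)$ additively and can only be controlled through the annulus estimate, so the stated constant would be lost.
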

\begin{proof}
By Lemma \ref{lemma:MEGV20} and \eqref{eq:MEOp}, we have $\mathscr{E}_h{\rm{cut}}_{\blackdiamond_n^r}^{h_0}\mu_n = \left( \mathscr{E}_h\mu_n \right)|_{{\rm{Cut}}(T_n, \blackdiamond_n^r)} + \sum_{\sigma \in \blackdiamond_n^r} (h_0 - h) \delta_{\sigma} = {\rm{cut}}_{\blackdiamond_n^r}^{h_0 - h} \mathscr{E}_h\mu_n$.
A similar calculation is carried out for $\mathscr{E}_h {\rm{cut}}_{\bullet^r}^{h_0}\mu$. Now, by Lemma \ref{lemma:MEGV11} we get that $d_{\rm{GP}}\left( {\rm{cut}}_{\blackdiamond_n^r}^{h_0 - h}\mathscr{E}_h\boldsymbol{\mu}_n, {\rm{cut}}_{\bullet^r}^{h_0 - h}\mathscr{E}_h\boldsymbol{\mu} \right) \leq 4\varepsilon + \left(1 + 6(h_0-h)|{\bullet^r}| \right)\eta$, giving the desired.
\end{proof}
\begin{thm}[Convergence in the sense of mass erasure of cut-off measures]
\label{thm:MEGV22}
Let $\boldsymbol{\mu} = [T, d, \rho, \mu]$ and $\boldsymbol{\mu}_n = [T_n, d_n, \rho_n, \mu_n]$, $n \in \mathbb{N}$, be isometry classes in $\mathbb{T}_{\rm{min}}^{\rm{di}}$. Fix some $\boldsymbol{\mu}$-nice $r > 0$. Let for each $n$, $\blackdiamond_n^r$ be $\bullet^r$-compatible cut-off points in $\partial B_{T_n}(\rho_n, r)$, and let $h_0 > 0$. If
\begin{enumerate}[$(i)$]
\item $\boldsymbol{\mu}_n \stackrel{\rm{GVme}}{\rightarrow} \boldsymbol{\mu}$ as $n \rightarrow \infty$, and 
\item $\left( \left( \mathscr{E}_h\mu_n \right)|_{{\rm{Cut}}(T_n, \blackdiamond_n^r)} \right)_{n \in \mathbb{N}}$ is b-tight for all $h > 0$,
\end{enumerate}
then ${\rm{cut}}_{\blackdiamond_n^r}^{h_0}\boldsymbol{\mu}_n \stackrel{\rm{GPme}}{\rightarrow} {\rm{cut}}_{\bullet^r}^{h_0}\boldsymbol{\mu}$ as $n \rightarrow \infty$.
\end{thm}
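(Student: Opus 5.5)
The plan is to mirror the proof of Theorem \ref{thm:MEGV12}, replacing Lemma \ref{lemma:MEGV11} by its mass-erased analogue, Lemma \ref{lemma:MEGV21}. By definition of $\stackrel{\rm{GPme}}{\rightarrow}$, we must show that for every $h > 0$,
$$\mathscr{E}_h{\rm{cut}}_{\blackdiamond_n^r}^{h_0}\boldsymbol{\mu}_n \stackrel{\rm{GP}}{\rightarrow} \mathscr{E}_h{\rm{cut}}_{\bullet^r}^{h_0}\boldsymbol{\mu}.$$
We first treat $h \in (0, h_0)$, which is the range covered by Lemma \ref{lemma:MEGV21}.

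Fix such an $h$ and some $\varepsilon > 0$. Choose $\kappa \in (0,r)$ so that $r-\kappa$ is $\boldsymbol{\mu}$-nice, the estimates of Lemma \ref{lemma:MEGV10} hold for $\mathscr{E}_h\mu$, and $\mathscr{E}_h\mu(\overline{B}_T(\rho, r+2\kappa)\setminus \overline{B}_T(\rho,r-\kappa)) \leq \varepsilon$. This is possible because ${\rm{Skel}}(T,\mathscr{E}_h\mu) = {\rm{Skel}}(T,\mu)$, so $\mathscr{E}_h\mu \in \mathscr{M}_{\rm{di}}(T)$ has the same cut-off points, and $\mathscr{E}_h\mu(\partial B_T(\rho,r)) = 0$ for $\boldsymbol{\mu}$-nice $r$, as noted in the proof of Proposition \ref{prop:MEOp9}. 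Then take $\eta \in (0,\eta_0 \wedge \varepsilon)$.

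Assumption $(ii)$ with this $h$ gives $R_0$ such that, eventually, $\mathscr{E}_h\mu_n({\rm{Cut}}(T_n,\blackdiamond_n^r)\setminus \overline{B}_{T_n}(\rho_n,R_0)) \leq \varepsilon\wedge\eta_0$. Since $\mathscr{E}_h\mu|_{{\rm{Cut}}(T,\bullet^r)}$ is finite, enlarging $R_0$ also gives the second half of condition $(i)$ of Lemma \ref{lemma:MEGV21}. Now pick $R > R_0 \vee R_h(K)$ with $K$ as in Lemma \ref{lemma:MEGV21}. Assumption $(i)$ gives $\mathscr{E}_h\boldsymbol{\mu}_n \stackrel{\rm{GV}}{\rightarrow}\mathscr{E}_h\boldsymbol{\mu}$, hence $d_{\rm{GV}}(\mathscr{E}_h\boldsymbol{\mu}_n,\mathscr{E}_h\boldsymbol{\mu}) \leq \frac{\eta}{20}e^{-R}$ eventually. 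Lemma \ref{lemma:MEGV21} then yields, eventually,
$$d_{\rm{GP}}\bigl(\mathscr{E}_h{\rm{cut}}_{\blackdiamond_n^r}^{h_0}\boldsymbol{\mu}_n,\mathscr{E}_h{\rm{cut}}_{\bullet^r}^{h_0}\boldsymbol{\mu}\bigr) \leq (5+6h_0|{\bullet^r}|)\varepsilon,$$
and letting $\varepsilon \to 0$ gives the convergence for every $h\in(0,h_0)$.

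The remaining values $h \geq h_0$ are the main obstacle, because Lemma \ref{lemma:MEGV21} requires $h_0 > h$ in order to keep $h_0 - h$ positive in the identity $\mathscr{E}_h{\rm{cut}}^{h_0}_{\blackdiamond_n^r}\mu_n = {\rm{cut}}^{h_0-h}_{\blackdiamond_n^r}\mathscr{E}_h\mu_n$. I would avoid this using \cite{DW26}. There, $\stackrel{\rm{GPme}}{\rightarrow}$ is metrized by $\sum_p 2^{-p}(1\wedge d_{\rm{GP}}(\mathscr{E}_{h_p}\cdot,\mathscr{E}_{h_p}\cdot))$ for any $h_p\searrow 0$, and the topology does not depend on the choice of sequence. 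Equivalently, by the semigroup property and the Gromov--Prokhorov continuity of $\mathscr{E}_{h'}$ on $\mathbb{T}_{\rm{min}}^{\rm{fin}}$, convergence of $\mathscr{E}_{h_p}$ for a sequence $h_p \searrow 0$ implies convergence of $\mathscr{E}_{h}$ for all $h > 0$, since $\mathscr{E}_h = \mathscr{E}_{h-h_p}\circ\mathscr{E}_{h_p}$ whenever $h_p < h$. Choosing $h_p \searrow 0$ inside $(0,h_0)$ therefore gives $\mathscr{E}_h$-convergence for every $h>0$, completing the proof. If the $d_{\rm{GP}}$-continuity of $\mathscr{E}_{h'}$ on finite isometry classes from \cite{DW26} is invoked, this extension step is immediate.
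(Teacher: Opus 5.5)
Your proof is correct and follows the paper's proof, which likewise applies Lemma \ref{lemma:MEGV21} for each fixed $h \in (0,h_0)$, in the same way Lemma \ref{lemma:MEGV11} is used in Theorem \ref{thm:MEGV12}. The passage to $h \geq h_0$ via the semigroup property and the Gromov--Prokhorov continuity of $\mathscr{E}_{h-h'}$ on $\mathbb{T}_{\rm{min}}^{\rm{fin}}$ from \cite{DW26} is left implicit in the paper, and you supply it correctly.
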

\begin{proof}[Proof of Theorem \ref{thm:MEGV22}]
This follows by applying Lemma \ref{lemma:MEGV21} in a similar way as in Theorem \ref{thm:MEGV12} for each fixed $h \in (0, h_0)$, to get that $d_{\rm{GP}}\left( \mathscr{E}_h {\rm{cut}}_{\blackdiamond_n^r}^{h_0}\boldsymbol{\mu}_n, \mathscr{E}_h{\rm{cut}}_{\blackdiamond_n^r}^{h_0}\boldsymbol{\mu} \right) \rightarrow 0$ as $n \rightarrow \infty$.
\end{proof}
Theorem \ref{thm:MEGV23b} now follows immediately by applying Theorem \ref{thm:MEGV22} and Lemma \ref{lemma:MEGV19} together with \cite{DW26} Lemma 4.31$(ii)$. In Proposition \ref{prop:MEGV23} we similarly obtain a limit theorem for the projections. The proof of Proposition \ref{prop:MEGV23} is a direct consequence of Theorem \ref{thm:MEGV22}, \cite{DW26} Lemma 4.31$(ii)$, and a straight-forward adaptation of Lemma \ref{lemma:MEGV15}.
\begin{prop}
\label{prop:MEGV23}
Let for all $n$, $\boldsymbol{\mu} = [T, d, \rho, \mu]$ and $\boldsymbol{\mu}_n = [T_n, d_n, \rho_n, \mu_n]$ be isometry classes in $\mathbb{T}_{\rm{min}}^{\rm{di}}$. Let $r_k \rightarrow \infty$ be an increasing sequence of $\boldsymbol{\mu}$-nice radii. Let for each $n$ and $k$, $\blackdiamond_n^{r_k}$ be $\bullet^{r_k}$-compatible cut-off points in $\partial B_{T_n}(\rho_n, r_k)$. Let $h, h_n \in (0, \infty)$, $n \in \mathbb{N}$, let $h_0 \geq \sup_{n \in \mathbb{N}} h_n$, and suppose that for all $k$,
\begin{enumerate}[$(i)$]
\item $\boldsymbol{\mu}_n \stackrel{\rm{GVme}}{\rightarrow} \boldsymbol{\mu}$ and $\left( \left( \mathscr{E}_h\mu_n \right)|_{{\rm{Cut}}(T_n, \blackdiamond_n^{r_k})} \right)_{n \in \mathbb{N}}$ is b-tight for all $h > 0$, and 
\item $h_n \rightarrow h$ and the map $h' \mapsto R_{{\rm{cut}}_{\bullet^{r_k}}^{h_0}\mu, h'}(T)$ is $d_H$-continuous at $h$.
\end{enumerate}
Then $\left[{\rm{Pr}}_{R_{\mu_n,h_n}(T_n)}\mu_n\right] \stackrel{\rm{GV}}{\rightarrow} \left[{\rm{Pr}}_{R_{\mu,h}(T)}\mu\right]$, and $\left(\left( {\rm{Pr}}_{R_{\mu_n,h_n}(T_n)} \mu_n \right)|_{{\rm{Cut}}(T_n, \blackdiamond_n^{r_k})}\right)_{n \in \mathbb{N}}$ is b-tight for all $k$.
\end{prop}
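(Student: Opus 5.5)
The plan is to transfer Proposition \ref{prop:MEGV16} from the Gromov-vague setting to the GVme setting, replacing Theorem \ref{thm:MEGV12} by its mass-erasure analogue Theorem \ref{thm:MEGV22}. Throughout, $k$ is fixed and $r=r_k$.

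The first step is the convergence of the finite cut-off measures in the sense of mass erasure. Theorem \ref{thm:MEGV22} gives ${\rm{cut}}_{\blackdiamond_n^{r}}^{h_0}\boldsymbol{\mu}_n \stackrel{\rm{GPme}}{\rightarrow} {\rm{cut}}_{\bullet^{r}}^{h_0}\boldsymbol{\mu}$. We then apply \cite{DW26} Lemma 4.31$(ii)$, the finite-mass Gromov analogue of \cite{DW26} Lemma 3.23, using the $d_H$-continuity of $h'\mapsto R_{{\rm{cut}}_{\bullet^r}^{h_0}\mu,h'}(T)$ at $h$. This yields Gromov--Prokhorov convergence of the projected cut-off measures:
$$\left[{\rm{Pr}}_{R_{{\rm{cut}}_{\blackdiamond_n^{r}}\mu_n,h_n}(T_n)}{\rm{cut}}_{\blackdiamond_n^{r}}\mu_n\right]\stackrel{\rm{GP}}{\rightarrow}\left[{\rm{Pr}}_{R_{{\rm{cut}}_{\bullet^{r}}\mu,h}(T)}{\rm{cut}}_{\bullet^{r}}\mu\right].$$

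The second step identifies these projections with ${\rm{Pr}}_{R_{\mu_n,h_n}(T_n)}\mu_n$ on balls of radius $\tilde r<r$, eventually in $n$. This needs an adapted Lemma \ref{lemma:MEGV15}. Its proof used only two facts: the identity $R_{\mu_n,h}(T_n)\cap{\rm{Cut}}(T_n,\blackdiamond_n^r)=R_{{\rm{cut}}_{\blackdiamond_n^r}\mu_n,h}(T_n)$, and $\blackdiamond_n^r\subseteq R_{\mu_n,h}(T_n)$. Lemma \ref{lemma:MEGV20} replaces Lemma \ref{lemma:MEGV13} and gives the identity for all $h'\le h_0$. Its hypotheses hold eventually for the following reasons.
\begin{itemize}
\item Condition $(i)$ follows from b-tightness of $\mathscr{E}_{h_1}\mu_n$ on the cut-off tree, for a fixed $h_1>0$.
\item Condition $(ii)$ follows from $\mathscr{E}_{h_1}\boldsymbol{\mu}_n\stackrel{\rm{GV}}{\rightarrow}\mathscr{E}_{h_1}\boldsymbol{\mu}$ via Lemma \ref{lemma:GV2}$(ii)$.
\end{itemize}
The proof of Lemma \ref{lemma:MEGV20} also gives $\mu_n(\theta_{\boldsymbol\sigma_n}T_n)>h_0$ for each $\boldsymbol\sigma_n\in\blackdiamond_n^r$, which supplies the second fact. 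The computation via Proposition \ref{prop:Proj2}$(i)$ then carries over verbatim with $h=h_n\le h_0$.

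The third step mirrors Lemma \ref{lemma:MEOp8}$(i)$ on the limit side. It shows that $({\rm{Pr}}_{R_{\mu,h}(T)}\mu)^{\restr\tilde r}$ equals the restriction of the projected cut-off measure. We choose $\tilde r_k\in(r_k-\varepsilon,r_k)$ nice for the limit projected cut-off measure. Restriction to $\overline B(\rho,\tilde r_k)$ is then GP-continuous, giving GP convergence of the $\tilde r_k$-restrictions. Proposition \ref{prop:GV1} then gives the Gromov-vague convergence claimed in the statement.

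The fourth step is b-tightness of the projections. We reuse the pushforward argument from the proof of Proposition \ref{prop:MEGV16}: the preimage under $f_{R_{\mu_n,h_n}(T_n)}$ of ${\rm{Cut}}(T_n,\blackdiamond_n^r)\setminus\overline B(\rho_n,R)$ lies inside that same set. This bounds the projected mass there by $\mu_n$ of the set.

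This is where I expect the main obstacle, since only $\mathscr{E}_h\mu_n$ is assumed b-tight, not $\mu_n$. I would circumvent it as follows. By Lemma \ref{lemma:MEOp4}, outside a ball containing the finitely many branch points of the finite-type tree, $\mathscr{E}_{h'}\mu_n$ and ${\rm{Pr}}_{R_{\mu_n,h'}(T_n)}\mu_n$ differ only by the leaf masses $h'\delta_\sigma$. Taking $h'\le h_n$, the projection ${\rm{Pr}}_{R_{\mu_n,h_n}}\mu_n$ is the projection of ${\rm{Pr}}_{R_{\mu_n,h'}}\mu_n$. Alternatively, I would deduce tightness from the GP convergence of the projected cut-off measures in the first step: a GP-convergent sequence of finite measures is tight in mass far from the root. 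This second route is cleaner and is the one I would try first.
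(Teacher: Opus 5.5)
Your argument for the Gromov-vague convergence is the paper's. It combines Theorem \ref{thm:MEGV22} with \cite{DW26} Lemma 4.31$(ii)$ for the projected cut-off measures, Lemma \ref{lemma:MEGV15} adapted through Lemma \ref{lemma:MEGV20}, Lemma \ref{lemma:MEFix8}$(i)$ on the limit side, and Proposition \ref{prop:GV1}. The proof of Lemma \ref{lemma:MEGV20} also gives $\blackdiamond_n^r \subseteq R_{\mu_n,h_n}(T_n)$, as you note. You are right to be cautious about b-tightness. The paper's one-line proof does not address it, and the pushforward bound from Proposition \ref{prop:MEGV16} needs b-tightness of $\mu_n$ itself on the cut-off trees, which is not assumed here.

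Route (b) works, but it relies on a step you have not written. Your second step identifies the two projected measures only on balls of radius $\tilde r<r$, whereas tightness concerns mass above levels $R>r$. Set $A:={\rm{Cut}}(T_n,\blackdiamond_n^r)\setminus\overline{B}_{T_n}(\rho_n,R)$ with $R>r$.
\begin{itemize}
\item If $f_{R_{\mu_n,h_n}(T_n)}(x)\in A$, then $x\in\theta_{\boldsymbol{\sigma}}T_n$ for some $\boldsymbol{\sigma}\in\partial B_{T_n}(\rho_n,r)\setminus\blackdiamond_n^r$.
\item On $\theta_{\boldsymbol{\sigma}}T_n$ we have ${\rm{cut}}^{h_0}_{\blackdiamond_n^r}\mu_n=\mu_n$. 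In particular ${\rm{cut}}^{h_0}_{\blackdiamond_n^r}\mu_n(\theta_yT_n)=\mu_n(\theta_yT_n)$ for all $y\in[\boldsymbol{\sigma},x]$.
\item Hence $x$ has the same projection onto $R_{{\rm{cut}}^{h_0}_{\blackdiamond_n^r}\mu_n,h_n}(T_n)$ as onto $R_{\mu_n,h_n}(T_n)$.
\end{itemize}
It follows that ${\rm{Pr}}_{R_{\mu_n,h_n}(T_n)}\mu_n(A)$ is at most the mass that the projected cut-off measure gives to $A$. The tails of the projected cut-off measure are controlled by its Gromov--Prokhorov convergence, which forces weak convergence of its height measures.

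Route (a), as written, is not uniform in $n$. The ball containing the branch points of $R_{\mu_n,h'}(T_n)\cap{\rm{Cut}}(T_n,\blackdiamond_n^r)$ may grow with $n$, and nothing bounds the number of leaf masses $h'$ above level $R$.

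There is also a shorter argument, which does not use the $d_H$-continuity hypothesis.
\begin{itemize}
\item $A$ is a union of components $D$ of $T_n\setminus\overline{B}_{T_n}(\rho_n,R)$.
\item By Lemma \ref{lemma:Proj3}, ${\rm{Pr}}_{R_{\mu_n,h_n}(T_n)}\mu_n(D)$ is either $0$ or equal to $\mu_n(D)\geq h_n$.
\item By Proposition \ref{prop:MEOp2}$(ii)$, $\mathscr{E}_{h/2}\mu_n(D)=(\mu_n(D)-h/2)_+\geq\mu_n(D)/3$ whenever $\mu_n(D)\geq 3h/4$.
\end{itemize}
So once $h_n\geq 3h/4$, you get ${\rm{Pr}}_{R_{\mu_n,h_n}(T_n)}\mu_n(A)\leq 3\,\mathscr{E}_{h/2}\mu_n(A)$. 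B-tightness then follows directly from hypothesis $(i)$ with parameter $h/2$.
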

We may also use Theorem \ref{thm:MEGV22} to prove Theorem \ref{thm:MEGV24}.
\begin{proof}[Proof of Theorem \ref{thm:MEGV24}]
For the direction $(a) \Rightarrow (b)$, we immediately get from $(a)$ that $\boldsymbol{\mu}_n \stackrel{\rm{GVme}}{\rightarrow} \boldsymbol{\mu}$ and $(\mathscr{E}_h\boldsymbol{\mu}_n)_{n \in \mathbb{N}}$ is b-tight on the cut-off trees for all $h > 0$, by Theorem \ref{thm:MEGV17}. Moreover, Theorem \ref{thm:MEGV12} gives that ${\rm{cut}}_{\blackdiamond_n^{r_k}}\boldsymbol{\mu}_n \stackrel{\rm{GP}}{\rightarrow} {\rm{cut}}_{\bullet^{r_k}}\boldsymbol{\mu}$. Considering some representative and using the continuous mapping theorem, thus yields that $\lambda_{{\rm{cut}}_{\blackdiamond_n^{r_k}}\boldsymbol{\mu}_n} \stackrel{\rm{wk}}{\rightarrow} \lambda_{{\rm{cut}}_{\bullet^{r_k}}\boldsymbol{\mu}}$ in $\mathscr{M}_{\rm{fin}}([0,\infty))$.

For the opposite direction, we know from Theorem \ref{thm:MEGV22} that ${\rm{cut}}_{\blackdiamond_n^{r_k}}\boldsymbol{\mu}_n \stackrel{\rm{GPme}}{\rightarrow} {\rm{cut}}_{\bullet^{r_k}}\boldsymbol{\mu}$ in $\mathbb{T}_{\rm{min}}^{\rm{fin}}$. So the assumption $\lambda_{{\rm{cut}}_{\blackdiamond_n^{r_k}}\boldsymbol{\mu}_n} \stackrel{\rm{wk}}{\rightarrow} \lambda_{{\rm{cut}}_{\bullet^{r_k}}\boldsymbol{\mu}}$ implies by \cite{DW26} Theorem 4.35 that ${\rm{cut}}_{\blackdiamond_n^{r_k}}\boldsymbol{\mu}_n \stackrel{\rm{GP}}{\rightarrow} {\rm{cut}}_{\bullet^{r_k}}\boldsymbol{\mu}$. Restricting to balls of $\boldsymbol{\mu}$-nice radius $\tilde{r}_k < r_k$ thus gives $\boldsymbol{\mu}_n \stackrel{\rm{GV}}{\rightarrow} \boldsymbol{\mu}$. As the cut-off measures have support on the cut-off trees and are automatically b-tight under Gromov--Prokhorov convergence, it also follows that $\lim_{R \rightarrow \infty} \limsup_{n \rightarrow \infty} \mu_n\left( {\rm{Cut}}(T_n, \blackdiamond_n^{r_k}) \setminus \overline{B}_{T_n}(\rho_n, R) \right) = 0$ for all $k$, as desired.
\end{proof}
\begin{rmk}
\label{rmk:MEGV24b}
Fix some $r > 0$, and let for all $n$, $\blackdiamond_n^r$ be $\bullet^r$-compatible cut-off points in $\partial B_{T_n}(\rho_n, r)$. Then $\left( \left( \mathscr{E}_h\mu_n \right)_{{\rm{Cut}}(T_n, \blackdiamond_n^r)} \right)_{n \in \mathbb{N}}$ is b-tight for all $h > 0$ if and only if it is b-tight for all $h$ in some decreasing sequence of parameters $h_p \searrow 0$. The proof of this statement is completely analogous to Remark \ref{rmk:MEOp10b}, utilizing the semigroup property together with the technique from the proof of Proposition \ref{prop:MEOp6}$(ii)$.
\demo
\end{rmk}
Note also that one may alternatively impose a condition of relative compactness together with Gromov-vague convergence in the sense of mass erasure, to obtain Gromov-vague convergence of isometry classes. This is a generalization of \cite{DW26} Theorem 4.32$(vii)$.
\begin{lemma}
Let $\boldsymbol{\mu}_n = [T_n, d_n, \rho_n, \mu_n]$, $n \in \mathbb{N}$, and $\boldsymbol{\mu} = [T, d, \rho, \mu]$ be isometry classes in $\mathbb{T}_{\rm{min}}^{\rm{di}}$. Let $r_k \rightarrow \infty$ be an increasing sequence of $\boldsymbol{\mu}$-nice radii. Let for each $n$ and $k$, $\blackdiamond_n^{r_k}$ be $\bullet^{r_k}$-compatible cut-off points in $\partial B_{T_n}(\rho_n, r_k)$. Suppose that $\left( \mu_n|_{{\rm{Cut}}(T_n, \blackdiamond_n^{r_k})} \right)_{n \in \mathbb{N}}$ is b-tight for all $k$. Then $\boldsymbol{\mu}_n \stackrel{\rm{GV}}{\rightarrow} \boldsymbol{\mu}$ if and only if $\boldsymbol{\mu}_n \stackrel{\rm{GVme}}{\rightarrow} \boldsymbol{\mu}$ and $\{\boldsymbol{\mu}_n ~|~ n \in \mathbb{N}\}$ is relatively compact in $(\mathbb{T}_{\rm{min}}^{\rm{di}}, d_{\rm{GV}})$.
\end{lemma}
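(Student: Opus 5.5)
The forward direction is immediate. If $\boldsymbol{\mu}_n \stackrel{\rm{GV}}{\rightarrow} \boldsymbol{\mu}$, then $\{\boldsymbol{\mu}_n\} \cup \{\boldsymbol{\mu}\}$ is compact in $(\mathbb{T}_{\rm{min}}^{\rm{bf}}, d_{\rm{GV}})$, so $\{\boldsymbol{\mu}_n\}$ is relatively compact; in the statement this should be read in the ambient Polish space $\mathbb{T}_{\rm{min}}^{\rm{bf}}$, with the limit lying in $\mathbb{T}_{\rm{min}}^{\rm{di}}$. Together with the assumed b-tightness on the cut-off trees, Theorem \ref{thm:MEGV17} (with $h_n = h$) gives $\mathscr{E}_h\boldsymbol{\mu}_n \stackrel{\rm{GV}}{\rightarrow} \mathscr{E}_h\boldsymbol{\mu}$ for all $h > 0$, i.e. $\boldsymbol{\mu}_n \stackrel{\rm{GVme}}{\rightarrow} \boldsymbol{\mu}$.

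For the converse I use the standard subsequence argument. It suffices to show that every subsequence of $(\boldsymbol{\mu}_n)$ has a further subsequence converging Gromov-vaguely to $\boldsymbol{\mu}$. By relative compactness (Proposition \ref{prop:GV4}), a given subsequence has a further subsequence with $\boldsymbol{\mu}_{n'} \stackrel{\rm{GV}}{\rightarrow} \boldsymbol{\nu}$ for some $\boldsymbol{\nu} = [T', d', \rho', \nu] \in \mathbb{T}_{\rm{min}}^{\rm{bf}}$. Three things remain: $\boldsymbol{\nu} \in \mathbb{T}_{\rm{min}}^{\rm{di}}$, the b-tightness hypothesis of Theorem \ref{thm:MEGV17} holds relative to $\boldsymbol{\nu}$, and $\boldsymbol{\nu} = \boldsymbol{\mu}$. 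Once these hold, Theorem \ref{thm:MEGV17} gives $\mathscr{E}_h\boldsymbol{\mu}_{n'} \stackrel{\rm{GV}}{\rightarrow} \mathscr{E}_h\boldsymbol{\nu}$. Uniqueness of limits then gives $\mathscr{E}_h\boldsymbol{\nu} = \mathscr{E}_h\boldsymbol{\mu}$ for all $h > 0$, and Lemma \ref{lemma:MEGV18}$(ii)$ yields $\boldsymbol{\nu} = \boldsymbol{\mu}$.

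The main obstacle is that the b-tightness hypothesis refers to cut-off points compatible with $\bullet^{r_k}(\mu)$, not with $\bullet^{r}(\nu)$, and $\nu$ might a priori lie outside $\mathbb{T}_{\rm{min}}^{\rm{di}}$. I plan to compare skeletons through mass counting. Work in an embedding system realising $\boldsymbol{\mu}_{n'} \stackrel{\rm{GV}}{\rightarrow} \boldsymbol{\nu}$ (Proposition \ref{prop:GV3}).

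First, I show $|\bullet^{r}(\nu)| \le |\bullet^{r}(\mu)|$ for nice $r$, and that $\nu$ is discretely infinite. Every infinite-mass point of $\nu$ at level $r$ forces, by the argument of Lemma \ref{lemma:MEGV4}, a point of $\partial B_{T_{n'}}(\rho_{n'}, r)$ with unboundedly large mass above it, and distinct such points of $\nu$ give distinct points in $T_{n'}$ by Lemma \ref{lemma:MEGV3}. By b-tightness on ${\rm{Cut}}(T_{n'}, \blackdiamond_{n'}^{r_k})$, all such points must lie in $\blackdiamond_{n'}^{r_k}$, a set of size at most $|\bullet^{r_k}(\mu)|$. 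The same pigeonhole argument bounds the mass over finite-mass subtrees of $\nu$ at level $r$: mass on ${\rm{Cut}}(T_{n'}, \blackdiamond^{r}_{n'})$ is tight, and the remaining mass sits above finitely many points. Hence ${\rm{Skel}}(T',\nu)$ is locally finite and condition $(ii)$ of Definition \ref{def:DIMeas} holds, so $\boldsymbol{\nu} \in \mathbb{T}_{\rm{min}}^{\rm{di}}$.

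Second, I choose $\bullet^{r}(\nu)$-compatible points $\tilde\blackdiamond^{r}_{n'}$ inside $\blackdiamond^{r}_{n'}$, namely those that are close to the $\nu$-contour areas, and check that the mass left over on ${\rm{Cut}}(T_{n'}, \tilde\blackdiamond^{r}_{n'})$ is still tight. The points of $\blackdiamond^{r}_{n'}\setminus\tilde\blackdiamond^{r}_{n'}$ are not close to infinite $\nu$-mass, so by Gromov-vague convergence the mass above them beyond a large radius $R$ is asymptotically small. I expect this to be the delicate estimate. Should it fail, the fallback is to pass to Gromov--Prokhorov limits of the cut-off measures and invoke the finite-mass analogue \cite{DW26} Theorem 4.32$(vii)$ directly.
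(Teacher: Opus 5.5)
Your forward direction is correct and matches the paper. For the converse, your main route is also the paper's: take a $d_{\rm{GV}}$-limit point, apply Theorem~\ref{thm:MEGV17} along the subsequence, and conclude by uniqueness. You are right that applying Theorem~\ref{thm:MEGV17} to the limit point $\boldsymbol{\nu}$ needs b-tightness on $\bullet^{r}(\nu)$-compatible cut-off trees at $\boldsymbol{\nu}$-nice radii. The hypothesis only gives this for $\bullet^{r_k}(\mu)$-compatible points at $\boldsymbol{\mu}$-nice radii, and the paper applies the theorem to the limit point without commenting on the mismatch.

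Your repair, however, has a genuine gap in the second step. You claim that ``by Gromov-vague convergence'' the mass above the points of $\blackdiamond^{r}_{n'}\setminus\tilde{\blackdiamond}^{r}_{n'}$ beyond a large radius is asymptotically small. This is unjustified. Gromov-vague convergence only sees mass in bounded balls, so it cannot exclude mass escaping to infinity above a point (compare Example~\ref{ex:MEConv}). The b-tightness hypothesis does not control these points either, because they are cut off. Ruling out $|\bullet^r(\nu)|<|\bullet^r(\mu)|$ is exactly the content of the lemma, and it has to come from the GVme hypothesis, which your plan never uses at this step. One way would be to compare $\mathscr{E}_h\mu_{n'}$ with $\mu_{n'}$ through the tight cut-off measures at a larger radius $r_{k'}$ and let $h\to0$.

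Your first step is also weaker than it looks, though it is avoidable:
\begin{itemize}
\item Lemma~\ref{lemma:MEGV3} and the choice of $\eta_0$ presuppose a discrete skeleton, so they cannot be used to prove that $\nu$ is discretely infinite.
\item Knowing that the leftover mass sits above finitely many points of $\blackdiamond$ does not bound that mass, so condition $(ii)$ of Definition~\ref{def:DIMeas} does not follow from the pigeonhole as sketched.
\item The radii $r_k$ need not be $\boldsymbol{\nu}$-nice.
\item The step is unnecessary under the statement as written, since relative compactness in $(\mathbb{T}_{\rm{min}}^{\rm{di}}, d_{\rm{GV}})$ already puts limit points in $\mathbb{T}_{\rm{min}}^{\rm{di}}$.
\end{itemize}

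Your fallback is the right proof. It makes the skeleton comparison unnecessary, because it works only with the cut-off points of $\mu$. It goes as follows.
\begin{enumerate}
\item Use Proposition~\ref{prop:MEOp2}$(iii)$ exactly as in the b-tightness part of the proof of Theorem~\ref{thm:MEGV17}. This turns the assumed b-tightness into b-tightness of $\bigl((\mathscr{E}_h\mu_n)|_{{\rm{Cut}}(T_n, \blackdiamond_n^{r_k})}\bigr)_{n}$ for every $h>0$.
\item Theorem~\ref{thm:MEGV22} then gives ${\rm{cut}}^{h_0}_{\blackdiamond_n^{r_k}}\boldsymbol{\mu}_n \stackrel{\rm{GPme}}{\rightarrow} {\rm{cut}}^{h_0}_{\bullet^{r_k}}\boldsymbol{\mu}$.
\item These cut-off classes are eventually finite and relatively compact in $(\mathbb{T}_{\rm{min}}^{\rm{fin}}, d_{\rm{GP}})$. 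Inside $\overline{B}_{T_n}(\rho_n,R)$ this follows from Proposition~\ref{prop:GV4}, and outside from b-tightness. The $|\bullet^{r_k}(\mu)|$ added atoms of mass $h_0$ at level $r_k$ change total mass and covering numbers only by a bounded amount.
\item \cite{DW26} Theorem 4.32$(vii)$ upgrades this to Gromov--Prokhorov convergence of the cut-off classes.
\item Restricting to $\boldsymbol{\mu}$-nice radii $\tilde{r}_k<r_k$ gives $\boldsymbol{\mu}_n^{\restr \tilde{r}_k} \stackrel{\rm{GP}}{\rightarrow} \boldsymbol{\mu}^{\restr \tilde{r}_k}$, hence $\boldsymbol{\mu}_n \stackrel{\rm{GV}}{\rightarrow} \boldsymbol{\mu}$ by Proposition~\ref{prop:GV1}.
\end{enumerate}
This is the paper's proof of Theorem~\ref{thm:MEGV24}, $(b)\Rightarrow(a)$, with \cite{DW26} Theorem 4.35 replaced by Theorem 4.32$(vii)$. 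Compared with the limit-point argument, it never has to transfer b-tightness to an unknown limit. It even works if relative compactness is read in $\mathbb{T}_{\rm{min}}^{\rm{bf}}$. I would promote it to the main argument.
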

\begin{proof}
The implication $(\Rightarrow)$ is a straightforward consequence of Theorem \ref{thm:MEGV17}. For the opposite implication, let $\boldsymbol{\mu}' \in \mathbb{T}_{\rm{min}}^{\rm{di}}$ be a $d_{\rm{GV}}$-limit point of $(\boldsymbol{\mu}_n)_{n \in \mathbb{N}}$. Then by the assumption of relative compactness there exists a subsequence $(\boldsymbol{\mu}_{n_k})_{k \in \mathbb{N}}$ such that $\boldsymbol{\mu}_{n_k} \stackrel{\rm{GV}}{\rightarrow} \boldsymbol{\mu}'$. Applying now the assumption of b-tightness together with Theorem \ref{thm:MEGV17} then yields $\boldsymbol{\mu}_{n_k} \stackrel{\rm{GVme}}{\rightarrow} \boldsymbol{\mu}'$ as $k \rightarrow \infty$. But as $\boldsymbol{\mu}_n \stackrel{\rm{GVme}}{\rightarrow} \boldsymbol{\mu}$ by assumption, uniqueness of limits clearly gives that $\boldsymbol{\mu}' = \boldsymbol{\mu}$. Hence, $(\boldsymbol{\mu}_n)_{n \in \mathbb{N}}$ has $\boldsymbol{\mu}$ as its unique $d_{\rm{GV}}$-limit point, and so we conclude that $\boldsymbol{\mu}_n \stackrel{\rm{GV}}{\rightarrow} \boldsymbol{\mu}$, as desired.
\end{proof}
Before moving on to the next section, we prove Lemma \ref{lemma:MEGV24b}, and discuss some relative compactness conditions.
\begin{proof}[Proof of Lemma \ref{lemma:MEGV24b}]
Suppose for $\boldsymbol{\mu}_n \in \mathbb{T}_{\rm{min}}^{\rm{di}}$, $n \in \mathbb{N}$, that $(\mathscr{E}_h\boldsymbol{\mu}_n)_{n \in \mathbb{N}}$ is Cauchy with respect to the Gromov-vague topology. Then, as $(\mathbb{T}_{\rm{min}}^{\rm{bf}}, d_{\rm{GV}})$ is complete, there exists some $\boldsymbol{\nu} \in \mathbb{T}_{\rm{min}}^{\rm{bf}}$ such that $\mathscr{E}_h\boldsymbol{\mu}_n \stackrel{\rm{GV}}{\rightarrow} \boldsymbol{\nu}$. Let $(\mathscr{Z}, \Phi, \phi)$ denote an embedding system realizing the convergence, as constructed in Proposition \ref{prop:GV3}. We wish to argue that $\boldsymbol{\nu} \in \mathbb{T}_{\rm{min}}^{\rm{di}}$. 

Suppose for contradiction that some representative $(T, d, \rho, \nu)$ of $\boldsymbol{\nu}$ satisfies that either i) ${\rm{Skel}}(T, \nu) \cap \overline{B}_T(\rho, r)$ is \emph{not} of finite type or ii) $\nu\left( \bigcup_{\sigma \in \partial B_T(\rho, r) \setminus {\rm{Skel}}(T, \nu)} \theta_{\sigma}T \right) = +\infty$, for some $\nu$-nice $r > 0$. Then for any $M \in \mathbb{N}$ we may choose some $\nu$-nice $R > r$ large enough that $\nu\left( \theta_{\sigma_i}T \cap \overline{B}_T(\rho, R) \right) > 0$ for at least $M$ points in $\partial B_T(\rho, r)$. Denote these $M$ points by $\sigma^{(1)}, \dots, \sigma^{(M)}$. Let $\varepsilon > 0$ be chosen small enough such that $\phi(\theta_{\sigma^{(i)}}T)^{\varepsilon}$ are disjoint in $(\mathscr{Z}, \delta)$ and such that $\nu(\theta_{\sigma^{(i)}}T \cap \overline{B}_T(\rho, R)) > 2\varepsilon$ for all $1 \leq i \leq M$. Then we can find some $N$ sufficiently large that $\delta_{\rm{P}}\left( (\nu \circ \phi^{-1})^{\restr R}, ( \mathscr{E}_h\mu_n \circ \phi_n^{-1})^{\restr R} \right) \leq \varepsilon$ for all $n \geq N$, implying in particular that $2 \varepsilon < \nu^{\restr R}(\theta_{\sigma^{(i)}}T) \leq (\mathscr{E}_h\mu_n \circ \phi_n^{-1})^{\restr R}(\phi(\theta_{\sigma^{(i)}}T)^{\varepsilon}) + \varepsilon$, and so $(\mathscr{E}_h\mu_n)^{\restr R}\left(\phi_n^{-1}\left( \phi(\theta_{\sigma^{(i)}}T)^{\varepsilon} \right) \right) > \varepsilon > 0$. In particular there must exist $M$ distinct points in ${\rm{Span}}\left( {\rm{supp}}(\mathscr{E}_h\mu_n) \right) \cap \partial B_{T_n}(\rho_n, r-\varepsilon) = R_{\mu_n, h+}(T_n) \cap \partial B_{T_n}(\rho_n, r-\varepsilon) $, and so as $R_{\mu_n,h+}(T_n) \subseteq R_{\mu_n,h}(T_n)$, we get that $R_{\mu_n,h}(T_n) \cap \overline{B}_T(\rho, r-\varepsilon)$ must eventually have at least $M$ leaves on $\partial B_{T_n}(\rho_n, r-\varepsilon)$. But then $(n(\rho, R_{\mu_n,h}(T_n)) - 1) + \sum_{\sigma \in {\rm{Bp}}(R_{\mu_n,h}(T_n)) \cap B_T(\rho, r-\varepsilon)} (n(\sigma, R_{\mu_n,h}(T_n)) - 2) \geq M-1$, implying in particular that $\mathscr{E}_h\mu_n(\overline{B}_T(\rho, r)) \geq \mathscr{E}_h\mu_n(\overline{B}_T(\rho, r-\varepsilon)) \geq h(M-1)$ eventually. As the argument can be repeated for $M$ arbitrarily large, we get $\mathscr{E}_h\mu_n(\overline{B}_T(\rho, r)) \rightarrow \infty$, contradicting our assumptions.
\end{proof}
We finally consider some generalizations of the relative compactness conditions from \cite{DW26} Proposition 4.13 and Theorem 4.32$(v)$. Recall from Proposition \ref{prop:GV4} that relative compactness in the Gromov-vague sense is equivalent to Gromov--Prokhorov relative compactness for the restrictions of isometry classes to closed balls. For a metric space $(\mathscr{X}, d)$ and a subset $A \subseteq \mathscr{X}$, we denote by $N(A, \varepsilon)$ the minimal number of open balls centered in $A$ with radius $\varepsilon$ necessary to cover $A$. From \cite{DW26} p. 33 we recall the definition of a slight variation of the essential covering number for $\mathbb{R}$-trees, defined for every $\boldsymbol{\mu} = [T, d, \rho, \mu] \in \mathbb{T}_{\rm{min}}^{\rm{fin}}$ by
\begin{align*}
\texttt{ess-tree}(\boldsymbol{\mu}, \varepsilon) := \left\{ N(T', \varepsilon) + {\rm{Ht}}(T') ~|~ T' \subseteq T \text{ is a compact rooted subtree}: \mu(T \setminus T') \leq \varepsilon \right\}.
\end{align*}
For convenience, we define also the smaller $\boldsymbol{\mu}$-essential covering number
$$ \widetilde{\texttt{ess-tree}}(\boldsymbol{\mu}, \varepsilon) := \left\{ N(T', \varepsilon) ~|~ T' \subseteq T \text{ is a compact rooted subtree}: \mu(T \setminus T') \leq \varepsilon \right\}. $$
\begin{lemma}
\label{lemma:precompact}
Let $\boldsymbol{\Pi} \subseteq \mathbb{T}_{\rm{min}}^{\rm{bf}}$ be non-empty. Then $\boldsymbol{\Pi}$ is relatively compact in $(\mathbb{T}_{\rm{min}}^{\rm{bf}}, d_{\rm{GV}})$ if and only if for some increasing sequence of radii $r_k \rightarrow \infty$ it holds for each $k$ that
\begin{enumerate}[$(i)$]
\item $\sup_{\boldsymbol{\mu} \in \boldsymbol{\Pi}} \mu\left( \overline{B}_T(\rho, r_k) \right) < \infty$, and
\item $\sup_{\boldsymbol{\mu} \in \boldsymbol{\Pi}} \widetilde{\texttt{\emph{ess-tree}}}(\boldsymbol{\mu}^{\restr r_k}, \varepsilon) < \infty$ for all $\varepsilon > 0$.
\end{enumerate}
\end{lemma}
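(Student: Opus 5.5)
The plan is to reduce the statement to Proposition \ref{prop:GV4}: $\boldsymbol{\Pi}$ is relatively compact in $(\mathbb{T}_{\rm{min}}^{\rm{bf}}, d_{\rm{GV}})$ if and only if $\boldsymbol{\Pi}^{\restr r_k}$ is relatively compact in $(\mathbb{T}_{\rm{min}}^{\rm{fin}}, d_{\rm{GP}})$ for some increasing sequence $r_k \rightarrow \infty$. It then suffices to characterize Gromov--Prokhorov relative compactness of a family of finitely measured minimal $\mathbb{R}$-trees. For this I would use the criterion recalled from \cite{DW26} Proposition 4.13 (or Theorem 4.32$(v)$): a family in $\mathbb{T}_{\rm{min}}^{\rm{fin}}$ is $d_{\rm{GP}}$-relatively compact if and only if the total masses are uniformly bounded and $\sup \texttt{ess-tree}(\cdot, \varepsilon) < \infty$ for every $\varepsilon > 0$.

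The first step is to check that the two characterizations match. Condition $(i)$ is exactly uniform boundedness of the total masses $\mu^{\restr r_k}(T) = \mu(\overline{B}_T(\rho, r_k))$. The only discrepancy is that $\texttt{ess-tree}$ contains the extra term ${\rm{Ht}}(T')$, while $\widetilde{\texttt{ess-tree}}$ does not. In the $\Rightarrow$ direction this is harmless, since $\widetilde{\texttt{ess-tree}} \leq \texttt{ess-tree}$. In the $\Leftarrow$ direction the height has to be controlled. Given a compact rooted subtree $T' \subseteq T$ with $\mu^{\restr r_k}(T \setminus T') \leq \varepsilon$, I would replace $T'$ by $T'' := T' \cap \overline{B}_T(\rho, r_k)$. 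This is again a compact rooted subtree, and since $\mu^{\restr r_k}$ gives no mass outside the ball, $\mu^{\restr r_k}(T\setminus T'') = \mu^{\restr r_k}(T \setminus T') \leq \varepsilon$.

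The main technical point is that the covering number does not increase under this truncation, up to a harmless change of radius. Every point of $T''$ lies in $T'$, so it is within $\varepsilon$ of some centre $c \in T'$. Projecting centres with $f_{\overline{B}_T(\rho, r_k)}$, which is 1-Lipschitz by Proposition \ref{prop:Proj1}$(iv)$ and fixes $T''$, gives centres in $T''$, so $N(T'', \varepsilon) \leq N(T', \varepsilon)$. One must also make sure the projected centres lie in $T''$ rather than only in the ball; this holds because $T'$ is a rooted subtree, so the projection of $c \in T'$ lies on $[\rho, c] \subseteq T'$. It follows that $\texttt{ess-tree}(\boldsymbol{\mu}^{\restr r_k}, \varepsilon) \leq \widetilde{\texttt{ess-tree}}(\boldsymbol{\mu}^{\restr r_k}, \varepsilon) + r_k$, which is uniformly bounded.

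One subtlety is that $\boldsymbol{\mu}^{\restr r_k}$ lives on $\overline{{\rm{Span}}({\rm{supp}}(\mu^{\restr r_k}))}$ rather than on $T$. Intersecting $T''$ with this minimal tree is again a compact rooted subtree and preserves the mass bound. Covering numbers may double the radius when centres are moved into the subset, so the argument uses $N(\cdot, 2\varepsilon)$ against $N(\cdot,\varepsilon)$; since the conditions are required for all $\varepsilon > 0$, this is harmless. Finally, the statement of the lemma is read with the ``$\inf$'' implicit in the definitions of $\texttt{ess-tree}$ and $\widetilde{\texttt{ess-tree}}$.
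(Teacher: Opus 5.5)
Your proposal is correct and follows the paper's proof. You reduce via Proposition \ref{prop:GV4} to $d_{\rm{GP}}$-relative compactness of $\boldsymbol{\Pi}^{\restr r_k}$, invoke \cite{DW26} Proposition 4.13, and absorb the height term through $\texttt{ess-tree}(\boldsymbol{\mu}^{\restr r_k}, \varepsilon) \leq \widetilde{\texttt{ess-tree}}(\boldsymbol{\mu}^{\restr r_k}, \varepsilon) + r_k$. Your truncation-and-projection step, and the worry about doubling radii, can be dropped: the representative tree $\overline{{\rm{Span}}({\rm{supp}}(\mu^{\restr r_k}))}$ of $\boldsymbol{\mu}^{\restr r_k}$ already lies in $\overline{B}_T(\rho, r_k)$, so every compact rooted subtree of it has height at most $r_k$.
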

\begin{proof}
By Proposition \ref{prop:GV4}, $\boldsymbol{\Pi}$ is relatively compact in $(\mathbb{T}_{\rm{min}}^{\rm{bf}}, d_{\rm{GV}})$ if and only if for some increasing sequence of radii $r_k \rightarrow \infty$, the restriction $\boldsymbol{\Pi}^{\restr r_k}$ is $d_{\rm{GP}}$-relatively compact. Now, as $\boldsymbol{\Pi}^{\restr r_k} \subseteq \mathbb{T}_{\rm{min}}^{\rm{fin}}$, we know from \cite{DW26} Proposition 4.13 that $\boldsymbol{\Pi}^{\restr r_k}$ is relatively compact if and only if $\sup_{\boldsymbol{\mu} \in \boldsymbol{\Pi}} \mu(\overline{B}_T(\rho, r_k)) < \infty$ and $\sup_{\boldsymbol{\mu} \in \boldsymbol{\Pi}} \texttt{ess-tree}(\boldsymbol{\mu}^{\restr r_k}, \varepsilon) < \infty$ for all $\varepsilon > 0$. Observe that any compact rooted subtree of $\overline{B}_T(\rho, r_k)$ will have height at most $r_k$. Hence, $\texttt{ess-tree}(\boldsymbol{\mu}^{\restr r_k}, \varepsilon) \leq \widetilde{\texttt{ess-tree}}(\boldsymbol{\mu}^{\restr r_k}, \varepsilon) + r_k$. From this we conclude the desired.
\end{proof}
\begin{lemma}
Let $\boldsymbol{\Pi} \subseteq \mathbb{T}_{\rm{min}}^{\rm{di}}$ be non-empty. Fix $h > 0$. Then $\mathscr{E}_h(\boldsymbol{\Pi})$ is relatively compact in $(\mathbb{T}_{\rm{min}}^{\rm{di}}, d_{\rm{GV}})$ if and only if for some increasing sequence of radii $r_k \rightarrow \infty$ it holds for each $k$ that
\begin{enumerate}[$(i)$]
\item $\sup_{\boldsymbol{\mu} \in \boldsymbol{\Pi}} \mathscr{E}_h\mu(\overline{B}_T(\rho, r_k)) < \infty$, and
\item $\sup_{\boldsymbol{\mu} \in \boldsymbol{\Pi}} |{{\rm{Lf}}\left( R_{\mu,h}(T) \cap \overline{B}_T(\rho, r_k) \right)}| < \infty$.
\end{enumerate}
\end{lemma}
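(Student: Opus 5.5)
The plan is to reduce to Lemma \ref{lemma:precompact}, which characterizes relative compactness of $\mathscr{E}_h(\boldsymbol{\Pi})$ in $(\mathbb{T}_{\rm{min}}^{\rm{bf}}, d_{\rm{GV}})$ by two conditions: the ball-mass bound $\sup \mathscr{E}_h\mu(\overline{B}_T(\rho,r_k))<\infty$, and a uniform bound on $\widetilde{\texttt{ess-tree}}((\mathscr{E}_h\boldsymbol{\mu})^{\restr r_k},\varepsilon)$. We then use Lemma \ref{lemma:MEGV24b} to pass from $\mathbb{T}_{\rm{min}}^{\rm{bf}}$ to $\mathbb{T}_{\rm{min}}^{\rm{di}}$. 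That lemma says every $d_{\rm{GV}}$-limit of elements of $\mathscr{E}_h(\mathbb{T}_{\rm{min}}^{\rm{di}})$ already lies in $\mathbb{T}_{\rm{min}}^{\rm{di}}$, so relative compactness in the two spaces coincides for subsets of $\mathscr{E}_h(\mathbb{T}_{\rm{min}}^{\rm{di}})$. Condition $(i)$ is exactly the first condition of Lemma \ref{lemma:precompact}, so the real work is to show that, in the presence of $(i)$, condition $(ii)$ is equivalent to the covering-number condition.

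\emph{($\Leftarrow$)} The representative of $\mathscr{E}_h\boldsymbol{\mu}$ lives on $R_{\mu,h+}(T)\subseteq R_{\mu,h}(T)$, so its restriction to the ball is supported on $T':=R_{\mu,h}(T)\cap\overline{B}_T(\rho,r_k)$. By Lemma \ref{lemma:MEFix2}, $T'$ is a compact rooted subtree of finite type, and it carries all of the mass. Let $L$ be its number of leaves. Then $T'$ is a union of at most $L$ segments $[\rho,\sigma]$, each of length at most $r_k$. Covering each segment by $\lceil r_k/\varepsilon\rceil+1$ balls gives
\[
N(T',\varepsilon)\le L(r_k/\varepsilon+2).
\]
Taking $T'$ in the definition of $\widetilde{\texttt{ess-tree}}$, this bound is uniform over $\boldsymbol{\Pi}$ by $(ii)$. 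A small check is needed here: the restricted isometry class is taken on the minimal span, which is a subtree of $T'$. Its leaves are bounded by those of $T'$ together with possibly the boundary points, so the same count applies.

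\emph{($\Rightarrow$)} Relative compactness gives $(i)$ directly through Lemma \ref{lemma:precompact}, so it remains to bound the leaf count. We use the same mass argument as in the proof of Lemma \ref{lemma:MEOp10b} and Lemma \ref{lemma:MEGV24b}. From \eqref{eq:MEOp}, the branch-point and root atoms give
\[
\mathscr{E}_h\mu(\overline{B}_T(\rho,r))\ \ge\ h\Big((n(\rho,R)-1)+\sum_{\sigma\in{\rm Bp}(R)\cap B_T(\rho,r)}(n(\sigma,R)-2)\Big),
\]
where $R=R_{\mu,h}(T)$. For a finite-type tree this bracket is at least $|{\rm Lf}(R\cap\overline{B}_T(\rho,r))|-1$ once one also counts the leaves lying strictly inside the ball. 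Those interior leaves are leaves of $R$ itself, and each contributes the atom $(\mu(\theta_\sigma T)-h)\delta_\sigma$, which may vanish. This is the main obstacle: the combinatorial identity counts leaves minus one via branch points, but every branch point of the truncated tree lies in the open ball only if we pass to a slightly larger radius. To handle it we apply the bound at radius $r_{k+1}>r_k$. The branch points of $R\cap\overline{B}(\rho,r_k)$ that generate its leaves all lie in $\overline{B}(\rho,r_k)\subset B(\rho,r_{k+1})$. Interior leaves count in the same way, since they are leaves of $R$ and hence leaves of the truncation at any larger radius. This gives
\[
|{\rm Lf}(R\cap\overline{B}_T(\rho,r_k))|\le 1+h^{-1}\,\mathscr{E}_h\mu(\overline{B}_T(\rho,r_{k+1})),
\]
which is uniformly bounded by $(i)$ applied at $r_{k+1}$. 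That yields $(ii)$ along the same sequence of radii.
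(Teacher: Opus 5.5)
Your proposal is correct, and its skeleton is the paper's. You reduce to Lemma~\ref{lemma:precompact} in $(\mathbb{T}_{\rm{min}}^{\rm{bf}}, d_{\rm{GV}})$, transfer to $\mathbb{T}_{\rm{min}}^{\rm{di}}$ through Lemma~\ref{lemma:MEGV24b}, and bound covering numbers of the finite-type tree $R_{\mu,h}(T)\cap\overline{B}_T(\rho,r_k)$ by $r_k/\varepsilon$ times its number of leaves.

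Where you genuinely differ is the implication from relative compactness to (ii). The paper's proof records only the inequality $N(R_{\mu,h}(T)\cap\overline{B}_T(\rho,r_k),\varepsilon)\le \frac{r_k}{\varepsilon}|{\rm{Lf}}(R_{\mu,h}(T)\cap\overline{B}_T(\rho,r_k))|$. That bounds covering numbers by leaf counts but not conversely, so by itself it does not explain why a uniform $\widetilde{\texttt{ess-tree}}$ bound forces a uniform leaf bound. Your counting through the nonnegative root and branch-point atoms of \eqref{eq:MEOp}, the same device as in Lemma~\ref{lemma:MEOp10b}, supplies exactly this step. On that direction your argument is more complete than the paper's.

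The detour to $r_{k+1}$ is harmless but unnecessary, and the obstacle you describe is not actually there. Take $S=R_{\mu,h}(T)\cap\overline{B}_T(\rho,r)$.
\begin{itemize}
\item One has $n(\rho,S)=n(\rho,R_{\mu,h}(T))$.
\item Points of $S$ on $\partial B_T(\rho,r)$ have degree one in $S$. Hence ${\rm{Bp}}(S)={\rm{Bp}}(R_{\mu,h}(T))\cap B_T(\rho,r)$, with unchanged degrees.
\item The identity $|{\rm{Lf}}(S)|=n(\rho,S)+\sum_{\sigma\in{\rm{Bp}}(S)}(n(\sigma,S)-2)$ counts interior and boundary leaves alike. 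The leaf atoms $(\mu(\theta_\sigma T)-h)\delta_\sigma$ never enter.
\end{itemize}
Consequently $|{\rm{Lf}}(S)|\le 1+h^{-1}\mathscr{E}_h\mu(\overline{B}_T(\rho,r))$ already at the same radius. This shows that (ii) is in fact a consequence of (i). Your weaker bound using $r_{k+1}$ remains valid, so nothing breaks.

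Similarly, in ($\Leftarrow$) no boundary points need to be added. The minimal span of $(\mathscr{E}_h\mu)^{\restr r_k}$ is a rooted subtree of $T'$. Two distinct leaves of a rooted subtree cannot lie on a common segment $[\rho,\ell]$ with $\ell\in{\rm{Lf}}(T')$, since one would then be a strict ancestor of the other. So this span has at most $|{\rm{Lf}}(T')|$ leaves, and your covering bound applies to it directly.
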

\begin{proof}
Note that $(\mathscr{E}_{h_p}\boldsymbol{\mu})^{\restr r_k}$ has support contained in $R_{\mu,h}(T) \cap \overline{B}_T(\rho, r_k)$, which is known to be a subtree of finite type. So in particular, $ N(R_{\mu,h}(T) \cap \overline{B}_T(\rho, r_k), \varepsilon) \leq \frac{r_k}{\varepsilon} |{{\rm{Lf}}\left( R_{\mu,h}(T) \cap \overline{B}_T(\rho, r_k) \right)}|$. Now, by Lemma \ref{lemma:precompact}, $(i)$ and $(ii)$ are true if and only if $\mathscr{E}_h(\boldsymbol{\Pi})$ is relatively compact in $(\mathbb{T}_{\rm{min}}^{\rm{bf}}, d_{\rm{GV}})$. By Lemma \ref{lemma:MEGV24b}, this holds if and only if $\mathscr{E}_h(\boldsymbol{\Pi})$ is relatively compact in $(\mathbb{T}_{\rm{min}}^{\rm{di}}, d_{\rm{GV}})$.
\end{proof}
\subsection{Existence of a limit}
\label{subsec:GVExist}
We start by generalizing Proposition \ref{prop:MEOp13} to the Gromov-vague setting.
\begin{prop}[Monotone convergence of mass erasures]
\label{prop:MEGV26}
Let $\boldsymbol{\mu}_n = [T_n, d_n, \rho_n, \mu_n] \in \mathbb{T}_{\rm{min}}^{\rm{di}}$, $n \in \mathbb{N}$, and let $h_n \searrow 0$ be strictly decreasing in $(0, \infty)$. Suppose that
\begin{enumerate}[$(i)$]
\item $\mathscr{E}_{h_n - h_{n+1}}\boldsymbol{\mu}_{n+1} = \boldsymbol{\mu}_n$ for all $n \in \mathbb{N}$, and
\item $\left( \mu_n|_{{\rm{Cut}}(T_n, \bullet^{r_k}(\mu_n))} \right)_{n \in \mathbb{N}}$ is b-tight for some $r_k \rightarrow \infty$ increasing.
\end{enumerate}
Then there exists a unique $\boldsymbol{\mu} \in \mathbb{T}_{\rm{min}}^{\rm{di}}$ such that $\mathscr{E}_{h_n}\boldsymbol{\mu} = \boldsymbol{\mu}_n$ for all $n \in \mathbb{N}$, and $\boldsymbol{\mu}_n \stackrel{\rm{GV}}{\rightarrow} \boldsymbol{\mu}$.
\end{prop}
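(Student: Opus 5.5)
The plan is to reduce to the fixed-tree result, Proposition \ref{prop:MEOp13}, by realising the whole sequence inside one common $\mathbb{R}$-tree. By assumption $(i)$ and Definition \ref{def:MEIsom}, for each $n$ there is a representative of $\boldsymbol{\mu}_n$ obtained from a representative $(T_{n+1}, d_{n+1}, \rho_{n+1}, \mu_{n+1})$ of $\boldsymbol{\mu}_{n+1}$ as $(R_{\mu_{n+1}, (h_n-h_{n+1})+}(T_{n+1}), d_{n+1}, \rho_{n+1}, \mathscr{E}_{h_n-h_{n+1}}\mu_{n+1})$. Inductively this yields pointed isometric embeddings $\iota_n \colon T_n \hookrightarrow T_{n+1}$ onto closed rooted subtrees, with $\mu_n \circ \iota_n^{-1} = \mathscr{E}_{h_n - h_{n+1}}\mu_{n+1}$. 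I would take the direct limit $T^{\circ} := \bigcup_n T_n$ along these embeddings; it is an increasing union of $\mathbb{R}$-trees, hence an $\mathbb{R}$-tree, and it is separable. Its completion $T$ is a Polish $\mathbb{R}$-tree, since completions of $\mathbb{R}$-trees are $\mathbb{R}$-trees. Each $T_n$ is complete, so it sits in $T$ as a closed rooted subtree, and we may regard all $\mu_n$ as measures on $T$ with $\mathscr{E}_{h_n-h_{n+1}}\mu_{n+1} = \mu_n$ in $\mathscr{M}_{\rm{di}}(T)$.

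Next I check the hypotheses of Proposition \ref{prop:MEOp13} in $T$. Hypothesis $(i)$ holds because mass erasure computed in $T$ agrees with mass erasure computed in $T_{n+1}$: the subtree masses $\mu_{n+1}(\theta_\sigma T)$ vanish off $T_{n+1}$, and the branch and leaf structure of $R_{\mu_{n+1},h}$ is intrinsic. For hypothesis $(ii)$, note that the skeletons coincide. Indeed ${\rm{Skel}}(T,\mu_n) = {\rm{Skel}}(T,\mu_1)$ by Proposition \ref{prop:MEOp1}, so $\bullet^{r_k}(\mu_n) = \bullet^{r_k}(\mu_1) =: \bullet^{r_k}$, and ${\rm{Cut}}(T_n, \bullet^{r_k}(\mu_n)) = T_n \cap {\rm{Cut}}(T, \bullet^{r_k})$. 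Since $\mu_n$ is supported in $T_n$, the b-tightness in assumption $(ii)$ is exactly b-tightness of $(\mu_n)$ on ${\rm{Cut}}(T, \bullet^{r_k})$. Proposition \ref{prop:MEOp13} then gives $\mu \in \mathscr{M}_{\rm{di}}(T)$ with $\mathscr{E}_{h_n}\mu = \mu_n$ and $\mu_n \stackrel{\rm{vg}}{\rightarrow} \mu$.

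Set $T' := \overline{{\rm{Span}}({\rm{supp}}(\mu))}$ and $\boldsymbol{\mu} := [T', d, \rho, \mu] \in \mathbb{T}_{\rm{min}}^{\rm{di}}$. By Lemma \ref{lemma:MEOp2b}, $\mathscr{E}_{h_n}\boldsymbol{\mu} = [R_{\mu,h_n+}(T), d, \rho, \mu_n]$. I must identify $R_{\mu,h_n+}(T)$ with the minimal tree of $\mu_n$, i.e. with $T_n$. This uses minimality of $\boldsymbol{\mu}_n$ and Lemma \ref{lemma:MEOp2b}, which give $\overline{{\rm{Span}}({\rm{supp}}(\mu_n))} = R_{\mu,h_n+}(T)$, while by minimality this closure equals $T_n$. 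Hence $\mathscr{E}_{h_n}\boldsymbol{\mu} = \boldsymbol{\mu}_n$. Gromov-vague convergence follows from Proposition \ref{prop:GV3}, using $T$ as the embedding space together with the vague convergence $\mu_n \to \mu$. Uniqueness follows from Lemma \ref{lemma:MEGV18}$(ii)$.

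The main obstacle is the first step: making the direct-limit construction rigorous. The identifications $\iota_n$ are only determined up to isometry of representatives, so a consistent choice must be fixed inductively. One must also confirm that completing $T^{\circ}$ introduces no problems, namely that $\mu$ may charge added boundary points. This is harmless, because Proposition \ref{prop:MEOp13} is applied in the completed Polish tree $T$, and minimality is only imposed afterwards via $T'$.
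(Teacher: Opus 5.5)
Your proposal is correct and follows essentially the same route as the paper: you realise all the $\boldsymbol{\mu}_n$ inside one Polish $\mathbb{R}$-tree via the chain of isometric identifications from assumption $(i)$ (the paper simply cites the construction in the proof of \cite{DW26} Lemma 4.29), then apply Proposition \ref{prop:MEOp13} there. Your extra checks fill in details the paper leaves implicit: that mass erasure, skeletons and cut-off trees computed in $T$ agree with those in $T_n$, that $R_{\mu,h_n+}(T)=T_n$ by Lemma \ref{lemma:MEOp2b} and minimality, and that uniqueness follows from Lemma \ref{lemma:MEGV18}$(ii)$.
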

\begin{proof}[Proof of Proposition \ref{prop:MEGV26}]
From assumption $(i)$ we get the existence of a bijective isometry $ (T_n, d_n, \rho_n, \mu_n) \rightarrow \left( R_{\mu_{n+1}, (h_n - h_{n+1})+}(T_{n+1}), d_{n+1}, \rho_{n+1}, \mathscr{E}_{h_n - h_{n+1}}\mu_{n+1} \right)$.
From this it is possible to construct a Polish $\mathbb{R}$-tree $(T, d, \rho)$ and isometries $\phi_n \colon T_n \rightarrow T$, $n \in \mathbb{N}$, such that setting $\mu_n' := \mu_n \circ \phi_n^{-1}$, we have that $\mathscr{E}_{h_n - h_{n+1}}\mu_{n+1}' = \mu_n'$ for all $n \in \mathbb{N}$ in $(T, d, \rho)$. Indeed, $(T, d, \rho)$ acts as an embedding space for the isometry classes $(\boldsymbol{\mu}_n)_{n \in \mathbb{N}}$, and is constructed exactly as in the proof of \cite{DW26} Lemma 4.29. This moves us back to the setting of Section \ref{sec:4}, and so a direct application of Proposition \ref{prop:MEOp13} to the measures $(\mu_n')_{n \in \mathbb{N}}$ on $(T, d, \rho)$ gives the existence of a measure $\mu \in \mathscr{M}_{\rm{di}}(T)$ with $\mathscr{E}_{h_n}\mu = \mu_n'$ for all $n \in \mathbb{N}$ and $\mu_n' \stackrel{\rm{vg}}{\rightarrow} \mu$. Letting $\boldsymbol{\mu}$ denote the isometry class of $\mu$ in $\mathbb{T}_{\rm{min}}^{\rm{di}}$ gives the desired.
\end{proof}
We may finally prove Theorem \ref{thm:MEGV27}.
\begin{proof}[Proof of Theorem \ref{thm:MEGV27}]
By the semigroup property, Theorem \ref{thm:MEGV17}, and assumption $(i)$, we have $\mathscr{E}_{h_p}\boldsymbol{\mu}_n = \mathscr{E}_{h_p - h_{p+1}}\mathscr{E}_{h_{p+1}}\boldsymbol{\mu}_n \stackrel{\rm{GV}}{\rightarrow} \mathscr{E}_{h_p - h_{p+1}}\boldsymbol{\nu}_{p+1}$ as $n \rightarrow \infty$. By uniqueness of limits, $\mathscr{E}_{h_p - h_{p+1}}\boldsymbol{\nu}_{p+1} = \boldsymbol{\nu}_p$ for all $p \in \mathbb{N}$. Using assumption $(ii)$ we conclude by Proposition \ref{prop:MEGV26} that there exists a unique isometry class $\boldsymbol{\mu} \in \mathbb{T}_{\rm{min}}^{\rm{di}}$ such that $\mathscr{E}_{h_p}\boldsymbol{\mu} = \boldsymbol{\nu}_p$ for all $p \in \mathbb{N}$ and $\boldsymbol{\nu}_p \stackrel{\rm{GV}}{\rightarrow} \boldsymbol{\mu}$ as $p \rightarrow \infty$. We conclude that $\mathscr{E}_{h_p}\boldsymbol{\mu}_n \stackrel{\rm{GV}}{\rightarrow} \mathscr{E}_{h_p}\boldsymbol{\mu}$, and so $\boldsymbol{\mu}_n \stackrel{\rm{GVme}}{\rightarrow} \boldsymbol{\mu}$ as $n \rightarrow \infty$.
\end{proof}
We finally note that Proposition \ref{prop:MEGV25} may be shown in a completely analogous manner to Proposition \ref{prop:MEOp11}.
\section*{Acknowledgements}
I would like to thank my PhD-supervisor Matthias Winkel for many insightful discussions on the ideas presented in this paper, and for reading through and providing invaluable feedback on countless drafts produced throughout the writing process. I would also like to thank Thomas Duquesne for hosting my research stay at the Sorbonne Université in Paris from January to June 2026, during which parts of this paper was drafted.

I would moreover like to express my gratitude to the Anglo--Danish Society for their financial support during my first year of PhD-studies, and Exeter College Oxford, London Mathematical Society, William Demant Fonden, Knud Højgårds Fond, Christian og Ottilia Brorsons Rejselegater, and Carl og Ellen Hertz's Legat, for supporting my research stay in Paris.

This research is supported by the EPSRC Centre for Doctoral Training in Mathematics of Random Systems: Analysis, Modelling and Simulation (ESPRC Grant EP/S023925/1).

For the purpose of Open Access, the author has applied a CC BY public copyright licence to
any Author Accepted Manuscript (AAM) version arising from this submission.
\end{document}